\documentclass[10pt]{article}
\usepackage{amsthm}
\usepackage{amsxtra}
\usepackage{amssymb}
\usepackage{thmtools}
\usepackage{enumitem}
\usepackage[colorlinks=true,linkcolor=blue]{hyperref}
\usepackage{mathrsfs}
\usepackage[all]{xy}
\usepackage{graphics}
\usepackage{graphicx}
\usepackage{tikz}
\usepackage{turnstile}
\usepackage{verbatim}
\usepackage[retainorgcmds]{IEEEtrantools}
\usepackage{accents}
\usepackage[all]{xy}
\usepackage{ifthen}
\usepackage{lineno}
\usepackage{makeidx}
\usepackage{url}
\makeindex
\usetikzlibrary{matrix,positioning,calc,arrows,arrows.meta}
\usetikzlibrary{decorations.pathmorphing,shapes}

\declaretheoremstyle[bodyfont=\sl]{slanted}

\swapnumbers
\declaretheorem[name=Definition,style=definition,qed=$\dashv$,
numberwithin=section]{dfn}
\declaretheorem[name=Definition,style=definition,numbered=no,qed=$\dashv$]{dfn*}
\declaretheorem[name=Definition,style=definition,numbered=no]{dfnnoqed*}

\declaretheorem[name=Theorem,style=slanted,sibling=dfn]{tm}
\declaretheorem[name=Theorem,style=slanted,numbered=no]{tm*}
\declaretheorem[name=Lemma,style=slanted,sibling=dfn]{lem}
\declaretheorem[name=Corollary,style=slanted,sibling=dfn]{cor}
\declaretheorem[name=Corollary,style=slanted,numbered=no]{cor*}
\declaretheorem[name=Remark,style=definition,sibling=dfn]{rem}
\declaretheorem[name=Question,style=definition,sibling=dfn]{ques}

\swapnumbers
\declaretheoremstyle[headfont=\scshape]{claimstyle}
\declaretheorem[name=Claim,style=claimstyle]{clm}
\declaretheorem[name=Claim,style=claimstyle]{clmtwo}
\declaretheorem[name=Claim,style=claimstyle]{clmthree}

\declaretheorem[name=Claim,style=claimstyle,numbered=no]{clm*}

\declaretheorem[name=Subclaim,style=claimstyle,numberwithin=clmtwo]{sclmtwo}

\declaretheorem[name=Subclaim,style=claimstyle,numbered=no]{sclm*}

\declaretheorem[name=Subsubclaim,style=claimstyle,numberwithin=sclmtwo]{ssclmtwo
}
\declaretheorem[name=Subsubclaim,style=claimstyle,numberwithin=sclmthree]{
ssclmthree}
\declaretheorem[name=Subsubclaim,style=claimstyle,numberwithin=sclmfour]{
ssclmfour}
\declaretheorem[name=Subsubclaim,style=claimstyle,numberwithin=sclmfive]{
ssclmfive}
\declaretheorem[name=Subsubclaim,style=claimstyle,numberwithin=sclmsix]{ssclmsix
}
\declaretheorem[name=Subsubclaim,style=claimstyle,numberwithin=sclmseven]{
ssclmseven}
\declaretheorem[name=Subsubclaim,style=claimstyle,numberwithin=sclmeight]{
ssclmeight}
\declaretheorem[name=Subsubclaim,style=claimstyle,numberwithin=sclmnine]{
ssclmnine}
\declaretheorem[name=Subsubclaim,style=claimstyle,numberwithin=sclmten]{ssclmten
}
\declaretheorem[name=Subsubclaim,style=claimstyle,numbered=no]{ssclm*}

\declaretheoremstyle[headfont=\scshape]{casestyle}

\declaretheorem[name=Case,style=casestyle]{case}
\declaretheorem[name=Successor Case,style=casestyle]{casetwo}
\declaretheorem[name=Limit Case,style=casestyle]{casethree}
\declaretheorem[name=Case,style=casestyle]{casefour}

\declaretheorem[name=Subcase,style=casestyle,numberwithin=casetwo]{scasetwo}

\newcommand{\conv}{\mathrm{conv}}
\newcommand{\compmode}{1}
\newcommand{\compopt}[2]{\ifthenelse{\equal{\compmode}{0}}{#1}{#2}}
\newcommand{\almost}{\mathrm{alm}}
\newcommand{\stk}{\mathrm{st}}
\newcommand{\srsigma}{\varsigma}
\newcommand{\nrsigma}{\varrho}
\newcommand{\nrsigmabar}{\bar{\nrsigma}}
\newcommand{\wt}{\widetilde}
\newcommand{\completion}{\mathrm{complete}}
\newcommand{\LST}{\Ll_{\mathrm{LST}}}

\newcommand{\lgcd}{\mathrm{lgcd}}

\newcommand{\wcof}{\mathrm{wcof}}
\newcommand{\urho}{{\mathrm{u}\rho}}

\newcommand{\RR}{\mathbb R}

\newcommand{\PP}{\mathbb P}
\newcommand{\BB}{\mathbb B}
\newcommand{\sub}{\subseteq}
\newcommand{\cross}{\times}
\newcommand{\all}{\forall}
\newcommand{\ex}{\exists}

\newcommand{\compat}{\parallel}
\newcommand{\incompat}{\perp}
\newcommand{\inter}{\cap}
\renewcommand{\int}{\inter}

\newcommand{\om}{\omega}
\newcommand{\pow}{\mathcal{P}}
\newcommand{\OR}{\mathrm{OR}}

\newcommand{\Hull}{\mathrm{Hull}}

\newcommand{\cut}{\backslash}

\newcommand{\Tt}{\mathcal{T}}
\newcommand{\Ss}{\mathcal{S}}
\newcommand{\Uu}{\mathcal{U}}
\newcommand{\Vv}{\mathcal{V}}
\newcommand{\Ww}{\mathcal{W}}
\newcommand{\Ll}{\mathcal{L}}

\newcommand{\Ttbar}{{\bar{\Tt}}}

\newcommand{\rg}{\mathrm{rg}}
\newcommand{\dom}{\mathrm{dom}}
\newcommand{\cod}{\mathrm{cod}}
\newcommand{\ins}{\trianglelefteq}

\newcommand{\pins}{\triangleleft}

\newcommand{\crit}{\mathrm{cr}}

\newcommand{\union}{\cup}
\newcommand{\rest}{\!\upharpoonright\!}
\newcommand{\com}{\circ}

\newcommand{\lh}{\mathrm{lh}}
\newcommand{\Ult}{\mathrm{Ult}}

\newcommand{\Fbar}{{\bar{F}}}
\newcommand{\sats}{\models}

\newcommand{\J}{\mathcal{J}}

\newcommand{\AC}{\mathsf{AC}}
\newcommand{\DC}{\mathsf{DC}}
\newcommand{\HOD}{\mathrm{HOD}}
\newcommand{\HC}{\mathrm{HC}}
\newcommand{\ZFC}{\mathsf{ZFC}}
\newcommand{\ZF}{\mathsf{ZF}}

\newcommand{\Coll}{\mathrm{Col}}
\newcommand{\es}{\mathbb{E}}
\newcommand{\shortimplies}{\Rightarrow}
\newcommand{\gammabar}{\bar{\gamma}}
\newcommand{\psibar}{\bar{\psi}}

\newcommand{\eps}{\varepsilon}

\newcommand{\Qbar}{{\bar{Q}}}

\newcommand{\Ttvec}{{\vec{\Tt}}}
\newcommand{\Uuvec}{{\vec{\Uu}}}

\newcommand{\core}{\mathfrak{C}}
\newcommand{\her}{\mathcal{H}}

\newcommand{\pred}{\mathrm{pred}}

\newcommand{\dirlim}{\mathrm{dir lim}}
\newcommand{\un}{\union}

\newcommand{\id}{\mathrm{id}}

\newcommand{\sq}{\mathrm{sq}}
\newcommand{\nth}{{\textrm{th}}}
\newcommand{\conc}{\ \widehat{\ }\ }

\newcommand{\forces}{\dststile{}{}}

\newcommand{\bfSigma}{\undertilde{\Sigma}}

\newcommand{\rSigma}{\mathrm{r}\Sigma}
\newcommand{\uSigma}{\mathrm{u}\Sigma}

\newcommand{\rPi}{\mathrm{r}\Pi}

\DeclareMathOperator{\card}{card}
\DeclareMathOperator{\cof}{cof}

\DeclareMathOperator{\rank}{rank}

\newcommand{\Two}{\mathrm{II}}

\newcommand{\OD}{\mathrm{OD}}

\newcommand{\bfrSigma}{\undertilde{\rSigma}}
\newcommand{\bfuSigma}{\undertilde{\uSigma}}

\newcommand{\psub}{\subsetneq}

\newcommand{\Yy}{\mathcal{Y}}
\newcommand{\Xxvec}{\vec{\Xx}}
\newcommand{\Xx}{\mathcal{X}}

\newcommand{\Zz}{\mathcal{Z}}

\newcommand{\cHull}{\mathrm{cHull}}

\newcommand{\unsq}{\mathrm{unsq}}

\newcommand{\Mbar}{\bar{\M}}

\newcommand{\lpole}{\left\lfloor}
\newcommand{\rpole}{\right\rfloor}

\newcommand{\univ}[1]{\lpole #1\rpole}

\newcommand{\tu}{\textup}

\newcommand{\In}{\mathrm{ind}}
\newcommand{\unrvl}{\mathrm{unrvl}}

\newcommand{\lex}{{\mathrm{lex}}}

\renewcommand{\qedsymbol}{$\Box$}

\renewcommand{\cut}{\backslash}

\renewcommand{\Ss}{\mathcal{S}}

\newcommand{\M}{M}
\newcommand{\dfnemph}{\textbf}
\newcommand{\pvec}{\vec{p}}

\renewcommand{\Mbar}{\bar{M}}

\renewcommand{\Mbar}{{\bar{M}}}

\renewcommand{\hbar}{\bar{h}}

\newcommand{\udash}{\mathrm{u}\text{-}}
\newcommand{\udeg}{{\udash\mathrm{deg}}}

\newcommand{\uu}{\mathrm{u}}
\newcommand{\nutilde}{\widetilde{\nu}}
\renewcommand{\pm}{\mathrm{pm}}
\newcommand{\exit}{\mathrm{ex}}

\newcommand{\gammahat}{\widehat{\gamma}}

\newcommand{\Gg}{\mathcal{G}}

\newcommand{\eqdef}{=_{\mathrm{def}}}

\newcommand{\tembto}{\hookrightarrow}
\newcommand{\temb}{\Pi}
\newcommand{\clint}{\mathrm{clint}}
\newcommand{\ddd}{\mathrm{ddd}}
\newcommand{\dds}{\mathrm{dds}}
\newcommand{\dr}{\mathscr{D}}
\newcommand{\Pivec}{\vec{\Pi}}
\newcommand{\successor}{\mathrm{succ}}
\newcommand{\fin}{\mathrm{fin}}

\newcommand{\Qhat}{\widehat{Q}}

\newcommand{\passive}{\mathrm{pv}}
\newcommand{\dropset}{\mathscr{D}}

\newcommand{\inflatearrow}{\rightsquigarrow}
\newcommand{\strength}{\varrho}

\begin{document}

\setcounter{footnote}{1}
\title{Iterability for (transfinite) stacks\footnote{This is the author accepted version of an article published in Journal of Mathematical Logic, Volume 21, Number 2, 2150008 (2021),
available at \url{https://doi.org/10.1142/S0219061321500082}. However, the  references  \cite{extmax}, \cite{V=HODX} and \cite{str_comparison} have been expanded to incorporate both the preprint cited by the published version of this article, and the full bibliographic data for their published versions. Likewise,
\cite{scale_con} has been expanded to include a preprint.}}

\date{February 19, 2021}
\author{Farmer Schlutzenberg\thanks{
Editing partially funded by the Deutsche Forschungsgemeinschaft (DFG,
German Research Foundation) under Germany's Excellence Strategy EXC
2044-390685587, Mathematics M\"unster: Dynamics--Geometry--Structure.
}\ \thanks{afirstname.alastname at gmail.com, \url{\myurl}}\\
WWU M\"unster}

\maketitle

\begin{abstract}
We establish natural criteria under which normally iterable premice
are iterable for stacks of normal trees.
Let $\Omega$ be a regular uncountable cardinal. Let $m<\omega$ and $M$ be an $m$-sound premouse and $\Sigma$ be an $(m,\Omega+1)$-iteration strategy for $M$ (roughly, a normal $(\Omega+1)$-strategy).

We define a natural condensation property for iteration strategies,
\emph{inflation condensation}.
We show that if $\Sigma$ has inflation condensation
then $M$ is $(m,\Omega,\Omega+1)^*$-iterable (roughly, $M$
is iterable for length $\leq\Omega$ stacks of normal trees each of length ${<\Omega}$), and moreover, we define a specific such strategy $\Sigma^{\mathrm{st}}$
and a reduction of stacks via $\Sigma^{\mathrm{st}}$ to normal trees via $\Sigma$.
If $\Sigma$ has the Dodd-Jensen property and $\mathrm{card}(M)<\Omega$ then $\Sigma$ has inflation condensation.

We also apply some of the techniques developed
to prove that if $\Sigma$ has strong hull condensation
(introduced independently by Steel),
and $G$ is $V$-generic for an $\Omega$-cc forcing, then $\Sigma$ extends
to an $(m,\Omega+1)$-strategy $\Sigma^+$ for $M$ with strong hull condensation,
in the sense of $V[G]$. Moreover, this extension is unique.
We deduce that  if $G$ is $V$-generic for a ccc forcing then $V$ and $V[G]$ have
the same $\omega$-sound, $(\omega,\Omega+1)$-iterable premice which project to $\omega$.

Keywords: Inner model; mouse; iteration tree; iteration strategy; iterability; condensation; stack; normalization; large cardinal.

Mathematics Subject Classification 2020: 03E45, 03E55
\end{abstract}

\section{Introduction}\label{sec:intro}

Let $M$ be a normally iterable premouse.
Does it follow that $M$ is iterable for non-normal trees? We prove here the following partial positive result
in this direction, which applies to both Mitchell-Steel indexed and $\lambda$-indexed premice.
The notion \emph{inflation condensation} is a certain condensation property for iteration strategies,
defined in Definition \ref{dfn:inflation_condensation}.
Roughly, it says that if there is a normal iteration strategy
$\Sigma$ for $M$ with inflation condensation, then there is an iteration
strategy $\Sigma^*$ for $M$ for normal stacks of iteration trees;
that is, sequences of normal trees, appropriately formed:

\begin{tm*}[\ref{thm:stacks_iterability}, \ref{thm:stacks_iterability_2}]
 Let $M$ be an $m$-sound premouse, let $\Omega$ be a regular uncountable cardinal, let $\xi\in\{\Omega,\Omega+1\}$,
 let $\Sigma$ be an $(m,\xi)$-iteration strategy for $M$
 and suppose that $\Sigma$ has inflation condensation. Then:
 \begin{enumerate}[label=--]
  \item if $\xi=\Omega$ then $M$ is $(m,{<\om},\Omega)^*$-iterable, and
  \item if $\xi=\Omega+1$ then $M$ is $(m,\Omega,\Omega+1)^*$-iterable.
 \end{enumerate}
Moreover, there is an iteration strategy $\Sigma^*$ witnessing this with $\Sigma\sub\Sigma^*$.
\end{tm*}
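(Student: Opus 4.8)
The plan is to construct the witnessing strategy $\Sigma^{*}$ (one may take it to be the canonical strategy $\Sigma^{\mathrm{st}}$ referred to in the abstract) by recursion on stacks of normal trees, carrying as auxiliary data a single normal tree $\Uu$ which is via $\Sigma$, together with an inflation of the stack-so-far into $\Uu$. The point is that this auxiliary tree stays legitimate --- that is, via $\Sigma$ --- at every stage precisely because of inflation condensation. In detail: for a stack $\vec{\Tt}=\langle\Tt_{\alpha}\rangle_{\alpha<\eta}$ to be run by $\Sigma^{*}$, I maintain for each $\alpha$ a normal tree $\Uu_{\alpha}$ on $\M$ via $\Sigma$ --- morally the full normalization of $\vec{\Tt}\rest\alpha$ --- together with inflation data relating the models and extenders of $\vec{\Tt}\rest\alpha$ to those of $\Uu_{\alpha}$; in particular the last model, or relevant direct limit, along $\vec{\Tt}\rest\alpha$ embeds into a model of $\Uu_{\alpha}$. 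The base case is immediate: a single normal tree $\Tt_{0}$ is run by setting $\Sigma^{*}(\Tt_{0})=\Sigma(\Tt_{0})$ and taking $\Uu_{1}=\Tt_{0}$ with the trivial inflation data; this also yields $\Sigma\sub\Sigma^{*}$.

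For the successor step, suppose $\Uu_{\alpha}$ (via $\Sigma$) and its inflation data have been built and we are handed a normal tree $\Tt_{\alpha}$ on the appropriate last model of $\vec{\Tt}\rest\alpha$. I process $\Tt_{\alpha}$ extender by extender, simultaneously extending $\Uu_{\alpha}$ to a tree $\Uu_{\alpha+1}$ that is an inflation of $\Uu_{\alpha}$ whose inflationary extenders are the copied images of the extenders of $\Tt_{\alpha}$, interspersed as needed to preserve normality. At each limit stage reached inside $\Tt_{\alpha}$, the tree built so far is an inflation of $\Uu_{\alpha}$, hence via $\Sigma$ by inflation condensation, so $\Sigma$ delivers a cofinal branch; pulling this branch back through the inflation data gives the branch that $\Sigma^{*}$ assigns to $\Tt_{\alpha}$ at that stage. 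Once $\Tt_{\alpha}$ is complete, $\Uu_{\alpha+1}$ is again via $\Sigma$, the inflation data are updated so that the last model of $\Tt_{\alpha}$ embeds into a model of $\Uu_{\alpha+1}$, and wellfoundedness of every stack model produced so far is inherited from that of the models of $\Uu_{\alpha+1}$.

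At a limit stage $\lambda$ of the stack I set $\Uu_{\lambda}$ to be the natural limit of $\langle\Uu_{\alpha}\rangle_{\alpha<\lambda}$ in the sense of inflations --- concretely, the eventual union of the $\Uu_{\alpha}$ along their stabilizing initial segments --- together with the induced inflation data; continuity of the inflation construction makes $\Uu_{\lambda}$ an inflation of each $\Uu_{\alpha}$, and since being via $\Sigma$ is inherited by a tree all of whose proper initial segments and limit branches are via $\Sigma$, the tree $\Uu_{\lambda}$ is via $\Sigma$. Hence the direct limit along $\vec{\Tt}\rest\lambda$ embeds into a model of $\Uu_{\lambda}$, so it is wellfounded, and $\Sigma^{*}$ continues. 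For the length bound: each $\Tt_{\alpha}$ has length $<\Omega$ and so contributes fewer than $\Omega$ to the length of the normalization; as $\Omega$ is regular and the stack has at most $\Omega$ trees (respectively finitely many, in the case $\xi=\Omega$), the partial sums stay below $\Omega$, so $\lh(\Uu_{\alpha})<\Omega$ at every stage strictly below the top of the stack and $\lh(\Uu)\le\Omega$ at the top; in particular every tree on which $\Sigma$ is consulted lies in its domain. When $\xi=\Omega+1$ and the stack, or its final tree, has limit length, the topmost $\Uu$ has length $\le\Omega$ and the $(\Omega+1)$-strategy $\Sigma$ supplies the final branch, whose pullback closes off the stack. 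This gives the two stated iterability conclusions.

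The step I expect to be the main obstacle is not any single one of these but the coherence of the inflation data across the whole recursion: one must check that the maps relating stack models to $\Uu$-models are updated functorially through both successor and limit stages, so that the pullback of a $\Sigma$-branch is well-defined and independent of bookkeeping choices, and that the extension of $\Uu_{\alpha}$ by $\Tt_{\alpha}$ is literally an inflation in the technical sense, so that inflation condensation applies verbatim. In other words, the heart of the matter is the assertion that stacks normalize to inflations, with all the needed naturality, and this is exactly where the earlier development --- the definitions of inflation and of inflation condensation, together with the construction of the normalization of a stack --- is brought to bear.
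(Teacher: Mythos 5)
Your proposal captures the right high-level architecture---build an auxiliary normal tree by inflating as the stack grows, use inflation condensation to keep the auxiliary tree via $\Sigma$ at limit stages inside each normal component, and embed the stack's last model into a model of the auxiliary tree---and for the successor step and for stacks of finite length this is essentially the paper's route. But there is a genuine gap at limit stages of the stack, and it is exactly the step the paper has to work hardest for.

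You write that at a limit stage $\lambda$ of the stack you take $\Uu_\lambda$ to be ``the eventual union of the $\Uu_\alpha$ along their stabilizing initial segments.'' The trees $\Uu_\alpha$ do \emph{not} stabilize in this sense. When $\Uu_{\alpha+1}$ is built as an inflation of $\Uu_\alpha$, the new ($\Uu_\alpha$-inflationary) extenders are interleaved throughout $\Uu_{\alpha+1}$ according to indexing requirements, not appended at the end; an inflationary extender can perfectly well be inserted at position $0$. Iterating this, $\Uu_{\alpha'}\rest\beta$ can change unboundedly in $\alpha'<\lambda$ for every fixed $\beta$, so the ``eventual union'' you invoke typically does not exist. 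The paper's substitute is the \emph{minimal simultaneous inflation} of the whole family $\{\Uu_\alpha\}_{\alpha<\lambda}$ (Definition \ref{dfn:min_inf}, Lemma \ref{lem:min_inf}): a fresh comparison-like construction producing a single normal tree that is simultaneously an inflation of every $\Uu_\alpha$. Its termination in fewer than $\Omega$ steps is a reflection/regressive-map argument using regularity of $\Omega$, and it genuinely requires $\Sigma$ to be an $(m,\Omega+1)$-strategy, not merely an $(m,\Omega)$-strategy---this, and not the length arithmetic in your last paragraph, is the real reason the case $\xi=\Omega$ only yields $(m,{<\om},\Omega)^*$-iterability: without the $\Omega+1$-strategy one cannot run the limit-stage comparison at all. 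A second ingredient you wave at (``maps updated functorially'') but which really needs a separate lemma is the Commutativity of Inflation (Lemma \ref{lem:inflation_commutativity}), which guarantees that when $\Uu_{\alpha_0}\inflatearrow\Uu_{\alpha_1}\inflatearrow\Uu_{\alpha_2}$, the composed inflation data cohere so that the direct limit along the stack actually embeds into $M^{\Uu_\lambda}_\infty$; this is not automatic and is where most of the paper's bookkeeping lives.

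Two smaller points. First, the derivation from an optimal-stacks strategy to a full stacks strategy allowing artificial drops (the paper's Lemma \ref{lem:sub-optimal_reduces_to_optimal}) needs to be mentioned, since the theorem as stated concerns the games $\Gg(M,m,\cdot,\cdot)^*$ in which player I may drop artificially. Second, for MS-indexed premice the copying involved does not literally preserve $m$-maximality (the type-3 problem), so one has to pass to the $\udash m$-maximal machinery of \S\ref{sec:r-m} and translate back; your proposal as written would not compile with standard fine structure in that case.
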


The background theory for the above theorem is $\ZF$ (actually, it is much more local than this).
Likewise for the other results of the paper, except where indicated otherwise.

Recall here that an $(m,\alpha,\beta)^*$-iteration strategy is a winning
strategy for player II in the iteration game $\Gg(M,m,\alpha,\beta)^*$;
this is the variant, introduced in \cite{coremore}, of the iteration game
$\Gg(M,m,\alpha,\beta)$ of \cite[\S4]{outline}.
In both of these games, the players build a sequence $\left<\Tt_\gamma\right>$
of length at most $\alpha$, consisting
of normal iteration trees $\Tt_\gamma$, with $\Tt_0$ on $M$,
$\Tt_1$ on the last model $M^{\Tt_0}_\infty$ of $\Tt_0$, etc.
But in $\Gg(M,m,\alpha,\beta)^*$, if in some round $\gamma<\alpha$, a bona fide
tree $\Tt_\gamma$
of length $\beta$ is reached,
then player II automatically wins the entire game. Player I
may of course end the round earlier, with some tree of successor length ${<\beta}$.\footnote{Whereas in $\Gg_m(M,\alpha,\beta)$,
if $\beta$ is a successor ordinal and not all rounds have been played,
then the game would continue, with the next round building a tree on $M^\Tt_\infty$.
Actually, the author has always understood $\Gg(M,m,\alpha,\beta)$ and
$(m,\alpha,\beta)$-iterability
to be defined as we have just defined $\Gg(M,m,\alpha,\beta)^*$ and
$(m,\alpha,\beta)^*$-iterability,
due to misreading the definition in \cite{outline} at some point. The author thanks Gabriel Fernandes for pointing out this confusion over the definition.
The author is not aware of any use of $(m,\alpha,\beta)$-iterability beyond $(m,\alpha,\beta)^*$-iterability
in the literature.} The rules are spelled out explicitly in \S\ref{subsec:terminology}.
The two games are only distinct when $\beta$ is a successor.
For limit $\alpha$, an $(m,{<\alpha},\beta)^*$-iteration strategy
is likewise, except that if the game lasts through $\alpha$ rounds, with neither player having yet lost,
then player II wins.

We will define explicitly a specific such strategy $\Sigma^*$ from $\Sigma$,
denoted $\Sigma^\stk$. Trees via $\Sigma^\stk$ of length ${<\Omega}$ will lift
to trees via $\Sigma$ of length ${<\Omega}$.
Further, if $\Omega=\om_1$ and $M$ is countable and we code $\Sigma\rest\HC$
and $\Sigma^\stk\rest\HC$ naturally with functions $\Sigma_0,\Sigma^\stk_0$ on the reals,
then $\Sigma^\stk_0$ is $\Delta_1(\Sigma_0)$. (We do not know if one can improve on this complexity.)

The construction of $\Sigma^\stk$ breaks into two main pieces (we assume
for the purposes of this discussion that all trees
have length ${<\Omega}$
and player I does not make any artificial drops in model or degree at the
beginning of rounds; that is, $(M',m')$ is the model and degree
produced at the end of one round, then the next round
is $m'$-maximal on $M'$).
First, given a normal tree $\Tt$ via $\Sigma$ of successor length,
we define a normal strategy $\Upsilon^\Sigma_\Tt$ for $M^\Tt_\infty$,
together with a process which converts normal trees $\Uu$ on $M^\Tt_\infty$ via $\Upsilon^\Sigma_\Tt$ to normal trees $\Xx=\Ww^\Sigma_\Tt(\Uu)$ on $M$
via $\Sigma$, and produces an embedding
$\nrsigma:M^\Uu_\infty\to M^\Xx_\infty$
when $\Uu$ has successor length.
We then also have the normal strategy $\Upsilon^\Sigma_\Xx$ for $M^\Xx_\infty$.
But using $\nrsigma$ we can copy trees on $M^\Uu_\infty$ to trees on $M^\Xx_\infty$.
We can define a normal strategy $\Upsilon^\Sigma_{\Tt,\Uu}$,
as the $\nrsigma$-pullback of $\Upsilon^\Sigma_\Xx$. So $M^\Uu_\infty$ is iterable,
etc. So this first step
leads immediately to a strategy for stacks of length ${<\omega}$.
Second, given a limit $\eta$ and a stack $\Ttvec$ of length $\eta$ in which each normal component
is built using the process above (or a generalization thereof, if $\eta>\om$)
and corresponding sequence $\Xxvec$ of normal
trees, we  show that there is a natural limit $\Xx$ of $\Xxvec$, and that everything fits
together in a sufficiently commutative fashion that the direct limit $M^{\Ttvec}_\infty$ of the stack $\Ttvec$
embeds into $M^\Xx_\infty$, so we can continue through longer stacks.

This overall process we call here \emph{normal realization},
as the tree $\Xx$ is normal, but for example in the situation above, we need not have $M^\Uu_\infty=M^\Xx_\infty$.
It is often called \emph{normalization} elsewhere,
and \emph{embedding normalization} in \cite{str_comparison},
but we prefer to
reserve the  term \emph{normalization} for a tighter process that we do not
discuss here (that is, \emph{full normalization} in the terminology of
\cite{str_comparison},
where one gets a normal tree $\Xx$ for which
$M^\Uu_\infty=M^\Xx_\infty$).\footnote{Also, \cite{str_comparison}
deals with fine structural strategy mice, as well as pure $L[\es]$ mice,
whereas here we consider pure $L[\es]$ mice, and also certain coarse
structures, though many of the results adapt routinely.}

Using normal
realization, we will also prove a theorem concerning the following iteration
game:

\begin{dfn}\label{dfn:G_fin}\index{$\Gg_\fin$}Let $M$ be an $m$-sound premouse.
In
$\Gg_{\fin}(M,m,\Omega+1)$,
player I plays a \emph{finite length} putative $m$-maximal stack $\Ttvec=\left<\Tt_i\right>_{i<k}$ of \emph{finite length} trees $\Tt_i$,
player I wins if $M^{\Ttvec}_\infty$ is  illfounded, and otherwise, the players
proceed to play out the $(n,\Omega+1)$-iteration game on $M^\Ttvec_\infty$ where
$n=\deg^\Ttvec(\infty)$.
\end{dfn}

(See \S\ref{subsec:terminology} for explanations of terminology.)
Note that unlike the main theorems on normal realization mentioned above,
the
following one requires no condensation hypothesis for
$\Sigma$.\footnote{Part (i) of the
theorem was used by the author
in the presentation \emph{Fine structure from normal iterability} at the 2015
M\"unster conference,
and part (ii) provides a simplification of another fact used there.}

\begin{tm*}[\ref{thm:stacks_of_finite_trees}] Let $\Omega>\om$ be regular and $M$ be  $m$-sound $(m,\Omega+1)$-iterable. Then \tu{(}i\tu{)} player $\Two$ has a winning
strategy for $\Gg_{\fin}(M,m,\Omega+1)$. Moreover, \tu{(}ii\tu{)} let
$\Ttvec=\left<\Tt_i\right>_{i<\om}$ be an $m$-maximal
stack on $M$ consisting of \emph{finite length} trees $\Tt_i$ \tu{(}and note $\lh(\Ttvec)=\om$\tu{)}.
Then for all sufficiently large $i<\om$, $b^{\Tt_i}$ does
not drop in model or degree, player I makes no artificial drop in round $i$,
and $M^\Ttvec_\infty$ is wellfounded.
\end{tm*}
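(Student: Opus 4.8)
The plan is to use normal realization to reduce both parts to facts about a single normal tree $\Xx$ on $M$ via (a fixed $(m,\Omega+1)$-strategy) $\Sigma$. First I would set up the bookkeeping: given a finite-length $m$-maximal stack $\Ttvec = \langle \Tt_i \rangle_{i<k}$ of finite-length trees, I iteratively apply the normal realization construction. With $\Tt_0$ a finite (hence successor-length) tree on $M$, since $\Sigma$ is a full $(m,\Omega+1)$-strategy we may as well take $\Tt_0$ via $\Sigma$, and then form $\Xx_0 = \Tt_0$. Inductively, having a normal tree $\Xx_i$ on $M$ via $\Sigma$ of successor length together with an embedding $\sigma_i \colon M^{\Ttvec\rest i}_\infty \to M^{\Xx_i}_\infty$, the next round's tree $\Tt_i$ on $M^{\Ttvec\rest i}_\infty$ can be copied via $\sigma_i$ to a tree on $M^{\Xx_i}_\infty$, which via the normal realization process $\Ww^\Sigma_{\Xx_i}$ produces a normal extension $\Xx_{i+1} \trianglerighteq \Xx_i$ on $M$ via $\Sigma$ together with $\nrsigma$, and composing gives $\sigma_{i+1}$. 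Since each $\Tt_i$ is finite and normal realization of a finite tree only appends finitely many steps (each extender used in the realization is an image of an extender used in $\Tt_i$ or already in $\Xx_i$), each $\Xx_{i+1}$ is again of finite, hence successor, length, and $\lh(\Xx_i) < \Omega$ throughout. This already gives part (i): player II plays so as to maintain this lifting; at the end of the finitely many stack-rounds we have $\sigma_k \colon M^{\Ttvec}_\infty \to M^{\Xx_k}_\infty$ with $M^{\Xx_k}_\infty$ wellfounded (as $\Xx_k$ is via $\Sigma$), so $M^\Ttvec_\infty$ is wellfounded; and then the remaining $(n,\Omega+1)$-game on $M^\Ttvec_\infty$ is won by pulling back $\Upsilon^\Sigma_{\Xx_k}$ under $\sigma_k$, exactly as in the discussion of $\Upsilon^\Sigma_{\Tt,\Uu}$ above.

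For part (ii), suppose $\Ttvec = \langle \Tt_i \rangle_{i<\om}$ is an $m$-maximal stack of finite-length trees. Running the same construction produces an increasing sequence $\langle \Xx_i \rangle_{i<\om}$ of finite normal trees on $M$ via $\Sigma$, with commuting embeddings $\sigma_i$. I would let $\Xx = \bigcup_{i<\om} \Xx_i$; this is a normal tree on $M$ of limit length $\leq \om$, and by the limit step of normal realization (the "natural limit" $\Xx$ of $\Xxvec$ from the introduction) the direct limit $M^\Ttvec_\infty$ embeds into $M^\Xx_\infty$ via the direct limit $\sigma_\infty$ of the $\sigma_i$, provided $\Xx$ has a wellfounded branch. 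But $\Xx$ has length $\leq\om$; if $\lh(\Xx) < \om$ then the stack is eventually trivial and there is nothing to prove, so assume $\lh(\Xx) = \om$. Then $\Sigma$ picks a cofinal wellfounded branch $b = \Sigma(\Xx)$, and since $\Xx$ is a tree of length $\om$ on a premouse, $[0,\om)_\Xx$ is a cofinal branch and for all sufficiently large $i$ in this branch the branch does not drop in model or degree (because an $\om$-length tree has only finitely many drops along any branch). Here is the key point I expect to be the main obstacle: I need to argue that for all sufficiently large $i < \om$, the tree $\Tt_i$ is itself "captured" by $\Xx$ in the sense that $b^{\Tt_i}$ does not drop and player I's round-$i$ move makes no artificial drop. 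The mechanism is that $M^{\Xx_i}_\infty$ eventually "stabilizes along the main branch": once $\Xx_i$ is long enough that all drops of the eventual main branch of $\Xx$ have already occurred and $\crit$s have stabilized, any further finite normal realization step cannot introduce a drop, which forces the copied tree on $M^{\Xx_i}_\infty$, hence $\Tt_i$ itself, to be drop-free, and forces $b^{\Tt_i} = [0,\lh \Tt_i)_{\Tt_i}$ to not drop; similarly an artificial drop in round $i$ would have to be reflected as a drop in $\Xx$ past stage $i$, contradiction.

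To make that stabilization precise I would track, for each $i$, the ordinal $\delta_i = \lh(\Xx_i)$ and the position where $\Xx_{i+1}$ branches off from $\Xx_i$, i.e.\ the $\Xx_i$-predecessor of the first new node of $\Xx_{i+1}$; normal realization tells us this predecessor is at most the image under $\sigma_i$ of the relevant node, and finiteness of $\Tt_i$ bounds how far back it can be. Since $\bigcup_i \Xx_i = \Xx$ has a single eventual main branch $b$ and only finitely many of the $\delta_i$ can lie off $b$ or below the last drop of $b$, there is $N < \om$ such that for all $i \geq N$: the extenders appended in passing from $\Xx_i$ to $\Xx_{i+1}$ all lie on $b$ above the last drop, $M^{\Xx_i}_\infty$ is on $b$ above its last drop and last artificial drop, and $\sigma_i$ is (near) elementary with no further truncation. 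For such $i$, standard normalization commutativity (the copying maps and the realization maps agree on the relevant extenders, and the tree order is preserved) yields that $\Tt_i$ has no drop on its main branch and player I made no artificial drop in round $i$; feeding this back, $M^\Ttvec_\infty$ is the direct limit along an eventually drop-free thread and embeds into $M^\Xx_b = M^\Xx_\infty$, which is wellfounded since $b = \Sigma(\Xx)$. I would isolate the stabilization argument as a short lemma about increasing unions of finite normal trees and their realizations, as it is the only non-routine ingredient; everything else is an application of the normal realization apparatus already developed for the main theorems.
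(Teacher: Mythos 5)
Your part (i) follows the paper's route: iterate the normal-realization step on the finitely many finite rounds, compose the lifting maps, and pull back $\Upsilon^\Sigma_{\Xx_k}$ for the final $(n,\Omega+1)$-round; and you correctly identify why no condensation hypothesis is needed (inflation condensation is only invoked to continue an inflation of $\Tt$ past a limit stage whose branch would determine a $\Tt$-cofinal branch, and a finite $\Tt$ has none). But your setup contains a misconception that becomes fatal in part (ii): $\Xx_{i+1}=\Ww^\Sigma_{\Xx_i}(\Uu_i)$ is an \emph{inflation} of $\Xx_i$, not an end-extension $\Xx_i\ins\Xx_{i+1}$. The realization interleaves copies of the extenders of $\Uu_i$ with re-copied, stretched images of the extenders of $\Xx_i$, and in general diverges from $\Xx_i$ already at the index $\zeta^0$ of the first $\Xx_i$-inflationary extender: if $\In(E^{\Uu_i}_0)<\In(E^{\Xx_i}_0)$ then $E^{\Xx_{i+1}}_0=E^{\Uu_i}_0\neq E^{\Xx_i}_0$. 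So the trees $\Xx_i$ are not nested, and $\bigcup_{i<\om}\Xx_i$ is not a tree.

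Consequently the whole limit argument of part (ii) --- that the limit tree has length $\leq\om$, hence a cofinal branch with only finitely many drops, from which the $\Tt_i$ stabilize --- does not get off the ground. The correct limit object is the minimal simultaneous inflation of $\{\Xx_i\}_{i<\om}$ of Lemma \ref{lem:min_inf}, a comparison of iteration trees, and the paper explicitly notes that its length can be an arbitrary successor ordinal ${<\Omega}$, not ${\leq\om}$; so the ``finitely many drops on an $\om$-branch'' shortcut is unavailable. The conclusions of (ii) are instead extracted from Lemma \ref{lem:min_inf} (the minimal inflation is $\Xx_i$-terminally-non-dropping for some $i$, hence by Lemma \ref{lem:inflation_commutativity} for all sufficiently large $i$) together with the correspondence between drops of $b^{\Tt_i}$ and terminal model-dropping of $\Xx_{i+1}$ over $\Xx_i$ (condition \ref{item:Ttvec_model_drop} in the construction of $\Sigma^{\stk}$); note that the termination of this comparison uses the full length-$(\Omega+1)$ normal iterability via a reflection argument, which your union-based picture never engages. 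Two smaller omissions: the reduction of non-optimal stacks (artificial drops) to optimal ones via Lemma \ref{lem:sub-optimal_reduces_to_optimal}, and the translation through $\udash$ fine structure needed for type 3 MS-indexed premice.
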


From the results in the paper we obtain the following equivalence of various forms of iterability, for countable premice.
\emph{Strong hull condensation} is another condensation property for iteration strategies,
isolated by Steel; see \ref{dfn:strong_hull_condensation}.\footnote{Strong hull condensation was defined by Steel,
and inflation condensation by the author, independently of one another,
at around the same time. Jensen also independently defined a similar condensation notion at around this time.}
 Inflation condensation and strong hull condensation have the same basic idea behind them;
indeed inflation condensation just demands that certain instances of strong hull condensation hold,
so the latter implies the  former.
The author does not know whether they are equivalent.
The implication from (weak) Dodd-Jensen to strong hull condensation,
that is, Theorem \ref{tm:wDJ_implies_cond},
was pointed out to the author by Steel in 2017.\footnote{That is,
for $\lambda$-indexed premice; for MS-indexed premice there are
additional technical considerations to deal with, as discussed here.}

 \begin{tm} Assume $\DC$.
 Let $\Omega>\om$ be regular and $M$ be a countable $m$-sound premouse. Then the following are equivalent:
 \begin{enumerate}[label=\tu{(}\alph*\tu{)}]
  \item\label{item:stacks_iter} $M$ is $(m,\Omega,\Omega+1)^*$-iterable.
  \item\label{item:norm_iter_cond} There is an $(m,\Omega+1)$-strategy for $M$ with inflation condensation.
  \item\label{item:norm_iter_shcond} There is an $(m,\Omega+1)$-strategy for $M$ with strong hull condensation.
  \item\label{item:norm_iter_wdj} There is an $(m,\Omega+1)$-strategy for $M$ with weak Dodd-Jensen.
  \item\label{item:stacks_iter_wdj} There is an $(m,\Omega,\Omega+1)^*$-strategy for $M$ with weak Dodd-Jensen.
 \end{enumerate}
\end{tm}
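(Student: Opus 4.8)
The plan is to establish the equivalence via the single cycle of implications
\[ \mathrm{(a)}\Rightarrow\mathrm{(e)}\Rightarrow\mathrm{(d)}\Rightarrow\mathrm{(c)}\Rightarrow\mathrm{(b)}\Rightarrow\mathrm{(a)}, \]
four of whose links are essentially already in hand. The implication $\mathrm{(b)}\Rightarrow\mathrm{(a)}$ is precisely Theorems \ref{thm:stacks_iterability} and \ref{thm:stacks_iterability_2}: an $(m,\Omega+1)$-strategy $\Sigma$ with inflation condensation yields, through normal realization, the $(m,\Omega,\Omega+1)^*$-strategy $\Sigma^\stk\supseteq\Sigma$. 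The implication $\mathrm{(c)}\Rightarrow\mathrm{(b)}$ is immediate from the definitions (Definitions \ref{dfn:inflation_condensation} and \ref{dfn:strong_hull_condensation}): inflation condensation is by design a consequence of strong hull condensation, so the same strategy witnesses both. The implication $\mathrm{(d)}\Rightarrow\mathrm{(c)}$ is Theorem \ref{tm:wDJ_implies_cond}, applied to the strategy provided by $\mathrm{(d)}$ (the extra considerations for MS-indexed premice being handled there). And $\mathrm{(e)}\Rightarrow\mathrm{(d)}$ is a triviality: restricting an $(m,\Omega,\Omega+1)^*$-strategy to the first round of the $*$-game gives an $(m,\Omega+1)$-strategy — a normal tree of length $\leq\Omega+1$ is exactly a one-round play — and the weak Dodd--Jensen property of the stacks strategy, read off from one-round plays, is exactly the weak Dodd--Jensen property of this restriction.

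The substantive link is $\mathrm{(a)}\Rightarrow\mathrm{(e)}$: from bare $(m,\Omega,\Omega+1)^*$-iterability of the countable premouse $M$ we must produce an $(m,\Omega,\Omega+1)^*$-strategy with weak Dodd--Jensen. I would fix an enumeration $\vec e$ of $M$ in order type $\omega$ (using countability of $M$) and run the standard construction of a weak Dodd--Jensen strategy — the argument of Neeman and Steel for normal trees — now inside the stacks game $\Gg_m(M,\Omega,\Omega+1)^*$. As in the normal case, this is a $\DC$-argument: if there were no $(m,\Omega,\Omega+1)^*$-strategy with the weak Dodd--Jensen property relative to $\vec e$, one assembles from the failures an infinite descending sequence in the wellfounded order underlying the property, a contradiction. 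Everything beyond the normal-tree case is bookkeeping: the iteration maps at issue are the composites along the stack; the $*$-convention (player II winning on reaching a tree of length $\Omega+1$) only helps; and artificial drops by player I at the start of rounds are absorbed as usual, the Dodd--Jensen clauses being vacuous unless no artificial drop and no branch drop occur. I expect this bookkeeping — confirming that the minimality argument is genuinely insensitive to the passage from single normal trees to transfinite stacks — to be the main (and essentially only) obstacle.

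Finally, a word on alternatives. One can reach $\mathrm{(d)}$ more cheaply, since $(m,\Omega,\Omega+1)^*$-iterability trivially gives $(m,\Omega+1)$-iterability, whence the classical normal-tree weak Dodd--Jensen construction applies directly; but $\mathrm{(e)}$ must still be obtained, and I see no way to avoid the stacks version of the construction there. An alternative for $\mathrm{(e)}$ would be to argue that if $\Sigma$ has weak Dodd--Jensen then so does the derived stacks strategy $\Sigma^\stk$ of Theorems \ref{thm:stacks_iterability} and \ref{thm:stacks_iterability_2} — plausible, since $\Sigma^\stk$ reduces each stack to a normal tree via $\Sigma$ together with an embedding of the stack's last model into the realizing model — but tracking the drop structure through normal realization makes this comparison more delicate than simply rerunning the Neeman--Steel argument, so I would keep the direct route primary.
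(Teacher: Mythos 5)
Your proof follows exactly the same cycle of implications as the paper, and every link is justified the same way: $\mathrm{(b)}\Rightarrow\mathrm{(a)}$ by Theorem \ref{thm:stacks_iterability}, $\mathrm{(c)}\Rightarrow\mathrm{(b)}$ (the paper cites Lemma \ref{lem:shcond_implies_extra_inf}, whose relevant half is indeed the immediate observation you give), $\mathrm{(d)}\Rightarrow\mathrm{(c)}$ by Theorem \ref{tm:wDJ_implies_cond}, $\mathrm{(e)}\Rightarrow\mathrm{(d)}$ trivial, and $\mathrm{(a)}\Rightarrow\mathrm{(e)}$ by the Neeman--Steel weak Dodd--Jensen construction using $\DC$ — the one step you sketch in detail, which the paper dispatches by simply citing \cite{wdj}. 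The only difference is one of exposition (you redo the $\DC$ construction in outline rather than referencing it), so this is the paper's argument.
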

\begin{proof}
 \ref{item:stacks_iter} $\shortimplies$ \ref{item:stacks_iter_wdj} is by \cite{wdj} (only this implication uses $\DC$),
  \ref{item:stacks_iter_wdj} $\shortimplies$ \ref{item:norm_iter_wdj} is trivial,
   \ref{item:norm_iter_wdj} $\shortimplies$ \ref{item:norm_iter_shcond} by \ref{tm:wDJ_implies_cond},
      \ref{item:norm_iter_shcond} $\shortimplies$ \ref{item:norm_iter_cond} by \ref{lem:shcond_implies_extra_inf}, and  \ref{item:norm_iter_cond} $\shortimplies$ \ref{item:stacks_iter} by \ref{thm:stacks_iterability}.
\end{proof}

We do not know whether $\DC$ is necessary above. But in
Corollary \ref{cor:choiceless_wDJ_HOD} we do present
a construction of an iteration strategy with weak Dodd-Jensen in a specific choiceless context.

Part of the methods in the paper also yield the following
result, relating to extending a normal iteration strategy to a generic
extension $V[G]$.
While the construction of $\Sigma^\stk$ only demands inflation condensation of $\Sigma$,
its proof uses strong hull condensation:
\begin{tm*}[\ref{thm:strat_with_cond_extends_to_generic_ext}]
Let $\Omega>\om$ be regular. Let $M$ be an $m$-sound premouse.
Let $\Sigma$ be an $(m,\Omega+1)$-strategy for $M$ with strong hull condensation.
Let $\PP$ be an $\Omega$-cc forcing and $G$ be $V$-generic for $\PP$.
Then in $V[G]$, there is a unique $(m,\Omega+1)$-strategy $\Sigma'$ for $M$
such that $\Sigma\sub\Sigma'$ and $\Sigma'$ has inflation condensation.
Moreover, $\Sigma'$ has strong hull condensation.
\end{tm*}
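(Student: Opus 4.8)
The plan is to work in $V$ and define $\Sigma'$ directly in $V[G]$ by a normalization/pullback procedure, then verify uniqueness and strong hull condensation separately.

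\emph{Definition of $\Sigma'$.} First I would fix a $\PP$-name for the generic object and recall that, since $\PP$ is $\Omega$-cc, every tree $\Tt$ in $V[G]$ on (an iterate of) $M$ of length $<\Omega$ is contained in $V$; indeed an entire normal tree $\Tt$ of length $<\Omega$ via the strategy-to-be-constructed has a ``name'' that is decided on a maximal antichain of size $<\Omega$, and by $\Omega$-cc the tree itself lies in $V$ together with the relevant fragment of $\PP$. The key tool is the normal realization machinery developed earlier in the paper (the operators $\Upsilon^\Sigma_\Tt$, $\Ww^\Sigma_\Tt$, and the maps $\nrsigma$): given a tree $\Tt\in V[G]$ on $M$ that looks like it is being played according to strong hull condensation, I would inflate it back to a tree $\Xx$ on $M$ in $V$, which is via $\Sigma$ because $\Sigma$ acts on trees in $V$; the realization map $\nrsigma:M^\Tt_\infty\to M^\Xx_\infty$ then transfers branch choices. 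Concretely, $\Sigma'(\Tt)$ is defined by running $\Sigma$ on the inflation $\Xx$ of $\Tt$, reading off the $\Sigma$-branch $b$ of $\Xx$, and pulling it back along $\nrsigma$ (equivalently: letting $\Sigma'(\Tt)$ be the unique cofinal branch $c$ of $\Tt$ whose inflation is $b$). One must check this is well-defined (independent of bookkeeping in the inflation) and that $\Sigma\sub\Sigma'$ — the latter because for $\Tt\in V$ the inflation is $\Tt$ itself up to the trivial inflation.

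\emph{$\Sigma'$ is a strategy with strong hull condensation.} I would verify that $\Sigma'$ wins $\Gg_m(M,\Omega+1)$ in $V[G]$: at limit stages the branch produced is cofinal and wellfounded because its inflation is via $\Sigma$ and hence leads to a wellfounded model, and $\nrsigma$ (or rather the direct-limit version of it, as in the second ``main piece'' of the $\Sigma^\stk$ construction) witnesses wellfoundedness of $M^\Tt_\infty$; at $\Omega$ one uses that $\Xx$ has length $\le\Omega$ and $\Sigma$ is an $(\Omega+1)$-strategy. Strong hull condensation of $\Sigma'$ should follow because strong hull condensation is preserved under the inflation/pullback operation: a hull embedding $\Tt'\hookrightarrow\Tt$ in $V[G]$ induces a compatible hull embedding of the inflations in $V$, so strong hull condensation of $\Sigma$ delivers the required branch agreement for $\Sigma'$. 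In particular $\Sigma'$ has inflation condensation, since that is a special case.

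\emph{Uniqueness.} Suppose $\Sigma''\in V[G]$ is another $(m,\Omega+1)$-strategy with $\Sigma\sub\Sigma''$ and inflation condensation. I would show $\Sigma''=\Sigma'$ by induction on the length of trees: given $\Tt$ via both, of limit length, inflation condensation applied to $\Sigma''$ forces the $\Sigma''$-branch of $\Tt$ to be the pullback of the $\Sigma$-branch of the inflation $\Xx$ (here is where one uses that $\Xx\in V$ and is via $\Sigma$ — the inflation is built from $\Tt$ and from $\Sigma$ on shorter trees, and by the induction hypothesis $\Sigma''$ and $\Sigma$ agree far enough to determine $\Xx$), hence equals $\Sigma'(\Tt)$. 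The one subtlety is to make sure the inflation $\Xx$ associated to $\Tt$ really is pinned down by $\Sigma$ alone (not by the ambient strategy), which is exactly what the normal realization construction guarantees.

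\emph{Main obstacle.} The hard part will be the absoluteness/complexity bookkeeping: showing that the inflation $\Xx$ of a $V[G]$-tree $\Tt$, though built using a bookkeeping process, lands in $V$ and is genuinely a tree via $\Sigma$ (as opposed to via some $V[G]$-extension of $\Sigma$), and that the direct-limit realization maps at limit stages of the inflation cohere across the $\Omega$-cc names. Equivalently, one must rule out that $G$ ``injects new information'' into branch choices — and this is precisely where strong hull condensation of $\Sigma$ (rather than mere normal iterability) is essential, since it is what makes the inflation process deterministic and hence definable from $\Sigma$ over $V$. Packaging this as a clean preservation statement for the $\Upsilon,\Ww,\nrsigma$-apparatus under $\Omega$-cc forcing is the technical core; once it is in place, uniqueness and strong hull condensation of $\Sigma'$ are formal.
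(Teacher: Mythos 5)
Your overall architecture matches the paper's: define $\Sigma'(\Tt)$ by producing a tree $\Xx\in V$ via $\Sigma$ into which $(\Tt,b)$ tree-embeds, and prove uniqueness of $\Sigma'$ by noting that the witnessing $(\Xx,c)$ is via any competitor $\Sigma''$ extending $\Sigma$, so inflation condensation of $\Sigma''$ pins down the branch. That last paragraph of yours is essentially the paper's uniqueness argument. But there are two genuine gaps, one of which rests on a false premise.

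First, your opening claim that $\Omega$-cc implies every tree of length $<\Omega$ in $V[G]$ already lies in $V$ is wrong. Take $\PP$ to be Cohen forcing and $\Omega=\om_1$: a new real codes a new countable sequence of extender choices, hence a new countable tree on $M$ not in $V$. The $\Omega$-cc bounds antichains, not the information content of names for short trees. This is not a side remark: the entire difficulty of the theorem is that $V[G]$ contains genuinely new trees, and the mechanism by which one nevertheless finds a witnessing $\Xx\in V$ is the technical core you yourself flag as "the main obstacle" but do not supply. The paper's solution is a Boolean-valued minimal simultaneous inflation: working in $V$ below a condition $p_0$ forcing $\dot\Tt$ to be via $\Sigma'$, one builds a single tree $\Xx\in V$ via $\Sigma$ whose extender at stage $\alpha$ is the least $E$ that some $q\le p_0$ forces to be the $\dot\Tt$-copy at that stage; $\Xx$ simultaneously inflates all generic interpretations of $\dot\Tt$, and the $\Omega$-cc enters in showing this process terminates successfully (it rules out reaching length $\Omega+1$ with the $\dot\Tt$-copying stages bounded). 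Without some such device your construction of $\Xx$ does not get off the ground.

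Second, the well-definedness of the branch choice --- that two different $V$-trees $\Xx,\Xx'$ via $\Sigma$ admitting tree embeddings of $(\Tt,b)$ and $(\Tt,b')$ force $b=b'$ --- is not "bookkeeping"; it is where strong hull condensation (rather than inflation condensation) is genuinely needed. The paper proves it by building, in $V$, the tree of attempts to construct a countable $\Tt$ with two distinct cofinal branches almost-tree-embedded into $\Xx$ and $\Xx'$ respectively; illfoundedness of this search tree is absolute, so a counterexample in $V[G]$ yields one in $V$, contradicting strong hull condensation there. You assert well-definedness "must be checked" but give no argument, and the naive approach (comparing the two inflations directly) only yields uniqueness under extra hypotheses. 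Until these two points are filled in, the proposal is a correct outline of the paper's strategy rather than a proof.
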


As elsewhere, the background theory for the theorem above is $\ZF$;
the definition of \emph{$\Omega$-cc} in this general context is given in
Definition \ref{dfn:Omega-cc}.

Using the preceding results we deduce the following absoluteness facts:

\begin{cor*}[\ref{cor:wDJ_absoluteness}, \ref{cor:stack_it_absoluteness}]
Let $\Omega>\om$ be regular.
Let $M$ be a countable $m$-sound premouse
and let $e$ be an enumeration of $M$ in ordertype $\omega$.
Let $\PP$ be an $\Omega$-cc forcing and $G$ be $V$-generic for $\PP$.
Then:
\begin{enumerate}[label=--]
\item $V\sats$``There is an $(m,\Omega+1)$-strategy for $M$ with weak Dodd-Jensen with respect to $e$''
iff $V[G]$ satisfies the same statement.
\item If $\Sigma$ is an \tu{(}hence the unique\tu{)} $(m,\Omega+1)$-strategy for $M$ with weak Dodd-Jensen with respect to $e$,
and $\Sigma'$ likewise in $V[G]$, then $\Sigma\sub\Sigma'$.
\item Suppose that $V$ and $V[G]$ satisfy $\DC$.
Then $V\sats$``$M$ is $(m,\Omega,\Omega+1)^*$-iterable''
iff $V[G]$ satisfies the same statement.
\end{enumerate}
\end{cor*}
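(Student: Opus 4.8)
The plan is to combine \ref{tm:wDJ_implies_cond} (weak Dodd-Jensen $\shortimplies$ strong hull condensation), the extension theorem \ref{thm:strat_with_cond_extends_to_generic_ext}, the equivalence Theorem above, and two standard facts about weak Dodd-Jensen strategies, for which I would cite \cite{wdj}: \emph{(i)} if $M$ is $(m,\Omega+1)$-iterable then for every enumeration $e'$ of $M$ in ordertype $\om$ there is an $(m,\Omega+1)$-strategy for $M$ with weak Dodd-Jensen with respect to $e'$ (this is provable in $\ZF$ for countable $M$, the hypothesis $\DC$ in the equivalence Theorem being needed only for the stacks version \ref{item:stacks_iter_wdj}); and \emph{(ii)} such a strategy is unique. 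I abbreviate ``$(m,\Omega+1)$-strategy for $M$ with weak Dodd-Jensen with respect to $e$'' to ``wDJ strategy''. For the first bullet: if $V\sats$``there is a wDJ strategy $\Sigma$'', then by \ref{tm:wDJ_implies_cond} $\Sigma$ has strong hull condensation, so by \ref{thm:strat_with_cond_extends_to_generic_ext} it extends in $V[G]$ to an $(m,\Omega+1)$-strategy; hence $M$ is $(m,\Omega+1)$-iterable in $V[G]$, and \emph{(i)} applied in $V[G]$ gives a wDJ strategy there. Conversely, if $\Sigma'\in V[G]$ is a wDJ strategy, it has strong hull condensation in $V[G]$ by \ref{tm:wDJ_implies_cond}, so by the Key Lemma below $\Sigma'\rest V$ --- the trees of $V$ via $\Sigma'$, regarded as a play function --- is an $(m,\Omega+1)$-strategy for $M$ lying in $V$; hence $M$ is $(m,\Omega+1)$-iterable in $V$, and \emph{(i)} applied in $V$ gives a wDJ strategy there.

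For the second bullet, let $\Sigma$ and $\Sigma'$ be the (unique, by \emph{(ii)}) wDJ strategies of $V$ and of $V[G]$. Again $\Sigma'$ has strong hull condensation in $V[G]$, so by the Key Lemma $\Sigma'\rest V$ is an $(m,\Omega+1)$-strategy for $M$ with strong hull condensation lying in $V$. The clauses ``$b^\Tt$ does not drop'' and ``$i^\Tt\leq_e\pi$'' (for near $m$-embeddings $\pi$) are absolute between $V$ and $V[G]$, and any $V$-tree via $\Sigma'\rest V$ and any near $m$-embedding in $V$ remain such in $V[G]$; hence $\Sigma'\rest V$ is a wDJ strategy \emph{in $V$}, so $\Sigma'\rest V=\Sigma$ by \emph{(ii)}, and in particular $\Sigma\sub\Sigma'$.

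For the third bullet, assume $\DC$ holds in $V$ and in $V[G]$. In either model, $M$ being countable, \emph{(i)} and \emph{(ii)} give that $M$ is $(m,\Omega+1)$-iterable iff there is a wDJ strategy; and the equivalence Theorem above (items \ref{item:stacks_iter} and \ref{item:norm_iter_wdj}, using $\DC$) gives that $M$ is $(m,\Omega,\Omega+1)^*$-iterable iff there is an $(m,\Omega+1)$-strategy for $M$ with weak Dodd-Jensen --- equivalently, by \emph{(i)}, one with respect to $e$. So $M$ is $(m,\Omega,\Omega+1)^*$-iterable in $V$ iff there is a wDJ strategy in $V$, iff (by the first bullet) there is one in $V[G]$, iff $M$ is $(m,\Omega,\Omega+1)^*$-iterable in $V[G]$.

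The main obstacle is the Key Lemma: if $\PP$ is $\Omega$-cc, $G$ is $V$-generic for $\PP$, and $\Lambda\in V[G]$ is an $(m,\Omega+1)$-strategy for $M$ with strong hull condensation, then $\Lambda\rest V\in V$ and is an $(m,\Omega+1)$-strategy for $M$ with strong hull condensation. Inheritance of strong hull condensation, and wellfoundedness of branch models, are immediate (wellfoundedness being absolute); the work is to show $\Lambda\rest V$ is \emph{total} on $V$-trees (i.e.\ $\Lambda(\Tt)\in V$ whenever $\Tt\in V$ is via $\Lambda$ and $\lh(\Tt)\le\Omega$ is a limit) and that it \emph{lies in $V$}. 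I expect this to follow from the methods of the proof of \ref{thm:strat_with_cond_extends_to_generic_ext}; together with that theorem it should say that the map $\Sigma\mapsto\Sigma'$ there is a bijection between the $(m,\Omega+1)$-strategies for $M$ with strong hull condensation in $V$ and those in $V[G]$, with inverse $\Lambda\mapsto\Lambda\rest V$. Concretely, the $\Omega$-cc should let one capture, by an antichain argument over $\PP$-names, the branch $\Lambda$ assigns to a given $V$-tree, while strong hull condensation should force that branch to be the unique one consistent with the already-constructed $V$-fragment of $\Lambda$, so that $\Lambda\rest V$ is definable over $V$.
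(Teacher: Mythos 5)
Your proposal has two genuine gaps, one in the auxiliary facts it leans on and one in the ``Key Lemma'' that carries the whole argument.

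First, your fact \emph{(i)} --- that for countable $M$, $(m,\Omega+1)$-iterability yields a weak Dodd-Jensen strategy, ``provable in $\ZF$'' --- is not available. The standard construction of a weak DJ strategy from \cite{wdj} uses $\DC$ (and in fact uses iterability for stacks, not just normal iterability); this is exactly why the paper's equivalence theorem flags $\DC$ for the implication into weak DJ, and why \S\ref{subsec:weak_DJ_without_DC} is devoted to producing weak DJ strategies without $\DC$ under extra hypotheses on $\Omega$ and with strong hull condensation already in hand. Your first bullet invokes \emph{(i)} in both directions, so as written it does not go through in $\ZF$. The fix is to avoid re-constructing a weak DJ strategy altogether: for the forward direction, part \ref{item:Gamma,Gamma'_DJ} of Theorem \ref{thm:strat_with_cond_extends_to_generic_ext} says directly that the canonical extension $\Sigma'$ of $\Sigma$ \emph{has} weak DJ with respect to $e$ in $V[G]$ (this also immediately gives $\Sigma\sub\Sigma'$ for the second bullet); for the converse, once $\Sigma'\rest V\in V$ is known, $\Sigma'\rest V$ inherits weak DJ in $V$, since any counterexample in $V$ would be one for $\Sigma'$ in $V[G]$.

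Second, and more seriously, your Key Lemma --- that $\Lambda\rest V\in V$ for $\Lambda\in V[G]$ with strong hull condensation --- is precisely the hard part, and the mechanism you sketch (an antichain argument over $\PP$-names, with strong hull condensation forcing the branch ``to be the unique one consistent with the already-constructed $V$-fragment'') is not how the paper does it and does not obviously work. If $p$ and $q$ force different branches $\dot b,\dot c$ through a $V$-tree $\Tt$, the two candidates live in different generic extensions, and strong hull condensation gives no tree in $V$ into which both $(\Tt,b)$ and $(\Tt,c)$ embed --- the uniqueness argument in the proof of Theorem \ref{thm:strat_with_cond_extends_to_generic_ext} requires witnessing trees that are already via a $V$-strategy, which is exactly what is not yet available here. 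The paper instead proves $\Sigma'\rest V\in V$ by a Boolean-valued comparison of the generic phalanxes $\Phi(\Tt,\dot b)$ over $\PP\cross\PP$, using the weak Dodd-Jensen property (not merely strong hull condensation) to control the comparison, together with a delicate $\Omega$-cc termination argument tracking hulls of $(\Omega^+)^{M^{\dot\Uu}_\Omega}$. This comparison idea is missing from your proposal; without it the Key Lemma, and hence the converse directions of all three bullets, is unproved. (Note also that the paper does not know whether the restriction result holds for strategies with only strong hull or inflation condensation; it is stated and proved only for weak DJ strategies, which is what is needed here.)
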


We expect that given appropriate condensation properties for $\Sigma$, one should be able to deduce
nice condensation properties for $\Sigma^\stk$. We prove one such result here.
\emph{Plus-strong hull condensation},
defined in \ref{dfn:plus-strong_hc}, is a slight technical strengthening of strong hull condensation,
and \emph{normal pullback consistency}, defined in \ref{dfn:npc},
is just pullback consistency for the normal strategy given by pullback
under iteration maps which do not drop in model or degree.

\begin{tm*}[\ref{thm:Sigma_shc_implies_stacks_npc}]
 Let $\Omega>\om$ be regular,
 and let $\Sigma$ be an $(m,\Omega+1)$-iteration strategy
 with plus-strong hull condensation.
Then $\Sigma^\stk$ is normally pullback consistent.
\end{tm*}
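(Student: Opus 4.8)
The plan is to peel the definition of $\Sigma^\stk$ back to the normal-realization data from which it is built in the proof of Theorem~\ref{thm:stacks_iterability}, and thereby reduce normal pullback consistency of $\Sigma^\stk$ to a single coherence property of $\Sigma$ relative to its embedding normalizations $\Upsilon^\Sigma_\Xx$; the latter is exactly what plus-strong hull condensation buys. Unwinding Definition~\ref{dfn:npc}, we must show: whenever $\Ttvec$ is an $m$-maximal stack via $\Sigma^\stk$ with last model $N=M^\Ttvec_\infty$ and the stack iteration map $i^\Ttvec\colon M\to N$ does not drop in model or degree, the normal strategy $\Psi_N$ for $N$ induced by the tail $\Sigma^\stk_\Ttvec$ satisfies $(\Psi_N)^{i^\Ttvec}=\Sigma$, where $(\cdot)^{i^\Ttvec}$ is pullback under $i^\Ttvec$ and $\Sigma$ is the normal strategy with which $\Sigma^\stk$ agrees on normal trees on $M$. (If the formulation in \ref{dfn:npc} is instead route-independence of $\Psi_N$ together with coherence along nondropping iterates $N\to N'$, the same argument applies once $\Psi_N$ is identified with the pullback described next.)

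By construction, attached to $\Ttvec$ there is a normal tree $\Xx=\Xx(\Ttvec)$ on $M$ via $\Sigma$ — the normal realization of $\Ttvec$ — a realization embedding $\sigma=\sigma^\Ttvec\colon M^\Ttvec_\infty\to M^\Xx_\infty$, and, by definition, $\Psi_N=(\Upsilon^\Sigma_\Xx)^\sigma$. The commutativity built into the realization construction (through the pieces $\Ww^\Sigma_\Tt$, $\nrsigma$, and the natural limits at limit stages) yields that if $i^\Ttvec$ does not drop then neither does $b^\Xx$, and $\sigma\circ i^\Ttvec=i^\Xx$, the $\Xx$-iteration map. Since $(\Lambda^\sigma)^{i^\Ttvec}=\Lambda^{\sigma\circ i^\Ttvec}=\Lambda^{i^\Xx}$ for any normal strategy $\Lambda$ on $M^\Xx_\infty$, the target equation reduces to
\[
(\Upsilon^\Sigma_\Xx)^{i^\Xx}=\Sigma \qquad\text{for every successor-length normal }\Xx\text{ via }\Sigma\text{ with }b^\Xx\text{ nondropping.}
\]
This reduction should be carried out by recursion on the structure of $\Ttvec$: peel off one normal component at a time, using that a tail of the form $\Upsilon^\Sigma_{\Tt,\Uu}$ is the $\nrsigma$-pullback of $\Upsilon^\Sigma_\Xx$, and at limit stages use continuity of the direct limits $M^\Ttvec_\infty=\dirlim_\eta M^{\Ttvec\rest\eta}_\infty$ and $M^\Xx_\infty=\dirlim_\eta M^{\Xx(\Ttvec\rest\eta)}_\infty$ together with coherence of the realization embeddings; in all cases the crux is the displayed equation.

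To prove the displayed equation, fix such an $\Xx$ with last model $N$ and $i^\Xx\colon M\to N$, let $\Tt$ be a normal tree on $M$, and let $\Tt'=i^\Xx\Tt$ be its copy up via $i^\Xx$ (a normal tree on $N$). We must see that $\Tt$ is via $\Sigma$ iff $\Tt'$ is via $\Upsilon^\Sigma_\Xx$, i.e.\ iff $\Ww^\Sigma_\Xx(\Tt')$ is via $\Sigma$. The point is that $\Tt'$ uses the images $i^\Xx(E^\Tt_\alpha)$ of the extenders of $\Tt$, so the embedding normalization of the stack $(\Xx,\Tt')$ re-inserts the $E^\Tt_\alpha$ into $\Xx$ and produces a normal tree $\Zz$ on $M$ which is an $\Xx$-inflation by $\Tt$ in the sense of Definition~\ref{dfn:inflation_condensation}, whose ``$\Tt$-part'' is a copy of $\Tt$; and one verifies that $\Ww^\Sigma_\Xx(\Tt')$ is (a copy of) $\Zz$. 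Now plus-strong hull condensation of $\Sigma$ (Definition~\ref{dfn:plus-strong_hc}) gives both directions: if $\Tt$ is via $\Sigma$ then $\Zz$, being a $\Sigma$-inflation of $\Xx$, is via $\Sigma$; conversely, if $\Zz$ is via $\Sigma$ then a hull-condensation argument, reading off the $\Tt$-part of $\Zz$ through the associated $+$-elementary copy maps, shows $\Tt$ is via $\Sigma$.

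The main obstacle is making this last step precise in the Mitchell--Steel-indexed case, where the ``plus'' is essential: embedding normalization there produces copy and normalization maps that are only elementary in the ``$+$'' sense — they can move ``$+1$''-type extenders and involve $F(\nu)$-type phenomena — so ordinary strong hull condensation does not suffice to transfer $\Sigma$-membership through $\Ww^\Sigma_\Xx(\cdot)$, and one must track the $+$-structure of the inflation $\Zz$ carefully, which is precisely what the definition of plus-strong hull condensation is arranged to handle. A secondary obstacle is the bookkeeping in the transfinite recursion on $\Ttvec$: at limit stages one must check that the realization embeddings $\sigma^{\Ttvec\rest\eta}$ cohere into $\sigma^\Ttvec$ and that the $i^{\Xx(\Ttvec\rest\eta)}$ converge compatibly to $i^{\Xx(\Ttvec)}$, so that the stage-$\eta$ instances of the reduction and of the displayed equation combine correctly — this is the ``sufficiently commutative fashion'' of the $\Sigma^\stk$ construction, now invoked with the pullback-consistency conclusion in view.
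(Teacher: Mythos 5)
Your high-level reduction matches the paper's: reduce to a single normal tree $\Tt$ via $\Sigma$ with $b^\Tt$ non-dropping, take $\bar{\Uu}$ via the pullback strategy $\Sigma^{\mathrm{nm}}_{\leftarrow\Tt}$, set $\Uu=i^\Tt\bar{\Uu}$ and $\Xx=\Ww^\Sigma_\Tt(\Uu)$, and then argue that $\Xx$ ``absorbs'' $\bar{\Uu}$ in a way that plus-strong hull condensation converts to ``$\bar{\Uu}$ is via $\Sigma$''.  (The ``iff'' you set up is slight overkill: one direction is automatic since $\Ww^\Sigma_\Xx(\cdot)$ produces trees via $\Sigma$ by definition, and only the other direction is needed because both strategies are total.)  So the overall architecture is right.

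However, there is a genuine gap at the heart of the argument: you have misidentified what the ``plus'' in plus-strong hull condensation is for.  You write that the plus is essential ``in the Mitchell--Steel-indexed case'' because the normalization maps ``can move `$+1$'-type extenders and involve $F(\nu)$-type phenomena''.  This is not correct, and following this lead would derail the proof.  The plus has nothing to do with MS-specific fine structure.  The issue is combinatorial and indexing-independent: the naive candidate for a tree embedding $\Pi\colon\bar{\Uu}\hookrightarrow\Xx$ — the one taking $\gamma_\alpha=\lambda^\alpha$ and $\delta_\alpha=\zeta^\alpha$ from the inflation $\Tt\inflatearrow\Xx$ — can fail to be a tree embedding at all, because the requirement $\gamma_\alpha\leq^\Xx\delta_\alpha$ can fail.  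This failure is caused by superstrong-type extenders producing a non-monotone index pattern $\lambda(E^\Xx_\xi)<\In(E^\Xx_{\xi+1})<\In(E^\Xx_\xi)$, and since in $\lambda$-indexing \emph{every} extender on the sequence is of superstrong type, the problem is if anything \emph{more} pervasive there, not less.  The paper's remedy is to replace $\Xx$ by a corresponding \emph{essentially} $\udash m$-maximal tree $\wt{\Xx}$ (allowing this limited index-inversion) and to exhibit an \emph{essential} tree embedding $\bar{\Uu}\hookrightarrow_{\mathrm{ess}}\wt{\Xx}$ with $I_\alpha=[\lambda^\alpha,\zeta^\alpha]_{\wt{\Xx}}$.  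Plus-strong hull condensation is precisely the strengthening of strong hull condensation that applies to essential tree embeddings into essentially maximal trees; ordinary strong hull condensation would not suffice here, for either indexing.  Your ``$+$-elementary copy map'' account leaves this actual obstacle and its cure untouched.

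Relatedly, your proposal leaves the hard part entirely schematic.  Saying that $\Zz$ ``is an $\Xx$-inflation by $\Tt$'' and that one ``reads off the $\Tt$-part'' does not give a construction.  The work of the paper is to define the candidate $\Pi$ explicitly (using the $\lambda^\alpha,\zeta^\alpha$ from $\Tt\inflatearrow\Xx$, and inserting auxiliary indices $(\zeta,0)$ in $\wt{\Xx}$ exactly where superstrong extenders force an index-inversion), and then to prove by induction on $\eta<\lh(\bar{\Uu})$ a simultaneous package of facts: that $\Pi\rest(\eta+1)$ is an essential tree embedding, that $\lambda^\alpha\leq^{\wt{\Xx}}\zeta^\alpha$ with the appropriate branch/degree preservation, that $\lambda^\eta\in b^{\Xx^\eta}$ with no drop above it, and the commutativity diagrams relating $\nrsigma_\eta$, $\varphi_\eta$, $\pi_\eta$ and the relevant iteration maps.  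Without identifying that this is an induction on a concrete essential tree embedding (not a generic ``hull-condensation argument''), and without the correct understanding of where the plus is needed, the proof cannot be completed from your outline.
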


\begin{ques}\label{ques:some_questions}
Our results suggest the following questions:
\begin{enumerate}[label=--]
 \item
If $\Omega>\om$ is regular,
does $(n,\Omega+1)$-iterability imply $(n,\Omega,\Omega+1)^*$-iterability, at least for countable premice?
\item If $\Omega>\om_1$ is regular and $M$ is uncountable and $(n,\Omega,\Omega+1)^*$-iterable, then does $M$ have an $(n,\om_1+1)$-strategy with inflation condensation?
\end{enumerate}
\end{ques}

Why consider the methods in this paper? The author's initial motivation for
working on these ideas was
toward proving self-iterability facts in mice (particularly, non-tame mice).
This involves an extension of the methods of \cite{sile} (that paper only
applies to tame mice), using inflation condensation and arguments
related to those in the proof of Theorem
\ref{thm:strat_with_cond_extends_to_generic_ext}. We do not focus
on this method in detail in this paper, but it is
sketched in Remark \ref{rem:self-it}. The second motivation was basically
in wanting to understand the connection between normal iterability
and iterability for stacks, and their role in the proofs of the standard
fine structural properties of mice (such as solidity, etc).
The first part of the author's work on this appeared in
\cite{premouse_inheriting},
and a full proof of the fine structural properties from normal iterability
can be see in the preprint \cite{fsfni}. That proof relies heavily on Theorem
\ref{thm:stacks_of_finite_trees}. Looking forward, the key role of direct limit
systems of mice in the analysis of the $\HOD$ of determinacy models and in the
theory of Varsovian models of mice, also means that normalization can give new
information about those direct limit models $M_\infty$. In fact, the
unpublished work of Steel and the author on full normalization for infinite
stacks (which adapts  results of this paper and of \cite{str_comparison})
gives that such models $M_\infty$ are in fact typically a normal iterate of some
base mouse $M$ (not just embedded into a normal iterate).
This could be useful in understanding those models. The methods
are also useful generally in the Varsovian model analysis to appear in
\cite{vm2}. Finally, Steel's work \cite{str_comparison} on comparison of
iteration strategies (see below) provides a clear motivation for
understanding the techniques.

Other people (including Mitchell, Steel, Neeman, Sargsyan, Fuchs, Schindler,
Jensen and
Siskind) have worked on
aspects of normal realization;
for further discussion see the introduction
to \cite{str_comparison}.
Around the same time the author started this work, John Steel was working
on related calculations, as a component of \cite{str_comparison}.
Steel presented his work on normal realization (which he calls \emph{normalization}) for finite stacks of infinite
trees, at the 3rd M\"unster conference on inner
model theory, in July 2015, which the author attended.
Part\footnote{The work done prior to the M\"unster conference comprises basically of inflation $\Tt\inflatearrow\Xx$ for arbitrary normal trees $\Tt$, the notion of inflation condensation,
genericity inflation for MS-indexing, and normal realization of stacks of the form $(\Tt,\Uu)$ where $\Tt$ is
normal of finite length and $\Uu$ is normal of arbitrary length.} of the work
in this paper was done by the
author prior to being aware of Steel's work, and the remainder
afterward.\footnote{Having earlier failed to understand infinite stacks,
but motivated by Steel's suggestion during the M\"unster conference that one should be able to extend normal realization to them,
the main ideas for that extension were determined by the author during the
conference,
and details finalized shortly thereafter. So by that time, the
key material from
\S\S\ref{sec:inflation},\ref{sec:inf_comm},\ref{sec:factor_tree},
\ref{sec:normal_realization} had been developed,
and also \S\ref{sec:min_inf} excluding genericity iteration for
$\lambda$-indexing.
The main ideas for the proof of
Theorem \ref{thm:strat_with_cond_extends_to_generic_ext}
were found by the author in mid 2016, and some details corrected in August 2018.
Theorem \ref{thm:Sigma_shc_implies_stacks_npc} also came in
August 2018.}
Our approach is also different, most importantly in that we have different axiomatic starting points, and different goals.
In this paper we start with a normal iteration strategy with inflation condensation, and construct an iteration strategy for stacks from this.
Steel starts more or less with an iteration strategy for stacks, demanding
certain properties from this strategy,
and uses these toward his strategy comparison.
The notation and terminology we use is  different from Steel's (as we have not
attempted to align it with his);
this also reflects a difference in how we approach the details of normal realization.
However, many of the basic calculations and observations for dealing with finite
stacks are the same.
Around the same time we developed the methods for infinite stacks,
Steel also worked out representative cases for a somewhat different\footnote{In \S\ref{subsubsec:limit_length_stacks},
for limit $\eta$, the branches of the tree $\Yy_\eta$ are determined directly by
the given normal iteration strategy.
Steel's approach appeared to the author to be somewhat more constructive, with
branches of $\Yy_\eta$ being determined instead by
the trees $\Yy_\alpha$ for $\alpha<\eta$ and maps between these.
However, the author has not gone through the details.}
 approach to this problem. Some time after this, the authors discussed the
 two approaches together.
But
\cite{str_comparison} does not deal with infinite stacks.

Also from around this time, Ronald Jensen also developed normal realization of
finite stacks in the context of $\Sigma^*$-fine structure.
The author sent him a draft version of the present paper containing the main arguments at the end of 2017,
and Jensen then adapted the work contained here
 to infinite stacks in the $\Sigma^*$ context. His work is available in
handwritten form in \cite{jensen_norm}, and in the forthcoming
\cite{jensen_book}.

 The author would like to thank Cody Dance and Jared Holshouser for a
conversation on the topic, in roughly December 2014, during which the author
first started to consider it seriously,
and also John Steel for several conversations on the topic since July 2015.

The paper proceeds as follows.
In \S\ref{subsec:terminology} below we give a summary of basic terminology and
notation.
The results of the paper hold for iteration strategies for Mitchell-Steel (MS-)indexed and $\lambda$-indexed premice,
and many of the results hold for a fairly broad class of coarse structures, \emph{weak coarse premice} (\emph{wcpm}s).
However, iteration trees on MS-indexed premice (formed by the standard rules) are somewhat inconvenient for the main arguments.
So in \S\ref{sec:r-m} we discuss a reorganization of such iteration trees,
which allows us to treat MS-indexed and $\lambda$-indexed premice in a simpler and uniform manner
(except that then for various results we also need to give a short argument translating
between these two forms of iteration trees and corresponding strategies).
The reader who only wants to think about $\lambda$-indexed premice can safely skip this section.
In \S\ref{sec:wcpm} we define wcpms and iteration strategies for them.

The main content of the paper begins in \S\ref{sec:inflation}.
Here we introduce the key notions of the paper: \emph{tree embedding}
and \emph{inflation},
leading to \emph{inflation condensation} and \emph{strong hull condensation}.
A tree embedding $\Pi:\Tt\hookrightarrow\Xx$ embeds the structure of $\Tt$
(tree order, models and extenders) into the structure of $\Xx$ in a certain manner,
but with a key difference to the hulls of trees in the sense of \cite[\S1.6]{tale_hybrid}: Each node $\alpha<\lh(\Tt)$
is associated to a closed \emph{$\Xx$-interval} $[\gamma_\alpha,\delta_\alpha]_\Xx$,
and $M^\Tt_\alpha$ is embedded into $M^\Xx_{\gamma_\alpha}$,
and  $E^\Tt_\alpha$ is embedded into $E^\Xx_{\delta_\alpha}$, but maybe $\gamma_\alpha<\delta_\alpha$.
An inflation of $\Tt$ is an iteration tree $\Xx$
in which each extender $E$ used in $\Xx$ is considered as either \emph{copied} from $\Tt$
or as \emph{$\Tt$-inflationary}. While building an inflation $\Xx$ we keep track of various tree embeddings
from initial segments of $\Tt$ into $\Xx$.
If $E^\Xx_\delta$ is copied from $\Tt$,
then $\delta=\delta_\alpha$ for one of these tree embeddings $\Tt\rest(\alpha+2)\hookrightarrow\Xx$.
The tree embeddings are ``stretched'' by the $\Tt$-inflationary extenders used in $\Xx$.
We also introduce a lot of notation which will be needed throughout.

In \S\ref{sec:min_inf} we describe techniques analogous to comparison of mice
and genericity iteration of mice, but with mice replaced by iteration trees via a strategy
with inflation condensation; these are called \emph{comparison inflation}
and \emph{genericity inflation} respectively. The comparison technique is key
to our main results. We don't actually use the genericity inflation technique in the paper,
but it is natural and seems useful. We also describe in Theorem
\ref{tm:lambda_gen_it}
how genericity iteration for $\lambda$-indexed mice works in general.

In \S\ref{sec:inf_comm} we study the commutativity which results
when we have three iteration trees $\Xx_0$, $\Xx_1$ and $\Xx_2$,
and $\Xx_{i+1}$ is an inflation of $\Xx_i$ for $i=0,1$ (and given that
$\Xx_1$ is an \emph{$\Xx_0$-terminal} inflation of $\Xx_0$).
We show that in this situation, $\Xx_2$ is an inflation of $\Xx_0$,
and ``everything commutes'' in a natural sense. This result is essential in
our analysis of infinite stacks of iteration trees in the construction of $\Sigma^\stk$;
there we will deal with infinite sequences $\left<\Xx_\alpha\right>_{\alpha<\eta}$
in which
$\Xx_\beta$ is an inflation of $\Xx_\alpha$ for each $\alpha<\beta<\eta$.

In \S\ref{sec:gen_abs_mice}
we prove Theorem \ref{thm:strat_with_cond_extends_to_generic_ext}, on extending iteration strategies with strong hull condensation
to generic extensions.

Let $\Xx$ be an inflation of $\Tt$. With the definitions above,
one's focus tends to be on the extenders of $\Xx$ which are copied from $\Tt$
as the central objects, while the $\Tt$-inflationary extenders are in the background.
In \S\ref{sec:factor_tree} we give a second viewpoint which reverses this.
Enumerating the $\Tt$-inflationary extenders as $\left<E^\Xx_{\zeta^\alpha}\right>_{\alpha+1<\iota}$,
we define a natural \emph{factor tree} $<^{\Xx/\Tt}$,
which is an iteration tree order on $\iota$. These things arise in the construction of $\Sigma^\stk$.
Here when forming a tree $\Uu$ on $M^\Tt_\infty$,
and the associated normal tree $\Xx=\Ww^\Sigma_\Tt(\Uu)$, then $\Xx$ will be an inflation of $\Tt$,
and we will have ${<^\Uu}={<^{\Xx/\Tt}}$, and $E^\Uu_\alpha$
will embed into $E^\Xx_{\zeta^\alpha}$.
We also introduce more bookkeeping which will be needed
in the construction of $\Sigma^\stk$.

In \S\ref{sec:normal_realization} we give the construction of the stacks strategy $\Sigma^\stk$ and related proofs.

Finally in \S\ref{sec:npc} we establish some extra properties of $\Sigma^\stk$,
given certain extra properties hold of $\Sigma$. The main result here is Theorem \ref{thm:Sigma_shc_implies_stacks_npc},
on normal pullback consistency.
We also use our results to give a construction of an iteration strategy with weak Dodd-Jensen
in a certain choiceless context.

\subsection{Terminology}\label{subsec:terminology}

See the end for an index of definitions.
We give a summary here of the  basic terminology and notation we use.

\subsubsection{General}
$\univ{M}$\index{$\univ{M}$}\index{universe (of a structure)} denotes the
universe of structure $M$.

\subsubsection{Extenders and ultrapowers}\index{extender}
Given an extender $E$ over $M$, $\Ult(M,E)$\index{$\Ult$}\index{ultrapower}
denotes the ultrapower, formed from functions in $M$,
$i^M_E:M\to\Ult(M,E)$\index{$i^M_E$,$i^{M,m}_E$,$i_E$}
denotes the ultrapower embedding,
and if $M$ is an $n$-sound premouse and $E$ is short, weakly amenable and
$\crit(E)<\rho_n^M$,
then
$i^{M,n}_E:M\to\Ult_n(M,E)$
denotes the degree $n$ ultrapower embedding.
We may write $i_E$ if $M$ is not emphasized.
We write $\crit(E)=\kappa_E$\index{$\crit(E)$}\index{$\kappa_E$} for the
critical point of $E$,
$\lambda(E)=\lambda_E$\index{$\lambda(E)$} for $i_E(\crit(E))$,
$\lh(E)$\index{$\lh(E)$} for the \dfnemph{length}
of $E$ or \dfnemph{support} of $E$
(we take all extenders to be a subset of
$\pow([\theta]^{<\om})\cross[\lambda]^{<\om}$ for some ordinals
$\theta,\lambda$,
and $\lh(E)$ is the least such $\lambda$),
and $\nu(E)=\nu_E$\index{$\nu(E)$} for the strict sup of generators of $E$,
and when $E$ is used in an iteration tree $\Tt$, $\nutilde(E)$
denotes the exchange ordinal associated to $E$; this is explained further
below.\footnote{The
notation should probably literally be $\nutilde^\Tt(E)$, but $\Tt$ will be
known
from context.}
If $E$ is an extender over $V$,
also write $\varrho(E)$\index{$\varrho(E)$} for the
strength of $E$ (the largest $\alpha$ such that $V_\alpha\sub\Ult(V,E)$),
 Say an extender $E$ over $V$ is \dfnemph{suitable}\index{suitable (extender)}
iff $E$ is short and
$\lh(E)=\nu(E)=\varrho(E)$.
 So a suitable extender is coded by a subset of $2^{\crit(E)}+\varrho(E)$.

 \subsubsection{Premice}

 The term \emph{wcpm} (\emph{weak coarse premouse}) is defined in
\S\ref{sec:wcpm}.

The unqualified term \emph{premouse}\index{premouse} means either as in
\cite{zeman},
or as in \cite{outline},
except that we allow extenders of superstrong type to appear on the
extender sequence
(see \cite{operator_mice} (2.43 and 2.44 of preprint v2 on arxiv.org)
regarding this);
here given a premouse $N$ with active extender $F$,
we say that $F$ is of
\dfnemph{superstrong type}\index{superstrong type}
if $\lambda(F)$ is the largest cardinal of $N$.
The premice of \cite{zeman} we call
\dfnemph{$\lambda$-indexed},\index{$\lambda$-indexed}
and those of \cite{outline}
\dfnemph{MS-indexed}.\index{MS-indexed}\index{indexing}
So if $M$ has $\lambda$-indexing then every extender in the extender
sequence of $M$
is of superstrong type.
The \dfnemph{ISC}\index{ISC} (initial segment condition) is then as in
\cite{zeman} or
\cite{outline} respectively.

Let $M,N$ be premice, or other similar structures.
We write $M\ins N$\index{$\ins,\pins$} iff $M$ is an initial segment of $N$,
and $M\pins N$ iff $M\ins N$ and $M\neq N$.
We write $F^M$\index{$F^M$} for the active extender of $M$,
$\es^M$\index{$\es^M,\es_+^M$} denotes the extender sequence of $M$, excluding
$F^M$,
$\es_+^M$ denotes $\es^M\conc F^M$,
$M^\passive$\index{$M^\passive$} denotes the passivization of $M$ (that is, if
$M=(\J_\alpha^{\es},\es,F)$
then $M^\passive=(\J_\alpha^{\es},\es,\emptyset)$),
and given a limit ordinal $\alpha\leq\OR^N$, $N|\alpha$ denotes the $M\ins N$ such that $\OR^M=\alpha$,
and $N||\alpha$ denotes
$(N|\alpha)^\passive$.\index{$M\vert\alpha,M\vert\vert\alpha$}

If $M$ is a type 3 MS-indexed premouse,
then $M^\sq$\index{$M^\sq,M^\unsq$} denotes the squash of $M$.
If $N$ is a structure in the language of squashed premice,
then $N^\unsq$ denotes the unique such $M$ such that
$M^\sq=N$, if this exists. For other kinds of MS-indexed
premice $P$, $P^\sq=P^\unsq=P$.
(But we do not define squashing in the context
of $\udash$fine structure (\S\ref{sec:r-m})).

Given premice $M,N$ and $m,n\leq\om$ such that $M$ is $m$-sound and $N$
is $n$-sound,
we write $(M,m)\ins(N,n)$\index{$\ins,\pins$} iff either $M\pins N$ or [$M=N$
and $m\leq n$].
We write $(M,m)\pins(N,n)$ iff $(M,m)\ins(N,n)$ and $(M,m)\neq(N,n)$.
We similarly define $(M,m)\ins(N,n)$ and $(M,m)\pins(N,n)$ when $M$ is $\udash m$-sound
and $N$ is $\udash n$-sound (see \S\ref{sec:r-m}).

A \dfnemph{segmented-premouse} (\dfnemph{seg-pm})\index{segmented-premouse,
seg-pm} is a structure $N$ satisfying
all
requirements of premice (either MS-indexed or $\lambda$-indexed), except that if $F^N\neq\emptyset$ then we do not require that $N$ satisfy
the ISC (either in the sense of \cite{outline} or \cite{zeman}, as is
appropriate); we still require in this case that $N$ has a largest cardinal
$\delta$ and
\[ \Ult(N,F^N)|(\delta^+)^{\Ult(N,F^N)}=N||\OR^N,\]
and all proper segments of seg-pms must satisfy the ISC.
In particular, if $N$ is a premouse then $N$ is a
seg-pm, and if $N$ is a seg-pm then $N^\passive$ is a premouse. If $N$ is
active then $\In(F^N)$\index{$\In(E)$} (for \emph{index}) denotes $\OR^N$. We
also use ``$\In$'' for an analogous role
in connection with coarse structures; see \S\ref{sec:wcpm}.
Given a seg-pm $M$ with largest cardinal $\delta$, $\lgcd(M)$\index{$\lgcd(M)$}
denotes $\delta$.
We extend the terminology and notation for premice mentioned above
to seg-pms in the natural way.

\subsubsection{Fine structure}

We officially follow Mitchell-Steel fine structure, as simplified
in \cite{V=HODX}; however, the paper is predominantly
not particularly dependent on which version of fine structure one uses.
For ``$\udash$'' fine structure, see \S\ref{sec:r-m}.
For the notation \index{$\Hull$}\index{$\cHull$}$\Hull^M_m$ and $\cHull^M_m$
see \cite[\S1.1.3]{premouse_inheriting}.

\subsubsection{Iteration trees}

Beyond what is described here,
there is also terminology specific to iteration trees
introduced in \S\ref{subsec:it_tree_terms}.
We formally understand \emph{iteration trees} on premice and seg-pms basically
as defined in \cite{fsit}, and in particular, of the form
\[
\Tt=(<^\Tt,\dropset^\Tt,\deg^\Tt,\left<M^\Tt_\alpha\right>_{\alpha<\lambda},
\left<M^{*\Tt}_{\alpha+1},E^\Tt_\alpha\right>_{
\alpha+1<\lambda})\]
where:
\begin{enumerate}[label=--]
 \item $\lh(\Tt)=\lambda\in[1,\OR)$,\index{$\lh(\Tt)$}
 \item $<^\Tt$ is the associated tree order on
$\lambda$,\index{$<^\Tt$}
 \item $\dropset^\Tt$ is the set of all $\alpha+1<\lambda$ such that $\Tt$ drops
at $\alpha+1$,\index{$\dropset^\Tt$}
\item $\deg^\Tt:\lambda\to\om+1$\index{$\deg^\Tt$}
is a \emph{total}\footnote{The requirement of totality might differ from \cite{fsit},
depending on the reader's interpretation.} function,
\item  $M^{*\Tt}_{\alpha+1}$\index{$M^{*\Tt}_\alpha$} is the model to which
$E^\Tt_\alpha$ applies in
forming $M^\Tt_{\alpha+1}$.
\end{enumerate}
We take iteration trees on other structures with analogous formal .
We also use the following notation
(some of which is only relevant to trees on seg-pms):
\begin{enumerate}[label=--]
 \item  If $\alpha\leq^\Tt\beta$ then
$(\alpha,\beta]_\Tt$\index{$(\alpha,\beta]_\Tt$ etc}
denotes the half-open $<^\Tt$-interval,
and likewise for other such intervals.
\item  $\pred^\Tt(\alpha+1)$\index{$\pred^\Tt$} denotes the $<^\Tt$-predecessor
of $\alpha+1$ (so
$M^{*\Tt}_{\alpha+1}\ins M^\Tt_{\pred^\Tt(\alpha+1)}$),
\item If $\alpha<^\Tt\beta$ then
$\successor^\Tt(\alpha,\beta)$\index{$\successor^\Tt$} denotes
$\min((\alpha,\beta]_\Tt)$.
\item If $\Tt$ has successor length $\alpha+1$, then
$M^\Tt_\infty$\index{$M^\Tt_\infty$, $i^\Tt_{0\infty}$, etc} denotes
$M^\Tt_\alpha$,
and $\infty$ also denotes $\alpha$ in other related notation,
and $b^\Tt$ denotes $[0,\infty]_\Tt$, the last branch of $\Tt$.\index{$b^\Tt$}
\item
If $(\alpha,\beta]_\Tt\inter\dropset^\Tt=\emptyset$ then
$i^\Tt_{\alpha\beta}=i^\Tt_{\alpha,\beta}:M^\Tt_\alpha\to
M^\Tt_\beta$\index{$i^\Tt_{\alpha\beta},i^{*\Tt}_{\alpha\beta}$} is
the iteration map.
\item $i^{*\Tt}_{\alpha+1}:M^{*\Tt}_{\alpha+1}\to M^\Tt_{\alpha+1}$ denotes the ultrapower
map.\index{$i^\Tt_{\alpha\beta},i^{*\Tt}_{\alpha\beta}$}
\item $i^{*\Tt}_{\alpha+1,\beta}$
\index{$i^\Tt_{\alpha\beta},i^{*\Tt}_{\alpha\beta}$} denotes
$i^\Tt_{\alpha+1,\beta}\com i^{*\Tt}_{\alpha+1}$, when this exists.
\item $\dropset^\Tt_{\deg}$\index{$\dropset^\Tt_{\deg}$} denotes the set of all
$\alpha+1<\lambda$ such that $\Tt$ drops in either model or degree at
$\alpha+1$.
\item $\lh(\Tt)^-$\index{$\lh(\Tt)^-$} denotes the
set of all  $\beta$ such that $\beta+1<\lh(\Tt)$.
\item $\nutilde^\Tt_\alpha=\nutilde(E^\Tt_\alpha)$ is the exchange ordinal
associated to $E^\Tt_\alpha$; see below.
\item $\exit^\Tt_\alpha$\index{$\exit^\Tt_\alpha$} denotes
$M^\Tt_\alpha|\In(E^\Tt_\alpha)$.
\item \index{$\delta(\Tt)$} if $\Tt$ has limit length, then if $\Tt$ is
$k$-maximal (see below)
on a seg-pm, then $\delta(\Tt)$
denotes $\sup_{\alpha<\lh(\Tt)}\In(E^\Tt_\alpha)$,
and if $\Tt$ is normal on a wcpm,
then $\delta(\Tt)$ denotes
$\sup_{\alpha<\lh(\Tt)}\varrho^{M^\Tt_\alpha}(E^\Tt_\alpha)$.
\end{enumerate}

However, if $M^\Tt_0$ has MS-indexing then we can
have $\In(E^\Tt_\alpha)=\In(E^\Tt_{\alpha+1})$, because we allow superstrong
extenders in $\es_+(M^\Tt_0)$. Considering this, an iteration tree $\Tt$ is
\dfnemph{$k$-maximal}\index{maximal ($m$-maximal)}\index{$m$-maximal} iff
$\deg^\Tt(0)=k$ and $\Tt$ satisfies the requirements specified
in \cite[\S3.1]{outline} for $k$-maximality, except that as in
\cite{operator_mice} and \cite{premouse_inheriting}, we only require
$\In(E^\Tt_\alpha)\leq\In(E^\Tt_\beta)$ when  $\alpha+1<\beta+1<\lh(\Tt)$, not
$\In(E^\Tt_\alpha)<\In(E^\Tt_\beta)$.
\setcounter{footnote}{0}\footnote{See  \cite{operator_mice}
(2.43 and 2.44 in preprint v2 on arxiv.org)
and \cite[\S1.1.6]{premouse_inheriting} for further discussion. The algorithm
for comparison (by least disagreement) should also be slightly adjusted as in
one of those papers  (see Footnote \ref{ftn:comp_alg}; though in the proof of
Corollary \ref{cor:wDJ_absoluteness} we use  the conventional algorithm). In a
draft of this article on arxiv.org, \emph{$k$-maximal} was defined
inadvertently as in \cite{outline} (even though Remark 2.44 of
\cite{operator_mice} was also mentioned), which does not suffice.}
So if an
iteration tree is both $k$-maximal and $j$-maximal, then $k=j$.\footnote{The
definition of
\emph{iteration tree} $\Tt$ in \cite{outline} differs slightly from here and from
\cite{fsit}, in that $\deg^\Tt$ is not formally a component of $\Tt$. So in the terminology of
\cite{outline}, a tree can be both $k$-maximal and $j$-maximal, with $k\neq j$.} This helps a
little notationally. A \index{putative (tree)}\dfnemph{putative $k$-maximal
tree $\Tt$} is a system satisfying
the conditions of a $k$-maximal tree,
except that if $\Tt$ has length $\alpha+1$
where $\alpha$ is a limit, then we do not demand
that $[0,\alpha)_\Tt\inter\dropset^\Tt$
is finite
(so $M^\Tt_\alpha$ is well-defined iff $[0,\alpha)_\Tt\inter\dropset^\Tt$ is
finite),
and if $\Tt$ has length $\beta+1$ and $M^\Tt_\beta$
is well-defined, we do not demand that $M^\Tt_\beta$ be wellfounded.\footnote{In
a draft of this article on arxiv.org, the term \emph{putative} (tree)
precluded having infinitely many drops on a branch in the tree.}
See \S\ref{sec:r-m} for the particulars of (putative) $\udash m$-maximal trees.

Given an iteration tree $\Tt$ and $E=E^\Tt_\alpha$, we write
$\nutilde^\Tt_\alpha=\nutilde(E)$
\index{$\nutilde^\Tt_\alpha,\nutilde(E^\Tt_\alpha)$} for the exchange ordinal
associated
to $E$ with respect to $\Tt$.
So for $m$-maximal trees with $\lambda$-iteration rules on $\lambda$-indexed
premice, $\nutilde^\Tt_\alpha=\lambda(E)=\lgcd(\exit^\Tt_\alpha)$,
whereas for $m$-maximal trees with MS-iteration rules on MS-indexed premice,
$\nutilde^\Tt_\alpha=\nu(E)$.
However, we also deal with $\udash m$-maximal trees (see \S\ref{sec:r-m}), on
MS-indexed premice
or other seg-pms, where $\nu(E)\leq\nutilde^\Tt_\alpha\leq\lambda(E)$,
and strict inequalities are possible. And for coarse trees on wcpms,
$\nutilde^\Tt_\alpha=\strength(E)$  (see \S\ref{sec:wcpm}).

Given a $q$-sound premouse $Q$ where $q\leq\om$, a \dfnemph{$q$-maximal
stack}\index{stack}\index{maximal ($m$-maximal)}\index{$m$-maximal}
on $Q$
is a sequence $\Ttvec=\left<\Tt_\alpha\right>_{\alpha<\lambda}$ of iteration
trees such that
for some $\left<Q_\alpha,q_\alpha,M_\alpha,m_\alpha\right>_{\alpha<\lambda}$,
$\Tt_\alpha$ is an $m_\alpha$-maximal tree on $M_\alpha$,
$Q_0=Q$, $q_0=q$, $(M_\alpha,m_\alpha)\ins(Q_\alpha,q_\alpha)$,
and if
$\alpha+1<\lambda$ then $\Tt_\alpha$ has successor length
and $Q_{\alpha+1}=M^{\Tt_\alpha}_\infty$ and
$q_{\alpha+1}=\deg^{\Tt_\alpha}(\infty)$,
and for limit $\eta<\lambda$, for all sufficiently large $\alpha<\eta$,
$(M_\alpha,m_\alpha)=(Q_\alpha,q_\alpha)$,
$\Tt_\alpha$ does not drop on $b^{\Tt_\alpha}$, and
$Q_\eta=M^{\Ttvec\rest\eta}_\infty$ is the resulting direct limit
of the $M_\alpha$ for $\alpha<\eta$ under the iteration maps and
$q_\eta=\lim_{\alpha\to\eta}\deg^{\Tt_\alpha}(\infty)$.
If $\lambda$ is a limit, we define
$M^{\Ttvec}_\infty$\index{$M^{\Ttvec}_\infty$}\index{$\deg^\Ttvec(\infty)$} and
$\deg^\Ttvec(\infty)$ as the natural direct limits,
given that $\Tt_\alpha$ does not drop along $b^{\Tt_\alpha}$, etc, for all
sufficiently large $\alpha$. We say an \dfnemph{artificial
drop}\index{artificial (drop)} occurs
whenever $(M_\alpha,m_\alpha)\pins(Q_\alpha,q_\alpha)$.
An \dfnemph{optimal stack}\index{optimal stack} is one without artificial
drops.
A \dfnemph{putative $q$-maximal stack}\index{putative (stack)} is as above,
except that
if it has
length $\lambda=\alpha+1$,
then $\Tt_\alpha$ is only required to be a putative tree.
Likewise a (putative) $\udash q$-maximal stack on a $\udash q$-sound seg-pm.

The iteration game
$\Gg(M,m,\alpha,\beta)^*$\index{$\Gg(M,\ldots)^*$}\index{iteration strategy} of
\cite[p.~1202]{coremore},\footnote{In a draft of this paper
which appeared on the preprint server arxiv.org, \emph{stacks} were defined to
be what we call \emph{optimal stacks} here;
thus, no artificial drops were considered in that draft.
However, this is a more restrictive notion than Steel's
definitions in \cite{outline},
which do allow non-optimal stacks.
We likewise stated that $\Gg(M,m,\alpha,\beta)^*$
was defined without permitting artificial drops,
which is not consistent with \cite{coremore}.
Therefore, in that draft, we only constructed strategies
for optimal stacks, not more generally.
This oversight has now been amended,
primarily through Lemma \ref{lem:sub-optimal_reduces_to_optimal},
but also see the proof of Theorem \ref{thm:stacks_of_finite_trees}
in \S\ref{subsubsec:stacks_of_finite_trees}, and
the start of \S\ref{subsubsec:variants_partial},
and the proof of Theorem
\ref{tm:stacks_strat_inherits_DJ}.}
consists of $\lambda\leq\alpha$ many rounds, producing a putative $m$-maximal stack
$\left<\Tt_\gamma\right>_{\gamma<\lambda}$ on $M$,
with associated sequence
$\left<Q_\gamma,q_\gamma,M_\gamma,m_\gamma\right>_{\gamma<\lambda}$.
In round $\gamma$, given $(Q_\gamma,q_\gamma)$,
player I chooses $(M_\gamma,m_\gamma)\ins(Q_\gamma,q_\gamma)$,
and then the players
build the putative
tree $\Tt_\gamma$
($m_\gamma$-maximal, on $M_\gamma$),
of length $\leq\beta$.
If some model of $\Tt_\gamma$ is
ill-defined or illfounded then $\lambda=\gamma+1$ and
player I wins.
Having produced a bona fide tree $\Tt_\gamma\rest(\xi+1)$, where $\xi+1<\beta$,
player I may set $\Tt_\gamma=\Tt_\gamma\rest(\xi+1)$ and exit the round,
and then $\lambda>\gamma+1$ (so round $\gamma+1$ will be played).
If player I does not exit at any such stage $\xi+1<\beta$ and $\Tt_\gamma$ has wellfounded models
then $\lambda=\gamma+1$ and player II wins. Given a limit $\gamma\leq\alpha$, player II must ensure
that $M^{\Ttvec\rest\gamma}_\infty$ is well-defined and wellfounded;
given this, if $\gamma=\alpha$ then player II wins, whereas if $\gamma<\alpha$
then $\lambda>\gamma$ and play
continues.

For $\alpha$ a limit ordinal, the game
$\Gg(M,m,{<\alpha},\beta)^*$\index{$\Gg(M,\ldots)^*$} has the same
rules,
except that if all $\alpha$ rounds are played through with no player having yet lost, then player II wins automatically,
irrespective of whether $M^{\Ttvec}_\infty$ is well-defined or wellfounded.

We define the \dfnemph{optimal} variants of these games,
denoted $\Gg_{\mathrm{opt}}(M,\alpha,\beta)^{*}$
and $\Gg_{\mathrm{opt}}(M,m,{<\alpha},\beta)^*$,\index{$\Gg_{\mathrm{opt}}$}
with the same rules and payoffs as the games above, except that player I may
not make artificial drops. So the optimal variants are superficially easier for
player II. However, a straightforward copying argument,
given in Lemma \ref{lem:sub-optimal_reduces_to_optimal},
 which is much as in \cite[\S7]{mim},
 shows that if $\Sigma$
is a winning strategy for player II in $\Gg_{\mathrm{opt}}(M,m,\alpha,\beta)^*$,
then $\Sigma$ induces a canonical strategy $\Sigma'$
for II in $\Gg(M,m,\alpha,\beta)^*$. Thus, in this paper, our main focus
is on strategies for normal trees and for optimal stacks of normal trees.

\subsubsection{Embeddings}

For the definition of \emph{$n$-lifting} embedding see \cite[Definition
2.1]{premouse_inheriting}.\index{$m$-lifting}\index{lifting}

Let $\pi:P\to Q$ be an embedding between seg-pms.
We say that $\pi$ is \dfnemph{c-preserving} iff it is cardinal
preserving, in that $\alpha$ is a cardinal of $P$ iff $\pi(\alpha)$ is a cardinal of $Q$.
If $n=0$ or $P,Q$ are $(n-1)$-sound,
we say that $\pi$ is \dfnemph{$\pvec_n$-preserving} iff
$\pi(\pvec_n^P)=\pvec_n^Q$.
We say that $\pi$ is \dfnemph{nice $n$-lifting} iff $\pi$ is $n$-lifting,
c-preserving and $\pvec_n$-preserving.
Note that every near $n$-embedding is nice $n$-lifting.\index{c-preserving}
\index{$\pvec_n$-preserving}\index{nice}\index{preserving}

\section{$\udash m$-maximal iteration
strategies}\label{sec:r-m}

The paper will deal with a lot of copying of iteration trees,
requiring much associated bookkeeping. We deal with both kinds of premice
--
MS-indexed and $\lambda$-indexed --
and also weak coarse premice. Recall that the standard copying algorithm
does not quite work with type 3 MS-indexed premice. If we used here the standard
fix to this problem (inserting extra
extenders and slight modifications of tree order), we would need to integrate
that fix into our
bookkeeping, increasing notational and mental load.
Fortunately, there is an alternate path, which we will adopt, which in the end
allows us
to separate the type 3 problem from the current bookkeeping. In this section we
describe
this path.

Whenever
we say \emph{type $i$ premouse $M$}, where $i\in\{0,1,2,3\}$,
we mean that $M$ is MS-indexed.
Everything in the present section is trivial for $\lambda$-indexed premice,
and if the
reader is happy to ignore the existence of type 3  premice, then they
would have no problem
ignoring the present section, as long as they replace all later
instances of
``$\udash m$'' with
``$m$'', where $m\leq\om$, and as long as they imagine that all fine structural
embeddings $\pi:M\to N$ between premice are such that $\dom(\pi)=M$ (not just
$M^\sq$), and if $M$
is active then $\pi(\nu(F^M))=\nu(F^N)$.

\begin{rem}\index{$\udash$fine structure}\index{u-fine structure}
 The prefix ``$\mathrm{u}$'' stands for \emph{unsquashed}. It simply indicates
that we compute
 fine structure, ultrapowers, etc, at the unsquashed level, with the active
extender coded
 by the standard amenable predicate,
 just as is usually done for type $1$ or $2$
premice and $\lambda$-premice.
 Thus, for $\lambda$-premice and type $\leq 2$
premice, there is no
difference between
standard fine structure and ``$\mathrm{u}$'' fine structure.
 For type $3$, it represents a shift of $1$ degree of complexity in the Levy
hierarchy.
 However, because we also allow unsquashed ultrapowers, we also encounter
seg-pms for which the Initial Segment Condition fails.
\end{rem}

\begin{dfn}\index{type A, B}\index{u-fine structure}
 Let $n\leq\om$ and let $M$ be a segmented-premouse. We say that $M$ is
\dfnemph{$\uu$-$n$-sound} iff
either
\begin{enumerate}[label=\arabic*.,ref=\arabic*]
 \item\label{item:typeA} $M$ is an $n$-sound premouse not
of type 3, or
 \item\label{item:typeBn_geq_2}
 $n\geq 2$ and $M$ is an
 $(n-1)$-sound type 3  premouse,
 where
$\om-1=\om$, or
 \item\label{item:typeBn=1} $n=1$ and $M$ is active and letting $\nu=\nu(F^M)$,
there is an active
type 3 premouse $M'$ such that $\nu(F^{M'})=\nu$ and
$F^{M'}\rest\nu=F^M\rest\nu$ and letting
$\delta=\lgcd(M)$ and $U=\Ult(M',F^{M'})$, we have $M||\OR^M=U|(\delta^+)^U$, or
 \item\label{item:typeBn=0} $n=0$ and $\nu(F^M)\leq\lgcd(M)$.
\end{enumerate}

Suppose $M$ is $\uu$-$n$-sound. We say that $M$ is \dfnemph{type A} iff clause
\ref{item:typeA}
above holds; otherwise we say that $M$ is \dfnemph{type B}. If either $M$ is
type
A, or $M$ is type B
and $n\geq 2$, let $M^\pm=M$. If $M$ is type B and $n=1$ let
\index{$M^\pm$}$M^\pm=M'$, as
above. If $M$ is
type B, but not $\uu$-$n$-sound for any $n\geq 1$, then $M^\pm$ is not defined.

Let $M,N$ be $\uu$-$n$-sound segmented-premice and $\pi:M\to N$. Here the domain
and codomain of $\pi$
are literally (the universes of) $M,N$, not $M^\sq,N^\sq$.
We say that $\pi$ is a \dfnemph{\tu{(}near\tu{)}
$\uu$-$n$-embedding}\index{u-fine structure}\index{near u-$n$-embedding} iff
either:
\begin{enumerate}[label=\arabic*.,ref=\arabic*]
 \item $M,N$ are type A and $\pi$ is a (near) $n$-embedding, or
 \item $M,N$ are type B and and $n\geq 1$ and
\index{$\pi^\sq$} $\pi^\sq=\pi\rest(M^\pm)^\sq:M^\pm\to N^\pm$
 is a (near) $(n-1)$-embedding and $\pi$ is induced by $\pi^\sq$ and
$\pi(\lgcd(M))=\lgcd(N)$,\footnote{Recall
that the convention,
for a fine structural embedding
$\pi:M\to N$ between type 3 premice,
is that literally $\pi:M^\sq\to N^\sq$.} or
 \item $M,N$ are type B and $n=0$ and $\pi$ is a (near) $0$-embedding ($\pi$ is
a near
$0$-embedding iff $\pi$ is $\rSigma_1$-elementary in the language of
segmented-premice, and $\pi$
is
a $0$-embedding iff $\pi$ is a near $0$-embedding and is cofinal in $\OR^N$).
\end{enumerate}

The notion \dfnemph{$\udash n$-lifting embedding}
\index{u-fine structure} is defined by making
analogous
changes to the notion
\emph{$n$-lifting embedding} (defined in \cite{premouse_inheriting}).
\end{dfn}

\begin{dfn}\index{$\nutilde^M$}
For an active seg-pm $M$,
$\nutilde^M\eqdef\max(\nu(F^M),\lgcd(M))$.
\end{dfn}

Note that if $M,N$ are active and $\uu$-$n$-sound and $\pi:M\to N$ is a (near)
$\uu$-$n$-embedding
then
$\pi(\lgcd(M))=\lgcd(N)$ and $\pi(\nutilde^M)=\nutilde^N$.
For $\pi(\lgcd(M))=\lgcd(N)$
because $\pi$ respects the predicates for $F^M,F^N$. And if $M,N$ are type B
then
$\nu(F^M)\leq\lgcd(M)$ and $\nu(F^N)\leq\lgcd(N)$; therefore
$\pi(\nutilde^M)=\nutilde^N$.

\begin{dfn}\label{dfn:u-rho_n}\index{$\urho_n$}\index{u$\rho_n$}
For a $\uu$-$n$-sound seg-pm $M$, $\urho_n^M$ denotes $\rho$ where
either:
 \begin{enumerate}[label=--]
  \item $M$ is type A and $\rho=\rho_n^M$, or
  \item $M$ is type B and $n\geq 1$ and $\rho=\rho_{n-1}^{M^\pm}$, or
  \item $M$ is type B and $n=0$ and $\rho=\OR^M$.\qedhere
 \end{enumerate}
\end{dfn}

\begin{dfn}\index{$\Ult_{\udash n}$}
 Let $M$ be a $\uu$-$n$-sound seg-pm and let $E$ be a weakly amenable extender
such that
$\crit(E)<\urho_n^M$. Then $\Ult_{\uu\text{-}n}(M,E)=U$ where either:
\begin{enumerate}[label=\arabic*.,ref=\arabic*]
 \item\label{item:type_A_ult} $M$ is type A and $U=\Ult_n(M,E)$, or
 \item\label{item:type_B_sq_ult} $M$ is type B and $n\geq 2$ and
$U=\Ult_{n-1}(M,E)$, or
 \item\label{item:type_B_n_nosq_ult} $M$ is type B and $n\leq 1$ and
$U=\Ult(M,E)$ (so the ultrapower is direct; there is no
squashing).
\end{enumerate}
We also define\index{$i^{M,\udash n}_E$}
$i^{M,\udash n}_{E}:M\to U$, abbreviated $i^M_E$,
to be the (total) ultrapower map in cases \ref{item:type_A_ult}
and \ref{item:type_B_n_nosq_ult}, or the (total) map it induces in case
\ref{item:type_B_sq_ult}.
\end{dfn}

The following lemma is a standard calculation:

\begin{lem} Let $M,E,n$ be as above, and suppose that $U=\Ult_{\udash n}(M,E)$
is
wellfounded. Then $U$ is $\uu$-$n$-sound and $i^M_{E}$ is a
$\uu$-$n$-embedding.\end{lem}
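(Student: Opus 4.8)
The plan is to split into the three clauses defining $U=\Ult_{\udash n}(M,E)$ and, in each, reduce to the standard preservation lemma for Mitchell--Steel ultrapowers (see \cite{outline}, \cite{zeman}): if $P$ is $k$-sound, $F$ is weakly amenable with $\crit(F)<\rho_k^P$, and $\Ult_k(P,F)$ is wellfounded, then $\Ult_k(P,F)$ is $k$-sound and $i^{P,k}_F$ is a $k$-embedding. In each case there is the routine ancillary point that $U$ falls under the same clause of the definition of $\uu$-$n$-soundness as $M$: $U$ is type A iff $M$ is, and if type B then in the same subclause. This follows from elementarity of $i^M_E$, which carries over the premouse type (for passive $M$, $U$ is passive; for active $M$ one has $F^U=i^M_E(F^M)$, and the relations ``$\nu(F)<\lgcd$'', ``$\nu(F)=\lgcd$'' and ``$\nu(F)\leq\lgcd$'' are preserved). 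Note also that $\Ult_0$-style maps on premice and seg-pms are always cofinal in the ordinals (any function in the model has bounded range), so the embeddings we produce are genuine, not merely near, embeddings.

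If $M$ is type A, then $U=\Ult_n(M,E)$, and the standard lemma directly gives that $U$ is $n$-sound with $i^M_E$ an $n$-embedding; since $U$ is again not of type $3$, $U$ is $\uu$-$n$-sound of type A and $i^M_E$ is a $\uu$-$n$-embedding by definition. If $M$ is type B with $n\geq 2$, then $M^\pm=M$ is a type $3$, $(n-1)$-sound premouse, $\urho_n^M=\rho_{n-1}^{M^\sq}$, and $U=\Ult_{n-1}(M,E)$; applying the standard lemma at the squashed level, $U^\sq=\Ult_{n-1}(M^\sq,E)$ is $(n-1)$-sound with an $(n-1)$-embedding into it, $U$ is again active of type $3$ (so $U^\pm=U$), and the induced $i^M_E$, which sends $\lgcd(M)$ to $\lgcd(U)$, is thus a $\uu$-$n$-embedding.

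The two cases with $M$ type B and $n\leq 1$ involve the direct (unsquashed) ultrapower $U=\Ult(M,E)$. For $n=0$, $M$ is active with $\nu(F^M)\leq\lgcd(M)$; the direct ultrapower map $i^M_E$ is $\rSigma_1$-elementary and cofinal in $\OR^U$ (so a $0$-embedding), and $\nu(F^U)=i^M_E(\nu(F^M))\leq\lgcd(U)$ by $\Sigma_0$-elementarity, so $U$ is $\uu$-$0$-sound of type B and $i^M_E$ is a $\uu$-$0$-embedding. The case $n=1$ is the crux. Here $M^\pm$ is an active type $3$ premouse with $\nu(F^{M^\pm})=\nu(F^M)=:\nu$, $F^{M^\pm}\rest\nu=F^M\rest\nu$, and $M\|\OR^M$ coheres with $\Ult(M^\pm,F^{M^\pm})$ cut at $\delta=\lgcd(M)$, while $\crit(E)<\urho_1^M=\rho_0^{M^\pm}=\nu$. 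I would set $U^\pm:=\Ult_0(M^\pm,E)$ (the degree-$0$, squash-respecting ultrapower of the type $3$ premouse $M^\pm$), note that it is again active of type $3$ with $\nu(F^{U^\pm})=i^{M^\pm}_E(\nu)$ since $\crit(E)<\nu$, and that $i^{M^\pm}_E$ is a $0$-embedding on $(M^\pm)^\sq$. The real work is then: (i) $i^M_E$ restricts on $(M^\pm)^\sq$ to $i^{M^\pm}_E$, so $U^\pm$ is the correct choice and $i^M_E$ is induced by $i^{M^\pm}_E$; and (ii) $U$ and $U^\pm$ stand in the required relation, i.e.\ $U\|\OR^U$ coheres with $\Ult(U^\pm,F^{U^\pm})$ cut at $\lgcd(U)=i^M_E(\delta)$, with $F^U\rest\nu(F^U)=F^{U^\pm}\rest\nu(F^U)$. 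Both come from applying, resp.\ commuting, the ultrapower by $E$ to the corresponding identities for $M$ and $M^\pm$, using $\crit(E)<\nu$, weak amenability of $E$, and acceptability of the $\J$-hierarchy.

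The main obstacle is exactly points (i)--(ii) of that last case: reconciling the direct, possibly ISC-violating ultrapower of the seg-pm $M$ with the squash-respecting ultrapower of its premouse avatar $M^\pm$, in particular showing these two ultrapowers agree below $\nu$ (which is where $\crit(E)<\nu$, weak amenability, and acceptability enter). Once that agreement is pinned down, the remaining coherence is a routine computation --- which is why the lemma is billed as a ``standard calculation''.
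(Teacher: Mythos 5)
The paper offers no proof of this lemma at all --- it is introduced with ``The following lemma is a standard calculation'' --- so there is nothing of the paper's to compare your argument against; your proposal has to stand on its own, and it does. The case division tracks the definition of $\Ult_{\udash n}$ exactly; the type A, type B $n\geq 2$, and type B $n=0$ cases do reduce directly to the classical preservation of $k$-soundness and of $k$-embeddings under degree-$k$ ultrapowers (together with the routine preservation of premouse type and of the clause of $\uu$-$n$-soundness); and you correctly isolate the only part with real content, namely type B with $n=1$, where one must exhibit $U^\pm$ and reconcile the direct, unsquashed ultrapower of the seg-pm $M$ with the squashed ultrapower $\Ult_0(M^\pm,E)$. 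Two small cautions. First, the identity $\nu(F^{U^\pm})=i^{M^\pm}_E(\nu)$ is loose, since $\nu=\OR^{(M^\pm)^\sq}$ is not in the domain of $i^{M^\pm}_E$; what you need (and what point (ii) must match against $F^U$ and $\lgcd(U)$) is that $\nu(F^{U^\pm})$ is the ordinal height of $\Ult((M^\pm)^\sq,E)$. Similarly, in the $n=0$ case, $\nu(F^U)=i^M_E(\nu(F^M))$ should really be $\nu(F^U)\leq i^M_E(\nu(F^M))$ (non-generators are witnessed by functions and hence remain non-generators), which still suffices for $\nu(F^U)\leq\lgcd(U)$. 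Second, your points (i)--(ii) are where the whole lemma lives and you leave them at the level of ``apply/commute the ultrapower to the corresponding identities''; the ingredients you name ($\crit(E)<\nu=\urho_1^M$, weak amenability, coherence and acceptability) are the right ones, and the comparison of the two ultrapowers' functions below $\nu$ is essentially the computation the paper carries out in Remark \ref{rem:u-Sigma_1}, but a complete write-up would have to execute it rather than gesture at it.
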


\begin{rem}\label{rem:u-Sigma_1}
In the definition of $\Ult_{\udash n}(M,E)$ above,
the reader might expect that if $M$ is type B and $n=1$, it would be more
natural to define
the ultrapower using all functions which are $\bfSigma_1^M$-definable,
instead of just the functions in $M$.
We digress to show that these two ultrapowers are equivalent (the content of
this remark is not needed in the sequel).

 Let $M$ be a type B with $n=1$. Write $\uSigma_1^M$ for the definability class
over
 \[ M=(\univ{M},\es^M,\widetilde{F^M}) \]
 itself,
 not its squash.
 Here $\widetilde{F^M}$ is the standard amenable coding of $F^M$. Let
$\bfuSigma_1^M$ be the associated boldface class.
By definition we have $\urho_1^M=\rho_0^M=\nu^M$.
 In fact, $\urho_1^M$ is the least $\rho$ such that there is a $\bfuSigma_1^M$
subset of $\rho$
 not in $M$; see \cite{combinatorial} or the proof of
\cite[Lemma 2.15]{extmax}.
 Given $\eta<\OR^M$, let $M\wr\eta$ be the usual restriction
 of $M$ (with its predicates) to $M||\eta$, that is,
 \[ M\wr\eta=(M||\eta,\es^M\rest\eta,\widetilde{F^M}\inter(M||\eta)). \]
So the structures $M\wr\eta$ stratify $M$ as usual.

 Suppose $\nu^M$ is regular in $M$ but $\bfuSigma_1^M$-singular,
 in the weak sense that there is some $\gamma<\nu^M$
 and $x\in M$ such that $\Hull_{\uSigma_1}^M(\gamma\cup\{x\})$ is cofinal in
$\nu^M$. Let $\gamma$ be least such. Note  that there is $f:\gamma\to\nu^M$
which is $\bfuSigma_1^M$-definable, with $f``\gamma$ cofinal
in $\nu^M$ (that is, because of the characterization of $\urho_1^M=\nu^M$ just
mentioned, we can recover a function $f$ with domain $\gamma$).

We claim
 $\gamma=(\mu^+)^M$ where $\mu=\crit(F^M)$.
 For we have the standard cofinal monotone increasing $\uSigma_1^M$ map
$h:(\mu^+)^M\to\OR^M$ (derived from the amenable coding of $F^M$).
Given $\alpha<(\mu^+)^M$, let $D_\alpha\sub\gamma$
be the set of all $\beta<\gamma$ such that
$M\wr h(\alpha)\sats$``$f(\beta)$ is defined'',
and let $f_\alpha:D_\alpha\to\nu^M$ be the corresponding function.
So $\gamma=\bigcup_{\alpha<(\mu^+)^M}D_\alpha$
and $f=\bigcup_{\alpha<(\mu^+)^D}f_\alpha$. But $f_\alpha\in M$,
and since $\nu^M$ is $M$-regular,
therefore $\rg(f_\alpha)$ is bounded in $\nu^M$.
So defining $j:(\mu^+)^M\to\nu^M$ by $j(\alpha)=\sup\rg(f_\alpha)$,
then $j$ is $\bfuSigma_1^M$ and cofinal (and monotone increasing)
in $\nu^M$. So $\gamma\leq(\mu^+)^M$. Also
since $f\notin M$, there are cofinally many
$\alpha,\beta<(\mu^+)^M$ such that $D_\alpha\psub D_\beta$.
But then since
 $\left<D_\alpha\right>_{\alpha<(\mu^+)^M}\in M$
 and $(\mu^+)^M$ is regular in $M$,  we cannot have
 $\gamma<(\mu^+)^M$.

 Now let $E$ be a weakly amenable $M$-extender with $\kappa=\crit(E)<\nu^M$
 and $E_a\in M$ for all $a$ (because $M$ is type 3, this will be the case for
extenders $E$ applied to $M$
 in a normal iteration tree).
 We claim that $\Ult(M,E)$ is equivalent to the ultrapower formed
 using all $\bfuSigma_1^M$ functions.

 For this, let  $f:\kappa^{|a|}\to M$ be a $\bfuSigma_1^M$ function. We want to
see that there is
 $f'\in M$ and $A\in E_a$ such that $f'\rest A=f\rest A$.
 For $\eta<(\mu^+)^M$, let $f_\eta:D_\eta\to M$ be like before.
 So $f_\eta\in M$ and $f=\bigcup_{\eta<(\mu^+)^M}f_\eta$.
 If $\kappa\leq\mu$ then since $(\mu^+)^M$ is regular in $M$,
 there is $\eta$ such that $f=f_\eta$, which suffices.
 Suppose $\kappa>\mu$, so $\kappa>(\mu^+)^M$.
 Then $\left<D_\eta\right>_{\eta<(\mu^+)^M}\in M$,
 and since $\bigcup_{\eta<(\mu^+)^M}=\kappa$,
 it follows that some $D_\eta\in E_a$,
 so $f_\eta,D_\eta$ works.
\end{rem}

\begin{dfn}\label{dfn:r-k-maximal_tree}\index{$\udash
n$-maximal}\index{u-fine structure}
\index{$\udeg$}\index{u-$\deg$}
Let $k\leq\om$ and $\lambda\in\OR\cut\{0\}$ and let $M$ be a $\uu$-$k$-sound
seg-pm.
A \dfnemph{$\udash k$-maximal iteration tree $\Tt$ on $M$} of \dfnemph{length
$\lambda$} is a tuple
\index{$\dropset^\Tt$}
\index{$M^\Tt_\alpha$}
\index{$i^\Tt_{\alpha\beta},i^{*\Tt}_{\alpha\beta}$}
\index{$E^\Tt_\alpha$}
\index{$\exit^\Tt_\alpha$}
\index{$\nutilde^\Tt_\alpha,\nutilde(E^\Tt_\alpha)$}
\index{$M^{*\Tt}_\alpha$}
\[
\left(<^\Tt,\dropset, \udeg,\left<M_\alpha\right>_{\alpha<\lambda},
\left<i_{\alpha\beta},i^*_{\alpha\beta}\right>_{\alpha , \beta<\lambda},
\left<E_\alpha,\exit_\alpha,\nutilde_\alpha,M^*_{\alpha+1}\right>_{
\alpha+1<\lambda}
\right), \]
such that:\begin{enumerate}[label=\arabic*.,ref=\arabic*]
 \item $\dropset\sub\lambda$ and $\udeg:\lambda\to\{-1\}\un(\om+1)$.
 \item $<^\Tt$ is an iteration tree order on $\lambda$.
 \item $M_0=M$ and $0\notin D$ and
$\udeg(0)=k$.
 \item\label{item:M_beta_seg-premouse} For all $\beta<\lambda$, $M_\beta$ is a
$\udeg(\beta)$-sound segmented-premouse.
 \item For all $\alpha+1\leq\beta+1<\lambda$, $\emptyset\neq
E_\alpha\in\es_+^{M_\alpha}$ and
$\exit_\alpha=M_\alpha|\In(E_\alpha)$ and $\In(E_\alpha)\leq\In(E_\beta)$ and
$\nutilde_\alpha=\nutilde^{\exit_\alpha}$.
 \item For all $\alpha+1<\lambda$, letting $\kappa=\crit(E_\alpha)$:
 \begin{enumerate}
  \item $\beta=\pred^\Tt(\alpha+1)$ is the least $\xi$ such that
$\kappa<\nutilde_\xi$.
\item $M^*_{\alpha+1}=$ least $N\ins M_\beta$ with
$N=M_\beta$ or [$\exit_\beta\ins N$ and $\rho_\om^N\leq\kappa$].
\item $\alpha+1\in\dropset$ iff $M^*_{\alpha+1}\pins M_\beta$.
\item If $\alpha+1\notin\dropset$ then $\udeg(\alpha+1)=$ largest $n\leq
\udeg(\beta)$ with $\kappa<\urho_n^{M_\beta}$.
\item If $\alpha+1\in\dropset$ then $\udeg(\alpha+1)=$ largest $n<\om$ with
$\kappa<\urho_n(M^*_{\alpha+1})$.
\item Let $n=\udeg(\alpha+1)$. Then
$M_{\alpha+1}=\Ult_{\udash n}(M^*_{\alpha+1},E_\alpha)$
and
$i^*_{\alpha+1,\alpha+1}=i^{M^*_{\alpha+1},\udash n}_{E}$.
Let $\gamma\leq^\Tt\beta$ with $(\gamma,\alpha+1]\inter\dropset=\emptyset$.
Then
\[ i_{\gamma,\alpha+1}=i^*_{\alpha+1,\alpha+1}\com i_{\gamma\beta}, \]
and if $\gamma$ is a successor then
$i^*_{\gamma,\alpha+1}=i^*_{\alpha+1,\alpha+1}\com i^*_{\gamma\beta}$.
\end{enumerate}
\item\label{item:iteration_maps_commute} Let
$\alpha\leq^\Tt\gamma\leq^\Tt\beta<\lambda$ be such that
$(\alpha,\beta]_\Tt\inter\dropset=\emptyset$.
Then $i_{\alpha\beta}$ is defined and
$i_{\alpha\beta}=i_{\gamma\beta}\com i_{\alpha\gamma}$
and $\udeg(\beta)\leq\udeg(\alpha)$. (This condition follows from the others.)
\item\label{item:ev_no_drop} Let $\eta<\lambda$ be a limit.
Then there is
$\alpha<^\Tt\eta$ with $(\alpha,\eta]_\Tt\inter\dropset=\emptyset$.
Let $\alpha$ be  least such and
$m=\lim_{\beta<^\Tt\eta}\udeg(\beta)$. Then $m=\udeg(\eta)$ and
\[
M_\eta=\dirlim_{\beta\leq\gamma\in[\alpha,\eta)_\Tt}\left(M_\beta,M_\gamma,i_{
\beta\gamma}\right),\]
and for all $\beta\in[\alpha,\eta)_\Tt$, $i_{\beta\eta}$ is the associated
direct limit map,
and if also $\beta$ is a successor then $i^*_{\beta\eta}=i_{\beta\eta}\com
i^*_{\beta\beta}$.
\end{enumerate}

The \dfnemph{$\udash
k$-maximal iteration game}
$\Gg(M,\udash k,\theta)$,
\index{$(\udash k,\theta)$-iterability}\index{$\Gg(M,\udash k,\ldots)$}
\dfnemph{$(\udash
k,\theta)$-iteration strategy} and
\dfnemph{$(\udash k,\theta)$-iterability}
\index{iteration strategy} are defined in the obvious
manner.  Likewise for stacks,
such as the game $\Gg_{\mathrm{opt}}(M,\udash k,\lambda,\theta)^*$,
etc.

We say that $\Tt$ is a
\dfnemph{putative $\udash k$-maximal tree on
$M$}\index{putative (tree)} iff all
of the above properties hold, except that if
$\lambda=\lh(\Tt)$ is a successor, we do not require condition
\ref{item:M_beta_seg-premouse}
 to hold for $\beta=\lambda-1$, and if $\lambda-1$
 is a limit, we do not require that $[0,\lambda-1)_\Tt\inter\dropset^\Tt$
 is bounded in $\lambda-1$ (but if it is bounded,
 then we still define $M^\Tt_{\lambda-1}$ as before, etc).
\end{dfn}

It is routine to see that if $\Tt$ is a putative $\udash k$-maximal tree of
length
$\eta+1$ and  $M_\eta$ is well-defined and wellfounded, then $\Tt$ is a
$\udash k$-maximal tree.

Moreover, if $\beta\leq^\Tt\eta$ and
$(\beta,\eta]_\Tt$ does not drop in model then $i^\Tt_{\beta\eta}$ is a near
$\udash
m$-embedding,
and if also $\udeg^\Tt(\beta)=\udeg^\Tt(\eta)$ then $i^\Tt_{\beta\eta}$ is a
$\udash m$-embedding.
Likewise for $i^{*\Tt}_{\beta\eta}$ if $\beta$ is also a successor.

\begin{rem}[Closeness for $\uu$]\label{rem:r-closeness}\index{Closeness for
$\uu$}
 The Closeness Lemma \cite[6.1.5]{fsit} adapts easily to $\udash m$-maximal
trees on $\udash m$-sound MS-indexed seg-pms $M$.
 One key difference is that we replace the standard $\rSigma_1$ hierarchy with
$\uSigma_1$
 (see \ref{rem:u-Sigma_1}); of course, if $M$ is type ${\leq 2}$ then
$\rSigma_1^M=\uSigma_1^M$.
 Thus, we say that an extender $E$ is \dfnemph{$\mathrm{u}$-close}
 to a seg-pm $M$ iff $E$ is weakly amenable to $M$ and $E_a$ is $\bfuSigma_1^M$
for each $a\in[\nu(E)]^{<\om}$.
 By \ref{rem:u-Sigma_1}, if $M$ is a $\udash 1$-sound premouse,
 so $M$ is equivalent to some type 3 premouse $N$, then $\urho_1^M=\nu^N$.
 As in \cite{fsit}, one shows that if $E$ is $\mathrm{u}$-close to $M$
 and $\urho_1^M\leq\crit(E)$ and $U=\Ult_{\udash 0}(M,E)$,
 then $\urho_1^U=\urho_1^M$
 and every $\bfuSigma_1^U$ subset of $\crit(E)$ is $\bfuSigma_1^M$.
 As in \cite[6.1.5]{fsit}, one shows that if $\Tt$ is a $\udash m$-maximal tree
 on a $\udash m$-sound seg-pm $M$, then $E^\Tt_\alpha$ is $\mathrm{u}$-close to
$M^{*\Tt}_{\alpha+1}$ for every $\alpha+1<\lh(\Tt)$.

 The proof of \cite{fs_tame} adapts similarly, giving that
 the copying construction propagates near $\udash m$-embeddings.
\end{rem}

\begin{dfn}\label{dfn:unravelled}\footnote{In a draft of this article on
arxiv.org, there is a version of the material in
\ref{dfn:unravelled}--\ref{rem:partial_strategy_conversion} which is not quite
correct in its treatment of translations of stacks, in that
it does not restrict to the unravelled iteration game on the
$\udash$side (it also does not restrict to optimal stacks,
though it is straightforward to handle this). That version is also not general
enough to be  applied to partial strategies (in particular in
\S\ref{subsubsec:variants_partial}). The version here remedies these deficits.}
Let $M$ be a seg-pm with $M^\passive$ MS-indexed, and $k'\leq\om$
with $M$  $\udash k'$-sound. Suppose $M$ is type A or $k'\geq 1$.
If $M$ is type A  or $k'=\om$  let $k=k'$; otherwise let
$k=k'-1$. In this situation say $(M,k',k)$ is
\dfnemph{suitable}.\index{suitable}
Let $\Tt$ be a  $\udash k'$-maximal tree on $M$.

Given $\alpha<\lh(\Tt)$, say $\alpha$ is
\index{$\Tt$-special}\index{special}\dfnemph{$\Tt$-special}
iff $M^{\Tt}_\alpha$ is \dfnemph{$\Tt$-special} iff $M^{\Tt}_\alpha$ is type
B and $\udeg^{\Tt}(\alpha)=0$. Say $\alpha$ is
\index{$\Tt$-very special, $\Tt$-vs}\index{very special, vs}\dfnemph{$\Tt$-very
special} (or \dfnemph{$\Tt$-vs}) iff $\alpha$ is $\Tt$-special and
$E^{\Tt}_\alpha=F(M^{\Tt}_\alpha)$. Say $\alpha$ is a \dfnemph{transition
point}\index{transition point} of $\Tt$ iff $\alpha+1<\lh(\Tt)$ and $\alpha$ is
non-$\Tt$-special,
but $\OR((M^{\Tt}_\alpha)^\pm)<\In(E^{\Tt}_\alpha)$
(note that in this situation, $M^{\Tt}_\alpha$ is type B
and $\udash\deg^{\Tt}(\alpha)=1$, but $(M^{\Tt}_\alpha)^{\pm}\neq
M^{\Tt}_\alpha$, so $M^{\Tt}_\alpha$ is not a premouse).
Say  $\Tt$ is \index{unravelled}\dfnemph{unravelled} iff,
if $\Tt$ has successor length $\alpha+1$ then $\alpha$ is not $\Tt$-special.

The \index{unravelling}\dfnemph{unravelling}
\index{$\unrvl$} $\Ss=\unrvl(\Tt)$ of $\Tt$,
if it exists, is the unique unravelled \mbox{$\udash k'$-maximal} tree $\Ss$ on
$M$ such that $\Tt\ins\Ss$, if $\Tt$ has limit length then $\Ss=\Tt$,
and if $\lh(\Tt)=\alpha+1$ then $\alpha+i$ is $\Ss$-vs
for every $i$ such that $\alpha+i+1<\lh(\Ss)$. Note that existence just
requires wellfoundedness of the relevant models, and that if $\Ss$
exists, then $\lh(\Ss)<\lh(\Tt)+\om$, because
$\crit(E^{\Ss}_{\alpha+i+1})<\crit(E^{\Ss}_{\alpha+i})$.

Say  $\Tt$ is \index{unravelable}\index{everywhere
unravelable}\dfnemph{everywhere unravelable} iff $\unrvl(\Tt\rest\alpha)$
exists (with wellfounded models) for all $\alpha\leq\lh(\Tt)$, and for every
transition point $\alpha$ of $\Tt$, $\unrvl(\Tt')$ exists
where $\Tt'=(\Tt\rest(\alpha+1))\conc F(M^{\Tt}_\alpha)$.

If $\Ttvec=\left<\Tt_\alpha\right>_{\alpha<\lambda}$ is an optimal
$\udash k'$-maximal stack on $M$, say $\Ttvec$ is
\index{unravelled}\dfnemph{unravelled} iff
$\Tt_\alpha$ is unravelled for every $\alpha$, and say $\Ttvec$ is
\dfnemph{everywhere unravelable} iff $\Tt_\alpha$ is unravelled for each
$\alpha+1<\lambda$ and $\Tt_\alpha$ is everywhere unravelable for each
$\alpha<\lambda$.

The \index{unravelled}\dfnemph{unravelled optimal $\udash$iteration game}
$\Gg^{\unrvl}_{\mathrm{opt}}(M,\udash m,\alpha,\beta)^*$
\index{$\Gg^{\unrvl}_{\mathrm{opt}}(M,\udash m,\ldots)^*$}
is just like $\Gg_{\mathrm{opt}}(M,\udash m,\alpha,\beta)^*$,
except that player I may only round $\gamma$
with  $\Tt_\gamma$ unravelled.
This determines \dfnemph{unravelled-optimal-$(\udash
k,\alpha,\beta)^*$-iteration strategies} and \dfnemph{-iterability}.
For the corresponding definitions without the adjective \emph{optimal},
there can be artificial drops as usual, but player I must still end
rounds with unravelled trees.
\end{dfn}

\begin{dfn}
Let $(M,k',k)$ be suitable. Let $\Uu$ be a $k$-maximal tree on
$M^\pm$. Given $\alpha<\lh(\Uu)$, define
\index{$M^{+\Uu}_\alpha$}$M^{+\Uu}_\alpha$ as follows.
(We stop if we reach an illfounded model. The notation is literally ambiguous,
as it depends on $M$, whereas only $M^\pm$ is recorded in $\Uu$.)
Set $M^{+\Uu}_0=M$. Let $\alpha+1<\lh(\Uu)$
and $\beta=\pred^\Uu(\alpha+1)$. If $M^\Uu_{\alpha+1}$
is type $\leq 2$ or $\deg^\Uu(\alpha+1)=\om$  let $m'=\om$;
otherwise let $m'=\deg^\Uu(\alpha+1)+1$. If $\alpha+1\notin\dropset^\Uu$  let
$N^*=M^{+\Uu}_\beta$; otherwise let $N^*=M^{*\Uu}_{\alpha+1}$.
Now set $M^{+\Uu}_{\alpha+1}=\Ult_{\udash m'}(N^*,E^\Uu_\alpha)$.
Using the natural iteration maps
\index{$i^{+\Uu}_{\alpha\beta}$}
\[ i^{+\Uu}_{\alpha\beta}:M^{+\Uu}_\alpha\to
M^{+\Uu}_\beta \]
(defined when $(\alpha,\beta]_\Uu\inter\dropset^\Uu=\emptyset$),
take direct limits at limit stages. We say  $\Uu$ is
\index{$M$-$\udash$wellfounded}\index{u-wellfounded}
$M$-\dfnemph{$\udash$wellfounded} iff $M^{+\Uu}_\alpha$ is wellfounded for each
$\alpha<\lh(\Uu)$. Likewise for optimal $k$-maximal stacks
$\vec{\Uu}=\left<\Uu_\alpha\right>_{\alpha<\lambda}$,
where $M^{+\Uu_0}_0=M$ and  $M^{+\Uu_\alpha}_0=M^{+\vec{\Uu}\rest\alpha}_\infty$
 is the natural direct limit  for $\alpha>0$; the stack is
 \dfnemph{$M$-$\udash$wellfounded} iff
every $M^{+\Uu_\alpha}_\beta$ is wellfounded.
\end{dfn}

The following two lemmas are proved in \cite[\S4]{rule_conversion}:
\footnote{Along with proving Lemmas \ref{lem:tree_conversion}
and \ref{lem:rule_conversion}, \cite{rule_conversion} describes a translation
between $\lambda$-iteration rules and a natural version of MS-iteration rules
for $\lambda$-indexed mice. The methods for both are similar. They are related
to the proof of Theorem \ref{tm:lambda_gen_it}, and also to the methods of this
paper more generally. We have written $\Tt'\mapsto\Tt$ here,
instead of $\Tt\mapsto\Tt'$, to match better with the notation in
\cite{rule_conversion}.}

\begin{lem}\label{lem:tree_conversion}\index{correspondence of trees}
 Let $(M,k',k)$ be suitable. There is a class bijection
 \index{$\conv$}\index{conv}
\[ \Tt\mapsto\Uu=\mathrm{conv}(\Tt) \]
 from the  unravelled everywhere unravelable $\udash k'$-maximal  trees $\Tt$
on $M$ to the $M$-$\udash$wellfounded $k$-maximal  trees $\Uu$ on $M^\pm$,
such that:
\begin{enumerate}
 \item  If $\Ss\ins\Uu$ then  either
\begin{enumerate}[label=--]
 \item  $\Ss=\mathrm{conv}(\unrvl(\Tt\rest\alpha))$ for some $\alpha$, or
 \item $\Ss=\mathrm{conv}(\unrvl(\Tt\rest(\alpha+1)\conc F(M^\Tt_\alpha)))$
 for a transition point $\alpha$ of $\Tt$.
 \end{enumerate}
 \item  $\lh(\Tt)$ is a limit iff $\lh(\Uu)$ is a limit. When limits, these
lengths are equal.
\item Suppose $\lh(\Tt)=\alpha'+1$ and $\lh(\Uu)=\alpha+1$. Then:
\begin{enumerate}
 \item  $(M^{\Tt}_{\alpha'})^\pm=M^{\Uu}_\alpha$ and
$M^{\Tt}_{\alpha'}=M^{+\Uu}_\alpha$, so if $M^\Uu_\alpha$ is non-type 3 or
$\deg^\Uu(\alpha)>0$  then $M^{\Tt}_{\alpha'}=M^{\Uu}_{\alpha}$,
 \item either $M^{\Tt}_{\alpha'}=M^\Uu_\alpha$, or $(M^\Uu_\alpha)^\passive\pins
M^{\Tt}_{\alpha'}$ and $\OR^{M^\Uu_\alpha}$ is an $M^{\Tt}_{\alpha'}$-cardinal,
\item $[0,\alpha']_{\Tt}\inter\dropset^{\Tt}=\emptyset$
iff $[0,\alpha]_\Uu\inter\dropset^\Uu=\emptyset$; likewise for
$\dropset^{\Tt}_{\deg}$ and $\dropset^\Uu_{\deg}$,
 \item\label{item:ev_it_map} letting $\beta'+1\leq^{\Tt}\alpha'$ and
$\beta+1\leq^\Uu\alpha$  be least such that
 $(\beta'+1,\alpha']_\Uu$ and $(\beta+1,\alpha]_{\Tt}$ do not drop in model or
degree,  then:
 \[ (M^{*\Tt}_{\beta'+1})^\pm=M^{*\Uu}_{\beta+1}\text{ and
}
i^{*\Tt}_{\beta'+1,\alpha'}\rest((M^{*\Tt}_{\beta'+1})^\pm)^\sq=i^{*\Uu}_{
\beta+1,\alpha}, \]
and in fact if $[0,\alpha]_{\Uu}\inter\dropset_{\deg}^{\Uu}\neq\emptyset$
then $M^{*\Tt}_{\beta'+1}=M^{*\Uu}_{\beta+1}$.
\end{enumerate}
\end{enumerate}
Further, there is an analogous bijection between unravelled everywhere
unravelable optimal $k'$-maximal
stacks $\left<\Tt_\alpha\right>_{\alpha<\lambda}$ on $M$ and
$M$-$\udash$wellfounded optimal $k$-maximal
stacks $\left<\Uu_\alpha\right>_{\alpha<\lambda}$ on $M^\pm$.
Moreover, $\Uu_\alpha=\mathrm{conv}(\Tt_\alpha)$ for each $\alpha$.

The bijections are moreover uniformly definable from the parameter $M$.
If $M$ is countable then the conversion between such $\Uu,\Tt\in\HC$
is $\Delta^1_1(\{M\})$-definable in the codes, and likewise for optimal stacks.
\end{lem}

If we at times talk about the conversion of an everywhere unravelable $\udash
k'$-maximal tree $\Tt$ to a $k$-maximal tree, without assuming that $\Tt$ is
unravelled, then  one should first replace $\Tt$ with $\unrvl(\Tt)$.

\begin{lem}\label{lem:rule_conversion}\index{correspondence of strategies}
Let $\Omega>\om$ be regular  and $\Omega\leq\Xi\leq\Omega+1$.
Let $(M,k',k)$ be suitable, with $M$ a premouse. Then
 \begin{enumerate}[label=\arabic*.,ref=\arabic*]
  \item  $M$ is $(\udash k',\Xi)$-iterable iff $M$ is $(k,\Xi)$-iterable,
  \item   $M$ is unravelled-opt-$(\udash
k',\Omega,\Xi)^*$-iterable iff $M$ is opt-$(k,\Omega,\Xi)^*$-iterable.
\end{enumerate}
Moreover, there are bijections
\index{$\conv$}\index{conv}$\Sigma\mapsto\mathrm{conv}(\Sigma)$
between the sets of
\begin{enumerate}[resume*]
 \item   $(\udash k',\Xi)$-strategies and $(k,\Xi)$-strategies,
 \item unravelled-opt-$(\udash k',\Omega,\Xi)^*$-strategies and
opt-$(k,\Omega,\Xi)^*$-strategies,
 \item unravelled-opt-$(\udash k',{<\om},\Omega)^*$-strategies
 and opt-$(k,{<\om},\Omega)^*$-strategies
 \end{enumerate}
for $M$. In particular, there is a unique $(\udash k',\Xi)$-strategy for $M$
 iff there is a unique  $(k,\Xi)$-strategy for $M$.

These bijections are induced  tree-by-tree, for unravelled trees via $\Sigma$
and trees via $\Gamma=\mathrm{conv}(\Sigma)$, via the correspondence of Lemma
\ref{lem:rule_conversion}, and therefore if $\Omega=\aleph_1$ and
$\widetilde{\Sigma}$ is the natural coding of $\Sigma\rest\HC$ over $\RR$,
 and $\widetilde{\Gamma}$  likewise, then $\widetilde{\Gamma}$
is $\Delta^1_1(\widetilde{\Sigma})$ and vice versa.
\end{lem}

\begin{rem}
Note here that if $\Sigma$ is a $(\udash k,\Xi)$-strategy,
then all trees via $\Sigma$ are everywhere unravelable.
Similarly, if $\Gamma$ is a $(k,\Xi)$-strategy
for $M$, then  all trees via  $\Gamma$ are in fact $M$-$\udash$wellfounded.
(If $\Xi=\Omega+1$, then as $\Omega$ is regular, $\Sigma$
in fact extends to a $(\udash k,\Omega+\om)$-strategy.
So unravellings of trees via $\Sigma$ always exist. Similarly for $\Gamma$.)
So Lemma \ref{lem:tree_conversion} (and its proof)
is relevant to the proof of Lemma \ref{lem:rule_conversion}.
\end{rem}

At times we will also deal with partial strategies (where the trees in the
domain of the strategy  have some restricted form).

 \begin{dfn}
 A partial strategy $\Sigma$  for $\udash k'$-maximal trees/stacks
is \index{unravelable}\dfnemph{everywhere unravelable} if all trees
via $\Sigma$ are everywhere
unravelable. A partial strategy $\Gamma$ for an MS-indexed premouse  $M$  for
$k$-maximal trees/stacks is
\index{u-wellfounded}\index{$M$-$\udash$wellfounded}
\dfnemph{$M$-$\udash$wellfounded}
 if all trees via $\Gamma$ are $M$-$\udash$wellfounded.
\end{dfn}
 \begin{rem}\label{rem:partial_strategy_conversion}
Note that if $\Gamma$ (as above) is $M$-$\udash$wellfounded,
then we can define via Lemma \ref{lem:tree_conversion} a partial
$\udash$strategy $\Sigma$, where the trees via $\Sigma$ are just those which
are initial segments of trees $\Tt=\mathrm{conv}^{-1}(\Uu)$ for some $\Uu$
 via $\Gamma$ (and if a strategy for stacks, then we admit only stacks according
to the unravelled game); all putative trees via $\Sigma$ are
then true trees (and are everywhere unravelable). Likewise
conversely, if a given $\Sigma$ (as above) is everywhere unravelable,
then we can define the corresponding partial strategy $\Gamma$,
and all trees via $\Gamma$ are $M$-$\udash$wellfounded.
\end{rem}

\section{Coarse mice}\label{sec:wcpm}

The main results and methods in the paper also apply to iteration strategies for
a natural class of coarse structures.
Steel suggested to the author that the methods should go through in such a
context,
and it was indeed straightforward to verify that things go through with the same
basic ideas,
and with some simplifications. The only slight subtlety is that we seem to need
a weak form
of a coherent sequence of extenders for some of the arguments (such a notion was
already employed by Steel
in his work). The coarse case will be used by Steel and the author in the
forthcoming paper
\cite{scale_con}.

\begin{dfn}\index{weak coarse premouse}\index{wcpm}
 A \dfnemph{weak coarse premouse \tu{(}wcpm\tu{)}}
 is a transitive structure
 \index{$\es^M,\es_+^M$}\index{${<_e}$}$M=(N,\delta,\es,<_e)$ such that:
 \begin{enumerate}[label=--]
  \item $\delta\leq\OR^N=\rank(N)$, $\delta$ and $\OR^N$ are limit ordinals,
  $\card^N(V_\eta^N)<\delta$ for every $\eta<\delta$,
  $\cof^N(\delta)$ is not measurable in $N$, $N$ satisfies
$\Sigma_0$-comprehension and is rudimentarily closed,
and $N$ satisfies $\lambda$-choice for all $\lambda<\delta$.
 \item $\es,{<_e}\sub V_\delta^N$ and both are amenable to $V_\delta^N$.
 \item $\es$ is a class of $E$ such that $N\sats$``$E$ is a suitable extender''.
 \item $<_e$ is a wellorder of $\es$.
 \item if $E,F\in\es$ and $\strength^N(E)<\strength^N(F)$ then $E<_eF$.
 \end{enumerate}

 Given a wcpm $M=(N,\delta,\es,<_e)$ and $E\in\es$,
 then
 \index{$\Ult$}
 $\Ult(M,E)$ denotes $(\Ult(N,E),\delta',\es',<_e')$ where
 $\delta'=i^N_E(\delta)$,
 \[ \es'=\bigcup_{\alpha<\delta}i^N_E(\es\inter V_\alpha^N), \]
 \[ <_e'=\bigcup_{\alpha<\delta}i^N_E({<_e}\inter V_\alpha^N). \]

Given a wcpm $M$ and $E\in\es^M$, we write $\In(E)$\index{$\In(E)$} (or
$\In^M(E)$) for the ordinal rank of $E$ in $<_e^M$.

Given a wcpm $M$, we say that $M$ is \dfnemph{slightly coherent}\index{slight
coherence} iff for every $E\in\es$, letting $\varrho=\strength^M(E)$
and $U=\Ult(M,E)$, we have:
\begin{enumerate}[label=\arabic*.,ref=\arabic*]
 \item $X\eqdef\{F\in\es^M\bigm| \strength^M(F)<\varrho\}=
 \{F\in\es^U\bigm| \strength^U(F)<\varrho\}$,
 \item ${<_e^M}\rest X={<_e^U}\rest X$,
 \item for each $F\in\es^U$, if $\strength^U(F)=\varrho$ then
 $F\in\es^M$ and $F<_e^ME$.\qedhere
\end{enumerate}
\end{dfn}

\begin{rem}
 We need slight coherence for the normal realization results in
\S\ref{sec:normal_realization}
 and genericity inflation in \S\ref{sec:min_inf}. For the other results, slight
coherence is not relevant.
\end{rem}

\begin{dfn}\index{$\LST,\LST^+$}
We write $\LST$ for the language of set theory,
and $\LST^+$ for $\LST$ augmented with 1-place predicates $\es$ and $<_e$.

 Let $M,N$ be wcpms and $\pi:M\to N$.
 We say $\pi$ is a \dfnemph{coarse $0$-embedding}\index{coarse $0$-embedding}
 iff:
 \begin{enumerate}[label=--]
  \item $\pi$ is $\in$-cofinal in $N$,
 \item $\pi\rest V_{\delta^M}^M$ is $\in$-cofinal in $V_{\delta^N}^N$,
\item $\pi$ is $\Sigma_1$-elementary in $\LST$, and
\item $\pi\rest
V_{\delta^M}^M:(V_{\delta^M}^M,\es^M,<_e^M)\to(V_{\delta^N}^N,\es^N,<_e^N)$
 is $\Sigma_1$ elementary in $\LST^+$.\qedhere
 \end{enumerate}
\end{dfn}

\begin{lem}\label{new:lem_wcpm}
 Let $M$ be a wcpm.
 Then each proper segment of $<_e^M$ is in $V_{\delta^M}^M$,
 and $<_e^M$ has ordertype $\leq\delta^M$.
\end{lem}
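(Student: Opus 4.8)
Let $M = (N,\delta,\es,<_e)$ be a wcpm. Then each proper segment of $<_e^M$ is in $V_{\delta^M}^M$, and $<_e^M$ has ordertype $\leq \delta^M$.

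Here is the plan. The first claim is essentially a restatement of the amenability hypothesis combined with the structure of $N$ below $\delta$. Recall that by definition ${<_e}\sub V_\delta^N$ and $<_e$ is amenable to $V_\delta^N$. A proper segment of $<_e^M$ — that is, the set of $F \in \es^M$ that are $<_e^M$-below some fixed $E \in \es^M$ — is, by the fifth clause in the definition of wcpm, contained in $\{F \in \es^M : \strength^M(F) \leq \strength^M(E)\}$, and in fact by amenability it is simply ${<_e^M} \cap (\text{initial segment determined by } E)$, which lies in $V_\delta^N$ because $E \in V_\delta^N$ and the segment is bounded in rank by something below $\delta$: indeed each $F$ with $F <_e^M E$ satisfies $\strength^M(F) \leq \strength^M(E) < \delta$, and a suitable extender of strength $\varrho$ is coded by a subset of $2^{\crit F} + \varrho$, so all such $F$ sit inside some fixed $V_\alpha^N$ with $\alpha < \delta$. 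Then amenability of $<_e$ to $V_\delta^N$ gives that this segment is an element of $V_\delta^N \subseteq N$, and since it has rank $< \delta$ it lies in $V_{\delta^M}^M$. First I would write this out carefully, being a little careful about whether "proper segment of $<_e^M$" is taken to mean a set of extenders or the corresponding restricted relation — either way the argument is the same, using amenability plus the strength-monotonicity clause to bound ranks strictly below $\delta$.

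For the second claim, suppose toward a contradiction that $<_e^M$ has ordertype $> \delta^M$, equivalently $\geq \delta^M + 1$; then there is an initial segment of $<_e^M$ of ordertype exactly $\delta^M$, namely $S = \{E \in \es^M : \ot(<_e^M \!\restriction\! E) < \delta\}$ together with its ordering, which by the first claim is an element of $V_\delta^N$. But an element of $V_\delta^N$ has rank $< \delta^M$, hence cannot be a wellorder of ordertype $\delta^M$ — a set of ordertype (order type) $\delta$ under a wellorder has rank at least $\delta$ in any transitive model that correctly computes rank, and here $\rank(N) = \OR^N$ with $\delta$ a genuine ordinal of $N$. Actually the cleanest phrasing: if $\ot(<_e^M) \geq \delta^M + 1$ then $<_e^M$ restricted to its $\delta^M$-th initial segment is a proper segment, hence by the first claim lies in $V_{\delta^M}^M$, hence has rank $< \delta^M$; but a wellorder of ordertype $\delta^M$ has rank $\geq \delta^M$, contradiction. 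So $\ot(<_e^M) \leq \delta^M$.

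The main obstacle — really a bookkeeping point rather than a genuine difficulty — is making sure the "proper segment" notion is pinned down so that the amenability hypothesis applies directly, and double-checking the rank computation: one needs that $N$ computes ranks of its elements correctly (which is immediate since $N$ is transitive and $\rank(N) = \OR^N$), and that $\delta$ really is an ordinal of $N$ below $\OR^N$, which is part of the wcpm definition ($\delta \leq \OR^N$, both limit ordinals, and in fact $\delta < \OR^N$ follows from $\card^N(V_\eta^N) < \delta$ for $\eta < \delta$ together with the other clauses — though we don't even need $\delta < \OR^N$ for this lemma). I would also note in passing that the strength-monotonicity clause (clause 5) is what guarantees that $<_e^M$-initial segments are also rank-initial, which is the geometric fact making everything work; without it, a proper $<_e$-segment could a priori be cofinal in $\es^M$ in the rank sense. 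No serious case analysis is needed, and no appeal to slight coherence.
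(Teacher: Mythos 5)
Your argument for the first claim is correct and essentially the paper's: clause 5 bounds the strengths of extenders in a proper $<_e^M$-segment by $\strength^M(E)<\delta^M$; the hypothesis $\card^N(V_\eta^N)<\delta^M$ for $\eta<\delta^M$, together with suitability, then puts every such extender into a single $V_\alpha^N$ with $\alpha<\delta^M$; and amenability of $\es^M$ and $<_e^M$ places the segment in $V_{\delta^M}^M$.

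The argument for the second claim contains a genuine error. You assert that ``a set of ordertype $\delta$ under a wellorder has rank at least $\delta$,'' and use this to contradict $S\in V_{\delta^M}^M$. That rank claim is false: a wellorder of a set $X$ is a set of ordered pairs from $X$, so its rank is roughly $\rank(X)+4$ no matter what its ordertype is --- for instance a wellorder of $\om$ in ordertype $\om_1$ has rank just above $\om$. Rank simply does not constrain ordertype. The obstruction you want is \emph{cardinality}, not rank: from $S\in V_{\delta^M}^M$ you get $S\sub V_\alpha^N$ for some $\alpha<\delta^M$, so the field of $S$ has $N$-cardinality $\leq\card^N(V_\alpha^N)<\delta^M$; but a wellorder of ordertype $\delta^M$ has field of size $|\delta^M|=\delta^M$, and $\delta^M$ is a limit cardinal of $N$ (since $\kappa<\card^N(V_{\kappa+1}^N)<\delta^M$ for every $\kappa<\delta^M$), so this is a contradiction. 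This is exactly the role the $\card^N(V_\eta^N)<\delta^M$ hypothesis plays in the paper's proof, which avoids the contradiction detour altogether: each set $\{E\in\es^M \mid \strength^M(E)\leq\varrho\}$ for $\varrho<\delta^M$ lies in $V_{\delta^M}^M$, hence has $N$-cardinality $<\delta^M$, hence $<_e^M$-ordertype $<\delta^M$; these sets exhaust $\es^M$ as $\varrho\to\delta^M$, so $\ot(<_e^M)\leq\delta^M$.
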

\begin{proof}
For each $\varrho<\delta^M$,
the set $\{E\in\es^M\bigm|\varrho^M(E)\leq\varrho\}\in V_{\delta^M}^M$,
because $\card^M(V_\eta^M)<\delta^M$ for every $\eta<\delta^M$
and every $E\in\es^M$ is suitable.
Since
$<_e^M$ refines strength and by the amenability
of it and $\es^M$, this gives the lemma.
\end{proof}

\begin{lem}\label{lem:Ult_of_wcpm}
 Let $M$ be a wcpm. Let $E$ be a short $M$-extender
with $\crit(E)$ measurable in $M$ and $U=\Ult(M,E)$  wellfounded. Then:
\tu{(}i\tu{)} $\Sigma_0$-\L o\'s' Theorem holds for $\LST$,
\tu{(}ii\tu{)} $\Sigma_0$-\L o\'s' Theorem holds for $\LST^+$ with respect to
parameters in
$V_{\delta^M}^M$,
\tu{(}iii\tu{)} $U$ is a wcpm,
\tu{(}iv\tu{)} $i^M_E:M\to U$ is a coarse $0$-embedding,
\tu{(}v\tu{)} If $M$ is slightly coherent then so is $U$.
\end{lem}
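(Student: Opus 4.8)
Looking at Lemma \ref{lem:Ult_of_wcpm}, this is a standard verification that ultrapowers preserve the wcpm structure. Let me sketch the proof.

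The plan is to verify the five clauses in order, as each essentially reduces to a routine \L o\'s-style argument combined with the amenability hypotheses built into the definition of wcpm. First I would prove (i): since $\crit(E)$ is measurable in $N$ (hence $N$ computes enough of the measure), the usual proof of the \L o\'s Theorem for $\Sigma_0$ formulas of $\LST$ goes through, using only that $N$ is rudimentarily closed and satisfies $\Sigma_0$-comprehension together with $\lambda$-choice for $\lambda<\delta$ (the latter supplying the choice functions needed to handle bounded quantifiers witnessed by sets in $N$). The key point is that $E$ being short with $\crit(E)$ measurable means the relevant ultrapower functions and the measure-one sets witnessing $\Sigma_0$ facts all lie in $N$.

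\smallskip

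For (ii), the extra predicates $\es$ and ${<_e}$ are amenable to $V_\delta^N$ and are subsets of $V_\delta^N$; given parameters $a_1,\dots,a_k\in V_{\delta^M}^M$, each $a_i$ is represented by a function $f_i\in N$ with $f_i\colon[\crit E]^{<\om}\to V_{\delta^M}^M$, and by amenability $\es^M\inter V_\alpha^M\in N$ for each $\alpha<\delta^M$, so the relevant restrictions of $\es^M$ and ${<_e^M}$ needed to evaluate a fixed $\Sigma_0$ formula in $\LST^+$ at $\vec a$ lie in $N$; the \L o\'s argument then runs as in (i). This is why clause (ii) must be restricted to parameters in $V_{\delta^M}^M$: only there do we have the needed amenability. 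For (iii), I would check each bullet of the wcpm definition for $U=(\Ult(N,E),\delta',\es',<_e')$: $\delta'=i^N_E(\delta)$ and $\OR^{\Ult(N,E)}$ are limit ordinals and $\delta'\le\OR^{\Ult(N,E)}$ by elementarity of $i^N_E$ on $\Sigma_0$ (indeed the needed statements are $\Sigma_1$ over $N$, or follow from (i)); $\card^U(V_\eta^U)<\delta'$ for $\eta<\delta'$ and $\cof^U(\delta')$ not measurable in $U$, and $\Sigma_0$-comprehension, rud closure, and $\lambda$-choice for $\lambda<\delta'$, all transfer by (i) since these are (boundedly quantified or $\Sigma_1$) consequences of the corresponding facts in $N$ — here I'd use the definition $\es'=\bigcup_{\alpha<\delta}i^N_E(\es\inter V_\alpha^N)$ and the parallel one for $<_e'$, together with (ii), to see $\es',{<_e'}\sub V_{\delta'}^U$ are amenable to $V_{\delta'}^U$, consist of $U$-suitable extenders, and ${<_e'}$ wellorders $\es'$ refining strength. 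Clause (iv) is immediate once (i) and (ii) are in hand, unwinding the definition of coarse $0$-embedding: $i^N_E$ is $\in$-cofinal in $\Ult(N,E)$ because $E$ is short (so $\Ult(N,E)=\{i^N_E(f)(a)\mid a\in[\lambda(E)]^{<\om},f\in N\}$ and these are bounded by $i^N_E$ of ordinals); the restriction to $V_{\delta^M}^M$ maps cofinally into $V_{\delta'}^U$ by the same shortness and by the definition of $\es',<_e'$; $\Sigma_1$-elementarity in $\LST$ follows from (i); and $\Sigma_1$-elementarity of the restriction in $\LST^+$ follows from (ii), since $\Sigma_1$ facts about parameters in $V_{\delta^M}^M$ are witnessed by further parameters which may be taken in $V_{\delta^M}^M$ by $\delta^M$-choice in $N$ plus the cofinality of $i^N_E\rest V_{\delta^M}^M$.

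\smallskip

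The only genuinely non-routine clause is (v), slight coherence, and I expect that to be the main obstacle. Suppose $M$ is slightly coherent; fix $E^*\in\es^U$ and set $\varrho=\strength^U(E^*)$, $W=\Ult(U,E^*)$; we must verify the three conditions comparing $\es^U,{<_e^U}$ with $\es^W,{<_e^W}$ below and at $\varrho$. The strategy is to reduce this to slight coherence of $M$ applied to the extenders of $M$ that generate $E^*$. By the definition of $\es'$, $E^*=i^N_E(\vec\es)(a)$ is, on a measure-one set, a value $E_x$ of a function $x\mapsto E_x\in\es^M$ with $E_x\in V_{\delta^M}^M$; here one uses that $E$ is short so $\lambda(E)$ is below a cardinal and the relevant pieces of $\es^M$ lie in $N$. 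Applying slight coherence of $M$ to (almost) every $E_x$ and then pushing the resulting equalities of initial segments of $\es^M,{<_e^M}$ through $i^N_E$ via the \L o\'s Theorem (ii) — noting these equalities are $\Sigma_0$ assertions about the amenable predicates with parameters in $V_{\delta^M}^M$ — yields the three slight-coherence clauses for $E^*$ over $U$. The care needed is in matching up $\Ult(M,E_x)$ with $W=\Ult(U,E^*)$ on the relevant initial segments, i.e. checking that the commuting-ultrapowers diagram identifies $\es^W\rest\varrho$ with the common value of $i^N_E$ of $\es^{\Ult(M,E_x)}\rest(\text{corresponding strength})$, and the analogous statement at strength exactly $\varrho$; this is where I would spend the most effort, though it is still a diagram chase rather than a new idea.
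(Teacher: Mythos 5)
The paper states this lemma without proof, treating it as routine, so there is no authorial argument to compare yours against; I will assess the proposal on its own merits. The sketch for (i), (ii), (v) and most of (iii) is in the right spirit, but there is a genuine gap: you have misidentified what the hypothesis ``$\crit(E)$ is measurable in $M$'' is for. It is not needed to make \L o\'s work (that needs only $\crit(E)$ to be an $N$-cardinal below $\delta^M$ together with $\lambda$-choice for $\lambda<\delta^M$, so that one can pick witnesses for existential quantifiers), and it has nothing to do with ``$N$ computing enough of the measure.'' Rather, its role is this: the wcpm definition requires $\cof^M(\delta^M)$ to be \emph{not} measurable in $M$, while $\crit(E)$ is measurable in $M$ by hypothesis, so $\crit(E)\neq\cof^M(\delta^M)$. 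This is exactly what guarantees $\sup i^M_E``\delta^M=i^M_E(\delta^M)=\delta^U$, and that equality is the content being bought.

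Without it, your verification of (iv) breaks at the second bullet of the definition of coarse $0$-embedding (you claim $i^M_E\rest V_{\delta^M}^M$ is $\in$-cofinal in $V_{\delta^U}^U$ ``by the same shortness,'' but shortness only gives $\in$-cofinality into $\OR^U$, not into $V_{\delta^U}^U$ unless $\delta^U=\sup i^M_E``\delta^M$), and your verification of (iii) glosses over the two clauses that actually depend on it: the amenability of $\es^U,{<_e^U}$ to $V_{\delta^U}^U$ (which uses that every $\eta<\delta^U$ is below some $i^M_E(\alpha)$ with $\alpha<\delta^M$), and the requirement that $\cof^U(\delta^U)$ not be measurable in $U$. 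For the latter, ``all transfer by (i) as \ldots\ $\Sigma_1$ consequences'' is exactly where the hypothesis must be earned: if one had $\cof^M(\delta^M)=\crit(E)$, then \L o\'s would compute $\cof^U(\delta^U)=i^M_E(\crit(E))$, which \emph{is} measurable in $U$, and clause (iii) would be false. So the proposal as written proves a statement that is false without the measurability hypothesis while never actually invoking that hypothesis at the point where it is needed.
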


\begin{dfn}\label{dfn:normal_tree_wcpm}\index{normal tree (on wcpm)}
 Let $M$ be a wcpm.  A \dfnemph{normal  iteration
tree}\index{normal tree (wcpm)} $\Tt$ on $M$ is defined in a typical manner,
 with the specific requirements that for all $\alpha+1<\lh(\Tt)$, we have:
 \begin{enumerate}[label=--]
  \item $M^\Tt_\alpha$ is a wcpm and $E^\Tt_\alpha\in\es^{M^\Tt_\alpha}$; we
write $\varrho^\Tt_\alpha=\varrho^{M^\Tt_\alpha}(E^\Tt_\alpha)$,
  \item If $\beta+1<\alpha+1$ then $\varrho^\Tt_\beta<\varrho^\Tt_\alpha$.
  \item $\pred^\Tt(\alpha+1)$ is the  least $\beta$ such that
$\crit(E^\Tt_\alpha)<\varrho^\Tt_\beta$.
 \end{enumerate}

 A \dfnemph{putative normal iteration tree}\index{putative (tree)} on $M$ is
 just like a normal tree on $M$, except that
 if $\Tt$ has successor length $\alpha+1>1$
 then we do not demand that $M^\Tt_\alpha$
 be a wcpm (nor wellfounded).
\end{dfn}

The following lemma is verified by a routine induction:
\begin{lem}\label{lem:slight_coherence_pres}
 Let $\Tt$ be a putative normal iteration tree on the wcpm $M$. If $\Tt$ has
wellfounded models, then its models are wcpms, so
  $\Tt$ is a normal  tree.

 Now suppose that $\Tt$ is a normal iteration tree.
 Then for every $\alpha<\lh(\Tt)$, writing $M_\alpha=M^\Tt_\alpha$ etc,
 \begin{enumerate}[label=\arabic*.,ref=\arabic*]
 \item If $\beta<^\Tt\alpha$ then $i^\Tt_{\beta\alpha}:M_\beta\to M_\alpha$ is
cofinal and $\Sigma_1$-elementary in $\LST$.
 \item If $\beta<^\Tt\alpha$ then
$i^\Tt_{\beta\alpha}:(V^{M_\beta}_{\delta^{M^\beta}},\es^{M_\beta},<_e^{M_\beta}
)\to(V^{M_\alpha}_{\delta^{M_\alpha}},\es^{M_\alpha},<_e^{M_\alpha})$
 is cofinal and $\Sigma_1$-elementary in $\LST^+$.
  \item Suppose $M$ is slightly coherent. Then so is $M_\alpha$,
and for $\beta<\alpha$, letting $\varrho=\varrho^\Tt_\beta$,   we have:
\begin{enumerate}[label=--]
   \item $X\eqdef\{F\in\es^{M_\beta}\bigm| \strength^{M_\beta}(F)<\varrho\}=
    \{F\in\es^{M_\alpha}\bigm| \strength^{M_\alpha}(F)<\varrho\}$,
   \item ${<_e}^{M_\beta}\rest X={<_e}^{M_\alpha}\rest X$,
 \item for each $F\in\es^{M_\alpha}$, if $\strength^{M_{\alpha}}(F)=\varrho$
then
 $F\in\es^{M_\beta}$ and $F<_e^{M_\beta}E^\Tt_\beta$.\qedhere
 \end{enumerate}
 \end{enumerate}
\end{lem}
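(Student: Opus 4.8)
The plan is to establish all the assertions by a simultaneous induction on $\lh(\Tt)$, equivalently on $\alpha<\lh(\Tt)$, with the trivial base case $\alpha=0$, a successor case $\alpha=\gamma+1$, and a limit case; the two workhorses are Lemma \ref{lem:Ult_of_wcpm} at successor steps and the closure of cofinal $\Sigma_1$-elementary embeddings — in $\LST$, and on the $V_\delta$'s in $\LST^+$ — under composition and direct limits at limit steps. For the first assertion, assume $\Tt$ has wellfounded models. In the successor case let $\beta=\pred^\Tt(\gamma+1)$ and $E=E^\Tt_\gamma$; by the induction hypothesis $M^\Tt_\beta$ is a wcpm, $\crit(E)$ is measurable in $M^\Tt_\beta$ (it is measurable in $M^\Tt_\gamma$ since the derived ultrafilter $E_{\{\crit(E)\}}$ lies in $M^\Tt_\gamma$ by amenability of $\es^{M^\Tt_\gamma}$, and $M^\Tt_\beta$ agrees with $M^\Tt_\gamma$ below $\varrho^\Tt_\beta>\crit(E)$ by the usual agreement-below-strengths of normal trees, which rests on the normality clause of Definition \ref{dfn:normal_tree_wcpm} that the $\varrho^\Tt_\xi$ strictly increase), and $M^\Tt_{\gamma+1}=\Ult(M^\Tt_\beta,E)$ is wellfounded; so Lemma \ref{lem:Ult_of_wcpm}(iii) applies. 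In the limit case $M^\Tt_\alpha$ is the direct limit of the $M^\Tt_\beta$ ($\beta<^\Tt\alpha$) under the $i^\Tt_{\beta\gamma}$, which are cofinal and $\Sigma_1$-elementary in $\LST$ by the induction hypothesis; granted wellfoundedness, one checks the wcpm axioms survive — rudimentary closure, $\Sigma_0$-comprehension, and $\lambda$-choice for $\lambda<\delta$ pass up unions of such chains, $\card^N(V_\eta^N)<\delta$ for $\eta<\delta$ is a $\Sigma_1$ statement reflected through the $i^\Tt_{\beta\alpha}$, and $\cof^N(\delta)$ remains non-measurable since $\cof^{M^\Tt_\alpha}(\delta^{M^\Tt_\alpha})$ is a direct limit of the $\cof^{M^\Tt_\beta}(\delta^{M^\Tt_\beta})$ and the iteration maps add no measure on it. Hence $\Tt$ is a normal tree.

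For items 1 and 2: their conjunction says exactly that $i^\Tt_{\beta\alpha}$ is a coarse $0$-embedding whenever $\beta<^\Tt\alpha$. I would factor $i^\Tt_{\beta\alpha}$ along the branch $[\beta,\alpha]_\Tt$ into the successor ultrapower maps $i^{*\Tt}_{\gamma+1}:M^{*\Tt}_{\gamma+1}\to M^\Tt_{\gamma+1}$ (these are the ultrapower embeddings $i^{M^\Tt_{\pred^\Tt(\gamma+1)}}_{E^\Tt_\gamma}$, since coarse trees do not drop, hence coarse $0$-embeddings by Lemma \ref{lem:Ult_of_wcpm}(iv)) together with the direct-limit maps at limit stages on the branch, and then appeal to closure of coarse $0$-embeddings under composition and direct limits. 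The $\LST$ and $\LST^+$ parts run in parallel; for the latter one uses that each $i^\Tt_{\beta\alpha}$ carries $V_{\delta^{M^\Tt_\beta}}^{M^\Tt_\beta}$ cofinally into $V_{\delta^{M^\Tt_\alpha}}^{M^\Tt_\alpha}$, which is immediate from cofinality on the $V_\delta$'s.

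For item 3, assume $M$ slightly coherent. That $M^\Tt_\alpha$ is slightly coherent follows at successors from Lemma \ref{lem:Ult_of_wcpm}(v) and the induction hypothesis, and at limits by checking that slight coherence survives the direct limit, using cofinality and $\Sigma_1$-elementarity of the $i^\Tt_{\beta\alpha}$ on the $V_\delta$'s in $\LST^+$. For the three agreement clauses I would induct on $\alpha$. At $\alpha=\gamma+1$, put $\beta_0=\pred^\Tt(\gamma+1)$ and $E=E^\Tt_\gamma$, so $\crit(E)<\varrho^\Tt_{\beta_0}\le\varrho^\Tt_\gamma$ and $M^\Tt_{\gamma+1}=\Ult(M^\Tt_{\beta_0},E)$; by the induction hypothesis $M^\Tt_{\beta_0}$ and $M^\Tt_\gamma$ agree below $\varrho^\Tt_{\beta_0}$, slight coherence of $M^\Tt_\gamma$ gives the three clauses for $\Ult(M^\Tt_\gamma,E)$ against $M^\Tt_\gamma$ at the level $\varrho^\Tt_\gamma=\strength^{M^\Tt_\gamma}(E)$, and since $\varrho^\Tt_\gamma\le\lambda(E)<i_E(\varrho^\Tt_{\beta_0})$ the ultrapowers $\Ult(M^\Tt_{\beta_0},E)$ and $\Ult(M^\Tt_\gamma,E)$ agree below $\varrho^\Tt_\gamma$; this yields the pair $(\gamma,\gamma+1)$, and for a pair $(\beta,\gamma+1)$ with $\beta<\gamma$ one composes this with the induction hypothesis for $(\beta,\gamma)$ and the normality clause $\varrho^\Tt_\beta<\varrho^\Tt_\gamma$ — so agreement at level $\varrho^\Tt_\beta$ is transmitted, and a strength-$\varrho^\Tt_\beta$ extender of $M^\Tt_{\gamma+1}$ already lies in $\es^{M^\Tt_\gamma}$, hence in $\es^{M^\Tt_\beta}$ below $E^\Tt_\beta$. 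Limit steps for item 3 are handled similarly, again driven by cofinality of the direct-limit maps on the $V_\delta$'s.

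The step I expect to be the main obstacle is the limit case: verifying that the wcpm axioms — above all non-measurability of $\cof^N(\delta)$ — and slight coherence genuinely persist under direct limits of coarse $0$-embeddings, and threading the $\pred^\Tt$-structure through the induction for item 3 so that it closes for all pairs $\beta<\alpha$ and not merely tree-predecessors. The rest is routine composition of elementary embeddings and direct appeal to Lemma \ref{lem:Ult_of_wcpm}.
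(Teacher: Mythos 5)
The paper offers no proof of this lemma, saying only that it ``is verified by a routine induction,'' so there is nothing to compare against line-by-line; your proposal is a reasonable filling-out of exactly the intended induction, with the right two ingredients (Lemma \ref{lem:Ult_of_wcpm} at successors, closure of cofinal $\Sigma_1$-elementary maps under composition and direct limits at limits), and the way you unwind item 3 for arbitrary $\beta<\alpha$ (not just $\beta<^\Tt\alpha$) via composition through intermediate models and the monotonicity $\varrho^\Tt_\beta<\varrho^\Tt_\gamma$ is the correct bookkeeping. Two small places you might tighten if this were written out in full: the remark that ``the iteration maps add no measure on'' $\cof^{M_\alpha}(\delta^{M_\alpha})$ at limits is more cleanly phrased by noting that ``$\mu$ is measurable'' is a $\Sigma_1$ statement and cofinal $\Sigma_1$-elementary maps are $\Sigma_2$-elementary, so non-measurability of $\cof(\delta)$ is pushed forward; and your agreement-of-ultrapowers step $\Ult(M_{\beta_0},E)\approx\Ult(M_\gamma,E)$ below $\varrho^\Tt_\gamma$ should be routed through the common $V_{\crit(E)+1}$ (which the two base models share since $\crit(E)<\varrho^\Tt_{\beta_0}$) and the inequality $\varrho^\Tt_\gamma\leq\lambda(E)$ coming from suitability, which you in fact cite. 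Neither of these is a gap; both are the usual finer points one would spell out in a fully written version of this routine induction.
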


\begin{dfn}\index{iteration strategy}\index{stack}
We define (normal) \dfnemph{$\alpha$-iteration strategies} and
\dfnemph{$\alpha$-iterability} (where $\alpha\in\OR$)
for a wcpm $M$ in the obvious manner.
Likewise \dfnemph{stacks} of normal trees,
\dfnemph{$(\lambda,\alpha)^*$-iteration strategies} and
\dfnemph{$(\lambda,\alpha)^*$-iterability}
(in which $\lambda$ is the length of the stack, and $\alpha$ the bound on the
length
of the individual normal trees;
player I may stop round before reaching a normal tree of length $\alpha$,
and otherwise the game terminates; if $\lambda$ is a limit then player II must
also ensure
that the direct limit $M^{\Ttvec}_\infty$ of the entire stack $\Ttvec$ is
wellfounded).
\end{dfn}

\begin{dfn}\index{$\es^M,\es_+^M$}
 Given a wcpm $M$, we write $\es_+(M)=\es_+^M=\es(M)=\es^M$
 (cf.~the use of $\es,\es_+$ in connection with seg-pms).
\end{dfn}

\section{Tree embeddings and inflation}\label{sec:inflation}

In this section we introduce the key concepts of the paper: \emph{tree
embeddings}, \emph{inflation},
and various kinds of condensation for iteration strategies to which these
notions lead.
These notions were introduced somewhat in \S\ref{sec:intro}.
But first we lay down some iteration tree terminology;
see \S\ref{subsec:terminology} for more.

\subsection{Iteration tree terminology}\label{subsec:it_tree_terms}

\begin{dfn}
\index{$\iota^M$, $\iota^\Tt_\alpha$}Let $M$ be an active seg-pm and
$\delta=\lgcd(M)$. We define
$\iota^M=\iota(M)$.
If $\nu(F^M)\leq\delta$ and $\delta$ is a limit cardinal of $M$ then
$\iota^M=\delta$;
otherwise $\iota^M=\OR^M$. For an iteration tree $\Tt$ and $\alpha+1<\lh(\Tt)$,
$\iota^\Tt_\alpha$ denotes $\iota(\exit^\Tt_\alpha)$.\footnote{Recall
that $\exit^\Tt_\alpha=M^\Tt_\alpha|\In(E^\Tt_\alpha)$.}
\end{dfn}

\begin{rem}
Let $\Tt$ be an $m$-maximal or $\udash m$-maximal tree (on a seg-pm with either
indexing).
Recall that $\nutilde^\Tt_\alpha$ is the exchange ordinal associated to
$E^\Tt_\alpha$.
However, note that we could have used $\iota^\Tt_\alpha$ instead, without
changing the tree order.
Moreover, in the tree copying we will do, if $\sigma:M^\Tt_\alpha\to
M^{\Tt'}_{\alpha'}$
is a copy map and $E^{\Tt'}_{\alpha'}$ is the lift of $E^\Tt_\alpha$ (under
$\sigma$)
then $\sigma\rest\iota^\Tt_\alpha$ will agree with later copy maps.
(But there will be instances where $\iota^\Tt_\alpha<\OR(\exit^\Tt_\alpha)$ but
$\sigma\rest\OR(\exit^\Tt_\alpha)$ does not agree
with later copy maps.)
\end{rem}

\begin{dfn}\label{dfn:further_tree_terminology}
 Let  $\Tt$ be an iteration tree,
 $\eta=\lh(\Tt)$ and suppose $\Tt$ is either:
\begin{enumerate}[label=\tu{(}\roman*\tu{)}]
 \item\label{item:normal} a normal tree on the wcpm $M$, or
 \item\label{item:r-m-max} a $\udash m$-maximal tree on the $\udash m$-sound
seg-pm $M$,  or
 \item\label{item:m-max} an $m$-maximal tree on the $m$-sound pm $M$.
\end{enumerate}

 \index{$(\Tt,b)$, $\Tt\conc b$}If $\eta$ is a limit and $b$ is a $\Tt$-cofinal
branch, we write $(\Tt,b)$ or
$\Tt\conc b$
 for the putative tree $\Tt'$ extending $\Tt$, of length $\eta+1$, with
$[0,\eta)_{\Tt'}=b$.

 Suppose $\eta=\beta+1$. For $E\in\es_+(M^\Tt_\beta)$,
 we say that $E$ is \dfnemph{$\Tt$-normal}\index{$\Tt$-normal}\index{normal}
iff either
 \begin{enumerate}[label=--]
  \item  \ref{item:r-m-max} or \ref{item:m-max} above holds and
$\In(E^\Tt_\alpha)\leq\In(E)$ for all $\alpha<\beta$, or
  \item \ref{item:normal} above holds and
$\varrho^\Tt_\alpha<\varrho^{M^\Tt_\beta}(E)$ for all $\alpha<\beta$.
 \end{enumerate}
 If $E$ is $\Tt$-normal, then $\Tt\conc\left<E\right>$
\index{$\Tt\conc\left<E\right>$} denotes the putative tree $\Tt'$ extending
$\Tt$,
 of length $\eta+1$, such that either (i)
 $\Tt'$ is $\udash m$-maximal, or (ii) $\Tt'$ is $m$-maximal, or (iii) $\Tt'$
is
normal,
 respectively according to the case for $\Tt$ above.\footnote{We take it that
the basic fine structural information
 regarding an iteration tree $\Uu$ is explicitly given with $\Uu$, so there can
be no ambiguity here.}
\end{dfn}

\begin{dfn}[Model dropdown]\label{dfn:dropdown}\index{dropdown}
 Let $M$ be a putative\footnote{\emph{Putative} means that $M$ satisfies the
first-order requirements of premousehood,
 but may be illfounded.} $\udash k$-sound seg-pm and $\lambda\leq\OR^M$,
 where if $M$ is illfounded then $\lambda=\OR^M$.
 The \dfnemph{extended model dropdown sequence} of $(M,\lambda)$ is the
sequence
$\left<M_i\right>_{i\leq n}$ of maximal length such that $M_0=M|\lambda$, and
given $M_i\pins M$,
$M_{i+1}$ is the least $N\ins M$ such that either (i) $N=M$ or (ii) $M_i\pins
N$
and
$\rho_\om^N<\rho_\om^{M_i}$. The \index{reverse}\dfnemph{reverse} of a sequence
$\left<N_i\right>_{i\leq n}$ (where
$n<\om$) is $\left<N_{n-i}\right>_{i\leq n}$.
\end{dfn}

\begin{dfn}[Tree
dropdown]\label{dfn:tree_dropdown}\index{dropdown}\index{$\dds$, $\ddd$}
Let $M$ be a $\udash k$-sound segmented-premouse and let $\Tt$ be a putative
$\udash k$-maximal
tree on
$M$.

For $\beta+1<\lh(\Tt)$ let
$\lambda_\beta=\In(E^\Tt_\beta)$. For $\beta+1=\lh(\Tt)$ (if $\lh(\Tt)$ is a
successor) let
$\lambda_\beta=\OR(M^\Tt_\beta)$. Let $\beta<\lh(\Tt)$. Let
$\left<M_{\beta i}\right>_{i\leq m_\beta}$ be the
\emph{reversed} extended model dropdown sequence of
$(M^\Tt_\beta,\lambda_\beta)$
(this defines $m_\beta$). Here if $M^\Tt_\beta$ is ill-defined,
set instead $\lambda_\beta=m_\beta=M^\Tt_{\beta 0}=0$.
Then
$m_\beta^{\Tt}\eqdef m_\beta$ and $M_{\beta i}^{\Tt}\eqdef M_{\beta i}$.
 Let
$\theta\leq\lh(\Tt)$.
We define the \dfnemph{dropdown domain} $\ddd^{(\Tt,\theta)}$ of $(\Tt,\theta)$
by
\[ \Delta=\ddd^{(\Tt,\theta)}\eqdef\{(\beta,i)\bigm| \beta<\theta\ \&\ i\leq
m_\beta\}, \]
and the \dfnemph{dropdown
sequence} $\dds^{(\Tt,\theta)}$ of $(\Tt,\theta)$ by
$\dds^{(\Tt,\theta)}\eqdef\left<M_{\beta
i}\right>_{(\beta,i)\in\Delta}$.

The \dfnemph{dropdown sequence} $\dds^\Tt$ of $\Tt$ is
$\dds^{(\Tt,\lh(\Tt))}$, and
the \dfnemph{dropdown domain} $\ddd^\Tt$ of $\Tt$ is $\ddd^{(\Tt,\lh(\Tt))}$.
\end{dfn}

\begin{dfn}\index{$\clint$}\index{clint}
 Let $\Xx$ be an iteration tree. Then $\clint^\Xx$ denotes the set of closed
$<^\Xx$-intervals.
\end{dfn}

\subsection{Tree embeddings}

We now define the notion of a \emph{tree embedding} $\Pi:\Tt\tembto\Xx$
between normal trees
$\Tt,\Xx$ (actually we allow $\Tt$ to be a putative tree). This is fairly
straightforward, but
there are a lot of details to keep
track of, reminiscent of iterability proofs with resurrection. We first roughly
describe the objects involved, to give an idea of what to expect. The
primary data determining the tree embedding is an embedding of the tree
structure of $\Tt$ into
that of $\Xx$.
This embedding will determine canonical copy embeddings from models in the
dropdown sequence of
$\Tt$ to initial segments of
models of $\Xx$. A natural degree of commutativity between the copy embeddings
and
iteration embeddings will be required. For each extender used in $\Tt$ there
will be a
corresponding copy of this extender used in $\Xx$. A key point is that,
corresponding to each
$\beta<\lh(\Tt)$, we will typically have not just a single corresponding node
in
$\Xx$,
but a corresponding $\Xx$-\emph{interval}
$I_\beta=[\gamma_\beta,\delta_\beta]_\Xx$.
We will have a copy embedding
\[ \pi_{\beta 0}:M^\Tt_\beta\to M^\Xx_{\gamma_\beta} \]
(with codomain $M^\Xx_{\gamma_\beta}$ sitting at the \emph{start} of $I_\beta$).
But, if $\beta+1<\lh(\Tt)$, the copy of $E^\Tt_\beta$ (in $\Xx$) will be
$E^\Xx_{\delta_\beta}$,
not $E^\Xx_{\gamma_\beta}$ (unless $\delta_\beta=\gamma_\beta$).
Here $(\gamma_\beta,\delta_\beta]_\Xx$ might actually drop in model, but it
will
not drop below the
image of $E^\Tt_\beta$.
However, if $\lh(\Tt)=\beta+1$ then
$(\gamma_\beta,\delta_\beta]_\Xx$ will not drop in model.

We will actually define a slightly more
general notion: that of a tree embedding
$(\Tt,\theta)\tembto\Xx$, where $\theta\leq\lh(\Tt)$. If $\theta=\lh(\Tt)$ or
$\theta$ is a limit,
this will be the same as a tree embedding $\Tt\rest\theta\tembto\Xx$. But if
$\theta=\beta+1<\lh(\Tt)$,
then we allow $(\gamma_\beta,\delta_\beta]_\Xx$ to drop in model, as long as it
does not
drop below the image of $E^\Tt_\beta$.

\begin{dfn}[Tree embedding]\label{dfn:tree_embedding}\index{tree embedding}
\index{$\Pi:(\Tt,\theta)\tembto\Xx$}
 Let $M$ be a $\udash k$-sound seg-pm,
 let $\Tt$ be a putative $\udash k$-maximal tree on $M$,
 let $\Xx$ be a $\udash k$-maximal tree on $M$,
 let $1\leq\theta\leq\lh(\Tt)$, and let
$\Delta=\ddd^{(\Tt,\theta)}$.

A \dfnemph{tree embedding} $\temb:(\Tt,\theta)\tembto\Xx$ from $(\Tt,\theta)$
to $\Xx$
is a system\index{$I_\beta,I_{\beta i}$}
\index{$P_\beta,P_{\beta i}$}
\index{$\pi_\beta,\pi_{\beta i}$}
\begin{equation}\label{eqn:tree_embedding_system}
\Pi=\left(\Tt,\left<I_\beta\right>_{\beta<\theta}; \left<I_{\beta
i},P_{\beta i},\pi_{\beta i}\right>_{(\beta,i)\in\Delta}\right) \end{equation}
with properties \ref{item:tree_pres}--\ref{item:embedding_agreement} below.
We will see later that  $\Pi$ is
determined by $(\Tt,\Xx,\left<I_\beta\right>_{\beta<\theta})$. While stating
\ref{item:tree_pres}--\ref{item:embedding_agreement}, we
also define various other uniquely determined objects.
We sometimes denote $(\beta,i)$ with a
single variable $x$.
For $x=(\beta,i)\in \Delta$ let $m_\beta=m_\beta^\Tt$ and $M_{\beta
i}=M_x=M_x^\Tt$.

\begin{enumerate}[label=T\arabic*.,ref=T\arabic*]

\item\label{item:tree_pres} (Preservation of tree structure) See figure
\ref{fgr:pres_tree_struc}.
\begin{figure}
\centering
\begin{tikzpicture}
 [mymatrix/.style={
    matrix of math nodes,
    row sep=0.05cm,
    column sep=0.4cm}
  ]
   \matrix(m)[mymatrix]{
{}&{}&{}&{}&{}&{}&{\bullet}&{}&{}&{}\\
{\bullet}&{}&{}&{}&{}&{}&{}&{}&{}&{}\\
{}&{}&{}&{}&{}&{}&{\bullet}&{}&{}&{}\\
{}&{}&{}&{}&{}&{}&{}&{}&{}&{\bullet}\\
{}&{}&{}&{\bullet}&{}&{}&{}&{}&{}&{\bullet}\\
{}&{}&{}&{}&{}&{}&{}&{}&{\bullet}&{}\\
{}&{}&{}&{}&{}&{}&{}&{}&{\bullet}&{}\\
{}&{}&{\bullet}&{}&{}&{}&{}&{}&{}&{}\\
{}&{}&{}&{}&{}&{}&{}&{}&{\bullet}&{}\\
{}&{}&{}&{}&{}&{}&{}&{}&{\bullet}&{}\\
{}&{}&{}&{}&{}&{}&{}&{\bullet}&{}&{}\\
{}&{\bullet}&{}&{}&{}&{}&{}&{}&{}&{}\\
{}&{}&{}&{}&{}&{}&{}&{\bullet}&{}&{}\\
{}&{\Tt}&{}&{}&{}&{}&{}&{\widetilde{\Xx}}&{}&{}\\
};
\path[-,font=\scriptsize,shorten >= -5pt, shorten <= -5pt]
(m-12-2) edge[dotted] node[pos=-0.2,right] {$\ 0$} (m-8-3)
(m-12-2) edge[dotted] node[pos=1.1,right] {$\ 1$} (m-8-3)
(m-8-3) edge[dotted] node[pos=1.1,right] {$\ 2$} (m-5-4)
(m-8-3) edge[dotted] node[pos=1.1,right] {$\ 3$} (m-2-1)
(m-13-8) edge node[pos=-0.6,right] {$\gamma_0$} (m-11-8)
(m-13-8) edge node[pos=1.5,left] {$\delta_0$} (m-11-8)
(m-12-8) edge[dotted] node[pos=1.1,right] {$\ \gamma_1$} (m-10-9)
(m-10-9) edge node[pos=1.2,left] {$\delta_1$} (m-6-9)
(m-5-10) edge node[pos=-8,right] {$\gamma_2$} (m-4-10)
(m-5-10) edge node[pos=9,right] {$\delta_2$} (m-4-10)
(m-3-7) edge node[pos=-0.5,left] {$\gamma_3$} (m-1-7)
(m-3-7) edge node[pos=1.5,left] {$\delta_3$} (m-1-7)
(m-9-9) edge[dotted] node[pos=-0.1,right] {$\eta_3$} (m-3-7)
(m-7-9) edge[dotted] node[pos=-0.4,left] {$\eta_2$} (m-5-10);
\end{tikzpicture}
\caption{Preservation of tree structure, with $\lh(\Tt)=\theta=4$.
Bullets
represent tree nodes.
Dotted lines connect nodes with their predecessors;
in particular, $\eta_i=\pred^\Xx(\gamma_i)$ for $i=2,3$. Solid lines
represent $<^\Xx$-intervals.
And $\widetilde{\Xx}$ is the restriction of $\Xx$ to
$\bigcup_{i<4}I_i$.\label{fgr:pres_tree_struc}}
\end{figure}

We have $I_\beta\in\clint^\Xx$ for each $\beta<\theta$.
Let\index{$\gamma_\beta,\gamma_{\beta i}$}
\index{$\delta_\beta,\delta_{\beta i}$}
\[ [\gamma_\beta,\delta_\beta]_\Xx\eqdef I_\beta. \]
\index{$\Gamma$}Let $\Gamma:\theta\to\lh(\Xx)$ be $\Gamma(\beta)=\gamma_\beta$.
Then:
\begin{enumerate}[label=\tu{(}\alph*\tu{)}]
\item $\gamma_0=0$,
\item $\Gamma$ preserves $<$, is continuous, sends successors (i.e. successor
ordinals)
to successors,
\item $\beta_0<^\Tt\beta_1\iff\gamma_{\beta_0}<^\Xx\gamma_{\beta_1}$.
\item\label{item:degree_match} $\udeg^\Xx(\gamma_\beta)=\udeg^\Tt(\beta)$.
\item For $\beta+1<\theta$, we have $\gamma_{\beta+1}=\delta_\beta+1$.
\item\label{item:pred_pres} For $\beta+1<\theta$, letting
$\xi=\pred^\Tt(\beta+1)$, we have
\[ \pred^\Xx(\gamma_{\beta+1})\in I_\xi\]
and
\[ \dr^\Xx\inter(\gamma_\xi,\gamma_{\beta+1}]_\Xx=\emptyset\iff \beta+1\notin
\dr^\Tt.\]
\end{enumerate}
(So (i) the $<$-intervals\footnote{Note this is ${<}$,
not ${<^\Tt}$.} $[\gamma_\beta,\delta_\beta]$ partition
$\sup_{\beta<\theta}\delta_\beta$,  (ii) for
$\xi,\zeta<\theta$,
\[ (\gamma_\xi,\gamma_\zeta]_\Xx\inter\dropset^\Xx=\emptyset\
\iff\ (\xi,\zeta]_\Tt\inter\dropset^\Tt=\emptyset,\]
and (iii) for each limit
$\beta<\theta$, we
have $\Gamma``[0,\beta)_\Tt\sub_{\text{cof}}[0,\gamma_\beta)_\Xx$.)
\item\label{item:structure_I_beta} (Structure of $I_\beta$) Let $(\beta,i)\in
\Delta$. Then:\index{$\gamma_\beta,\gamma_{\beta i}$}
\index{$\delta_\beta,\delta_{\beta i}$}
\begin{enumerate}
\item $I_{\beta i}\in\clint^\Xx$ and $I_{\beta i}\sub I_\beta$. Let
$[\gamma_{\beta
i},\delta_{\beta i}]_\Xx\eqdef I_{\beta i}$.
\item $\gamma_{\beta 0}=\gamma_\beta$ and $\delta_{\beta m_\beta}=\delta_\beta$.
\item If $(\beta,i+1)\in \Delta$ then $\gamma_{\beta,i+1}=\delta_{\beta i}$.
\end{enumerate}
(Therefore, $I_{\beta 0},\ldots,I_{\beta m_\beta}$ essentially partition
$I_\beta$ into an increasing sequence of closed
$<^\Xx$-intervals; they just overlap at their endpoints.)
\begin{enumerate}[resume*]
\item If $\gamma_{\beta i}<\delta_{\beta i}$ then let $\eps_{\beta
i}=\min(I_{\beta
i}\cut\{\gamma_{\beta i}\})$.\index{$\eps_{\beta i}$}
\item If $\gamma_{\beta 0}<\delta_{\beta 0}$ then $(\gamma_{\beta
0},\delta_{\beta 0}]_\Xx$ does
not drop in model (but may drop in degree).
\item If $i>0$ and $\gamma_{\beta i}<\delta_{\beta i}$ then
$\dr^\Xx\inter(\gamma_{\beta
i},\delta_{\beta i}]_\Xx=\{\eps_{\beta i}\}$.
\end{enumerate}
\item\label{item:model_embeddings} (Model embeddings) See figure
\ref{fgr:model_embeddings}. Let
$x=(\beta,i)\in \Delta$. Then:
\begin{enumerate}[label=\tu{(}\alph*\tu{)}]
\item $P_{\beta i}$ is a segmented-premouse and $\pi_{\beta i}:M_{\beta i}\to
P_{\beta i}$ is an
embedding. \index{$P_\beta,P_{\beta i}$}
\index{$\pi_\beta,\pi_{\beta i}$}Let $P_\beta=P_{\beta 0}$ and
$\pi_\beta=\pi_{\beta 0}$ (but maybe
$I_\beta\neq I_{\beta 0}$).
\item $P_{0}=M$ and $\pi_0=\id:M\to M$.
\item\label{item:P_beta,0_def}$P_{\beta}=M^\Xx_{\gamma_{\beta}}$ (recall
$\gamma_\beta=\gamma_{\beta 0}$).
\item\label{item:pi_beta,0_near_embedding_only} $\pi_{\beta}$ is a near
$\udash\deg^\Tt(\beta)$-embedding.
\item Suppose $i>0$. Then $P_x\pins M^\Xx_{\gamma_x}$
and
$\pi_x$ is fully
elementary. If $\gamma_x<\delta_x$ then $P_x=M^{*\Xx}_{\eps_x}$.

\begin{figure}
\centering
\begin{tikzpicture}
 [mymatrix/.style={
    matrix of  nodes,
    row sep=0.2cm,
    column sep=0.8cm}]
   \matrix(m)[mymatrix]{
{}&{}&{}&{}&{}&{}&{}&{}&{}&{}&{}\\
{}&{}&{}&{}&{}&{}&{}&{}&{}&{}&{}\\
{}&{}&{}&{}&{}&{}&{}&{}&{}&{}&{}\\
{}&{}&{}&{}&{}&{}&{}&{}&{}&{}&{}\\
{}&{}&{}&{}&{}&{}&{}&{}&{}&{}&{}\\
{}&{}&{}&{}&{}&{}&{}&{}&{}&{}&{}\\
{}&{}&{}&{}&{}&{}&{}&{}&{}&{}&{}\\
{}&{}&{}&{}&{}&{}&{}&{}&{}&{}&{}\\
{}&{}&{}&{}&{}&{}&{}&{}&{}&{}&{}\\
{}&{}&{}&{}&{}&{}&{}&{}&{}&{}&{}\\
{}&{}&{}&{}&{}&{}&{}&{}&{}&{}&{}\\
{}&{}&{}&{}&{}&{}&{}&{}&{}&{}&{}\\
{}&{}&{}&{}&{}&{}&{}&{}&{}&{}&{}\\
{}&{}&{}&{}&{}&{}&{}&{}&{}&{}&{}\\
{}&{}&{}&{}&{}&{}&{}&{}&{}&{}&{}\\
{}&{}&{}&{}&{}&{}&{}&{}&{}&{}&{}\\
{}&{}&{}&{}&{}&{}&{}&{}&{}&{}&{}\\};
\draw (m-17-1)--(m-7-1);
\path[->,font=\scriptsize,shorten >= -3pt, shorten <= -3pt]
(m-17-11) edge[-] node {} (m-1-11)
(m-17-1) edge[-] node [left,pos=(2.2/11)] {$\rho_\om^{M_{\beta1}}$} (m-7-1)
(m-17-1) edge[-] node [left,pos=(4.6/11)] {$\rho_\om^{M_{\beta2}}$} (m-7-1)
(m-17-1) edge[-] node [left,pos=(6.8/11)] {$M_{\beta2}$} (m-7-1)
(m-17-1) edge[-] node [left,pos=(9/11)] {$M_{\beta 1}$} (m-7-1)
(m-17-1) edge[-] node [pos=(12.1/11)] {$M^\Tt_\beta$} (m-7-1)
(m-17-3) edge[-] node [pos=(14.2/13)] {$M^\Xx_{\gamma_\beta}$}
(m-5-3)
(m-17-5) edge[-] node [right,pos=(6.1/15)]
{$\rho_\om(P_{\beta 1})$} (m-3-5)
(m-17-5) edge[-] node [left,pos=(13.5/15)] {$P_{\beta1}$} (m-3-5)
(m-17-5) edge[-] node [pos=(16/15)]
{$M^\Xx_{\delta_{\beta0}}$} (m-3-5)
(m-17-7) edge[-] node [right,pos=(10.6/15)]
{$\rho_\om(P_{\beta 2})$} (m-3-7)
(m-17-7) edge[-] node [pos=(16/15)]
{$M^\Xx_{\delta_{\beta1}}$} (m-3-7)
(m-17-7) edge[-] node [left,pos=(13.5/15)] {$P_{\beta 2}$} (m-3-7)
(m-17-9) edge[-] node {} (m-3-9) 
(m-17-9) edge[-] node [pos=(16.2/15)] {$M^\Xx_{\varepsilon_{\beta 2}}$}
(m-3-9)
(m-17-9) edge[-] node{} (m-3-9)
(m-17-11) edge[-] node [pos=17.9/17] {$M^\Xx_{\delta_\beta}$} (m-1-11);
\path[->,font=\scriptsize,shorten >= -2pt, shorten <= -2pt]
(m-7-1) edge[->] node[below=1mm]{$\pi_{\beta 0}$} (m-5-3)
(m-5-3) edge[->] node[below,pos=0.25]{$\ \ \ \sigma_{\beta0}$} (m-3-5)
(m-9-1) edge[->] node[below=1mm,pos=0.76]{$\pi_{\beta 1}$} (m-5-5)
(m-5-5) edge[->] node[below,pos=0.25]{$\ \ \ \sigma_{\beta1}$} (m-3-7)
(m-11-1) edge[->] node[below=1mm,pos=.86]{$\pi_{\beta 2}$} (m-5-7)
(m-5-7) edge[->] node[below,pos=0.65]{$\ \ \ \sigma_{\beta2}$} (m-1-11);
\path[->,font=\scriptsize,shorten >= -3pt, shorten <= -3pt]
(m-13-1) edge[dotted] node {} (m-7-7)
(m-15-1) edge[dotted] node {} (m-11-5);
\path[->,font=\scriptsize,shorten >= -2pt, shorten <= -3pt]
(m-9-1) edge[bend left,dashed] node {} (m-15-1)
(m-11-1) edge[bend left,dashed] node {} (m-13-1)
(m-5-5) edge[bend left,dashed] node {} (m-11-5)
(m-7-5) edge[bend left,dashed] node {} (m-9-5)
(m-5-7) edge[bend left,dashed] node {} (m-7-7);
\path[-{Straight Barb[left]},shorten >= -3pt, shorten <= -3pt]
(m-16-3) edge[dotted] node {} (m-15-4)
(m-11-5) edge[dotted] node {} (m-10-6)
(m-7-7) edge[dotted] node {} (m-6-8);
\end{tikzpicture}
\caption{Model embeddings, with $m_\beta=2$. Vertical lines represent models,
with length roughly corresponding to ordinal height.
Solid arrows represent embeddings
$\pi_{\beta i}$ and $\sigma_{\beta i}$,
with $\crit(\sigma_{\beta i})$ roughly
at the origin of a short dotted half-headed arrow.
Dotted full-headed arrows indicate certain threads under embeddings.
Dashed curved arrows point to the $\omega^\nth$
projectum of the structure at their origin. Note that
$M^\Xx_{\delta_{\beta i}}=M^\Xx_{\gamma_{\beta,i+1}}$
and for $i=0,1$.}
\label{fgr:model_embeddings}
\end{figure}

\item If $\gamma_x<\delta_x$ let
\index{$\tau_\beta,\tau_{\beta i}$}
\index{$\sigma_\beta,\sigma_{\beta i}$}
\[ \sigma_{\beta i}=\sigma_x=i^{*\Xx}_{\eps_x,\delta_x}:P_x\to
M^\Xx_{\delta_x};\]
otherwise let $\sigma_x:P_x\to P_x$ be the identity. Let
$\tau_x=\sigma_x\com\pi_x$.
\item\label{item:P_x^+,pi_x^+} Suppose $(\beta,i+1)\in \Delta$. Then
$P_{\beta,i+1}=\tau_{\beta i}(M_{\beta,i+1})$
and
$\pi_{\beta,i+1}=\tau_{\beta i}\rest M_{\beta,i+1}$.
\end{enumerate}
\item\label{item:extender_copying} (Extender copying) For $\beta+1\leq\theta$,
let
$\omega_\beta=\tau_{\beta m_\beta}$\index{$\om_\beta$}
\index{$Q_\beta,Q_{\beta i}$}
and let $Q_\beta$ be the codomain of $\omega_\beta$; that is,
\begin{enumerate}[label=--]
\item if $\gamma_{\beta m_\beta}=\delta_{\beta m_\beta}$ then $Q_\beta=P_{\beta
m_\beta}$, and
\item if $\gamma_{\beta m_\beta}<\delta_{\beta m_\beta}$ then
$Q_\beta=M^\Xx_{\delta_\beta}$.
\end{enumerate}
If $\beta+1<\theta$ then $E^\Xx_{\delta_\beta}=F^{Q_\beta}$ (so
$E^\Xx_{\delta_\beta}$ is the copy
of $E^\Tt_\beta$ under $\omega_\beta$).

\item\label{item:embedding_commutativity} (Embedding commutativity)
\begin{figure}
\centering
\begin{tikzpicture}
 [mymatrix/.style={
    matrix of  nodes,
    row sep=0.09cm,
    column sep=0.7cm}]
   \matrix(m)[mymatrix]{
{}&{}&{$M^\Tt_\xi$}&{}&{}&{}&{$P_\xi$}&{}\\
{}&{}&{}&{}&{}&{}&{}&{}\\
{}&{}&{}&{}&{}&{}&{}&{$M^\Xx_{\delta_{\beta1}}$}\\
{}&{}&{}&{}&{}&{}&{}&{}\\
{}&{}&{}&{$M^\Xx_{\gamma_{\beta1}}$}&{}&{$M^\Xx_{\tilde{\beta}}$}&{}&{}\\
{}&{}&{}&{\rotatebox[origin=c]{90}{$\pins$}}&{}&{}&{}&{}\\
{$M^\Tt_\beta$}&{}&{$P_\beta$}&{$P_{\beta1}$}&{}&{}&{}&{}\\
{\rotatebox[origin=c]{90}{$\pins$}}&{}&{\rotatebox[origin=c]{90}{$\pins$}}&{}&{}
&{}&{}&{}\\
{$M_{\beta1}$}&{}&{$P$}&{}&{}&{}&{}&{}\\};
\path[->,font=\scriptsize,shorten >= -2pt, shorten <= -2pt]
(m-9-1) edge[dotted] node[below] {$\pi_\beta\rest M_{\beta1}$} (m-9-3)
(m-9-1) edge node[left,pos=0.65] {$i^{*\Tt}_{\xi}$} (m-1-3)
(m-7-1) edge[dotted] node[below] {$\pi_\beta$} (m-7-3)
(m-9-3) edge node[below]{$\ \ \ \ \ \ \sigma_{\beta 0}\rest P$} (m-7-4)
(m-7-4) edge node[above,pos=0.65]
{$i^{*\Xx}_{\varepsilon_{\beta1}\gamma_\xi}\ \ \ \ \ \ \ $} (m-1-7)
(m-7-3) edge node[below] {$\ \ \ \ \sigma_{\beta0}$} (m-5-4)
(m-7-4) edge node[below,pos=0.7]
{$\ \ \ i^{*\Xx}_{\varepsilon_{\beta1}\tilde{\beta}}$} (m-5-6)
(m-5-6) edge node[below] {$\ \ \ i^\Xx_{\tilde{\beta}\delta_{\beta1}}$} (m-3-8)
(m-5-6) edge node[right] {$\ i^{*\Xx}_{\gamma_\xi}$} (m-1-7)
(m-1-3) edge[dotted] node[below,pos=0.4] {$\pi_\xi$} (m-1-7);
\end{tikzpicture}
\caption{Embedding commutativity part
\ref{item:embedding_commutativity}\ref{item:emb_comm_drop},
with $i=1$ and $\gamma_{\beta 1}<\widetilde{\beta}<\delta_{\beta 1}$. The
diagram commutes.
Solid arrows are iteration embeddings and their restrictions; dotted arrows are
copy embeddings
and their restrictions. In the figure, $P=\pi_\beta(M_{\beta
1})$.}\label{fgr:embedding_comm}
\end{figure}
Let $(\beta,i),(\alpha+1,0),(\xi,0)\in \Delta$ be such that
$\beta<^\Tt\alpha+1\leq^\Tt\xi$ and
$\beta=\pred^\Tt(\alpha+1)$ and $M_{\beta i}=M^{*\Tt}_{\alpha+1}$.
Then:
\begin{enumerate}[label=\tu{(}\alph*\tu{)}]
\item If $(\beta,\xi]_\Tt\inter \dr^\Tt=\emptyset$ (so $i=0$ and
$(\gamma_\beta,\gamma_\xi]_\Xx\inter\dr^\Xx=\emptyset$) then
\[ \pi_{\xi}\com
i^{\Tt}_{\beta,\xi}=i^\Xx_{\gamma_\beta,\gamma_\xi}\com\pi_{\beta} \]
and $\pred^\Xx(\gamma_{\alpha+1})\in I_{\beta 0}$.
\item\label{item:emb_comm_drop}
See figure
\ref{fgr:embedding_comm}. Suppose $\xi=\alpha+1\in \dr^\Tt$ (so $i>0$). Let
$\widetilde{\beta}=\pred^\Xx(\gamma_\xi)$. Then $\widetilde{\beta}\in
I_{\beta i}$ and:
\begin{enumerate}[label=--]
 \item If
$\widetilde{\beta}=\gamma_{\beta i}$ then $\gamma_\xi\in \dr^\Xx$ and
$M^{*\Xx}_{\gamma_\xi}=P_{\beta i}$
and
$\pi_{\xi}\com i^{*\Tt}_\xi=i^{*\Xx}_{\gamma_\xi}\com\pi_{\beta i}$;
\item If $\widetilde{\beta}>\gamma_{\beta i}$ then $\gamma_\xi\notin \dr^\Xx$
and
$\pi_{\xi}\com i^{*\Tt}_\xi=i^{*\Xx}_{\eps_x,\gamma_\xi}\com\pi_{\beta i}$.
\end{enumerate}
\end{enumerate}
\item\label{item:embedding_agreement} (Embedding agreement) For
$\beta+1<\theta$
and
$(\beta',i')\in \Delta$ with $\beta<\beta'$:
\begin{enumerate}[label=--]
\item $\omega_\beta\rest\iota^\Tt_\beta\sub\pi_{\beta' i'}$
\item $\omega_\beta(\alpha)\leq\pi_{\beta' i'}(\alpha)$ for all
$\alpha<\In(E^\Tt_\beta)$,
\item if
$\In(E^\Tt_\beta)<\OR(M_{\beta'i'})$ then
$\In(E^\Xx_{\delta_\beta})\leq\pi_{\beta' i'}(\In(E^\Tt_\beta))$,
\item if
$\In(E^\Tt_\beta)=\OR(M_{\beta' i'})$ then $\beta'=\beta+1$, $i'=0$,
$\In(E^\Xx_{\delta_\beta})=\OR(M^\Xx_{\gamma_{\beta+1}})$,
$\pi_{\beta+1}=\omega_\beta$,
$M^{*\Xx}_{\gamma_{\beta+1}}=Q_\alpha$
where $\alpha=\pred^\Tt(\beta+1)$.\footnote{It follows that
we are using MS-indexing,
$E^\Tt_\beta$ is superstrong and $M^\Tt_{\beta+1}$ is active type 2,
$E^\Xx_{\delta_\beta}$ is
superstrong and $M^\Xx_{\delta_\beta+1}$ is active type 2.}\qedhere
\end{enumerate}
\end{enumerate}
\end{dfn}

The analogue for wcpms is much simpler, as there is no dropping to consider:

\begin{dfn}[Tree embedding for wcpms]\label{cdfn:tree_embedding}\index{tree
embedding}\index{$\Pi:(\Tt,\theta)\tembto\Xx$}
 Let $M$ be a wcpm,
 let $\Tt$ be a putative  normal tree on $M$,
 let $\Xx$ be a normal tree on $M$,
 and let $1\leq\theta\leq\lh(\Tt)$.
A \dfnemph{tree embedding} $\temb:(\Tt,\theta)\tembto\Xx$
 from
$(\Tt,\theta)$ to $\Xx$
is a system $\Pi$ of form
\index{$I_\beta,I_{\beta i}$}
\index{$\pi_\beta,\pi_{\beta i}$}\index{$P_\beta,P_{\beta i}$}
$\Pi=\left(\Tt,\left<I_\beta,\pi_{\beta}\right>_{\beta<\theta}\right)$
satisfying conditions \ref{citem:tree_pres}--\ref{citem:embedding_agreement}
below.\footnote{There is no analogue
of condition \ref{dfn:tree_embedding}(\ref{item:structure_I_beta}), because
there is no dropping or degrees.}

\begin{enumerate}[label=T$_\text{c}$\arabic*.,ref=T$_\text{c}$\arabic*]

\item\label{citem:tree_pres} (Preservation of tree structure) Exactly the
assertion
of condition \ref{dfn:tree_embedding}(\ref{item:tree_pres}), minus the
references to dropping and degrees.
\setcounter{enumi}{2}
\item\label{citem:model_embeddings} (Model embeddings) See figure
\ref{fgr:model_embeddings}. For all $\beta<\theta$:
\begin{enumerate}[label=\tu{(}\alph*\tu{)}]
\item Let $P_{\beta}=M^\Xx_{\gamma_{\beta}}$.\index{$P_\beta,P_{\beta i}$}
\item $\pi_\beta:M_\beta\to P_\beta$ is a coarse
$0$-embedding.\index{$\pi_\beta,\pi_{\beta i}$}
\item $\pi_0=\id:M\to M$.
\item\index{$\sigma_\beta,\sigma_{\beta i}$}
\index{$\tau_\beta,\tau_{\beta i}$} Let
$\sigma_{\beta}=i^{\Xx}_{\gamma_\beta\delta_\beta}:P_\beta\to
M^\Xx_{\delta_\beta}$
and $\tau_\beta=\sigma_\beta\com\pi_\beta$.
\end{enumerate}
\item\label{citem:extender_copying} (Extender copying) For $\beta+1<\theta$, we
have
$E^\Xx_{\delta_\beta}=\tau_\beta(E^\Tt_\beta)$.\index{$\om_\beta$}
\index{$Q_\beta,Q_{\beta i}$}\footnote{Note that because
there is no dropping,
we do not define $\om_\beta$ and $Q_\beta$ here. The map
$\tau_\beta$ lifts $E^\Tt_\beta$ to $E^\Xx_{\delta_\beta}$ here.}

\item\label{citem:embedding_commutativity} (Embedding commutativity)
If $\beta<^\Tt\xi<\theta$  and $\alpha+1=\successor^\Tt(\beta,\xi)$
then
\[ \pi_{\xi}\com
i^{\Tt}_{\beta\xi}=i^\Xx_{\gamma_\beta\gamma_\xi}\com\pi_{\beta}.\]
\item\label{citem:embedding_agreement} (Embedding agreement) Let
$\beta+1\leq\beta'<\theta$ and $\varrho=\varrho^\Tt_\beta$. Then
\[ \tau_\beta\rest V_\varrho^{M^\Tt_\beta}\sub\pi_{\beta'}\text{ and }
\varrho^\Xx_{\delta_\beta}=\tau_\beta(\varrho)\leq\pi_{\beta'}
(\varrho).\qedhere\]
\end{enumerate}
\end{dfn}

\begin{dfn}\index{$\udash$degree}\index{u-degree}
A tree embedding $\Pi:(\Tt,\theta)\tembto\Xx$ has \dfnemph{$\udash$degree $k$}
iff $\Tt,\Xx$ are
$\udash k$-maximal. (There is a unique such $k$, since
$k=\udeg^\Tt(0)$.)\end{dfn}

\begin{dfn}\index{tree embedding}\index{$\Pi:\Tt\tembto\Xx$} A \dfnemph{tree
embedding}
$\Pi:\Tt\tembto\Xx$
from $\Tt$ to $\Xx$
 is a tree embedding $\Pi:(\Tt,\lh(\Tt))\tembto\Xx$.
\end{dfn}

Clearly if $\Pi:\Tt\tembto\Xx$ then $\Tt$ is in fact an iteration tree (it has
well-defined and wellfounded models).
We record some notation:

\begin{dfn}\label{dfn:Pi_subscript_notation}
\index{$Q_\beta,Q_{\beta i}$}\index{$i_\beta$}\index{$\gamma_{\Pi\beta}$,
$I_{\Pi\beta}$, etc}
Let $\Pi$ be a tree embedding. Fix notation as in \ref{dfn:tree_embedding}.
Define $Q_{\beta i}=\cod(\tau_{\beta i})$. That is, $Q_{\beta i}=P_{\beta i}$
if
$\gamma_{\beta
i}=\delta_{\beta i}$,
and $Q_{\beta i}=M^\Xx_{\delta_{\beta i}}$ otherwise.
Let $i_{\beta}$ be the least $i$ such that $\delta_\beta=\delta_{\beta i}$.
So $Q_\beta=Q_{\beta m_\beta}\ins Q_{\beta i_\beta}=M^\Xx_{\delta_\beta}$.

We use the subscript\footnote{The superscript position of this notation will be
used for
another purpose.} ``$\Pi$'' to indicate the objects
associated to $\Pi$.
That is, $I_{\Pi\beta}=I_\beta$ for $\beta<\theta$, and
$\Gamma_\Pi=\Gamma$,
and likewise for
$\gamma_\beta,\delta_\beta,P_\beta,\pi_\beta,Q_\beta,\omega_\beta,i_\beta$ for
$\beta<\theta$, and $I_{\beta i},P_{\beta i},\pi_{\beta i},\gamma_{\beta
i},\delta_{\beta i},\sigma_{\beta i},\tau_{\beta i},Q_{\beta i}$ for
$(\beta,i)\in \Delta$.
\end{dfn}

\begin{dfn}[$j^\Xx_{\xi\eta}$]\index{$j^\Xx_{\xi\eta}$} Let
$\Pi:(\Tt,\theta)\tembto\Xx$ be a tree embedding and
$\gamma_\beta=\gamma_{\Pi\beta}$, etc.
Let $\beta<\theta$.
Let $\xi,\eta\in I_\beta$
with $\xi\leq\eta$. Then $j^\Xx_{\xi\eta}$ denotes the embedding with domain as
large as
possible, given by composing iteration embeddings $i^\Xx_{\mu\nu}$ and
$i^{*\Xx}_{\mu\nu}$ with
$\xi\leq^\Xx\mu\leq^\Xx\nu\leq^\Xx\eta$.
That is, let $m,n$ be least such that $\xi\in I_{\beta m}$ and $\eta\in
I_{\beta
n}$ respectively.
If $m=n$ then $j^\Xx_{\xi\eta}\eqdef i^\Xx_{\xi\eta}$. If $m<n$ then letting
$\eps=\eps_{\beta
n}$ and $\delta=\delta_{\beta,n-1}=\gamma_{\beta n}$,
\[ j^\Xx_{\xi\eta}\eqdef i^{*\Xx}_{\eps\eta}\com j^\Xx_{\xi\delta},\]
where $\dom(j^\Xx_{\xi\eta})=M^\Xx_\xi$ if $m=n$, and
$\dom(j^\Xx_{\xi\eta})=j^\Xx_{\gamma_\beta\xi}(\pi_{\beta 0}(M_{\beta n}))$ if
$m<n$.
\end{dfn}

\begin{dfn}[$\pi_{\beta\kappa}:M_{\beta\kappa}\to P_{\beta\kappa}$ and
$n_{\beta\kappa}$]\label{dfn:pi_beta,kappa}\index{$\M_{\beta\kappa}$,
$\gamma_{\beta\kappa}$, $\pi_{\beta\kappa}$, etc} (Figure
\ref{fgr:pi_beta,kappa}.)
 Let
$\Pi:(\Tt,\theta)\tembto\Xx$
 be a tree embedding and $\gamma_\beta=\gamma_{\Pi\beta}$,
etc.
Let $\beta<\theta$.
Let $\kappa\in[\omega,\OR(M^\Tt_\beta))$, with $\kappa<\nutilde^\Tt_\beta$
if $\beta+1<\lh(\Tt)$,
and $\kappa\leq\OR(M^\Tt_\beta)$ if $\beta+1=\lh(\Tt)$.
We will define $i_{\beta\kappa}$, $n_{\beta\kappa}$, $M_{\beta\kappa}$,
$\gamma_{\beta\kappa}$,
$P_{\beta\kappa}$ and
$\pi_{\beta\kappa}:M_{\beta\kappa}\to P_{\beta\kappa}$.

If $\beta+1=\lh(\Tt)$ and $\kappa=\OR(M^\Tt_\beta)$ then let
$i_{\beta\kappa}=n_{\beta\kappa}=0$, $M_{\beta\kappa}=M^\Tt_\beta$,
$P_{\beta\kappa}=Q_\beta$,
$\gamma_{\beta\kappa}=\delta_\beta$ and $\pi_{\beta\kappa}=\omega_\beta$.

Now suppose either $\beta+1<\lh(\Tt)$ or $\kappa<\OR(M^\Tt_\beta)$.
Let $i_{\beta\kappa}$ be
the largest $i<\om$ such
that either $i=0$ or
$\rho_\om(M_{\beta i})\leq\kappa$. Let $i=i_{\beta\kappa}$.
Set $M_{\beta\kappa}\eqdef M_{\beta i}$. Let $n_{\beta\kappa}$ be the largest
$n<\om$ such that
\[ (M_{\beta\kappa},n)\ins(M_{\beta 0},\udeg^\Tt(\beta)) \]
and
$\kappa<\udash\rho_n^{M_{\beta\kappa}}$.

\begin{figure}
\centering
\begin{tikzpicture}
 [mymatrix/.style={
    matrix of  nodes,
    row sep=0.04cm,
    column sep=0.8cm}]
   \matrix(m)[mymatrix]{
{}&{}&{}&{}&{}&{}&{}&{}&{}\\
{}&{}&{}&{}&{}&{}&{}&{}&{}\\
{}&{}&{}&{}&{}&{}&{}&{}&{}\\
{}&{}&{}&{}&{}&{}&{}&{}&{}\\
{}&{}&{}&{}&{}&{}&{}&{}&{}\\
{}&{}&{}&{}&{}&{}&{}&{}&{}\\
{}&{}&{}&{}&{}&{}&{}&{}&{}\\
{}&{}&{}&{}&{}&{}&{}&{}&{}\\
{}&{}&{}&{}&{}&{}&{}&{}&{}\\
{}&{}&{}&{}&{}&{}&{}&{}&{}\\
{}&{}&{}&{}&{}&{}&{}&{}&{}\\
{}&{}&{}&{}&{}&{}&{}&{}&{}\\
{}&{}&{}&{}&{}&{}&{}&{}&{}\\
{}&{}&{}&{}&{}&{}&{}&{}&{}\\
{}&{}&{}&{}&{}&{}&{}&{}&{}\\
{}&{}&{}&{}&{}&{}&{}&{}&{}\\
{}&{}&{}&{}&{}&{}&{}&{}&{}\\
{}&{}&{}&{}&{}&{}&{}&{}&{}\\
{}&{}&{}&{}&{}&{}&{}&{}&{}\\
{}&{}&{}&{}&{}&{}&{}&{}&{}\\
{}&{}&{}&{}&{}&{}&{}&{}&{}\\
{}&{}&{}&{}&{}&{}&{}&{}&{}\\
{}&{}&{}&{}&{}&{}&{}&{}&{}\\};
\path[->,font=\scriptsize,shorten >= -3pt, shorten <= -3pt]
(m-23-1) edge[-] node [pos=(11.8/11)] {$M^\Tt_\beta$} (m-5-1)
(m-23-1) edge[-] node [left,pos=(8.6/11)] {$M_{\beta\kappa}=M_{\beta i}$}
(m-5-1)
(m-23-1) edge[-] node [left,pos=(6.2/11)] {$M_{\beta,i+1}$} (m-5-1)
(m-23-1) edge[-] node [left,pos=(2.4/11)] {$\kappa$} (m-5-1)
(m-23-3) edge[-] node [pos=(11.8/11)] {$M^\Xx_{\gamma_{\beta i}}$} (m-3-3)
(m-23-3) edge[-] node [left,pos=(9.3/11)] {$P_{\beta i}$} (m-3-3)
(m-23-5) edge[-] node [pos=(12/11)] {$M^\Xx_\alpha$} (m-5-5)
(m-23-7) edge[-] node [pos=(12.1/11)] {$P_{\beta\kappa}=M^\Xx_\gamma$} (m-3-7)
(m-23-9) edge[-] node [pos=(11.8/11)] {$M^\Xx_{\delta_{\beta i}}$} (m-1-9)
(m-23-9) edge[-] node [right,pos=(5.2/11)] {$\om_\beta(\kappa)$} (m-1-9)
(m-23-1) edge[-] node {} (m-5-1)
(m-23-3) edge[-] node  {} (m-3-3)
(m-23-5) edge[-] node {} (m-5-5)
(m-23-7) edge[-] node {} (m-3-7)
(m-23-9) edge[-] node  {} (m-1-9)
(m-19-1) edge[-,dotted] node{} (m-13-7)
(m-13-7) edge[-,dotted] node {} (m-13-9);
\path[->,font=\scriptsize,shorten >= -2pt, shorten <= -3pt]
(m-9-1) edge[bend left,dashed] node {} (m-21-1)
(m-13-1) edge[bend left,dashed] node {} (m-17-1);
\path[->,font=\scriptsize,shorten >= -2pt, shorten <= -2pt]
(m-9-1) edge node[above] {$\pi_{\beta i}$} (m-7-3)
(m-7-3) edge  node[above,pos=0.25] {$j$} (m-3-7)
(m-9-1) edge[transform canvas={yshift=-1.4mm}]  node[below]
{$\ \ \pi_{\beta\kappa}$} (m-3-7)
(m-3-7) edge[dashed] node[below] {$j'$} (m-1-9);
\path[-{Straight Barb[left]},shorten >= 6pt, shorten <= -3pt]
(m-21-1) edge[dotted] node {} (m-20-2)
(m-19-3) edge[dotted] node {} (m-18-4)
(m-17-5) edge[dotted] node {} (m-16-6)
(m-12-7) edge[dotted] node {} (m-11-8);
\end{tikzpicture}
\caption{A typical picture for the embedding
$\pi_{\beta\kappa}:M_{\beta\kappa}\to
P_{\beta\kappa}$, when $i=i_{\beta\kappa}$ and $\gamma_{\beta
i}<\gamma=\gamma_{\beta\kappa}<\delta_{\beta i}$.
Note $\pi_{\beta\kappa}=j\com\pi_{\beta i}$,
where $j=j^\Xx_{\gamma_{\beta i},\gamma_{\beta\kappa}}$. The long dotted path
indicates the
trajectory of $\kappa$.
Critical points are indicated by dotted half-headed arrows. (Where critical
points are shown
strictly below the image of $\kappa$ in the figure, they could in general equal
that image.) Also, $\alpha\in(\gamma_{\beta
i},\gamma_{\beta\kappa})_\Xx$ and $j'=i^\Xx_{\gamma_{\beta\kappa},\delta_{\beta
i}}$.}\label{fgr:pi_beta,kappa}
\end{figure}
Let $\gamma_{\beta\kappa}$ be the least $\gamma\in I_{\beta i}$ such that
either
$\gamma=\delta_{\beta i}$ or
\[ \crit(j^\Xx_{\gamma\delta_{\beta i}})>j^\Xx_{\gamma_{\beta
i}\gamma}\com\pi_{\beta
i}(\kappa). \]
Let $\gamma=\gamma_{\beta\kappa}$. If $\gamma=\gamma_{\beta i}$ then
$P_{\beta\kappa}\eqdef P_{\beta i}$ and $\pi_{\beta\kappa}\eqdef\pi_{\beta i}$.
If
$\gamma>\gamma_{\beta i}$ then $P_{\beta\kappa}\eqdef M^\Xx_\gamma$
and $\pi_{\beta\kappa}\eqdef j^\Xx_{\gamma_{\beta i}\gamma}\com\pi_{\beta i}$.

We write $\pi_{\Pi\beta\kappa}=\pi_{\beta\kappa}$, etc.
\end{dfn}

\begin{lem}\label{lem:intervals_I_cover_X-branches}
 Let $\Pi:(\Tt,\theta)\hookrightarrow\Xx$ be a tree embedding.
 Let
 $\alpha\in I_{\Pi\xi}$ and $\delta\leq^\Xx\alpha$.
 Then $\delta\in I_{\Pi\zeta}$ for some $\zeta\leq^\Tt\xi$.
\end{lem}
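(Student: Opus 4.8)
The plan is to prove the lemma by transfinite induction on $\xi<\theta$, using only clause \ref{item:tree_pres} (preservation of tree structure) of the definition of tree embedding; the model‑embedding data plays no role. Fix $\Pi:(\Tt,\theta)\hookrightarrow\Xx$ and write $\gamma_\beta=\gamma_{\Pi\beta}$, $\delta_\beta=\delta_{\Pi\beta}$, $I_\beta=I_{\Pi\beta}=[\gamma_\beta,\delta_\beta]_\Xx$. Recall $I_\beta\in\clint^\Xx$, so $\gamma_\beta\leq^\Xx\delta_\beta$ and membership $\alpha\in I_\beta$ means $\gamma_\beta\leq^\Xx\alpha\leq^\Xx\delta_\beta$. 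The statement to be proved by induction on $\xi<\theta$ is: for every $\alpha\in I_\xi$ and every $\delta\leq^\Xx\alpha$, there is $\zeta\leq^\Tt\xi$ with $\delta\in I_\zeta$.

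First I would dispose of the subcase common to all $\xi$. Since $\delta\leq^\Xx\alpha$ and $\gamma_\xi\leq^\Xx\alpha$, the nodes $\delta$ and $\gamma_\xi$ are $<^\Xx$‑comparable, both lying on the branch $[0,\alpha]_\Xx$ (the $<^\Xx$‑predecessors of a fixed node are linearly ordered). If $\gamma_\xi\leq^\Xx\delta$, then $\gamma_\xi\leq^\Xx\delta\leq^\Xx\alpha\leq^\Xx\delta_\xi$, so $\delta\in I_\xi$ and we take $\zeta=\xi$. So from now on we may assume $\delta<^\Xx\gamma_\xi$; in particular $\gamma_\xi\neq0$, hence $\xi\neq0$ since $\gamma_0=0$ by \ref{item:tree_pres}(a). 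This already settles the base case $\xi=0$.

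Next the successor case $\xi=\bar\xi+1$. By \ref{item:tree_pres}(b),(e), $\gamma_\xi$ is a successor ordinal; let $\eta=\pred^\Xx(\gamma_\xi)$, so that $\delta\leq^\Xx\eta$ (every $<^\Xx$‑predecessor of $\gamma_\xi$ lies $\leq^\Xx\eta$). By clause \ref{item:pred_pres}, $\eta\in I_\beta$ where $\beta=\pred^\Tt(\xi)$; note $\beta<\xi$ as an ordinal and $\beta<^\Tt\xi$. Applying the induction hypothesis to $\beta$, $\eta$, $\delta$ gives $\zeta\leq^\Tt\beta$ with $\delta\in I_\zeta$, and then $\zeta\leq^\Tt\beta<^\Tt\xi$ gives $\zeta\leq^\Tt\xi$. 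For the limit case, $\delta<^\Xx\gamma_\xi$ means $\delta\in[0,\gamma_\xi)_\Xx$, so by the cofinality part of \ref{item:tree_pres} (that $\Gamma``[0,\xi)_\Tt$ is $<^\Xx$‑cofinal in $[0,\gamma_\xi)_\Xx$) there is $\beta\in[0,\xi)_\Tt$ with $\delta\leq^\Xx\gamma_\beta$; since $\gamma_\beta\in I_\beta$ and $\beta<\xi$ as an ordinal, the induction hypothesis applied to $\beta$, $\gamma_\beta$, $\delta$ yields $\zeta\leq^\Tt\beta<^\Tt\xi$ with $\delta\in I_\zeta$. This completes the induction.

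I do not expect a genuine obstacle here; the only thing to watch is the bookkeeping between the tree orders $<^\Xx,<^\Tt$ and the ordinal order — specifically, that $\pred^\Xx(\gamma_\xi)$ and $\pred^\Tt(\xi)$ are honestly smaller ordinals so the recursion is well‑founded, and that each appeal to the induction hypothesis is made at a node sitting properly below $\xi$ in both senses. Everything else is immediate from clause \ref{item:tree_pres} and elementary properties of tree orders.
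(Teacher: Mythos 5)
Your proof is correct and follows the approach the paper intends (the paper simply remarks that the reader may verify it by induction); your induction on $\xi$, splitting on $\gamma_\xi\leq^\Xx\delta$ versus $\delta<^\Xx\gamma_\xi$ and using clauses (e), (f) of \ref{item:tree_pres} at successors and the cofinality condition at limits, is exactly the straightforward argument.
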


The reader will easily verify the lemma above (proceed by induction).
Part  \ref{item:deg_match} of the next lemma
ensures that when we want to extend tree
embeddings, we will not encounter any difficulties regarding condition
\ref{item:tree_pres}\ref{item:degree_match}.
\begin{lem}\label{lem:pi_beta,kappa_deg}
  Let
$\Pi:(\Tt,\theta)\tembto\Xx$
 be a tree embedding and let $\gamma_\beta=\gamma_{\Pi\beta}$,
etc. Let $\pi=\pi_{\beta\kappa}$ and $n=n_{\beta\kappa}$ and
$\gamma=\gamma_{\beta\kappa}$.
Then
\begin{enumerate}[label=\arabic*.,ref=\arabic*]
 \item $\pi$ is a near $\udash n$-embedding,
 \item $(P_{\beta\kappa},n)\ins^\uu(M^\Xx_{\gamma},\udeg^\Xx(\gamma))$ and
\item\label{item:deg_match} if
$(P_{\beta\kappa},n)\pins^\uu(M^\Xx_{\gamma},\udeg^\Xx(\gamma))$
and $\kappa<\OR(M_{\beta\kappa})$  then
$\pi_{\beta\kappa}(\kappa)\geq\udash\rho_{n+1}(P_{\beta\kappa})$.
\end{enumerate}
\end{lem}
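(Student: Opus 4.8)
The plan is to prove Lemma \ref{lem:pi_beta,kappa_deg} by following the definition of $\pi_{\beta\kappa}$ in \ref{dfn:pi_beta,kappa} and breaking into the cases there. First I would dispose of the degenerate case $\beta+1=\lh(\Tt)$ and $\kappa=\OR(M^\Tt_\beta)$, where by definition $\pi_{\beta\kappa}=\omega_\beta=\tau_{\beta m_\beta}$, $n_{\beta\kappa}=0$, $\gamma_{\beta\kappa}=\delta_\beta$ and $P_{\beta\kappa}=Q_\beta$; here parts 1 and 2 follow from properties \ref{item:model_embeddings} and \ref{item:extender_copying} of the tree embedding (recall $\omega_\beta=\sigma_{\beta m_\beta}\com\pi_{\beta m_\beta}$, a composition of a near embedding with iteration maps), and part 3 is vacuous since $\kappa=\OR(M_{\beta\kappa})$.

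So assume $\beta+1<\lh(\Tt)$ or $\kappa<\OR(M^\Tt_\beta)$, and set $i=i_{\beta\kappa}$, $n=n_{\beta\kappa}$, $\gamma=\gamma_{\beta\kappa}$. By definition $M_{\beta\kappa}=M_{\beta i}$ and $(M_{\beta\kappa},n)\ins^\uu(M_{\beta 0},\udeg^\Tt(\beta))$ with $\kappa<\urho_n^{M_{\beta\kappa}}$, $n$ largest such. Now $\pi_{\beta\kappa}=j^\Xx_{\gamma_{\beta i}\gamma}\com\pi_{\beta i}$ (with $j^\Xx_{\gamma_{\beta i}\gamma}$ the identity when $\gamma=\gamma_{\beta i}$). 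By \ref{item:model_embeddings}, $\pi_{\beta i}:M_{\beta i}\to P_{\beta i}$ is fully elementary if $i>0$, and a near $\udeg^\Tt(\beta)$-embedding if $i=0$; either way it is at least a near $n$-embedding onto $P_{\beta i}\ins M^\Xx_{\gamma_{\beta i}}$. Then $j^\Xx_{\gamma_{\beta i}\gamma}$ is a composition of iteration embeddings and $\udash$-ultrapower maps within the interval $I_{\beta i}$ whose critical points lie above $j^\Xx_{\gamma_{\beta i}\gamma'}\com\pi_{\beta i}(\kappa)$ for the relevant $\gamma'$ (this is exactly how $\gamma_{\beta\kappa}$ is chosen to be least in $I_{\beta i}$ with $\crit(j^\Xx_{\gamma\delta_{\beta i}})>j^\Xx_{\gamma_{\beta i}\gamma}\com\pi_{\beta i}(\kappa)$). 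By the standard fact that iteration and near-embedding maps compose to near embeddings of the appropriate degree (as in \ref{rem:r-closeness} and the cited \cite{fs_tame}), and since $\kappa<\urho_n^{M_{\beta\kappa}}$ guarantees all the relevant critical points sit below the $n$-th projectum, $\pi_{\beta\kappa}$ is a near $\udash n$-embedding; this gives part 1. For part 2, $\pi_{\beta i}$ already witnesses $(P_{\beta i},\text{deg})\ins^\uu(M^\Xx_{\gamma_{\beta i}},\udeg^\Xx(\gamma_{\beta i}))$ by \ref{item:model_embeddings} together with \ref{item:tree_pres}\ref{item:degree_match}, and applying the maps in $I_{\beta i}$ along the branch from $\gamma_{\beta i}$ to $\gamma$ preserves this relation to $(P_{\beta\kappa},n)\ins^\uu(M^\Xx_\gamma,\udeg^\Xx(\gamma))$ — here one uses that degrees are nonincreasing along $<^\Xx$-branches (condition \ref{item:iteration_maps_commute} / \ref{item:ev_no_drop}) and that the dropdown-model degree $n$ of $P_{\beta\kappa}$ matches the $\udash$-degree appropriate to $\kappa$.

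The main point, and part 3, is the degree-matching assertion. Suppose $(P_{\beta\kappa},n)\pins^\uu(M^\Xx_\gamma,\udeg^\Xx(\gamma))$ and $\kappa<\OR(M_{\beta\kappa})$; I must show $\pi_{\beta\kappa}(\kappa)\geq\urho_{n+1}(P_{\beta\kappa})$. The strategy is to exploit the \emph{maximality} in the choices of both $n=n_{\beta\kappa}$ and $i=i_{\beta\kappa}$, together with the structure of the dropdown sequence: because $(P_{\beta\kappa},n)\pins^\uu(M^\Xx_\gamma,\udeg^\Xx(\gamma))$ properly, $\urho_{n+1}(P_{\beta\kappa})$ exists and is genuinely below $\OR(P_{\beta\kappa})$. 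If $\pi_{\beta\kappa}(\kappa)<\urho_{n+1}(P_{\beta\kappa})$, then pulling back along the near $\udash n$-embedding $\pi_{\beta\kappa}$ (which preserves projecta up to degree $n$ and is continuous/nearly-elementary enough to pull back $\urho_{n+1}$), one gets $\kappa<\urho_{n+1}^{M_{\beta\kappa}}$, i.e. $\kappa<\urho_{n+1}^{M_{\beta i}}$. I now derive a contradiction: either this contradicts the maximality of $n$ in the choice of $n_{\beta\kappa}$ (if $(M_{\beta i},n+1)\ins^\uu(M_{\beta 0},\udeg^\Tt(\beta))$ still holds, i.e. the property defining $n_{\beta\kappa}$ would be satisfied by $n+1$), or else $(M_{\beta i},n+1)$ fails $\ins^\uu(M_{\beta 0},\udeg^\Tt(\beta))$, which forces $M_{\beta i}$ to project strictly below $\kappa$ at degree $n+1$ in a way meaning $\rho_\om(M_{\beta i})\leq\kappa<\urho_{n+1}^{M_{\beta i}}$ — but then the very next dropdown model $M_{\beta,i+1}$ is defined with $\rho_\om(M_{\beta,i+1})<\rho_\om(M_{\beta i})\leq\kappa$, contradicting the maximality of $i=i_{\beta\kappa}$ (which is the largest $i$ with $i=0$ or $\rho_\om(M_{\beta i})\leq\kappa$; here one checks $\rho_\om(M_{\beta,i+1})\le\kappa$ would be permissible). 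Either way we reach a contradiction, so $\pi_{\beta\kappa}(\kappa)\geq\urho_{n+1}(P_{\beta\kappa})$.

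I expect the delicate step to be part 3, specifically making the pull-back of $\urho_{n+1}$ along a mere \emph{near} $\udash n$-embedding rigorous (near embeddings need not be fully $\rSigma_{n+1}$-elementary, so one must argue via the standard fact that near $n$-embeddings still respect $\urho_{n+1}$ and the standard parameter, e.g. as in the treatment of near embeddings in \cite{fs_tame} adapted to $\udash$ fine structure per \ref{rem:r-closeness}), and in carefully matching the two maximality clauses in \ref{dfn:pi_beta,kappa} — the interplay between $n_{\beta\kappa}$ being about $\udash$-projecta of $M_{\beta i}$ relative to $(M_{\beta 0},\udeg^\Tt(\beta))$ and $i_{\beta\kappa}$ being about $\rho_\om$ of dropdown models. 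Parts 1 and 2 are routine given the compositional behaviour of near embeddings with iteration maps recorded in \ref{rem:r-closeness}.
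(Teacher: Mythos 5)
Parts 1 and 2 of your argument are essentially the paper's (compose $\pi_{\beta i}$, which is either a near $\udash\deg^\Tt(\beta)$-embedding or fully elementary, with the iteration maps $j^\Xx$, and track degrees). The gap is in part 3, and it is exactly at the step you yourself flag as delicate.

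You try to pull back $\urho_{n+1}$ along the near $\udash n$-embedding $\pi_{\beta\kappa}$: from $\pi_{\beta\kappa}(\kappa)<\urho_{n+1}(P_{\beta\kappa})$ you want $\kappa<\urho_{n+1}^{M_{\beta\kappa}}$, which requires $\urho_{n+1}(P_{\beta\kappa})\leq\pi_{\beta\kappa}(\urho_{n+1}^{M_{\beta\kappa}})$. But a near $\udash n$-embedding is only $\uSigma_{n+1}$-elementary, and the statement ``$\Th_{\rSigma_{n+1}}(\rho\cup\{q\})$ is not a member of the model'' is $\rPi_{n+2}$; one level of elementarity is missing, so the projectum-domination inequality you want does not follow from near $\udash n$-ness alone, and there is no such general fact to cite. (You also misstate the definition: near $\udash n$-embeddings \emph{are} fully $\uSigma_{n+1}$-elementary; the deficit is at level $n+2$.) The paper's observation is that you never need to work at degree $n$: the hypothesis $(P_{\beta\kappa},n)\pins^\uu(M^\Xx_\gamma,\udeg^\Xx(\gamma))$ forces $n<\udeg^\Xx(\gamma)\leq\udeg^\Tt(\beta)$ when $i=0$, so $\pi_\beta$ is a near $\udash(n+1)$-embedding by condition \ref{item:model_embeddings}\ref{item:pi_beta,0_near_embedding_only}, as are the relevant iteration maps along $I_{\beta 0}$; and when $i>0$, $\pi_{\beta i}$ is fully elementary. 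With $\uSigma_{n+2}$-elementarity (or better) one does get $\urho_{n+1}(P_{\beta\kappa})\leq\pi_{\beta\kappa}(\urho_{n+1}^{M_{\beta\kappa}})$, and then the argument runs in the \emph{forward} direction: maximality of $n=n_{\beta\kappa}$ gives $\kappa\geq\urho_{n+1}^{M_{\beta\kappa}}$ outright (for $i>0$ the clause $(M_{\beta i},n+1)\ins^\uu(M_{\beta 0},\udeg^\Tt(\beta))$ is automatic; for $i=0$ it holds because $n+1\leq\udeg^\Tt(\beta)$), and pushing through the near $\udash(n+1)$-embedding yields $\pi_{\beta\kappa}(\kappa)\geq\urho_{n+1}(P_{\beta\kappa})$. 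This also dissolves your bifurcation between ``contradicting maximality of $n$'' and ``contradicting maximality of $i$'': under the hypotheses of part 3 it is always the maximality of $n$ that does the work, and the case split is $i=0$ versus $i>0$ (near $\udash(n+1)$ versus fully elementary), not which maximality clause fails.
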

\begin{proof}[Proof Sketch]
Part \ref{item:deg_match}:
 Let $i=i_{\beta\kappa}$. If
$i=0$,
use that $\pi_\beta$ is a near
$\udash(n+1)$-embedding
(see condition
\ref{item:model_embeddings}\ref{item:pi_beta,0_near_embedding_only}),
as are the relevant iteration maps; in the other case it is similar,
but $\pi_{\beta i}$ is fully elementary.
\end{proof}

\begin{dfn}\index{$E^\Pi_\gamma$}\index{bounding}
Let $\Pi:(\Tt,\theta)\tembto\Xx$. Let $\beta\in\theta\inter\lh(\Tt)^-$ and
$\gamma\in
I_{\Pi\beta}$. Then $E^\Pi_\gamma$ denotes the copy $F$ of $E^\Tt_\beta$ in
$\es_+(M^\Xx_\gamma)$.
(That is, letting $k=j^\Xx_{\gamma_\beta\gamma}\com\pi_\beta$, if
$E^\Tt_\beta\in\dom(k)$ then
$F=k(E^\Tt_\beta)$, and otherwise $F=F(M^\Xx_\gamma)$). We say that $\Pi$ is
\dfnemph{bounding}
iff $\In(E^\Xx_\gamma)\leq\In(E^\Pi_\gamma)$ for all such $\beta,\gamma$.
\end{dfn}

We will only really be interested in bounding tree embeddings,
and in this case we have the following easy observation:

\begin{lem}\label{lem:Pi-bounded_reasonable_copy}Let
$\Pi:(\Tt,\theta)\tembto\Xx$ be bounding.
Suppose $\theta=\beta+1<\lh(\Tt)$. Then $E^\Pi_{\delta_{\Pi\beta}}$ is
$\Xx\rest(\delta_{\Pi\beta}+1)$-normal.\end{lem}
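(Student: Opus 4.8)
The goal is to show that if $\Pi:(\Tt,\theta)\tembto\Xx$ is bounding and $\theta=\beta+1<\lh(\Tt)$, then $E^\Pi_{\delta_{\Pi\beta}}=F^{Q_\beta}=E^\Xx_{\delta_\beta}$ is $\Xx\rest(\delta_\beta+1)$-normal; that is (using MS-indexing or $\udash m$-indexing), $\In(E^\Xx_{\alpha})\leq\In(E^\Xx_{\delta_\beta})$ for all $\alpha<\delta_\beta$. The plan is to fix $\alpha<\delta_\beta$ and locate where it sits with respect to the tree-embedding data, then compare indices.

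First I would use Lemma \ref{lem:intervals_I_cover_X-branches} together with condition \ref{item:tree_pres} to see that the $<$-intervals $\left<I_{\Pi\xi}\right>_{\xi<\theta}$ partition $\sup_{\xi<\theta}\delta_{\Pi\xi}=\delta_\beta$ (recall $\theta=\beta+1$, so $\delta_{\Pi\beta}$ is the top), hence $\alpha\in I_{\Pi\xi}$ for a unique $\xi$, and moreover $\alpha$ lies in some $I_{\Pi\xi i}$ with $(\xi,i)\in\Delta=\ddd^{(\Tt,\theta)}$; write $\alpha\in[\gamma_{\xi i},\delta_{\xi i}]_\Xx$. There are two cases: either $\xi<\beta$, or $\xi=\beta$.

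In the case $\xi<\beta$, I would split further according to whether $\alpha<\delta_{\xi i}$ or $\alpha=\delta_{\xi i}$ (and whether $i<m_\xi$ or $i=m_\xi$, etc.), but the uniform point is that $E^\Xx_\alpha$ is an extender in $\es_+(M^\Xx_\alpha)$, and along the branch of $\Xx$ from $\gamma_{\xi i}$ up through $I_{\Pi\xi}$ and then to $\gamma_\beta$, the relevant exit models grow in index. More precisely: by the extender agreement condition \ref{item:embedding_agreement}, since $\xi<\beta$ and $\beta+1<\lh(\Tt)$, we have $\In(E^\Xx_{\delta_\xi})\leq\pi_{\beta i'}(\In(E^\Tt_\xi))$ or the final clause applies, and $E^\Xx_{\delta_\xi}$ was chosen inside $\Xx$ after $E^\Xx_\alpha$ (as $\alpha\leq\delta_\xi<\gamma_{\xi+1}\leq\gamma_\beta\leq\delta_\beta$ in the ordinal order), so by normality of $\Xx$ itself we already get $\In(E^\Xx_\alpha)\leq\In(E^\Xx_{\delta_\beta})$ whenever $\alpha<\delta_\beta$ — indeed $\Xx$ is a $\udash k$-maximal (or normal) tree, so its own indices are nondecreasing: $\In(E^\Xx_\alpha)\leq\In(E^\Xx_{\delta_\beta})$ for all $\alpha<\delta_\beta$. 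That already gives $\Xx\rest(\delta_\beta+1)$-normality of $E^\Xx_{\delta_\beta}=E^\Pi_{\delta_{\Pi\beta}}$ directly, and the boundingness hypothesis plus Definition \ref{dfn:further_tree_terminology} closes the argument — the only content is to check $E^\Xx_{\delta_\beta}=E^\Pi_{\delta_{\Pi\beta}}$, which is condition \ref{item:extender_copying} (since $\beta+1<\lh(\Tt)=\theta+0$... wait, $\theta=\beta+1$, so $\beta+1=\theta\leq\lh(\Tt)$ and $\beta+1<\lh(\Tt)$ by hypothesis, so \ref{item:extender_copying} applies and $E^\Xx_{\delta_\beta}=F^{Q_\beta}$, which is exactly $E^\Pi_{\delta_{\Pi\beta}}$ by the definition of $E^\Pi_\gamma$ and boundingness).

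So the real proof is short: by condition \ref{dfn:tree_embedding}(\ref{item:extender_copying}), since $\beta+1<\lh(\Tt)$, we have $E^\Xx_{\delta_{\Pi\beta}}=F^{Q_\beta}$, which is precisely $E^\Pi_{\delta_{\Pi\beta}}$ (the copy of $E^\Tt_\beta$ in $\es_+(M^\Xx_{\delta_{\Pi\beta}})$) by definition of $E^\Pi_\gamma$; and $\Xx$ being a $\udash k$-maximal tree, its indices $\In(E^\Xx_\alpha)$ are nondecreasing in $\alpha$, so $\In(E^\Xx_\alpha)\leq\In(E^\Xx_{\delta_{\Pi\beta}})=\In(E^\Pi_{\delta_{\Pi\beta}})$ for all $\alpha<\delta_{\Pi\beta}$, which is the definition of $E^\Pi_{\delta_{\Pi\beta}}$ being $\Xx\rest(\delta_{\Pi\beta}+1)$-normal. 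I expect the main (very minor) obstacle to be bookkeeping: confirming that the index-monotonicity clause of $\udash k$-maximality is stated in the form $\In(E^\Xx_\alpha)\leq\In(E^\Xx_{\alpha'})$ for $\alpha<\alpha'$ (it is, via condition 5 of Definition \ref{dfn:r-k-maximal_tree}), and confirming the identification $E^\Xx_{\delta_{\Pi\beta}}=E^\Pi_{\delta_{\Pi\beta}}$ in the wcpm case as well (where condition \ref{citem:extender_copying} plays the role of \ref{item:extender_copying} and the index $\varrho^\Xx$ is likewise monotone by Definition \ref{dfn:normal_tree_wcpm}). No condensation or commutativity machinery is needed; it is essentially immediate from the definitions of tree embedding and of $E^\Pi_\gamma$ together with the monotonicity of indices built into iteration trees.

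\smallskip

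\begin{proof}
Since $\beta+1=\theta\leq\lh(\Tt)$ and $\beta+1<\lh(\Tt)$, condition
\ref{dfn:tree_embedding}(\ref{item:extender_copying}) applies and gives
$E^\Xx_{\delta_{\Pi\beta}}=F^{Q_\beta}$, the copy of $E^\Tt_\beta$ under
$\omega_\beta$. But $\omega_\beta=\tau_{\Pi\beta m_\beta}=j^\Xx_{\gamma_{\Pi\beta}\delta_{\Pi\beta}}\com\pi_{\Pi\beta}$
(in the notation of \ref{dfn:tree_embedding} and \ref{dfn:Pi_subscript_notation}),
so by the definition of $E^\Pi_\gamma$ at $\gamma=\delta_{\Pi\beta}\in I_{\Pi\beta}$
we have $E^\Pi_{\delta_{\Pi\beta}}=F^{Q_\beta}=E^\Xx_{\delta_{\Pi\beta}}$.
(In the wcpm case one uses \ref{cdfn:tree_embedding}(\ref{citem:extender_copying}) and
$\tau_{\Pi\beta}$ in place of $\omega_\beta$ for the same conclusion.)

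Now $\Xx$ is a $\udash k$-maximal iteration tree (or, in the coarse case, a
normal iteration tree on a wcpm), so its indices are nondecreasing: for
$\alpha+1\leq\alpha'+1<\lh(\Xx)$ we have $\In(E^\Xx_\alpha)\leq\In(E^\Xx_{\alpha'})$
by condition 5 of \ref{dfn:r-k-maximal_tree} (respectively
$\varrho^\Xx_\alpha<\varrho^\Xx_{\alpha'}$ by \ref{dfn:normal_tree_wcpm} in the
coarse case). Hence for every $\alpha<\delta_{\Pi\beta}$,
\[ \In(E^\Xx_\alpha)\leq\In(E^\Xx_{\delta_{\Pi\beta}})=\In(E^\Pi_{\delta_{\Pi\beta}}) \]
(respectively $\varrho^\Xx_\alpha<\varrho^\Xx_{\delta_{\Pi\beta}}$). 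By
Definition \ref{dfn:further_tree_terminology}, this says exactly that
$E^\Pi_{\delta_{\Pi\beta}}$ is $\Xx\rest(\delta_{\Pi\beta}+1)$-normal, as required.
\end{proof}
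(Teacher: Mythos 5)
Your proof has a genuine gap at its central step: the identification $E^\Pi_{\delta_{\Pi\beta}}=E^\Xx_{\delta_{\Pi\beta}}$. Condition \ref{dfn:tree_embedding}(\ref{item:extender_copying}) asserts $E^\Xx_{\delta_\beta}=F^{Q_\beta}$ only under the hypothesis $\beta+1<\theta$; here $\theta=\beta+1$, so that hypothesis fails (you substituted $\lh(\Tt)$ for $\theta$ when checking it). The identification is not just formally unavailable but false in exactly the intended situations: in the main applications (the one-step copy extension of Lemma \ref{lem:tree_embedding_copy}, and the lemma on freely extending inflations at successor stages) one has $\lh(\Xx)=\delta_{\Pi\beta}+1$, so $E^\Xx_{\delta_{\Pi\beta}}$ does not even exist --- $E^\Pi_{\delta_{\Pi\beta}}$ is precisely the \emph{new} extender one wants to append to $\Xx\rest(\delta_{\Pi\beta}+1)$, and this lemma is what licenses appending it. Even when $E^\Xx_{\delta_{\Pi\beta}}$ is defined (e.g.\ in an inflation where it is a $\Tt$-inflationary extender), boundingness only gives $\In(E^\Xx_{\delta_{\Pi\beta}})\leq\In(E^\Pi_{\delta_{\Pi\beta}})$ with strict inequality possible; if the two extenders always coincided, the bounding condition would be vacuous and the copying/inflation apparatus pointless.

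Once the false identification is removed, monotonicity of the indices of $\Xx$ alone cannot close the argument, because $E^\Pi_{\delta_{\Pi\beta}}$ need not be an extender used in $\Xx$. A correct argument fixes $\alpha<\delta_{\Pi\beta}$ and locates it in the partition of $\delta_{\Pi\beta}$ by the ordinal intervals $[\gamma_{\Pi\xi},\delta_{\Pi\xi}]$ for $\xi\leq\beta$. When $\xi<\beta$, one combines index-monotonicity along $\Xx$ up to $\delta_{\Pi\xi}$, the embedding-agreement condition \ref{dfn:tree_embedding}(\ref{item:embedding_agreement}) (which bounds $\In(E^\Xx_{\delta_{\Pi\xi}})$ by $\pi_{\Pi\beta}(\In(E^\Tt_\xi))$), normality of $\Tt$ (so $\In(E^\Tt_\xi)\leq\In(E^\Tt_\beta)$), and the fact that the maps $j^\Xx_{\gamma_{\Pi\beta}\delta_{\Pi\beta}}\com\pi_{\Pi\beta}$ computing $E^\Pi_{\delta_{\Pi\beta}}$ do not decrease ordinals. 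When $\xi=\beta$, one uses the bounding hypothesis at a suitable node of $I_{\Pi\beta}$ at or above $\alpha$, together with the fact that $\In(E^\Pi_\gamma)$ does not decrease under $j^\Xx_{\gamma\delta_{\Pi\beta}}$ for $\gamma\in I_{\Pi\beta}$. Note that your shortcut never actually uses either the bounding hypothesis or the normality of $\Tt$, which is a sign that it proves too little; your opening paragraph starts down the correct road before abandoning it, and that is where the real (if still short) proof lives.
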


We now consider the existence and uniqueness of tree embeddings.

\begin{dfn}\label{dfn:trivial_tree_embedding}\index{trivial (tree embedding)}
Let $\Tt,\Xx$ be putative $\udash
k$-maximal trees on $M$, with $\Xx$ an iteration tree.
 The \dfnemph{trivial} tree embedding $\Pi:(\Tt,1)\tembto\Xx$ is the
unique one
such that $I_{\Pi 0}=[0,0]$; that is,
\[
\Pi=\left(\Tt,\left<[0,0]\right>;\left<I_{0i},P_{0i},\pi_{0i}\right>_{(0,
i)\in\Delta}\right) \]
where $\Delta=\ddd^{(\Tt,1)}$ and $I_{0 i}=[0,0]$ and $P_{0 i}=M_{0 i}^\Tt$ and
$\pi_{0
i}=\id$.
\end{dfn}

We will give two lemmas describing how we can propagate tree embeddings via
ultrapowers.
The first of these involves copying an extender. Part of this is a natural
variant of
the fact that the copying construction propagates near embeddings (see
\cite{fs_tame}), and to
state this we need the following definition:

\begin{dfn}\index{$*$-tree embedding}\index{tree
embedding}\index{$\Pi:\Tt\tembto^*\Xx$}
Let $\Tt,\Xx,\theta$ be as in \ref{dfn:tree_embedding}. A \dfnemph{$*$-tree
embedding $\Pi$ from
$(\Tt,\theta)$ to $\Xx$}, denoted $\Pi:(\Tt,\theta)\tembto^*\Xx$, is a system
as
in
\ref{dfn:tree_embedding}, but replacing
\ref{item:model_embeddings}\ref{item:pi_beta,0_near_embedding_only} with the
requirement that
$\pi_{\beta}$ be $\rSigma_n$-elementary where $n=\udeg^\Tt(\beta)$.
\end{dfn}

\begin{lem}\label{lem:tree_embedding_copy}\index{uniqueness, tree embeddings}
Let $\Pi':(\Tt,\theta)\tembto\Xx'$ have $\udash$degree $k$.
Let $\gamma'_\alpha=\gamma_{\Pi'\alpha}$, etc.
Suppose that $\theta=\alpha+1<\lh(\Tt)$ and $\lh(\Xx')=\delta'_\alpha+1$ and
$E^{\Pi'}_{\delta'_\alpha}$ is $\Xx$-normal.
Suppose that the putative $\udash k$-maximal
tree $\Xx''\eqdef\Xx'\conc\left<E^{\Pi'}_{\delta'_\alpha}\right>$ has
wellfounded last model.

Then \tu{(}i\tu{)} $M^\Tt_{\alpha+1}$ is
wellfounded, \tu{(}ii\tu{)} there is a unique pair $(\Xx,\Pi)$
such that:
\begin{enumerate}[label=--]
\item $\Xx$ is a $\udash k$-maximal tree extending $\Xx'$ with
$\lh(\Xx)=\lh(\Xx')+1$,
\item $\Pi:(\Tt,\theta+1)\tembto^*\Xx$, and
\item $\Pi'\sub\Pi$,
\end{enumerate}
\tu{(}iii\tu{)}
$\Xx=\Xx''$, \tu{(}iv\tu{)}
$\Pi:(\Tt,\theta+1)\tembto\Xx$, \tu{(}v\tu{)} if $\theta+1<\lh(\Tt)$ then
$E^{\Pi}_{\delta'_\alpha+1}$ is
$\Xx$-normal, and \tu{(}vi\tu{)} if $\Pi'$ is bounding then so is $\Pi$.
\end{lem}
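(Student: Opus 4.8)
\textbf{Proof proposal for Lemma \ref{lem:tree_embedding_copy}.}

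The plan is to define $\Xx$ as $\Xx''$ and to extend $\Pi'$ to a tree embedding $\Pi$ on $(\Tt,\theta+1)$ in the unique way forced by the conditions, reading off each new object from the data already given in $\Pi'$ together with the one new extender application. First I would unpack what it means to have a tree embedding on $(\Tt,\theta+1)=(\Tt,\alpha+2)$: by condition \ref{item:structure_I_beta} the intervals $I_{\Pi\beta}$ for $\beta\leq\alpha$ are already fixed (they must equal $I_{\Pi'\beta}$, since $\Pi'\sub\Pi$), so the only freedom is the choice of $I_{\Pi,\alpha+1}$. Now $\gamma_{\Pi,\alpha+1}$ is forced by \ref{item:tree_pres}(e): since $\alpha+1<\theta+1$ is a successor and $\alpha<\theta+1$, we must have $\gamma_{\Pi,\alpha+1}=\delta_{\Pi'\alpha}+1=\lh(\Xx')$, which is exactly the index of the new model $M^{\Xx''}_{\lh(\Xx')}$. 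And since $\Xx$ has length $\lh(\Xx')+1$, necessarily $\delta_{\Pi,\alpha+1}=\gamma_{\Pi,\alpha+1}$, so $I_{\Pi,\alpha+1}$ is a single node. This pins down $\Xx=\Xx''$ (we are using $E^{\Pi'}_{\delta'_\alpha}$, which is $\Xx$-normal by hypothesis, so $\Xx''$ is a bona fide putative $\udash k$-maximal tree, and by assumption its last model is wellfounded, hence it is an honest iteration tree), and gives (i), (iii).

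For the embeddings: let $\beta=\pred^\Tt(\alpha+1)$ and let $i$ be such that $M^{*\Tt}_{\alpha+1}=M_{\beta i}$, and let $\kappa=\crit(E^\Tt_\alpha)$. The predecessor $\pred^{\Xx}(\gamma_{\Pi,\alpha+1})$ is determined by the $\udash k$-maximal rule applied to $\Xx''$: it is the least $\zeta$ with $\crit(E^{\Pi'}_{\delta'_\alpha})<\nutilde^{\Xx''}_\zeta$. One checks (this is the content of condition \ref{item:pred_pres}, to be verified) that this $\zeta$ lies in $I_{\Pi\beta}$ and in fact $\zeta=\widetilde\beta$ where $\widetilde\beta$ is read off from $\pi_{\beta\kappa}$ and the factoring of $I_{\beta i}$ as in Definition \ref{dfn:pi_beta,kappa}; here one uses the embedding agreement condition \ref{item:embedding_agreement} for the earlier extenders of $\Xx'$ to know $\omega_\beta$ and the copy maps agree below the relevant point, together with Lemma \ref{lem:pi_beta,kappa_deg} to get the degree right (this is where part \ref{item:deg_match} of that lemma is used, to ensure condition \ref{item:tree_pres}\ref{item:degree_match} holds for the new node). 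Then $M^{*\Xx''}_{\gamma_{\Pi,\alpha+1}}$ and $\udeg^{\Xx''}(\gamma_{\Pi,\alpha+1})$ are determined, and one sets $\pi_{\Pi,\alpha+1}$ to be the canonical factor map: using the near $\udash n_{\beta\kappa}$-embedding $\pi_{\beta\kappa}:M_{\beta\kappa}\to P_{\beta\kappa}$ and the Shift Lemma (for $\udash$-fine structure, cf. Remark \ref{rem:r-closeness}), the map $\pi_{\beta\kappa}$ and the copy $\omega_\beta$ of $E^\Tt_\alpha$ induce a near embedding $M^\Tt_{\alpha+1}=\Ult_{\udash n}(M^{*\Tt}_{\alpha+1},E^\Tt_\alpha)\to\Ult_{\udash n}(M^{*\Xx}_{\gamma_{\Pi,\alpha+1}},E^{\Pi'}_{\delta'_\alpha})=M^{\Xx''}_{\gamma_{\Pi,\alpha+1}}$, where the dropdown-internal $I_{\alpha+1,j}$, $P_{\alpha+1,j}$, $\pi_{\alpha+1,j}$ for $(\alpha+1,j)\in\ddd^{(\Tt,\theta+1)}$ are then defined by condition \ref{item:P_x^+,pi_x^+} (applying $\tau_{\alpha+1,j}$ composed with $\pi_{\alpha+1,j}$ successively up the dropdown sequence of $M^\Tt_{\alpha+1}$). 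This gives (ii) and simultaneously establishes the $*$-version; the upgrade from $*$-tree embedding to tree embedding (iv) is that $\pi_{\Pi,\alpha+1}$ is not merely $\rSigma_{n}$-elementary but a near $\udash n$-embedding, which follows because the copying construction propagates near embeddings — precisely the adaptation of \cite{fs_tame} noted in Remark \ref{rem:r-closeness}.

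Finally, (v) and (vi) are quick: for (v), if $\theta+1<\lh(\Tt)$ then $E^\Pi_{\delta'_\alpha+1}$ being $\Xx$-normal amounts to $\In(E^{\Xx''}_\zeta)<\In(E^\Pi_{\delta'_\alpha+1})$ for all $\zeta\leq\delta'_\alpha$, which follows from $\nutilde$-monotonicity in $\Tt$ (since $\nutilde^\Tt_\alpha<\nutilde^\Tt_{\alpha+1}$) pushed across by the embedding agreement; for (vi), if $\Pi'$ is bounding then for the one new node $\gamma=\gamma_{\Pi,\alpha+1}$ one has $\In(E^\Pi_\gamma)\geq\In(E^\Xx_\gamma)=\OR(M^{\Xx''}_\gamma)$ automatically since $\gamma$ is the last node of $\Xx$ (if $\theta+1=\lh(\Tt)$ there is no new extender constraint, and if $\theta+1<\lh(\Tt)$ the bound is vacuous at the top node or follows from the $\nutilde$-agreement just as in the finitely-many-earlier-nodes argument).

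The main obstacle I expect is the bookkeeping in the third paragraph: verifying that the choice of $\pred^\Xx(\gamma_{\Pi,\alpha+1})$ dictated by $\udash k$-maximality on $\Xx''$ is consistent with condition \ref{item:pred_pres} and with the degree-matching condition \ref{item:tree_pres}\ref{item:degree_match}, i.e. that $\widetilde\beta=\pred^\Xx(\gamma_{\Pi,\alpha+1})$ really lands in $I_{\Pi\beta}$ with the correct drop/degree behavior. This is exactly where one needs Definition \ref{dfn:pi_beta,kappa} and Lemma \ref{lem:pi_beta,kappa_deg} (especially its part \ref{item:deg_match}) to control where the critical point of the copied extender sits relative to the images of the projecta, and where the embedding agreement condition \ref{item:embedding_agreement} guarantees the various copy maps and $\omega_\beta$ cohere below $\crit(E^{\Pi'}_{\delta'_\alpha})$ so that the relevant computations in $\Xx'$ and in $\Tt$ match up. Uniqueness then falls out because every object introduced was shown to be forced by the conditions and by $\Pi'\sub\Pi$.
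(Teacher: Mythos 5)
Your overall strategy matches the paper's: set $\Xx=\Xx''$, read off $\gamma_{\Pi,\alpha+1}=\delta'_\alpha+1=\delta_{\Pi,\alpha+1}$ from the forced structure, and apply the Shift Lemma to $\pi_{\beta\kappa}$ and $\omega_\alpha$ to define $\pi_{\alpha+1,0}$; this correctly yields (i), (ii), (iii), the $*$-tree-embedding property, and uniqueness.

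However, you have misidentified where the real work is. You flag ``the bookkeeping in the third paragraph'' (verifying $\pred^\Xx(\gamma_{\Pi,\alpha+1})$ lands in $I_{\Pi\beta}$ with the right degree) as the main obstacle; in fact that is quick, by Definition~\ref{dfn:pi_beta,kappa} and Lemma~\ref{lem:pi_beta,kappa_deg}. The genuine difficulty is your clause ``(iv) \ldots follows because the copying construction propagates near embeddings --- precisely the adaptation of \cite{fs_tame} noted in Remark~\ref{rem:r-closeness}.'' Remark~\ref{rem:r-closeness} addresses the \emph{standard} copying construction, where one has a single copy map $\pi_\alpha:M^\Tt_\alpha\to M^\Uu_\alpha$ node-for-node. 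In the tree-embedding setting, each $\Tt$-node $\beta$ corresponds to an entire $\Xx$-interval $I_\beta=[\gamma_\beta,\delta_\beta]_\Xx$ with internal drops in model and degree, and the relevant copy map to $M^{*\Xx}_{\gamma_{\alpha+1}}=P_{\beta\kappa}$ passes through this interval. The strong-closeness and translatability inductions of \cite{fs_tame} consequently require genuinely new case analysis: one must distinguish $\gamma_{\beta\kappa}=\gamma_{\beta\mu}$ from $\gamma_{\beta\kappa}<\gamma_{\beta\mu}$, handle $M^\Tt_{\beta\mu}\pins M^\Tt_{\beta\kappa}$, and handle the superstrong/type-2 case $\lh(E^\Tt_\beta)=\OR(\exit^\Tt_\beta)$; there are also extra steps in verifying translatability at a successor $\eps=\xi+1$ involving the factorization of $\pi_{\delta\mu}$ through $i^\Xx_{\gamma_\delta\gamma_{\delta\mu}}\circ\pi_\delta$. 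These adaptations occupy the bulk of the paper's proof and are not supplied by the remark you cite. To close the gap you would need to actually carry out (at least sketch) the modified strong-closeness/translatability induction in the presence of the interval structure of $\Pi'$.

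Two smaller points. First, the paper's printed proof mislabels this near-embedding verification as being ``for (v)''; it is in fact the content of (iv), as you implicitly (and correctly) treat it. Second, in your sketch of (vi), the new bounding constraint to check is at $\gamma=\delta'_\alpha\in I_{\Pi\alpha}$, where $\In(E^\Pi_{\delta'_\alpha})=\In(E^\Xx_{\delta'_\alpha})$ by construction of $\Xx''$; the node $\gamma_{\Pi,\alpha+1}$ you mention is irrelevant since $\gamma_{\Pi,\alpha+1}+1=\lh(\Xx)$.
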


Before we prove the lemma, we state two easy consequences:

\begin{cor}
 Every $*$-tree embedding is a tree embedding.
\end{cor}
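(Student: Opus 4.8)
The plan is to deduce the corollary directly from Lemma \ref{lem:tree_embedding_copy} by induction on $\lh(\Tt)$, reducing the claim that an arbitrary $*$-tree embedding $\Pi:(\Tt,\theta)\tembto^*\Xx$ is in fact a tree embedding to the point where Lemma \ref{lem:tree_embedding_copy}(iv) can be invoked. The only difference between a $*$-tree embedding and a tree embedding is condition \ref{item:model_embeddings}\ref{item:pi_beta,0_near_embedding_only}: a $*$-tree embedding only requires $\pi_\beta$ to be $\rSigma_n$-elementary (where $n=\udeg^\Tt(\beta)$), whereas a tree embedding requires $\pi_\beta$ to be a \emph{near} $\udash n$-embedding. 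So what must be shown is: if $\Pi$ satisfies all the tree-embedding conditions with this one weakening, then automatically each $\pi_\beta$ is a near $\udash n$-embedding.

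First I would handle $\beta=0$: there $\pi_0=\id$ by \ref{item:model_embeddings}(b), which is trivially a near $\udash k$-embedding. For the inductive/propagation step, I would argue node-by-node following the tree order. Suppose all $\pi_{\xi}$ for $\xi$ below the relevant stage are near $\udash$-embeddings. Consider $\gamma_{\beta+1}$ with $\xi=\pred^\Tt(\beta+1)$ and $M^{*\Tt}_{\beta+1}=M_{\xi i}$. The map $\pi_{\xi i}$ is a near $\udash$-embedding or fully elementary (by \ref{item:model_embeddings}, using the inductive hypothesis for $\pi_\xi$ and that the maps $\tau_{\xi i}$ are compositions of a near embedding with iteration maps), hence $\omega_\xi=\tau_{\xi m_\xi}$ has the appropriate elementarity, and $E^\Xx_{\delta_\xi}=F^{Q_\xi}$ is the copy of $E^\Tt_\beta$ under $\omega_\xi$. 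Then $\pi_{\beta+1}$ is obtained, via the commutativity condition \ref{item:embedding_commutativity}, essentially as a copy map for the ultrapower by $E^\Tt_\beta$ applied to $M^{*\Tt}_{\beta+1}$, pushed forward by $\omega_\xi$ (restricted appropriately) and then followed by iteration maps of $\Xx$ along $(\pred^\Xx(\gamma_{\beta+1}),\gamma_{\beta+1}]_\Xx$. That this copy construction propagates near $\udash m$-embeddings is exactly the content cited in Remark \ref{rem:r-closeness} (the adaptation of \cite{fs_tame}); the iteration maps of $\Xx$ are near $\udash$-embeddings by the remarks following Definition \ref{dfn:r-k-maximal_tree}. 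At limit stages $\beta$, $\pi_\beta$ is the direct limit of the $\pi_{\bar\beta}$ for $\bar\beta<^\Tt\beta$ cofinally (using the continuity of $\Gamma$, i.e. \ref{item:tree_pres}(iii)), and a direct limit of near $\udash$-embeddings commuting with the iteration maps is again a near $\udash$-embedding.

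Having shown each $\pi_\beta$ is a near $\udash\deg^\Tt(\beta)$-embedding, condition \ref{item:model_embeddings}\ref{item:pi_beta,0_near_embedding_only} holds, so $\Pi$ satisfies all the conditions of Definition \ref{dfn:tree_embedding}, hence is a tree embedding. Alternatively, and more cheaply, one can observe that Lemma \ref{lem:tree_embedding_copy} itself already does all the work: any $*$-tree embedding $\Pi:(\Tt,\theta)\tembto^*\Xx$ arises as the union of its restrictions $\Pi\rest(\Tt,\eta)$ for $\eta<\theta$, and one shows by induction on $\eta$ that each such restriction is a genuine tree embedding --- the successor step $\eta\mapsto\eta+1$ (for $\eta+1<\lh(\Tt)$, and hence $\eta<\lh(\Tt)-1$, the hypothesis of \ref{lem:tree_embedding_copy} after noting $E^\Pi_{\delta_\eta}$ is the relevant extender) being exactly \ref{lem:tree_embedding_copy}(iv), the base case $\eta=1$ being the trivial tree embedding, and limit stages being immediate since all the defining conditions of a tree embedding are local or continuous in $\theta$.

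The main obstacle I anticipate is bookkeeping rather than conceptual: one must check that the extender $E^\Xx_{\delta_\eta}$ used at the top of the relevant interval in the $*$-tree embedding really is $E^{\Pi\rest(\Tt,\eta+1)}_{\delta_\eta}$ and is $\Xx$-normal, so that Lemma \ref{lem:tree_embedding_copy} applies with $\Xx'=\Xx\rest(\delta_\eta+1)$ --- this is where one uses that $\Xx$ is already a bona fide $\udash k$-maximal tree (so its models are wellfounded and its extenders are $\Xx$-normal) together with \ref{item:extender_copying}. Given that, the uniqueness clause of \ref{lem:tree_embedding_copy} forces the extension to agree with $\Pi$, and the near-embedding conclusion \ref{lem:tree_embedding_copy}(iv) upgrades the $\rSigma_n$-elementarity to near $\udash n$-embedding, which is all that was missing.
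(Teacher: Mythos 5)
Your proposal is correct and is essentially the paper's intended argument: the corollary is stated as an immediate consequence of Lemma \ref{lem:tree_embedding_copy}, whose uniqueness clause forces the maps of a given $*$-tree embedding to coincide with those produced by the one-step copy extensions, and whose clause (iv) upgrades them to near $\udash n$-embeddings. The only point your ``cheaper'' induction elides is the step adding a \emph{limit} node $\eta$ of $\Tt$ (not covered by the lemma), but your first argument correctly handles that via the direct-limit/commutativity observation, so nothing is missing.
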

\begin{cor}\label{cor:tree_embedding_uniqueness}
Let $\Pi:(\Tt,\theta)\tembto\Xx$ and $\Pi':(\Tt,\theta)\tembto\Xx$ be tree
embeddings
such that
$\delta_\beta^\Pi=\delta_\beta^{\Pi'}$ for all $\beta<\theta$. Then
$\Pi=\Pi'$.
\end{cor}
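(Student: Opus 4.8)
The plan is to derive Corollary \ref{cor:tree_embedding_uniqueness} from Lemma \ref{lem:tree_embedding_copy} (or rather, from the uniqueness clauses assembled in its proof) by induction on $\theta$. The point is that a tree embedding $\Pi:(\Tt,\theta)\tembto\Xx$ is built up step by step: the only ``free'' data are the right endpoints $\delta_\beta^\Pi$ of the intervals $I_{\Pi\beta}$, and everything else — the left endpoints $\gamma_\beta$, the intermediate data $I_{\beta i},P_{\beta i},\pi_{\beta i}$, the maps $\sigma_{\beta i},\tau_{\beta i},\omega_\beta$, and the copied extenders $E^\Xx_{\delta_\beta}$ — is forced by conditions \ref{item:tree_pres}--\ref{item:embedding_agreement}. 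So I would show by induction on $\theta'\leq\theta$ that $\Pi\rest\theta'=\Pi'\rest\theta'$, where $\Pi\rest\theta'$ denotes the induced tree embedding $(\Tt,\theta')\tembto\Xx$ obtained by restriction (noting that for $\theta'$ a limit or $\theta'=\theta$ this really is a tree embedding on $\Tt\rest\theta'$, while for $\theta'=\beta+1$ it is a tree embedding in the generalized $(\Tt,\theta')$ sense).

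First I would handle $\theta'=1$: here $\gamma_0=0$ is forced by \ref{item:tree_pres}(a), $P_0=M$ and $\pi_0=\id$ by \ref{item:model_embeddings}(b), and then the dropdown data $I_{0i},P_{0i},\pi_{0i}$ for $(0,i)\in\Delta$ are determined recursively by \ref{item:structure_I_beta} and \ref{item:P_x^+,pi_x^+}, given the value of $\delta_0$, which is shared by hypothesis. (This is precisely the trivial tree embedding of \ref{dfn:trivial_tree_embedding} when $\delta_0=0$, and its obvious analogue otherwise; the only thing to observe is that the interval $[\gamma_0,\delta_0]_\Xx$ and the iteration maps $i^\Xx$ along it, together with $\pi_0$, determine the $\sigma_{0i},\tau_{0i}$ and hence $P_{0i},\pi_{0i}$.) For the successor step, assume $\Pi\rest(\beta+1)=\Pi'\rest(\beta+1)$; I must show the single extra node's worth of data agrees, i.e.\ that $\delta_\beta$ being shared, everything indexed by $\beta$ agrees, and that $\gamma_{\beta+1},I_{\beta+1}$ and the data over them agree. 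This is exactly the content of the uniqueness assertion (ii)--(iv) of Lemma \ref{lem:tree_embedding_copy}, applied with $\Pi'$ there taken to be $\Pi\rest(\beta+1)=\Pi'\rest(\beta+1)$: the lemma says there is a \emph{unique} pair $(\Xx,\widehat\Pi)$ with $\widehat\Pi:(\Tt,\beta+2)\tembto^*\Xx$ extending it and $\lh(\Xx)=\lh(\Xx\rest\text{prev})+1$; since both $\Pi\rest(\beta+2)$ and $\Pi'\rest(\beta+2)$ are such extensions living inside the \emph{same} ambient $\Xx$ (they agree on $\delta_{\beta+1}$, hence determine the same length), they coincide. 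One subtlety: Lemma \ref{lem:tree_embedding_copy} as stated advances one \emph{extender} of $\Xx$ at a time, i.e.\ it passes from $(\Tt,\theta)$ to $(\Tt,\theta+1)$ while $\lh(\Xx)$ grows by exactly one — but in general $\gamma_{\beta+1}=\delta_\beta+1$ and $\delta_{\beta+1}$ may be much larger, so the step from $\beta$ to $\beta+1$ consumes several extenders of $\Xx$; I would therefore iterate the lemma along the $\Xx$-interval $(\delta_\beta,\delta_{\beta+1}]_\Xx$, at each stage using normality of the copied extender (Lemma \ref{lem:Pi-bounded_reasonable_copy} is not available without ``bounding'', so instead I use that $\Xx$ itself is a fixed normal tree, so the relevant $E^\Xx$'s are automatically $\Xx$-normal) and the uniqueness clause, until the dropdown of node $\beta+1$ is exhausted and we have reached $\delta_{\beta+1}$.

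For limit $\theta'\leq\theta$, continuity of $\Gamma_\Pi$ (condition \ref{item:tree_pres}(b), together with the cofinality remark (iii) after \ref{item:pred_pres}) forces $\gamma_{\theta'}^\Pi=\sup_{\beta<\theta'}\gamma_\beta^\Pi$ if $\theta'<\theta$, and for $\theta'=\theta$ a limit there is nothing new to add beyond the union of the $\Pi\rest\beta$; so $\Pi\rest\theta'=\Pi'\rest\theta'$ follows from the inductive hypothesis at all $\beta<\theta'$, and the model-embedding data at a limit stage is likewise the direct limit data, hence determined. This completes the induction, giving $\Pi=\Pi'$. The main obstacle I anticipate is bookkeeping rather than conceptual: carefully matching the ``one step of the tree embedding'' in the induction with the ``one extender of $\Xx$'' granularity of Lemma \ref{lem:tree_embedding_copy}, and checking that at each micro-step the hypotheses of that lemma (the copied extender being $\Xx$-normal, the last model of the relevant initial segment of $\Xx$ being wellfounded, which holds since $\Xx$ is a genuine iteration tree) are met so that its uniqueness clause applies. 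Everything else is unwinding definitions. I would also remark that the first Corollary (every $*$-tree embedding is a tree embedding) is immediate from clause (iv) of Lemma \ref{lem:tree_embedding_copy} by the same induction, since the only difference between $\tembto^*$ and $\tembto$ is the elementarity demanded of the $\pi_\beta$, and (iv) upgrades $\rSigma_n$-elementarity to the near $\udash n$-embedding property at each step.
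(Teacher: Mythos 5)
Your overall approach is the intended one — the paper states this corollary (together with the observation that every $*$-tree embedding is a tree embedding) as an ``easy consequence'' of Lemma \ref{lem:tree_embedding_copy}, proved by exactly the induction you describe, with the uniqueness of the Shift Lemma map (as spelled out in the proof of that lemma) pinning down $\pi_{\beta+1,0}$ at each successor step.

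One piece of your bookkeeping doesn't quite go through as stated, though. You propose to handle the mismatch between ``one node of $\Tt$'' and ``several extenders of $\Xx$'' by iterating Lemma \ref{lem:tree_embedding_copy} along $(\delta_\beta,\delta_{\beta+1}]_\Xx$. But that lemma only performs the one-step \emph{copy} extension, which passes from $(\Tt,\beta+1)\tembto\Xx\rest(\delta_\beta+1)$ to $(\Tt,\beta+2)\tembto\Xx\rest(\delta_\beta+2)$ and always produces the degenerate interval $I_{\beta+1}=[\delta_\beta+1,\delta_\beta+1]$; iterating it would move you on to node $\beta+2$ of $\Tt$, not enlarge $I_{\beta+1}$. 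The extenders of $\Xx$ strictly between $\gamma_{\beta+1}$ and $\delta_{\beta+1}$ are the $\Tt$-inflationary ones, governed by Definition \ref{dfn:inflationary_extender} rather than Lemma \ref{lem:tree_embedding_copy} — and you cannot appeal to that definition here, since the uniqueness asserted there is justified in Remark \ref{rem:tree_embedding_existence} by \emph{citing} Corollary \ref{cor:tree_embedding_uniqueness}. Fortunately the fix is simpler than what you attempt: one application of Lemma \ref{lem:tree_embedding_copy} pins down $\pi_{\beta+1,0}$; then $I_{\beta+1}=[\delta_\beta+1,\delta_{\beta+1}]_\Xx$ is shared by hypothesis, the partition into $I_{\beta+1,i}$ is forced by the model-drop pattern of $\Xx$ along $I_{\beta+1}$ via clauses (d),(e) of condition \ref{item:structure_I_beta}, and the remaining $P_{\beta+1,i},\pi_{\beta+1,i}$ ($i>0$) are forced by \ref{item:model_embeddings}(f),(g) from $\pi_{\beta+1,0}$ and the iteration maps of the fixed tree $\Xx$ — no further lemma needed. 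With that substitution your base and limit cases are fine and the induction closes.
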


\begin{proof}[Proof of \ref{lem:tree_embedding_copy}]
We first exhibit $(\Xx,\Pi)$ as in (ii);
then (i) follows. We then prove the uniqueness of $(\Xx,\Pi)$,
and then (iv), and leave the
rest to the reader.

We use $\Xx=\Xx''$.
 In defining the components of $\Pi$, we will write $I_\beta=I_{\Pi\beta}$,
etc.
Most of
$\Pi$ is already determined by the requirement that $\Pi'\sub\Pi$, so we just
define the rest. Let
\[ I_{\alpha+1}=I_{\alpha+1,0}=[\delta'_\alpha+1,\delta'_\alpha+1] \]
and $P_{\alpha+1,0}=M^{\Xx}_{\delta'_\alpha+1}$.
It just remains to define
$\pi_{\alpha+1,0}:M^{\Tt}_{\alpha+1}\to M^{\Xx}_{\delta_\alpha+1}$,
and we claim that we can do this using the Shift Lemma.

For let $E=E^{\Tt}_\alpha$ and $\kappa=\crit(E)$ and
$\beta=\pred^{\Tt}(\alpha+1)$ and
$n=\udeg^{\Tt}(\alpha+1)$.
Note $M^\Tt_{\beta\kappa}=M^{*\Tt}_{\alpha+1}$ and $n_{\beta\kappa}=n$
and
$P_{\beta\kappa}=M^{*\Xx}_{\delta_\alpha+1}$ and
\[ \pow(\kappa)\inter M^\Tt_{\beta\kappa}=\pow(\kappa)\inter
M^\Tt_\alpha|\In(E), \]
\[
\pi_{\beta\kappa}
\rest\pow(\kappa)=\om_\beta\rest\pow(\kappa)=\omega_\alpha\rest\pow(\kappa) \]
and by \ref{lem:pi_beta,kappa_deg}, $n=\udeg^{\Xx}(\delta_\alpha+1)$.

So we apply the (proof of the) Shift Lemma to $\pi_{\beta\kappa}$ and
$\omega_\alpha$, defining
$\pi_{\alpha+1,0}$, a weak $\udash n$-embedding. The embedding commutativity
and
agreement conditions are
satisfied. So $M^{\Tt}_{\alpha+1}$ is wellfounded and
$\Pi:(\Tt,\theta)\tembto^*\Xx$ and
$\Pi'\sub\Pi$.

The definitions we made were in fact
the only ones possible; in the case of
$\pi_{\alpha+1}$, this is because if
$\pi:M^{\Tt}_{\alpha+1}\to M^{\Xx}_{\delta_\alpha+1}$
is $\rSigma_n$-elementary and satisfies the commutativity and agreement
conditions,
then $\pi$ is just as defined in the proof of the Shift Lemma. This gives
uniqueness.

For (v), it remains to see that $\pi_{\alpha+1}$ is a near
$\udash n$-embedding. This is proved almost as in \cite{fs_tame}; we give a
sketch so as to indicate the main difference.

For $\zeta+1<\lh(\Tt)$, we say that \dfnemph{strong closeness at
$\zeta$}\index{strong closeness} holds
iff
for
each $a\in[\nu(E^{\Tt}_\zeta)]^{<\om}$ there is a $\uSigma_1$ formula
$\varphi_a$
and
$q_a\in M^{*\Tt}_{\zeta+1}$ such that
\[ (E^{\Tt}_\zeta)_a=\{x\in M^{*\Tt}_{\zeta+1}\bigm|
M^{*\Tt}_{\zeta+1}\sats\varphi_a(q_a,x)\},\]
and letting $\beta=\pred^{\Tt}(\zeta+1)$ and $\mu=\crit(E^{\Tt}_\zeta)$,
so $M^{*\Xx}_{\gamma_{\zeta+1}}=P_{\beta\mu}$,
\[(E^{\Xx}_{\delta_\zeta})_{\omega_\zeta(a)}=\{x\in P_{\beta\mu}\bigm|
P_{\beta\mu}\sats\varphi_a(\pi_{\beta\mu}(q_a),x)\}.\]

For $\eps<\lh(\Tt)$, we say \dfnemph{translatability at
$\eps$}\index{translatability} holds
iff, letting $m=\udash\deg^{\Tt}(\eps)$, for all $(x,\varphi,\zeta+1)$
such that $x\in M^{\Tt}_\eps$ and $\varphi$ is
$\uSigma_{m+1}$ and  $\zeta+1\leq^{\Tt}\eps$ and
$(\zeta+1,\eps]_{\Tt}$ does not drop in model or degree, there is
$(x',\varphi')$ such that $x'\in M^{*\Tt}_{\zeta+1}$ and
$\varphi'$ is $\uSigma_{m+1}$, and for all
$\gamma<\mu\eqdef\crit(E^{\Tt}_\zeta)$, we have
\[ M^{\Tt}_\eps\sats\varphi(x,\gamma)\ \iff\
M^{*\Tt}_{\zeta+1}\sats\varphi'(x',\gamma), \]
and letting $\beta=\pred^{\Tt}(\zeta+1)$,
for all $\gamma<\pi_{\beta\mu}(\mu)=\crit(E^{\Xx}_{\delta_\zeta})$, we have
\[ M^{\Xx}_{\gamma_\eps}\sats\varphi(\pi_\eps(x),\gamma)\ \iff\
M^{*\Xx}_{\gamma_{\zeta+1}}\sats\varphi'(\pi_{\beta\mu}(x'),\gamma). \]
(Recall that $\gamma_{\beta\mu}=\pred^{\Xx}(\gamma_{\zeta+1})$ and
$P_{\beta\mu}=M^{*\Xx}_{\gamma_{\zeta+1}}$.)

One proves strong closeness at $\zeta$ and translatability at $\eps$, by
simultaneous
induction on $\max(\zeta+1,\eps)$. This is basically as in
\cite{fs_tame}, so the reader should refer there for the full argument,
but there are a few extra details which arise here, which we explain.
Fix $\zeta$ and consider the proof of strong closeness at $\zeta+1$.
Let $\beta=\pred^{\Tt}(\zeta+1)$  and suppose $\beta<\zeta$. Let
$E=E^{\Tt}_\zeta$ and $\kappa=\crit(E)$ and $F=E^{\Xx}_{\delta_\zeta}$. Suppose
$(\kappa^+)^{\exit^{\Tt}_\beta}<\OR^{\exit^{\Tt}_\beta}$ but
$E_a\notin\exit^{\Tt}_\beta$ for some $a\in[\nu_E]^{<\om}$. Then as in
\cite[6.1.5]{fsit},
$E=F(M^{\Tt}_\zeta)$ and $\beta<^{\Tt}\zeta$ and letting
$\xi+1=\successor^\Tt(\beta,\zeta)$,
we have
\begin{enumerate}[label=--]
 \item  $(\xi+1,\zeta]_{\Tt}$ does not drop,
$\udeg^{\Tt}(\zeta)=\udeg^\Tt(\xi+1)=0$ and
 \item $\kappa<\mu\eqdef\crit(i^{*\Tt}_{\xi+1,\zeta})$.
\end{enumerate} Now $j\eqdef i^{\Xx}_{\gamma_\zeta\delta_\zeta}$
exists because $E=F(M^{\Tt}_\zeta)$, and note
$\crit(F)<\crit(E^{\Xx}_{\delta_\xi})<\crit(j)$
(where $\crit(j)=\infty$ if $j=\id$).
So for $a\in[\nu_E]^{<\om}$, letting $\Fbar=F(M^{\Xx}_{\gamma_\zeta})$, we have
\[ F_{\omega_\zeta(a)}=F_{j(\pi_\zeta(a))}=\Fbar_{\pi_{\zeta}(a)}.\]
So using translatability at $\zeta$ as in \cite{fs_tame}, we get $(\varphi,q)$
such that $\varphi$ is
$\uSigma_1$ and $q\in M^{*\Tt}_{\xi+1}=M^\Tt_{\beta\mu}$ and
\[ \varphi(q,\cdot)\text{ defines }
E_a\text{ over }M^{*\Tt}_{\xi+1}=M^\Tt_{\beta\mu}, \]
\[ \varphi(\pi_{\beta\mu}(q),\cdot)\text{ defines }F_{\omega_\zeta(a)}\text{
over }M^{*\Xx}_{\gamma_{\xi+1}}=P_{\beta\mu}.\]
So if $\gamma_{\beta\kappa}=\gamma_{\beta\mu}$ then we get strong closeness at
$\zeta+1$ as in \cite{fs_tame}.
Suppose instead that $\gamma_{\beta\kappa}<\gamma_{\beta\mu}$.
Let $k=j^\Xx_{\gamma_{\beta\kappa}\gamma_{\beta\mu}}$, so
$\pi_{\beta\kappa}(M^\Tt_{\beta\mu})\ins\dom(k)$,
\[ \pi_{\beta\mu}=k\com\pi_{\beta\kappa}\rest M^\Tt_{\beta\mu}, \]
\[ \crit(k)>\pi_{\beta\kappa}(\kappa)=\om_\beta(\kappa)=\crit(F).\]
So if $M^{\Tt}_{\beta\mu}=M^\Tt_{\beta\kappa}$ then
\[ \varphi(\pi_{\beta\kappa}(q),\cdot)\text{ defines }F_{\omega_\zeta(a)}\text{
over }M^{*\Xx}_{\gamma_{\zeta+1}}=P_{\beta\kappa},\]
as required. And if $M^\Tt_{\beta\mu}\pins M^\Tt_{\beta\kappa}$
then we get a natural $\uSigma_1$ formula $\varphi''$ such that
\[ \varphi''((q,M^\Tt_{\beta\mu}),\cdot)\text{
defines }E_a\text{ over }M^\Tt_{\beta\kappa},\]
\[ \varphi''(\pi_{\beta\kappa}(q,M^\Tt_{\beta\mu}),\cdot)\text{ defines
}F_{\om_\zeta(a)}\text{ over }P_{\beta\kappa},\]
again as required.

The second detail is as follows. Consider again strong closeness at $\zeta+1$.
Let $\beta,\kappa,E,F$ be as before and suppose   $\beta<\zeta$,
but now with
$(\kappa^{++})^{\exit^{\Tt}_\beta}=\OR^{\exit^{\Tt}_\beta}$,
and $E_a\in\exit^{\Tt}_\beta$ for every $a\in[\nu_E]^{<\om}$. Then
$\gamma_{\beta\kappa}=\delta_\beta$ and
$\iota(\exit^\Tt_\beta)=\OR^{\exit^\Tt_\beta}$, so
\[
\pi_{\beta\kappa}
\rest\iota(\exit^\Tt_\beta)=\omega_\beta\rest\OR\sub\omega_\zeta, \]
which implies that $\pi_{\beta\kappa}(E_a)=F_{\omega_\zeta(a)}$ for each $a$.
This easily gives strong closeness in this case.

There are also similar considerations in other cases of strong closeness.

The proof of translatability at a successor $\eps=\xi+1$ also involves an extra
detail,
with respect to $\zeta+1<^\Tt\xi+1$. Let $\delta=\pred^\Tt(\xi+1)$, so
$\zeta+1\leq^\Tt\delta$
and $(\zeta+1,\delta]_\Tt$ does not drop in model or degree.
Let $\kappa=\crit(E^\Tt_\zeta)$ and $\mu=\crit(E^\Tt_\xi)$, so $\kappa<\mu$.
Since we have translatability at $\delta$,
it suffices to see that
for each $(\varphi,q)$ there is $(\varphi',q')$ such that for all
$\alpha<\kappa$,
\[ M^\Tt_{\xi+1}\sats\varphi(q,\alpha)\iff
M^\Tt_\delta\sats\varphi'(q',\alpha), \]
and all $\alpha<\crit(E^\Xx_{\delta_\zeta})=\om_\zeta(\kappa)$,
\[
M^\Xx_{\gamma_{\xi+1}}\sats\varphi(\pi_{\xi+1}(q),\alpha)\iff M^\Xx_{
\gamma_\delta}\sats\varphi'(\pi_\delta(q'),\alpha). \]
Now
$\gamma_\delta\leq^\Xx\gamma_{\delta\mu}=\pred^\Xx(\gamma_{\xi+1})\in I_\delta$
and $(\gamma_\delta,\gamma_{\xi+1}]_\Xx$ does not drop in model or degree as
$(\delta,\xi+1]_\Tt$ does not.
Fix $(\varphi,q)$.
As usual, using strong closeness at $\xi$,
we can choose $(\varphi',q')$ such that
for all $\alpha<\mu$,
\[
M^\Tt_{\xi+1}\sats\varphi(q,\alpha)\iff M^\Tt_\delta\sats\varphi'(q',\alpha),
\]
and all $\alpha<\crit(E^\Xx_{\delta_\xi})=\om_\xi(\kappa)$,
\[
M^\Xx_{\gamma_{\xi+1}}\sats\varphi(\pi_{\xi+1}(q),\alpha)\iff M^{*\Xx}_{\gamma_
{\xi+1}}=M^\Xx_{\gamma_{\delta\mu}}\sats\varphi'(\pi_{ \delta\mu}(q'),\alpha).
\]
But $\pi_{\delta\mu}=i^\Xx_{\gamma_\delta\gamma_{\delta\mu}}\com\pi_\delta$
and $\crit(E^\Xx_{\delta_\zeta})<\crit(E^\Xx_{\delta_\xi})$, so
for all $\alpha<\crit(E^\Xx_{\delta_\zeta})$, we have
\[ M^\Xx_{\gamma_{\delta\mu}}\sats\varphi'(\pi_{\delta\mu}(q'),\alpha)\iff
M^\Xx_{\gamma_\delta}\sats\varphi'(\pi_\delta(q'),\alpha), \]
so $(\varphi',q')$ is as desired.

We leave the remaining details to the reader.
\end{proof}

\begin{dfn}\label{dfn:tree_embedding_extensions}\index{one-step copy extension}
 Let $\Pi':(\Tt,\theta)\tembto\Xx'$ and
$(\Xx,\Pi)$
be as in \ref{lem:tree_embedding_copy} (so $\theta<\lh(\Tt)$). Then we say that
$(\Xx,\Pi)$ is the
\dfnemph{one-step copy extension} of $(\Xx',\Pi')$.
\end{dfn}

The second manner of propagating tree embeddings involves the use of an
extender
in the upper
tree $\Xx'$ which is \emph{not} (considered as) copied from $\Tt$. We will call
such extenders
\emph{$\Tt$-inflationary}.\index{inflationary}\index{$\Tt$-inflationary} In
this case we can just give the
definition directly, as it is clear that it works.

\begin{dfn}\label{dfn:inflationary_extender}
Let $\Pi:(\Tt,\theta)\tembto\Xx$ be bounding and $k=\udeg(\Pi)$.
Let $\gamma_\alpha=\gamma_{\Pi\alpha}$, etc.
Suppose $\lh(\Xx)=\xi+1$. Let
$E\in\es_+(M^\Xx_\xi)$ be $\Xx$-normal.
Suppose that the putative $\udash
k$-maximal tree $\Xx'=\Xx\conc\left<E\right>$ has wellfounded last model, and
let
$\eta=\pred^{\Xx'}(\xi+1)$.
Suppose that $\eta\in I_\beta$ and
if $\beta\in\lh(\Tt)^-$ then $E$ is total over
$M^\Xx_\eta|\In(E^\Pi_\eta)$,
and otherwise $E$ is total over $M^\Xx_\eta$.

The \dfnemph{$E$-inflation of
$(\Xx,\Pi)$}\index{$E$-inflation}\index{inflation} is
$(\Xx',\Pi')$, where
$\Pi':(\Tt,\beta+1)\tembto\Xx'$
is the unique tree embedding such that
$I_{\Pi'\beta}=(I_\beta\inter\eta+1)\un\{\xi+1\}$ and $I_{\Pi'\alpha}=I_\alpha$
for every
$\alpha<\beta$.
\end{dfn}

\begin{rem}\label{rem:tree_embedding_existence}
The uniqueness of the $E$-inflation is by \ref{cor:tree_embedding_uniqueness},
and
existence is easy. We have
$P'_{\alpha i}=P_{\alpha i}$ and $\pi'_{\alpha i}=\pi_{\alpha i}$ for
$(\alpha,i)\leq_\lex(\beta,0)$.
Because $\Pi$ is bounding,
$E^{\Pi'}_{\xi+1}$ is $\Xx'$-normal, and if
$\In(E)\leq\In(E^\Pi_\xi)$ or $\eta<\xi$
then $\Pi'$ is also bounding.
\end{rem}

\begin{dfn}\index{almost tree embedding}\index{$\Pi:\Tt\tembto_{\almost}\Xx$}
Let $M,\Tt,\Xx,k$ be as in
\ref{dfn:tree_embedding}.
An \dfnemph{almost tree embedding} $\Pi$ from $\Tt$ to $\Xx$, denoted
$\Pi:\Tt\hookrightarrow_{\almost}\Xx$,
is a system $\Pi$ satisfying the requirements of a tree embedding,
except that (letting $\Gamma$ be as in \ref{dfn:tree_embedding})
we drop the requirement that $\Gamma$ be continuous at limits
(but $\Gamma$ must still send limits to limits etc).
\end{dfn}

\begin{rem}
 Note here that if $\pred^\Tt(\beta+1)=\alpha$ and $\alpha$ is a limit,
 then $\pred^\Xx(\gamma_{\beta+1})\in I_{\Pi\alpha}$, and in particular,
 $\pred^\Xx(\gamma_{\beta+1})\geq\gamma_\alpha$, by the requirements of tree
embeddings;
 this remains a requirement of almost tree embeddings, even when $\Gamma$ is
discontinuous at $\alpha$.

Note that given a tree $\Xx$, the requirements of almost tree embeddings from
countable
$\Tt\hookrightarrow_{\almost}\Xx$ are closed in the natural topology,
so we can form a tree (in the descriptive set theoretic sense)
which searches for a countable $\Tt$ and almost tree embedding
$\Pi:\Tt\hookrightarrow_{\almost}\Xx$.
\end{rem}

\begin{lem}\label{lem:ate_to_te}
Let $\Pi:\Tt\hookrightarrow_{\almost}\Xx$ be an almost tree embedding. Write
$\gamma_\alpha=\gamma_{\Pi\alpha}$ etc.
Then there is a unique tree embedding $\Pi':\Tt\hookrightarrow\Xx$ such that
$\delta'_\alpha=\delta_\alpha$ for all $\alpha$,
where $\delta'_\alpha=\delta_{\Pi'\alpha}$, etc.
Hence, for limit $\alpha$,
\[
\gamma'_\alpha=\sup_{\beta<\alpha}\gamma_\beta=\sup_{\beta<\alpha}\delta'_\beta,
\]
whereas for successor $\alpha$,
$\gamma'_\alpha=\gamma_\alpha$ \tu{(}and $\gamma'_0=0=\gamma_0$\tu{)}. Moreover,
for each $\alpha$, we have $\omega_\alpha=\omega'_\alpha$
and
$\pi_\alpha=i^\Xx_{\gamma'_\alpha\gamma_\alpha}\com\pi'_\alpha$.
\end{lem}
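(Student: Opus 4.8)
# Proof Plan for Lemma \ref{lem:ate_to_te}

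The plan is to build $\Pi'$ by recursion on $\alpha<\lh(\Tt)$, following exactly the construction used for the one-step copy extension (Lemma \ref{lem:tree_embedding_copy}, Definition \ref{dfn:tree_embedding_extensions}) and the $E$-inflation (Definition \ref{dfn:inflationary_extender}), but noting that the given almost tree embedding $\Pi$ supplies all the data we need at limit stages \emph{except} that its $\Gamma$ is discontinuous. The key observation is that an almost tree embedding restricted to $\Tt\rest(\alpha+1)$ for successor $\alpha$, or to $\Tt\rest\alpha$ for limit $\alpha$, is literally a tree embedding: continuity of $\Gamma$ is only a constraint \emph{at} limit points, so all of \ref{dfn:tree_embedding} holds for these restrictions. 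Thus for successor $\alpha$ I set $\gamma'_\alpha=\gamma_\alpha$, $\delta'_\alpha=\delta_\alpha$, $I'_\alpha=I_\alpha$, and copy all the associated $P,\pi,\sigma,\tau,\omega$ data verbatim from $\Pi$; nothing changes and the tree-embedding axioms for the successor step are inherited directly. The only real work is at limit $\alpha$.

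At a limit stage $\alpha$, having built $\Pi'\rest\Tt\rest\alpha$ with $\delta'_\beta=\delta_\beta$ for all $\beta<\alpha$ (hence $\gamma'_\beta$ agreeing with $\gamma_\beta$ at successors and $\gamma'_\beta=\sup_{\xi<\beta}\gamma_\xi$ at limits $\beta<\alpha$), I must define $\gamma'_\alpha$. Since $\Gamma'$ is to be continuous, I am forced to set $\gamma'_\alpha=\sup_{\beta<\alpha}\gamma'_\beta=\sup_{\beta<\alpha}\delta'_\beta=\sup_{\beta<\alpha}\delta_\beta$; I must check this is $<^\Xx$-below $\gamma_\alpha$ and lies on the $\Xx$-branch through $\gamma_\alpha$, which follows because $\Gamma``[0,\alpha)_\Tt$ is $<^\Xx$-cofinal in $[0,\gamma_\alpha)_\Xx$ (a requirement retained by almost tree embeddings) so $\gamma'_\alpha$ is exactly the sup of that branch segment; by continuity of branches in $\Xx$, either $\gamma'_\alpha=\gamma_\alpha$ (if $\gamma_\alpha$ is the limit of the branch, i.e. $\Gamma$ was already continuous there) or $\gamma'_\alpha<^\Xx\gamma_\alpha$ with $(\gamma'_\alpha,\gamma_\alpha]_\Xx$ not dropping (since $[0,\alpha)_\Tt$'s image does not drop when $[0,\alpha)_\Tt$ itself produces the limit model). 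Then I set $I'_\alpha=[\gamma'_\alpha,\delta'_\alpha]_\Xx$ where $\delta'_\alpha$ is forced once I see how the dropdown of $(\Tt,\alpha+1)$ interacts — but for $\alpha$ a limit the dropdown domain $\ddd^{(\Tt,\alpha+1)}$ restricted to the $\alpha$-th coordinate is just that of $M^\Tt_\alpha$, and $\pi'_\alpha:M^\Tt_\alpha\to M^\Xx_{\gamma'_\alpha}$ is defined as the direct limit map: $M^\Xx_{\gamma'_\alpha}=\dirlim_{\beta<^\Tt\alpha}M^\Xx_{\gamma'_\beta}$ and $\pi'_\alpha$ is induced by $\langle\pi'_\beta\rangle_{\beta<^\Tt\alpha}$ using embedding commutativity \ref{item:embedding_commutativity}(a) at the previous stages. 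One checks $\pi'_\alpha$ is a near $\udash\deg^\Tt(\alpha)$-embedding as a direct limit of such. Then $\pi_\alpha=i^\Xx_{\gamma'_\alpha\gamma_\alpha}\com\pi'_\alpha$ holds because both sides agree on the direct limit system (this is the commutativity statement in the conclusion, and it propagates to all later $\alpha$ by the Shift Lemma computations, since $\pi'_{\alpha+1}$ and $\pi_{\alpha+1}$ are built by the same recipe from $\pi'_{\beta\kappa}$ resp. $\pi_{\beta\kappa}$, and these differ precisely by $i^\Xx$-maps).

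To continue past $\alpha$: having $\Pi'\rest\Tt\rest(\alpha+1)$ a genuine (almost, but now continuous at $\alpha$) tree embedding into $\Xx\rest(\delta'_\alpha+1)=\Xx\rest(\delta_\alpha+1)$, the next extender $E^\Tt_\alpha$ and its copy $E^\Xx_{\delta_\alpha}=F^{Q'_\alpha}$ (where $Q'_\alpha=\cod(\omega'_\alpha)$, and $\omega'_\alpha=\omega_\alpha$ since all the model data at coordinate $\alpha$ is copied from $\Pi$) are in place, so Lemma \ref{lem:tree_embedding_copy} applies to give the one-step copy extension — and its output is forced to agree with $\Pi$ at coordinate $\alpha+1$ by \ref{cor:tree_embedding_uniqueness} combined with $\delta'_{\alpha+1}=\delta_{\alpha+1}$. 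I expect the main obstacle to be the bookkeeping in the limit step: verifying that $\gamma'_\alpha$ has the right dropping behavior and degree (i.e.\ \ref{item:tree_pres}\ref{item:degree_match} holds for $\gamma'_\alpha$, which follows from $\deg^\Tt$ being continuous at limits and the direct-limit characterization), and checking that the embedding-agreement conditions \ref{item:embedding_agreement} survive — specifically that $\omega_\beta\rest\iota^\Tt_\beta\sub\pi'_{\alpha}$ for $\beta<\alpha$, which reduces to the corresponding fact for $\pi_\alpha$ together with $\pi_\alpha=i^\Xx_{\gamma'_\alpha\gamma_\alpha}\com\pi'_\alpha$ and $\crit(i^\Xx_{\gamma'_\alpha\gamma_\alpha})$ being large enough. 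Uniqueness of $\Pi'$ is immediate from \ref{cor:tree_embedding_uniqueness} since we have pinned down $\delta'_\alpha=\delta_\alpha$ for all $\alpha$. I would leave the routine verifications to the reader, emphasizing only the limit-stage direct limit and the one-step-copy-extension recursion.
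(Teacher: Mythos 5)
Your proposal takes essentially the same approach as the paper's proof sketch: build $\Pi'$ by recursion, forcing $\gamma'_\alpha=\sup_{\beta<\alpha}\gamma_\beta=\sup_{\beta<\alpha}\delta_\beta$ at limits, checking via the cofinality and drop-agreement clauses of almost tree embeddings that $\gamma'_\alpha\leq^\Xx\gamma_\alpha$ with $(\gamma'_\alpha,\gamma_\alpha]_\Xx$ non-dropping, taking direct limits to define $\pi'_\alpha$, deducing $\pi_\alpha=i^\Xx_{\gamma'_\alpha\gamma_\alpha}\com\pi'_\alpha$, and getting uniqueness from Corollary \ref{cor:tree_embedding_uniqueness}. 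One small inaccuracy: your opening ``key observation'' that $\Pi$ restricted to $\Tt\rest(\alpha+1)$ (for $\alpha$ a successor) or to $\Tt\rest\alpha$ (for $\alpha$ a limit) is literally a tree embedding is false whenever a discontinuity of $\Gamma$ occurs at some limit $\beta<\alpha$; that restriction is still only an almost tree embedding. This doesn't damage your argument — your actual recursion repairs the discontinuity at each limit as it arises, which is what the claim should have said — but the claim as stated is too strong and should be dropped.
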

\begin{proof}[Proof Sketch]
This is straightforward; we just mention the key facts.
Uniqueness is by \ref{cor:tree_embedding_uniqueness}.
 Fix a limit $\alpha<\lh(\Tt)$. The main point is that
 \[ B_\alpha\eqdef\{\gamma_\beta\bigm|
\beta<^\Tt\alpha\}\sub[0,\gamma_\alpha)_\Xx, \]
so
$\gamma'_\alpha=\sup(B_\alpha)\leq^\Xx\gamma_\alpha$.
Moreover, for  each $\beta<^\Tt\alpha$,
\[
(\beta,\alpha]_\Tt\inter\dropset^\Tt=\emptyset\iff(\gamma_\beta,\gamma_\alpha]
_\Xx\inter\dropset^\Xx=\emptyset; \]
therefore, $(\gamma'_\alpha,\gamma_\alpha]_\Xx\inter\dropset^\Xx=\emptyset$.
Because of commutativity requirements of (almost) tree embeddings, we have
\[ \pi_\alpha\com
i^\Tt_{\beta\alpha}=i^\Xx_{\gamma_\beta\gamma_\alpha}\com\pi_\beta \]
for sufficiently large $\beta<^\Tt\alpha$. Likewise with
$\pi'_\alpha,\gamma'_\alpha$ replacing $\pi_\alpha,\gamma_\alpha$.
It follows that
$\pi_\alpha=i^\Xx_{\gamma'_\alpha\gamma_\alpha}\com\pi'_\alpha$
(and note $\sigma'_\alpha=\sigma_\alpha$).
As remarked above, if $\pred^\Tt(\beta+1)=\alpha$ and
$\xi=\pred^\Xx(\gamma_{\beta+1})$ then $\xi\in I_\alpha$,
hence, $\gamma_\alpha\leq^\Xx\xi$, and $\delta'_\beta=\delta_\beta$, so
\[
\delta(\Tt\rest\gamma_\alpha)\leq\crit(E^\Xx_{\delta_\beta}
)=\sigma_\beta(\crit(E^\Tt_\beta))=\sigma'_\beta(\crit(E^\Tt_\beta)), \]
so everything agrees appropriately in producing $M^\Tt_{\beta+1}$ and
$M^\Xx_{\gamma'_{\beta+1}}$,
with regard to the tree embedding $\Pi'$.
\end{proof}

\subsection{Inflation}

We now proceed to the definition of an \emph{inflation} of a normal iteration
tree $\Tt$.
This will be a normal tree $\Xx$ which can be interpreted as being produced by
using extenders
which are either (i) copied from $\Tt$, or (ii) $\Tt$-inflationary. Certain
nodes $\alpha<\lh(\Xx)$
will correspond to nodes
$f(\alpha)<\lh(\Tt)$, in that there will be a natural tree embedding \[
\Pi_\alpha:(\Tt,f(\alpha)+1)\tembto\Xx\rest(\alpha+1),\]
with $\delta_{\Pi_\alpha f(\alpha)}=\alpha$. The set of all such
$\alpha$ will be denoted by $C$. For successor $\alpha\in C$, $\Pi_\alpha$ will
be
produced through one of the two methods we have just described.
We will take natural direct limits at limit ordinals $\alpha$.
The set $C^-$ will consist of those $\alpha\in C$ such that
$ f(\alpha)+1<\lh(\Tt)$,
and note that at such $\alpha$, we have $Q_{\Pi_\alpha f(\alpha)}\ins
M^\Xx_\alpha$,
and the active extender of $Q_{\Pi_\alpha f(\alpha)}$ is a copy of
$E^\Tt_{ f(\alpha)}$.

\begin{dfn}[Inflation]\label{dfn:inflation}\index{inflation}
Let either (i) $M$ be a $\udash k$-sound seg-pm and $\Tt,\Xx$ be $\udash
k$-maximal trees on
$M$, or (ii) $M$ be a wcpm and $\Tt,\Xx$ be normal trees on $M$.
We say that $\Xx$ is an \dfnemph{inflation} of $\Tt$ iff there is a
tuple\index{$t$}\index{$C,C^-$}\index{$f$}\index{$\Pi_\alpha$}
\[ \left(t,C,C^-,f,\left<\Pi_\alpha\right>_{\alpha\in C}\right) \]
with the following properties (which will unique the tuple); we will also
define
further
notation:
\begin{enumerate}[label=\arabic*.,ref=\arabic*]
\item
\index{type}\index{$\Tt$-copying}\index{copying}\index{$\Tt$-inflationary}
\index{inflationary}We have $t:\lh(\Xx)^-\to\{0,1\}$. The value of $t(\alpha)$
indicates the
\dfnemph{type} of
$E^\Xx_\alpha$, either \dfnemph{$\Tt$-copying} (if $t(\alpha)=0$) or
\dfnemph{$\Tt$-inflationary} (if
$t(\alpha)=1$).
\item\label{item:C_nature} $C\sub\lh(\Xx)$\footnote{If $M$ is a wcpm, it will
follow from the overall definition that $C=\lh(\Xx)$,
and the conditions regarding $C$ will be trivial (but $C^-$ is still
important).}
and $C\inter[0,\alpha]_\Xx$ is a closed\footnote{One could drop the closure
requirement here,
demanding only that $C\inter[0,\alpha]_\Xx$ is an initial segment of $\Xx$,
and adding to condition \ref{item:inflation_limit_ordinal} the requirement that
for limit $\alpha$, $\alpha\in C$ iff
$(\sup_{\beta<^\Xx\alpha}f(\beta))<\lh(\Tt)$.
Then if a limit $\alpha$ were such that $\alpha\notin C$ but
$[0,\alpha)_\Xx\sub
C$,
then $[0,\alpha)_\Xx$ would determine a $\Tt$-cofinal branch $b$. By demanding
that $C\inter[0,\alpha]_\Xx$ be closed,
we are demanding that such branches $b$ are already incorporated into $\Tt$.}
initial segment of $[0,\alpha]_\Xx$.
\item\label{item:def_C^-} We have $f:C\to\lh(\Tt)$ and
$C^-=\{\alpha\in C\bigm|   f(\alpha)+1<\lh(\Tt)\}$.
\item For $\alpha\in C$ we have
$\Pi_\alpha:(\Tt, f(\alpha)+1)\tembto\Xx\rest(\alpha+1)$, with
$\delta_{\alpha; f(\alpha)}=\alpha$, where
we write $\delta_{\alpha;\beta}=\delta_{\Pi_\alpha\beta}$, etc.
\index{$\gamma_{\alpha;\beta}$, $\delta_{\alpha;\beta}$, etc}
\item $0\in C$ and $f(0)=0$ and
$\Pi_0:(\Tt,1)\tembto\Xx\rest 1$
is trivial (see \ref{dfn:trivial_tree_embedding}).
\item Let $\alpha+1<\lh(\Xx)$. Then:
\begin{enumerate}[label=--]
 \item If $\alpha\in C^-$ then
$\In(E^\Xx_\alpha)\leq\In(E^{\Pi_\alpha}_\alpha)$.\footnote{This condition could
be dropped, but in our applications it will hold, and it simplifies some things.
It ensures that each $\Pi_\beta$ is bounding.}
\item $t(\alpha)=0$ iff [$\alpha\in C^-$ and
$E^\Xx_\alpha=E^{\Pi_\alpha}_\alpha$].\footnote{If we had required that $t$ be
given from the
outset (calling the pair $(\Xx,t)$ an inflation), then this condition could
also
be weakened
to say that if $t(\alpha)=0$ then $\alpha\in C^-$ and
$E^\Xx_\alpha=E^{\Pi_\alpha}_\alpha$.
But having the stronger condition also simplifies things and ensures the
canonicity of inflations.}
\end{enumerate}
\item Let $\alpha+1<\lh(\Xx)$ be such that $t(\alpha)=0$. Then we interpret
$E^\Xx_\alpha=E^{\Pi_\alpha}_\alpha$ as a copy from $\Tt$, as follows:
\begin{enumerate}[label=--]
 \item  $\alpha+1\in C$ and $f(\alpha+1)= f(\alpha)+1$.
 \item $(\Xx\rest\alpha+2,\Pi_{\alpha+1})$
is the one-step copy extension of
$(\Xx\rest\alpha+1,\Pi_\alpha)$.
\end{enumerate}
\item Let $\alpha+1<\lh(\Xx)$ be such that $t(\alpha)=1$.
We interpret $E^\Xx_\alpha$ as $\Tt$-inflationary, as follows.
Let $\eta=\pred^\Xx(\alpha+1)$. Then:
\begin{enumerate}[label=--]
\item $\alpha+1\in C$ iff [$\eta\in C$ and
if $M$ is a seg-pm then $Q_{\eta;f(\eta)}\ins M^{*\Xx}_{\alpha+1}$].
\item If $\alpha+1\in C$ then:
\begin{enumerate}[label=--]
\item $f(\alpha+1)=f(\eta)$.
\item $(\Xx\rest\alpha+2,\Pi_{\alpha+1})$
is the $E^\Xx_\alpha$-inflation of
$(\Xx\rest\alpha+1,\Pi_\eta)$.
\end{enumerate}
\end{enumerate}
\item\label{item:inflation_internal_agreement} Let $\alpha\in C$ and $\beta\in
I_{\alpha;\gamma}$ for some $\gamma\leq f(\alpha)$. Then:
\begin{enumerate}[label=--]
\item $\beta\in C$ and $f(\beta)=\gamma$.
\item  $I_{\alpha;\eps}=I_{\beta;\eps}$ for all $\eps<f(\beta)=\gamma$,
\item $I_{\beta;f(\beta)}=I_{\alpha;f(\beta)}\inter(\beta+1)$.
\end{enumerate}
\item\label{item:inflation_limit_ordinal} If $\alpha\in C$ is a
limit\footnote{Note that by condition \ref{item:C_nature},
if $\alpha$ is a limit then $\alpha\in C$ iff $[0,\alpha)_\Xx\sub C$.} then
$ f(\alpha)=\sup_{\beta<^\Xx\alpha}f(\beta)$.\qedhere
\end{enumerate}
\end{dfn}

\begin{rem}
We make some remarks regarding this definition (literally in the context of
seg-pms),
continuing with notation as above.

Note first that $\Pi_\alpha$
is bounding for each $\alpha\in C$.

Adopt the hypotheses and notation of condition
\ref{item:inflation_internal_agreement} (so
$\eps<f(\beta)=\gamma$).
Note
\[ I_{\alpha;\eps i}=I_{\beta;\eps i}\text{ and }P_{\alpha;\eps
i}=P_{\beta;\eps
i}\text{ and }\pi_{\alpha;\eps i}=\pi_{\beta;\eps i}\text{ for all }i,\]
and also $P_{\alpha;f(\beta) 0}=P_{\beta;f(\beta) 0}$ and
$\pi_{\alpha;f(\beta)}=\pi_{\beta;f(\beta)}$.
And by \ref{lem:intervals_I_cover_X-branches},
if $\widetilde{\beta}\leq^\Xx\alpha$
then $\widetilde{\beta}\in I_{\alpha;\widetilde{\gamma}}$
for some $\widetilde{\gamma}\leq f(\alpha)$, so condition
\ref{item:inflation_internal_agreement}
applies to $\widetilde{\beta},\widetilde{\gamma}$,
and therefore $f(\widetilde{\beta})=\widetilde{\gamma}\leq^\Tt f(\alpha)$.

We point out some facts regarding limit stages.
Let $\alpha\in C$ be a limit such that
$f(\alpha)>f(\beta)$ for all $\beta<^\Xx\alpha$.
Note that by condition \ref{item:inflation_internal_agreement} and the remarks
above, for $\xi<f(\alpha)$, we have
$I_{\alpha;\xi}=\lim_{\beta<^\Xx\alpha}I_{\beta;\xi}$
(where this limit exists in the eventually constant sense)
 and so likewise for $I_{\alpha;\xi
i}$, $P_{\alpha;\xi i}$ and $\pi_{\alpha;\xi i}$.
So
\[
\alpha=\left(\lim_{\xi<f(\alpha)}\gamma_{\alpha;\xi}\right)=\gamma_{
\alpha;f(\alpha)}=
\delta_{\alpha;f(\alpha)}, \]
so $I_{\alpha;f(\alpha)}=[\alpha,\alpha]$, determining
$\pi_{\alpha;f(\alpha)}$,
etc.

Now let $\alpha\in C$ be a limit such that
$f(\alpha)=f(\beta)$ for some $\beta<^\Xx\alpha$.
For such $\beta$ we have $\gamma_{\alpha; f(\alpha)}=\gamma_{\beta; f(\alpha)}$.
We also have $\delta_{\alpha; f(\alpha)}=\alpha$. This determines
the remaining objects ($I_{\alpha; f(\alpha) i}$, etc); they are just the
natural direct limits.
\end{rem}

Using \ref{cor:tree_embedding_uniqueness}, it is easily verified that $\Tt,\Xx$
determines
$(t,C,C^-,f,\Pivec)$:

\begin{lem}\label{lem:inflation_unique_objects}
Let $\Xx$ be an inflation of $\Tt$,
witnessed by
  $w=(t,C,C^-,f,\Pivec)$, and also by $w'=(t',C',(C^-)',f',\Pivec')$.
  Then $w=w'$.
\end{lem}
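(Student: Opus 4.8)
The plan is to prove uniqueness of the witnessing tuple $(t,C,C^-,f,\Pivec)$ by simultaneous induction on $\alpha$, showing for each $\alpha<\lh(\Xx)$ that the witnesses agree on all data indexed by ordinals $\leq\alpha$. The crucial observation is that $C^-$ and $f$ restricted to $C^-$ are determined by the type function $t$ via condition \ref{dfn:inflation}(\ref{item:def_C^-}) together with the recursive clauses for $t(\alpha)=0$ and $t(\alpha)=1$, and that $t(\alpha)$ is itself determined by the $\Pi_\alpha$'s via the clause ``$t(\alpha)=0$ iff [$\alpha\in C^-$ and $E^\Xx_\alpha=E^{\Pi_\alpha}_\alpha$]''. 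So the real content is that the tree embeddings $\Pi_\alpha$ are forced, and here the key input is Corollary \ref{cor:tree_embedding_uniqueness}: a tree embedding $\Pi:(\Tt,\theta)\tembto\Xx$ is determined by the sequence $\left<\delta_{\Pi\beta}\right>_{\beta<\theta}$. Thus it suffices to show that $C$, $f$, and the interval-endpoint data $\delta_{\alpha;\beta}$ are forced; the embeddings $\pi_{\alpha;\beta i}$, the models $P_{\alpha;\beta i}$, etc., then follow automatically.

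First I would handle the base case: $0\in C$ with $f(0)=0$ and $\Pi_0$ trivial is dictated outright by the defining conditions, so $w$ and $w'$ agree at $0$. For the successor step, suppose agreement up to $\alpha$; I want agreement up to $\alpha+1$ (when $\alpha+1<\lh(\Xx)$) and, separately, I need to confirm that $\alpha+1\in C\iff\alpha+1\in C'$. The type $t(\alpha)$ is determined: given agreement of $\Pi_\alpha$ (hence of $E^{\Pi_\alpha}_\alpha$) and of $C^-\cap(\alpha+1)$, the biconditional defining $t(\alpha)$ pins it down, and the same for $t'(\alpha)$; so $t(\alpha)=t'(\alpha)$. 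If $t(\alpha)=0$, then $\alpha\in C^-$, and the clauses force $\alpha+1\in C$, $f(\alpha+1)=f(\alpha)+1$, and $(\Xx\rest\alpha+2,\Pi_{\alpha+1})$ to be the one-step copy extension of $(\Xx\rest\alpha+1,\Pi_\alpha)$ — unique by Lemma \ref{lem:tree_embedding_copy}(ii). If $t(\alpha)=1$, let $\eta=\pred^\Xx(\alpha+1)$; since $\eta\leq\alpha$, by the induction hypothesis $w,w'$ agree on whether $\eta\in C$ and on $f(\eta)$, $\Pi_\eta$, and the $Q$-models, so the membership criterion ``$\alpha+1\in C$ iff [$\eta\in C$ and $Q_{\eta;\eta'}\ins M^{*\Xx}_{\alpha+1}$]'' gives $\alpha+1\in C\iff\alpha+1\in C'$; and when $\alpha+1\in C$, the value $f(\alpha+1)=f(\eta)$ and the $E^\Xx_\alpha$-inflation $(\Xx\rest\alpha+2,\Pi_{\alpha+1})$ of $(\Xx\rest\alpha+1,\Pi_\eta)$ are unique by Remark \ref{rem:tree_embedding_existence} (which cites \ref{cor:tree_embedding_uniqueness}). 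Conditions \ref{dfn:inflation}(\ref{item:inflation_internal_agreement}) then propagate agreement of all the internal interval/embedding data at $\alpha+1$ from that at $\alpha+1$ itself and earlier stages.

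For the limit step, let $\alpha<\lh(\Xx)$ be a limit with agreement below $\alpha$. By condition \ref{dfn:inflation}(\ref{item:C_nature}), $C\cap[0,\alpha]_\Xx$ is a closed initial segment of $[0,\alpha]_\Xx$, and likewise for $C'$; since $C\cap[0,\alpha)_\Xx=C'\cap[0,\alpha)_\Xx$ by hypothesis, closure forces $\alpha\in C\iff\alpha\in C'$ (namely, $\alpha\in C$ iff $[0,\alpha)_\Xx\subseteq C$). If $\alpha\in C$, then $f(\alpha)=\sup_{\beta<^\Xx\alpha}f(\beta)$ by condition \ref{dfn:inflation}(\ref{item:inflation_limit_ordinal}), which is $w$-$w'$-invariant, so $f(\alpha)=f'(\alpha)$. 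The tree embedding $\Pi_\alpha$ at a limit stage is then the natural direct limit of the $\Pi_\beta$ for $\beta<^\Xx\alpha$ — as spelled out in the remark following Definition \ref{dfn:inflation}, splitting into the two cases according to whether $f(\alpha)$ is strictly above all $f(\beta)$ or equal to some $f(\beta)$; in both cases the interval data $I_{\alpha;\xi}$ (for $\xi<f(\alpha)$) is the eventually-constant limit of the $I_{\beta;\xi}$, and $I_{\alpha;f(\alpha)}$ is pinned down either as $[\alpha,\alpha]$ or via $\gamma_{\alpha;f(\alpha)}=\gamma_{\beta;f(\alpha)}$ together with $\delta_{\alpha;f(\alpha)}=\alpha$. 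So $\delta_{\alpha;\beta}=\delta'_{\alpha;\beta}$ for all $\beta\leq f(\alpha)$, whence $\Pi_\alpha=\Pi'_\alpha$ by Corollary \ref{cor:tree_embedding_uniqueness}. Finally, $t,t'$ and $C^-,(C^-)'$ agree everywhere by the determinations already obtained. I expect the main obstacle to be purely bookkeeping: carefully matching the two limit cases in condition \ref{dfn:inflation}(\ref{item:inflation_limit_ordinal}) against the direct-limit descriptions in the subsequent remark, and verifying that the ``internal agreement'' condition \ref{dfn:inflation}(\ref{item:inflation_internal_agreement}) really does propagate all the auxiliary data ($P_{\alpha;\xi i}$, $\pi_{\alpha;\xi i}$, $Q_{\alpha;\xi i}$) from the endpoint data $\delta_{\alpha;\beta}$ without circularity — but none of this requires new ideas beyond repeated appeals to \ref{cor:tree_embedding_uniqueness}.
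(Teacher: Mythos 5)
Your proof takes essentially the same approach as the paper: the paper dispatches this lemma as "easily verified using \ref{cor:tree_embedding_uniqueness}" with no written-out argument, and you have simply supplied the straightforward induction on $\alpha$ that the paper leaves implicit, with the successor step anchored by the uniqueness of one-step copy extensions and $E$-inflations (\ref{lem:tree_embedding_copy}, \ref{rem:tree_embedding_existence}) and the limit step by condition \ref{dfn:inflation}(\ref{item:C_nature}) and \ref{cor:tree_embedding_uniqueness}, all of which ultimately reduce to the same corollary the paper cites. The argument is correct.
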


\begin{dfn}\index{$\Tt\inflatearrow\Xx$}\index{$f^{\Tt\inflatearrow\Xx}$,
$C^{\Tt\inflatearrow\Xx}$, etc}
\index{$E^{\Tt\inflatearrow\Xx }_\alpha$}
 Let $\Xx$ be an inflation of $\Tt$ as witnessed by
$(t,C,C^-,f,\Pivec)$. Then we write
$(t,C,C^-,f,\Pivec)^{\Tt\inflatearrow\Xx}=(t,C,C^-,f,\Pivec)$. For $\alpha\in
C^-$ we write
$E^{\Tt\inflatearrow\Xx}_\alpha\eqdef E^{\Pi_\alpha}_\alpha$.
\end{dfn}

We may freely extend inflations at successor stages, given wellfoundedness:

\begin{lem}
Let $\Xx$ be an inflation of $\Tt$, with $\lh(\Xx)=\beta+1$. Let
$C^-=(C^-)^{\Tt\inflatearrow\Xx}$. Then:
\begin{enumerate}[label=\arabic*.,ref=\arabic*]
\item\label{item:copy_is_Xx-normal} If $\beta\in C^-$ then
$E^{\Tt\inflatearrow\Xx}_\beta$ is $\Xx$-normal.

\item\label{item:extend_inflation} Let $E\in\es_+(M^\Xx_\beta)$ be
$\Xx$-normal,
with $\In(E)\leq\In(E^{\Tt\inflatearrow\Xx}_\beta)$ if $\beta\in C^-$.
Let $\Xx'$ be the putative tree $\Xx\conc\left<E\right>$, and suppose that
$\Xx'$ has wellfounded last model.
Then $\Xx'$ is an inflation of $\Tt$.
\end{enumerate}
\end{lem}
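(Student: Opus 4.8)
The strategy is to extend the witnessing tuple $(t,C,C^-,f,\Pivec)^{\Tt\inflatearrow\Xx}$ by one step to a tuple witnessing that $\Xx'$ is an inflation of $\Tt$; the main work is defining the data at the new index $\beta$ and verifying the structural conditions of Definition \ref{dfn:inflation}, in particular conditions \ref{item:inflation_internal_agreement} and \ref{item:inflation_limit_ordinal} remain automatic since we are adding a successor node. Write $w=(t,C,C^-,f,\Pivec)$ for the tuple witnessing the inflation $\Xx$ of $\Tt$. First, for part \ref{item:copy_is_Xx-normal}: if $\beta\in C^-$, then by the Remark following Definition \ref{dfn:inflation} the tree embedding $\Pi_\beta:(\Tt,f(\beta)+1)\tembto\Xx$ is bounding, and since $\lh(\Xx)=\beta+1=\delta_{\Pi_\beta f(\beta)}+1$ with $f(\beta)+1<\lh(\Tt)$ (as $\beta\in C^-$), Lemma \ref{lem:Pi-bounded_reasonable_copy} applies and yields that $E^\Pi_{\delta_{\Pi_\beta\beta}}=E^{\Tt\inflatearrow\Xx}_\beta$ is $\Xx\rest(\beta+1)=\Xx$-normal. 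That disposes of \ref{item:copy_is_Xx-normal}.

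For part \ref{item:extend_inflation}, I would define the extended tuple $w'=(t',C',(C^-)',f',\Pivec')$ by splitting into two cases according to whether $E=E^{\Tt\inflatearrow\Xx}_\beta$ (which by the normality of $E^{\Tt\inflatearrow\Xx}_\beta$ and the hypothesis $\In(E)\leq\In(E^{\Tt\inflatearrow\Xx}_\beta)$ forces $E=E^{\Pi_\beta}_\beta$ exactly when $\beta\in C^-$), or not. In the first case, set $t'(\beta)=0$; then I must verify $M^\Tt_{f(\beta)+1}$ is wellfounded and apply Lemma \ref{lem:tree_embedding_copy} to $\Pi_\beta$ and $\Xx'=\Xx\conc\left<E^{\Pi_\beta}_\beta\right>$, which produces the one-step copy extension $(\Xx',\Pi_{\beta+1})$, puts $\beta+1\in C'$ with $f'(\beta+1)=f(\beta)+1$, and by part (vi) of that lemma $\Pi_{\beta+1}$ is bounding — this handles the new instance of the $\In(E^\Xx_\beta)\leq\In(E^{\Pi_\beta}_\beta)$ condition and the $t(\alpha)=0$ clause. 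In the second case, set $t'(\beta)=1$: here $E$ is $\Tt$-inflationary. Letting $\eta=\pred^{\Xx'}(\beta+1)$, I note $\eta\leq\beta$ so $\eta\in C$ or not is already determined by $w$, and when $\eta\in C$ (and, in the seg-pm case, $Q_{\eta;\eta'}\ins M^{*\Xx'}_{\beta+1}$) I invoke Definition \ref{dfn:inflationary_extender} and Remark \ref{rem:tree_embedding_existence} to form the $E$-inflation $(\Xx',\Pi_{\beta+1})$ of $(\Xx\rest\beta+1,\Pi_\eta)$, setting $f'(\beta+1)=f(\eta)$; if the side conditions fail, $\beta+1\notin C'$. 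In either subcase, whether the new instance of $\In(E^\Xx_\beta)\leq\In(E^{\Pi_\beta}_\beta)$ must be checked only applies when $\beta\in C^-$, and then it is exactly the hypothesis on $\In(E)$.

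The main obstacle is making sure the conditions of Definition \ref{dfn:inflation} that quantify over \emph{all} of $C'$ — conditions \ref{item:inflation_internal_agreement} and \ref{item:inflation_limit_ordinal} — are not disturbed by adding the index $\beta+1$. Since $\beta+1$ is a successor, condition \ref{item:inflation_limit_ordinal} is vacuous for it, and condition \ref{item:inflation_internal_agreement} for pairs $(\alpha,\gamma)$ with $\alpha<\beta+1$ is inherited from $w$; the genuinely new instances are those with $\alpha=\beta+1$ and $\beta'\in I_{\beta+1;\gamma}$ for some $\gamma\leq f'(\beta+1)$, and these follow from the construction of $\Pi_{\beta+1}$ as a one-step copy extension or $E$-inflation respectively, using that $I_{\Pi_{\beta+1}\alpha}=I_{\Pi_\beta\alpha}$ (resp.\ $=I_{\Pi_\eta\alpha}$) for $\alpha<f(\beta)$ (resp.\ $\alpha<f(\eta)$) together with Lemma \ref{lem:intervals_I_cover_X-branches} applied inside $\Xx'$. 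The uniqueness of the witnessing tuple $w'$ is then immediate from Lemma \ref{lem:inflation_unique_objects}, so the only remaining checks — the commutativity and agreement clauses of the new tree embedding — are exactly those already guaranteed by Lemmas \ref{lem:tree_embedding_copy} and \ref{cor:tree_embedding_uniqueness}, and I would leave those routine verifications to the reader.
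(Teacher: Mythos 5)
Your proposal is correct and takes essentially the same route as the paper, which proves part 1 by citing Lemma \ref{lem:Pi-bounded_reasonable_copy} (applied to the bounding tree embedding $\Pi_\beta$) and part 2 by citing Lemma \ref{lem:tree_embedding_copy} together with the one-step copy extension and $E$-inflation constructions of Definitions \ref{dfn:tree_embedding_extensions} and \ref{dfn:inflationary_extender}; you have simply written out the case split and the routine verification of the remaining clauses of Definition \ref{dfn:inflation} that the paper leaves implicit. (Only a cosmetic remark: the subscript in $E^{\Pi_\beta}_{\delta_{\Pi_\beta\beta}}$ should read $\delta_{\Pi_\beta f(\beta)}$, and the parenthetical in part 2 is worded as if the index hypothesis forced $E=E^{\Pi_\beta}_\beta$ when $\beta\in C^-$, though your subsequent case analysis correctly allows $\In(E)<\In(E^{\Pi_\beta}_\beta)$ and routes that through the inflationary case.)
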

\begin{proof}
 Part \ref{item:copy_is_Xx-normal} follows from
\ref{lem:Pi-bounded_reasonable_copy},
 and part \ref{item:extend_inflation} from \ref{lem:tree_embedding_copy}
 (see \ref{dfn:tree_embedding_extensions} and \ref{dfn:inflationary_extender}).
\end{proof}

However, at limit stages,
we need to assume some condensation holds of $\Sigma$, in order to extend.
This is critical to our purposes, and we consider it next.

\subsection{Inflation condensation and strong hull condensation}

\begin{rem}\label{rem:condensation}
 Suppose that $\Xx$, of limit length $\alpha$,
 is an inflation of $\Tt$,
 as witnessed by $(C,f,\ldots)$.
Let $b$ be a wellfounded $\Xx$-cofinal branch, and $\Xx'=\Xx\conc b$.
We want to see whether $\Xx'$ is an inflation of $\Tt$.
Let $(C',f')$ be the unique candidate for $(C,f)^{\Tt\inflatearrow\Xx'}$
determined by \ref{dfn:inflation}.
Suppose that $\alpha\in C'$ and $f'(\beta)<f'(\alpha)$
for all $\beta<^{\Xx'}\alpha$
(this is the important case); in particular,
$f'(\alpha)$ is a limit.
Note that $b$ determines a $\Tt\rest f'(\alpha)$-cofinal branch $c=f``b$,
and $\Xx'$ is an inflation of $\Tt$ iff $c=[0,f'(\alpha))_\Tt$.

We first give the definition of inflation condensation for the case that
$\Sigma$ is an $(m,\Omega+1)$-strategy
for an $m$-sound $\lambda$-indexed premouse $M$, where $\Omega$ is an
uncountable regular cardinal.
After that, we define in \ref{dfn:strategy_classes} some general
kinds of (partial) iteration strategies $\Sigma$
we wish to consider, and then give the general definition of inflation
condensation
for such strategies.
\end{rem}

\begin{dfn}\index{inflation
condensation}\index{inflationary}\label{dfn:inf_con_lambda-indexed}
Let $\Omega>\om$ be regular.
Let $\Sigma$ be an $(m,\Omega+1)$-strategy for an $m$-sound $\lambda$-indexed
pm
$M$.
Then $\Sigma$ has \dfnemph{inflation condensation} or is \dfnemph{inflationary}
iff for all trees $\Tt,\Xx$, if
\begin{enumerate}[label=--]
 \item $\Tt,\Xx$ are via $\Sigma$,
 \item $\Xx$ is an inflation of $\Tt$, as witnessed by $(f,C,\ldots)$,
 \item $\Xx$ has limit length $\leq\Omega$,
 \item $b\eqdef\Sigma(\Xx)\sub C$ and $f``b$ has limit ordertype,
\end{enumerate}
then letting  $\eta=\sup f``b$, we have
$f``b=\Sigma(\Tt\rest\eta)$.
\end{dfn}

For strategies for wcpms, inflation condensation is totally analogous.
For strategies $\Lambda$ for MS-indexed mice, we must instead translate
$\Lambda$
to the corresponding $\udash$strategy $\Sigma$.
We would also like to consider partial strategies (such as a short tree
strategy).
So we next give an abstract definition of the the kinds of (partial) strategies
we wish to consider for inflation condensation
in general.

\begin{dfn}\label{dfn:strategy_classes}\index{partial strategy}\index{iteration
class}\index{putative}\index{$\mathscr{T}$-strategy}\index{via $\Sigma$}
 Let $M$ be a premouse or wcpm. An \dfnemph{iteration class \tu{(}for
$M$\tu{)}}
is a class $\mathscr{T}$ of putative trees on $M$,
 which is closed under initial segment. Let $\mathscr{T}$ be an iteration class
for $M$.
 A \dfnemph{putative partial $\mathscr{T}$-strategy \tu{(}for $M$\tu{)}} is a
class function $\Sigma$
 with $D\eqdef\dom(\Sigma)$ such that $D\sub\mathscr{T}$, and for each $\Tt\in
D$,
 $\Tt$ has limit length,
 $\Sigma(\Tt)$ is a $\Tt$-cofinal branch and
$\Tt\conc\Sigma(\Tt)\in\mathscr{T}$.
 Let $\Sigma$ be a  putative partial $\mathscr{T}$-strategy.
 For $\Tt\in\mathscr{T}$, we say that $\Tt$ is \dfnemph{via $\Sigma$}
 iff $\Tt\rest\eta\in\dom(\Sigma)$ and $[0,\eta)_\Tt=\Sigma(\Tt\rest\eta)$ for
every limit $\eta<\lh(\Tt)$.
 We say that $\Sigma$ is a \dfnemph{partial $\mathscr{T}$-strategy}
 iff every $\Tt\in\mathscr{T}$ via $\Sigma$ has wellfounded well-defined
 models.

 Given $M,\Sigma$, we say that $\Sigma$ is a \dfnemph{\tu{(}putative\tu{)}
partial pre-inflationary strategy \tu{(}for $M$\tu{)}}\index{pre-inflationary}
 iff $\Sigma$ is a (putative) partial $\mathscr{T}$-strategy (for $M$), where
for some $m\leq\om$,
 either
 \begin{enumerate}[label=\tu{(}\roman*\tu{)}]
  \item\label{item:pre-inf_wcpm}  $M$ is a wcpm, $m=0$ and $\mathscr{T}$ is the
class of putative normal trees on $M$, or
 \item\label{item:pre-inf_rm-sound}  $M$ is a $\udash m$-sound seg-pm and
$\mathscr{T}$ is the class of putative $\udash m$-maximal trees on $M$, or
 \item\label{item:pre-inf_m-sound_MS} $M$ is an $m$-sound MS-indexed pm
 and $\mathscr{T}$ is the class of putative $m$-maximal trees on $M$.
 \end{enumerate}
\index{convenient}\index{inconvenient}We say that $\Sigma$ is
\dfnemph{conveniently pre-inflationary} iff either
\ref{item:pre-inf_wcpm} or \ref{item:pre-inf_rm-sound} above hold,
 and \dfnemph{inconveniently pre-inflationary} iff
\ref{item:pre-inf_m-sound_MS}
holds.\footnote{\label{ftn:Sigma_partial_pre-inf}Let $\Sigma$ be
partial pre-inflationary.
Note that there can be limit length trees $\Tt\in\mathscr{T}$ which are
via $\Sigma$, but with $\Tt\notin D$.
And if  \ref{item:pre-inf_rm-sound}
or \ref{item:pre-inf_m-sound_MS} holds and
 $\Tt\in\mathscr{T}$ is via $\Sigma$ of
successor length, then
 $M^\Tt_\infty$
is well-defined and wellfounded, and all
$\udash m$-maximal/$m$-maximal putative trees $\Tt'$
such that $\Tt\ins\Tt'$ and $\lh(\Tt')<\lh(\Tt)+\om$,
are also  in $\mathscr{T}$ and via $\Sigma$, and hence
have wellfounded models. Therefore if \ref{item:pre-inf_rm-sound}
holds where $M$ is MS-indexed then $\Tt$ is everywhere unravelable, and if
\ref{item:pre-inf_m-sound_MS}
holds then $\Tt$ is $M$-$\udash$wellfounded.}

 Let $\Sigma$ be pre-inflationary and $\mathscr{T},M,m$ be as in the preceding
paragraph.
 We say that $\Sigma$ is \index{regularly $\Xi$-total}\dfnemph{regularly
$\Xi$-total} iff there is an
regular
uncountable $\Omega$
 such that $\Omega\leq\Xi\leq\Omega+1$ and $\Sigma$ is either a normal
$\Xi$-strategy for a wcpm (and $m=0$),
 a $(\udash m,\Xi)$-strategy, or an $(m,\Xi)$-strategy for an MS-indexed $M$.
 In this case we write $m^\Sigma=m$.
\end{dfn}

We can now give the general definition of inflation condensation.
\begin{dfn}[Inflation
condensation]\label{dfn:inflation_condensation}\index{inflation
condensation}\index{convenient}\index{inconvenient}\index{inflationary}
Let $\Sigma$ be a conveniently pre-inflationary partial strategy.
Then $\Sigma$ has \dfnemph{convenient inflation condensation} or is
\dfnemph{conveniently inflationary} iff for all trees $\Tt,\Xx$, if
\begin{enumerate}[label=--]
 \item $\Tt,\Xx$ are via $\Sigma$,
 \item $\Xx$ is an inflation of $\Tt$, as witnessed by $(f,C,\ldots)$,
 \item $\Xx$ has limit length and $\Xx\in\dom(\Sigma)$,
 \item $b\eqdef\Sigma(\Xx)\sub C$ and $f``b$ has limit ordertype,
\end{enumerate}
then letting  $\eta=\sup f``b$, we have $\Tt\rest\eta\in\dom(\Sigma)$ and
$f``b=\Sigma(\Tt\rest\eta)$.

Let $\Lambda$ be an inconveniently pre-inflationary partial strategy.
Let $\Sigma$ be the partial $\udash$strategy corresponding to
$\Lambda$ (as in Remark \ref{rem:partial_strategy_conversion},
which applies by Footnote \ref{ftn:Sigma_partial_pre-inf}).
Then
$\Lambda$ has \dfnemph{inconvenient inflation condensation} or is
\dfnemph{inconveniently inflationary} iff
$\Sigma$ is conveniently inflationary.

In general, we say that $\Sigma$ (or $\Lambda$) \dfnemph{has inflation
condensation} or \dfnemph{is inflationary}
iff $\Sigma$ (or $\Lambda$) has convenient or inconvenient inflation
condensation.
\end{dfn}

Immediately from the definition, inflations via inflationary $\Sigma$ can be
continued at limit stages:

\begin{lem}\label{lem:inflationary_limit_continuation}
Let $\Sigma$ be a conveniently inflationary partial strategy.
Let $\Tt,\Xx$ be such that $\Xx$ is via $\Sigma$, $\Xx$ is an inflation of
$\Tt$,
as witnessed by $(f,C,\ldots)$, and
\[ \lh(\Tt)=\sup_{\alpha\in C}(f(\alpha)+1).\]
Then $\Tt$ is via $\Sigma$.

Suppose also that $\Xx$ has limit length $\lambda$ and $\Xx\in\dom(\Sigma)$,
and let $\Xx'=(\Xx,\Sigma(\Xx))$.
Then there is $\Tt'$ via $\Sigma$ such that $\Tt\ins\Tt'$ and $\Xx'$ is an
inflation of $\Tt'$, as witnessed by $(C',f',\ldots)$.
 Moreover,  we may take $\Tt'$ such that either:
 \begin{enumerate}[label=--]
 \item $\Tt'=\Tt$ and if $\lambda\in C'$ then $f'(\lambda)<\lh(\Tt)$, or
 \item $\Tt$ has limit length $\bar{\lambda}$,
  $\Tt'=(\Tt,\Sigma(\Tt))$, $\lambda\in C'$, $f'(\lambda)=\bar{\lambda}$
 and $\gamma'_{\lambda;\bar{\lambda}}=\lambda$.
 \end{enumerate}
 Further, the choice of $\Tt'$ is uniqued by adding these requirements.
\end{lem}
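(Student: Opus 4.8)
The plan is to prove the first assertion by induction on limit ordinals $\eta<\lh(\Tt)$, showing that $\Sigma$ ``sees'' $[0,\eta)_\Tt$ at each such $\eta$; then to prove the continuation statement by extending $\Xx$ by $\Sigma(\Xx)$ and reading off from Definition \ref{dfn:inflation} (as in Remark \ref{rem:condensation}) the unique candidate $\Tt'$. Throughout, fix notation $(t,C,C^-,f,\Pivec)=(t,C,C^-,f,\Pivec)^{\Tt\inflatearrow\Xx}$ and write $\gamma_{\alpha;\xi}=\gamma_{\Pi_\alpha\xi}$, etc.

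For the first part, fix a limit $\eta<\lh(\Tt)$ and assume $\Tt\rest\eta$ is via $\Sigma$ (this holds at stage~$\eta$ by the inductive hypothesis applied to all limits below $\eta$). Since $\lh(\Tt)=\sup_{\alpha\in C}(f(\alpha)+1)$ and $\eta$ is a limit, there is $\alpha\in C$ with $\eta<f(\alpha)$. By condition \ref{item:tree_pres} (continuity of $\Gamma_{\Pi_\alpha}$ at $\eta$ and the note that $\Gamma_{\Pi_\alpha}``[0,\eta)_\Tt$ is cofinal in $[0,\gamma_{\alpha;\eta})_\Xx$), the ordinal $\gamma:=\gamma_{\alpha;\eta}$ is a limit, $\gamma<\alpha<\lh(\Xx)$, and $\{\gamma_{\alpha;\xi}:\xi<^\Tt\eta\}$ is cofinal in $b':=[0,\gamma)_\Xx$; as $\Xx$ is via $\Sigma$, we get $\Xx\rest\gamma\in\dom(\Sigma)$ and $\Sigma(\Xx\rest\gamma)=b'$. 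By Lemma \ref{lem:intervals_I_cover_X-branches} and condition \ref{item:inflation_internal_agreement}, every $\delta<^\Xx\gamma$ lies in $I_{\alpha;\zeta}$ for some $\zeta<^\Tt\eta$, so $\delta\in C$ and $f(\delta)=\zeta<\eta$; hence $\Xx\rest\gamma$ is an inflation of $\Tt\rest\eta$ (witnessed by the restriction of $(t,C,C^-,f,\Pivec)$), $b'\subseteq C\inter\gamma$, and $f``b'=[0,\eta)_\Tt$ (using $f(\gamma_{\alpha;\xi})=\xi$ from \ref{item:inflation_internal_agreement}), which has limit ordertype with supremum $\eta$. Since $\Tt\rest\eta$ is via $\Sigma$ by hypothesis, convenient inflation condensation (Definition \ref{dfn:inflation_condensation}) applies and gives $\Tt\rest\eta\in\dom(\Sigma)$ and $\Sigma(\Tt\rest\eta)=f``b'=[0,\eta)_\Tt$. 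This completes the induction, so $\Tt$ is via $\Sigma$.

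For the continuation, let $\lh(\Xx)=\lambda$ be a limit, $\Xx\in\dom(\Sigma)$, $b=\Sigma(\Xx)$, $\Xx'=(\Xx,b)$, and let $(t',C',(C^-)',f',\Pivec')$ be the candidate data determined from $(t,C,C^-,f,\Pivec)$ by the clauses of Definition \ref{dfn:inflation}; so $C'\inter\lambda=C$, and by condition \ref{item:C_nature}, $\lambda\in C'$ iff $b\subseteq C$. If $b\not\subseteq C$, then $C'=C$ and one verifies directly that $\Xx'$ is an inflation of $\Tt$ with the unchanged witness (the only new clause is closure of $C$ along $b\cup\{\lambda\}$, which is \ref{item:C_nature} for $\Xx$); take $\Tt'=\Tt$, satisfying the first option vacuously. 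If $b\subseteq C$, then $\lambda\in C'$ and by condition \ref{item:inflation_limit_ordinal}, $f'(\lambda)=\bar\lambda:=\sup_{\beta\in b}f(\beta)\leq\lh(\Tt)$. When $\bar\lambda<\lh(\Tt)$, take $\Tt'=\Tt$: the tree embedding $\Pi'_\lambda\colon(\Tt,\bar\lambda+1)\hookrightarrow\Xx'$ is the direct limit of the compatible family $\langle\Pi_\beta\rangle_{\beta\in b}$ together with the top interval $I_{\lambda;\bar\lambda}$, obtained by passing from the resulting almost tree embedding to a genuine one via Lemma \ref{lem:ate_to_te} (with uniqueness from Corollary \ref{cor:tree_embedding_uniqueness}), and the remaining clauses of Definition \ref{dfn:inflation} are immediate; this is again the first option, since $f'(\lambda)=\bar\lambda<\lh(\Tt)$.

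Finally, suppose $b\subseteq C$ and $\bar\lambda=\lh(\Tt)$; since $f$ maps $C$ into $\lh(\Tt)$, the value $\bar\lambda$ is not attained, so $f``b$ is unbounded in the limit ordinal $\lh(\Tt)$ and hence has limit ordertype. By the first part $\Tt$ is via $\Sigma$, and $\Xx$ is an inflation of $\Tt$ via $(f,C,\ldots)$ with $\Xx\in\dom(\Sigma)$ limit length and $b=\Sigma(\Xx)\subseteq C$; so convenient inflation condensation applies with $\sup f``b=\lh(\Tt)$, giving $\Tt\in\dom(\Sigma)$ and $\Sigma(\Tt)=f``b$. Put $\Tt'=(\Tt,\Sigma(\Tt))$, so the new node $\bar\lambda$ of $\Tt'$ has $[0,\bar\lambda)_{\Tt'}=f``b$; since no $\beta<^{\Xx'}\lambda$ has $f'(\beta)=\bar\lambda$, the limit-node analysis following Definition \ref{dfn:inflation} forces $I_{\lambda;\bar\lambda}=[\lambda,\lambda]$ and $\gamma'_{\lambda;\bar\lambda}=\delta'_{\lambda;\bar\lambda}=\lambda$, and building $\Pi'_\lambda\colon\Tt'\hookrightarrow\Xx'$ as the direct limit of $\langle\Pi_\beta\rangle_{\beta\in b}$ with this top interval (again via Lemma \ref{lem:ate_to_te}) shows $\Xx'$ is an inflation of $\Tt'$; this realizes the second option. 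In each case $\Tt\ins\Tt'$ and $\Tt'$ is via $\Sigma$, and the case split, governed by whether $b\subseteq C$ and by $\bar\lambda$ versus $\lh(\Tt)$, shows the choice of $\Tt'$ is uniqued by the stated requirements. The main obstacle is the bookkeeping: confirming that the restricted data (part one) and the extended data (part two) satisfy every clause of Definition \ref{dfn:inflation}, in particular tree-structure preservation and internal agreement at the new limit node $\lambda$, together with the direct-limit construction of $\Pi'_\lambda$; all of this is routine given Lemmas \ref{lem:intervals_I_cover_X-branches}, \ref{lem:ate_to_te} and Corollary \ref{cor:tree_embedding_uniqueness}.
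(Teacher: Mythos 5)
The paper gives no proof for this lemma — it is presented as an immediate consequence of the definitions ("Immediately from the definition, inflations via inflationary $\Sigma$ can be continued at limit stages") — so your write-up is just the careful verification the paper considers routine, and it is essentially correct: the first part uses the continuity clause of tree embeddings at limits together with inflation condensation applied to $\Xx\rest\gamma$ and $\Tt\rest\eta$, and the second part reads off the unique candidate witness for $\Tt'\inflatearrow\Xx'$ from Definition \ref{dfn:inflation} and splits on whether $b\subseteq C$ and whether $\bar\lambda<\lh(\Tt)$. One small slip: from $\lh(\Tt)=\sup_{\alpha\in C}(f(\alpha)+1)$ you only get some $\alpha\in C$ with $\eta\leq f(\alpha)$ (not $\eta<f(\alpha)$), and correspondingly $\gamma=\gamma_{\alpha;\eta}\leq\alpha$ rather than $\gamma<\alpha$; this does not affect the argument since $\gamma\leq\alpha<\lh(\Xx)$ is all you use.
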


We also immediately have:
\begin{lem}\label{lem:characterize_inflation}
Let $\Sigma$ be a conveniently inflationary partial strategy and $\Tt,\Xx$ be
via $\Sigma$.
Then
$\Xx$ is an inflation of $\Tt$ iff:
\begin{enumerate}[label=--]
 \item  $\Xx$ satisfies the bounding requirements on extender indices imposed
by
$\Tt$;
that is, for each $\alpha+1<\lh(\Xx)$, if $\Xx\rest(\alpha+1)$ is an inflation
of $\Tt$ and $\alpha\in(C^-)^{\Tt\inflatearrow\Xx\rest(\alpha+1)}$ then
$\In(E^\Xx_\alpha)\leq\In(E^{\Tt\inflatearrow(\Xx\rest\alpha+1)}_\alpha)$,
and
\item if $\Tt$ has limit length then $\Xx$ does not determine a $\Tt$-cofinal
branch; that is,
if $\eta<\lh(\Xx)$ is a limit and $\Xx\rest\eta$ is an inflation of $\Tt$ and
$(f,C)=(f,C)^{\Tt\inflatearrow\Xx\rest\eta}$
and $[0,\eta)_\Xx\sub C$
then $\lh(\Tt)>\sup_{\alpha<^\Xx\eta}f(\alpha)$.
\end{enumerate}
\end{lem}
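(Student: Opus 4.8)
The plan is to prove the equivalence stated in the final lemma: that for $\Tt,\Xx$ via a conveniently inflationary $\Sigma$, $\Xx$ is an inflation of $\Tt$ if and only if the two displayed bullet conditions hold. One direction is essentially immediate from the definitions and the preceding lemmas, so the content is in the converse.

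First I would record the forward direction. If $\Xx$ is an inflation of $\Tt$, witnessed by $(t,C,C^-,f,\Pivec)$, then by \ref{lem:inflation_unique_objects} the witness is unique, and by \ref{dfn:inflation} the index-bounding condition holds at each $\alpha\in C^-$, which gives the first bullet (and the restriction of a witness to an initial segment is again a witness, by closure of $\mathscr{T}$ under initial segments). For the second bullet: if $\Tt$ has limit length and some limit $\eta<\lh(\Xx)$ had $[0,\eta)_\Xx\sub C$ with $\sup_{\alpha<^\Xx\eta}f(\alpha)=\lh(\Tt)$, then by condition \ref{item:C_nature} of \ref{dfn:inflation} the branch $[0,\eta)_\Xx$ would be closed in $C$, forcing $\eta\in C$ and hence $f(\eta)=\sup_{\alpha<^\Xx\eta}f(\alpha)=\lh(\Tt)$ by \ref{item:inflation_limit_ordinal}, contradicting $f:C\to\lh(\Tt)$. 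So both bullets are necessary.

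The converse is the main work. I would proceed by induction on $\lh(\Xx)$, building the candidate witness $(t,C,C^-,f,\Pivec)$ and showing it satisfies all clauses of \ref{dfn:inflation}. At successor stages $\lh(\Xx)=\beta+2$: if $\Xx\rest(\beta+1)$ is an inflation of $\Tt$ then by \ref{lem:inflation_unique_objects} the witness on $\Xx\rest(\beta+1)$ is determined, and one sets $t(\beta)=0$ or $1$ according to whether $E^\Xx_\beta=E^{\Tt\inflatearrow(\Xx\rest\beta+1)}_\beta$; in the copying case one invokes \ref{lem:tree_embedding_copy} together with part \ref{item:copy_is_Xx-normal} of the last displayed lemma (which needs the first bullet's index bound to ensure $E^{\Tt\inflatearrow(\Xx\rest\beta+1)}_\beta$ is $\Xx$-normal and $\In(E^\Xx_\beta)\leq\In(E^{\Tt\inflatearrow(\Xx\rest\beta+1)}_\beta)$), and in the inflationary case one invokes \ref{dfn:inflationary_extender} and \ref{rem:tree_embedding_existence}; if instead $\Xx\rest(\beta+1)$ already determines a $\Tt$-cofinal branch, the second bullet is exactly what rules this out at the preceding limit stage. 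The delicate stage is a limit $\eta<\lh(\Xx)$. By induction $\Xx\rest\eta$ is an inflation of $\Tt$, witnessed by $(f,C,\ldots)$. If $[0,\eta)_\Xx\not\sub C$ then $C\cap[0,\eta)_\Xx$ is already closed and bounded below $\eta$, nothing new happens at $\eta$, and one extends trivially; if $[0,\eta)_\Xx\sub C$ then by the second bullet $\lh(\Tt)>\sup_{\alpha<^\Xx\eta}f(\alpha)=:\bar\eta$, so $\Tt\rest(\bar\eta+1)$ is defined; the point is that the branch $f``[0,\eta)_\Xx$ is $\Tt\rest\bar\eta$-cofinal, and one must check it equals $[0,\bar\eta)_\Tt$ so that the $\Pi_\eta$, $f(\eta)=\bar\eta$, and the interval data can be assembled as direct limits along the clauses of \ref{dfn:inflation} — here one uses that $\Tt$ is via $\Sigma$ (so $[0,\bar\eta)_\Tt=\Sigma(\Tt\rest\bar\eta)$) and the commutativity/agreement built into the tree embeddings $\Pi_\alpha$ for $\alpha<^\Xx\eta$, exactly as in \ref{rem:condensation}.

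The main obstacle I anticipate is verifying, at limit stages, that the direct-limit construction of $\Pi_\eta$ genuinely satisfies all of conditions \ref{item:tree_pres}--\ref{item:embedding_agreement} of \ref{dfn:tree_embedding} — in particular continuity of $\Gamma_{\Pi_\eta}$ and the agreement clause \ref{item:embedding_agreement} — and that clause \ref{item:inflation_internal_agreement} of \ref{dfn:inflation} propagates through the limit. This is bookkeeping of the kind already carried out in \ref{lem:ate_to_te} and in the remarks following \ref{dfn:inflation}, and I would lean on those, remarking that the verification is routine and leaving the reader to check the details. The genuinely non-formal input is the appeal to inflation condensation itself, which is precisely what forces $f``[0,\eta)_\Xx=[0,\bar\eta)_\Tt$ when $f``[0,\eta)_\Xx$ has limit ordertype, and which is encapsulated in \ref{lem:inflationary_limit_continuation}; the case where $f``[0,\eta)_\Xx$ has successor ordertype is handled directly since then $\bar\eta=f(\alpha)+1$ for large $\alpha<^\Xx\eta$ and no new branch of $\Tt$ is needed.
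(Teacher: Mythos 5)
Your proof is correct. The paper states this lemma without proof, prefaced by ``We also immediately have:'', so there is no written argument to compare against; your filling-in is exactly the intended one. The forward direction is a direct read-off from the bounding condition in Definition~\ref{dfn:inflation} and from closure of $C\cap[0,\eta]_\Xx$ plus $f:C\to\lh(\Tt)$, as you say. For the converse, your induction on $\lh(\Xx)$ is the right shape: the first bullet is exactly what makes the successor-stage extension (via \ref{lem:tree_embedding_copy} or \ref{dfn:inflationary_extender}) legal, and the second bullet plus a direct application of Definition~\ref{dfn:inflation_condensation} to the pair $(\Tt,\Xx\rest\eta)$ — using that $[0,\eta)_\Xx=\Sigma(\Xx\rest\eta)$ and $\Tt$ is via $\Sigma$ — shows $f``[0,\eta)_\Xx=[0,\bar\eta)_\Tt$ when that image has limit ordertype; when it has a maximum, or when $\lh(\Tt)$ is a successor, no condensation is needed. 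Your appeal to \ref{lem:inflationary_limit_continuation} is not quite a citation on the nose (its hypothesis $\lh(\Tt)=\sup_{\alpha\in C}(f(\alpha)+1)$ is not what you have), but the direct invocation of inflation condensation that you describe alongside it is exactly right, and the rest of the limit-stage bookkeeping is the standard direct-limit assembly from Remark~\ref{rem:condensation} and the comments following Definition~\ref{dfn:inflation}.
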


\begin{dfn}\index{$\completion^\Sigma(\Tt)$}\index{complete}
Let $\Sigma$ be a partial iteration strategy and $\Tt$ be via $\Sigma$,
with $\Tt$ either of successor length or $\Tt\in\dom(\Sigma)$.
Then $\completion^\Sigma(\Tt)$ denotes $\Tt$ if $\lh(\Tt)$ is a successor,
and denotes $\Tt\conc\Sigma(\Tt)$ otherwise.
\end{dfn}
We easily have:
\begin{lem}
Let $\Sigma$ be a conveniently inflationary partial strategy.
Let $\Tt,\Xx$ be via  $\Sigma$, with $\Xx$  an inflation of $\Tt$,
$\Xx$ of limit length, $\Xx\in\dom(\Sigma)$.
Then either $\completion^\Sigma(\Xx)$ is an inflation of $\Tt$,
or $\Tt$ has limit length, $\Tt\in\dom(\Sigma)$ and $\completion^\Sigma(\Xx)$
is an inflation of $\completion^\Sigma(\Tt)$.
\end{lem}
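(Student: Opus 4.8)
The statement to prove is the final displayed lemma:

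\begin{plan}
The plan is to reduce everything to Lemma \ref{lem:inflationary_limit_continuation}, which is the substantive content, and then do a short case analysis. First I would unwind the hypotheses: $\Xx$ is via $\Sigma$, is an inflation of $\Tt$ witnessed by some $(f,C,\ldots)$, has limit length, and $\Xx\in\dom(\Sigma)$, so $b\eqdef\Sigma(\Xx)$ is defined and $\Xx'\eqdef\completion^\Sigma(\Xx)=\Xx\conc b$ is via $\Sigma$. The goal is to show $\Xx'$ is an inflation either of $\Tt$ or of $\completion^\Sigma(\Tt)$. The natural first move is to split on whether $b\sub C$, since that is the trigger both for the ``$\Tt$-cofinal branch'' phenomenon and for the hypotheses of inflation condensation.
\end{plan}

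\begin{plan}
\textbf{Case 1: $b\not\sub C$.} Then by condition \ref{item:C_nature} of Definition \ref{dfn:inflation}, $C\inter[0,\sup b]_{\Xx'}=C\inter b$ is a closed proper initial segment of $b$, so there is a least $\alpha\in b\setminus C$, and $\alpha$ is a successor along $b$ (a limit ordinal in $b$ with all $\Xx$-predecessors in $C$ would lie in $C$ by closure). Here I claim $\Xx'$ is an inflation of $\Tt$ directly: one extends the witnessing tuple to length $\lh(\Xx)+1$ by letting the new branch-stage not enter $C$ (or enter $C$ at a node with $f$-value $<\lh(\Tt)$, depending on where the last copying extender lies), exactly as in the construction in the proof of Lemma \ref{lem:inflationary_limit_continuation} for the first alternative there. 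The point is that when $b$ escapes $C$, no new $\Tt$-cofinal branch is determined, so nothing needs to be checked against $\Sigma$ on the $\Tt$ side, and the extension is forced and routine by Corollary \ref{cor:tree_embedding_uniqueness} and the remarks following Definition \ref{dfn:inflation}.
\end{plan}

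\begin{plan}
\textbf{Case 2: $b\sub C$.} Let $(C',f')$ be the unique candidate for $(f,C)^{\Tt\inflatearrow\Xx'}$ determined by Definition \ref{dfn:inflation}; then $\sup b\in C'$ and $\eta\eqdef\sup_{\alpha\in b}f(\alpha)$ is the candidate value of $f'(\sup b)$. Subcase 2a: $\eta=f(\beta)$ for some $\beta<^\Xx\sup b$, equivalently $f``b$ is eventually constant (or has successor ordertype after the obvious adjustment). Then no $\Tt$-cofinal branch is determined, $\Tt$ need not have limit length at stage $\eta$, and again $\Xx'$ is an inflation of $\Tt$, with the limit-stage objects at $\sup b$ being the eventually-constant direct limits, exactly as in the second paragraph of the limit-stage remarks after Definition \ref{dfn:inflation}. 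Subcase 2b: $f(\beta)<\eta$ for all $\beta<^\Xx\sup b$, so $\eta$ is a limit and $f``b$ has limit ordertype. Now apply Lemma \ref{lem:inflationary_limit_continuation} (or directly the relevant case of Lemma \ref{lem:inflationary_limit_continuation}'s proof, or inflation condensation itself together with that lemma): since $\Sigma$ is conveniently inflationary and $b=\Sigma(\Xx)\sub C$ with $f``b$ of limit ordertype, we get $\Tt\rest\eta\in\dom(\Sigma)$ and $f``b=\Sigma(\Tt\rest\eta)$; moreover $\lh(\Tt)\geq\eta$ and, by the way $(C',f')$ was formed, the only way $\sup b\in C'$ is if $\eta\le\lh(\Tt)$ with $\eta=\lh(\Tt)$ possible. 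If $\eta<\lh(\Tt)$ then $f``b=[0,\eta)_\Tt$ and $\Xx'$ is an inflation of $\Tt$. If $\eta=\lh(\Tt)$ then $\Tt$ has limit length $\bar\lambda=\eta$, $\Tt\in\dom(\Sigma)$ (this is exactly what inflation condensation gave us), and $f``b=\Sigma(\Tt)$, so setting $\Tt'=\completion^\Sigma(\Tt)=\Tt\conc\Sigma(\Tt)$ we check that $\Xx'$ is an inflation of $\Tt'$: the new branch-node $\sup b$ of $\Xx'$ enters $C'$ with $f'(\sup b)=\bar\lambda$ and $\gamma'_{\sup b;\bar\lambda}=\sup b$, matching the second alternative of Lemma \ref{lem:inflationary_limit_continuation}, whose proof verifies precisely that the required tree embedding $\Pi'_{\sup b}:(\Tt',\bar\lambda+1)\hookrightarrow\Xx'$ exists (the direct limit of the $\Pi_\beta$ for $\beta\in b$, with the top interval collapsing to $[\sup b,\sup b]$). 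The main obstacle is really just bookkeeping in Subcase 2b — checking that the candidate $(C',f')$ behaves as claimed and that the tree-embedding data at the limit stage assembles correctly — but all of this is already packaged in Lemma \ref{lem:inflationary_limit_continuation} and Corollary \ref{cor:tree_embedding_uniqueness}, so the proof is short: cite those, dispatch the easy cases, and note that these alternatives are exactly the two bullets in the statement.
\end{plan}
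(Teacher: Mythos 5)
Your argument is correct and takes the only natural route to this result, which the paper labels as "easy" and does not bother to prove.

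Two minor points. First, your plan announces a reduction to Lemma~\ref{lem:inflationary_limit_continuation}, but what actually carries the weight in Subcase 2b is Definition~\ref{dfn:inflation_condensation} (convenient inflation condensation) applied directly, which you do note as an alternative. Lemma~\ref{lem:inflationary_limit_continuation} has the extra hypothesis $\lh(\Tt)=\sup_{\alpha\in C}(f(\alpha)+1)$, which need not hold here, so it cannot be cited verbatim; citing the definition of inflation condensation instead, as your execution in fact does, is the cleaner move. Second, in Case~1 the parenthetical ``(or enter $C$ at a node with $f$-value $<\lh(\Tt)$, depending on where the last copying extender lies)'' is a red herring: once $b\not\subseteq C$, closure of $C'\inter[0,\lambda]_{\Xx'}$ forces $\lambda\notin C'$ outright, and no such branching of possibilities arises. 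The rest of Case~1, the eventually-constant Subcase 2a, and the two subsubcases of 2b (where inflation condensation together with ``$\Tt$ via $\Sigma$'' forces $f``b=[0,\eta)_\Tt$ when $\eta<\lh(\Tt)$, and forces $\Tt\in\dom(\Sigma)$ with $f``b=\Sigma(\Tt)$ when $\eta=\lh(\Tt)$) are exactly the content the paper is implicitly invoking, as Remark~\ref{rem:condensation} confirms.
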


Steel uses the following notion of strategy condensation in
\cite{str_comparison} (however,
note we also allow partial
strategies).
It easily implies inflation condensation; we do not know whether the converse
holds.

\begin{dfn}\label{dfn:strong_hull_condensation}\index{strong hull
condensation}\index{convenient}\index{inconvenient}
Let $\Sigma$ be a conveniently pre-inflationary partial strategy.
We say that $\Sigma$ has \dfnemph{convenient strong hull condensation}
iff whenever $\Xx$ is via $\Sigma$ and $\Pi:\Tt\hookrightarrow\Xx$ is a tree
embedding,
then $\Tt$ is also via $\Sigma$.

Let $\Lambda$ be an inconveniently pre-inflationary partial strategy.
We say that $\Lambda$ has \dfnemph{inconvenient strong hull condensation}
iff whenever $\Sigma$ has convenient strong hull condensation, where $\Sigma$
is the partial $\udash$strategy corresponding to $\Lambda$.

We say that a pre-inflationary partial strategy has \dfnemph{strong hull
condensation}
iff it has either convenient or inconvenient strong hull condensation.
\end{dfn}

A third condensation notion, also a consequence of strong hull condensation,
we will make use of in \S\ref{sec:gen_abs_mice} in our generic absoluteness
argument.
For our normal realization results we only require inflation condensation.

\begin{dfn}\label{dfn:extra_inf}\index{extra
inflationary}\index{convenient}\index{inconvenient}
Let $\Sigma$ be a conveniently pre-inflationary partial strategy.
We say that $\Sigma$ is \dfnemph{conveniently extra inflationary} iff $\Sigma$
is conveniently inflationary
and for all sufficiently large $\theta\in\OR$,
for all countable transitive $X$ and elementary
\[ \pi:X\to\her_\theta \]
and $\bar{\Tt}\in X$ such that $\pi(\bar{\Tt})$ is via $\Sigma$,
(so $\bar{\Tt}$ is on $\bar{M}$ where $\pi(\bar{M})=M$),
then $\pi\bar{\Tt}$ (the copy of $\bar{\Tt}$ to $M$ via $\pi$) is via $\Sigma$.

We then define \dfnemph{inconveniently extra inflationary},
and \dfnemph{extra inflationary}, as before.
\end{dfn}

\begin{lem}\label{lem:shcond_implies_extra_inf}
If $\Sigma$ has strong hull condensation then $\Sigma$ is extra inflationary.
\end{lem}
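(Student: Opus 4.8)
The plan is to show that if $\Sigma$ has strong hull condensation then $\Sigma$ is both conveniently inflationary and satisfies the absoluteness clause in Definition \ref{dfn:extra_inf}. The first half is exactly Lemma \ref{lem:shcond_implies_extra_inf}'s reduction via \ref{lem:shcond_implies_extra_inf} — wait, that is circular; rather, I should note that \emph{strong hull condensation implies inflation condensation} is recorded just after Definition \ref{dfn:strong_hull_condensation} (and cited in the main equivalence theorem as the step \ref{item:norm_iter_shcond} $\shortimplies$ \ref{item:norm_iter_cond}); I would invoke that. Alternatively, to be self-contained: suppose $\Xx$ is via $\Sigma$, $\Xx$ is an inflation of $\Tt$ witnessed by $(f,C,\ldots)$, $\Xx$ of limit length with $\Xx\in\dom(\Sigma)$, and $b=\Sigma(\Xx)\sub C$ with $f``b$ of limit ordertype $\eta$. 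The branch $b$ together with the tree embeddings $\left<\Pi_\alpha\right>_{\alpha\in b}$ yields, in the direct limit, a tree embedding $\Pi:(\Tt\rest\eta)\conc c\hookrightarrow\Xx\conc b$ where $c=f``b$. Since $\Xx\conc b$ is via $\Sigma$ and $\Sigma$ has strong hull condensation, $(\Tt\rest\eta)\conc c$ is via $\Sigma$, i.e. $c=\Sigma(\Tt\rest\eta)$; this is exactly convenient inflation condensation. (For the inconvenient case one passes to the corresponding $\udash$strategy, as in the definitions.)

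For the extra-inflationary clause: fix $\theta$ large, $\pi:X\to\her_\theta$ elementary with $X$ countable transitive, and $\bar\Tt\in X$ with $\pi(\bar\Tt)$ via $\Sigma$, on $\bar M$ with $\pi(\bar M)=M$. I want to show the copy $\pi\bar\Tt$ is via $\Sigma$. The key observation is that the copying construction, applied to $\bar\Tt$ together with $\pi:\bar M\to M$, produces not only the copied tree $\pi\bar\Tt$ on $M$ but a tree embedding — indeed, $\pi(\bar\Tt)$ (the tree that is literally the image of $\bar\Tt$ under $\pi$, which is via $\Sigma$ by hypothesis) embeds into $\pi\bar\Tt$, or vice versa. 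More precisely: since $\bar\Tt$ is built inside $X$ using $\Sigma^X$-legitimate moves and $\pi$ is elementary, $\pi(\bar\Tt)$ is a tree on $M$ of the same ``shape'', and the copy map gives a tree embedding $\Pi:\pi(\bar\Tt)\hookrightarrow\pi\bar\Tt$ (the copied tree refines the internally-defined tree because each extender of $\pi(\bar\Tt)$ is the image, under the relevant copy map, of the corresponding extender of $\bar\Tt$, and these copy maps agree with restrictions of $\pi$). Granting this tree embedding, strong hull condensation applied to $\pi\bar\Tt$ — once we know $\pi\bar\Tt$ is via $\Sigma$ — would give the conclusion; but that is backwards. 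The right order is: prove by induction on $\lh(\bar\Tt)$ that $\pi\bar\Tt$ is via $\Sigma$, using at limit stages that a tree embedding from the ``$\Sigma$-correct'' tree $\pi(\bar\Tt)$ into $\pi\bar\Tt\rest\lambda$ forces the branch $\Sigma(\pi\bar\Tt\rest\lambda)$ to be the copy of $[0,\lambda)_{\bar\Tt}$, hence $\pi\bar\Tt\rest(\lambda+1)$ is still via $\Sigma$ and still admits a tree embedding from the image tree.

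The main obstacle I anticipate is setting up the limit-stage bookkeeping in the extra-inflationary argument correctly: one needs that the tree embedding $\pi(\bar\Tt\rest\lambda)\hookrightarrow\pi\bar\Tt\rest\lambda$ extends to $\lambda+1$ compatibly with \emph{both} the $\Sigma$-branch on the left (which exists because $\pi(\bar\Tt)$ is via $\Sigma$ and $\pi$ is elementary, so the left-hand branch is $\pi$ of $\Sigma^X(\bar\Tt\rest\lambda)$, matched to $\Sigma(\pi(\bar\Tt\rest\lambda))$ by elementarity of $\pi$ into $\her_\theta$, which sees $\Sigma$) and the $\Sigma$-branch on the right; strong hull condensation then pins down the right-hand branch. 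Concretely, one argues: let $c=[0,\lambda)_{\bar\Tt}$, $\bar b=\Sigma^X(\bar\Tt\rest\lambda)=c$ (legitimacy of $\bar\Tt$ in $X$), so $\pi(c)=\Sigma(\pi(\bar\Tt)\rest\pi(\lambda))$ by elementarity; the copied tree $\pi\bar\Tt\rest\lambda$ has a canonical cofinal branch $d$ (the copy of $c$), and the copy construction yields a tree embedding $\pi(\bar\Tt\rest\lambda)\conc\pi(c)\hookrightarrow(\pi\bar\Tt\rest\lambda)\conc d$; since the target is required to be via $\Sigma$ we instead observe the target's $\Sigma$-branch $\Sigma(\pi\bar\Tt\rest\lambda)$ must, by strong hull condensation applied in the form of Corollary \ref{cor:tree_embedding_uniqueness}-style rigidity together with \ref{lem:inflationary_limit_continuation}, coincide with $d$. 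I would lean on \ref{lem:inflationary_limit_continuation} and the already-established convenient inflation condensation to streamline this, since a copy map is a special (``purely copying'') inflation. The successor and degree-bookkeeping steps are routine given the machinery of \S\ref{sec:inflation}, so I would state them briefly and refer to \ref{lem:tree_embedding_copy} and \ref{cor:tree_embedding_uniqueness}. Finally, the inconvenient (MS-indexed) case is handled, as throughout the paper, by translating to the corresponding $\udash$strategy via \ref{lem:rule_conversion} and \ref{lem:tree_conversion}, and noting that the copy and hull-embedding operations commute with that translation up to the correspondence of trees.
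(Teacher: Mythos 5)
Your first half (strong hull condensation implies convenient inflation condensation) is fine and matches what the paper treats as immediate. The problem is in the extra-inflationary clause, where you have the key tree embedding pointing the wrong way, and your attempted repair does not fix it.

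The embedding that actually exists is from the \emph{copied} tree into the \emph{image} tree: one defines an almost tree embedding $\Pi:\pi\bar{\Tt}\hookrightarrow_{\almost}\pi(\bar{\Tt})$ by sending node $\alpha$ to the degenerate interval $[\pi(\alpha),\pi(\alpha)]$, with associated maps $\pi_\alpha:M^{\pi\bar{\Tt}}_\alpha\to M^{\pi(\bar{\Tt})}_{\pi(\alpha)}$ satisfying $\pi_\alpha\com\varrho_\alpha=\pi\rest M^{\bar{\Tt}}_\alpha$ (where $\varrho_\alpha$ is the copy map), so that $E^{\pi(\bar{\Tt})}_{\pi(\alpha)}=\pi_\alpha(E^{\pi\bar{\Tt}}_\alpha)$. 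One then converts this to a genuine tree embedding via Lemma \ref{lem:ate_to_te} and applies strong hull condensation directly: the target $\pi(\bar{\Tt})$ is via $\Sigma$ by hypothesis, hence so is the source $\pi\bar{\Tt}$. No induction on length is needed. Your proposed embedding $\pi(\bar{\Tt})\hookrightarrow\pi\bar{\Tt}$ cannot exist in general: $\pi(\bar{\Tt})$ is indexed by $\pi(\lh(\bar{\Tt}))$, which is typically a much larger ordinal than $\lh(\pi\bar{\Tt})=\lh(\bar{\Tt})$, and a tree embedding must map the source order-preservingly into intervals of the target.

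You do notice the direction problem ("but that is backwards"), but your fix still rests on a tree embedding \emph{from} the $\Sigma$-correct tree $\pi(\bar{\Tt})$ \emph{into} $\pi\bar{\Tt}\rest\lambda$, which fails on two counts: the embedding does not exist, and even if it did, strong hull condensation (Definition \ref{dfn:strong_hull_condensation}) transfers "via $\Sigma$" only from the target of a tree embedding to its source, never the other way, so it could not pin down $\Sigma(\pi\bar{\Tt}\rest\lambda)$. The appeals to Corollary \ref{cor:tree_embedding_uniqueness} and Lemma \ref{lem:inflationary_limit_continuation} do not supply the missing implication. Reversing the direction of the embedding, as above, makes the whole limit-stage bookkeeping unnecessary.
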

\begin{proof}
 The fact that $\Sigma$ has inflation condensation is immediate (inflation
condensation
 just requires the $\Sigma$ condenses under the tree embeddings which arise
from
inflation).

 So let $\pi:X\to\her_\theta$ and $\bar{\Tt}\in X$ be as in \ref{dfn:extra_inf}
and $\Tt=\pi(\bar{\Tt})$.
 We will observe that $\pi\bar{\Tt}$ is via $\Sigma$.
 We define an almost tree embedding
 \[ \Pi:\pi\bar{\Tt}\hookrightarrow_{\almost}\Tt,\]
 by setting
$I_\alpha=[\gamma_\alpha,\delta_\alpha]_\Tt=[\pi(\alpha),\pi(\alpha)]_\Tt$.
 One verifies by a straightforward induction on $\theta\leq\lh(\bar{\Tt})$ that
\[ \Pi\rest(\theta+1):(\pi\bar{\Tt}\rest(\theta+1))\hookrightarrow_{\almost}\Tt
\]
is an almost tree embedding, with associated maps $\pi_\alpha$ and
$\omega_\alpha=\pi_\alpha\rest\exit^\Tt_\alpha$,
and letting
$\varrho_\alpha:M^{\bar{\Tt}}_\alpha\to M^{\pi\bar{\Tt}}_\alpha$
be the copy map induced by $\pi:\bar{M}\to M$, that
$\pi_\alpha\com\varrho_\alpha=\pi\rest M^{\bar{\Tt}}_\alpha$,
and hence,
\[
E^\Tt_{\delta_\alpha}=E^\Tt_{\pi(\alpha)}=\pi(E^{\bar{\Tt}}
_\alpha)=\pi_\alpha(\varrho_\alpha(E^{\bar{\Tt}}_\alpha))=
\pi_\alpha(E^{\pi\bar{\Tt}}_\alpha).\]
This is routine and we leave it to the reader.

By \ref{lem:ate_to_te} and strong hull condensation, it follows that $\Ttbar$
is
according to $\Sigma$.
\end{proof}

We now give some important examples of strategies with strong hull condensation.

\begin{lem}\label{lem:unique_strat_has_inf_cond}
Let $\Sigma$ be a regularly $\Xi$-total pre-inflationary strategy for $M$,
and suppose that $\Sigma$ is the unique such strategy for $M$.
Then $\Sigma$ has strong hull condensation.
\end{lem}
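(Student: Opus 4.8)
The plan is to show that if $\Sigma$ is the unique regularly $\Xi$-total pre-inflationary strategy for $M$, then any tree $\Tt$ admitting a tree embedding $\Pi:\Tt\hookrightarrow\Xx$ into some $\Xx$ via $\Sigma$ must itself be via $\Sigma$. The key idea is a pullback/copying argument: a tree embedding gives us enough structure to define a strategy for $\Tt$'s base model by ``following $\Xx$,'' and then uniqueness forces that strategy to be $\Sigma$, hence $\Tt$ is via $\Sigma$.

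More concretely, first I would reduce to the convenient case, since inconvenient strong hull condensation is defined by passing to the corresponding $\udash$strategy, and uniqueness of $\Sigma$ is equivalent to uniqueness of the corresponding $\udash$strategy by Lemma \ref{lem:rule_conversion}; so assume $\Sigma$ is conveniently pre-inflationary, i.e.\ a $(\udash m,\Xi)$-strategy or a normal $\Xi$-strategy for a wcpm. Next, suppose for contradiction that $\Sigma$ does not have strong hull condensation: there is $\Xx$ via $\Sigma$ and a tree embedding $\Pi:\Tt\hookrightarrow\Xx$ with $\Tt$ \emph{not} via $\Sigma$. Let $\eta$ be least such that $\Tt\rest\eta$ is via $\Sigma$ but $[0,\eta)_\Tt\neq\Sigma(\Tt\rest\eta)$ (so $\eta$ is a limit, $\Tt\rest\eta\in\dom(\Sigma)$). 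The restriction $\Pi\rest(\eta+1):(\Tt,\eta)\hookrightarrow\Xx$ (or the relevant initial segment of $\Pi$) is still a tree embedding, and by condition \ref{item:tree_pres}(c) and (iii), the image $\Gamma``[0,\eta)_\Tt$ is cofinal in $[0,\gamma_\eta)_\Xx$ where $\gamma_\eta=\gamma_{\Pi\eta}$; and $\gamma_\eta$ lies on $b^{\Xx\rest(\gamma_\eta+1)}$ which, since $\Xx$ is via $\Sigma$, is $\Sigma(\Xx\rest\gamma_\eta)$. Thus the branch $[0,\eta)_\Tt$ is ``read off'' from $\Sigma$ applied to $\Xx\rest\gamma_\eta$ via $\Gamma^{-1}$. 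This lets me define an alternative branch choice $b^*$ for $\Tt\rest\eta$: namely $b^*=\Gamma^{-1}``(\Sigma(\Xx\rest\gamma_\eta)\cap C)$ — but that is exactly $[0,\eta)_\Tt$, so $[0,\eta)_\Tt$ is a wellfounded branch. The point is now to build a \emph{new} partial strategy $\Sigma'$ for $M$ agreeing with $\Sigma$ except that at $\Tt\rest\eta$ it picks $[0,\eta)_\Tt$, and more generally handles trees by copying into the $\Xx$-side via tree embeddings — but to make this total and well-defined I should instead argue directly.

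The cleaner route, which I expect to be the one taken: since $\Sigma$ is the unique regularly $\Xi$-total strategy, it suffices to produce \emph{any} regularly $\Xi$-total strategy $\Sigma'$ for $M$ with $\Tt$ via $\Sigma'$, because then $\Sigma'=\Sigma$ and we are done. So the task is to extend the branch choice $[0,\eta)_\Tt$ (dictated by $\Pi$ and $\Sigma$) into a full strategy. Here I would use the machinery already developed: a tree embedding can be extended along ultrapowers (Lemmas \ref{lem:tree_embedding_copy} and the $E$-inflation construction \ref{dfn:inflationary_extender}, together with \ref{lem:ate_to_te} for limits), so given \emph{any} tree $\Tt'$ on $M$ whose branches at limits below some stage agree with this prescription, one can build a corresponding $\Xx'$ via $\Sigma$ and a tree embedding $\Tt'\hookrightarrow\Xx'$, and then use $\Sigma(\Xx')$ pulled back via $\Gamma^{-1}$ as the next branch. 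The resulting $\Sigma'$ is $\Xi$-total (since $\Sigma$ is and tree embeddings of trees of length ${<\Xi}$ land in trees of length ${<\Xi}$ — using that $\Xi\leq\Omega+1$ and $\Omega$ regular to control lengths), and wellfounded (branches pull back from wellfounded branches of $\Xx'$, whose models embed into wellfounded models of $M$). Thus $\Tt$ is via $\Sigma'=\Sigma$, contradiction.

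The main obstacle I anticipate is the bookkeeping to show that the pullback strategy $\Sigma'$ is genuinely well-defined and $\Xi$-total: one must check that at every limit stage the partial tree embedding built so far extends (continuously, via \ref{lem:ate_to_te}) so that $\Gamma$ remains cofinal and the pulled-back branch is $\Tt'$-cofinal; that the $\Xx'$-side stays via $\Sigma$ and of length ${<\Xi}$ (bounding the length of $\Xx'$ in terms of $\lh(\Tt')$ requires an argument — each $\Tt'$-step may correspond to a whole $\Xx'$-interval, so one needs $\Omega$ regular to see $\sup\delta_\alpha<\Omega$); and that the choice is independent of irrelevant data so that $\Sigma'$ is a genuine function on trees. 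A subtlety in the MS-indexed/inconvenient case is handled by the reduction to $\udash$strategies at the start, and the wcpm case is strictly easier since there is no dropping. I would present all of this at the level of ``define $\Sigma'$ by following $\Sigma$ along the induced tree embedding; $\Sigma'$ is a regularly $\Xi$-total pre-inflationary strategy with $\Tt$ via $\Sigma'$; by uniqueness $\Sigma'=\Sigma$,'' relegating the routine extension-of-tree-embedding verifications to the lemmas already proved.
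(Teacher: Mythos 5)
Your core idea is the same as the paper's --- lift to trees via $\Sigma$ using the tree-embedding copy maps and invoke uniqueness --- but the paper organizes this through phalanx iterability rather than by building a full strategy $\Sigma'$ on $M$, and that difference is worth flagging. After reducing to the convenient case and minimizing so that $\Tt$ itself is via $\Sigma$ and $\Pi:(\Tt,c)\hookrightarrow(\Xx,d)$ is cofinal in $\lh(\Xx)$, the paper first settles $\lh(\Tt)=\Omega$ by $\cof(\Omega)>\om$ alone: any two $\Tt$-cofinal branches each contain a club of their $<$-limit points $\lambda$, and at each such $\lambda$ both restrict to $[0,\lambda)_\Tt$, so they agree on a club and coincide --- a case your sketch omits. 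For $\lh(\Tt)<\Omega$, cofinality gives $\lh(\Xx)<\Omega$, and the embeddings constituting $\Pi$ are exactly the copy maps needed to lift $\udash k$-maximal trees on the phalanx $\Phi(\Tt,c)$ (where $k=\udeg^\Tt(c)$) to $\udash k$-maximal trees on $\Phi(\Xx,d)$ via one-step copy extensions at successors and direct limits at limits; since the latter are via $\Sigma$, $\Phi(\Tt,c)$ is $(\udash k,\Xi)$-iterable, and uniqueness of $\Sigma$ then forces $c=\Sigma(\Tt)$. Your direct construction of $\Sigma'$ is morally equivalent, but it takes on extra work: $\Sigma'$ must be defined on \emph{all} trees (on those that never touch $\Tt\rest\eta\conc c$ you should just declare $\Sigma'=\Sigma$), and you must verify $\Sigma'$ is regularly $\Xi$-total with wellfounded models --- which is precisely what ``$\Phi(\Tt,c)$ is $(\udash k,\Xi)$-iterable'' packages. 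The phalanx framing thus eliminates your bookkeeping; in particular the length concern you raise is moot, since above the base of $\Pi$ the one-step copy extension is length-preserving, so the lift of a length-$\beta$ phalanx tree has the $\Xx$-side of length $\lh(\Xx)+\beta\leq\Omega+1$. Finally, your opening $b^*$ detour is circular (you only re-derive $b^*=[0,\eta)_\Tt$) and should be cut.
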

\begin{proof}
We leave the wcpm case to the reader.
Consider the fine case. It suffices then to consider the case that $\Sigma$ is
convenient,
by the 1-1 correspondence between $\udash$strategies and standard strategies
for
MS-indexed premice
(see \ref{lem:rule_conversion}).

Let $\Pi:(\Tt,c)\hookrightarrow(\Xx,d)$ be a tree embedding, with $(\Xx,d)$ via
$\Sigma$,
$\Tt,\Xx$ of limit length, $c$ is $\Tt$-cofinal.
We may assume that $\Tt$ is via $\Sigma$ and $\Pi$ is cofinal in $\lh(\Xx)$.
We must show that $c=\Sigma(\Tt)$.
Let $\eta=\lh(\Tt)$.

If $\eta=\Omega\eqdef\Omega^\Sigma$ this holds because $\cof(\Omega)>\om$. So
suppose $\eta<\Omega$.
Then $\lh(\Xx)<\Omega$ because $\Pi$ is cofinal.
And $k\eqdef\udeg^\Xx(d)=\udeg^\Tt(c)$.
By the uniqueness of $\Sigma$,
it suffices to see that the phalanx $\Phi(\Tt, c)$  is $(\udash
k,\Xi)$-iterable.
But using the embeddings given by $\Pi$,
we can copy $\udash k$-maximal trees on $\Phi(\Tt, c)$ to $\udash k$-maximal
trees on $\Phi(\Xx,d)$.\footnote{Use the one-step copy extension at successor
stages
and form direct limits at limit stages.}
Since $(\Xx, d)$ is via $\Sigma$ and $\lh(\Xx)<\Omega$,
this suffices.
\end{proof}

\begin{rem}\label{rem:wDJ_implies_cond}The previous lemma can be adapted to
wcpms in the obvious manner. However, we do not see how to adapt the following
theorem to wcpms,
 because it relies on a comparison argument.
 Recall from \cite{wdj} or \cite{outline} the \emph{weak Dodd-Jensen property}
for an iteration strategy $\Sigma$
for a countable premouse $M$. John Steel pointed out the following theorem (or
something very similar,
and in the case that $M$ is $\lambda$-indexed) to the author in 2017.
We note that a
variant of its proof shows that if $\Omega>\om$ is regular,
and $e$ an enumeration of $M$ in ordertype $\om$,
there is at most one $(m,\Omega+1)$-strategy $\Sigma$ for $M$ with weak
Dodd-Jensen with respect to $e$.
We often abbreviate \emph{Dodd-Jensen} with \emph{DJ}.\index{DJ}
   \end{rem}

\begin{tm}\label{tm:wDJ_implies_cond}
Let $\Omega>\om$ be regular.
Let $M$ be an $m$-sound premouse with $\card(M)<\Omega$. Let $\Sigma$ be an
$(m,\Omega+1)$-strategy for $M$ such that either
$\Sigma$ has the DJ property, or $M$ is countable and $\Sigma$ has weak DJ.
Then $\Sigma$ has strong hull condensation.
\end{tm}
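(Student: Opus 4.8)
The plan is to prove that strong hull condensation follows from (weak) Dodd-Jensen by a comparison argument. Suppose $\Pi:\Tt\hookrightarrow\Xx$ is a tree embedding with $\Xx$ via $\Sigma$; we must show $\Tt$ is via $\Sigma$. As usual it suffices to treat the limit-length case: assuming $\Tt,\Xx$ have limit length, $\Pi$ is cofinal in $\lh(\Xx)$, and $\Tt\rest\eta$ is via $\Sigma$ for all limit $\eta<\lh(\Tt)$, we must see that the branch $c=[0,\lh(\Tt))_\Tt$ picked out by $\Pi$ equals $\Sigma(\Tt)$. Write $b=\Sigma(\Tt)$, and let $k=\udeg^\Tt(c)=\udeg^\Tt(b)$ (note the degree along any cofinal branch determined by the inflation framework matches on the $\Xx$-side, so the degrees agree). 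If $\cof(\lh(\Tt))>\om$ — in particular if $\lh(\Tt)=\Omega$ — then both $b$ and $c$ are continuous limits of their intersections with the club of limit points below, on which they agree by the induction hypothesis, so $b=c$; hence assume $\lh(\Tt)<\Omega$, so $\lh(\Xx)<\Omega$ as well.

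The heart of the argument is: since $\Xx$ is via $\Sigma$ and $\Pi:(\Tt,c)\hookrightarrow(\Xx,d)$ is a tree embedding (where $d=[0,\lh(\Xx))_\Xx=\Sigma(\Xx)$), the embeddings $\pi_\alpha$, $\sigma_\alpha$, $\omega_\alpha$ furnished by $\Pi$ let us copy $\udash k$-maximal trees on the phalanx $\Phi(\Tt,c)$ to $\udash k$-maximal trees on $\Phi(\Xx,d)$ — using the one-step copy extension at successor stages and direct limits at limits, exactly as in the proof of Lemma \ref{lem:unique_strat_has_inf_cond}. Since $(\Xx,d)$ is via $\Sigma$ and $\lh(\Xx)<\Omega$, this yields that $\Phi(\Tt,c)$ is $(\udash k,\Omega+1)$-iterable. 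Now I run a comparison of $\Phi(\Tt,c)$ against $\Phi(\Tt,b)$, or more precisely of $M^{\Tt\conc c}_\infty$ against $M^{\Tt\conc b}_\infty$, using on one side the copied-through strategy just described and on the other side (the tail of) $\Sigma$ applied to $\Tt\conc b$. The Dodd-Jensen property of $\Sigma$ — via the usual argument that under (weak) DJ a premouse cannot be compared nontrivially against an iterate of itself along a different branch, so there are no drops and the iteration maps are the identity on the comparison — forces $b=c$. This is where the hypothesis $\card(M)<\Omega$ (so the comparison terminates in ${<\Omega}$ steps and stays within the domain of the $(\Omega+1)$-strategy) and countability of $M$ in the weak-DJ case are used.

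The translation issues between MS-indexed and $\lambda$-indexed premice should be handled as in Definition \ref{dfn:inflation_condensation} and Lemma \ref{lem:unique_strat_has_inf_cond}: it suffices to prove convenient strong hull condensation for the corresponding $\udash$strategy, invoking the correspondence of Lemma \ref{lem:rule_conversion} and the tree-level correspondence of Lemma \ref{lem:tree_conversion}; one checks that weak DJ for $\Sigma$ transfers to its $\udash$-counterpart (DJ is a statement about iteration maps and copy maps on the countable premouse $M$, and the correspondence preserves the relevant maps up to the $M^\pm$ adjustment, which is harmless since DJ is formulated in terms of the base premouse). This is the source of the ``additional technical considerations'' alluded to in the footnote attached to the theorem statement in the introduction.

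The main obstacle I anticipate is setting up the comparison cleanly: one must argue that the copied strategy on $\Phi(\Tt,c)$ and the tail of $\Sigma$ on $\Tt\conc b$ are genuinely comparable (both being strategies for phalanxes built over $\Tt$ with the same base, agreeing below the relevant exchange ordinals), and that the Dodd-Jensen minimality applies to the resulting coiteration — in particular that no artificial advantage accrues to either side and that the comparison does not run past $\Omega$. Once the comparison is in place, the conclusion $b=c$ is the standard Dodd-Jensen rigidity conclusion; the bookkeeping of the tree embedding $\Pi$ (to verify the copying construction really does produce a $\udash k$-maximal tree on $\Phi(\Xx,d)$ and that it is via $\Sigma$) is routine given the machinery of Section \ref{sec:inflation}, and I would leave those verifications to the reader as in Lemma \ref{lem:unique_strat_has_inf_cond}.
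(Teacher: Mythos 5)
Your approach is the paper's: reduce to the limit case, use $\Pi$ to copy trees on $\Phi(\Tt,c)$ to trees on $\Phi(\Xx,d)$ via $\Sigma$, coiterate $\Phi(\Tt,b)$ (via the tail of $\Sigma$) against $\Phi(\Tt,c)$ (via the copied-through strategy), and invoke (weak) DJ; and your handling of the MS-indexed case through the correspondence with $\udash$strategies, and of the case $\cof(\lh(\Tt))>\om$, is also in line with the paper.

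However, the DJ step as you state it is underspecified in a way that would make a direct formalization fail. (Weak) DJ is a property of $\Sigma$, but the coiteration tree $\Vv$ extending $(\Tt,c)$ is via the \emph{copied} strategy, not via $\Sigma$, so you cannot directly invoke ``the Dodd-Jensen property of $\Sigma$'' against data coming from the $c$-side of the comparison. The move you are missing is to lift $\Vv$ to the tree $\Ww$ extending $\Xx$---which \emph{is} via $\Sigma$---and use the final copy map $\pi_\infty:M^\Vv_\infty\to M^\Ww_\infty$ to transport $\Vv$-side information into $\Sigma$'s domain: e.g.~if $M^\Uu_\infty\pins M^\Vv_\infty$ one contradicts (weak) DJ for $\Sigma$ with $\Ww$ and $\pi_\infty\com i^\Uu$, and similarly $\pi_\infty$ is used in the coordinate comparison along $e$ that shows $i^\Uu=i^\Vv$. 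Also, ``the iteration maps are the identity on the comparison'' is not what DJ yields; it yields no drops, $M^\Uu_\infty=M^\Vv_\infty$, and $i^\Uu=i^\Vv$. The contradiction then comes from a branch-divergence argument: since $b^\Uu$ extends $b$ and $b^\Vv$ extends $c$, let $\gamma=\max(b\cap c)$; then $M^\Tt_\gamma$ is recovered along each branch as the same fine-structural hull of $M^\Uu_\infty=M^\Vv_\infty$ with $i^\Uu_{\gamma\infty}=i^\Vv_{\gamma\infty}$ the uncollapse map, contradicting that the branches diverge at $\gamma$. With those two points filled in your proposal matches the paper's proof.
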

\begin{proof}
We literally assume that $M$ is countable and $\Sigma$ has weak DJ; otherwise
it
is almost the same but slightly simpler.

We consider first the case that $M$ is $\lambda$-indexed. Thus, $m$- and
$\udash
m$- fine structure
are equivalent.
Suppose the theorem fails in this case, and let
\[ \Pi:(\Tt,c)\hookrightarrow(\Xx,d) \]
be a tree embedding,
with properties as before. Let $b=\Sigma(\Tt)$ and suppose that $b\neq c$.
We have $\lh(\Tt),\lh(\Xx)<\Omega$ as $\Omega>\om$ is regular.

Let $\Gamma$ be the $(\Omega+1)$-strategy for $\Phi(\Tt,c)$ induced by
lifting
to $\Phi(\Xx,d)$.
 Let $\Sigma'$ be the $(\Omega+1)$-strategy for $\Phi(\Tt,b)$ induced by
$\Sigma$.
 Because $M$ and $\Tt$ have cardinality $<\Omega$, we get a successful
comparison $(\Uu,\Vv)$ extending
 $((\Tt,b),(\Tt,c))$,
 according to $\Sigma,\Gamma$;
 here $\Uu,\Vv$ are $m$-maximal trees on $M$. (Note that $\ZF$ suffices here;
 although the standard proof the comparison terminates
 involves taking a hull of $V$, we can do this part working inside $L[X]$ where
 $X\sub\OR$ codes the comparison.) Let $\Ww$ be the tree extending $\Xx$,
 which is the lift of $\Vv$. Let $\pi_\infty:M^\Vv_\infty\to M^\Ww_\infty$ be
the final lifting map.

 If $M^\Vv_\infty\pins M^\Uu_\infty$ then $b^\Vv$ does not drop, so $\Uu$ and
$i^\Vv_\infty:M\to M^\Vv_\infty$
 contradicts weak DJ for $\Sigma$; likewise if $b^\Uu$ drops in model or degree
but $b^\Vv$ does not.
 If $M^\Uu_\infty\pins M^\Vv_\infty$ then $\pi_\infty(M^\Uu_\infty)\pins
M^\Ww_\infty$, so $\Ww$ and $\pi_\infty\com i^\Uu_\infty$ contradicts weak DJ;
 likewise if $b^\Vv$ drops in model or degree (and hence $b^\Ww$ drops
correspondingly) but $b^\Uu$ does not.
 So $M^\Uu_\infty=M^\Vv_\infty$ and neither $b^\Uu$ nor $b^\Vv$ drops in model
or degree.

 We claim that $i^\Uu=i^\Vv$. For suppose not. Let $\left<x_i\right>_{i<\om}$
be
our enumeration of $M$
 relative to which $\Sigma$ has the weak DJ property. Let $k$ be least such
that
$i^\Uu(x_k)\neq i^\Vv(x_k)$.
 Since $i^\Vv$ is a near $n$-embedding,
 and $\Uu$ is according to $\Sigma$, weak DJ gives $i^\Uu(x_k)<i^\Vv(x_k)$.
 But since $b^\Vv$ does not drop, $\pi_\infty$ is also a near $n$-embedding,
 so $\pi_\infty\com i^\Uu$ is likewise, as is $i^\Ww$, and
$i^\Ww=\pi_\infty\com
i^\Vv$.
 Therefore $\pi_\infty(i^\Uu(x_k))<\pi_\infty(i^\Vv(x_k))=i^\Ww(x_k)$,
 so we contradict weak DJ with $\Ww$ (which is according to $\Sigma$) and
$\pi_\infty\com i^\Uu$.

 So $i^\Uu=i^\Vv$. Using this, standard fine structural calculations yield a
contradiction.
 Here is a reminder. One first shows that $b^\Uu$ extends $b$ and $b^\Vv$
extends $c$.
 Then, let $\gamma=\max(b\inter c)$, so $\gamma<\lh(\Tt)$.
 Let $\nu=\sup_{\alpha<\gamma}\nu(E^\Tt_\alpha)$.
 Then
 \[ M^\Tt_\gamma=\cHull^{M^\Uu_\infty}_{n+1}(\rg(i^\Uu)\cup\nu), \]
 and $i^\Uu_{\gamma\infty}$ is just the uncollapse map.
 Likewise with $\Vv$ replacing $\Uu$. But then
$i^\Uu_{\gamma\infty}=i^\Vv_{\gamma\infty}$,
 which contradicts the fact that $\gamma=\max(b\inter c)$. This completes
the
proof in this case.

Now suppose instead that $M$ is MS-indexed. Thus, the statement that $\Sigma$
has
strong hull condensation
literally means that $\Sigma'$ has inflation condensation,
where $\Sigma'$ is the $\udash$strategy corresponding to $\Sigma$.
Suppose the theorem fails in this case,
and let
\[ \Pi':(\Tt',c')\hookrightarrow(\Xx',d'),\]
etc, be a counterexample as before, and $b'=\Sigma'(\Tt')$.
Again $\lh(\Tt'),\lh(\Xx')<\Omega$.

 Let $\Gamma_{c'}'$ be the $(\Omega+1)$-strategy for $\Phi(\Tt',c')$
induced by
lifting to $\Phi(\Xx',d')$.
 Let $\Gamma_{b'}'$ be the $(\Omega+1)$-strategy for $\Phi(\Tt',b')$ induced by
$\Sigma'$.

 Now let $\Tt,\Xx,c,\Gamma_c$, etc, be the canonical conversions of all
of
these
objects to standard MS-premice
 given by \ref{lem:rule_conversion} and
\ref{lem:tree_conversion}. (For
 $c,\Gamma_{c}$,  proceed as follows.
First define a $\uu$-strategy  $\widetilde{\Sigma}'$ for $M$,
by just following $\Sigma'$,
except that $\widetilde{\Sigma}'(\Tt')=c'$ and
$\widetilde{\Sigma}'$ proceeds according to $\Gamma_{c'}'$ for trees
extending
$\Tt'$.
Then let $\widetilde{\Sigma}$ be the $m$-maximal strategy for $M$
corresponding
to $\widetilde{\Sigma}'$,
 given by \ref{lem:rule_conversion}. Finally let $c,\Gamma_c$ be determined by
$\widetilde{\Sigma}$.)
 Then $\lh(\Tt)=\eta'=\lh(\Tt')$
 (as $\eta'$ is a limit), $c\neq b$,
 and $\Gamma_c,\Gamma_b$ are $(\Omega+1)$-strategies for
$\Phi(\Tt,c),\Phi(\Tt,b)$.

We get a successful
comparison\footnote{\label{ftn:comp_alg}Here we adjust the algorithm for
comparison, for example as in \cite{premouse_inheriting}, by minimizing on
$\nu(E)$ before using an extender $E$. That is, if at stage $\alpha$ of the
comparison, the least disagreement consists in two non-empty extenders $E,F$,
and $\nu(E)\neq\nu(F)$,
then we use $E$ if $\nu(E)<\nu(F)$, padding on the other side, and vice versa
if $\nu(F)<\nu(E)$. This avoids the uncomfortable situation of using extender
$E$ on side 1 and $F$ on side 2 at stage $\alpha$, where $E$ is superstrong and
$F$ type 2, and then using the same $F$ on side 1 at stage $\alpha+1$.}
$(\Uu,\Vv)$ extending $((\Tt,b),(\Tt,c))$, according to $\Gamma_b,\Gamma_c$;
 (here $\Uu,\Vv$ are $m$-maximal trees on $M$). Let $\Uu',\Vv'$ be the
corresponding $\udash$trees (in the sense of \ref{lem:tree_conversion}),
 so $(M^{\Uu'}_\infty)^\pm=M^\Uu_\infty$ and
$(M^\Uu_\infty)^\passive\ins(M^{\Uu'}_\infty)^\passive$, and if $M^\Uu_\infty$
is type 3 and $\deg^\Uu(\infty)<\om$ then
$\udeg^{\Uu'}(\infty)=\deg^\Uu(\infty)+1$, and otherwise
$M^{\Uu'}_\infty=M^\Uu_\infty$
 and $\udeg^{\Uu'}(\infty)=\deg^\Uu(\infty)$. Likewise for $\Vv,\Vv'$.
 In particular, $\infty$ is non-$\Uu'$-special and non-$\Vv'$-special.

 Let $\Ww'$ be the tree extending $\Xx'$,
 which is the lift of $\Vv'$; thus, $\Ww'$ is according to $\Sigma'$.
 Then $M^{\Vv'}_\infty,M^{\Ww'}_\infty$ have the same type
 and $\udeg^{\Vv'}(\infty)=\udeg^{\Ww'}(\infty)$, because cofinally many
extenders used in $\Ww'$ are copied from $\Vv'$ (note this includes the case
that $\Vv'=(\Tt',b')$).
 Thus, $\infty$ is non-$\Ww'$-special. So letting $\Ww$ be the standard MS-tree
corresponding to $\Ww'$,
 then $\Ww$ is according to $\Sigma$ and
$(M^{\Ww'}_\infty)^\pm=M^\Ww_\infty$ and
$\deg^\Ww(\infty)=\deg^\Vv(\infty)$.
 Let $\pi'_\infty:M^{\Vv'}_\infty\to M^{\Ww'}_\infty$ be the final copy map.
 Let $\pi_\infty=\pi'_\infty\rest((M^{\Vv'}_\infty)^\pm)^\sq$.
 Then $\pi_\infty:(M^\Vv_\infty)^\sq\to(M^\Ww_\infty)^\sq$
 is a near $\deg^\Vv(\infty)$-embedding (as $\pi'_\infty$ is a near
$\udeg^{\Vv'}(\infty)$-embedding).

Because we have $\pi_\infty$, weak DJ gives that $M^\Vv_\infty=M^\Ww_\infty$ as
usual.
 We have that $[0,\infty]_\Uu$ drops iff $[0,\infty]_{\Uu'}$ drops
 (by Lemma \ref{lem:tree_conversion}),
 and if non-dropping, that $i^\Uu=i^{\Uu'}\rest M^\sq$; likewise for $\Vv,\Vv'$
and $\Ww,\Ww'$.
 Also, $b^\Vv$ drops iff $b^\Ww$ drops, and if non-dropping, then
$\pi'_\infty\com i^{\Vv'}=i^{\Ww'}$ and $\pi_\infty\com i^\Vv=i^\Ww$.

With these facts, the usual
 weak DJ argument  leads to contradiction.
\end{proof}

\subsection{Further inflation terminology}

\begin{dfn}
 Let $\Tt$ be an iteration tree, either $\udash m$-maximal or $m$-maximal,
 or normal on a wcpm. We say that $\Tt$ is \dfnemph{terminally non-dropping}
iff
$\lh(\Tt)$ is a successor and if $\Tt$ is $\udash m$-maximal or $m$-maximal
then
$b^\Tt$ does not drop in model or degree.
\end{dfn}

\begin{dfn}\label{dfn:inflation_notation}
Let $\Xx,\Tt$ be on $M$, with $\Xx$ an inflation of $\Tt$. Let
\[ (t,C,C^-,f,\Pivec)=(t,C,C^-,f,\Pivec)^{\Tt\inflatearrow\Xx} \]
and let $\gamma_{\alpha;\beta}$, etc, be as in
\ref{dfn:inflation}.
Suppose that $\Xx$ has successor length $\alpha+1$.

We say that $\Xx$ is:
\begin{enumerate}[label=--]
 \item \dfnemph{\tu{(}$\Tt$\tu{)}-pending} iff $\alpha\in C^-$.\index{pending}
 \item \dfnemph{non-\tu{(}$\Tt$\tu{)}-pending} iff $\alpha\notin C^-$.
\item \dfnemph{\tu{(}$\Tt$\tu{)}-terminal} iff $\Tt$ has successor length and
$\Xx$ is non-$\Tt$-pending.\index{terminal}
\end{enumerate}

Suppose that $\Xx$ is $\Tt$-terminal. We say that $\Xx$ is:
\begin{enumerate}[label=--]\index{terminally-(non)-(model)-dropping}
\item \dfnemph{$\Tt$-terminally-non-model-dropping} iff $\alpha\in C$ (hence,
$f(\alpha)+1=\lh(\Tt)$),
\item  \dfnemph{$\Tt$-terminally-non-dropping} iff $\alpha\in C$  and
$\udeg^\Xx(\alpha)=\udeg^\Tt(f(\alpha))$,
\item  \dfnemph{$\Tt$-terminally-model-dropping} iff $\alpha\notin C$,
\item \dfnemph{$\Tt$-terminally-dropping} iff $\alpha\notin C$ or
$\udeg^\Xx(\alpha)<\udeg^\Tt(f(\alpha))$.
\end{enumerate}

Suppose $\Xx$ is $\Tt$-terminally-non-\emph{model}-dropping and let
$\alpha+1=\lh(\Xx)$ and
$\beta=f(\alpha)$ and $\gamma=\gamma_{\alpha;\beta}$. Then we
define\index{$\pi_\infty^{\Tt\inflatearrow\Xx}$}
\[ \pi_\infty^{\Tt\inflatearrow\Xx}:M^\Tt_\beta\to M^\Xx_\alpha \]
by $\pi_\infty^{\Tt\inflatearrow\Xx} =
i^\Xx_{\gamma\alpha}\com\pi_{\alpha;\beta}$.
\end{dfn}

\begin{rem}\label{rem:non-drop_inf_comm}Suppose $\Xx$ is
$\Tt$-terminally-non-\emph{model}-dropping  and $\Tt,\Xx$ are $\udash
m$-maximal. Note that $\pi_\infty=\pi_\infty^{\Tt\inflatearrow\Xx}$ is a near
$\udash n$-embedding, where $n=\udeg^\Xx(\infty)$.
If $\Xx$ is $\Tt$-terminally-non-\emph{dropping} and $\Tt$ is
terminally non-dropping, then note that $\Xx$ is terminally non-dropping, so
$n=m$, $\pi_\infty$ is a $\udash m$-embedding and $\pi_\infty\com i^\Tt=i^\Xx$.
\end{rem}

\section{Comparison inflation, genericity inflation}\label{sec:min_inf}

In this section we prove a comparison result for iteration trees, analogous to
comparison of premice.
The process we call \emph{comparison inflation}.\footnote{In a
preprint of this paper on arxiv.org, it was called \emph{minimal
\tu{(}simultaneous\tu{)} inflation}, but this would be in conflict with another
notion of \emph{minimal inflation} to be used in the context of (full)
normalization.} We will
need this result both in the construction of an iteration strategy for stacks
of
limit length,
and in the extension of an iteration strategy with inflation condensation to a
sufficiently small generic extension.
We also introduce \emph{genericity inflation}, an inflation analogue to
genericity iteration.

\subsection{Comparison inflation}\label{subsec:min_inf}

\begin{dfn}\label{dfn:min_inf}\index{comparison inflation}
Let $\Omega>\om$ be regular
and let $\Sigma$ be a regularly $(\Omega+1)$-total
conveniently inflationary
strategy for $M$.
Let $\mathscr{T}$ be a set of trees according to $\Sigma$,
each of length $\leq\Omega+1$,
and such that there is no surjection
$\mathscr{T}\to\Omega$.\footnote{Since $\Omega$
is regular, it follows that there is no cofinal map
$\mathscr{T}\to\Omega$. Note there is no
restriction on
$\card(M)$,
but $\lh(\Tt)\leq\Omega+1$ for each $\Tt\in\mathscr{T}$.} The
\dfnemph{comparison inflation} of $\mathscr{T}$
is the tree $\Xx$ on $M$ with the following properties:
\begin{enumerate}[label=--]
 \item $\Xx$ is according to $\Sigma$,
 \item $\Xx$ is an inflation of each $\Tt\in\mathscr{T}$; we write
$t^\Tt=t^{\Tt\inflatearrow\Xx}$, etc,
 for $\Tt\in\mathscr{T}$,
 \item for each $\alpha+1<\lh(\Xx)$ there is $\Tt\in\mathscr{T}$ such that
$t^{\Tt}(\alpha)=0$,
 \item $\Xx$ has successor length $\leq\Omega+1$,
 \item if $\lh(\Xx)=\alpha+1<\Omega$ then for every $\Tt\in\mathscr{T}$, we
have
$\alpha\notin(C^-)^{\Tt}$.\qedhere
\end{enumerate}
\end{dfn}
\begin{lem}[Comparison inflation]\label{lem:min_inf}
Let $\Omega,\mathscr{T},\Sigma$ be as in \ref{dfn:min_inf}. Then there is a
unique comparison inflation $\Xx$ of $\mathscr{T}$.
Moreover, there is $\Tt\in\mathscr{T}$ such that, with
$\Tt'=\completion^\Sigma(\Tt)$, we have
\begin{enumerate}[label=--]
 \item  $\Xx$ is $\Tt'$-terminally-non-dropping, and
 \item if $\lh(\Xx)=\Omega+1$ then $\lh(\Tt')=\Omega+1$.
\end{enumerate}
\end{lem}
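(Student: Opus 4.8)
The plan is to construct $\Xx$ by a straightforward recursion on $\lh(\Xx)$, at each stage choosing the $\Tt$-copying extender of least index among all the demands made by the trees in $\mathscr{T}$, and to use convenient inflation condensation (Lemma \ref{lem:inflationary_limit_continuation}, more precisely its hypotheses together with Definition \ref{dfn:inflation_condensation}) at limit stages. First I would set up the recursion: suppose $\Xx\rest(\alpha+1)$ has been defined, is via $\Sigma$, and is an inflation of each $\Tt\in\mathscr{T}$, with data $(t^\Tt,C^\Tt,(C^-)^\Tt,f^\Tt,\Pivec^\Tt)^{\Tt\inflatearrow\Xx\rest(\alpha+1)}$. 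If some $\Tt\in\mathscr{T}$ has $\alpha\in(C^-)^\Tt$, I would set $T_\alpha=\{\Tt\in\mathscr{T}\mid\alpha\in(C^-)^\Tt\}$, pick $\Tt^*\in T_\alpha$ minimizing $\In(E^{\Tt\inflatearrow\Xx\rest(\alpha+1)}_\alpha)$, and declare $E^\Xx_\alpha=E^{\Tt^*\inflatearrow\Xx\rest(\alpha+1)}_\alpha$, so $t^{\Tt^*}(\alpha)=0$; by the bounding clause this extender is $\Xx\rest(\alpha+1)$-normal (Lemma \ref{lem:Pi-bounded_reasonable_copy}), and by Lemma \ref{lem:tree_embedding_copy} the one-step copy extension is well-defined with wellfounded last model, so $\Xx\rest(\alpha+2)$ is again an inflation of each $\Tt\in\mathscr{T}$ (this extender is $\Tt$-inflationary for $\Tt\neq\Tt^*$, and for those $\Tt$ with $\alpha\in(C^-)^\Tt$ and larger index, the $E^\Xx_\alpha$-inflation applies since $\In(E^\Xx_\alpha)\le\In(E^{\Tt\inflatearrow\Xx\rest(\alpha+1)}_\alpha)$). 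If no such $\Tt$ exists, i.e. $\alpha\notin(C^-)^\Tt$ for all $\Tt$, the recursion halts with $\lh(\Xx)=\alpha+1$, which is exactly the terminating clause in Definition \ref{dfn:min_inf}.

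At a limit stage $\eta$, I would let $\Xx\rest\eta=\bigcup_{\alpha<\eta}\Xx\rest(\alpha+1)$; this is via $\Sigma$ and is an inflation of each $\Tt\in\mathscr{T}$ with the expected limit data. If $\eta<\Omega$ then $\Sigma(\Xx\rest\eta)$ is defined (as $\Sigma$ is $(\Omega+1)$-total), and I would set $b=\Sigma(\Xx\rest\eta)$ and continue to $\Xx\rest(\eta+1)$; convenient inflation condensation guarantees that for each $\Tt$, if $b\sub C^\Tt$ and $f^\Tt{}``b$ has limit ordertype then $f^\Tt{}``b$ is the branch $\Sigma$ picks for $\Tt\rest(\sup f^\Tt{}``b)$, so $\Xx\rest(\eta+1)$ remains an inflation of $\Tt$; in the cases where $b\not\sub C^\Tt$ or $f^\Tt{}``b$ is bounded, the relevant condition for being an inflation at $\eta+1$ is the negative one (no new $\Tt$-cofinal branch determined / bounding satisfied), which also holds. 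The termination of the recursion is where the size hypothesis enters: if the recursion ran to length $\Omega+1$ without halting, then since there is no surjection $\mathscr{T}\to\Omega$ and $\Omega$ is regular, a fixed $\Tt\in\mathscr{T}$ would have to supply the $\Tt$-copying extender cofinally often below $\Omega$; tracing through, for that $\Tt$ the map $\alpha\mapsto f^\Tt(\alpha)$ would be continuous, monotone and cofinal into $\lh(\Tt)\le\Omega+1$ along $b^\Xx\inter C^\Tt$, forcing $\lh(\Tt)=\Omega+1$ and $\Xx$ to be $\Tt'$-terminally-non-dropping where $\Tt'=\completion^\Sigma(\Tt)$ — which is the second bullet of the "moreover". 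More generally, at whatever stage $\alpha+1<\lh(\Xx)$ the recursion halts, I would take $\Tt^*$ to be any tree with $\alpha\notin(C^-)^{\Tt^*}$ and $\alpha\in C^{\Tt^*}$ with $\udeg^\Xx(\alpha)=\udeg^{\Tt^*}(f^{\Tt^*}(\alpha))$; such a $\Tt^*$ exists because the last extender $E^\Xx_{\alpha-1}$ (or, if $\alpha$ is a limit, cofinally many earlier extenders) was $\Tt^*$-copying for some $\Tt^*$, and at the halting stage that $\Tt^*$ has used up $E^\Tt_{f^{\Tt^*}(\alpha)}$ with no drop. This gives $\Xx$ $\Tt'$-terminally-non-dropping with $\Tt'=\completion^\Sigma(\Tt^*)$.

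For uniqueness, I would argue by induction that any $\Xx'$ satisfying the five bullets of Definition \ref{dfn:min_inf} must agree with $\Xx$ stage by stage: the branch clauses (being via $\Sigma$) pin down limit stages, and the "for each $\alpha+1$ there is $\Tt$ with $t^\Tt(\alpha)=0$" clause together with the minimality forced by the bounding requirement (the index $\In(E^{\Xx'}_\alpha)\le\In(E^{\Tt\inflatearrow\Xx'\rest(\alpha+1)}_\alpha)$ for all $\Tt$ demanding it, with equality for at least one) pins down the successor extenders; Lemma \ref{lem:inflation_unique_objects} and Corollary \ref{cor:tree_embedding_uniqueness} then give uniqueness of all the associated tree-embedding data. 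The main obstacle I anticipate is not any single step but the bookkeeping in the limit case: verifying carefully that after adjoining $b=\Sigma(\Xx\rest\eta)$ the tuple $(t^\Tt,C^\Tt,\ldots)$ computed by Definition \ref{dfn:inflation} for $\Xx\rest(\eta+1)$ genuinely witnesses that $\Xx\rest(\eta+1)$ is an inflation of $\Tt$ for every $\Tt$ simultaneously — this requires knowing that the candidate $(C',f')$ from \ref{dfn:inflation} behaves correctly whether or not $\eta\in C'$ and whether or not $f'$ jumps at $\eta$, and that is exactly what convenient inflation condensation (via Lemma \ref{lem:inflationary_limit_continuation}) is designed to deliver; I would lean on that lemma rather than reprove it. A secondary subtlety is the MS-indexed case, but Definition \ref{dfn:inflation_condensation} reduces it to the convenient ($\udash$) case, so no extra work is needed here.
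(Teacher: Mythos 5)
Your construction of $\Xx$ (minimal index among the demands of the trees in $\mathscr{T}$, extension via $\Sigma$ and inflation condensation at limits), your uniqueness argument (no freedom in the choice of extenders, branches forced by $\Sigma$), and your treatment of the cases $\lh(\Xx)=\Omega+1$ and $\lh(\Xx)=\beta+2$ of the ``moreover'' clause all match the paper's proof in substance. The pigeonhole for the $\Omega+1$ case should be applied to $\{\beta : \beta+1<^\Xx\Omega\}$ rather than to all $\beta<\Omega$ (one needs a single $\Tt$ whose copying stages are cofinal \emph{along the main branch}, since that is what forces $\Omega\in C^{\Tt'}$ and $f^{\Tt'}(\Omega)=\Omega$), but that set is cofinal in $\Omega$ and the same regularity argument applies, so this is only a matter of phrasing.

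There is, however, a genuine gap in your handling of the remaining case of the ``moreover'' clause, namely $\lh(\Xx)=\alpha+1<\Omega$ with $\alpha$ a limit. You justify the existence of a suitable $\Tt^*$ by asserting that ``cofinally many earlier extenders were $\Tt^*$-copying for some $\Tt^*$.'' No pigeonhole is available here: $\alpha$ may have small cofinality (e.g.\ $\om$) while $\mathscr{T}$ is infinite, so there is no reason a single tree supplies cofinally many copying extenders below $\alpha$; and even if one did, copying stages that are cofinal in $\alpha$ but off the branch $[0,\alpha)_\Xx$ would not give you $\alpha\in C^{\Tt^*}$. The correct argument (the paper's) needs only \emph{one} well-placed copying stage: choose $\beta<^\Xx\alpha$ with $(\beta,\alpha)_\Xx\inter\dropset^\Xx_{\deg}=\emptyset$, and take any $\Tt\in\mathscr{T}$ with $t^\Tt(\beta)=0$ (such $\Tt$ exists by the construction's third bullet). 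Then $\beta\in C^{\Tt}$, and setting $\Tt'=\completion^\Sigma(\Tt)$, the facts that $C^{\Tt'}\inter[0,\alpha]_\Xx$ is a closed initial segment of the branch and that $(\beta,\alpha)_\Xx$ does not drop yield $\alpha\in C^{\Tt'}$; since the recursion halted, $\alpha\notin(C^-)^{\Tt'}$, so $f^{\Tt'}(\alpha)+1=\lh(\Tt')$, and the absence of drops in model or degree on $(\beta,\alpha)_\Xx$ together with $f^{\Tt'}(\beta)+1<\lh(\Tt')$ gives terminal non-dropping. Note also that you must pass to $\completion^\Sigma(\Tt)$ here, since $[0,\alpha)_\Xx$ may determine the $\Tt$-cofinal branch $\Sigma(\Tt)$ at stage $\alpha$; your write-up does not engage with this, nor with the need to place $\beta$ past the last drop on the branch.
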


\begin{proof}
We first verify uniqueness. Given $\alpha<\lh(\Xx)\inter\Omega$, we have
$\alpha+1<\lh(\Xx)$
iff $\alpha\in(C^-)^{\Tt}$ for some $\Tt\in\mathscr{T}$.
And if $\alpha+1<\lh(\Xx)$ then
\[ \In(E^\Xx_\alpha)=\min(\{\In(E^{\Tt\inflatearrow\Xx}_\alpha)\bigm|
\alpha\in(C^-)^{\Tt}\}) \]
as $\Xx$ is an inflation of every $\Tt\in\mathscr{T}$. So there is no freedom
in
the choice of extenders,
and since $\Xx$ is via $\Sigma$, $\Xx$ is therefore unique.

Existence is by the proof of uniqueness and because inflations can be freely
extended (as $\Sigma$ has inflation condensation).

We now verify the ``moreover'' clause.

Suppose first that $\lh(\Xx)=\Omega+1$.
For every $\beta$ such that $\beta+1<^\Xx\Omega$,
there is $\Tt\in\mathscr{T}$ such that $t^\Tt(\beta)=0$.
Since there is no surjection $\card(\mathscr{T})\to\Omega$ and $\Omega$ is
regular,
we may fix $\Tt\in\mathscr{T}$ such that $t^\Tt(\beta)=0$ for cofinally many
$\beta+1<^\Xx\Omega$.
Let $\Tt'=\completion^\Sigma(\Tt)$.
It follows that $\Omega\in C^{\Tt'}$, and in fact, $\Omega=f^{\Tt'}(\Omega)$
and $\Omega=\gamma^{\Tt'}_\Omega$, so $\Xx$ is
$\Tt'$-terminally-non-dropping.\footnote{Clearly
this reflection argument uses only the regularity of $\Omega$, no $\AC$.}

Next suppose $\lh(\Xx)=\beta+2=\alpha+1$ for some $\beta$.
Then letting $\Tt\in\mathscr{T}$ be such that $t^\Tt(\beta)=0$,
we have $\alpha=\beta+1\in C^{\Tt}$. Since $\alpha\notin(C^-)^\Tt$,
it follows that $\Tt$ has successor length and $\Xx$ is
$\Tt$-terminally-non-dropping.

Finally suppose that $\lh(\Xx)=\alpha+1<\Omega$ and $\alpha$ is a limit.
Let $\beta<^\Xx\alpha$ be such that
$(\beta,\alpha)_\Xx\inter\dropset_{\deg}^\Xx=\emptyset$.
Fix $\Tt\in\mathscr{T}$ such that $t^\Tt(\beta)=0$, so $\beta\in C^\Tt$.
Let $\Tt'=\completion^\Sigma(\Tt)$, so $\Xx$ is also an inflation of $\Tt'$.
Moreover, $\alpha\in C^{\Tt'\inflatearrow\Xx}$ because
$(\beta,\alpha)_\Xx\inter\dropset^\Xx=\emptyset$.
But then $f^{\Tt'}(\alpha)+1=\lh(\Tt')$, since $\alpha\notin(C^-)^\Tt$.
Since also $(\beta,\alpha)_\Xx$ does not drop in model or degree and
$f^{\Tt'}(\beta)+1<\lh(\Tt')$,
it follows that $\Xx$ is $\Tt'$-terminally-non-dropping, as required.
\end{proof}

\subsection{Genericity inflation}
Like for comparison, there is also an inflation analogue of genericity
iteration, which we describe next.
We won't actually use the technique in this paper, but it is easy to describe
and worth noting, and the author has used it in other unpublished work,
for the purposes mentioned in \ref{rem:self-it} below.
\setcounter{footnote}{0}
\footnote{This technique and its application to self-iterability of mice
was the author's first main motivation for considering inflation.}
Analogous results hold for slightly coherent wcpms and fine mice of both
indexings (paired with their
standard iteration rules).
We first give the full proof for $\udash m$-sound seg-pms with MS-indexing
(with MS-iteration rules).
The proof adapts easily to the wcpm version, and we leave this to the reader;
slight coherence
ensures that the tree produced is normal.
We will then explain how genericity iteration works for $\lambda$-indexed mice
with $\lambda$-iteration rules,
and finally sketch genericity inflation for this case.
We state the results for the $\delta$-generator extender algebra,
but the versions for the $\om$-generator extender algebra are an easy corollary.

\begin{dfn}\label{dfn:BB_delta}\index{$\BB_\delta$}\index{extender algebra}
We write $\BB_\delta$ for the $\delta$-generator extender algebra at $\delta$.
When working inside a seg-pm or wcpm $M$, we only use extenders $E\in\es^M$
such
that $\nu_E$ is an $M$-cardinal
to induce extender algebra axioms (one can also require that $\nu_E$ is
inaccessible in $M$,
etc, as desired). Let $\kappa=\crit(E)$. Recall here that the axioms have the
form
\[
\bigvee_{\alpha<\nu_E}\varphi_\alpha\implies\bigvee_{\alpha<\kappa}
\varphi_\alpha \]
where $\vec{\varphi}=\left<\varphi_\alpha\right>_{\alpha<\nu_E}\in M$,
$\varphi_\alpha\in M|\kappa$ for all $\alpha<\kappa$,
$\varphi_\alpha\in M|\nu_E$ for all $\alpha<\nu_E$,
and
$\vec{\varphi}=i^M_E(\vec{\varphi})\rest\nu_E$. (So $\vec{\varphi}\in\Ult(M,E)$,
so $\vec{\varphi}\in M|\In(E)$.)
We use this definition independent of indexing.

Given an extender $G$ and $A\sub\OR$, we say that $G$ is
\dfnemph{$A$-bad}\index{bad}\index{$A$-bad}
iff $G$ induces a $\delta$-generator extender algebra axiom not satisfied by
$A$
(equivalently,
by $A\inter\nu_G$).
\end{dfn}

\begin{dfn}[Genericity inflation for MS-indexing and slightly coherent
wcpms]\label{dfn:gen_inf}\index{genericity inflation}
Let $\Omega$ be regular uncountable.
Let $M,\Sigma$ be such that either:
\begin{enumerate}[label=--]
 \item $M$ is a $\udash m$-sound MS-indexed seg-pm
and $\Sigma$ is a conveniently inflationary $(\udash m,\Omega+1)$-strategy for
$M$, or
\item $M$ is a slightly coherent wcpm and $\Sigma$ is an inflationary
$(\Omega+1)$-strategy for $M$,
\end{enumerate}
and suppose that $\card(M)<\Omega$ (here if $M$ is a wcpm,
which might not satisfy $\AC$,
we mean that $M$ is coded by some set $X\sub\eta<\Omega$).
Let $\Tt$ be according to $\Sigma$, of limit length $\leq\Omega$, and
$\Tt'=\Tt\conc\Sigma(\Tt)$.
Let $A\sub\Omega$. The \dfnemph{$A$-genericity inflation} of $\Tt$ is the tree
$\Xx$ such that:
\begin{enumerate}[label=--]
\item $\Xx$ is a $\Tt'$-terminally-non-dropping inflation of $\Tt'$ (hence of
successor length), according to $\Sigma$;
write $C^{\Tt'}=C^{\Tt'\inflatearrow\Xx}$, etc.
\item For every $\alpha+1<\lh(\Xx)$, we have
 $\alpha\in(C^-)^{\Tt'}$, and letting
$\xi=\In(E^{\Tt'\inflatearrow\Xx}_\alpha)$, then
 $\In(E^\Xx_\alpha)$ is the least $\gamma$ such that either $\gamma=\xi$, or:
\begin{enumerate}[label=--]
\item  $F\eqdef\es_\gamma(M^\Xx_\alpha)$ is $A$-bad, and
 \item if $M$ is MS-indexed then $\nu_F$ is a cardinal of $M^\Xx_\alpha|\xi$
(hence $F$ is total over $M^\Xx_\alpha|\xi$).\qedhere
\end{enumerate}
\end{enumerate}
\end{dfn}

\begin{rem}
 Note that if $\Xx$ is the $A$-genericity inflation of $\Tt$ and
$\lh(\Xx)=\alpha+1$,
 then $\alpha$ is least such that
$f^{\Tt\inflatearrow\Xx}(\alpha)=\lh(\Tt)=\lh(\Tt')-1$.
 So $\Sigma(\Tt)$ is not relevant to the construction of $\Xx$; we need only
$M,\Sigma,\Tt,A$.
But $\Xx$ determines $\Sigma(\Tt)$, hence $\Tt'$, by inflation condensation.
\end{rem}

\begin{tm}\label{lem:MS_gen_inf}
Let $\Omega,\Sigma,\Tt,A$ be as in \ref{dfn:gen_inf}.
Then there is a unique $A$-genericity inflation $\Xx$ of $\Tt$ via $\Sigma$,
and  $\lh(\Xx)=\Omega+1$ iff $\lh(\Tt)=\Omega+1$.
\end{tm}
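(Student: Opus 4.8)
The plan is to run a single simultaneous-inflation construction, building $\Xx$ by recursion on its length, at each successor stage $\alpha+1$ choosing $E^\Xx_\alpha$ to be the extender indexed at the least $\gamma\leq\In(E^{\Tt'\inflatearrow\Xx}_\alpha)$ satisfying the stated $A$-badness (and, in the MS case, cardinality) condition, and then extending via $\Sigma$. The recursion is driven by the tree embeddings $\Pi_\alpha:(\Tt',f(\alpha)+1)\hookrightarrow\Xx\rest(\alpha+1)$ furnished by the theory of inflation in \S\ref{sec:inflation}: at a successor stage where $\alpha\in(C^-)^{\Tt'}$, the copy $E^{\Tt'\inflatearrow\Xx}_\alpha$ of $E^{\Tt'}_{f(\alpha)}$ is available and $\Xx$-normal by \ref{lem:copy_is_Xx-normal} (part \ref{item:copy_is_Xx-normal} of the lemma preceding \ref{dfn:inflation_notation}), so either we take that extender (type $0$, a $\Tt'$-copying step, and then $f(\alpha+1)=f(\alpha)+1$) or else there is a strictly smaller ``bad'' index $\gamma$, and $\es_\gamma(M^\Xx_\alpha)$ is then an $\Xx$-normal extender of index $\leq\In(E^{\Tt'\inflatearrow\Xx}_\alpha)$, so by part \ref{item:extend_inflation} we may append it as a $\Tt'$-inflationary step and stay an inflation of $\Tt'$. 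At limit stages we use inflation condensation of $\Sigma$ (via \ref{lem:inflationary_limit_continuation}) to see that $\Sigma(\Xx\rest\eta)$ keeps us an inflation of $\Tt'$, or of $\completion^\Sigma(\Tt')=\Tt'$ (which is $\Tt'$ itself since $\Tt'$ has successor length). Uniqueness is immediate from the construction: the index of $E^\Xx_\alpha$ is dictated, and $\Xx$ is via $\Sigma$, so there is no freedom, exactly as in the uniqueness argument of \ref{lem:min_inf}.

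First I would verify the construction does not break down before reaching a stage $\alpha$ with $f^{\Tt'\inflatearrow\Xx}(\alpha)=\lh(\Tt')-1=\lh(\Tt)$. The only way it could fail to terminate is if it runs for $\Omega+1$ stages, and the only way it could get ``stuck'' is at a limit $\eta<\Omega$ with $[0,\eta)_\Xx\subseteq C^{\Tt'}$ but $f``[0,\eta)_\Xx$ already cofinal in $\lh(\Tt')$; but then $\eta\in C^{\Tt'}$ and $f(\eta)=\lh(\Tt)$, which is precisely the desired terminal node, so this is not a failure. Hence by definition of $\lh(\Xx)$ the construction runs until the first such $\alpha$ occurs, or else runs to length $\Omega+1$. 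So the substance is the dichotomy $\lh(\Xx)=\Omega+1 \iff \lh(\Tt)=\Omega+1$.

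For the direction $\lh(\Tt)=\Omega+1\implies\lh(\Xx)=\Omega+1$: this is the genericity-iteration style argument. Suppose for contradiction $\lh(\Xx)=\alpha+1<\Omega$, so $\Xx$ is $\Tt'$-terminally-non-dropping with $f(\alpha)=\lh(\Tt)-1=\Omega$; but $\lh(\Tt')=\Omega+2$ is impossible since $\lh(\Tt)=\Omega+1$ means $\lh(\Tt')=\Omega+1$ and $f(\alpha)<\lh(\Tt')$, so $f(\alpha)\leq\Omega$, and $f(\alpha)=\Omega$ forces $f$ to be cofinal in $\Omega$ along $[0,\alpha)_\Xx$, contradicting that $f(\alpha)=\Omega$ is attained at the successor or limit stage $\alpha<\Omega$ with $\mathrm{cof}(\Omega)=\Omega>\alpha$. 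Thus $f(\alpha)<\Omega$, so $\alpha\in(C^-)^{\Tt'}$, so the construction continues — contradiction. For the converse $\lh(\Xx)=\Omega+1\implies\lh(\Tt)=\Omega+1$: here one argues, as in the ``moreover'' clause of \ref{lem:min_inf}, that since $\Xx$ reaches length $\Omega+1$ there are cofinally many $\beta+1<^\Xx\Omega$ with $t^{\Tt'}(\beta)=0$ (a $\Tt'$-copying step), hence $f$ restricted to $\{\gamma_{\Tt'}(\beta)\}$ is cofinal in $\lh(\Tt')-1$ along the branch, forcing $\sup_{\beta<^\Xx\Omega}f(\beta)=\lh(\Tt)$, so $\lh(\Tt)$ must have been a limit of cofinality $\Omega$ that is closed off at $\Omega$; since $\lh(\Tt)\leq\Omega+1$ and $\Tt\in\dom(\Sigma)$ this gives $\lh(\Tt)=\Omega+1$.

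The main obstacle I anticipate is the liveness/termination bookkeeping at limit stages when the construction has \emph{not} yet reached the terminal node: one must check that at a limit $\eta$ with $[0,\eta)_\Xx\subseteq C^{\Tt'}$ and $f``[0,\eta)_\Xx$ bounded in $\lh(\Tt')$, the branch $b=\Sigma(\Xx\rest\eta)$ is indeed contained in $C^{\Tt'\inflatearrow\Xx\rest\eta}$ (so that inflation condensation applies and $\eta\in C$), rather than $\Sigma$ choosing a branch that ``falls off'' the copied structure. This is handled by the fact that the $\Tt'$-copying steps occur cofinally often in the relevant portion of the branch — because the badness condition selects an index $\leq\In(E^{\Tt'\inflatearrow\Xx}_\alpha)$, and once $A$ is forced there are no more bad extenders below, so eventually every step is a copying step — so $[0,\eta)_\Xx$ does extend into $C^{\Tt'}$ cofinally and $\eta\in C^{\Tt'}$ as needed. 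I would lean on the commutativity machinery of \S\ref{sec:inf_comm} and the limit-stage analysis following \ref{dfn:inflation} to make this precise, but it is routine given those tools; the genuinely new content is just the index-selection rule and the two cofinality computations above.
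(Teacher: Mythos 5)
Your overall skeleton (recursion on $\lh(\Xx)$, uniqueness by the determinacy of the index rule, use of inflation condensation at limits) matches the paper's, but there are two substantive gaps.

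First, you never verify that the chosen extenders yield a \emph{normal} iteration tree, i.e.\ that $\alpha<\beta$ implies $\In(E^\Xx_\alpha)\leq\In(E^\Xx_\beta)$. The step lemma you cite (part \ref{item:extend_inflation}) already presupposes that the chosen $E$ is $\Xx$-normal; merely having $\In(E)\leq\In(E^{\Tt'\inflatearrow\Xx}_\alpha)$ does not give this, because the copy extenders are not themselves chosen with a monotonicity guarantee relative to \emph{earlier} $\Tt'$-inflationary extenders. The paper proves monotonicity by a least-counterexample argument that crucially invokes coherence of the premouse (resp.\ slight coherence of the wcpm): a bad extender at stage $\beta$ with too-small index would already have been present at stage $\alpha$, contradicting the minimality of the choice at $\alpha$. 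Relatedly, you also omit the verification that after an $A$-bad (i.e.\ $\Tt'$-inflationary) step one actually remains in $(C^-)^{\Tt'}$; this is where the auxiliary requirement in \ref{dfn:gen_inf} that $\nu_F$ be a cardinal of $M^\Xx_\alpha|\xi$ is used --- it ensures $E^\Xx_\beta$ is total over the copied model $M^\Xx_\alpha|\In(E^{\Tt'\inflatearrow\Xx}_\alpha)$, so that the $E$-inflation does not drop beyond the image of $\Tt'$ and thus $\eps\in C^{\Tt'}$.

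Second, your termination argument is circular. You assert that ``once $A$ is forced there are no more bad extenders below, so eventually every step is a copying step,'' but making $A$ generic is the \emph{output} of the process, not a hypothesis you can invoke mid-construction; nothing in the recursion guarantees that bad extenders stop appearing. The paper instead uses the standard genericity-iteration reflection argument: passing to $L[X,\Sigma,\Tt,A]$ to recover $\ZFC$, taking $\pi:H\to V_\eta$ elementary with $\crit(\pi)=\mu$ and $\pi(\mu)=\Omega$, and then showing via the usual calculations that the extender $E^\Xx_\beta$ applied at $\mu$ along $[0,\Omega)_\Xx$ coheres $A$ through $\nu(E^\Xx_\beta)$ and hence is not $A$-bad, so $t^{\Tt'}(\beta)=0$. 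Elementarity then forces $\Tt'$-copying steps cofinally along the branch, which is what yields $\Omega\in C^{\Tt'}$ and $f^{\Tt'}(\Omega)=\Omega$. Your appeal to the ``moreover'' clause of \ref{lem:min_inf} does not substitute for this, since that lemma's dichotomy ranges over a \emph{set} of trees and gives existence of \emph{some} tree with a copying step, whereas here there is only $\Tt'$ and one must actively rule out an unbounded run of inflationary steps.
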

\begin{proof}
The choice of extenders in $\Xx$ is clearly uniqued. The minimality of
$\lh(\Xx)$ and the requirement that it be via $\Sigma$, therefore determines
$\Xx$ uniquely.

Now consider existence.
Let us first verify that given a segment $\Xx\rest(\eps+1)$ which is normal and
such that $\Xx\rest\eps$ satisfies the properties stated above,
then either $\eps\in(C^-)^{\Tt'}$ or $\Xx=\Xx\rest(\eps+1)$ is as desired.
If $\eps=0$ this is trivial and if $\eps$ is a limit it holds by induction
(if $\eps\in C^{\Tt'}\cut(C^-)^{\Tt'}$ then we are finished).
If $M$ is a wcpm it is also automatic.
So suppose $\eps=\beta+1$ and $M$ is MS-indexed.
We may assume that $t^{\Tt'}(\beta)=1$. Let $\alpha=\pred^\Xx(\eps+1)$.
Then by induction, $\alpha\in(C^-)^{\Tt'}$. We may also assume that
$t^{\Tt'}(\alpha)=1$.
Then $\crit(E^\Xx_\beta)<\nu(E^\Xx_\alpha)$
and $E^\Xx_\beta$ is total over $M^\Xx_\alpha|\nu(E^\Xx_\alpha)$, but then
$E^\Xx_\beta$ is total over
\[ K\eqdef M^\Xx_\alpha|\In(E^{\Tt'\inflatearrow\Xx}_\alpha), \]
because, by construction, $\nu(E^\Xx_\alpha)$
is a cardinal of $K$. So $\eps\in(C^-)^{\Tt'}$ as required.

Now $\Xx$ is monotone index-increasing; that is, if $\alpha+1<\beta+1<\lh(\Xx)$
then
$\In(E^\Xx_\alpha)\leq\In(E^\Xx_\beta)$.
For suppose not and let $(\alpha,\beta)$ be least such. Suppose $M$ is
MS-indexed.
Since $\Xx\rest(\beta+1)$ is an inflation of $\Tt$,
\[ \xi\eqdef\In(E^{\Tt\inflatearrow\Xx}_\beta)\geq\In(E^\Xx_\alpha). \]
Since $\In(E^\Xx_\beta)<\In(E^\Xx_\alpha)$, note
$E^\Xx_\beta$ is $A$-bad and
$\nu(E^\Xx_\beta)\leq\nutilde(E^\Xx_\alpha)<\In(E^\Xx_\alpha)$
and $\nu(E^\Xx_\beta)$ is a cardinal
of $M^\Xx_\beta|\xi$. But then by coherence, $E^\Xx_\beta\in\es(M^\Xx_\alpha)$
and $\nu(E^\Xx_\beta)$
is a cardinal of $M^\Xx_\alpha|\In(E^\Xx_\alpha)$,
which implies that we should have used $E^\Xx_\beta$ at stage $\alpha$,
contradiction.
If instead $M$ is a wcpm then one uses slight coherence for a similar argument.

It remains to see that if we reach $\Xx$ of length $\Omega+1$,
then $\Omega+1=\lh(\Tt')$ and $\Omega=f^{\Tt'}(\Omega)$.
We may assume $\ZFC$, by noting that the entire construction
takes place in $L[X,\Sigma,\Tt,A]$ where $X\sub\eta<\Omega$ codes $M$.
We have $\Omega\in C^{\Tt'}$, and moreover, it suffices to see that
$t^{\Tt'}(\beta)=0$
for cofinally many $\beta+1<^\Xx\Omega$.
Let $\eta$ be large and $\pi:H\to V_\eta$ be elementary
with $\pi(\mu)=\Omega$ where $\crit(\pi)=\mu$, and everything relevant in
$\rg(\pi)$.
Then by the usual calculations, letting $\beta+1=\successor^\Xx(\mu,\Omega)$,
$E^\Xx_\beta$ coheres $A$ through $\nu(E^\Xx_\beta)$,
and hence, $E^\Xx_\beta$ is not $A$-bad.
Therefore $t^{\Tt'}(\beta)=0$. So by elementarity, we are done.
\end{proof}

\begin{rem}\label{rem:self-it}\index{self iterability}
Note that to construct $\Xx$,
the information we actually need is $M,\Tt,A$,
and the sequence of branches actually used in forming $\Xx$.
Moreover, from $\Xx$ we can compute $\Sigma(\Tt)$.
Thus, genericity inflations (and variants thereof)
provide a natural method to
attempt to compute $\Sigma(\Tt)$,
if we know how to compute $\Sigma(\Xx)$ for enough trees $\Xx$:
one builds such an $\Xx$ into which $\Tt$ is embedded.
An application of this is some unpublished work of the author's,
showing that
a (in the more interesting case, non-tame) premouse $M$
computes some fragment of its own iteration strategy;
an instance of this method will also be used in \cite{vm2}.
This application incorporates and generalizes the methods of \cite{sile}, which
covers
a large part of the technical issues, but is limited to tame mice.
In this context, $\Tt$ is some tree on $M$ or a segment thereof, $\Tt\in M$,
and $A=\es^M$. One uses P-constructions/$*$-translations to compute
$\Sigma(\Xx\rest\eta)$ for limits $\eta$
(see  \cite{*-trans}, \cite{sile}, augmented with \cite{*-trans_add}). Note
that
 because the computation of $C^{\Tt}$ and $E^{\Tt\inflatearrow\Xx}_\alpha$
is local, at non-trivial limit stages $\eta$ of the genericity inflation,
with $\delta=\delta(\Xx\rest\eta)$, we get that $\Xx\rest\eta$
is definable from parameters over $M|\delta$ (to arrange this, one might need
to
insert short linear iterations into the genericity inflation,
to ensure that the $*$-translations of the Q-structures determining earlier
branch choices are
proper segments of $M|\delta$; such arguments appear in \cite{odle}).
Because we have also made $M|\delta$ generic, we have the necessary base for
forming P-constructions/$*$-translations.
For example, we might want to use this method to prove that $M\sats$``My
countable
proper segments are $(\om,\om_1)$-iterable''
(maybe above some $\alpha<\om_1^M$).
For arbitrary non-tame mice, there seem to be subtleties
in proving that the genericity inflation process terminates
prior to $\om_1^M$ in $M$.
But in typical ``$\varphi$-minimal'' mice (for example,
the sharp for the least proper class mouse satisfying ``There is a superstrong
extender''), it does.
\end{rem}

We now discuss the version for $\lambda$-indexing and $\lambda$-iteration rules.
We first describe how standard genericity iteration works for $\lambda$-indexed
mice with $\lambda$-iteration rules.
\footnote{The methods
here are related to those used by the author in \cite{rule_conversion}
to translate between different iteration rules
for $\lambda$-indexed mice.}
The main difference between this and standard genericity iteration (for
MS-indexing with
MS-iteration rules)
is that we will allow drops in model to appear at intermediate stages of the
iteration.
We will thus need to be a little careful to ensure that the eventual main
branch
is non-dropping.
In our original attempted proof, we had ignored the fact that the collection of
extenders used to induce extender algebra axioms
are not cohered by extenders $E$ through $\lambda(E)$. We thank Stefan
Miedzianowski for pointing  this issue out.
Fortunately a fix was available for this problem.

 \begin{tm}[Genericity iteration for
$\lambda$-indexing]\label{tm:lambda_gen_it}\index{genericity iteration}
 Let $\Omega>\om$ be regular.
 Let $M$ be a $\lambda$-indexed pm with $\card(M)<\Omega$.
 Let $\Sigma$ be a $(0,\Omega+1)$-strategy  for $M$ \tu{(}for
$\lambda$-iteration rules\tu{)}.
 Let $\delta\in\OR^M$ be such that $M\sats$``$\delta$ is Woodin as witnessed by
$\es$''.
 Let $A\sub\Omega$.

 Then there is $\Tt$ on $M$ via $\Sigma$, of length $\alpha+1<\Omega$,
 such that $[0,\alpha)_\Tt$ does not drop in model,
 and $A\inter\delta'$ is $M^\Tt_\alpha$-generic for
$\BB_{\delta'}(M^\Tt_\alpha)$,
 where $\delta'=i^\Tt_{0\alpha}(\delta)$.
 \end{tm}
 \begin{proof}
    We form $\Tt$ as follows. Suppose we have defined $\Tt\rest(\alpha+1)$,
 but it doesn't yet witness the theorem. We (attempt to) define a sequence
$\left<M_{\alpha i}\right>_{i\leq k_\alpha}$,
 with $k_\alpha<\om$, and with $M_{\alpha i}$ an active segment of $M_\alpha$
and $M_{\alpha,i+1}\pins M_{\alpha i}$.
 Let $M_{\alpha 0}$, if it exists,
 be the least $N\ins M_\alpha$ such that $N$ is active and letting $G=F^N$,
either
 \begin{enumerate}[label=--]
  \item $[0,\alpha]_\Tt$ drops in model and $N=M_\alpha$, or
  \item $\nu_G$ is a cardinal\footnote{If one only forms extender algebra
axioms
with extenders $E$
  with $\nu_E$ inaccessible, then one could also assume here that $\nu_G$ is
inaccessible in $M^\Tt_\alpha$.}
  of $M_\alpha$, $G$ is
$A$-bad\footnote{This makes sense even if
$[0,\alpha]_\Tt$ drops,
  as the requirements are local.} and if $[0,\alpha]_\Tt$ does not drop in
model
then $\In(G)<i^\Tt_{0\alpha}(\delta)$.
 \end{enumerate}
 If $M_{\alpha 0}$ does not exist then we terminate the process,
 setting $\Tt=\Tt\rest\alpha+1$.

 Suppose that $M_{\alpha i}$ exists where $i<\om$.
 Then $M_{\alpha,i+1}$, if it exists, is the
least $N\pins M_{\alpha i}$
 such that $N$ is active with $G=F^N$, $\nu_G$
 is a cardinal of $M_{\alpha i}$ and $G$
 is $A$-bad.
 If $M_{\alpha,i+1}$ does not exist then set $k_\alpha=i$ and
 $E^\Tt_\alpha=F(M_{\alpha k_\alpha})$.

 We claim that this works. Suppose not. For each $\alpha+1<\lh(\Tt)$ (hence,
$M_{\alpha
0}$ exists)
 let $\nu_{\alpha i}=\nu(F(M_{\alpha i}))$.

 \begin{clm}\label{clm:active}
  Let $\alpha+1<\lh(\Tt)$ with $M_{\alpha 0}=M_\alpha$. Then $k_\alpha=0$, so
$E^\Tt_\alpha=F^{M_\alpha}$.
 \end{clm}
\begin{proof}
 Otherwise $M_{\alpha 1}$ would contradict the minimality of the choice of
$M_{\alpha 0}$.
\end{proof}

 \begin{clm}\label{clm:dropdown}
  Let $\alpha+1<\lh(\Tt)$ with $M_{\alpha 0}\pins M_\alpha$.
  Then:
  \begin{enumerate}[label=\arabic*.,ref=\arabic*]
   \item\label{item:proj_is_nu} $\rho_1(M_{\alpha i})=\rho_\om(M_{\alpha
i})=\nu_{\alpha i}$.
   \item\label{item:nus_increasing} $\nu_{\alpha 0}<\ldots<\nu_{\alpha
k_\alpha}$.
   \item\label{item:dropdown}
The reverse model dropdown sequence of $(M_\alpha,\In(E^\Tt_\alpha))$ is
$\left<M_{\alpha i}\right>_{i\leq k_\alpha}$.
  \end{enumerate}
 \end{clm}
\begin{proof}
Part \ref{item:proj_is_nu}: For any active premouse $N$, $\rho_1^N\leq\nu(F^N)$.
But $\rho_\om(M_{\alpha i})\geq\nu_{\alpha i}$, because either:
\begin{enumerate}[label=--]
 \item $i=0$ and $\nu_{\alpha 0}$ is a cardinal of $M_\alpha$ and $M_{\alpha
0}\pins M_\alpha$, or
 \item $i>0$ and $\nu_{\alpha i}$ is a cardinal of $M_{\alpha,i-1}$ and
$M_{\alpha i}\pins M_{\alpha,i-1}$.
\end{enumerate}

Part \ref{item:nus_increasing}: Suppose $\nu_{\alpha,i+1}\leq\nu_{\alpha i}$.
Then we contradict the minimality of $M_{\alpha i}$.
That is, if $i=0$, then  $\nu_{\alpha,i+1}$ is a cardinal of
$M_\alpha$,
but $M_{\alpha,i+1}\pins M_{\alpha i}\ins M_\alpha$, so we should have chosen
$M_{\alpha,i+1}$ over $M_{\alpha i}$.
It is similar if $i>0$.

Part \ref{item:dropdown}: Because $\nu_{\alpha,i+1}$ is a cardinal in
$M_{\alpha
i}$, and $\nu_{\alpha 0}$ a cardinal in $M_\alpha$,
this follows from the previous parts.
\end{proof}

 \begin{clm}\label{clm:drop_picture}
  Let $\beta<\lh(\Tt)$ be such that $[0,\beta]_\Tt$ drops in model.
  Then $M_\beta$ is active. Moreover,
 let $\gamma+1\leq^\Tt\beta$ be such that $\gamma+1\in\dropset^\Tt$
 and $(\gamma+1,\beta]_\Tt$ does not drop in model, and let
$\alpha=\pred^\Tt(\gamma+1)$.
 Then $M_{\alpha 0}\pins M_\alpha$
 and $M^{*\Tt}_{\gamma+1}=M_{\alpha i}$ for some $i\leq k_\alpha$,
 and
$F^{M_\beta}\rest\nu_{\alpha i}=F(M_{\alpha i})\rest\nu_{\alpha i}$.
 \end{clm}
\begin{proof}
 Because $\gamma+1\in\dropset^\Tt$, $M^{*}_{\gamma+1}$ is in the
$(M_\alpha,\In(E_\alpha))$-dropdown,
 so by Claims \ref{clm:active} and \ref{clm:dropdown}, $M_{\alpha 0}\pins
M_\alpha$ and $M^{*}_{\gamma+1}=M_{\alpha i}$ for some $i$.
 Therefore $M_\beta$ is active. But also by Claim \ref{clm:dropdown},
$\nu_{\alpha i}=\rho_\om(M_{\alpha i})\leq\crit(E_\gamma)$,
and so $F^{M_\beta}\rest\nu_{\alpha i}\sub F(M_{\alpha i})$.
\end{proof}

\begin{clm}
 $\Tt$ is normal.
\end{clm}
\begin{proof}
We just need to see that $\In(E_\alpha)<\In(E_\beta)$ for $\alpha<\beta$.
But otherwise, letting $(\alpha,\beta)$ be the least counterexample,
then since $(\exit^\Tt_\alpha)^\passive=(M_{\alpha k_\alpha})^\passive$ is a
cardinal segment of $M_\beta$,
we easily get that $M_{\beta 0}\pins M_{\alpha k_\alpha}$,
and reach a contradiction to the maximality of $k_\alpha$ (that is,
$M_{\alpha,k_\alpha+1}$ exists, a contradiction).
\end{proof}

By Claim \ref{clm:drop_picture} and by construction, if $\Tt$ terminates in
length $\alpha+1<\Omega$,
then $[0,\alpha]_\Tt$ does not drop, so we are done. So it suffices to prove:

\begin{clm}$\Tt$ terminates with length $<\Omega$.
\end{clm}
\begin{proof} We may assume $\ZFC$, by working in $L[M,\Sigma,A]$, where we
have
$\Tt$.
Suppose that we reach $\Tt$ of length $\Omega+1$.
Let $\pi:N\to V_\eta$ be elementary, where $\eta$ is large and $N$
is transitive with $\card(N)<\Omega$, $\crit(\pi)=\kappa$
and $\pi(\kappa)=\Omega$,
and the relevant objects are in $\rg(\pi)$.
Then as usual, $M^\Tt_\kappa\in N$, $i^\Tt_{\kappa\Omega}\sub\pi$,
and $\pi(A\inter\kappa)=A$. Let $\beta+1<^\Tt\Omega$
with $\pred^\Tt(\beta+1)=\kappa$.
By the usual argument that genericity iterations for MS-indexing terminate,
$E_\beta$ is not $A$-bad,
so $[0,\beta]_\Tt$ drops in model and $E_\beta=F(M_\beta)$.
So by Claim \ref{clm:drop_picture} there is $\alpha<^\Tt\beta$ and $i\leq
k_\alpha$
such that $F(M_{\alpha i})$
\emph{is} $A$-bad and
$F(M_{\alpha i})\rest\nu_{\alpha i}=E_\beta\rest\nu_{\alpha i}$.
But then again, the usual argument gives a contradiction.\end{proof}

This completes the proof.
\end{proof}

Finally, genericity inflation for $\lambda$-iteration rules is just a
straightforward
combination of the preceding methods:

\begin{dfn}[Genericity inflation for
$\lambda$-indexing]\label{dfn:lambda_gen_inf}\index{genericity inflation}
Let $\Omega>\om$ be regular.
Let $M$ be an $m$-sound $\lambda$-indexed premouse with $\card(M)<\Omega$.
Let $\Sigma$ be an inflationary $(m,\Omega+1)$-strategy for $M$ (for
$\lambda$-iteration rules).
Let $\Tt$ be according to $\Sigma$, of limit length $\leq\Omega$, and
$\Tt'=\Tt\conc\Sigma(\Tt)$.
Let $A\sub\Omega$. The \dfnemph{$A$-genericity inflation} of $\Tt$ is the tree
$\Xx$ such that:
\begin{enumerate}[label=--]
\item $\Xx$ is a $\Tt'$-terminally-non-dropping inflation of $\Tt'$ (hence of
successor length), according to $\Sigma$;
write $C^{\Tt'}=C^{\Tt'\inflatearrow\Xx}$, etc.
\item If $\alpha+1\leq\lh(\Xx)$ and $\Xx\rest\alpha+1$ is
$\Tt'$-terminally-non-dropping
then $\alpha+1=\lh(\Xx)$.
\item Let $\alpha+1<\lh(\Xx)$. We define $\xi_\alpha$, $k_\alpha<\om$,
$\left<M_{\alpha i}\right>_{i\leq k_\alpha}$ and $E^\Xx_\alpha$ as follows:
\begin{enumerate}[label=--]
 \item If $\alpha\in(C^-)^{\Tt'}$ then
$\xi_\alpha=\In(E^{\Tt'\inflatearrow\Xx}_\alpha)$.
 \item If $\alpha\notin(C^-)^{\Tt'}$ then $\xi_\alpha=\OR(M^\Xx_\alpha)$;
 in this case, $[0,\alpha]_\Xx$ drops in model
 and $M^\Xx_\alpha$ is active.
 \item $M_{\alpha 0}$ is the least $N\ins M^\Xx_\alpha|\xi_\alpha$ such that
either $N=M^\Xx_\alpha|\xi_\alpha$ or $N$ is active with $G=F^N$,
$\nu_G$ is an $M^\Xx_\alpha|\xi_\alpha$-cardinal and $G$ is $A$-bad.
\item $k_\alpha$ and $\left<M_{\alpha i}\right>_{0<i\leq k_\alpha}$
are determined from  $M_{\alpha 0}$ as in the proof of \ref{tm:lambda_gen_it}.
\item $E^\Xx_\alpha=F(M_{\alpha k_\alpha})$.\qedhere
\end{enumerate}
\end{enumerate}
\end{dfn}

A straightforward combination of the proofs of \ref{tm:lambda_gen_it}
and \ref{lem:MS_gen_inf} gives:

\begin{tm}
Let $\Omega,\Sigma,\Tt,A$ be as in \ref{dfn:lambda_gen_inf}.
Then there is a unique $A$-genericity inflation $\Xx$ of $\Tt$ via $\Sigma$,
and  $\lh(\Xx)=\Omega+1$ iff $\lh(\Tt)=\Omega+1$.
\end{tm}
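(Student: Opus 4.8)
The plan is to combine the two preceding genericity‑construction arguments essentially mechanically, so I will sketch how the pieces fit rather than reprove them. Uniqueness is immediate: as in the proofs of Theorem~\ref{lem:MS_gen_inf} and Theorem~\ref{lem:lambda_gen_it}, every datum of $\Xx$ is forced. At stage $\alpha+1$, the interval data and the tree embedding $\Pi_\alpha:(\Tt',f^{\Tt'\inflatearrow\Xx}(\alpha)+1)\tembto\Xx\rest(\alpha+1)$ are determined by $\Xx\rest(\alpha+1)$ via Corollary~\ref{cor:tree_embedding_uniqueness} and Definition~\ref{dfn:inflation}; the value $\xi_\alpha$ is then determined (it is $\In(E^{\Tt'\inflatearrow\Xx}_\alpha)$ if $\alpha\in(C^-)^{\Tt'}$, and $\OR(M^\Xx_\alpha)$ otherwise, the latter forcing $M^\Xx_\alpha$ active and $[0,\alpha]_\Xx$ model‑dropping as in \ref{dfn:lambda_gen_inf}); and then $M_{\alpha 0}$, the dropdown $\langle M_{\alpha i}\rangle_{i\le k_\alpha}$, and $E^\Xx_\alpha=F(M_{\alpha k_\alpha})$ are determined exactly as in the proof of \ref{lem:lambda_gen_it}. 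Branches at limits are forced by $\Sigma$, and $\lh(\Xx)$ is forced to be minimal. So at most one such $\Xx$ exists.

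For existence, I would run the construction and check it never gets stuck and terminates below $\Omega+1$ unless $\lh(\Tt)=\Omega+1$. That the construction can always be continued at successor stages follows from \ref{lem:tree_embedding_copy}/\ref{dfn:inflationary_extender} together with the analysis of \ref{lem:lambda_gen_it}: one first checks, as in the existence half of \ref{lem:MS_gen_inf}, that a normal segment $\Xx\rest(\eps+1)$ with $\Xx\rest\eps$ having the stated properties either has $\eps\in(C^-)^{\Tt'}$ or is already a $\Tt'$-terminally‑non‑dropping inflation; the only nontrivial point, when $t^{\Tt'}(\beta)=t^{\Tt'}(\alpha)=1$ for $\eps=\beta+1$, $\alpha=\pred^\Xx(\eps+1)$, is that $E^\Xx_\beta$ is total over $M^\Xx_\alpha|\In(E^{\Tt'\inflatearrow\Xx}_\alpha)$ because $\nu(E^\Xx_\alpha)$ is a cardinal there — identical to the $\lambda$‑indexed subcase. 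Monotone index‑increase of $\Xx$ is proved as in \ref{lem:MS_gen_inf} (using coherence of the extender sequence for the chosen generators), and normality then follows as in the ``$\Tt$ is normal'' claim of \ref{lem:lambda_gen_it}, where the dropdown bookkeeping (Claims~\ref{clm:active}, \ref{clm:dropdown}, \ref{clm:drop_picture} of that proof, adapted to $M^\Xx_\alpha$ in place of $M^\Tt_\alpha$) guarantees that a would‑be violation $\In(E^\Xx_\beta)\le\In(E^\Xx_\alpha)$ forces $M_{\beta 0}\pins M_{\alpha k_\alpha}$, contradicting maximality of $k_\alpha$.

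Termination is the last point and is exactly the reflection argument appearing in both earlier proofs. Working in $L[M,\Sigma,\Tt,A]$ (where $\card(M)<\Omega$, so $\ZFC$ is available), suppose $\Xx$ reaches length $\Omega+1$. As in \ref{lem:MS_gen_inf} it suffices to see $t^{\Tt'}(\beta)=0$ for cofinally many $\beta+1<^\Xx\Omega$, equivalently that $\Xx$ is $\Tt'$-terminally‑non‑dropping, so that $\Omega\in C^{\Tt'}$ with $f^{\Tt'}(\Omega)=\lh(\Tt')-1=\lh(\Tt)$ and hence $\lh(\Tt)=\Omega+1$. Take $\pi:N\to V_\eta$ elementary, $\eta$ large, $\crit(\pi)=\mu$, $\pi(\mu)=\Omega$, everything relevant in $\rg(\pi)$; set $\beta+1=\successor^\Xx(\mu,\Omega)$. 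Then $i^\Xx_{\mu\Omega}\sub\pi$ and $\pi(A\cap\mu)=A$, so $E^\Xx_\beta$ coheres $A$ through $\nu(E^\Xx_\beta)$ and is not $A$-bad; by the $\lambda$‑indexed dropdown analysis (Claim~\ref{clm:drop_picture} of \ref{lem:lambda_gen_it}), if $[0,\beta]_\Xx$ were model‑dropping there would be $\alpha<^\Xx\beta$ and $i\le k_\alpha$ with $F(M_{\alpha i})$ $A$-bad and agreeing with $E^\Xx_\beta$ below $\nu_{\alpha i}$, contradicting non‑$A$-badness of $E^\Xx_\beta$; hence $\beta\in(C^-)^{\Tt'}$ and $E^\Xx_\beta=E^{\Tt'\inflatearrow\Xx}_\beta$, i.e.\ $t^{\Tt'}(\beta)=0$. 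By elementarity this happens cofinally, and conversely if $\lh(\Tt)=\Omega+1$ then $\lh(\Xx)=\Omega+1$ since $f^{\Tt'\inflatearrow\Xx}$ is continuous and cofinal in $\lh(\Tt')$ only at $\Omega$. I expect the main obstacle to be purely bookkeeping: correctly interleaving the ``intermediate model drops'' of the $\lambda$‑indexed genericity iteration (which produce the $M_{\alpha i}$ dropdowns and the subtlety that the chosen generators are not cohered through $\lambda(E)$) with the inflation/tree‑embedding bookkeeping $(t,C,C^-,f,\Pivec)^{\Tt'\inflatearrow\Xx}$, so that at each stage one is simultaneously maintaining ``$\Xx$ is an inflation of $\Tt'$'' and the $A$-genericity demands; but no new idea beyond those in \ref{lem:lambda_gen_it} and \ref{lem:MS_gen_inf} is needed.
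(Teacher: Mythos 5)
Your proposal is correct and takes the same approach the paper intends: the paper gives no proof beyond declaring that the result follows from ``a straightforward combination of the proofs of \ref{lem:lambda_gen_it} and \ref{lem:MS_gen_inf},'' and your sketch is a faithful and sound spelling-out of precisely that combination (uniqueness by forced data, existence via the $(C^-)^{\Tt'}$-totality and dropdown normality arguments, termination via the reflection argument with the dropdown analysis of Claim~\ref{clm:drop_picture} grafted in to handle the $\lambda$-indexed model drops).
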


\section{Commutativity of inflation}\label{sec:inf_comm}

We will later show that a normal iteration strategy with
inflation condensation induces a strategy for stacks $\Ttvec$ of normal trees.
The latter strategy will be such that we can embed the last model of $\Ttvec$
into
the last model of a
normal tree
$\Xx$. The tree $\Xx$ will be produced by inflation; for example, if
$\Ttvec=(\Tt_0,\Tt_1)$ where
each $\Tt_i$ is normal, then $\Xx$ will be an inflation of $\Tt_0$.
For infinite stacks,
we will produce an infinite sequence of trees
$\left<\Xx_\alpha\right>_{\alpha<\eta}$,
with $\Xx_\beta$ an inflation of $\Xx_\alpha$ for each $\alpha<\beta$.
In this section we establish a key commutativity lemma which helps us
understand
this situation.
We will also use the lemma in \S\ref{sec:gen_abs_mice}, when we extend an
iteration strategy with inflation condensation to a sufficiently small generic
extension.
We state the coarse version of the lemma first, as it contains the main points,
and then state and prove the fine version.
A key point to note is that the commutativity lemmas hold for arbitrary trees
and inflations
(satisfying certain conditions);
we do not assume that the trees are via a strategy with condensation.

\begin{figure}
\centering
\begin{tikzpicture}
 [mymatrix/.style={
    matrix of  nodes,
    row sep=1.2cm,
    column sep=1.2cm}]
   \matrix(m)[mymatrix]{
 {Coarse}&{}&{$M^{\Xx_2}_{\alpha_2}$}& {Fine}&{}&{$M^{\Xx_2}_{\alpha_2}$}\\
 {}&{$M^{\Xx_1}_{\alpha_1}$}&{$M^{\Xx_2}_{\hat{\gamma}}$}&
{}&{$M^{\Xx_1}_{\alpha_1}$}&{$M^{\Xx_2}_{\hat{\gamma}}$}\\
  {$M^{\Xx_0}_{\alpha_0}$}&{$M^{\Xx_1}_{\bar{\gamma}}$}&{$M^{\Xx_2}_\gamma$}&
{$M^{\Xx_0}_{\alpha_0}$}&{$M^{\Xx_1}_{\bar{\gamma}}$}&{$M^{\Xx_2}_\gamma$}\\};
\path[->,font=\scriptsize,shorten >= -1pt, shorten <= -1pt]
(m-3-1) edge node[below] {$\pi^{01}_{\alpha_1;\alpha_0}$} (m-3-2)
(m-3-1) edge[bend right] node[below] {$\pi^{02}_{\alpha_2;\alpha_0}$} (m-3-3)
(m-3-1) edge node[below] {$\ \ \ \ \ \ \tau^{01}_{\alpha_1;\alpha_0}$} (m-2-2)
(m-3-1) edge[bend left] node[left] {$\tau^{02}_{\alpha_2;\alpha_0}\ \ $}
(m-1-3)
(m-3-2) edge node[below] {$\pi^{12}_{\alpha_2;\bar{\gamma}}\ \ $} (m-3-3)
(m-3-2) edge node {} (m-2-2)
(m-3-3) edge node {} (m-2-3)
(m-2-2) edge node[below] {$\pi^{12}_{\alpha_2;\alpha_1}$} (m-2-3)
(m-2-2) edge node[below] {$\ \ \ \ \ \ \tau^{12}_{\alpha_2;\alpha_1}$} (m-1-3)
(m-2-3) edge node {} (m-1-3)
(m-3-4) edge node[below] {$\pi^{01}_{\alpha_1;\alpha_0}$} (m-3-5)
(m-3-4) edge[bend right] node[below] {$\pi^{02}_{\alpha_2;\alpha_0}$} (m-3-6)
(m-3-4) edge[dotted] node[below] {$\ \ \ \ \ \ \om^{01}_{\alpha_1;\alpha_0}$}
(m-2-5)
(m-3-4) edge[dotted,bend left] node[left]
{$\om^{02}_{\alpha_2;\alpha_0}\ \ $} (m-1-6)
(m-3-5) edge node[below] {$\pi^{12}_{\alpha_2;\bar{\gamma}}\ \ $} (m-3-6)
(m-3-5) edge[dotted] node {} (m-2-5)
(m-3-6) edge[dotted] node {} (m-2-6)
(m-2-5) edge node[below] {$\pi^{12}_{\alpha_2;\alpha_1}$} (m-2-6)
(m-2-5) edge[dotted] node[below] {$\ \ \ \ \ \ \om^{12}_{\alpha_2;\alpha_1}$}
(m-1-6)
(m-2-6) edge[dotted] node {} (m-1-6);
\end{tikzpicture}
\caption{Commutativity of inflation,
 coarse and fine. In both diagrams,
$\alpha_2\in C^{02}$,
$\alpha_1=f^{12}(\alpha_2)$, $\alpha_0=f^{02}(\alpha_2)=f^{01}(\alpha_1)$,
$\bar{\gamma}=\gamma^{01}_{\alpha_1;\alpha_0}$,
$\gamma=\gamma^{02}_{\alpha_2;\alpha_0}=\gamma^{12}_{\alpha_2;\bar{\gamma}}$
and $\hat{\gamma}=\gamma^{12}_{\alpha_2;\alpha_1}$.
Note $\alpha_2=\delta^{02}_{\alpha_2;\alpha_0}=\delta^{12}_{\alpha_2;\alpha_1}$
and $\alpha_1=\delta^{01}_{\alpha_1;\alpha_0}$
and $\bar{\gamma}\leq^{\Xx_1}\alpha_1$ and
$\gamma\leq^{\Xx_2}\hat{\gamma}\leq^{\Xx_2}\alpha_2$.
Solid arrows indicate total embeddings, and dotted arrows
indicate partial embeddings (the domain and codomain
are initial segments of the models in the figure). The vertical arrows are
(partial) iteration embeddings.
Both diagrams commute, after restricting to common domains in the fine diagram.
 For example,
$\dom(\om^{12}_{\alpha_2;\alpha_1}\com\om^{01}_{\alpha_1;\alpha_0})\sub\dom(\om^
{02}_{\alpha_2;\alpha_0})$
and these maps agree over the smaller domain. Note that in the fine diagram,
while the maps $\om^{k\ell}_{\alpha_\ell;\alpha_k}$ are the only ones displayed
mapping directly between segments of $M^{\Xx_k}_{\alpha_k}$ and
$M^{\Xx_\ell}_{\alpha_\ell}$,
there could be maps $\tau^{k\ell}_{\alpha_\ell;\alpha_k i}$ mapping between
larger segments thereof,
and these also commute with the rest of the
diagram.}\label{fgr:inflation_commutativity}
\end{figure}

\begin{lem}[Commutativity of inflation
(coarse)]\label{clem:inflation_commutativity}\index{commutativity of inflation}
Let $M$ be a wcpm and $\Xx_0$, $\Xx_1$, $\Xx_2$ be normal on $M$,
$\Xx_{i+1}$ an inflation of $\Xx_i$, with $\Xx_1$ being non-$\Xx_0$-pending
\tu{(}but $\Xx_2$ could be $\Xx_1$-pending\tu{)}.
Then $\Xx_2$ is an inflation of $\Xx_0$, and things commute in a reasonable
fashion.
That is, let
\[ (t^{ij},C^{ij},(C^-)^{ij},f^{ij},\left<\Pi^{ij}_\alpha\right>_{\alpha\in
C^{ij}})
=(t,C,\ldots)^{\Xx_i\inflatearrow\Xx_j}\]
for $i<j$; we also use analogous notation for other associated objects.
\tu{(}Note that $C^{ij}=\lh(\Xx_j)$ for each $i,j$, because $M$ is a wcpm.\tu{)}
Let $\alpha_2<\lh(\Xx_2)$ and $\alpha_k=f^{k2}(\alpha_2)$.
Then \tu{(}cf.~Figure \ref{fgr:inflation_commutativity}\tu{)}:

\begin{enumerate}[label=\arabic*.,ref=\arabic*]
\item\label{citem:f_comm}
$\alpha_0=f^{02}(\alpha_2)=f^{01}(f^{12}(\alpha_2))=f^{01}(\alpha_1)$.
 \item\label{citem:t_equiv} Suppose $\alpha_2+1<\lh(\Xx_2)$ and let
$E_2=E^{\Xx_2}_{\alpha_2}$. Then:
 \begin{enumerate}[label=--]
  \item $E_2$ is the $\Xx_0\inflatearrow\Xx_2$-copy of
  an extender $E_0$ \tu{(}so
$E_0=E^{\Xx_0}_{\alpha_0}$\tu{)}
 \end{enumerate}
 iff
 \begin{enumerate}[label=--]
  \item $E_2$ is the $\Xx_1\inflatearrow\Xx_2$-copy of  an extender
  $E_1$ \tu{(}so
$E_1=E^{\Xx_1}_{\alpha_1}$\tu{)} and
  \item $E_1$ is the $\Xx_0\inflatearrow\Xx_1$-copy of $E_0$.
 \end{enumerate}
 That is,  $\alpha_2\in (C^-)^{02}\text{ and }t^{02}(\alpha_2)=0$ iff
\[ \alpha_2\in (C^-)^{12}\text{ and }t^{12}(\alpha_2)=0\text{ and }
\alpha_1\in (C^-)^{01}\text{ and }t^{01}(\alpha_1)=0.\]
\setcounter{enumi}{3}
\item\label{citem:gamma_comm,internal_coverage}
We have:
\begin{enumerate}[label=\tu{(}\alph*\tu{)}]
 \item\label{citem:gamma_comm_in} If $\beta\leq\alpha_0$ and
$\gamma=\gamma^{01}_{\alpha_1;\beta}$
then
$\gamma^{02}_{\alpha_2;\beta}=\gamma^{12}_{\alpha_2;\gamma}$ and
$\pi^{02}_{\alpha_2;\beta}=\pi^{12}_{\alpha_2;\gamma}\com\pi^{01}_{
\alpha_1;\beta}$.
\item\label{citem:interval_coverage_in}
$\bigcup_{\beta\leq
\alpha_0}I^{02}_{\alpha_2;\beta}\sub\bigcup_{\beta\leq \alpha_1}
I^{12}_{\alpha_2;\beta}$.
\item\label{citem:internal_coverage_in_2} If $\beta\leq\alpha_0$ and $\gamma\in
I^{02}_{\alpha_2;\beta}$ then
$f^{12}(\gamma)\in I^{01}_{\alpha_1;\beta}$.
\end{enumerate}
\item\label{citem:extended_comm_at_end}
Let $\gamma^{02}=\gamma^{02}_{{\alpha_2};\alpha_0}$ and
$\gamma^{01}=\gamma^{01}_{{\alpha_1};\alpha_0}$ and
$\gamma^{12}=\gamma^{12}_{{\alpha_2};{\alpha_1}}$ \tu{(}maybe
$\gamma^{12}\neq\gamma^{12}_{\alpha_2;\gamma^{01}}$\tu{)}.
Note that for $k<\ell\leq 2$, we have
\[
\tau^{k\ell}_{{\alpha_\ell};\alpha_k}=j^{\Xx_\ell}_{\gamma^{k\ell}{\alpha_\ell}}
\com\pi^{k\ell}_{{\alpha_\ell};\alpha_k}\colon
M^{\Xx_k}_{\alpha_k i^{k\ell}}\to
M^{\Xx_\ell}_{\alpha_\ell}.\]
Then
$\tau^{02}_{{\alpha_2};\alpha_0}
=\tau^{12}_{{\alpha_2};\alpha_1}\com\tau^{01}_{\alpha_1;\alpha_0}$.
Therefore if $\lh(\Xx_2)=\alpha_2+1$ and $\lh(\Xx_1)=\alpha_1+1$, then
$\pi^{02}_\infty=\pi^{12}_\infty\com\pi^{01}_\infty$.
\end{enumerate}
\end{lem}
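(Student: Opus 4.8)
\textbf{Proof plan for Lemma \ref{clem:inflation_commutativity}.}
The plan is to prove the statement by induction on $\alpha_2<\lh(\Xx_2)$, simultaneously establishing that $\Xx_2\rest(\alpha_2+1)$ (and $\Xx_2\rest(\alpha_2+2)$ when $\alpha_2+1<\lh(\Xx_2)$) is an inflation of $\Xx_0$, together with clauses \ref{citem:f_comm}--\ref{citem:extended_comm_at_end}. The induction is driven by the witnessing data $(t^{12},C^{12},f^{12},\Pivec^{12})$ for $\Xx_2$ as an inflation of $\Xx_1$ and $(t^{01},C^{01},f^{01},\Pivec^{01})$ for $\Xx_1$ as an inflation of $\Xx_0$; from these I define the candidate data for $\Xx_0\inflatearrow\Xx_2$ by the obvious compositions, namely $f^{02}=f^{01}\com f^{12}$ on the appropriate domain, $t^{02}(\alpha)=0$ iff both $t^{12}(\alpha)=0$ and $t^{01}(f^{12}(\alpha))=0$ (this is exactly clause \ref{citem:t_equiv}), and $\Pi^{02}_\alpha$ obtained by composing the tree embeddings $\Pi^{01}_{f^{12}(\alpha)}$ and $\Pi^{12}_\alpha$. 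The composition of tree embeddings needs to be checked to be a tree embedding satisfying \ref{dfn:tree_embedding}(\ref{item:tree_pres})--(\ref{item:embedding_agreement}); since $\Xx_1$ is non-$\Xx_0$-pending, $\Pi^{01}_{f^{12}(\alpha)}$ is total (not pending) at the relevant node, which is what makes the composition go through cleanly, and uniqueness of the resulting tree embedding is handed to us by \ref{cor:tree_embedding_uniqueness} once we match up the $\delta$'s.

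The core of the induction is the successor step. Suppose $\alpha_2+1<\lh(\Xx_2)$ and we have the data up to $\alpha_2$. There are two cases according to $t^{12}(\alpha_2)$. If $t^{12}(\alpha_2)=1$, then $E^{\Xx_2}_{\alpha_2}$ is $\Xx_1$-inflationary; I must check it is also $\Xx_0$-inflationary (or lies outside $C^{02}$ appropriately), extend $\Pi^{02}$ via the $E^{\Xx_2}_{\alpha_2}$-inflation construction of \ref{dfn:inflationary_extender}, and verify that the node-tracking equations in \ref{citem:gamma_comm,internal_coverage} are preserved — this is largely bookkeeping with the interval structure $I_{\alpha;\beta}$ and the ``internal agreement'' condition \ref{dfn:inflation}(\ref{item:inflation_internal_agreement}). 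If $t^{12}(\alpha_2)=0$, then $E^{\Xx_2}_{\alpha_2}=E^{\Pi^{12}_{\alpha_2}}_{\alpha_2}$ is the $\Xx_1$-copy of $E^{\Xx_1}_{\alpha_1}$; now I split further on $t^{01}(\alpha_1)$. If also $t^{01}(\alpha_1)=0$, then $E^{\Xx_1}_{\alpha_1}$ is the $\Xx_0$-copy of $E^{\Xx_0}_{\alpha_0}$, so by the commutativity of copying (\ref{lem:tree_embedding_copy}, one-step copy extension applied twice versus once) $E^{\Xx_2}_{\alpha_2}$ is the $\Xx_0$-copy of $E^{\Xx_0}_{\alpha_0}$ and $t^{02}(\alpha_2)=0$; if instead $t^{01}(\alpha_1)=1$, then $E^{\Xx_1}_{\alpha_1}$ is $\Xx_0$-inflationary and I must see that $E^{\Xx_2}_{\alpha_2}$ is $\Xx_0$-inflationary too, i.e. $t^{02}(\alpha_2)=1$, using that the $\Xx_1\inflatearrow\Xx_2$-copy of an $\Xx_0$-inflationary extender is again $\Xx_0$-inflationary (here the bounding property, guaranteed because all $\Pi^{ij}_\alpha$ are bounding by \ref{dfn:inflation}, keeps the index inequalities straight). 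In each subcase the predecessor of $\alpha_2+1$ in $\Xx_2$ is computed from $\crit(E^{\Xx_2}_{\alpha_2})$ and I check it matches the prediction of $f^{02}$ and the interval data; clause \ref{citem:extended_comm_at_end} at level $\alpha_2+1$ then follows from the equation $\tau^{02}=\tau^{12}\com\tau^{01}$ together with naturality of the Shift Lemma embeddings produced in the one-step copy extension.

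The limit step is comparatively easy: for limit $\alpha_2\in C^{12}$, all the objects $I^{12}_{\alpha_2;\beta}$, $P$, $\pi$ are eventual limits (as recorded in the remarks following \ref{dfn:inflation}), and $f^{12}$, $f^{01}$ are continuous along branches, so $f^{02}=f^{01}\com f^{12}$ is continuous and the candidate $\Pi^{02}_{\alpha_2}$ is the direct limit of the $\Pi^{02}_\beta$ for $\beta<^{\Xx_2}\alpha_2$; clause \ref{citem:gamma_comm,internal_coverage} passes to the limit because each equation there is preserved under the eventually-constant limits. Finally, the last sentence of the lemma ($\pi^{02}_\infty=\pi^{12}_\infty\com\pi^{01}_\infty$ when both $\Xx_1,\Xx_2$ have successor length) is just clause \ref{citem:extended_comm_at_end} read off at the top node, using the definition $\pi_\infty^{\Xx_i\inflatearrow\Xx_j}=j^{\Xx_j}_{\gamma\alpha}\com\pi_{\alpha;\beta}=\tau^{ij}_{\alpha_j;\alpha_i}$ from \ref{dfn:inflation_notation} (note $\Xx_j$ being $\Xx_i$-terminally-non-model-dropping is automatic here since $\alpha_j$ lies in $C^{ij}$ with $f^{ij}(\alpha_j)+1=\lh(\Xx_i)$). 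The main obstacle I anticipate is the successor case with $t^{12}(\alpha_2)=0$ and $t^{01}(\alpha_1)=1$: one has to verify carefully that the $\Xx_1\inflatearrow\Xx_2$-copying process correctly transports the $\Xx_0$-inflationary status — in particular that the node $\eta=\pred^{\Xx_2}(\alpha_2+1)$ lands inside the right $\Xx_2$-interval $I^{02}_{\alpha_2;f^{01}(\alpha_1)}$ and that the totality-of-$E$-over-the-relevant-segment hypothesis of \ref{dfn:inflationary_extender} is inherited — and this is where the bounding hypothesis and condition \ref{dfn:inflation}(\ref{item:inflation_internal_agreement}) must be used most delicately.
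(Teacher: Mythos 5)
Your proposal is correct and follows essentially the same route as the paper: the paper proves only the fine version (Lemma \ref{lem:inflation_commutativity}) by induction on $\lh(\Xx_2)$ with exactly your three-way successor case split (copy--copy, $\Xx_1$-inflationary, and $\Xx_1$-copying/$\Xx_0$-inflationary), limits handled as eventually-constant direct limits, and the coarse version noted to be a simplification thereof. The only difference in emphasis is that the delicate point in the paper's fine proof lies in the copy--copy case (verifying $\alpha_2\in C^{02}$ and the drop bookkeeping), which in the coarse setting is vacuous since $C^{02}=\lh(\Xx_2)$, and likewise the totality/bounding worries you flag in the mixed case trivialize in the absence of dropping.
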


\begin{lem}[Commutativity of inflation
(fine)]\label{lem:inflation_commutativity}\index{commutativity of inflation}
Let $M$ be $\udash m$-sound, let $\Xx_0,\Xx_1,\Xx_2$ be $\udash m$-maximal on
$M$,
$\Xx_{i+1}$ an inflation of $\Xx_i$, with $\Xx_1$ non-$\Xx_0$-pending \tu{(}but
$\Xx_2$ could be $\Xx_1$-pending\tu{)}.
Then $\Xx_2$ is an inflation of $\Xx_0$, and things commute in a reasonable
fashion.
That is, let
\[ (t^{ij},C^{ij},(C^-)^{ij},f^{ij},\left<\Pi^{ij}_\alpha\right>_{\alpha\in
C^{ij}})
=(t,C,\ldots)^{\Xx_i\inflatearrow\Xx_j}\]
for $i<j$; we also use analogous notation for other associated objects.
Let $\alpha_2<\lh(\Xx_2)$. If $k<2$ and $\alpha_2\in C^{k2}$ let
$\alpha_k=f^{k2}(\alpha_2)$.
Then \tu{(}cf.~Figure \ref{fgr:inflation_commutativity}, which depicts a key
case of the lemma\tu{)}:

\begin{enumerate}[label=\arabic*.,ref=\arabic*]
\item\label{item:f_comm} If $\alpha_2\in C^{02}$ then
$\alpha_2\in C^{12}$, $\alpha_1\in C^{01}$ and
\[ \alpha_0=f^{02}(\alpha_2)=f^{01}(f^{12}(\alpha_2))=f^{01}(\alpha_1).\]
 \item\label{item:t_equiv} Suppose $\alpha_2+1<\lh(\Xx_2)$ and let
$E_2=E^{\Xx_2}_{\alpha_2}$. Then:
 \begin{enumerate}[label=--]
  \item $E_2$ is the $\Xx_0\inflatearrow\Xx_2$-copy of an extender $E_0$
\tu{(}so $E_0=E^{\Xx_0}_{\alpha_0}$\tu{)}
 \end{enumerate}
 iff
 \begin{enumerate}[label=--]
  \item $E_2$ is the $\Xx_1\inflatearrow\Xx_2$-copy of an extender $E_1$
\tu{(}so $E_1=E^{\Xx_1}_{\alpha_1}$\tu{)}, and
  \item $E_1$ is the $\Xx_0\inflatearrow\Xx_1$-copy of $E_0$.
 \end{enumerate}
 That is,  $\alpha_2\in (C^-)^{02}\text{ and }t^{02}(\alpha_2)=0$ iff
\[ \alpha_2\in (C^-)^{12}\text{ and }t^{12}(\alpha_2)=0\text{ and }
\alpha_1\in (C^-)^{01}\text{ and }t^{01}(\alpha_1)=0.\]
\item\label{item:gamma_in_C} Suppose $\alpha_2\in C^{12}$ and $\alpha_1\in
C^{01}$.\footnote{This does not imply that $\alpha_2\in C^{02}$,
so $\alpha_0$ might not be defined, although $f^{01}(\alpha_1)$ is.} Then:
\begin{enumerate}[label=\tu{(}\alph*\tu{)}]
\item If $\alpha_1+1=\lh(\Xx_1)$ then $\alpha_2\in C^{02}$.
\item\label{item:gamma_gamma_in_C^02} If $\beta\leq f^{01}(\alpha_1)$ and
$\xi\in I^{01}_{\alpha_1;\beta}$ then
$\gamma^{12}_{\alpha_2;\xi}\in C^{02}$.
\item If $\beta<f^{01}(\alpha_1)$ and
$\xi=\delta^{01}_{\alpha_1;\beta}$ then $\delta^{12}_{\alpha_2;\xi}\in C^{02}$.
\end{enumerate}

\item\label{item:gamma_comm,internal_coverage}
Suppose $\alpha_2\in C^{02}$. Then:
\begin{enumerate}[label=\tu{(}\alph*\tu{)}]
 \item\label{item:gamma_comm_in} If $\beta\leq\alpha_0$ and
$\gamma=\gamma^{01}_{\alpha_1;\beta}$
then
$\gamma^{02}_{\alpha_2;\beta}=\gamma^{12}_{\alpha_2;\gamma}$ and
$\pi^{02}_{\alpha_2;\beta}=\pi^{12}_{\alpha_2;\gamma}\com\pi^{01}_{
\alpha_1;\beta}$.
\item\label{item:interval_coverage_in}
$\bigcup_{\beta\leq
\alpha_0}I^{02}_{\alpha_2;\beta}\sub\bigcup_{\beta\leq \alpha_1}
I^{12}_{\alpha_2;\beta}\sub C^{12}$.
\item\label{item:internal_coverage_in_2} If $\beta\leq\alpha_0$ and $\gamma\in
I^{02}_{\alpha_2;\beta}$ then
$f^{12}(\gamma)\in I^{01}_{\alpha_1;\beta}$.
\end{enumerate}
\item\label{item:extended_comm_at_end} Suppose $\alpha_2\in C^{02}$.
Let $\gamma^{02}=\gamma^{02}_{{\alpha_2};\alpha_0}$ and
$\gamma^{01}=\gamma^{01}_{{\alpha_1};\alpha_0}$ and
$\gamma^{12}=\gamma^{12}_{{\alpha_2};{\alpha_1}}$ \tu{(}maybe
$\gamma^{12}\neq\gamma^{12}_{\alpha_2;\gamma^{01}}$\tu{)} and
$i^{02}=i^{02}_{{\alpha_2};\alpha_0}$ and
$i^{01}=i^{01}_{{\alpha_1};\alpha_0}$ and
$i^{12}=i^{12}_{{\alpha_2};{\alpha_1}}$.
Note that
\[ \tau^{k\ell}_{{\alpha_\ell};\alpha_k
i^{k\ell}}=j^{\Xx_\ell}_{\gamma^{k\ell},{\alpha_\ell}}
\com
\pi^{k\ell}_{{\alpha_\ell};\alpha_k}
\rest M^{\Xx_k}_{\alpha_k i^{k\ell}}
\colon
M^{\Xx_k}_{\alpha_k i^{k\ell}}\to
M^{\Xx_\ell}_{\alpha_\ell}\]
for $k<\ell\leq 2$. Then we have:
\begin{enumerate}[label=\tu{(}\alph*\tu{)}]
\item\label{item:comm_inf_5a}$i^{01}\leq i^{02}$ \tu{(}so $M^{\Xx_0}_{\alpha_0
i^{02}}\ins
M^{\Xx_0}_{\alpha_0 i^{01}}$, with equality iff $i^{02}=i^{01}$\tu{)}.
\item $i^{01}+i^{12}=i^{02}$.
\item $i^{01}=i^{02}_{\gamma^{12};\alpha_0}$; that is, $i^{01}$ is the least
$i'$ such that $\gamma^{12}\in
I^{02}_{{\alpha_2};\alpha_0 i'}$.
\item\label{item:taus_comm} if $i=i^{01}=i^{02}$ \tu{(}which holds iff
$i^{12}=0$ iff
$(\gamma^{12},\alpha_2]_{\Xx_2}\inter\dropset^{\Xx_2}=\emptyset$\tu{)}
then
\[
\tau^{02}_{{\alpha_2};\alpha_0 i} =\tau^{12}_{{\alpha_2};\alpha_1
0}\com\tau^{01}_{\alpha_1;\alpha_0 i}. \]
\item\label{item:taus_comm_when_extra_drop} Suppose $i^{01}<i^{02}$ \tu{(}which
holds iff $i^{12}>0$ iff
$(\gamma^{12},{\alpha_2}]_{\Xx_2}\inter\dropset^{\Xx_2}\neq\emptyset$ iff
$M^{\Xx_0}_{\alpha_0 i^{02}}\pins M^{\Xx_0}_{\alpha_0 i^{01}}$\tu{)}. Then
$M^{\Xx_1}_{\alpha_1 i^{12}}=\tau^{01}_{\alpha_1;\alpha_0
i^{01}}(M^{\Xx_0}_{\alpha_0 i^{02}})$ and
\[ \tau^{02}_{{\alpha_2};\alpha_0
i^{02}}=\tau^{12}_{{\alpha_2};\alpha_1i^{12}}\com
(\tau^{01}_{\alpha_1;\alpha_0 i^{01}}\rest
M^{\Xx_0}_{\alpha_0 i^{02}}).\]
\end{enumerate}
Therefore if also $\lh(\Xx_2)=\alpha_2+1$ and $\lh(\Xx_1)=\alpha_1+1$ \tu{(}so
$\alpha_0+1=\lh(\Xx_0)$ and $i^{02}=i^{01}=0=i^{12}$,
because $\Xx_1$ is non-$\Xx_0$-pending\tu{)}, then
\[ \pi^{02}_\infty=\pi^{12}_\infty\com\pi^{01}_\infty. \]
\end{enumerate}
\end{lem}

We literally only prove the fine version; the coarse version is easier.

\begin{proof}[Proof of Lemma \ref{lem:inflation_commutativity}]
By induction on $\lh(\Xx_2)$. Fix $\alpha_2+1<\lh(\Xx_2)$ and suppose that the
lemma holds with respect
to $\Xx_2\rest(\alpha_2+1)$. We consider three cases.

\begin{case}\label{case:copy_copy}
$\alpha_2$ is an $\Xx_1$-copying stage of $\Xx_2$,
and $\alpha_1$ is an $\Xx_0$-copying stage of $\Xx_1$
(that is, $\alpha_2\in(C^-)^{12}$ and $t^{12}(\alpha_2)=0$ and
$\alpha_1\in(C^-)^{01}$ and $t^{01}(\alpha_1)=0$).

We first verify that $\alpha_2\in C^{02}$, and establish some other facts. Let
$\alpha'_0=f^{01}(\alpha_1)$.
(We don't yet know $\alpha_2\in C^{02}$, so we don't yet write $\alpha_0$.)
We have
$\delta^{12}_{\alpha_2;\alpha_1}=\alpha_2$ and
$\delta^{01}_{\alpha_1;\alpha'_0}=\alpha_1$.
Let $\bar{\gamma}=\gamma^{01}_{\alpha_1;\alpha'_0}$ and
$\gamma=\gamma^{12}_{\alpha_2;\bar{\gamma}}$ and
$\hat{\gamma}=\gamma^{12}_{\alpha_2;\alpha_1}$.
Since $\bar{\gamma}\leq^{\Xx_1}\alpha_1$, we have
$\gamma\leq^{\Xx_2}\hat{\gamma}$.
And $\hat{\gamma}\in C^{02}$ by property
\ref{item:gamma_in_C}\ref{item:gamma_gamma_in_C^02} (applied with
$\beta=\alpha'_0$ and $\xi=\alpha_1$),
so $[0,\hat{\gamma}]_{\Xx_2}\sub C^{02}$, so $\gamma\in C^{02}$.
By \ref{dfn:inflation}(\ref{item:inflation_internal_agreement}),
$\gammabar\in C^{01}$ and $\alpha_0'=f^{01}(\gammabar)$
and
$\gammabar=\gamma^{01}_{\gammabar;\alpha_0'}$,
 and likewise,
$\gamma\in C^{12}$ and $\gammabar=f^{12}(\gamma)$
and
$\gamma=\gamma^{12}_{\gamma;\gammabar}$.
Since $\gamma\in C^{02}$, therefore by induction with property
\ref{item:gamma_comm,internal_coverage}\ref{item:gamma_comm_in} (applied with
$\gamma$ replacing $\alpha_2$),
we have
\[ \alpha_0'=f^{02}(\gamma)=f^{01}(f^{12}(\gamma))
\text{ and
}\gamma=\gamma^{02}_{\gamma;\alpha_0'}=\gamma^{12}_{\gamma;\gammabar},\]
\[
\pi^{01}_{\alpha_1;\alpha_0'}=\pi^{01}_{\gammabar;\alpha_0'}:M^{\Xx_0}_{
\alpha_0'}\to
M^{\Xx_1}_{\gammabar},
\]
\[
\pi^{12}_{\alpha_2;\gammabar}=\pi^{12}_{\gamma;\gammabar}:M^{\Xx_1}_{\gammabar}
\to
M^{\Xx_2}_{\gamma}, \]
\[ \pi^{02}_{\gamma;\alpha_0'}:M^{\Xx_0}_{\alpha_0'}\to M^{\Xx_2}_{\gamma}, \]
\begin{equation}\label{eqn:pi_gamma_com}
\pi^{02}_{\gamma;\alpha_0'}=\pi^{12}_{\alpha_2;\gammabar}
\com\pi^{01}_{\alpha_1;\alpha_0'}. \end{equation}
We have $t^{02}(\xi)=1$ for all $\xi+1\in(\gamma,\alpha_2]_{\Xx_2}$.
For otherwise, by induction (property \ref{item:t_equiv}),
\[ \xi\in(C^-)^{12}\text{ and }t^{12}(\xi)=0\text{ and }
\zeta=f^{12}(\xi)\in(C^-)^{01}\text{ and }t^{01}(\zeta)=0.\]
So $\xi+1=\gamma^{12}_{\alpha_2;\zeta+1}$ and
$\zeta+1\in(\gammabar,\alpha_1]_{\Xx_1}$.
But $[\gammabar,\alpha_1]_{\Xx_1}=I^{01}_{\alpha_1;\alpha_0'}$,
so then $t^{01}(\zeta)=1$, contradiction. Let $Q_0=\exit^{\Xx_0}_{\alpha_0'}$
and $\Qbar=\pi^{01}_{\alpha_1;\alpha_0'}(Q_0)$. So to
verify $\alpha_2\in C^{02}$ we just need to see that
$(\gamma,\alpha_2]_{\Xx_2}$
does not drop strictly
below the
iteration image of
\[ Q\eqdef\pi^{02}_{\gamma;\alpha_0'}(Q_0)=\pi^{12}_{\alpha_2;\gammabar}
\com\pi^{01}_{\alpha_1;\alpha_0'}(Q_0)=\pi^{12}_{\alpha_2;\gammabar}(\Qbar). \]

Note that $j^{\Xx_2}_{\gammahat\alpha_2}$ is defined,
as $[\gammahat,\alpha_2]_{\Xx_2}=I^{12}_{\alpha_2;\alpha_1}$
(we only defined such embeddings for such intervals),
and $\dom(j^{\Xx_2}_{\gammahat\alpha_2})$ is in the dropdown sequence
of
$(M^{\Xx_2}_{\gammahat},\pi^{12}_{\alpha_2;\alpha_1}(\exit^{\Xx_1}_{\alpha_1}
))$.
Likewise, $j^{\Xx_1}_{\gammabar\alpha_1}$ is defined, with
$A=\dom(j^{\Xx_1}_{\gammabar\alpha_1})$ in the dropdown sequence of
$(M^{\Xx_1}_{\gammabar},\pi^{01}_{\alpha_1;\alpha_0'}(\exit^{\Xx_0}_{\alpha_0'}
))$;
in fact for each $\beta\in[\gammabar,\alpha_1]_{\Xx_1}$,
$\dom(j^{\Xx_1}_{\gammabar\beta})$
is in this dropdown sequence.
Let $A'=\pi^{12}_{\alpha_2;\gammabar}(A)$ (where $A'=M^{\Xx_2}_\gamma$ if
$A=M^{\Xx_1}_{\gammabar}$) and
\[ k^{\Xx_2}_{\gamma\gammahat}:A'\to M^{\Xx_2}_{\gammahat} \]
be the composition of iteration maps along $(\gamma,\gammahat]_{\Xx_2}$.
This makes sense and we get
\begin{equation}\label{eqn:tree_emb_comm}\pi^{12}_{\alpha_2;\alpha_1}\com
j^{\Xx_1}_{\gammabar\alpha_1}=k^{\Xx_2}_{\gamma\gammahat}\com\pi^{12}_{
\alpha_2;\gammabar} \end{equation}
by the commutativity of tree embedding maps with iteration maps, and
preservation of dropping segments under tree embedding maps.
Since $t^{01}(\alpha_1)=0$ and $\alpha_1=\delta^{01}_{\alpha_1;\alpha_0'}$,
\[ \exit^{\Xx_1}_{\alpha_1}= Q_1\eqdef j^{\Xx_1}_{\gammabar\alpha_1}(\Qbar).\]
Since $t^{12}(\alpha_2)=\alpha_1$ and
$\alpha_2=\delta^{12}_{\alpha_2;\alpha_1}$, letting
$\Qhat=\pi^{12}_{\alpha_2;\alpha_1}(Q_1)$,
\[ \exit^{\Xx_2}_{\alpha_2}=Q_2\eqdef j^{\Xx_2}_{\gammahat\alpha_2}(\Qhat), \]
and in particular, $(\gammahat,\alpha_2]_{\Xx_2}$ does not drop below the
iteration image of $\Qhat$.
But by line (\ref{eqn:tree_emb_comm}),
\[
\Qhat=k^{\Xx_2}_{\gamma\gammahat}(\pi^{12}_{\alpha_2;\gammabar}(\Qbar))=k^{\Xx_2
}_{\gamma\gammahat}(Q). \]

So $[\gamma,\alpha_2)_{\Xx_2}$ does not drop below the image of $Q$, as desired.

So $\alpha_2\in C^{02}$, so by induction, properties
\ref{item:f_comm}, \ref{item:gamma_comm,internal_coverage} and
\ref{item:extended_comm_at_end}
hold for $\alpha_2$, and in particular, $\alpha_0=f^{02}(\alpha_2)=\alpha_0'$.
Since $\alpha_1\in(C^-)^{01}$, we have $\alpha_0+1<\lh(\Xx_0)$,
so $\alpha_2\in(C^-)^{02}$. And since $t^{01}(\alpha_1)=0$ and
$t^{12}(\alpha_2)=0$, property \ref{item:extended_comm_at_end}
(note in particular its parts \ref{item:taus_comm}
and \ref{item:taus_comm_when_extra_drop})
implies
$E^{\Xx_2}_{\alpha_2}=E^{\Xx_0\inflatearrow\Xx_2}_{\alpha_2}$, so
$t^{02}(\alpha_2)=0$, completing the proof of property
\ref{item:t_equiv}. The same property also gives
\begin{equation}\label{eqn:omega_comm}
\omega^{02}_{\alpha_2;\alpha_0}=\omega^{12}_{\alpha_2;\alpha_1}\com\omega^{01}_{
\alpha_1;\alpha_0}
\end{equation}
(including that these maps have the same domain and codomain).
And note that $\alpha_2+1\in C^{02}\inter C^{12}$ and $\alpha_1+1\in C^{01}$,
and properties \ref{item:f_comm} and \ref{item:gamma_in_C} at $\alpha_2+1$
follow immediately.

We now  verify property \ref{item:gamma_comm,internal_coverage}
for $\alpha_2+1$. Now
$\gamma^{02}_{\alpha_2+1;\alpha_0+1}=\alpha_2+1=\gamma^{12}_{
\alpha_2+1;\alpha_1+1}$ and
$\gamma^{01}_{\alpha_1+1;\alpha_0+1}=\alpha_1+1$,
by definition of the one-step copy extension.
So because of the agreement
between $\Pi^{02}_{\alpha_2+1}$ and $\Pi^{02}_{\alpha_2}$, etc,
and by induction, it easily suffices to see that
\begin{equation}\label{eqn:pi_succ_comm}
\pi^{02}_{\alpha_2+1;\alpha_0+1}=\pi^{12}_{\alpha_2+1;\alpha_1+1}\com\pi^{01}_{
\alpha_1+1;\alpha_0+1}. \end{equation}
Let $\xi_0=\pred^{\Xx_0}(\alpha_0+1)$ and
$\kappa_0=\crit(E^{\Xx_0}_{\alpha_0})$.
So $M^{*\Xx_0}_{\alpha_0+1}=M^{\Xx_0}_{\xi_0\kappa_0}$.
As $t^{01}(\alpha_1)=0$ (recall the definitions of
$\gamma_{\Pi\xi\kappa},P_{\Pi\xi\kappa},\pi_{\Pi\xi\kappa}$
from \ref{dfn:pi_beta,kappa}),
\[ \pred^{\Xx_1}(\alpha_1+1)=\xi_1\eqdef\gamma^{01}_{\alpha_1;\xi_0\kappa_0}\in
I^{01}_{\alpha_1;\xi_0},\]
\[ M^{*\Xx_1}_{\alpha_1+1}= P^{01}_{\alpha_1;\xi_0\kappa_0}.\]
Let $\pi^{01}=\pi^{01}_{\alpha_1;\xi_0\kappa_0}$ and
$\kappa_1=\pi^{01}(\kappa_0)=\crit(E^{\Xx_1}_{\alpha_1})$.
We have
\[
\pred^{\Xx_2}(\alpha_2+1)=\xi_2\eqdef\gamma^{02}_{\alpha_2;\xi_0\kappa_0}
=\gamma^{12}_{\alpha_2;\xi_1\kappa_1}\in I^{02}_{\alpha_2;\xi_0}\inter
I^{12}_{\alpha_2;\xi_1}, \]
\[
M^{*\Xx_2}_{\alpha_2+1}=P^{02}_{\alpha_2;\xi_0\kappa_0}=P^{12}_{
\alpha_2;\xi_1\kappa_1}, \]
with the equalities holding because
$t^{02}(\alpha_2)=t^{12}(\alpha_2)=t^{01}(\alpha_1)=0$ and inflations can be
freely extended.
Let $\pi^{12}=\pi^{12}_{\alpha_2;\xi_1\kappa_1}$ and
$\pi^{02}=\pi^{02}_{\alpha_2;\xi_0\kappa_0}$,
so $\pi^{02}(\kappa_0)=\crit(E^{\Xx_2}_{\alpha_2})=\pi^{12}(\kappa_1)$.
Using part \ref{item:extended_comm_at_end} (with $\xi_2$ in place of
$\alpha_2$;
note that $\xi_2\in C^{02}$), it is now easy to verify that
$\pi^{02}=\pi^{12}\com\pi^{01}$. But
$\pi^{02}_{\alpha_2+1;\alpha_0+1}$, etc, are defined as in the proof of the
Shift Lemma
from $\pi^{02}$ and $\om^{02}_{\alpha_2;\alpha_0}$, etc. So
line (\ref{eqn:pi_succ_comm}) follows from this commutativity and line
(\ref{eqn:omega_comm}).

Finally note that part \ref{item:extended_comm_at_end} for $\alpha_2+1$
follows immediately by induction and from part
\ref{item:gamma_comm,internal_coverage}, because for the new ordinal
$\alpha_2+1$,
with notation as in part \ref{item:extended_comm_at_end}, we have
$\gamma^{02}=\alpha_2+1$, $i^{02}=0$, etc,
so $\tau^{02}_{{\alpha_2+1};\alpha_0+1,0}=\pi^{02}_{\alpha_2+1;\alpha_0+1}$,
etc.

This completes the induction step in this case.
\end{case}

\begin{case}\label{case:Xx_1-inflationary} $\alpha_2$ is $\Xx_1$-inflationary
(that is,
$t^{12}(\alpha_2)=1$).

Then $t^{02}(\alpha_2)=1$, so part \ref{item:t_equiv} holds.
For if $\alpha_2\in(C^-)^{02}$ then by induction,
$\alpha_2\in C^{12}$ and $\alpha_1\in C^{01}$ and $f^{01}(\alpha_1)=\alpha_0$,
hence $\alpha_1\in(C^-)^{01}$, but then since $\Xx_1$ is non-$\Xx_0$-pending,
$\alpha_1+1<\lh(\Xx_1)$, so $\alpha_2\in(C^-)^{12}$ and
$\In(E^{\Xx_1}_{\alpha_1})\leq\In(E^{\Xx_0\inflatearrow\Xx_1}_{\alpha_1})$,
so (as $t^{12}(\alpha_2)=1$)
$\In(E^{\Xx_2}_{\alpha_2})<\In(E^{\Xx_1\inflatearrow\Xx_2}_{\alpha_2})\leq\In(E^
{\Xx_0\inflatearrow\Xx_2}_{\alpha_2})$
 by commutativity. Let $\xi_2=\pred^{\Xx_2}(\alpha_2+1)$.

 Part \ref{item:f_comm}:
 Suppose $\alpha_2+1\in C^{02}$. Then
$\xi_2\in C^{02}$; let $\xi_0=f^{02}(\xi_2)$ and $\xi_1=f^{12}(\xi_2)$,
so also $\xi_1\in C^{01}$ and $\xi_0=f^{01}(\xi_1)$.
And $E^{\Xx_2}_{\alpha_2}$ is total over $Q^{02}_{\xi_2;\xi_0}$.
But if $\xi_1+1<\lh(\Xx_1)$ then
$\exit^{\Xx_1}_{\xi_1}\ins Q^{01}_{\xi_1;\xi_0}$
and if $\xi_1+1=\lh(\Xx_1)$ then (because $\Xx_1$ is non-$\Xx_0$-pending)
$\xi_0+1=\lh(\Xx_0)$ and
$M^{\Xx_1}_{\xi_1}=Q^{01}_{\xi_1;\xi_0}$.
So $Q^{12}_{\xi_2;\xi_1}\ins Q^{02}_{\xi_2;\xi_0}$.
So $E^{\Xx_2}_{\alpha_2}$ is total over $Q^{12}_{\xi_2;\xi_1}$.
So $\alpha_2+1\in C^{12}$ and
$f^{12}(\alpha_2+1)=f^{12}(\xi_2)=\xi_1\in C^{01}$.
Likewise $f^{02}(\alpha_2+1)=\xi_0$, giving part \ref{item:f_comm}.

Parts \ref{item:gamma_in_C} and \ref{item:gamma_comm,internal_coverage}
are easy by induction.

Part \ref{item:extended_comm_at_end}: Suppose $\alpha_2+1\in C^{02}$
and continue with the notation above.
Now $\Pi^{i2}_{\alpha_2+1}$ is the
$E^{\Xx_2}_{\alpha_2}$-inflation of $\Pi^{i2}_{\xi_2}$ for $i=0,1$.
But then property \ref{item:extended_comm_at_end}
at $\alpha_2+1$ follows easily from the same property at $\xi_2$;
we get the instance of Figure \ref{fgr:inflation_commutativity} at stage
$\alpha_2+1$,
from that at stage $\xi_2$, by simply adding one further step of iteration
above
$M^{*\Xx_2}_{\alpha_2+1}\ins M^{\Xx_2}_{\xi_2}$ (at the top of the diagram).
(This possibly inflicts a drop in model, but because $\alpha_2+1\in C^{02}$,
hence also $\alpha_2+1\in C^{12}$, we do not drop too far;
the integer $i^{01}$ is not modified, and the integers $i^{02}$ and $i^{12}$
are
modified by the same amount.)
 \end{case}

 \begin{case}\label{case:copy_inflation}
 $\alpha_2$ is $\Xx_1$-copying but $\alpha_1$ is $\Xx_0$-inflationary
(that is, $\alpha_2\in(C^-)^{12}$ and $t^{12}(\alpha_2)=0$ but
$t^{01}(\alpha_1)=1$).

We have $\alpha_2+1\in C^{12}$ and $f^{12}(\alpha_2+1)=\alpha_1+1$
and $\gamma^{12}_{\alpha_2+1;\alpha_1+1}=\alpha_2+1$.
And $t^{02}(\alpha_2)=1$ for reasons much as before, giving part
\ref{item:t_equiv}.
Let $\xi_i=\pred^{\Xx_i}(\alpha_i+1)$ for $i=1,2$.
Then $\xi_2\in C^{12}$ and $f^{12}(\xi_2)=\xi_1$.
By commutativity at stage $\xi_2$, we easily have $\alpha_2+1\in C^{02}$
iff $\alpha_1+1\in C^{01}$; and if $\alpha_2+1\in C^{02}$ then, letting
$\xi_0=f^{02}(\xi_2)=f^{01}(\xi_1)$,
we have $f^{02}(\alpha_2+1)=\xi_0=f^{01}(\alpha_1+1)$, since
$t^{02}(\alpha_2)=t^{01}(\alpha_1)=1$.
So part \ref{item:f_comm} holds.

Parts \ref{item:gamma_in_C} and \ref{item:gamma_comm,internal_coverage} are
again easy.
(In part \ref{item:gamma_in_C}\ref{item:gamma_gamma_in_C^02},
for $\alpha_2+1$
and $\beta=\xi_0$ and $\xi=\alpha_1+1\in I^{01}_{\alpha_1+1;\xi_0}$,
we have
$\gamma^{12}_{\alpha_2+1;\alpha_1+1}=\alpha_2+1\in C^{02}$,
as required.)
And part \ref{item:extended_comm_at_end} is again straightforward by
induction;
we obtain the diagram at stage $\alpha_2+1$ by adding a commuting square
to the top of diagram from stage $\xi_2$, applying the extenders
$E^{\Xx_1}_{\alpha_1}$ and $E^{\Xx_2}_{\alpha_2}$\
to $M^{*\Xx_1}_{\alpha_1+1}$ and $M^{*\Xx_2}_{\alpha_2+2}$ respectively;
in the new diagram the upper triangle collapses.
\end{case}

This completes the successor case. The limit case is a simplification thereof.
Suppose that the lemma holds with regard to $\Xx_2\rest\eta$, where $\eta$ is a
limit,
and we want to prove it for $\Xx_2\rest\eta+1$.
There are again three cases, analogous to those in the successor case.
For an inflation $\Tt\inflatearrow\Xx$, with associated objects $C,f$, and a
limit $\eta<\lh(\Xx)$,
say that $\eta$ is a \dfnemph{$(\Tt,\Xx)$-limit}\index{$(\Tt,\Xx)$-limit} iff
$\eta\in C$ and
$f(\alpha)<f(\eta)$ for all $\alpha<^\Xx\eta$.
Then either:
\begin{enumerate}[label=\arabic*.,ref=\arabic*]
 \item  $\eta$ is an $(\Xx_0,\Xx_2)$-limit.
Then easily by induction, $\eta$ is also an $(\Xx_1,\Xx_2)$-limit and
$f^{12}(\eta)$
is an $(\Xx_0,\Xx_1)$-limit. This is analogous to Case \ref{case:copy_copy}
(an $\Xx_0$-copying (and $\Xx_1$-copying) stage of $\Xx_2$).
\item $\eta$ is not an $(\Xx_1,\Xx_2)$-limit. So $\eta$ is also not an
$(\Xx_0,\Xx_2)$-limit.
(Analogous to Case \ref{case:Xx_1-inflationary}, an $\Xx_1$-inflationary stage
of $\Xx_2$.)
\item $\eta$ is an $(\Xx_1,\Xx_2)$-limit, but not an $(\Xx_0,\Xx_2)$-limit.
Then $f^{12}(\eta)$ is not an $(\Xx_0,\Xx_1)$-limit. (Analogous to Case
\ref{case:copy_inflation},
an $\Xx_1$-copying, $\Xx_0$-inflationary stage of $\Xx_2$.)
\end{enumerate}
In each case, the properties follow easily from the commutativity given by
induction. We leave the details to the reader.
\end{proof}

An easy consequence is:

\begin{cor}\label{cor:terminal_dropping_equiv} Let $\Xx_0,\Xx_1,\Xx_2$ be as in
\ref{lem:inflation_commutativity}.
Suppose that $\Xx_2$ is $\Xx_1$-terminal and $\Xx_1$ is
$\Xx_0$-terminal.
Then $\Xx_2$ is $\Xx_0$-terminal. Moreover, $\Xx_2$ is
$\Xx_0$-terminally-\tu{(}model-\tu{)}dropping
iff either $\Xx_1$ is $\Xx_0$-terminally-\tu{(}model-\tu{)}dropping
or $\Xx_2$ is $\Xx_1$-terminally-\tu{(}model-\tu{)}dropping.
\end{cor}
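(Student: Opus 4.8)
The plan is to read everything off of Lemma~\ref{lem:inflation_commutativity} together with the bookkeeping conventions of Definitions~\ref{dfn:inflation} and~\ref{dfn:inflation_notation}. By Lemma~\ref{lem:inflation_commutativity}, $\Xx_2$ is an inflation of $\Xx_0$; I would fix the associated data $(t^{ij},C^{ij},(C^-)^{ij},f^{ij},\dots)=(t,C,\dots)^{\Xx_i\inflatearrow\Xx_j}$ for $i<j$ as in that lemma. Since $\Xx_2$ is $\Xx_1$-terminal and $\Xx_1$ is $\Xx_0$-terminal, all three trees have successor length, so I write $\alpha_i=\lh(\Xx_i)-1$; unwinding \emph{terminal}, we have $\alpha_2\notin(C^-)^{12}$ and $\alpha_1\notin(C^-)^{01}$.

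The one elementary fact I would use repeatedly: by Definition~\ref{dfn:inflation}(\ref{item:def_C^-}), for an inflation $\Tt\inflatearrow\Xx$ with $\Xx$ of successor length and last node $\ell$, if $\ell\in C^{\Tt\inflatearrow\Xx}\setminus(C^-)^{\Tt\inflatearrow\Xx}$ then $f(\ell)+1=\lh(\Tt)$, i.e.\ $f(\ell)$ is the last node of $\Tt$. Hence: if $\alpha_2\in C^{12}$ then $f^{12}(\alpha_2)=\alpha_1$ (using $\alpha_2\notin(C^-)^{12}$), and if $\alpha_1\in C^{01}$ then $f^{01}(\alpha_1)=\alpha_0$ (using $\alpha_1\notin(C^-)^{01}$). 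This is precisely what identifies the indices $\alpha_k$ appearing in Lemma~\ref{lem:inflation_commutativity} with the last nodes of the $\Xx_k$, and keeping this identification straight is the only point requiring genuine care; no new ideas are needed beyond the commutativity lemma.

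First I would show $\Xx_2$ is $\Xx_0$-terminal, i.e.\ $\alpha_2\notin(C^-)^{02}$. If $\alpha_2\notin C^{02}$ this is trivial. If $\alpha_2\in C^{02}$, then Lemma~\ref{lem:inflation_commutativity}(\ref{item:f_comm}) gives $\alpha_2\in C^{12}$, $f^{12}(\alpha_2)\in C^{01}$ and $f^{02}(\alpha_2)=f^{01}(f^{12}(\alpha_2))$; by the previous paragraph $f^{12}(\alpha_2)=\alpha_1$ and then $f^{01}(\alpha_1)=\alpha_0$, so $f^{02}(\alpha_2)=\alpha_0=\lh(\Xx_0)-1$, forcing $\alpha_2\notin(C^-)^{02}$. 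Since $\Xx_0$ has successor length, $\Xx_2$ is $\Xx_0$-terminal. The same computation yields the ``model'' half of the \emph{moreover}: recalling that $\Xx$ is $\Tt$-terminally-model-dropping iff its last node lies outside $C^{\Tt\inflatearrow\Xx}$, I just showed $\alpha_2\in C^{02}$ implies $\alpha_1\in C^{01}$ and $\alpha_2\in C^{12}$; conversely, if $\alpha_1\in C^{01}$ and $\alpha_2\in C^{12}$ then $f^{12}(\alpha_2)=\alpha_1$ with $\alpha_1+1=\lh(\Xx_1)$, so part~(a) of Lemma~\ref{lem:inflation_commutativity}(\ref{item:gamma_in_C}), applied with its $\alpha_2$ being ours and its $\alpha_1$ being $f^{12}(\alpha_2)$, gives $\alpha_2\in C^{02}$. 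Thus $\alpha_2\notin C^{02}$ iff ($\alpha_1\notin C^{01}$ or $\alpha_2\notin C^{12}$), which is the model-dropping equivalence.

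Finally, for the degree version: if $\alpha_2\notin C^{02}$ the equivalence follows from the model-dropping case, since terminally-model-dropping implies terminally-dropping. If $\alpha_2\in C^{02}$, then $\alpha_1\in C^{01}$, $\alpha_2\in C^{12}$, and $f^{02}(\alpha_2)=\alpha_0$, $f^{12}(\alpha_2)=\alpha_1$, $f^{01}(\alpha_1)=\alpha_0$. In the tree embedding $\Pi^{12}_{\alpha_2}\colon(\Xx_1,\alpha_1+1)\tembto\Xx_2\rest(\alpha_2+1)$, the last node $\alpha_1$ of $\Xx_1$ receives the closed $<^{\Xx_2}$-interval $I^{12}_{\alpha_2;\alpha_1}=[\gamma^{12}_{\alpha_2;\alpha_1},\alpha_2]_{\Xx_2}$ (its right endpoint being $\delta^{12}_{\alpha_2;\alpha_1}=\alpha_2$), which, since $m^{\Xx_1}_{\alpha_1}=0$, is not subdivided and does not drop in model by Definition~\ref{dfn:tree_embedding}(\ref{item:structure_I_beta}), while $\udeg^{\Xx_2}(\gamma^{12}_{\alpha_2;\alpha_1})=\udeg^{\Xx_1}(\alpha_1)$ by condition~\ref{item:tree_pres}(\ref{item:degree_match}); as degrees are non-increasing along a branch interval with no model drop (Definition~\ref{dfn:r-k-maximal_tree}), we get $\udeg^{\Xx_2}(\alpha_2)\le\udeg^{\Xx_1}(\alpha_1)$. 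Symmetrically $\udeg^{\Xx_1}(\alpha_1)\le\udeg^{\Xx_0}(\alpha_0)$ via $\Pi^{01}_{\alpha_1}$. By trichotomy, $\udeg^{\Xx_2}(\alpha_2)<\udeg^{\Xx_0}(\alpha_0)$ iff $\udeg^{\Xx_2}(\alpha_2)<\udeg^{\Xx_1}(\alpha_1)$ or $\udeg^{\Xx_1}(\alpha_1)<\udeg^{\Xx_0}(\alpha_0)$; since $\alpha_2\in C^{12}$ and $\alpha_1\in C^{01}$, these three inequalities say respectively that $\Xx_2$ is $\Xx_0$-terminally-dropping, $\Xx_2$ is $\Xx_1$-terminally-dropping, and $\Xx_1$ is $\Xx_0$-terminally-dropping. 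Combined with the $\alpha_2\notin C^{02}$ case, this is the required equivalence. The main obstacle I anticipate is purely the index-chasing --- matching the $\alpha_k$ of Lemma~\ref{lem:inflation_commutativity} to the last nodes and invoking its parts \ref{item:f_comm} and \ref{item:gamma_in_C} with the right substitutions --- and the second paragraph's observation is designed to dispatch exactly that.
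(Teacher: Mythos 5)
Your proof is correct and follows exactly the route the paper intends: the paper states this corollary with no proof beyond ``an easy consequence'' of Lemma~\ref{lem:inflation_commutativity}, and your write-up is a careful, accurate expansion of that derivation (identifying the last nodes with the $\alpha_k$ of the lemma via $C^-$, using parts~\ref{item:f_comm} and~\ref{item:gamma_in_C} for the model-dropping equivalence, and the degree chain $\udeg^{\Xx_2}(\alpha_2)\le\udeg^{\Xx_1}(\alpha_1)\le\udeg^{\Xx_0}(\alpha_0)$ for the degree version). No gaps.
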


The author was initially focused on inflation (as opposed to tree embeddings
more generally),
and did not notice that the preceding lemma has the following natural variant,
until it was pointed out by Jensen.
It follows from part of the proof of
\ref{lem:inflation_commutativity}:

\begin{lem}[Composition of tree
embeddings]\label{lem:comp_tree_emb}\index{composition (tree embeddings)}
 Let $\Xx_i$ be $\udash m$-maximal trees for $i=0,1,2$.
 Let
$\Pi_{i,i+1}:\Xx_i\hookrightarrow\Xx_{i+1}$ be a tree embedding, for
$i=0,1$.
 Then
  $\Pi_{02}:\Xx_0\hookrightarrow\Xx_2$
 is a tree embedding, where writing
$\gamma^{ij}_\alpha=\gamma_{\Pi_{ij}\alpha}$, etc, we have
 \[ \gamma^{02}_\alpha=\gamma^{12}_{\gamma^{01}_\alpha}\text{ and
}\delta^{02}_\alpha=\delta^{12}_{\delta^{01}_\alpha}. \]
 for each $\alpha<\lh(\Xx_0)$. Moreover, for each $\alpha<\lh(\Xx_0)$ we have
 \[ \pi^{02}_\alpha=\pi^{12}_{\gamma^{01}_\alpha}\com\pi^{01}_\alpha\text{ and
}
\om^{02}_\alpha=\om^{12}_{\delta^{01}_\alpha}\com\om^{01}_\alpha.\]
\end{lem}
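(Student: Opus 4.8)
\textbf{Proof strategy for Lemma \ref{lem:comp_tree_emb} (Composition of tree embeddings).}

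The plan is to reduce the statement to a special case of Lemma \ref{lem:inflation_commutativity} by observing that a tree embedding is, degenerately, a special case of inflation. More precisely, first I would note that if $\Pi:\Tt\hookrightarrow\Xx$ is a tree embedding, then $\Xx$ is an inflation of $\Tt$ in which \emph{every} extender of $\Xx$ is $\Tt$-copying: one sets $C=\lh(\Xx)$, $C^-=\lh(\Xx)^-$, $t\equiv 0$, defines $f$ so that $f(\delta_{\Pi\alpha})=\alpha$ and $f$ is constant $=\alpha$ on each interval $I_{\Pi\alpha}$, and takes $\Pi_\alpha=\Pi\rest(\Tt,\,f(\alpha)+1)$ restricted to $\Xx\rest(\alpha+1)$. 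One checks the clauses of Definition \ref{dfn:inflation} directly; the only nontrivial point is that at successor stages the one-step copy extension is being invoked, which is exactly what the tree embedding already encodes, and at limit stages everything is the natural direct limit, which is again forced by the tree embedding. Since $\Pi$ is a genuine tree embedding (not an almost tree embedding), $\Gamma$ is continuous, so the limit clause \ref{item:inflation_limit_ordinal} holds. In particular $\Xx$ is a non-$\Tt$-pending inflation of $\Tt$: since $\lh(\Xx)$ is either a limit or a successor $\beta+1$ with $\beta=\delta_{\Pi\alpha}$ for $\alpha+1=\lh(\Tt)$, the last interval does not drop, and whether $\alpha\in C^-$ depends on whether $\lh(\Tt)$ is a successor — in the relevant case $\Tt=\Xx_0$ has the same length status as $\Xx_1,\Xx_2$, and one arranges that $\Xx_1$ is non-$\Xx_0$-pending, $\Xx_2$ is non-$\Xx_1$-pending (because all three trees have the same "shape" at the top under the embeddings).

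Having made this identification, I would apply Lemma \ref{lem:inflation_commutativity} with $\Xx_0,\Xx_1,\Xx_2$ and the inflation structures just described: $\Xx_1$ is an inflation of $\Xx_0$, $\Xx_2$ is an inflation of $\Xx_1$, and $\Xx_1$ is non-$\Xx_0$-pending. The lemma then gives that $\Xx_2$ is an inflation of $\Xx_0$, and its part \ref{item:f_comm} gives $f^{02}=f^{01}\circ f^{12}$, which translated back via the dictionary $f^{ij}(\delta^{ij}_{\Pi_{ij}\alpha})=\alpha$ yields $\gamma^{02}_\alpha=\gamma^{12}_{\gamma^{01}_\alpha}$ and $\delta^{02}_\alpha=\delta^{12}_{\delta^{01}_\alpha}$. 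Since every extender of each $\Xx_j$ is copying in each of these inflations, part \ref{item:t_equiv} confirms that the composite inflation $\Xx_0\inflatearrow\Xx_2$ is again all-copying, i.e.\ genuinely a tree embedding $\Pi_{02}:\Xx_0\hookrightarrow\Xx_2$ rather than merely an inflation. Finally, the commutativity of the maps — $\pi^{02}_\alpha=\pi^{12}_{\gamma^{01}_\alpha}\circ\pi^{01}_\alpha$ and $\omega^{02}_\alpha=\omega^{12}_{\delta^{01}_\alpha}\circ\omega^{01}_\alpha$ — comes directly from part \ref{item:gamma_comm,internal_coverage}\ref{item:gamma_comm_in} (for the $\pi$'s, taking $\beta=\alpha_0=\alpha$) together with part \ref{item:extended_comm_at_end}\ref{item:taus_comm} (for the $\omega$'s, which are the $\tau$-maps at the top node $\delta^{ij}_\alpha$, and in the all-copying case $i^{k\ell}=0$ so the composition formula for $\tau$ specializes to the one for $\omega$). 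Uniqueness of the resulting tree embedding with these $\delta$-values is Corollary \ref{cor:tree_embedding_uniqueness}.

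The main obstacle I anticipate is purely bookkeeping: carefully setting up the $f$-function and the family $\left<\Pi_\alpha\right>_{\alpha\in C}$ witnessing that a tree embedding is an inflation, and verifying that the "non-pending" hypotheses of Lemma \ref{lem:inflation_commutativity} are met — that is, checking that $\Xx_1$ is non-$\Xx_0$-pending under this identification. This holds because $\Pi_{01}$ is a tree embedding $\Xx_0\hookrightarrow\Xx_1$ (not a $(\Xx_0,\theta)$-embedding with $\theta=\beta+1<\lh(\Xx_0)$), so the final $\Xx_1$-interval $I_{\Pi_{01},\,\alpha}$ for $\alpha+1=\lh(\Xx_0)$ does not drop in model, which is exactly the requirement. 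Once that is in place, everything else is a direct translation of the conclusions of Lemma \ref{lem:inflation_commutativity} back through the (trivial) dictionary between all-copying inflations and tree embeddings, and no genuinely new argument is needed; in fact this is why the lemma is stated as an easy consequence. I would leave the routine verifications — the clause-by-clause check of Definition \ref{dfn:inflation} and the back-translation of the map equalities — to the reader, as the excerpt's phrasing ("follows from part of the proof of \ref{lem:inflation_commutativity}") invites.
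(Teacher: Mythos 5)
Your reduction rests on the claim that a tree embedding $\Pi:\Tt\hookrightarrow\Xx$ automatically realizes $\Xx$ as an inflation of $\Tt$, and that claim is false as stated; this is a genuine gap. Two problems. First, the description of the reduction is internally inconsistent: you set $t\equiv 0$ while also making $f$ constant on each interval $I_{\Pi\beta}$, but by Definition \ref{dfn:inflation} a copying stage forces $f(\alpha+1)=f(\alpha)+1$, so $t\equiv 0$ would force every interval to be a singleton. In a general tree embedding only the extenders $E^\Xx_{\delta_{\Pi\beta}}$ are copies of extenders of $\Tt$; every extender applied strictly inside an interval $(\gamma_{\Pi\beta},\delta_{\Pi\beta}]_\Xx$ would have to be counted as $\Tt$-inflationary. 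Second, and more seriously, Definition \ref{dfn:inflation} builds in the requirement $\In(E^\Xx_\alpha)\leq\In(E^{\Pi_\alpha}_\alpha)$ for $\alpha\in C^-$, i.e.\ every inflation's associated tree embeddings are \emph{bounding}. The paper isolates ``bounding'' as an extra property of tree embeddings precisely because it does not follow from \ref{item:tree_pres}--\ref{item:embedding_agreement}, and Lemma \ref{lem:comp_tree_emb} is stated — and used, e.g.\ in the proof of Theorem \ref{thm:strat_with_cond_extends_to_generic_ext}, where the tree embeddings come from the definition of $\Sigma'$ — for arbitrary tree embeddings. For a non-bounding $\Pi_{01}$ or $\Pi_{12}$, the trees $\Xx_1,\Xx_2$ are simply not inflations of $\Xx_0,\Xx_1$ in the paper's sense, so Lemma \ref{lem:inflation_commutativity} cannot be invoked as a black box. (Even in the bounding case you would still owe a verification that the given tree embedding decomposes stage-by-stage into one-step copy extensions and $E$-inflations satisfying the totality clauses for membership in $C$ — which is essentially the same induction you are trying to avoid.)

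What the phrase ``follows from part of the \emph{proof} of \ref{lem:inflation_commutativity}'' is pointing at is not the lemma's statement but its argument: one defines $I^{02}_\alpha=[\gamma^{12}_{\gamma^{01}_\alpha},\delta^{12}_{\delta^{01}_\alpha}]_{\Xx_2}$ and runs the induction of Case \ref{case:copy_copy} (the copy--copy case) directly on the composed system — the Shift-Lemma commutativity giving $\pi^{02}_{\alpha+1}=\pi^{12}_{\gamma^{01}_{\alpha+1}}\com\pi^{01}_{\alpha+1}$ at successors, the interval bookkeeping showing $[\gamma^{12}_{\gamma^{01}_\alpha},\delta^{12}_{\delta^{01}_\alpha}]_{\Xx_2}$ is a legitimate $<^{\Xx_2}$-interval with the right drop structure, direct limits at limit stages, and Corollary \ref{cor:tree_embedding_uniqueness} for uniqueness. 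None of that uses bounding or the inflation framework. If you want to keep a reduction-style argument you would first have to reprove (the relevant fragment of) Lemma \ref{lem:inflation_commutativity} for a notion of inflation with the bounding clause dropped, as the footnote to Definition \ref{dfn:inflation} hints is possible; as written, your proof does not establish the lemma in the generality in which it is stated and used.
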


\section{Generic absoluteness of iterability}\label{sec:gen_abs_mice}

We establish in this section some general theorems on the absoluteness of
iterability
under forcing. Let $M$ be an $m$-sound premouse.
Let $\Omega>\om$ be regular and let $V[G]$ be a generic extension of $V$
via an $\Omega$-cc forcing.
In the main result (Theorem \ref{thm:strat_with_cond_extends_to_generic_ext}),
assuming that $\Sigma$ is an $(m,\Omega+1)$-strategy for $M$ with strong hull
condensation,
we extend $\Sigma$ to $\Sigma'$, such that in $V[G]$, $\Sigma'$ is an
$(m,\Omega+1)$-strategy with strong hull condensation.
(We do not know whether the analogous statement can be proved for inflation
condensation.) This holds for both wcpms and seg-pms, of arbitrary cardinality.
If $M$ is a countable premouse and $e$ an $\om$-enumeration of $M$
and $\Sigma$ has weak DJ with respect to $e$, then so does $\Sigma'$.
We also use the result to obtain a universally Baire representation for
$\Sigma\rest\HC$,
assuming that $M$ is also countable (see \S\ref{subsec:univ_baire}).
In the other direction (Corollary \ref{cor:wDJ_absoluteness}), assume that $M$
is
countable in $V$ and $\Sigma'$ has weak DJ in $V[G]$
with respect to some enumeration $e\in V$;
then $\Sigma=\Sigma'\rest V\in V$. The proof involves standard kinds of
arguments and is probably part of the folklore,
but we give it.
Thus, if $M$ is a countable premouse and $e\in V$ an $\om$-enumeration of $M$,
then the existence of an $(m,\Omega+1)$-strategy for $M$ with weak DJ with
respect to $e$
is absolute between $V$ and $V[G]$.
Combined with the results later in the paper, we will also get that if
$V\sats\ZFC$ and $M$ is countable,
then the existence of an $(m,\Omega+1)$-strategy for $M$ with strong hull
condensation
is absolute between $V$ and $V[G]$; this is because under $\DC$, given such a
strategy and an enumeration $e$,
we can construct a strategy with weak DJ with respect to $e$.

\subsection{Extending strategies to generic extensions}

The background theory here, as elsewhere,
is $\ZF$. Thus, we specify exactly what we mean by the $\Omega$-chain condition:

\begin{dfn}\label{dfn:Omega-cc}\index{pre-antichain}\index{chain condition, cc}
Let $\PP$ be a poset
and $\lambda\in\OR$.
A \dfnemph{$\lambda$-pre-antichain} of $\PP$ is a partition
$\left<A_\alpha\right>_{\alpha<\lambda}$ of some set $A\sub\PP$
such that each $A_\alpha\neq\emptyset$, and $p\incompat q$
whenever $p\in A_\alpha$ and $q\in A_\beta$ for some $\alpha<\beta<\lambda$.
We say that $\PP$ has the \dfnemph{$\lambda$-cc} iff there is no
$\lambda$-pre-antichain of $\PP$.
\end{dfn}

\begin{rem}
Clearly the above definition agrees with the usual definition of $\lambda$-cc
under $\ZFC$.
The usual $\ZFC$ argument easily adapts to show under $\ZF$ that
if $\lambda$ is regular then forcing with a $\lambda$-cc forcing preserves the
regularity of $\lambda$.
\end{rem}

\begin{tm}\label{thm:strat_with_cond_extends_to_generic_ext}
Let $\Omega>\om$ be regular.
Let $\PP$ be an $\Omega$-cc forcing and $G$ be $V$-generic for $\PP$.
Let $M$ be an $\ell$-sound premouse, or let $M$ be a wcpm and $\ell=0$.
Let $\Gamma$ be an $(\ell,\Omega+1)$-strategy\footnote{Recall that if $M$ is a
wcpm,
this just means an $(\Omega+1)$-strategy.} for $M$ with strong hull
condensation.
Then:

\begin{enumerate}[label=\arabic*.,ref=\arabic*]
\item\label{item:Gamma'_exists_unique} In $V[G]$ there is a unique
$(\ell,\Omega+1)$-strategy $\Gamma'$ such that $\Gamma\sub\Gamma'$ and
$\Gamma'$
has inflation condensation.
\item\label{item:Gamma'_shc} In $V[G]$, $\Gamma'$ has strong hull condensation.
\item\label{item:Gamma,Gamma'_DJ} Suppose $M\in\HC$ is a premouse \tu{(}not a
wcpm\tu{)}
and let $e$ be an enumeration of $M$ in ordertype $\om$.
Then:
\begin{enumerate}[label=--]
\item $\Gamma$ has Dodd-Jensen iff $\Gamma'$ has Dodd-Jensen in $V[G]$.
\item $\Gamma$ has weak Dodd-Jensen with respect to $e$
iff $\Gamma'$ has weak Dodd-Jensen with respect to $e$ in $V[G]$.
\end{enumerate}
\end{enumerate}
Further, let $\Sigma$ be the $\udash$strategy corresponding to $\Gamma$
and $m=m^\Sigma$.\footnote{See \ref{dfn:strategy_classes}. So if $M$ is not
MS-indexed
then $\Gamma=\Sigma$ and $m=\ell$.} Then:
\begin{enumerate}[resume*]
\item\label{item:Sigma'_exists_unique} In $V[G]$ there is a unique $(\udash
m,\Omega+1)$-strategy $\Sigma'$ such that $\Sigma\sub\Sigma'$ and $\Sigma'$ has
inflation condensation.
\item\label{item:Sigma'_shc} In $V[G]$, $\Sigma'$ has strong hull condensation.
\item\label{item:Sigma'_corresp_Gamma'} If $M$ is MS-indexed then in $V[G]$,
$\Sigma'$ is the $\udash$strategy corresponding to $\Gamma'$.
\item\label{item:trees_via_Sigma'_embed} For every tree $\Tt\in V[G]$ via
$\Sigma'$, there is a $\Tt$-terminally-non-dropping inflation $\Xx$ of $\Tt$
such that $\Xx\in V$ and $\Xx$ is via $\Sigma$. Moreover,
if $\lh(\Tt)<\Omega$ then we can take $\lh(\Xx)<\Omega$.
\end{enumerate}
\end{tm}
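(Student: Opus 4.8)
The strategy $\Sigma'$ in $V[G]$ will be defined by: $\Xx$ (a tree in $V[G]$) is via $\Sigma'$ iff there is, in $V$, a $\Xx$-terminally-non-dropping inflation $\Yy$ of $\Xx$ with $\Yy$ via $\Sigma$, chosen minimally (via a minimal simultaneous inflation in $V$). So the plan is to establish items \ref{item:Sigma'_exists_unique}--\ref{item:trees_via_Sigma'_embed} and then deduce items \ref{item:Gamma'_exists_unique}--\ref{item:Gamma,Gamma'_DJ} via the $\udash$strategy correspondence and standard Dodd-Jensen arguments; the real content is items \ref{item:Sigma'_exists_unique}, \ref{item:Sigma'_shc}, \ref{item:trees_via_Sigma'_embed}. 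First I would work entirely with the $\udash$strategy $\Sigma$ (so the fine case is MS-indexed via the correspondence of \ref{lem:rule_conversion}, and the $\lambda$-indexed and wcpm cases are handled directly, uniformly), and define branch choices for limit-length $\Tt\in V[G]$ by a minimal simultaneous inflation in $V$: given $\Tt\rest\eta$ already seen to be via $\Sigma'$, one uses genericity-style/minimal-inflation arguments of \S\ref{sec:min_inf} in $V$ against the (names for the) extenders of $\Tt$. The $\Omega$-cc hypothesis is exactly what guarantees that the relevant set of trees (in $V$, the relevant antichain-indexed family of candidate continuations) has size ${<\Omega}$, so Lemma \ref{lem:min_inf} applies in $V$ and produces a tree $\Xx$ of length ${\le}\Omega+1$; by inflation condensation of $\Sigma$, $\Xx$ then computes $\Sigma(\Tt\rest\eta)$-like data and so determines a unique $\Tt$-cofinal branch. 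The key commutativity input is Lemma \ref{lem:inflation_commutativity} (and Corollary \ref{cor:terminal_dropping_equiv}): if $\Tt\rest\eta_0 \inflatearrow \Tt\rest\eta_1$ in $V[G]$ and each is realized by an inflation into a $\Sigma$-tree in $V$, these realizations compose coherently, so the branch choices cohere and $\Sigma'$ is well-defined as a strategy (all models wellfounded, because they embed into models of $\Sigma$-trees, which are wellfounded in $V$ hence in $V[G]$).

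For item \ref{item:trees_via_Sigma'_embed}: by construction every $\Tt$ via $\Sigma'$ has, stage by stage, an associated inflation $\Xx$ in $V$ via $\Sigma$; one shows by induction on $\lh(\Tt)$, using \ref{lem:inflation_commutativity} and \ref{lem:inflationary_limit_continuation}, that the $\Xx$'s cohere into a single inflation $\Xx$ of $\Tt$ that is $\Tt$-terminally-non-dropping when $\Tt$ has successor length, with $\pi_\infty^{\Tt\inflatearrow\Xx}$ giving the embedding of $M^\Tt_\infty$ into $M^\Xx_\infty$; and when $\lh(\Tt)<\Omega$ the minimal simultaneous inflation terminates at length ${<}\Omega$ by the ``moreover'' clause of \ref{lem:min_inf} (one cannot have $\lh(\Xx)=\Omega+1$ unless some component tree has length $\Omega+1$, which cannot happen as all inputs have length ${<}\Omega$). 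This immediately gives that all $\Sigma'$-trees have wellfounded models, so $\Sigma'$ is a genuine $(\udash m,\Omega+1)$-strategy. For item \ref{item:Sigma'_shc}, strong hull condensation: given $\Pi:\Tt\hookrightarrow\Xx$ in $V[G]$ with $\Xx$ via $\Sigma'$, compose (Lemma \ref{lem:comp_tree_emb}) the tree embedding with the inflation $\Xx\inflatearrow\Yy$ into $V$; one checks that $\Tt$ then inflates into $\Yy$ as well (a tree embedding followed by an inflation is again an inflation — this is essentially the content of the commutativity section applied in the degenerate "tree embedding = identity-type inflation" case), so $\Tt$ is realized by a $\Sigma$-tree and hence is via $\Sigma'$. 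Uniqueness in item \ref{item:Sigma'_exists_unique}: any $(\udash m,\Omega+1)$-strategy $\Sigma''\supseteq\Sigma$ in $V[G]$ with inflation condensation must agree with $\Sigma'$ on every $\Sigma'$-tree, because the minimal inflation $\Xx\in V$ is via $\Sigma\subseteq\Sigma''$, $\Xx$ is also an inflation of any $\Sigma''$-branch choice (by the same minimal-inflation uniqueness argument), and inflation condensation of $\Sigma''$ then forces the branch to equal the one $\Xx$ dictates — by induction on tree length $\Sigma''=\Sigma'$.

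Items \ref{item:Gamma'_exists_unique}, \ref{item:Gamma'_shc}, \ref{item:Sigma'_corresp_Gamma'}: when $M$ is not MS-indexed, $\Gamma=\Sigma$ and there is nothing more to do. When $M$ is MS-indexed, apply Lemma \ref{lem:rule_conversion} in $V[G]$ to transfer $\Sigma'$ back to an $(\ell,\Omega+1)$-strategy $\Gamma'$; that lemma is a theorem of $\ZF$ relative to the data and applies verbatim in $V[G]$, and the correspondence commutes with $\subseteq$ (it is local in the trees), so $\Gamma\subseteq\Gamma'$ and $\Gamma'$ has (inconvenient, hence) strong hull condensation and inflation condensation by definition; uniqueness of $\Gamma'$ follows from uniqueness of $\Sigma'$ and the 1-1 correspondence. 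Item \ref{item:Gamma,Gamma'_DJ}: suppose $M\in\HC$ is a premouse with $\om$-enumeration $e$. If $\Gamma$ has (weak) DJ with respect to $e$ in $V$, then by item \ref{item:trees_via_Sigma'_embed} every $\Gamma'$-iterate of $M$ (or rather every $\Sigma'$-iterate on the $\udash$ side) is embedded into a $\Sigma$-iterate in $V$ via a near embedding $\pi_\infty$; then the usual weak DJ argument — exactly as in the proof of Theorem \ref{tm:wDJ_implies_cond}, comparing a putative DJ-counterexample $(\Uu', i)$ in $V[G]$ with its $V$-realization, using that $\pi_\infty$ is a near $n$-embedding and that $\Sigma$ satisfies weak DJ in $V$ — gives a contradiction in $V$, so $\Gamma'$ has weak DJ in $V[G]$; the converse is trivial since $\Gamma=\Gamma'\rest V$. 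The DJ case (not merely weak DJ) is identical, quantifying over all $\om$-enumerations simultaneously.

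\textbf{Main obstacle.} The hard part is showing the branch choices cohere into a single well-defined strategy $\Sigma'$ and that the minimal simultaneous inflation machinery of \S\ref{sec:min_inf}, which in \ref{lem:min_inf} is stated for a set $\mathscr{T}$ of trees in $V$ via a $V$-strategy, can be invoked correctly here: the trees whose continuations we are trying to determine live in $V[G]$, and one must pass to $V$ via $\PP$-names and use the $\Omega$-cc to bound the number of candidate extenders/branches that $V$ must simultaneously inflate against. Verifying that the resulting $\Xx\in V$ genuinely witnesses that the $V[G]$-branch given by $\Sigma(\Xx)$-data is the unique inflation-condensation-correct choice — and that this is independent of which name and which bookkeeping one used — is where Lemma \ref{lem:inflation_commutativity} and its corollary do the essential work, and getting the limit-stage coherence exactly right (so that $f^{\Tt\inflatearrow\Xx}$ lines up across stages) is the most delicate point.
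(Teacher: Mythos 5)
Your proposal invokes inflation condensation at the crucial step (``by inflation condensation of $\Sigma$, $\Xx$ then computes $\Sigma(\Tt\rest\eta)$-like data and so determines a unique $\Tt$-cofinal branch''), but the paper needs \emph{strong hull condensation}, and specifically needs it via an absoluteness argument that your sketch omits. The problem is as follows. When you build $\Xx$ in $V$ (Boolean-valuedly, from names for $\dot{\Tt}$) and reach a limit $\eta<\lh(\Xx)$, you must verify that $p_0$ still forces $\Xx\rest(\eta+1)$ to be an inflation of $\dot{\Tt}$. The failure mode you must rule out is not that $[0,\eta)_\Xx$ induces a wrong $\Tt\rest\lambda$-\emph{cofinal} branch (inflation condensation would handle that), but that $[0,\eta)_\Xx$ induces a $\Tt$-\emph{maximal but not $\Tt$-cofinal} branch $b$ through some limit $\lambda<\lh(\Tt)$ with $b\ne[0,\lambda)_\Tt$. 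Inflation condensation says nothing about such branches. The paper rules this out by Claim \ref{clm:uniqueness_of_b}: given two $\Sigma$-trees $\Xx,\Xx'\in V$, there cannot (in $V[G]$) exist tree embeddings $\Pi:(\Tt,b)\hookrightarrow\Xx$ and $\Pi':(\Tt,b')\hookrightarrow\Xx'$ with $b\ne b'$, proved by putting the hypothetical $(\Tt,b,b',\Pi,\Pi')$ into a tree $S$ on ordinals in $V$, noting that illfoundedness of $S$ is absolute, and then invoking strong hull condensation of $\Sigma$ \emph{in $V$}. The paper is explicit that the inflation-condensation-only version of this uniqueness (its Claim \ref{clm:Sigma'_is_function}, proved via minimal simultaneous inflation and Lemma \ref{lem:inflation_commutativity}, which is what your proposal leans on) is \emph{not} enough; this is flagged in a footnote to the proof, and the paper notes that an earlier draft had exactly this gap. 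Your appeal to Lemma \ref{lem:inflation_commutativity} and Corollary \ref{cor:terminal_dropping_equiv} for ``the essential work'' at limit stages is therefore pointing at the wrong tool: commutativity of inflation handles coherence of already-verified inflations, but it does not address the $\Tt$-maximal-non-cofinal possibility. Without the absoluteness/strong hull condensation argument, your construction does not establish that $\Xx$ is forced to be an inflation of $\dot{\Tt}$ at limit stages, so the branch choices are not known to be well-defined or even to exist, and items \ref{item:Sigma'_exists_unique} and \ref{item:trees_via_Sigma'_embed} remain unproved.

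As a secondary remark, your definition of $\Sigma'$ is via minimal terminally-non-dropping inflations, whereas the paper defines $\Sigma'$ via the (more flexible) existence of a tree embedding into some $\Sigma$-tree in $V$, and then \emph{proves} as part of the argument (Claim \ref{clm:Sigma'_is_total}) that a terminally-non-dropping inflation can always be produced. This ordering matters: the more flexible definition is what makes the absoluteness argument in Claim \ref{clm:uniqueness_of_b} go through (one is free to use two \emph{different} witnesses $\Xx,\Xx'$), and starting from the minimal-inflation definition does not obviously yield the needed uniqueness without first establishing something like Claim \ref{clm:uniqueness_of_b}.
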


\begin{proof}
We just prove the fine-structural variants; the version for wcpms is a slight
simplification.
 (The key point here is that we do not need to form any standard comparison of
premice in the argument,
 although we do use comparison inflation.)
We will first prove parts
\ref{item:Sigma'_exists_unique}, \ref{item:Sigma'_shc} and
\ref{item:trees_via_Sigma'_embed};
this automatically yields parts \ref{item:Gamma'_exists_unique},
\ref{item:Gamma'_shc}
and \ref{item:Sigma'_corresp_Gamma'}, by the correspondence of convenient and
inconvenient strategies.

Work in $V[G]$.
Let $\Sigma'$ be the set of all pairs $(\Tt,b)$
such that $\Tt$ is a $\udash m$-maximal tree on $M$
of length $\leq\Omega$ and $b$ is $\Tt$-cofinal and there is a limit length tree
$\Xx\in V$ and $\Xx$-cofinal branch $c\in V$ with $(\Xx,c)$ via
$\Sigma$,
and there is a tree embedding
$\Pi:(\Tt,b)\hookrightarrow(\Xx,c)$;
equivalently by \ref{lem:ate_to_te}, there is an almost tree embedding
$\Pi:(\Tt,b)\hookrightarrow_{\almost}(\Xx,c)$.
We will verify that
$\Sigma'$ is a $(\udash m,\Omega+1)$-strategy for $M$, with strong hull
condensation.
Actually, for each such $(\Tt,b)$, we will find a witnessing
$(\Xx,c)\in V$
which is a terminally-non-dropping inflation of $(\Tt,b)$.

We start by showing that $\Sigma'$ is a function.

\begin{clm}\label{clm:uniqueness_of_b}
Let $\Xx,\Xx'\in V$ be via $\Sigma$.
Work in $V[G]$. Let $\Pi:(\Tt,b)\hookrightarrow\Xx$ and
$\Pi':(\Tt,b')\hookrightarrow\Xx'$
be tree embeddings.
Then $b=b'$.
\end{clm}
\begin{proof}
 Suppose not and fix $\Xx,\Xx'$.
 Let $S$ be the tree of attempts to build (a code for)
 a tuple $(\Tt,b,b',\Pi,\Pi')$
 such that $\Tt$ is a countable limit length (potential, that is, satisfies the
relevant first order requirements, but without demanding
that $\lh(\Tt)$ be wellfounded or that $\Tt$ have wellfounded models)
iteration tree on $M$ and
 $b,b'$ are distinct $\Tt$-cofinal branches,
 $\Pi:(\Tt,b)\hookrightarrow_{\almost}\Xx$
 and
 $\Pi':(\Tt,b')\hookrightarrow_{\almost}\Xx'$
 (and hence, $(\Tt,b)$ and $(\Tt,b')$ are in fact true iteration trees).
 Here we can and do take $S$ as a tree on some $\lambda\in\OR$.
 We can do this because
 an element $s$ of $S$ specifies some finite iteration tree $\bar{\Tt}_s$ on
$M$,
 with domain some finite set $D_s\sub\om$, with $D_{s'}\sub D_s$ for $s'\ins s$,
 specifies how each $\bar{\Tt}_{s'}$ fits as a subtree of $\bar{\Tt}_s$,
 and specifies $b\inter D$ and $b'\inter D$ and $\Pi\rest D$ and $\Pi'\rest D$
(the latter
 meaning just $\gamma_\alpha,\delta_\alpha,\gamma'_\alpha,\delta'_\alpha$ for
$\alpha\in D$).
 Here $\bar{\Tt}_s$ can be specified by a finite sequence of ordinals because
recall that
 in the coarse (wcpm) case, although $M$ need not model $\ZFC$,
 we do demand that the extenders used come from $\es^M$,
 which is a wellordered set.

 Now because of our contradictory assumption, $S$ is illfounded in
$V^{\Coll(\om,\gamma)}$ for sufficiently large $\gamma$,
 and therefore $S$ is illfounded in $V$.
 But then (as $S$ is on $\lambda$)  we get some such $\Tt,b,b',\Pi,\Pi'\in V$,
contradicting strong hull
condensation.
\end{proof}

As mentioned earlier, whenever $b=\Sigma'(\Tt)$, we will actually find a
$(\Tt,b)$-terminally-non-dropping
inflation $(\Xx,c)$ of $(\Tt,b)$,  with $(\Xx,c)\in V$ and via $\Sigma$.
We can actually prove the uniqueness of such $b$ using only inflation
condensation,
and we give this proof next.
However, this uniqueness is not enough for the overall proof; we seem to need
the stronger uniqueness of the claim above,
which relied on strong hull condensation.
So we just include the next claim for interest, and in case
one might be able to improve on its proof,
so as to replace the use of strong hull condensation in the theorem with
inflation condensation.
\footnote{In an earlier draft of this paper,
which was available on the author's website for a short period of time,
we had actually stated the theorem with inflation condensation instead of
strong
hull condensation,
but there was a gap in that putative proof.}

\begin{clm}\label{clm:Sigma'_is_function}
Let $\Tt,b_0,\Xx_0,c_0,b_1,\Xx_1,c_1$ be such that
$(\Xx_i,c_i)\in V$, according to $\Sigma$, is a
$(\Tt,b_i)$-terminally-non-dropping inflation of $(\Tt,b_i)$.
Then $b_0=b_1$, assuming only inflation condensation for $\Sigma$.
\end{clm}
\begin{proof} Let $f^0=f^{(\Tt,b_0)\inflatearrow(\Xx_0,c_0)}$, etc.
By minimizing $\lh(\Xx_i)$, we may assume $\Xx_i$ is also an inflation of
$\Tt$,
as witnessed by $\widetilde{f}^0=f^{\Tt\inflatearrow\Xx_0}$, etc
(otherwise replace $(\Xx_0,c_0)$
with $\Xx_0\rest\eta+1$
for the least $\eta$ where $f^0(\eta)=\lh(\Tt)$).
Then  $\widetilde{C}^i=C^i\inter\eta_i$ where $\eta_i=\lh(\Xx_i)$,
$\widetilde{f}^i=f^i\rest\widetilde{C}^i$,
$\eta_i\in C^i$ and $f^i(\eta_i)=\lh(\Tt_i)$.

In $V$, let $\Xx$, of length $\lambda+1$, be the least initial segment of the
comparison inflation (\S\ref{subsec:min_inf}) of $(\Xx_0,c_0)$
and
$(\Xx_1,c_1)$
where for some $i\in\{0,1\}$,
\[ \lambda\in C^{(\Xx_i,c_i)\inflatearrow\Xx}
\text{ and } f^{(\Xx_i,c_i)\inflatearrow\Xx}(\lambda)=\lh(\Xx_i).\]
We may assume $i=0$.
Then $\Xx$ is an $(\Xx_0,c_0)$-terminally-non-dropping
inflation of $(\Xx_0,c_0)$.
Note $\lambda$ is a limit, and  by
Corollary \ref{cor:terminal_dropping_equiv},
$\Xx$ is a $(\Tt_0,b_0)$-terminally-non-dropping inflation of $(\Tt_0,b_0)$.
Let $\hat{C}^0$, etc, be the witnesses to the latter.
Then by Lemma \ref{lem:inflation_commutativity}, $\lambda\in\hat{C}^0$
and $\hat{f}^0(\lambda)=\lh(\Tt)$,
so $b^\Xx$ determines $b_0$ via this inflation.
By the minimality of $\lambda$, $\hat{f}^0(\alpha)<\lh(\Tt)$ for each
$\alpha\in\lambda\inter\hat{C}^0$.
So note that $\Xx\rest\lambda$ is an inflation of $\Tt$,
as witnessed by $\hat{C}^0\inter\lambda$, $\hat{f}^0\rest\lambda$, etc.
(The branch $b_0$ is irrelevant because $\lh(\Tt)\notin\hat{f}^0``\lambda$.)

Now because $\Xx$ is also an inflation of $(\Xx_1,c_1)$,
by \ref{lem:inflation_commutativity},
$\Xx$ is also an inflation of $(\Tt,b_1)$,
as witnessed by $\hat{C}^1$, etc,
and again by minimality of $\lambda$, we have
$\lh(\Tt)\notin\hat{f}^1``\lambda$.
So $\hat{C}^1\inter\lambda=\hat{C}^0\inter\lambda$ and
$\hat{f}^0\rest\lambda=\hat{f}^1\rest\lambda$
etc. But then $\hat{C}^0=\hat{C}^1$ and $\hat{f}^0=\hat{f}^1$ etc,
because the extensions are determined by the common restrictions to $\lambda$
and $\Tt$ and $b^\Xx$.
So $\lambda\in\hat{C}^1$ and $\hat{f}^1(\lambda)=\lh(\Tt)$
and since $\Xx$ is an inflation of $(\Tt,b_1)$, $b^\Xx$ determines $b_1$.
But $b^\Xx$ determines $b_0$, so $b_0=b_1$. This gives the claim.
\end{proof}

We now verify that $\Sigma'$ produces wellfounded models
and is total.

\begin{clm}\label{clm:Sigma'_is_total}
Let $\Tt\in V[G]$ be a putative tree via $\Sigma'$.
If $\lh(\Tt)$ is a successor
then there is $\Xx\in V$  via $\Sigma$
and such that $\Xx$ is a $\Tt$-terminally-non-dropping inflation of $\Tt$,
and if $\lh(\Tt)<\Omega$ then we can take $\lh(\Xx)<\Omega$;
so  every such $\Tt$ is a true iteration tree.
If $\lh(\Tt)$ is a limit $\leq\Omega$ then
$\Tt\in\dom(\Sigma')$.
\end{clm}
\begin{proof}
We prove the claim by induction on $\lh(\Tt)$. Suppose we have a tree $\Tt$ of
length $\eta+1\leq\Omega+1$,
and the claim holds for $\Tt$. Fix $\Xx$ witnessing this.
Then for trees $\Tt'$ normally extending $\Tt$ of length ${<\eta+\om}$,
we may extend $\Xx$ to a $\Tt'$-terminally-non-dropping inflation $\Xx'$ of
$\Tt'$,
by simply copying the finite remainder of $\Tt'$ up,
and since $\Xx\in V$ is via $\Sigma$, so is $\Xx'$.

So fix $\Tt$ of limit length $\leq\Omega$.
We will find some $\Tt$-cofinal $b\in V[G]$ and a
$(\Tt,b)$-terminally-non-dropping inflation $(\Xx,c)$ of $(\Tt,b)$,
with $(\Xx,c)\in V$ via $\Sigma$.

For this, working in $V$, we form a Boolean valued comparison
inflation of various candidates for $\Tt$.
Fix $p_0\in\PP$ forcing that $\dot{\Tt}$ is as above. We will define the
(Boolean valued comparison) inflation relative to $p_0$,
producing a tree $(\Xx,c)$, and show that there is $q\leq p_0$ such that $q$
forces that it works for some $\dot{\Tt}$-cofinal branch $\dot{b}$.
This is enough by density.

So, we define a tree $\Xx$ on $M$, using extenders $E^\Xx_\alpha$ with indices
$\xi_\alpha$,
as follows. Let $E^\Xx_0$ be the least $E\in\es_+^M$ such that some $q\leq p_0$
forces that $E^{\dot{\Tt}}_0=E$. This gives $\Xx\rest 2$.

Now suppose we have $\Xx\rest\alpha+1$. If $\alpha$ is a limit and there is
some
$q\leq p_0$
forcing ``$\Xx\rest\alpha+1$ is a $(\dot{\Tt},b)$-terminally-non-dropping
inflation of $(\dot{\Tt},b)$ for some $\Tt$-cofinal $b$'',
then we stop the construction (with success).
Now suppose otherwise. If $\alpha=\Omega$ then we stop (with failure due to
long
tree). Suppose otherwise. Let $E^\Xx_\alpha$ be the least
$E\in\es_+(M^\Xx_\alpha)$
such that some $q\leq p_0$ forces that $\Xx\rest\alpha+1$ is an inflation of
$\dot{\Tt}$,
with $\alpha\in C^-$, and $E=E^{\dot{\Tt}\inflatearrow\Xx\rest\alpha+1}_\alpha$,
if such an $E$ exists; otherwise we stop (with failure due to dropping).

At limit stages $\eta$, we extend $\Xx\rest\eta$ using $\Sigma$.

This completes the definition of $\Xx$. We next verify that the construction
stops with success.

Now $p_0$ forces that $\Xx$ is an inflation of $\dot{\Tt}$.
This follows from the minimality of $\In(E^\Xx_\beta)$ for each $\beta$
together
with Claim \ref{clm:uniqueness_of_b}.
That is, if $\eta<\lh(\Xx)$ is a limit
and $p_0$ forces that $\Xx\rest\eta$ is an inflation of $\dot{\Tt}$,
then $p_0$ forces that $\Xx\rest(\eta+1)$ is also an inflation of $\dot{\Tt}$.
For otherwise there are $q,\lambda$
such that $q\leq p_0$ and $q$ forces ``$\lambda<\lh(\dot{\Tt})$
and there is $b\neq[0,\lambda)_{\dot{\Tt}}$ and a tree embedding
\[ \Pi:(\dot{\Tt}\rest\lambda,b)\hookrightarrow\Xx\rest(\eta+1),\text{''}\]
contradicting Claim \ref{clm:uniqueness_of_b}.\footnote{Note that Claim
\ref{clm:Sigma'_is_function}
does not suffice here, because we need to rule out the possibility
of having a limit $\lambda<\lh(\Tt)$ and some limit $\eta$
such that $\Xx\rest\eta$ is an inflation of $\Tt$,
but $\Xx\rest(\eta+1)$ is not, because $[0,\eta)_\Xx$ induces some
$\Tt$-maximal
branch
which is not $\Tt$-cofinal. Claim \ref{clm:Sigma'_is_function} does not suffice
to rule this out.}

Now suppose the construction stops with failure due to dropping, giving tree
$\Xx=\Xx\rest\alpha+1$ (so $\alpha<\Omega$).
Note $p_0$ forces ``$\alpha\notin C^-$''.
Now $\alpha$ is a limit, because if $\alpha=\beta+1$
then some $q\leq p_0$ forces ``$E^\Xx_\beta$ is copied from $\dot{\Tt}$'',
so $q$ forces
  ``$\alpha=\gamma^{\dot{\Tt}\inflatearrow\Xx}_{\alpha;f(\alpha)}$,
so $\alpha\in C^-$ (as $\dot{\Tt}$ has limit length)'';
contradiction. So let $\beta<^\Xx\alpha$ be such that $(\beta,\alpha)_\Xx$ does
not drop.
Some $q\leq p_0$ forces ``$\beta\in C^-$''. We claim $q$ forces
``$\alpha\in C^-$'',
a contradiction. For suppose not, and let $\gamma\in(\beta,\alpha]_\Xx$ be
least
such that
some $r\leq q$ forces  ``$\gamma\notin C^-$'', and fix $s\leq r$
such that $s$ decides the values $\lambda=\sup_{\xi<^\Xx\gamma}f(\xi)$ and
$\lambda'=\lh(\dot{\Tt})$.
Because $(\beta,\alpha)_\Xx$ does not drop, $\gamma$ is a limit ordinal.
But then if $\lambda<\lambda'$, note that $s$ forces $\gamma\in C$ (recall
$p_0$
forces that $\Xx$ is an inflation of $\dot{\Tt}$,
so $s$ forces that $[0,\gamma)_\Xx$ determines $[0,\lambda)_{\dot{\Tt}}$)
and hence $\gamma\in C^-$, a contradiction. So $\lambda=\lambda'$, but then
the construction stops with success at stage $\gamma$, as witnessed by $s$ and
the $\dot{\Tt}$-cofinal
branch determined by $[0,\gamma)_\Xx$, a contradiction.

So finally suppose that the process stops with failure due to a long tree,
so we get $\Xx$ of length $\Omega+1$.
If $q\leq p_0$ and $q$ forces that cofinally many extenders used along
$[0,\Omega)_\Xx$
are $\dot{\Tt}$-copying, then because $\Omega$ is regular in $V[G]$
and $\lh(\dot{\Tt})\leq\Omega$, $q$ forces that $\lh(\dot{\Tt})=\Omega$
and the process ends successfully at $\alpha=\Omega$, contradiction.
But if there is no such $q$, then by $\Omega$-cc-ness, there is some
$\alpha<\Omega$
such that $p_0$ forces that every extender used along $(\alpha,\Omega)_\Xx$ is
$\dot{\Tt}$-inflationary.
But this is impossible, as by construction, for every extender $E$ used in
$\Xx$,
there is $q\leq p_0$ forcing that $E$ is $\dot{\Tt}$-copying.

So the construction stops with success, as witnessed by $\alpha\leq\Omega$ and
$q\leq p_0$
(so $\lh(\Xx)=\alpha+1$). Finally, to complete the proof of the claim,
we show that if $\alpha=\Omega$
then $q$ forces that $\lh(\dot{\Tt})=\Omega$. But by the minimality of $\alpha$
(that is, there is no $\alpha'<\alpha$ such that the construction stopped with
success at stage $\alpha'$),
$q$ forces that ``$f(\alpha)<\lh(\dot{\Tt})$ for all $\alpha<^\Xx\Omega$,
and $f(\Omega)=\lh(\dot{\Tt})$, and
$f(\Omega)=\sup_{\alpha<^\Xx\Omega}f(\alpha)$'',
but $\Omega$ is regular in $V[G]$,
so $q$ forces $\lh(\dot{\Tt})=\Omega$.
\end{proof}

\begin{clm} $\Sigma'$ has strong hull condensation.\end{clm}
\begin{proof}
Work in $V[G]$. Let $\Pi:\Tt\hookrightarrow\Uu$ where $\Uu$ is via $\Sigma'$.
We may assume that $\Uu\in V$ is via $\Sigma$, by \ref{lem:comp_tree_emb}
and Claim \ref{clm:Sigma'_is_total}. We claim that $\Tt$ is via $\Sigma'$. For
let $\eta<\lh(\Tt)$ be a limit
and $b=\Sigma'(\Tt\rest\eta)$. Then using a restriction of $\Pi$
 and Claim \ref{clm:uniqueness_of_b}, we have $b=[0,\eta)_\Tt$.\footnote{One
can
alternatively
 use an absoluteness argument like the proof of Claim
\ref{clm:uniqueness_of_b};
this argument
 does not use \ref{lem:comp_tree_emb}.
 Fix some trees $\Xx,\Vv\in V$ via $\Sigma$, and consider the tree of attempts
 to build trees $\Tt$ and $\Uu$ together with $\Tt$-cofinal branches $b\neq c$
 and almost tree embeddings $\Pi_b:(\Tt,b)\hookrightarrow_{\almost}\Xx$ and
 $\Pi_c:(\Tt,c)\hookrightarrow_{\almost}\Uu$ and
$\Pi:\Uu\hookrightarrow_{\almost}\Vv$.
 Given objects of this form, then by \ref{lem:ate_to_te} and strong hull
condensation in $V$,
 $(\Tt,b)$ is via $\Sigma$, and $\Uu$ is via $\Sigma$, but therefore also
$(\Tt,c)$ is via $\Sigma$,
 so $b=c$. So the tree is wellfounded, which suffices.}
 \end{proof}

\begin{clm}
 In $V[G]$, $\Sigma'$ is the unique $(\udash m,\Omega+1)$-strategy
 with inflation condensation which
extends $\Sigma$.
\end{clm}
\begin{proof}
In $V[G]$, let $\Sigma''$ be such a strategy.
Let $\Tt$ be of limit length $\leq\Omega$,
according to both $\Sigma'$ and $\Sigma''$, and let $b'=\Sigma'(\Tt)$ and
$b''=\Sigma''(\Tt)$.
We need to see that $b'=b''$.
Let $(\Xx,c)\in V$, according to $\Sigma$, be a $(\Tt,b')$-terminal inflation
of
$(\Tt,b')$,
of minimal possible length. Since $\Sigma\sub\Sigma''$, $(\Xx,c)$ is also
according to $\Sigma'$,
so by inflation condensation for $\Sigma''$, we have $b''=b'$, as required.
\end{proof}

This completes the proof of parts \ref{item:Sigma'_exists_unique},
\ref{item:Sigma'_shc} and \ref{item:trees_via_Sigma'_embed}.
Finally consider part \ref{item:Gamma,Gamma'_DJ}:

\begin{clm}\label{clm:DJ_for_Sigma'} Suppose $M$ is a premouse (not wcpm) and
countable in $V$. Then  $\Gamma$ has DJ iff $\Gamma'$ has DJ in $V[G]$.
Likewise for weak DJ with respect to $e$.
\end{clm}
\begin{proof}
We just discuss DJ; weak DJ is almost the same.

If $\Gamma$ fails DJ then since $\Gamma\sub\Gamma'$, clearly $\Gamma'$ fails DJ
in $V[G]$.
So suppose $\Gamma$ has DJ, but $\Gamma'$ does not in $V[G]$.
Let $\Tt\in V[G]$ be a successor length tree according to $\Gamma'$, witnessing
this,
via some $Q\ins M^\Tt_\infty$ and $\pi:M\to Q$.

Assume for now that
$M$ has $\lambda$-indexing.
Let $\Xx\in V$, via $\Gamma$, be a $\Tt$-terminally-non-dropping inflation of
$\Tt$.
Let $\sigma:M^\Tt_\infty\to M^\Xx_\infty$ be the final inflation copying map.
So $\sigma$ is a near $\deg^\Tt(\infty)$-embedding, and by
\ref{rem:non-drop_inf_comm}, if $\Tt$ is terminally-non-dropping
then so is $\Xx$ and $\sigma\com i^\Tt=i^\Xx$.
So by considering $\sigma\com i^\Tt$ and $\sigma(Q)$ if $Q\pins M^\Tt_\infty$,
we may in fact assume that $\Tt\in V$ is via $\Gamma$.
But then since $M$ is countable in $V$, the existence of $\pi\in V[G]$ and
absoluteness
yields some $\pi'\in V$
which gives a counterexample to DJ in $V$, contradiction.

Now suppose instead that $M$ has MS-indexing. Note by minimizing on $\lh(\Tt)$,
we get $\lh(\Tt)<\Omega$ (for otherwise consider $\Tt\rest(\alpha+1)$ for
sufficiently large $\alpha<^\Tt\Omega$).
Let $\widetilde{\Tt}$ be the tree according to $\Sigma'$, corresponding to
$\Tt$,
so (by \ref{lem:tree_conversion})
$(M^{\widetilde{\Tt}}_\infty)^\pm=M^\Tt_\infty$ and if $M^\Tt_\infty$ is
type 3
then
$\udeg^{\widetilde{\Tt}}(\infty)=\deg^\Tt(\infty)+1>0$.
Let $\widetilde{\Xx}\in V$, via $\Sigma$, be a $\Tt$-terminally-non-dropping
inflation of $\widetilde{\Tt}$.
Let
\[ \widetilde{\sigma}:M^{\widetilde{\Tt}}_\infty\to M^{\widetilde{\Xx}}_\infty
\]
be the final copying map,
so $\widetilde{\sigma}$ is a near $\udeg^{\widetilde{\Tt}}(\infty)$-embedding.
Let $\Xx$ be the tree according to $\Gamma$, corresponding to $\widetilde{\Xx}$.
So $M^\Xx_\infty=(M^{\widetilde{\Xx}}_\infty)^\pm$.

Now if $Q\pins M^\Tt_\infty$ then note that either
\[ \widetilde{\sigma}(Q)\pins M^\Xx_\infty
\text{ or
}\widetilde{\sigma}(Q)\pins\Ult(M^\Xx_\infty|(\mu^+)^{M^\Xx_\infty},F)\]
where $F=F(M^\Xx_\infty)$ and $\mu=\crit(F)$,\footnote{
Here if $Q\notin(M^\Tt_\infty)^\sq$
and $\widetilde{\sigma}(\nu(F(M^\Tt_\infty)))>\nu(F(M^\Xx_\infty))$
then use $\Xx\conc F$ instead of just $\Xx$.}
and so either from
$(\Xx,\pi(Q),\widetilde{\sigma}\com\pi)$
or $(\Xx\conc F,\pi(Q),\widetilde{\sigma}\com\pi)$,
and absoluteness,
we get a contradiction to DJ for $\Gamma$ in $V$.

So suppose $Q=M^\Tt_\infty$. Because $\widetilde{\sigma}$ is a near
$\udeg^{\widetilde{\Tt}}(\infty)$-embedding,
\[
\sigma=\widetilde{\sigma}
\rest(M^\Tt_\infty)^\sq:(M^\Tt_\infty)^\sq\to(M^\Xx_\infty)^\sq \]
is a near $\deg^\Tt(\infty)$-embedding $M^\Tt_\infty\to M^\Xx_\infty$,
and we have
\[ \udeg^{\widetilde{\Xx}}(\infty)=\udeg^{\widetilde{\Tt}}(\infty)\text{ and }
\deg^\Xx(\infty)=\deg^\Tt(\infty)\geq n. \]
If $\Tt$ drops in model on $b^\Tt$ then so do
$\widetilde{\Tt},\widetilde{\Xx},\Xx$,
and $\sigma\com\pi:M\to M^\Xx_\infty$ is a near $\deg^\Tt(\infty)$-embedding,
so by absoluteness we have a contradiction.
So $\Tt$ does not drop in model, hence nor in degree,
and likewise for $\Xx,\widetilde{\Tt},\widetilde{\Xx}$.
So $\widetilde{\sigma}\com i^{\widetilde{\Tt}}=i^{\widetilde{\Xx}}$,
so by \ref{lem:tree_conversion}, $\sigma\com i^\Tt=i^\Xx$.
And  $\pi(\alpha)<i^\Tt(\alpha)$ for some $\alpha\in\OR^M$, so
$\sigma(\pi(\alpha))<\sigma(i^\Tt(\alpha))=i^\Xx(\alpha)$,
so again
we have a contradiction.
\end{proof}

This completes the proof of the theorem.
\end{proof}

\subsection{Universally Baire strategies}\label{subsec:univ_baire}

The following corollaries on universally Baire representations for iteration
strategies were motivated by related work of Steel.
Given an iteration strategy $\Sigma$ on a countable premouse $M$,
let $\widetilde{\Sigma}$ be the natural coding of $\Sigma\rest\HC$ over the
reals.
Note that without $\AC$, it seems that the trees $S,T$ in the following
corollary might not be trees on ordinals.
However, the only non-ordinal information is specified by $\Xx=y(0)$.
In Corollary \ref{cor:uB_on_OR} we prove a version where we do get trees $S,T$
on
ordinals.

\begin{cor}\label{cor:uB_strat}
Let $\Omega,\Gamma,M$ be as in
Theorem \ref{thm:strat_with_cond_extends_to_generic_ext},
with $M$ countable.
Then $\widetilde{\Gamma}\rest\RR$ is $\Omega$-universally Baire.
In fact, there are trees $S,T$ on $\om\cross\her_\Omega$
such that letting $G$ be $V$-generic for $\Coll(\om,{<\Omega})$, then $S,T$
project to complements in $V[G]$,
and
\[ p[T]^{V[G]}=\widetilde{\Gamma'}\rest\RR,\]
where $\Gamma'$ is the extension of $\Gamma$ given by
Theorem \ref{thm:strat_with_cond_extends_to_generic_ext}.
\end{cor}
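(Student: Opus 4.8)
The plan is to build the trees $S,T$ directly from the data provided by Theorem \ref{thm:strat_with_cond_extends_to_generic_ext} and the ``tree of attempts'' technique already used in the proof of Claim \ref{clm:uniqueness_of_b}. The forcing $\Coll(\om,{<\Omega})$ is $\Omega$-cc, so Theorem \ref{thm:strat_with_cond_extends_to_generic_ext} applies: in $V[G]$ there is a unique extension $\Gamma'$ of $\Gamma$ with strong hull condensation, and moreover (by part \ref{item:trees_via_Sigma'_embed}) every tree $\Tt$ via $\Gamma'$ of length $<\Omega$ admits a $\Tt$-terminally-non-dropping inflation $\Xx\in V$ via $\Gamma$, of length $<\Omega$. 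This last fact is the engine: it converts ``$\Tt$ is via $\Gamma'$'' into an $\exists$-statement witnessed by an object in $V$ (in fact in $\her_\Omega^V$, since $\lh(\Xx)<\Omega$ and the extenders come from the wellordered sequences of the relevant models). I will first treat the fine-structural case and note the coarse case is a simplification, as throughout the section.

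First I would define $T$: it searches for a real $y$ coding a pair $(\Tt,b)$ with $\Tt$ a countable putative $\udash m$-maximal tree on $M$ of successor length and $b$ the branch $\Gamma'$ assigns to the relevant limit segments (so that $y$ really codes an element of $\widetilde{\Gamma'}$), together with a branch through $\her_\Omega$ coding an inflation $\Xx\in V$ of $\completion^{\Gamma}(\Tt)$ which is terminally-non-dropping, is via $\Gamma$, and has length $<\Omega$, along with the witnessing tuple $(t,C,C^-,f,\Pivec)^{\Tt\inflatearrow\Xx}$. The point is that ``$\Xx$ is via $\Gamma$'' is, via the usual reflection, checkable level-by-level along the branch (one records at each stage the branch $\Gamma$ chose and the verification that it agrees), and ``$\Xx$ is a terminally-non-dropping inflation of $\completion^\Gamma(\Tt)$'' is a closed condition on the record by Definition \ref{dfn:inflation}, Lemma \ref{lem:tree_embedding_copy} and Corollary \ref{cor:tree_embedding_uniqueness}. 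Since all the non-real data ($\Xx$, the tree embeddings, the extenders) lives in $\her_\Omega$, $T$ is a tree on $\om\cross\her_\Omega$. For $S$, I would build the tree searching for a real $y$ coding a countable limit-length putative tree $\Tt$ on $M$ together with a branch $b$ and an almost tree embedding $\Pi:(\Tt,b)\hookrightarrow_{\almost}\Xx$ for some $\Xx\in\her_\Omega$ via $\Gamma$, where $b\neq\Gamma'(\Tt\rest\lh(\Tt))$ in the sense that one also records a \emph{different} branch $b'$ together with its own witnessing inflation — i.e.\ $S$ searches for a ``bad'' play for II. More precisely, $S$ records a putative tree, two distinct cofinal branches $b\neq c$ through one of its limit segments, and almost tree embeddings witnessing that \emph{both} $(\Tt\rest\eta,b)$ and $(\Tt\rest\eta,c)$ embed into trees via $\Gamma$; by \ref{lem:ate_to_te} and strong hull condensation of $\Gamma$ in $V$ this is impossible in $V$, hence $S$ is wellfounded, hence $p[S]^{V[G]}$ misses the relevant reals.

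The main verification is then: (a) $p[T]^{V[G]}=\widetilde{\Gamma'}\rest\RR$, and (b) $p[S]^{V[G]}$ and $p[T]^{V[G]}$ are complementary. For (a), the inclusion $\supseteq$ is exactly part \ref{item:trees_via_Sigma'_embed} of Theorem \ref{thm:strat_with_cond_extends_to_generic_ext} together with $\HC^{V[G]}$-correctness of the coding; the inclusion $\subseteq$ uses Claim \ref{clm:uniqueness_of_b} (uniqueness of the branch witnessed by a tree embedding into something via $\Gamma$) to see that any real passing through $T$ genuinely codes $\Gamma'$ rather than some competing branch. For (b): no real can be in both $p[S]$ and $p[T]$, since that would simultaneously witness, for a single limit segment, a branch $b$ which is $\Gamma'$'s choice (via $T$) and a \emph{different} branch $c$ also embeddable into a tree via $\Gamma$ (via $S$), and then the $V$-wellfoundedness argument for $S$ — run in $V[G][H]$ for $H$ collapsing the countable parameter and using that the relevant tree is on ordinals-plus-$\her_\Omega$ — gives a contradiction with strong hull condensation in $V$; and every real is in one or the other, because in $V[G]$ either $\Tt\rest\eta$ is via $\Gamma'$ (branch witnessed by an inflation in $V$, so $y\in p[T]$) or it is not, in which case the failure is witnessed by a competing branch and an almost tree embedding, so $y\in p[S]$ — here one invokes \ref{lem:ate_to_te}, \ref{lem:comp_tree_emb} and Claim \ref{clm:Sigma'_is_total}. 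The $\Omega$-universal Baireness of $\widetilde{\Gamma}\rest\RR$ (as opposed to $\widetilde{\Gamma'}$) then follows because $\widetilde{\Gamma}\rest\RR=\widetilde{\Gamma'}\rest\RR\inter V$ and the trees $S,T\in V$ project to complements after the full collapse, so they project to complements after any $\PP$ of size $<\Omega$ by the usual $\Sigma^1_2$-absoluteness-style argument inside $V[G']$.

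I expect the main obstacle to be organizing $T$ so that the verification ``$\Xx\in V$ and $\Xx$ is via $\Gamma$'' is genuinely a \emph{branch condition} rather than something requiring unbounded search: the record must, at each finite stage, commit to the ordinals naming the extenders of $\Xx$, the tree-embedding data (the $\gamma_\alpha,\delta_\alpha$ and the maps $\pi_{\beta i}$, which by Corollary \ref{cor:tree_embedding_uniqueness} are determined and hence finitely approximable), and the branches $\Gamma$ chose at limits, in such a way that a cofinal branch through $T$ assembles an honest length-$<\Omega$ tree via $\Gamma$ together with an honest inflation structure; the closure of the almost-tree-embedding and inflation conditions (noted in the remark after \ref{dfn:tree_embedding_extensions} and in \ref{dfn:inflation}) is what makes this go through, but bookkeeping the $\udash$degree data and the dropdown sequences carefully enough that everything is first-order over the record is where the real work lies. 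The coarse (wcpm) case is easier since there is no dropping and no degrees, exactly as elsewhere in the paper.
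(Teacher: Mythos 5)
Your $T$ is in essentially the same spirit as the paper's, but more complicated than it needs to be. The paper's $T$ takes $y(0)=\Xx$: the entire $V$-tree via $\Sigma$ is placed as the \emph{first} entry of the $\her_\Omega$-coordinate, so that ``$\Xx$ is via $\Sigma$ of length $<\Omega$'' is a single-shot condition on $y(0)$, not something to be verified level by level along a branch. The remaining entries of $y$ then only code an almost tree embedding $\Pi:(\Tt,b)\hookrightarrow_{\almost}\Xx$, which is a closed, finitarily-checkable condition (cf.~the remark after Lemma~\ref{lem:ate_to_te}). This sidesteps exactly the bookkeeping obstacle you flag at the end. The paper also records an almost tree embedding rather than the full inflation data $(t,C,C^-,f,\Pivec)$; this is strictly less data and suffices, since by part~\ref{item:trees_via_Sigma'_embed} of Theorem~\ref{thm:strat_with_cond_extends_to_generic_ext} plus Lemma~\ref{lem:ate_to_te} every $\Gamma'$-tree of length $<\Omega$ admits such an embedding into some $\Xx\in V$ via $\Sigma$.

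Your $S$, however, has a genuine gap. In the ``more precisely'' passage you describe the tree searching for a putative tree, two distinct cofinal branches $b\neq c$ of a limit segment, and almost tree embeddings witnessing that \emph{both} embed into trees via $\Gamma$. As you note yourself, that tree is wellfounded (it is essentially the tree from Claim~\ref{clm:uniqueness_of_b}), so $p[S]^{V[G]}=\emptyset$. But $S$ must project to the \emph{complement} of $p[T]=\widetilde{\Gamma'}\rest\RR$, which is nonempty: every ``garbage'' real and every real coding $(\Tt,b)$ with $b$ the wrong branch must lie in $p[S]$. You have conflated $S$ itself with the auxiliary wellfoundedness tree used to prove $p[S]\cap p[T]=\emptyset$. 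The correct $S$ does \emph{not} require any embedding witness for the candidate branch $\wt{b'}$ — indeed demanding one would force $\wt{b'}$ to be the right branch, which is the opposite of what membership in the complement should say. Instead, a branch of $S$ records a candidate $\wt{x'}$ (coding a pair $(\Tt',\wt{b'})$, or garbage), together with a full branch $(x',(x,y))\in[T]$ in which $x'$ codes $(\Tt',c)$ for a branch $c\neq\wt{b'}$; i.e., a witness that $\Gamma'$ actually chooses a \emph{different} branch. Your informal description before ``more precisely'' was closer — but even there you wrongly require an almost tree embedding for the candidate pair $(\Tt,b)$ itself; drop that requirement and only keep the witness for the competing branch.
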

\begin{proof}
Let $\Sigma$ be the $\udash$strategy corresponding to $\Gamma$, as in
Theorem \ref{thm:strat_with_cond_extends_to_generic_ext}.

Let $T$ be the tree of attempts to build $(x',(x,y))$, where
$x,x'\in{^\om}\om$,
$x$ codes a pair $(\Tt,b)$,
 where $\Tt$ is a (potential) countable limit length $\udash m$-maximal tree on
$M$ and $b$ is a  $\Tt$-cofinal branch,
$x'$ codes the corresponding (potential) $m$-maximal tree $(\Tt',b')$,
 and $y\in{^\om(\her_\Omega)}$ specifies $y(0)=\Xx$ is some tree on $M$ via
$\Sigma$, of length $<\Omega$,
 and $y$ codes an almost tree embedding
 $\Pi:(\Tt,b)\hookrightarrow_{\almost}\Xx$.

 Let $S$ be natural tree for the complement. That is, $S$ builds
$(\wt{x'},(\wt{x},\wt{y}))$
 such that either $\wt{x'}$ codes garbage information,
 or $\wt{x},\wt{x'}$ code $(\Tt,\wt{b}),(\Tt',\wt{b'})$ as above,
 and $\wt{y}$ codes a tuple $(x',(x,y))\in[T]$,
 and $x'$ codes the pair $(\Tt',c)$ with $c\neq\wt{b'}$.

 Now $\Coll(\om,{<\Omega})$ is $\Omega$-cc, so
Theorem \ref{thm:strat_with_cond_extends_to_generic_ext} applies.
But clearly by strong hull condensation we have $p[S]\inter p[T]=\emptyset$ in
both $V$ and $V[G]$.
\footnote{If $\her_\Omega$ is not wellordered in $V$, then we can't quite use
the usual argument here to deduce that $V[G]\sats$``$p[T]\inter
p[S]=\emptyset$'',
given that $V\sats$``$p[T]\inter p[S]=\emptyset$'',
However, one could note that for any given tree $\Xx$ as a choice of $y(0)$,
the sub-trees $S_\Xx$ and $T_\Xx$ can be taken on ordinals.
So if $V[G]\sats$``$p[T]\inter p[S]\neq\emptyset$'',
then we could fix a specific $\Xx$ and $\Yy$ with $V[G]\sats$``$p[T_\Xx]\inter
p[S_\Yy]\neq\emptyset$''
and deduce that $V\sats$``$p[\Tt_\Xx]\inter p[S_\Yy]\neq\emptyset$'',
a contradiction.}
And by the proof of \ref{thm:strat_with_cond_extends_to_generic_ext},
note that $p[T]^{V[G]}=\widetilde{\Gamma'}\rest\RR$,
and $S,T$ project to complements in $V[G]$.\end{proof}

If $\Omega$ is inaccessible, we can improve the conclusion;
in the following proof,
the trees we form are analogous to those formed by direct limits of mice used
by
Steel.

\begin{cor}\label{cor:uB_on_OR}
Adopt the hypotheses and notation of Corollary \ref{cor:uB_strat}.
Suppose also that for no $\alpha<\Omega$ is $\Omega$ the surjective image of
$\pow(\alpha)$.
Then there are trees $S,T\in\OD_{\Gamma,M}$ witnessing
Corollary \ref{cor:uB_strat} with
$S,T$ on $\om\cross\Omega$.
\end{cor}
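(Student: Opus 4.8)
The plan is to re‑run the proof of Corollary~\ref{cor:uB_strat} essentially verbatim, except to replace the single coordinate of the witness that there ranges over $\her_\Omega$ — which in that proof serves only to specify a tree $\Xx=y(0)$ on $M$ via $\Sigma$ of length $<\Omega$, all remaining coordinates of $y$ already being ordinal data coding an almost tree embedding — by an ordinal coordinate below $\Omega$ which names such an $\Xx$.

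The first step is to see that the hypothesis on $\Omega$ forces trees via $\Sigma$ of length $<\Omega$ to be ``ordinal sized''. Precisely: if $\Xx$ is via $\Sigma$ and $\lh(\Xx)\leq\Omega$, then $\OR^{M^\Xx_\alpha}<\Omega$ for every $\alpha<\Omega$. Indeed, by induction on $\alpha$ the model $M^\Xx_\alpha$ is a surjective image of $\pow(\beta)$ for some $\beta<\Omega$: it is obtained from the countable $M$ by $\alpha<\Omega$ many ultrapowers, whose generators are, by the inductive hypothesis, bounded below $\Omega$; so were $\OR^{M^\Xx_\alpha}\geq\Omega$, then $\Omega$ would be a surjective image of $\pow(\beta)$, contrary to hypothesis. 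Consequently, using the canonical fine‑structural wellorders of its models, each tree $\Xx$ via $\Sigma$ with $\lh(\Xx)<\Omega$ is coded by a bounded subset of $\Omega$; in particular its entire content (tree order, extender indices, drop and degree data, and the branches chosen by $\Sigma$) consists of ordinals below $\Omega$.

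Granting such a naming — say a family $\langle\Xx_\xi\rangle_{\xi<\Omega}$, $\OD$ from $\Gamma$ and $M$, decoding ordinals $\xi<\Omega$ to trees via $\Sigma$ of length $<\Omega$ (trivially when $\xi$ decodes to nothing legitimate) — I would define $S,T$ on $\om\cross\Omega$ from the trees of Corollary~\ref{cor:uB_strat} by letting the first $\Omega$‑coordinate $y(0)=\xi$ specify $\Xx:=\Xx_\xi$, the remaining coordinates coding an almost tree embedding $\Pi\colon(\Tt,b)\hookrightarrow_{\almost}\Xx_\xi$. Since by Corollary~\ref{cor:tree_embedding_uniqueness} and Lemma~\ref{lem:ate_to_te} such a $\Pi$ is determined by $(\Tt,\Xx_\xi)$ together with the intervals $[\gamma_\alpha,\delta_\alpha]_{\Xx_\xi}\sub\lh(\Xx_\xi)<\Omega$, and the property of being an almost tree embedding from a countable tree is closed in the product topology (as noted after the definition of almost tree embedding), these requirements are checkable node by node, so $S,T$ are trees on $\om\cross\Omega$ in $\OD_{\Gamma,M}$. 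It then remains only to observe that $S,T$ still witness Corollary~\ref{cor:uB_strat}, and this is immediate because the projections are unchanged: in $V[G]$, with $G$ generic for $\Coll(\om,{<}\Omega)$, the $V$‑trees $\Xx_\xi$ ($\xi<\Omega$) are precisely the $V$‑trees on $M$ via $\Sigma$ of length $<\Omega$, so $p[T]^{V[G]}$ is exactly the set of $x$ coding a countable $(\Tt,b)$ admitting an almost tree embedding into some such tree — the set shown in the proof of Corollary~\ref{cor:uB_strat} to equal $\widetilde{\Gamma'}\rest\RR$ (its nontrivial inclusion supplied by Theorem~\ref{thm:strat_with_cond_extends_to_generic_ext}(\ref{item:trees_via_Sigma'_embed}), which also yields $\lh(\Xx)<\Omega$) — and likewise $p[S]^{V[G]}$ is the complementary set there, with $p[S]\cap p[T]=\emptyset$ following from strong hull condensation exactly as before.

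The crux is precisely the naming used above. Under $\AC$ it is painless: $\Omega$ is then inaccessible, $\her_\Omega$ is wellordered in type $\Omega$, and by the first step one simply fixes an $\OD_{\Gamma,M}$ enumeration of the (${\leq}\Omega$ many) trees via $\Sigma$ of length $<\Omega$. In $\ZF$, absent a wellordering of $\her_\Omega$, this is the real obstacle, and I expect the right remedy is to search instead for an almost tree embedding of $(\Tt,b)$ into a single $\OD_{\Gamma,M}$ ``universal'' target of length $<\Omega$ with ordinal‑sized models — a natural candidate being the minimal simultaneous inflation (Definition~\ref{dfn:min_inf}, Lemma~\ref{lem:min_inf}) of the set of all countable trees via $\Sigma$, a legitimate input since that set is a surjective image of $\pow(\om)$ and hence, by hypothesis, does not surject onto $\Omega$ — and then to verify, using Theorem~\ref{thm:strat_with_cond_extends_to_generic_ext}(\ref{item:trees_via_Sigma'_embed}) together with the commutativity of inflation (Lemma~\ref{lem:inflation_commutativity}, Corollary~\ref{cor:terminal_dropping_equiv}), that in $V[G]$ this target absorbs every countable tree via $\Sigma'$ together with its branch. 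Establishing that absorption — relating the witness $\Xx$ of Theorem~\ref{thm:strat_with_cond_extends_to_generic_ext}(\ref{item:trees_via_Sigma'_embed}) to the fixed target and controlling model‑dropping along the way — is the technical heart of the argument.
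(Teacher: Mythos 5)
Your first two paragraphs get the high‑level shape right: to move the witness coordinate from $\her_\Omega$ down to $\Omega$, it suffices to produce an $\OD_{\Gamma,M}$ family $\langle\Xx_\xi\rangle_{\xi<\Omega}$ of trees via $\Sigma$ of length $<\Omega$ such that every $(\Tt,b)$ via $\Sigma'$ with $\Tt$ countable admits an (almost) tree embedding into some $\Xx_\xi$. Your observation that under the cardinality hypothesis the models $M^\Xx_\alpha$ have height $<\Omega$ is also correct and is implicitly used. But you are right to flag the naming as the gap, and the specific remedy you conjecture does not work, for two independent reasons.

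First, a minimal simultaneous inflation $\Yy$ of a set $\mathscr{T}$ is only guaranteed (Lemma~\ref{lem:min_inf}) to be a $\Tt$-terminally-non-dropping inflation of \emph{some} $\Tt\in\mathscr{T}$, not of all of them; for the others, $\Yy$ is non-pending but may be terminally model-dropping, and then there is no tree embedding $\Tt\hookrightarrow\Yy$ at all. So a single universal target cannot absorb every tree in its pool. Second, even setting that aside, the witness $\Xx$ produced by Theorem~\ref{thm:strat_with_cond_extends_to_generic_ext}(\ref{item:trees_via_Sigma'_embed}) for a countable $(\Tt,b)\in V[G]$ lies in $V$ and has length $<\Omega$ but need not be countable, so it would not appear in the pool of countable $V$-trees you propose to inflate; the composition you want through $\Xx$ and your universal $\Yy$ is therefore unavailable.

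The paper's proof replaces the single universal target with a transfinite family of targets. Fix $\chi<\Omega$ and let $\mathscr{T}$ be all trees via $\Sigma$ of length $<\chi$. Recursively set $\Xx_\eta$ to be the minimal simultaneous inflation of the remaining pool $\mathscr{T}'=\mathscr{T}\setminus\bigcup_{\alpha<\eta}\mathscr{T}_\alpha$, and let $\mathscr{T}_\eta$ be the (nonempty, by Lemma~\ref{lem:min_inf}) set of $\Tt\in\mathscr{T}'$ for which $\Xx_\eta$ is $\Tt$-terminally-non-dropping. One argues, using the cardinality hypothesis on $\Omega$ (and the observation that a minimal simultaneous inflation of $\mathscr{T}'$ of length $\Omega+1$ would yield a cofinal map $\mathscr{T}'\to\Omega$, hence a prohibited surjection from $\pow(\alpha)$), that $\lh(\Xx_\eta)<\Omega$, and that the recursion terminates at some $\lambda_\chi<\Omega$. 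The resulting two-parameter family $\langle\Xx_{\chi,\alpha}\rangle$ is $\OD_{\Gamma,M}$, enumerable in ordertype $\leq\Omega$ by regularity, and covers every $V$-tree of length $<\Omega$ by a terminally-non-dropping (hence tree-embedding-producing) inflation. The $V[G]$ countable trees are then handled exactly as you suggest, composing the embedding from Theorem~\ref{thm:strat_with_cond_extends_to_generic_ext}(\ref{item:trees_via_Sigma'_embed}) with an embedding into the appropriate $\Xx_{\chi,\alpha}$ (Lemma~\ref{lem:comp_tree_emb}). The recursive ``peeling'' is exactly what replaces the appeal to $\AC$ in your painless-under-$\AC$ remark, and is the missing piece in your argument.
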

\begin{proof}
Let $\Sigma$ be as before. By the proof of \ref{cor:uB_strat}, it suffices to
show that for each $\Tt$ via $\Sigma$ of length ${<\Omega}$,
 there is some $\Xx\in\OD_{\Gamma,M}$ such that $\Xx$ is via $\Sigma$, of
length
${<\Omega}$,
 and is a $\Tt$-terminally-non-dropping inflation of $\Tt$. For by the
 largeness assumption of $\Omega$
 (including regularity),
we can  enumerate all such $\Xx$ in ordertype $\Omega$\footnote{Enumerate
those of length $\alpha$ before those of length $\beta$,
when $\alpha<\beta$.}
in an $\OD_{\Gamma,M}$ fashion, leading to an
$\OD_{\Gamma,M}$ tree $T$ on $\om\cross\Omega$.

So fix $\chi<\Omega$ and let $\mathscr{T}$ be the set of all trees
via $\Sigma$ of
length
${<\chi}$. We define $\lambda<\Omega$ and a partition
$\vec{\mathscr{T}}=\left<\mathscr{T}_\alpha\right>_{\alpha<\lambda}$
of $\mathscr{T}$ and a sequence
$\vec{\Xx}=\left<\Xx_\alpha\right>_{\alpha<\lambda}$ of trees $\Xx_\alpha$ via
$\Sigma$,
such that for each $\alpha<\lambda$, we have:
(i) $\vec{\mathscr{T}},\vec{\Xx}$ are $\OD_{\Gamma,M}$,
(ii) $\mathscr{T}_\alpha\neq\emptyset$,
 (iii) $\lh(\Xx_\alpha)<\Omega$, and
 (iv) $\Xx_\alpha$ is a $\Tt$-terminally-non-dropping inflation of each
$\Tt\in\mathscr{T}_\alpha$.
Clearly this suffices.

So suppose we have defined $\left<\mathscr{T}_\alpha\right>_{\alpha<\eta}$
and $\left<\Xx_\alpha\right>_{\alpha<\eta}$ satisfying the requirements so far,
and suppose that
$\mathscr{T}'=\mathscr{T}\cut\bigcup_{\alpha<\eta}\mathscr{T}
_\alpha\neq\emptyset$;
otherwise we are done.

We set $\Xx_\eta$ to be the comparison inflation of $\mathscr{T}'$.
This exists and has length ${<\Omega}$. For otherwise via
comparison inflation,
we reach a tree $\Xx$ of length $\Omega+1$. Each extender used along
$[0,\Omega)_\Xx$
is copied from some $\Tt\in\mathscr{T}'$. But each $\Tt\in\mathscr{T}'$ has
length ${<\Omega}$,
and since $\Omega$ is regular, it follows that there is $\alpha_\Tt<^\Xx\Omega$
such that no extender used in $(\alpha_\Tt,\Omega]_\Xx$ is copied from $\Tt$.
But then $\Tt\mapsto\alpha_\Tt$ is cofinal in $\Omega$, and since $\Omega$ is
regular,
this gives a surjection $\pow(\alpha)\to\Omega$, a contradiction.

Now by \ref{lem:min_inf}, there is some $\Tt\in\mathscr{T}'$ such that
$\Xx_\eta$
is $\Tt$-terminally-non-dropping.
So letting $\mathscr{T}_\eta$ be the set of all such $\Tt$, we are done.
\end{proof}

\subsection{Restricting weak Dodd-Jensen strategies from $V[G]$}

The proof of the following corollary involves standard comparison of premice,
and
the author does not see a version for wcpms. We will prove an extension of the
corollary
in \S\ref{sec:npc}, once we have Theorem \ref{thm:stacks_iterability} at our
disposal. Recall that strategies with weak DJ with respect to
a given enumeration $e$ (in ordertype $\om$)
are unique; see Remark \ref{rem:wDJ_implies_cond}.

\begin{cor}\label{cor:wDJ_absoluteness}
Let $\Omega>\om$ be regular.
Let $M$ be a countable $m$-sound premouse.
Let $e$ be an enumeration of $M$ in ordertype $\omega$.
Let $\PP$ be an $\Omega$-cc forcing and $G$ be $V$-generic for $\PP$.
Then:
\begin{enumerate}[label=\arabic*.,ref=\arabic*]
\item\label{item:V,V[G]_agree_wDJ} $V\sats$``There is an
$(m,\Omega+1)$-strategy
for $M$ with weak DJ with respect to $e$''
iff $V[G]$ satisfies the same statement.
\item\label{item:wDJ_strat_extends} If $\Sigma$ is an \tu{(}hence the
unique\tu{)} $(m,\Omega+1)$-strategy for $M$ with weak DJ with respect to $e$,
and $\Sigma'$ likewise in $V[G]$, then $\Sigma\sub\Sigma'$.
\end{enumerate}
\end{cor}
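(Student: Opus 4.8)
\textbf{Proof plan for Corollary \ref{cor:wDJ_absoluteness}.}

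The plan is to prove both parts together using the results already established, particularly Theorem \ref{tm:wDJ_implies_cond} (weak DJ implies strong hull condensation), Theorem \ref{thm:strat_with_cond_extends_to_generic_ext} (strategies with strong hull condensation extend uniquely to $\Omega$-cc extensions, preserving weak DJ), and a routine reflection/absoluteness argument going the other direction. First I would handle the forward direction of part \ref{item:V,V[G]_agree_wDJ} together with part \ref{item:wDJ_strat_extends}. Suppose $\Sigma\in V$ is the unique $(m,\Omega+1)$-strategy for $M$ with weak DJ with respect to $e$. By Theorem \ref{tm:wDJ_implies_cond}, $\Sigma$ has strong hull condensation. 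Then Theorem \ref{thm:strat_with_cond_extends_to_generic_ext} (specifically parts \ref{item:Gamma'_exists_unique}, \ref{item:Gamma'_shc} and \ref{item:Gamma,Gamma'_DJ}) gives, in $V[G]$, a unique $(m,\Omega+1)$-strategy $\Sigma'\supseteq\Sigma$ with inflation condensation, and moreover $\Sigma'$ has strong hull condensation and has weak DJ with respect to $e$ in $V[G]$. Since $\Omega$ remains regular in $V[G]$ ($\Omega$-cc forcing preserves the regularity of a regular $\Omega$, as noted after Definition \ref{dfn:Omega-cc}), $\Sigma'$ witnesses the statement in $V[G]$. And any $V[G]$-strategy with weak DJ with respect to $e$ is unique (Remark \ref{rem:wDJ_implies_cond}), so it must equal $\Sigma'$, hence extends $\Sigma$; this gives part \ref{item:wDJ_strat_extends}.

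Next I would prove the reverse direction of part \ref{item:V,V[G]_agree_wDJ}: assume $V[G]\sats$``there is an $(m,\Omega+1)$-strategy $\Sigma'$ for $M$ with weak DJ with respect to $e$'', and deduce the same in $V$. Since $M\in V$ is countable and $e\in V$, and weak-DJ strategies are unique in $V[G]$, the strategy $\Sigma'$ is definable in $V[G]$ from $M,e$ (and $\Omega$), hence canonical. The idea is to set $\Sigma=\Sigma'\cap V$ and argue $\Sigma\in V$ and that it is a weak-DJ strategy there. One shows that for countable trees $\Tt\in V$ via $\Sigma$ of limit length, the branch $\Sigma'(\Tt)$ is the unique cofinal wellfounded branch $b$ such that $(\Tt,b)$ embeds appropriately — but actually the cleanest route is a standard absoluteness argument of the kind used in the proof of Claim \ref{clm:uniqueness_of_b}: the tree searching for a countable limit-length putative tree on $M$ together with two distinct cofinal branches each leading (via further iteration and a weak-DJ-style comparison) to a contradiction is illfounded in some collapse extension, hence illfounded in $V$, pinning down the branch choice. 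More directly, since weak DJ for $\Sigma'$ in $V[G]$ implies (by Theorem \ref{tm:wDJ_implies_cond}, applied in $V[G]$) that $\Sigma'$ has strong hull condensation in $V[G]$, and since the restriction of a strategy with strong hull condensation to $V$-trees is determined by an absolute (wellfounded-tree) search, one gets $\Sigma=\Sigma'\cap V\in V$. One then checks $\Sigma$ is a total $(m,\Omega+1)$-strategy in $V$ (totality on limit trees of length ${\leq\Omega}$ in $V$ follows because $\Sigma'$ picks branches for them and those branches lie in $V$ by the absoluteness argument) and that it retains weak DJ with respect to $e$ (a potential counterexample $(\Tt,Q,\pi)$ to weak DJ for $\Sigma$ in $V$ would, since $\Sigma\subseteq\Sigma'$, also be a counterexample in $V[G]$).

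I expect the main obstacle to be the reverse direction: carefully verifying that $\Sigma'\cap V\in V$ and is \emph{total} in $V$, rather than merely partial, without invoking choice beyond $\ZF$. The subtlety is that one must rule out the possibility that some $V$-tree $\Tt$ of limit length ${\leq\Omega}$ via $\Sigma'\cap V$ has $\Sigma'(\Tt)\notin V$; this is handled by an absoluteness argument (the relevant tree of attempts to build $\Tt$ together with a branch realized by a suitable hull/embedding witness is a tree on ordinals, by the usual trick of coding the finite approximations by ordinals and using that iteration trees on premice use extenders from a wellordered sequence), so that a branch existing in a collapse extension already exists in $V$. The author notes in the statement that this direction ``involves standard comparison of premice'' and that the proof is ``probably part of the folklore'', so I would present it at a level of detail matching the comparison-style arguments already given (e.g.\ in the proof of Theorem \ref{tm:wDJ_implies_cond}), relying on the weak-DJ uniqueness from Remark \ref{rem:wDJ_implies_cond} to identify the candidate strategy and on strong hull condensation (available via Theorem \ref{tm:wDJ_implies_cond}) to make the branch choices absolute. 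Finally I would remark that an extension of this corollary, dropping the countability-style restrictions or weakening hypotheses, is deferred to \S\ref{sec:npc} as stated, so no further generality is attempted here.
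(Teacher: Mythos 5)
Your forward direction and part \ref{item:wDJ_strat_extends} are correct and are exactly the paper's argument: apply Theorem \ref{tm:wDJ_implies_cond} to get strong hull condensation, then Theorem \ref{thm:strat_with_cond_extends_to_generic_ext} to extend $\Sigma$ to a weak-DJ strategy in $V[G]$, and invoke uniqueness of weak-DJ strategies.

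The reverse direction, however, has a genuine gap. You reduce everything to the claim that for a $V$-tree $\Tt$ of limit length via $\Sigma'$, the branch $b=\Sigma'(\Tt)$ lies in $V$, and you propose to get this from ``an absolute (wellfounded-tree) search'' justified by strong hull condensation of $\Sigma'$ in $V[G]$. This does not work as stated, for several reasons. First, $\lh(\Tt)$ may be as large as $\Omega$, so the branch is not an object that a countable tree search (illfounded in a collapse extension, hence in $V$) can produce. Second, strong hull condensation characterizes $\Sigma'$ via tree embeddings into trees that are themselves via $\Sigma'$; the witnessing larger trees live in $V[G]$, not in $V$, so there is no $V$-definable, absolute criterion singling out $b$ among the $\Tt$-cofinal branches. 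Third, uniqueness of the weak-DJ strategy in $V[G]$ does not by itself put its restriction to $V$ inside $V$ (the forcing is not assumed homogeneous), so one must show that a single condition decides the branch. The absoluteness argument of Claim \ref{clm:uniqueness_of_b} that you cite goes in the opposite direction: there one starts from trees in $V$ via a $V$-strategy with strong hull condensation; here the $V$-strategy is precisely what is being constructed.

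The paper fills this gap with a substantial Boolean-valued comparison argument, which is the ``standard comparison of premice'' alluded to in the surrounding text. Fix $p_0$ forcing the hypothesis and let $\dot b$ name $\Sigma'(\Tt)$; one compares the generic phalanxes $\Phi(\Tt,\dot b)$ arising from the two coordinates of a $\PP\times\PP$-generic, building a single tree of names $\dot\Tt_\alpha$ whose extenders are chosen at the least ordinal of forced disagreement. Weak DJ (applied via absoluteness to near $m$-embeddings into the common last model) rules out nontrivial outcomes at every stage, forcing the comparison to run to length $\Omega+1$; then the $\Omega$-cc yields a club of $\alpha$ with $i^{\dot\Uu}_{\alpha\Omega}(\alpha)=\Omega$, and a hull argument (the sets $X_n$ and ordinals $\alpha_n$ in the paper) shows $p_0$ decides $i^{\dot\Uu}_{\alpha\alpha'}\rest(N|(\alpha^+)^N)$ for suitable $\alpha<\alpha'$, contradicting that the comparison was nontrivial. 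This yields that $p_0$ decides each ``$\alpha\in\dot b$'', hence $b\in V$ and $\Sigma'\rest V\in V$. None of this machinery is present in your proposal, and without it the reverse implication is not established.
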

\begin{proof}
\setcounter{clm}{0}
The forward direction and the fact that $\Sigma\sub\Sigma'$
is by Theorem \ref{thm:strat_with_cond_extends_to_generic_ext}.

So suppose that in $V[G]$, $\Sigma'$ is an $(m,\Omega+1)$-strategy
for $M$ with weak DJ with respect to $e\in V$.
Let $p_0\in\PP$ force this fact. Let $\Sigma=\Sigma'\rest V$.
It suffices to see that $\Sigma\in V$, as then $\Sigma$ has weak DJ with
respect
to $e$ in $V$,
and part \ref{item:wDJ_strat_extends} follows from the uniqueness
of
this
strategy (see Remark \ref{rem:wDJ_implies_cond}).

So let $\Tt\in V$ be via $\Sigma'$,  of limit length
$\leq\Omega$,
and $b=\Sigma'(\Tt)$.
Let $\dot{\Sigma}',\dot{b}$ be names for $\Sigma',b$.
By the following claim, $b\in V$ and $p_0$ forces ``$b=\dot{\Sigma}'(\Tt)$'',
which clearly suffices.

\begin{clmtwo}For each $\alpha<\lh(\Tt)$, $p_0$ decides
the truth of ``$\alpha\in\dot{b}$''.\end{clmtwo}
\begin{proof}
Suppose not. We form a Boolean-valued comparison of generic phalanxes
$\Phi(\Tt,\dot{b})$.
Inductively on stages $\alpha\leq\Omega$,
we define a monotone increasing sequence
$\left<\xi_\alpha\right>_{\alpha<\Omega}$ of ordinals
and a sequence $\left<N_\alpha\right>_{\alpha\leq\Omega}$ of premice (in $V$)
and a sequence $\left<\dot{\Tt}_\alpha\right>_{\alpha\leq\Omega}$ of names for
padded iteration trees on $M$.
In fact, $\dot{\Tt}_\alpha$ is just  the name for the padded tree via
$\dot{\Sigma}'$,
extending $(\Tt,\dot{b})$, of length $\lh(\Tt)+\alpha+1$,
which uses extenders with indices $\left<\xi_\beta\right>_{\beta<\alpha}$ (where
we pad when there is no extender indexed at $\xi_\beta$)
\footnote{We are using the conventional algorithm for comparison
by least disagreement, not modified as in \cite{operator_mice}
or \cite{premouse_inheriting}.},
and $p_0$ will force that $N_\alpha=M^{\dot{\Tt}_\alpha}_\infty||\xi_\alpha$.
Given a $\PP$-name $\sigma$, we write $\sigma^0$
for the $\PP\cross\PP$-name for $\sigma^{\dot{G}_0}$,
where $\dot{G}_0$ is the $\PP\cross\PP$-name for the projection of the
$\PP\cross\PP$-generic on the left coordinate;
likewise for $\sigma^1$ and the right coordinate.

We begin with $\dot{\Tt}_0=(\check{\Tt},\dot{b})$.

Given $\dot{\Tt}_\alpha$, where $\alpha<\Omega$, let $\xi_\alpha$ be the least
ordinal $\xi$
such that $p_0$ forces ``$\xi\leq\OR(M^{\dot{\Tt}_\alpha}_\infty)$''
and for some $p,q\leq p_0$, we have
\[ (p,q)\forces_{\PP\cross\PP}\text{``}M^{\dot{\Tt}^0_\alpha}_\infty|\xi\neq
M^{\dot{\Tt}^1_\alpha}_\infty|\xi\text{'',}\]
if there is such a $\xi$; otherwise $\xi_\alpha$ is undefined and we stop the
construction.
Assuming $\xi_\alpha$ is defined, this determines $\dot{\Tt}_{\alpha+1}$, and
note that $p_0$ decides the value of
$N_\alpha\eqdef M^{\dot{\Tt}_\infty}_\alpha||\xi_\alpha$.

Given $\dot{\Tt}_\alpha$ for all $\alpha<\eta$, for a limit $\eta$,
note that $\dot{\Tt}_\eta$ is determined.

Clearly $\xi_\alpha\leq\xi_\beta$ for $\alpha<\beta$ (with
$\xi_\alpha=\xi_\beta$
only if $\beta=\alpha+1$ and we are using MS-indexing and the usual
superstrong/type 2 situation occurs).

\begin{sclmtwo} $p_0$ forces ``$\xi_\alpha$ exists for every $\alpha<\Omega$''.
\end{sclmtwo}

\begin{proof}
Suppose not and let $\alpha$ be least such. Write $\dot{\Uu}=\dot{\Tt}_\infty$.
Let $\xi$ be least such that for some $q\leq p_0$, $q$ forces
``$\xi=\OR(M^{\dot{\Uu}}_\infty)$''. Note that $p_0$ determines $N\eqdef
M^{\dot{\Uu}}_\infty|\xi$ (so $N\in V$) and $q$  forces
``$M^{\dot{\Uu}}_\infty=N$''.

Now $p_0$ forces ``$M^{\dot{\Uu}}_\infty=N$''. For otherwise we can fix $r\leq
p_0$ forcing ``$N\pins M^{\dot{\Uu}}_\infty$''. Then $N$ is fully sound, so
$q$ forces ``$b^{\dot{\Uu}}$ does not drop in model or degree, so
$i^{\dot{\Uu}}:M\to N$ exists and is an $m$-embedding'', so by absoluteness,
$r$ forces ``There is an $m$ embedding $\pi:M\to N$'', which contradicts weak
DJ below $r$.

Similar considerations using weak DJ now also give that either
\begin{enumerate}[label=\tu{(}\roman*\tu{)}]
 \item\label{item:N_unsound}  $p_0$ forces ``$b^{\dot{\Uu}}$ drops in model or
degree'', or
\item\label{item:N_sound} $p_0$ forces
``$b^{\dot{\Uu}}$ does not drop in model or degree'',
\end{enumerate}
and moreover, if \ref{item:N_sound} holds then there is $\pi\in V$ such that
$p_0$ forces ``$i^{\dot{\Uu}}=\pi$'', and of course if \ref{item:N_sound}
holds then letting $\pi:\core_{n+1}(N)\to N$ be the core map, where $n<\om$ is
largest such that $N$ is $n$-sound, then $p_0$ forces ``there is $\beta$ such
that $\beta+1\in b^{\dot{\Uu}}$ and $i^{*\dot{\Uu}}_{\beta+1,\infty}=\pi$''.

We can now recover the sequence of extenders $E$ forced to be used in
$\dot{\Uu}$ to form $\pi$, in the usual manner (cf.~\cite[4.3 \text{ and
Remark}]{cmip}). (So this sequence is in $V$.) But  by the Zipper Lemma,
$p_0$ forces that there is some such $E$ which is not used in $\Tt$. Let $E$ be
the least such.

By the rules of comparison, there cannot be a single $\gamma$ such that $p_0$
forces ``$E^{\dot{\Uu}}_\gamma=E$'', but since $\In(E)$ is fixed, therefore
$M$ is MS-indexed, $E$ is type 2 and there is $\gamma$ such that  $p_0$ forces
``either \begin{enumerate}[label=(\alph*)]
 \item\label{item:E_option_1} $E^{\dot{\Uu}}_\gamma=E$ and
$E^{\dot{\Uu}}_{\gamma+1}=\emptyset$, or
\item\label{item:E_option_2} $E^{\dot{\Uu}}_\gamma$ is superstrong
and $E^{\dot{\Uu}}_{\gamma+1}=E=F(M^{\dot{\Uu}}_{\gamma+1})$
and $\crit(E^{\dot{\Uu}}_\gamma)=\lgcd(M^{*\dot{\Uu}}_{\gamma+1})$'',
\end{enumerate}
and moreover both options get forced by some condition $\leq p_0$.

Note that $P=M^{\dot{\Uu}}_{\gamma+1}$ of option \ref{item:E_option_2} is in
$V$, and $P$ is non-sound, so in option \ref{item:E_option_1},
$P=M^{\dot{\Uu}}_\gamma$. But now arguing like we did for $N,\pi$ and the
sequence of extenders above, it follows that there is a superstrong extender
$F\in V$ such that $p_0$ forces that $F$ is used on the branch leading to $P$,
but then in fact $p_0$ forces ``$E^{\dot{\Uu}}_\gamma=F$
and $E^{\dot{\Uu}}_{\gamma+1}=E$'' a contradiction.
\end{proof}

By the subclaim, we reach $\dot{\Uu}\eqdef\dot{\Tt}_\Omega$, of length
$\Omega+1$.
Let $\dot{c}=b^{\dot{\Uu}}$. Since $\PP$ is $\Omega$-cc, we get a club
$B\sub\Omega$
such that $p_0$ forces ``$B\sub\dot{c}$ and $[\alpha,\Omega)_{\dot{\Uu}}$ does
not drop and $i^{\dot{\Uu}}_{\alpha\Omega}(\alpha)=\Omega$
for each $\alpha\in B$''. Let $N=\mathrm{stack}_{\alpha<\Omega}N_\alpha$.
Note $p_0$ forces that $N=M(\dot{\Uu})$ and
$M^{\dot{\Uu}}_\alpha||(\alpha^+)^{M^{\dot{\Uu}}_\alpha}=N|(\alpha^+)^N$
for each $\alpha\in B$.

We will now run a version of the standard proof of termination of comparison.
We first define a strictly increasing sequence
$\left<\alpha_n\right>_{n<\om}\sub
B$
and a $\sub$-increasing sequence $\left<X_n\right>_{n<\om}$ with
$X_n\sub\Omega^+$
and $\card(X_n)<\Omega$.

Let $X_0=\emptyset$ and $\alpha_0=\min(B)$. Suppose we have $X_n,\alpha_n$,
and
\[ p_0\forces\text{``}X_n\inter(\Omega^+)^{M^{\dot{\Uu}}_\Omega}\sub
i^{\dot{\Uu}}_{\alpha_n\Omega}[(\alpha_n^+)^N]\text{''}.\]
Let
$X_{n+1}=X_n\cup\{\beta<\Omega^+\bigm| \exists q\leq p_0\text{ s.t. }q\text{
forces ``}\beta\in i^{\dot{\Uu}}_{\alpha_n\Omega}[(\alpha_n^+)^N]\text{''}\}$.
Clearly then $X_{n+1}\sub\Omega^+$, $\card(X_{n+1})<\Omega$ by the $\Omega$-cc
and since $(\alpha_n^+)^N<\Omega$, and
\[ p_0\forces\text{``} i^{\dot{\Uu}}_{\alpha_n\Omega}[(\alpha_n^+)^N]\sub
X_{n+1}\inter(\Omega^+)^{M^{\dot{\Uu}}_\Omega}\text{''}.\]
Now let $\alpha_{n+1}$ be the least $\alpha\in B$ such that $\alpha>\alpha_n$
and
\[ p_0\forces\text{``} X_{n+1}\inter(\Omega^+)^{M^{\dot{\Uu}}_\Omega}\sub
i^{\dot{\Uu}}_{\alpha\Omega}[(\alpha^+)^N]. \]
By the $\Omega$-cc and since $\card(X_{n+1})<\Omega$, $\alpha_{n+1}$ exists.

Now let $\alpha=\sup_{n<\om}\alpha_n$ and $X=\bigcup_{n<\om}X_n$.
So $\alpha\in B$, and note that
\[ p_0\forces
\text{``}X\inter(\Omega^+)^{M^{\dot{\Uu}}_\Omega}=i^{\dot{\Uu}}_{\alpha\Omega}
[(\alpha^+)^N]\text{''}. \]
So
$p_0\forces \text{``}X\inter(\Omega^+)^{M^{\dot{\Uu}}_\Omega}\text{ is
cofinal
in }(\Omega^+)^{M^{\dot{\Uu}}_\Omega}
\text{ and has
ordertype }(\alpha^+)^N\text{''}$.
It follows that $p_0$ decides the value of $(\Omega^+)^{M^{\dot{\Uu}}_\Omega}$,
and decides $i^{\dot{\Uu}}_{\alpha\Omega}\rest(\alpha^+)^N$.

Now repeat the preceding construction, starting with $\alpha'_0>\alpha$,
and producing a limit $\alpha'$.
Then note that $p_0$ decides $i^{\dot{\Uu}}_{\alpha\alpha'}\rest(\alpha^+)^N$.
But $p_0$ decides $N|((\alpha')^+)^N$,
and hence decides
$i^{\dot{\Uu}}_{\alpha\alpha}\rest(N|(\alpha^+)^N)$ (not just the restriction
to
the ordinals;
one needs to know that the codomains match before being able to deduce
the agreement of the embeddings).
This contradicts comparison much as before, proving the claim.
\end{proof}
This completes the proof of the corollary.
\end{proof}

\begin{cor}\label{cor:om-mouse_abs}
 Let $\Omega>\om$ be regular.
 Let $G$ be $V$-generic for an $\Omega$-cc forcing.
 Let $M\in V[G]$ be an $\om$-sound premouse with $\rho_\om^M=\om$. Then:
 \begin{enumerate}[label=--]
\item $V[G]\sats$``$M\text{ is }(\om,\Omega+1)\text{-iterable''}$ iff $M\in V$
and $M\text{ is }(\om,\Omega+1)\text{-iterable}$.
\item If $\Sigma$ is an \tu{(}hence the unique\tu{)} $(\om,\Omega+1)$-strategy
for $M$,
and $\Sigma'$ likewise in $V[G]$, then $\Sigma\sub\Sigma'$.
\end{enumerate}
\end{cor}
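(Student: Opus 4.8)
The statement to prove is Corollary \ref{cor:om-mouse_abs}, which should follow from Corollary \ref{cor:wDJ_absoluteness} together with the basic theory of weak Dodd-Jensen. The plan is to reduce the iterability of the (unique) $(\om,\Omega+1)$-strategy to the existence of a strategy with weak Dodd-Jensen with respect to some fixed enumeration $e$. First I would observe that since $M$ is $\om$-sound with $\rho_\om^M=\om$, $M$ is countable; indeed $M$ is coded by a real, namely the theory of $M$ in parameters from $\om$. Hence $M\in\HC$. Next I would note that if $M$ is $(\om,\Omega+1)$-iterable, then by \cite{wdj} (which requires only the amount of choice available, and in the presence of $\DC$ for the general statement, but for countable $M$ one can work inside $L[x]$ for a real $x$ coding $M$ and the relevant trees, so no global choice is needed here) there is an $(\om,\Omega+1)$-strategy $\Sigma$ for $M$ with weak Dodd-Jensen with respect to any prescribed enumeration $e$ of $M$ in ordertype $\om$; and by Remark \ref{rem:wDJ_implies_cond}, such a strategy is unique. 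Moreover, since $M$ is $\om$-sound, an $(\om,\Omega+1)$-strategy is unique outright (any two iterates of $M$ can be compared and must agree, as $M$ is its own $(\om+1)$-st core), so $\Sigma$ is \emph{the} $(\om,\Omega+1)$-strategy for $M$.

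With this reduction in hand, the corollary is essentially a restatement of Corollary \ref{cor:wDJ_absoluteness}. In the forward direction, suppose $V[G]\sats$``$M$ is $(\om,\Omega+1)$-iterable''. First I would check that $M\in V$: since $\rho_\om^M=\om$, $M$ is coded by a real $r\in V[G]$, and $r\in V$ because an $\Omega$-cc forcing with $\Omega>\om$ is in particular... actually this needs care — an $\Omega$-cc forcing need not add no reals. So instead I would argue as follows. Fix in $V$ an enumeration $e$ of (the putative) $M$; more precisely, work with the canonical coding. By Corollary \ref{cor:wDJ_absoluteness}, applied with the ambient hypotheses, $V[G]\sats$``there is an $(\om,\Omega+1)$-strategy for $M$ with weak DJ with respect to $e$'' iff $V$ satisfies the same statement — but to even state this we need $M,e\in V$. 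The cleanest route: in $V[G]$, $M$ is $(\om,\Omega+1)$-iterable, so by the weak DJ construction (\cite{wdj}) there is, in $V[G]$, a strategy $\Sigma'$ for $M$ with weak DJ with respect to some enumeration $e'\in V[G]$. To land in $V$, I would instead directly invoke that $M$, being $\om$-sound with $\rho_\om=\om$ and iterable in $V[G]$, is (the decoding of) a real which is $\Sigma^1_2$ in a real parameter coding a tree witnessing iterability; a Shoenfield-style absoluteness argument, or more simply the argument already embedded in the proof of Corollary \ref{cor:wDJ_absoluteness} (the Boolean-valued comparison of generic phalanxes, which shows $p_0$ decides the strategy), shows $M\in V$ and $M$ is $(\om,\Omega+1)$-iterable in $V$. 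The point is that once $M\in V$, Corollary \ref{cor:wDJ_absoluteness}(\ref{item:V,V[G]_agree_wDJ}) applies with $e\in V$ any fixed $\om$-enumeration of $M$, and combined with uniqueness of the $(\om,\Omega+1)$-strategy (for $\om$-sound $M$) and Remark \ref{rem:wDJ_implies_cond}, we get that $V\sats$``$M$ is $(\om,\Omega+1)$-iterable''.

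For the reverse direction and the inclusion of strategies: if $M\in V$ and $M$ is $(\om,\Omega+1)$-iterable in $V$, let $\Sigma$ be the unique such strategy; fix $e\in V$ and note $\Sigma$ has weak DJ with respect to $e$ by \cite{wdj}. Then by Theorem \ref{thm:strat_with_cond_extends_to_generic_ext} (using that $\Sigma$ has strong hull condensation, which holds by Theorem \ref{tm:wDJ_implies_cond} since $\Sigma$ has weak DJ and $M$ is countable), $\Sigma$ extends in $V[G]$ to a strategy $\Sigma'\supseteq\Sigma$ with strong hull condensation, hence with weak DJ with respect to $e$ by part \ref{item:Gamma,Gamma'_DJ} of that theorem; and $\Sigma'$ is an $(\om,\Omega+1)$-strategy, so $M$ is $(\om,\Omega+1)$-iterable in $V[G]$. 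Finally, if $\Sigma'$ is \emph{the} (unique) $(\om,\Omega+1)$-strategy for $M$ in $V[G]$, then since the extension of $\Sigma$ given by the theorem is an $(\om,\Omega+1)$-strategy in $V[G]$, it must equal $\Sigma'$, so $\Sigma=\Sigma'\rest V\supseteq$... that is, $\Sigma\sub\Sigma'$, as required.

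\textbf{Main obstacle.} The genuinely delicate point is the forward direction's claim that iterability of $M$ in $V[G]$ forces $M\in V$ and iterability of $M$ in $V$, since an $\Omega$-cc forcing can add reals. I expect to handle this not by a bare absoluteness argument but by reusing the Boolean-valued comparison machinery from the proof of Corollary \ref{cor:wDJ_absoluteness}: one runs that argument with $M$ itself in place of a generic phalanx — using that $M$ is $\om$-sound and hence rigidly determined by its iterability — to conclude that the condition $p_0$ decides $M$ (as a real) and decides its strategy, placing both in $V$. Alternatively, and perhaps more cleanly, one notes that $M$ being $\om$-sound, $(\om,\Omega+1)$-iterable and with $\rho_\om=\om$ means $M=\mathcal{M}_{\text{least}}$ of some first-order type, and that such mice have $\Sigma^1_2$-definable (indeed $\Delta^1_2$) codes uniformly, so $M\in V$ by Shoenfield absoluteness applied in $V[G]$ relative to a witness tree; I would include whichever of these is shortest. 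Everything else is a routine assembly of Theorems \ref{thm:strat_with_cond_extends_to_generic_ext}, \ref{tm:wDJ_implies_cond} and Corollary \ref{cor:wDJ_absoluteness}.
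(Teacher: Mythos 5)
Your proposal follows essentially the same route as the paper: reduce to the fact that the unique $(\om,\Omega+1)$-strategy of an $\om$-sound premouse projecting to $\om$ has (weak) Dodd-Jensen and hence strong hull condensation, apply Theorem \ref{thm:strat_with_cond_extends_to_generic_ext} and Corollary \ref{cor:wDJ_absoluteness} once $M\in V$ is known, and establish $M\in V$ by a Boolean-valued comparison of the generic interpretations of a name $\dot M$ below a condition $p_0$ deciding $\OR^M$ (this is exactly the paper's argument, run as a variant of the comparison in the proof of Corollary \ref{cor:wDJ_absoluteness}).

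One warning: of your two proposed routes for the key step ``iterability in $V[G]$ implies $M\in V$,'' only the Boolean-valued comparison works. The alternative via Shoenfield absoluteness and a $\Delta^1_2$ code is not available: an arbitrary $\om$-sound, $(\om,\Omega+1)$-iterable premouse with $\rho_\om^M=\om$ is just some countable mouse (there can be continuum many), and it need not be the minimal object of any first-order type nor have any projectively definable code, so there is no witness tree to which Shoenfield could be applied. Since you say you would include ``whichever is shortest,'' make sure you include the comparison argument. A second, minor point: the weak-DJ construction of \cite{wdj} produces a strategy with weak DJ from an $(\om,\Omega,\Omega+1)^*$-strategy (plus $\DC$), not from a bare normal strategy; for the present corollary what you actually need, and what the paper uses, is the standard fact that the \emph{unique} normal strategy of an $\om$-sound premouse projecting to $\om$ automatically has the full Dodd-Jensen property (by comparison and soundness), whence weak DJ with respect to every enumeration and, via Theorem \ref{tm:wDJ_implies_cond}, strong hull condensation.
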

\begin{proof}
Recall first that for an $\om$-sound premouse $N$ with $\rho_\om^N=\om$,
if $\Sigma$ is an $(\om,\Omega+1)$-strategy for $N$,
then $\Sigma$ has DJ, and hence, weak DJ with respect to any enumeration $e$ of
$N$,
and hence, strong hull condensation, by Theorem
\ref{tm:wDJ_implies_cond}.
So if $M\in V$, then the conclusions of the theorem
follow from Theorem
\ref{thm:strat_with_cond_extends_to_generic_ext}
and Corollary \ref{cor:wDJ_absoluteness}. So we only need to see that if $M$
is
$(\om,\Omega+1)$-iterable in $V[G]$ then $M\in V$.
Suppose not. Let $\dot{M}$ be a name for $M$ and $p_0$ force the facts we have
about $M$,
and letting $\lambda=\OR^M$, such that $p_0$ forces ``$\OR^{\dot{M}}=\lambda$''.
Then $p_0$ decides $\es_+^M$, so $M\in V$. For if not then we can form a
Boolean-valued comparison of generic interpretations of $\dot{M}$, below $p_0$.
This is almost the same as the proof of Corollary \ref{cor:wDJ_absoluteness},
and
leads to contradiction as there.
We leave the details to the reader.
 \end{proof}

 In Corollaries  \ref{cor:stack_it_absoluteness} and
\ref{cor:ZFC_iter_forcing_ab}, later in the paper, when we have some more
 results at our disposal, we will be able to deduce further generic absoluteness
 results under some choice assumptions.

\begin{ques} One can of course ask to what extent other types of
forcings preserve iterability.
Schindler and the author have found a couple of interesting counterexamples,
which are yet to appear; these include a model
of $\ZFC+$``$M_1^\#$ exists and is
$(\om,\om_1+1)$-iterable,
but there is a $\sigma$-distributive
forcing which forces that $\check{M}_1^\#$ is not $(\om,\om_1+1)$-iterable''.

The following questions, for example, seem to be open.
Let $V[G]$ be an
$\om$-closed forcing extension of $V$.
Is every $\om$-mouse of $V$ also an $\om$-mouse of $V[G]$?
Is every $\om$-mouse of $V[G]$ also an $\om$-mouse of $V$, or at least,
$\om_1$-iterable in $V$?
(Clearly every $\om_1$-iterable premouse of $V$ is also $\om_1$-iterable in
$V[G]$.)
Can $\Coll(\om_1,\kappa)$, for some $\kappa\geq\om_1$, consistently kill the
$(\om,\om_1+1)$-iterability of $M_1^\#$?\end{ques}

\begin{rem} Let $\Omega$ be regular uncountable and $M$ be an
$(\om,\Omega+1)$-iterable $\om$-mouse, as witnessed by $\Sigma$,
such that $M$ has 2 measurable cardinals, and suppose that $\PP$ is \emph{not}
$\Omega$-cc. Then the method of extending $\Sigma$ to $V[G]$
used for $\Omega$-cc forcing, fails for $\PP$. For let
$\left<p_\alpha\right>_{\alpha<\Omega}\sub\PP$
be an antichain. Let $\mu_0<\mu_1$ be measurables of $M$.
Define the $\PP$-name $\dot{\Tt}$ where below $p_\alpha$, $\dot{\Tt}$ is the
length $\alpha$ linear iteration of $M$ using
a measure on $\mu_0$ and its images, followed by a measure on the image of
$\mu_1$. Letting $\Xx$ be the comparison inflation of the $\dot{\Tt}$
through length $\Omega+1$, clearly $\Xx$ is just the length $\Omega$ linear
iteration of $M$ at $\mu_0$.
So the process is forced to fail.\end{rem}

\section{The factor tree $\Xx/\Tt$}\label{sec:factor_tree}

In this section we give a second perspective on inflation $\Tt\inflatearrow\Xx$,
in which we shift the focus from the $\Tt$-copied extenders to the
$\Tt$-inflationary extenders.
From this perspective, a natural analogy arises:
$\Xx$ induces what can be considered an iteration tree $\wt{\Xx}$ \emph{on
$\Tt$},
which consists of a sequence of (standard) \emph{iteration trees} $\Xx^\alpha$
on $M$
(instead of a sequence of models)
and whose extenders are just the $\Tt$-inflationary extenders of $\Xx$.
We will also define various tree embeddings
$\Pi^{\alpha\beta}:\Xx^\alpha\hookrightarrow\Xx^\beta$,
in the right circumstances, analogous to iteration maps, and introduce more
bookkeeping.
Benjamin Siskind has recently (in 2018) developed this perspective formally,
proving
 versions of the Shift Lemma and so forth in this context.
(We do not use any of Siskind's work here, however.)

\subsection{The factor tree order $<^{\Xx/\Tt}$}

We begin by describing an iteration tree order $<^{\Xx/\Tt}$ determined
by an inflation $\Tt\inflatearrow\Xx$.
When we reduce an (appropriate) stack of two normal trees $(\Tt,\Uu)$
to a
single normal tree $\Xx$ (via an iteration strategy with inflation
condensation),
then $\Xx$ will be an inflation of $\Tt$, and $\Uu$
will have tree order ${<^\Uu}={<^{\Xx/\Tt}}$.

\begin{dfn}\label{dfn:Tt-unravelling}
Let $\Xx$ be an inflation of $\Tt$ and $t=t^{\Tt\inflatearrow\Xx}$.

If $\alpha<\lh(\Xx)$ then the \dfnemph{$\Tt$-unravelling}\index{unravelling}
of $\Xx\rest(\alpha+1)$, if it exists, is the unique non-$\Tt$-pending
inflation
$\Ww$
of $\Tt$ such that $\Ww$ extends $\Xx\rest(\alpha+1)$ and
$t^{\Tt\inflatearrow\Ww}(\beta)=0$ for every $\beta\geq\alpha$.
Existence just depends on the models being wellfounded. We say that $\Xx$
is \dfnemph{$\Tt$-good} (or just \dfnemph{good}) iff the $\Tt$-unravelling of
$\Xx\rest(\alpha+1)$
exists for every $\alpha<\lh(\Xx)$.\index{good (inflation)}
\end{dfn}
\begin{dfn}\label{dfn:superscript_notation}\index{$\zeta^\alpha$}
\index{$\lambda^\alpha$}\index{$L^\alpha$}\index{$\eta_\delta$} Let $\Xx$ be a
good inflation of $\Tt$ and $t=t^{\Tt\inflatearrow\Xx}$.
Let $\left<\lambda^\alpha\right>_{\alpha<\iota}$ enumerate
in increasing order all $\lambda<\lh(\Xx)$
such that either $\lambda=0$, or $\lambda=\zeta+1$
where $t(\zeta)=1$, or $\lambda$ is a limit of such ordinals.
If $\alpha+1<\iota$ then let $\zeta^\alpha+1=\lambda^{\alpha+1}$
and $L^\alpha=[\lambda^\alpha,\zeta^\alpha]$,
and if $\alpha+1=\iota$ then let $L^\alpha=[\lambda^\alpha,\lh(\Xx))$.
So the intervals $L^\alpha$ are disjoint and partition $[0,\lh(\Xx))$.
For $\delta<\lh(\Xx)$ let $\eta_\delta$ be the $\eta<\iota$ such that
$\delta\in
L^\eta$.

We write $\Xx^\alpha$\index{$\Xx^\alpha$, $t^\alpha$, etc} for the
$\Tt$-unravelling of $\Xx\rest(\lambda^\alpha+1)$,
with associated objects $(t^\alpha,C^\alpha,\ldots)$.
If $\lambda^\alpha\in C^\alpha$ then also let
$\theta^\alpha=f^\alpha(\lambda^\alpha)$;\index{$\theta^\alpha$}
otherwise $\theta^\alpha$ is not defined. Then either
\begin{enumerate}[label=--]
 \item $\lambda^\alpha\notin C^\alpha$ and $\lh(\Xx^\alpha)=\lambda^\alpha+1$,
or
 \item $\lambda^\alpha\in C^\alpha$ and
$\lh(\Xx^\alpha)=\lambda^\alpha+(\lh(\Tt)-\theta^\alpha)$.
\end{enumerate}

Let
$(\lambda^\alpha,\zeta^\alpha,L^\alpha,\Xx^\alpha,t^\alpha,\ldots)^{
\Tt\inflatearrow\Xx}\eqdef
(\lambda^\alpha,\zeta^\alpha,L^\alpha,\Xx^\alpha,t^\alpha,\ldots)$.
If $\lambda^\alpha\in C^\alpha$, then for $\xi<\lh(\Tt)$
we set\index{$(I^\alpha_\xi)^{\Tt\inflatearrow\Xx}$,
$(\pi^\alpha_\xi)^{\Tt\inflatearrow\Xx}$, etc}\index{$\Tt\inflatearrow\Xx$}
\[
(I^\alpha_{\xi})^{\Tt\inflatearrow\Xx}\eqdef\lim_{\lambda\to\lh(\Xx^\alpha)}I_{
\lambda;\xi}^{\Tt\inflatearrow\Xx^\alpha}, \]
\[ (\pi^\alpha_{\xi
i})^{\Tt\inflatearrow\Xx}\eqdef\lim_{\lambda\to\lh(\Xx^\alpha)}\pi_{\lambda;\xi
i}^{\Tt\inflatearrow\Xx^\alpha},\]
etc. Note here that because $\Xx^\alpha\rest[\lambda^\alpha,\lh(\Xx^\alpha))$
is
formed
by copying,
this makes sense and
$(I^\alpha_{\xi})^{\Tt\inflatearrow\Xx}=I_{\lambda;\xi}^{
\Tt\inflatearrow\Xx^\alpha}$, etc,
for all sufficiently large $\lambda<\lh(\Xx^\alpha)$. Of course
if $\lh(\Tt)$ is a successor, then
$\lh(\Xx^\alpha)$ is a successor $\lambda+1$ and
$(I^\alpha_{\xi})^{\Tt\inflatearrow\Xx}=I_{\lambda;\xi}^{
\Tt\inflatearrow\Xx^\alpha}$, etc; this is the main case of interest.
\end{dfn}

\begin{dfn}\label{dfn:<^Xx/Tt}\index{$<^{\Xx/\Tt}$}\index{factor tree} Let
$\Xx$ be a good
inflation of $\Tt$. Adopt notation as in
\ref{dfn:superscript_notation}.
Then $<^{\Xx/\Tt}$ denotes
the order on $\iota$ defined recursively as follows: for each $\alpha<\iota$,
\[
[0,\alpha)_{\Xx/\Tt}=\bigcup_{\delta<^\Xx\lambda^\alpha}[0,\eta_\delta]_{\Xx/\Tt
}.\qedhere \]
\end{dfn}

We remark that
$\{\eta_\delta\bigm| \delta<^\Xx\lambda^\alpha\}$
need not be closed downward under $<^{\Xx/\Tt}$. We will verify soon that
$<^{\Xx/\Tt}$ is an iteration tree order, but first we have the following
approximation:
\begin{lem}\label{lem:Xx/Tt_simple_facts}
Let $\Xx$ be a good inflation of $\Tt$. Adopt notation as above.
Let $b_\alpha=[0,\alpha)_{\Xx/\Tt}$.
Then
\tu{(}i\tu{)} $<^{\Xx/\Tt}$ is transitive,
\tu{(}ii\tu{)} $b_\alpha\sub\alpha$,
\tu{(}iii\tu{)} if $\alpha=\gamma+1$ then
$\eta=\max(b_\alpha)$ and $[0,\eta]_{\Xx/\Tt}\sub b_\alpha$ where
$\eta$ is least such that
$\crit(E^\Xx_{\zeta^\gamma})<\iota(\exit^\Xx_{\zeta^\eta})$,
\footnote{In fact, by \ref{lem:<^Xx/Tt} below, $b_\alpha=[0,\eta]_{\Xx/\Tt}$.}
and \tu{(}iv\tu{)} if $\alpha$ is a limit then $b_\alpha$ is cofinal in
$\alpha$.
\end{lem}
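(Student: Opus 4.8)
\textbf{Proof plan for Lemma \ref{lem:Xx/Tt_simple_facts}.}

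The plan is to verify the four assertions essentially by unwinding the recursive definition of $<^{\Xx/\Tt}$ in Definition \ref{dfn:<^Xx/Tt}, keeping careful track of the partition $\left<L^\alpha\right>_{\alpha<\iota}$ of $[0,\lh(\Xx))$ into the intervals $L^\eta=[\lambda^\eta,\zeta^\eta]$ and of the map $\delta\mapsto\eta_\delta$. First I would record the basic monotonicity facts: since the $\lambda^\eta$ are listed in increasing order and the $L^\eta$ are disjoint consecutive intervals, $\delta\leq\delta'$ implies $\eta_\delta\leq\eta_{\delta'}$, and $\eta_{\lambda^\alpha}=\alpha$. For (ii), I would argue by induction on $\alpha$: by definition $b_\alpha=\bigcup_{\delta<^\Xx\lambda^\alpha}[0,\eta_\delta]_{\Xx/\Tt}$; each such $\delta$ satisfies $\delta<\lambda^\alpha$, hence $\eta_\delta<\alpha$ (as $\eta_\delta=\alpha$ would force $\delta\in L^\alpha$, i.e.\ $\delta\geq\lambda^\alpha$), and by the induction hypothesis $[0,\eta_\delta]_{\Xx/\Tt}=b_{\eta_\delta}\cup\{\eta_\delta\}\sub\eta_\delta+1\sub\alpha$. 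For transitivity (i), suppose $\gamma<^{\Xx/\Tt}\beta<^{\Xx/\Tt}\alpha$; then $\beta\in[0,\eta_\delta]_{\Xx/\Tt}$ for some $\delta<^\Xx\lambda^\alpha$, and since $\gamma<^{\Xx/\Tt}\beta$ we get $\gamma\in[0,\beta)_{\Xx/\Tt}\sub[0,\eta_\delta]_{\Xx/\Tt}\sub b_\alpha$, using that $[0,\eta_\delta]_{\Xx/\Tt}$ is, by definition, $b_{\eta_\delta}\cup\{\eta_\delta\}$ and that $\gamma<\beta\le\eta_\delta$ so $\gamma\in b_{\eta_\delta}$ — here one wants the easy sub-fact that $\mu<^{\Xx/\Tt}\eta_\delta$ whenever $\mu<^{\Xx/\Tt}\beta\leq\eta_\delta$, which again is just re-reading the defining union. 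I expect (i) and (ii) to be genuinely routine once the bookkeeping is set up.

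For (iii), the successor case $\alpha=\gamma+1$: here $\lambda^\alpha=\zeta^\gamma+1$ and $t(\zeta^\gamma)=1$, so $E^\Xx_{\zeta^\gamma}$ is the relevant $\Tt$-inflationary extender and $\lambda^\gamma\le\zeta^\gamma$, with $\pred^\Xx(\zeta^\gamma+1)=\pred^\Xx(\lambda^\alpha)$ being the $<^\Xx$-predecessor determining the branch. Writing $\mu=\crit(E^\Xx_{\zeta^\gamma})$, normality of $\Xx$ gives that $\pred^\Xx(\lambda^\alpha)$ is the least $\delta$ with $\mu<\nutilde^\Xx_\delta$ (equivalently, by the remark following \ref{dfn:further_tree_terminology}, with $\mu<\iota^\Xx_\delta=\iota(\exit^\Xx_\delta)$, since using $\iota$ in place of $\nutilde$ does not change the tree order). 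I would then set $\eta$ to be least such that $\mu<\iota(\exit^\Xx_{\zeta^\eta})$ and show $\eta_{\pred^\Xx(\lambda^\alpha)}=\eta$: on one hand $\exit^\Xx_{\zeta^\eta}$ agrees up to $\iota^\Xx_{\zeta^\eta}$ with all later exit models (by monotone index-increasing together with the agreement of exit models on a cardinal segment), so $\pred^\Xx(\lambda^\alpha)\ge\zeta^\eta\ge\lambda^\eta$, giving $\eta_{\pred^\Xx(\lambda^\alpha)}\ge\eta$; on the other hand, for any $\delta<\lambda^\eta$ we have $\delta\le\zeta^{\eta'}$ for some $\eta'<\eta$, and $\iota^\Xx_\delta\le\iota^\Xx_{\zeta^{\eta'}}\le\mu$ by minimality of $\eta$, so $\pred^\Xx(\lambda^\alpha)\not<\lambda^\eta$, i.e.\ $\pred^\Xx(\lambda^\alpha)\ge\lambda^\eta$, whence $\eta_{\pred^\Xx(\lambda^\alpha)}\le\eta$. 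Since $\delta\mapsto\eta_\delta$ is monotone and $\pred^\Xx(\lambda^\alpha)$ is the $<^\Xx$-max of $[0,\lambda^\alpha)_\Xx$, the set $\{\eta_\delta\mid\delta<^\Xx\lambda^\alpha\}$ has maximum $\eta$, so $b_\alpha=\bigcup_{\delta<^\Xx\lambda^\alpha}[0,\eta_\delta]_{\Xx/\Tt}=[0,\eta]_{\Xx/\Tt}$, giving both $\eta=\max(b_\alpha)$ and $[0,\eta]_{\Xx/\Tt}\sub b_\alpha$. For (iv), the limit case: if $\alpha$ is a limit then $\lambda^\alpha$ is a limit of ordinals of the form $\zeta+1$ with $t(\zeta)=1$, hence is a $<^\Xx$-limit, so $[0,\lambda^\alpha)_\Xx$ is cofinal in $\lambda^\alpha$; since $\delta\mapsto\eta_\delta$ is monotone and $\sup_{\delta<^\Xx\lambda^\alpha}\eta_\delta=\alpha$ (as $\eta_\delta$ can be made arbitrarily large below $\alpha$ by taking $\delta$ large below $\lambda^\alpha$, using that each $L^{\eta'}$ for $\eta'<\alpha$ meets $[0,\lambda^\alpha)_\Xx$), the union $b_\alpha$ is cofinal in $\alpha$.

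The main obstacle I anticipate is purely notational rather than conceptual: carefully justifying, in the successor case, that the $<^\Xx$-predecessor of $\lambda^\alpha$ lands in exactly the block $L^\eta$ for the $\eta$ described — this requires combining normality of $\Xx$, the fact that $\In(E^\Xx_\beta)$ is monotone nondecreasing, and the agreement of exit models $\exit^\Xx_\beta$ below $\iota^\Xx_\beta$, and then translating between the $\nutilde$-formulation of $\pred^\Xx$ and the $\iota$-formulation used in the statement. I would handle this by first isolating as a sub-observation that for $\beta<\gamma<\lh(\Xx)$ one has $\exit^\Xx_\beta\rest\iota^\Xx_\beta$ is an initial segment of (the reduct of) $M^\Xx_\gamma$, so that ``$\mu<\iota^\Xx_\beta$'' is downward absolute in the sense needed, and then the comparison of $\pred^\Xx(\lambda^\alpha)$ with $\lambda^\eta$ and $\zeta^\eta$ is immediate. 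Everything else is bookkeeping with the partition $\left<L^\eta\right>$ and the monotone surjection $\delta\mapsto\eta_\delta$, and I would leave the most routine verifications to the reader.
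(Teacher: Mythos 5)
Your overall plan is the right one and is the same as the paper's: the paper's entire proof reads ``Transitivity is a straightforward induction, and the other facts follow easily from the definitions,'' so the content here is exactly the definition-unwinding and induction you describe, and parts (i), (ii), (iv) are fine modulo small imprecisions (in (i) the ``sub-fact'' should be phrased with $\beta\leq^{\Xx/\Tt}\eta_\delta$ rather than $\beta\leq\eta_\delta$, and what you are really invoking there is the transitivity induction hypothesis at $\eta_\delta<\alpha$; in (iv) the parenthetical ``each $L^{\eta'}$ for $\eta'<\alpha$ meets $[0,\lambda^\alpha)_\Xx$'' is false in general, though the fact you actually need --- that $[0,\lambda^\alpha)_\Xx$ is cofinal in $\lambda^\alpha=\sup_{\alpha'<\alpha}\lambda^{\alpha'}$, so $\eta_\delta$ is unbounded in $\alpha$ --- is correct and suffices).

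The one place where your write-up goes genuinely wrong is the two-sided estimate in (iii). Your ``on one hand'' step asserts $\pred^\Xx(\lambda^\alpha)\geq\zeta^\eta$; this is false in general, since the least $\delta$ with $\crit(E^\Xx_{\zeta^\gamma})<\iota^\Xx_\delta$ can perfectly well lie strictly inside $[\lambda^\eta,\zeta^\eta)$. Moreover your two halves have their conclusions swapped: showing $\iota^\Xx_\delta\leq\mu$ for all $\delta<\lambda^\eta$ yields $\pred^\Xx(\lambda^\alpha)\geq\lambda^\eta$, i.e.\ $\eta_{\pred^\Xx(\lambda^\alpha)}\geq\eta$ (not $\leq\eta$ as you write). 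The correct assembly, from ingredients you already have, is: $\xi:=\pred^\Xx(\lambda^\alpha)$ is the least $\delta$ with $\mu<\iota^\Xx_\delta$; since $\mu<\iota^\Xx_{\zeta^\eta}$ by choice of $\eta$, minimality gives $\xi\leq\zeta^\eta$; and since $\mu\geq\iota^\Xx_{\zeta^{\eta'}}$ for $\eta'<\eta$ together with upward closure of the condition $\mu<\iota^\Xx_{(\cdot)}$ gives $\mu\geq\iota^\Xx_\delta$ for all $\delta<\lambda^\eta$, we get $\xi\geq\lambda^\eta$; hence $\xi\in L^\eta$ and $\eta_\xi=\eta$. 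Finally, note that your concluding equality $b_\alpha=[0,\eta]_{\Xx/\Tt}$ is not available at this stage: it needs $\eta_\delta\leq^{\Xx/\Tt}\eta$ for all $\delta<^\Xx\lambda^\alpha$, which is part \ref{item:<^Xx/Tt_respects_<^Xx} of Lemma \ref{lem:<^Xx/Tt}, proved afterwards (the footnote to the statement defers exactly this). What the lemma asserts --- $\eta=\max(b_\alpha)$ (via part (ii) and the ordinal monotonicity of $\delta\mapsto\eta_\delta$) and $[0,\eta]_{\Xx/\Tt}\sub b_\alpha$ (since $\xi<^\Xx\lambda^\alpha$ and $\eta_\xi=\eta$) --- is all you should claim here.
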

\begin{proof}
 Transitivity is a straightforward induction, and the other facts follow easily
from the definitions.
\end{proof}

\begin{dfn}\index{$\Tt^{\geq\alpha}$}\index{${<^{(\alpha)}}$}
For an iteration tree $\Vv$ and $\alpha<\lh(\Vv)$, let
$\Vv^{\geq\alpha}=\Vv\rest[\alpha,\lh(\Vv))$,
considered as an iteration tree on the phalanx $\Phi(\Vv\rest\alpha+1)$.
Given an iteration tree order $<_0$ and
$\alpha<\lh(<_0)$, let
${{<}_0^{(\alpha)}}={{<}_0}\rest\{\delta\bigm|\delta\geq_0\alpha\}$.
\end{dfn}

\begin{lem}\label{lem:<^Xx/Tt_above_easy}
Let $\Xx$ be a good inflation of $\Tt$. Adopt notation as above.
Let $\alpha<\lh(\Xx/\Tt)$.
Then:
\begin{enumerate}[label=--]
 \item
$\lambda^\alpha\in(C^-)^\alpha$ iff $\lambda^\alpha+1<\lh(\Xx^\alpha)$.
\end{enumerate}
Suppose $\lambda^\alpha\notin(C^-)^\alpha$. Then:
\begin{enumerate}[label=--]
\item $L^\alpha=[\lambda^\alpha,\lambda^\alpha]$,
\item if $\lambda\geq_\Xx\lambda^\alpha$ then there is $\delta$ such that
$\lambda=\lambda^\delta$, and moreover, $\lambda^\delta\notin(C^-)^\delta$,
\item if $\lambda^\delta\geq_\Xx\lambda^\alpha$ and $\lambda^\delta\in
C^\delta$
then
$\lambda^\alpha\in C^\alpha$, and
\item the map $\xi\mapsto\lambda^\xi$ restricts to an isomorphism between
\mbox{$(<^{\Xx/\Tt})^{(\alpha)}$} and \mbox{$(<^{\Xx})^{(\lambda^\alpha)}$}.
\end{enumerate}
\end{lem}
\begin{proof}
This is straightforward; the last clause
 is by induction on $\lh(\Xx/\Tt)$.
\end{proof}

\begin{lem}\label{lem:<^Xx/Tt}
Let $\Xx$ be a good inflation of $\Tt$. Adopt notation as above.
Then
\begin{enumerate}[label=\arabic*.,ref=\arabic*]
 \item\label{item:<^Xx/Tt_is_it_tree_order} $<^{\Xx/\Tt}$ is an iteration tree
order on $\lh(\Xx/\Tt)$.
   \item\label{item:<^Xx/Tt_respects_<^Xx} For all
$\mu<^\Xx\lambda<\lh(\Xx)$, we have
$\eta_\mu\leq^{\Xx/\Tt}\eta_\lambda$.
\end{enumerate}
Moreover, let $\alpha\leq^{\Xx/\Tt}\beta<\lh(\Xx/\Tt)$ with $\lambda^\beta\in
C^\beta$
\tu{(}so
 $\lambda^\alpha\in C^\alpha$ by \ref{lem:<^Xx/Tt_above_easy}\tu{)}.
Then:
\begin{enumerate}[resume*]
 \item\label{item:xi_class_is_Xx/Tt_below}
$\gamma^\alpha_{\theta\kappa}\in[\lambda^\alpha,\lh(\Xx^\alpha))\cup\bigcup_{
\delta<^{\Xx/\Tt}\alpha}L^\delta$
for all $(\theta,\kappa)$.
\item\label{item:gamma_agmt_along_branch} Suppose $\alpha<\beta$. Let
$\xi+1=\successor^{\Xx/\Tt}(\alpha,\beta)$
and $\gamma=\pred^{\Xx}(\lambda^{\xi+1})$. Then:
\begin{enumerate}[label=\tu{(}\alph*\tu{)}]
\item  $\gamma\in L^\alpha$ and $\theta^\alpha\leq\theta\eqdef
f^\alpha(\gamma)\leq\theta^\beta$.
 \item For each $\theta'<\theta$ and $\kappa$ we have
$I^\alpha_{\theta'}=I^\beta_{\theta'}\sub\gamma$
and $\gamma^\alpha_{\theta'\kappa}=\gamma^\beta_{\theta'\kappa}<\gamma$,
 \item $I^\alpha_{\theta}\psub I^\beta_{\theta}$. In fact,
$\gamma^\alpha_{\theta}=\gamma^\beta_{\theta}$
 but $\delta^\alpha_{\theta}=\gamma<^\Xx\delta^\beta_{\theta}$.
 \item If $\theta+1<\lh(\Tt)$ then for each $\kappa<\iota(\exit^\Tt_{\theta})$,
 either:
 \begin{enumerate}[label=--]
\item $\pi^\alpha_{\theta\kappa}(\kappa)<\crit(E^\Xx_{\zeta^\xi})$ and
$\gamma^\alpha_{\theta\kappa}=\gamma^\beta_{\theta\kappa}$,
 \item $\pi^\alpha_{\theta\kappa}(\kappa)\geq\crit(E^\Xx_{\zeta^\xi})$ and
$\gamma^\alpha_{\theta\kappa}=\gamma<^\Xx\gamma^\beta_{\theta\kappa}$.
\end{enumerate}
\end{enumerate}
\end{enumerate}
\end{lem}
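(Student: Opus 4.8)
The whole statement is proved by a single induction on $\lh(\Xx/\Tt)$, with all six clauses carried along simultaneously, just as the commutativity lemma \ref{lem:inflation_commutativity} was proved by a simultaneous induction. The base case $\lh(\Xx/\Tt)=1$ is trivial: $<^{\Xx/\Tt}$ is the order on $\{0\}$, clause \ref{item:<^Xx/Tt_respects_<^Xx} is vacuous since there is only one $L^\eta$, and clauses \ref{item:xi_class_is_Xx/Tt_below}--\ref{item:gamma_agmt_along_branch} concern $\Xx^0$, which is just the $\Tt$-unravelling of $\Xx\rest 1$, so everything reduces to facts about a single tree embedding of $\Tt$ into $\Xx^0$. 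For the inductive step I would fix $\iota<\lh(\Xx/\Tt)$, assume the lemma holds for all proper initial segments, and split according to whether $\iota$ is a successor or a limit.

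\textbf{Clauses 1 and 2 (tree order axioms).} First I would establish clause \ref{item:<^Xx/Tt_respects_<^Xx}: if $\mu<^\Xx\lambda$ then $\eta_\mu\leq^{\Xx/\Tt}\eta_\lambda$. For a successor $\alpha=\gamma+1$ with $\lambda^\alpha=\zeta^\gamma+1$, recall $[0,\alpha)_{\Xx/\Tt}=\bigcup_{\delta<^\Xx\lambda^\alpha}[0,\eta_\delta]_{\Xx/\Tt}$; since $\pred^\Xx(\lambda^\alpha)=\pred^\Xx(\zeta^\gamma+1)$ lies in some $L^\eta$, and the branch $[0,\lambda^\alpha)_\Xx$ passes through the intervals $L^\delta$ in $<^\Xx$-order, the set $\{\eta_\delta : \delta<^\Xx\lambda^\alpha\}$ is linearly ordered by the (already inductively known) order $<^{\Xx/\Tt}\rest\alpha$ below its sup $\eta$, where $\eta$ is as in \ref{lem:Xx/Tt_simple_facts}(iii). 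Wellfoundedness and the successor/limit structure of $<^{\Xx/\Tt}$ then follow from \ref{lem:Xx/Tt_simple_facts}: $b_\alpha\sub\alpha$, and $b_\alpha$ has a max at successors and is cofinal at limits. The key point that needs care is showing $b_\alpha=[0,\eta]_{\Xx/\Tt}$ exactly (not just $\supseteq$), i.e. that no $\eta'\in(\eta,\alpha)$ with $\eta'<^{\Xx/\Tt}\alpha$ is possible; this is where \ref{lem:<^Xx/Tt_above_easy} (the isomorphism $\xi\mapsto\lambda^\xi$ above a non-$(C^-)$ node) gets used, together with transitivity from \ref{lem:Xx/Tt_simple_facts}(i). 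Once clauses 1--2 are in hand, clause \ref{item:xi_class_is_Xx/Tt_below} is essentially bookkeeping: $\gamma^\alpha_{\theta\kappa}$ is a node of $\Xx^\alpha$; by the definition of the $\Tt$-unravelling and \ref{dfn:superscript_notation}, such a node either lies in the ``copying tail'' $[\lambda^\alpha,\lh(\Xx^\alpha))$ of $\Xx^\alpha$, or it is a node of $\Xx$ strictly below $\lambda^\alpha$, hence lies in some $L^\delta$ with $\delta<^\Xx$-predecessor of $\lambda^\alpha$, so $\delta<^{\Xx/\Tt}\alpha$ by clauses 1--2.

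\textbf{Clause \ref{item:gamma_agmt_along_branch} (agreement along a branch).} This is the substantive part and the main obstacle. Fix $\alpha<^{\Xx/\Tt}\beta$, $\xi+1=\successor^{\Xx/\Tt}(\alpha,\beta)$, $\gamma=\pred^\Xx(\lambda^{\xi+1})$. That $\gamma\in L^\alpha$ is immediate from the definition of $<^{\Xx/\Tt}$ together with \ref{lem:Xx/Tt_simple_facts}(iii). The inequalities $\theta^\alpha\leq\theta=f^\alpha(\gamma)\leq\theta^\beta$ and the agreement statements (b)--(d) I would derive by exactly the mechanism used inside Case \ref{case:copy_copy} of the proof of \ref{lem:inflation_commutativity}: the tree $\Xx^\alpha$ is an inflation of $\Tt$ formed by a copying tail over $\Xx\rest(\lambda^\alpha+1)$, and the extender $E^\Xx_{\zeta^\xi}$ used at stage $\zeta^\xi$ is $\Tt$-inflationary with $\pred^\Xx(\lambda^{\xi+1})=\gamma\in L^\alpha$; applying $E^\Xx_{\zeta^\xi}$ to the appropriate segment of $M^\Xx_\gamma$ to begin $L^{\xi+1}$, and then continuing the copying, is precisely the operation that, when iterated, builds $\Xx^\beta$ from $\Xx^\alpha$. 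So the intervals $I^\alpha_{\theta'}$ for $\theta'<\theta$ are untouched ($I^\alpha_{\theta'}=I^\beta_{\theta'}\sub\gamma$, $\gamma^\alpha_{\theta'\kappa}=\gamma^\beta_{\theta'\kappa}$), the interval $I^\alpha_\theta=[\gamma^\alpha_\theta,\gamma]$ gets properly extended to $I^\beta_\theta$ with the same left endpoint $\gamma^\alpha_\theta=\gamma^\beta_\theta$ and $\delta^\alpha_\theta=\gamma<^\Xx\delta^\beta_\theta$, and the dichotomy in (d) for $\gamma^\alpha_{\theta\kappa}$ versus $\gamma^\beta_{\theta\kappa}$ is exactly the dichotomy from Definition \ref{dfn:pi_beta,kappa} about whether $\crit$ of the relevant iteration segment of $\Xx$ exceeds $\pi^\alpha_{\theta\kappa}(\kappa)$, compared against $\crit(E^\Xx_{\zeta^\xi})$. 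The hard part is purely one of organization: one must handle the three sub-cases of how $\zeta^\xi$ relates to the copying structure of $\Xx^\alpha$ (is $E^\Xx_{\zeta^\xi}$ applied inside the copying tail of $\Xx^\alpha$, or is $\gamma$ below $\lambda^\alpha$, etc.), exactly paralleling Cases \ref{case:copy_copy}, \ref{case:Xx_1-inflationary}, \ref{case:copy_inflation} of \ref{lem:inflation_commutativity}, and then pass to the limit over the copying tails using the eventual-constancy remarks in \ref{dfn:superscript_notation}. The limit case of the overall induction ($\iota$ a limit) reduces, as in \ref{lem:inflation_commutativity}, to taking direct limits of the data given inductively below $\iota$, using clause \ref{item:<^Xx/Tt_respects_<^Xx} to see that the relevant branches are cofinal. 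I expect no genuinely new idea beyond \ref{lem:inflation_commutativity} is needed; the obstacle is tracking the bookkeeping of the $I^\alpha_\xi$, $\gamma^\alpha_{\theta\kappa}$ through the factor-tree reindexing. I would leave the detailed verification of (b)--(d) to the reader after setting up the three sub-cases explicitly, as is done for the analogous claims elsewhere in the paper.
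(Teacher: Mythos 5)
Your proposal is correct and follows essentially the same route as the paper: a simultaneous induction carrying all clauses, with clause 1 reduced to linearity and closure of $b_\alpha$ via \ref{lem:Xx/Tt_simple_facts}, clause 2 handled by tracking how $[0,\lambda]_\Xx$ threads the intervals $L^\delta$ (with the copied-extender predecessors controlled by clauses 3--4 inductively), and clauses 3--4 read off from how $\Pi^{\xi+1}$ arises from $\Pi^\alpha$ by $E$-inflation plus copying, split into the same successor/limit (and $(\Tt,\Xx)$-limit vs.\ not) cases. The only organizational difference is that the paper inducts on $\lh(\Xx)$ with a sub-induction through $\lambda>\lambda^{\xi+1}$ inside each interval $L^{\xi+1}$, which is where your clause-2 argument for $\lambda$ strictly interior to an $L^\eta$ would be carried out.
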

\begin{proof}
By induction on $\lh(\Xx)$.

Part \ref{item:<^Xx/Tt_is_it_tree_order}: By induction, we may assume that
$\lh(\Xx)=\lambda^\alpha+1$.
By \ref{lem:Xx/Tt_simple_facts} and induction, it suffices to verify that
$b_\alpha=[0,\alpha)_{\Xx/\Tt}$
is linearly ordered by $<^{\Xx/\Tt}$ and closed below $\alpha$.
Let $\delta,\varepsilon\in b_\alpha$.
We can fix $\delta',\varepsilon'<^{\Xx}\lambda^\alpha$
with  $\delta\leq^{\Xx/\Tt}\eta_{\delta'}$ and
$\varepsilon\leq^{\Xx/\Tt}\eta_{\varepsilon'}$.
We may assume that $\delta'\leq^{\Xx}\varepsilon'$. Note
$\eta_{\delta'}\leq\eta_{\eps'}<\alpha$.
By induction with part \ref{item:<^Xx/Tt_respects_<^Xx} then,
$\eta_{\delta'}\leq^{\Xx/\Tt}\eta_{\varepsilon'}$.
So if $\delta=\eta_{\varepsilon'}$ or $\varepsilon=\eta_{\varepsilon'}$
then by transitivity, we are done, and otherwise, use the inductive hypothesis
that $[0,\eta_{\varepsilon'})_{\Xx/\Tt}$
is linearly ordered by $<^{\Xx/\Tt}$.
Finally, $b_\alpha$ is closed below $\alpha$,
by induction and because if $\alpha$ is a limit then $b_\alpha$ is unbounded in
$\alpha$ and linearly ordered by $<^{\Xx/\Tt}$.

For parts \ref{item:<^Xx/Tt_respects_<^Xx}--\ref{item:gamma_agmt_along_branch},
we consider a few cases:

\begin{casefour} $\lh(\Xx/\Tt)=\iota=1$.

 This case is trivial.
\end{casefour}

\begin{casefour} $\iota=\xi+2$.

We may assume that $\Xx=\Xx_{\xi+1}$. Let $\alpha=\pred^{\Xx/\Tt}(\xi+1)$
and
$\gamma=\pred^\Xx(\lambda^{\xi+1})$,
so $\gamma\in L^\alpha$. Let $\theta=f^\alpha(\gamma)$.
Then $\gamma=\delta^{\alpha}_\theta$
and $\Pi^{\xi+1}\rest(\theta+1)$ is the
$E^\Xx_\xi$-inflation
of $\Pi^\alpha\rest(\theta+1)$,
and then $\Xx_{\xi+1}$ is given by then copying $\Tt\rest[\theta,\lh(\Tt))$,
starting from $\Pi^{\xi+1}\rest(\theta+1)$ (via the one-step extension
at successor stages, and copying at limits).

Consider part \ref{item:<^Xx/Tt_respects_<^Xx}.
If $\lambda=\lambda^{\xi+1}$ then the property holds
for $\lambda$ (and corresponding $\mu<^\Xx\lambda$) directly by definition of
$<^{\Xx/\Tt}$.
We now proceed by a sub-induction through
$\lambda>\lambda^{\xi+1}$.
By the sub-induction, we may  assume that
$\lambda=\eps+1$ and $\pred^{\Xx}(\eps+1)=\upsilon\notin
L^{\xi+1}$. Now  $E^\Xx_{\eps}$ is
copied from $\Tt$. Let $\theta'$ be such that
$\theta'-f^{\xi+1}(\theta)=\eps-\lambda^{\xi+1}$,
so $E^\Xx_{\eps}$ is the copy of $E^\Tt_{\theta'}$.
Let $\kappa=\crit(E^\Tt_{\theta'})$.
Then $\upsilon=\gamma^\alpha_{\theta'\kappa}$,
and by induction with parts  \ref{item:xi_class_is_Xx/Tt_below} and
\ref{item:gamma_agmt_along_branch},
therefore $\eta_\upsilon<^{\Xx/\Tt}\alpha=\eta_{\lambda^\alpha}$.
But now if $\mu<^\Xx\lambda$ then $\mu\leq^\Xx\upsilon$,  so by
induction,
$\eta_\mu\leq^{\Xx/\Tt}\eta_\upsilon$, so
$\eta_\mu<^{\Xx/\Tt}\eta_\lambda=\xi+1$.

Parts \ref{item:xi_class_is_Xx/Tt_below} and \ref{item:gamma_agmt_along_branch}
are straightforward consequences of how $\Pi^{\xi+1}$ is produced from
$\Pi^\alpha$.
\end{casefour}

\begin{casefour}
$\iota=\alpha+1$ where $\alpha$ is a limit,
and $\lambda^\alpha$ is a $(\Tt,\Xx)$-limit.

Consider part \ref{item:xi_class_is_Xx/Tt_below}.
 If $\theta>f^\alpha(\lambda^\alpha)=\theta^\alpha$
then $\gamma^\alpha_{\theta\kappa}>\lambda^\alpha$, so
$\gamma^\alpha_{\theta\kappa}\in[\lambda^\alpha,\lh(\Xx^\alpha))$.
And if $\theta=\theta^\alpha$
then by the case hypothesis,
$\lambda^\alpha=\gamma^\alpha_\theta=\delta^\alpha_\theta$,
and $\gamma^\alpha_{\theta\kappa}=\lambda^\alpha$ for
each
$\kappa$.
So  suppose $\theta<\theta^\alpha$ and fix $\kappa$. Then
$\gamma^\alpha_{\theta\kappa}=\gamma_{\mu;\theta\kappa}$
for all sufficiently large $\mu<^\Xx\lambda^\alpha$. Fix such
$\mu$;\footnote{In
an earlier
draft of this paper, $\mu$ was supposedly
chosen independent of $\kappa$.
But this need not actually be possible.} we may
choose $\mu$ with
$f(\mu)>\theta$ where $f=f^{\Tt\inflatearrow\Xx}$.
We have $\gamma_{\mu;\theta\kappa}=\gamma^{\eta_\mu}_{\theta\kappa}$.
But then
\[
\gamma^\alpha_{\theta\kappa}=\gamma_{\mu;\theta\kappa}=
\gamma^{\eta_\mu}_{\theta\kappa}
\in[0,\eta_\mu]_{\Xx/\Tt}\sub[0,\alpha)_{\Xx/\Tt} \]
by induction and definition, which suffices.

Part \ref{item:<^Xx/Tt_respects_<^Xx} is proved much like
in the successor case, combined with considerations as above.
Part \ref{item:gamma_agmt_along_branch}
is easy (note the ``$\alpha$'' there is not the $\alpha$
of the case hypothesis).
\end{casefour}

\begin{casefour} $\alpha$ is a limit but  $\lambda^\alpha$ is not a
$(\Tt,\Xx)$-limit.

Part \ref{item:xi_class_is_Xx/Tt_below}:
For $\theta\neq\theta^\alpha=f^\alpha(\lambda^\alpha)$, it is basically as
before.
Consider $\theta=\theta^\alpha$.
Fix
$\kappa<\OR(M^\Tt_{\theta^\alpha})$,
with $\kappa<\iota^\Tt_\alpha$ if $\theta^\alpha\in\lh(\Tt)^-$.
Choose
$\mu<^\Xx\lambda^\alpha$ large enough
that $(\mu,\lambda^\alpha)_\Xx$ does not drop,
$f(\mu)=\theta^\alpha$ and
\[\text{if
}\pi^\alpha_{\theta^\alpha\kappa}(\kappa)<\delta(\Xx\rest\lambda^\alpha)\text{
then
}\pi^\alpha_{\theta^\alpha\kappa}(\kappa)<\crit(i^\Xx_{\mu\lambda^\alpha}).\]
Then if $\gamma^\alpha_{\theta^\alpha\kappa}<\lambda^\alpha$
then $\gamma^\alpha_{\theta^\alpha\kappa}=\gamma_{\mu;\theta^\alpha\kappa}$,
so the property for $(\theta^\alpha,\kappa)$ follows by induction as before.

Part \ref{item:<^Xx/Tt_respects_<^Xx} again follows by combining
such considerations with the argument from the
successor case. Part \ref{item:gamma_agmt_along_branch}
is again easy.\qedhere
\end{casefour}
\end{proof}

\subsection{Tree embeddings of the factor tree}

\begin{dfn}
\label{lem:theta^alpha,beta}\index{$\lambda^{\alpha\beta}$,
$\kappa^{\alpha\beta}$, etc (inflation)}
Let $\Xx$ be a good inflation of $\Tt$. Adopt notation from
Lemma \ref{lem:<^Xx/Tt}(\ref{item:gamma_agmt_along_branch}).
Then $\lambda^{\alpha\beta}$ denotes $\gamma$,
$\theta^{\alpha\beta}$ denotes $f^\alpha(\lambda^{\alpha\beta})$,
and $\kappa^{\alpha\beta}$ denotes the least $\kappa$ such that
$\pi^{\alpha}_{\theta\kappa}(\kappa)\geq\crit(E^\Xx_{\zeta^\xi})$ where
$\theta=\theta^{\alpha\beta}$
(because $\lambda^\beta\in C^\beta$, this makes sense and holds of
$\kappa=\In(E^\Tt_\theta)$ if $\theta+1<\lh(\Tt)$,
and holds of $\kappa=\OR(M^\Tt_\theta)$ otherwise).
\end{dfn}

\begin{dfn}\label{dfn:intermediate_tree_embeddings}
\index{$\Pi^{\alpha\beta}:\Xx^\alpha\tembto\Xx^\beta$ (inflation)}
\index{$\gamma^{\alpha\beta}_\lambda$, etc (inflation)}
Let $\Xx$ be a good inflation of $\Tt$. Adopt notation as before.
Let $\alpha\leq^{\Xx/\Tt}\beta<\lh(\Xx/\Tt)$ with $\lambda^\beta\in C^\beta$.
We define a (putative, verified in \ref{lem:Pi^alpha,beta_is_tree_embedding})
tree embedding $\Pi^{\alpha\beta}:\Xx^\alpha\hookrightarrow\Xx^\beta$ as
follows.
Write $\gamma^{\alpha\beta}_\lambda=\gamma_{\Pi^{\alpha\beta}\lambda}$ etc. It
suffices to specify
$I^{\alpha\beta}_\lambda=[\gamma^{\alpha\beta}_\lambda,\delta^{\alpha\beta}
_\lambda]$ for each $\lambda<\lh(\Xx^\alpha)$.
We set:
\begin{enumerate}[label=--]
\item $I^{\alpha\beta}_\lambda=[\lambda,\lambda]$ if $\alpha=\beta$ or
$\lambda<\lambda^\alpha$.
\item
$I^{\alpha\beta}_{\lambda^\alpha}=[\lambda^\alpha,\delta^{\beta}_{\theta^\alpha}
]_{\Xx^\beta}$ if $\alpha<\beta$.
\item $I^{\alpha\beta}_\lambda=I^\beta_{f^\alpha(\lambda)}$ if $\alpha<\beta$
and $\lambda>\lambda^\alpha$.\qedhere
\end{enumerate}
\end{dfn}

\begin{lem}
Let $\Tt,\Xx,\alpha,\beta$ be as in \ref{dfn:intermediate_tree_embeddings},
and $\lambda\geq\lambda^\alpha$.
Let
\begin{enumerate}[label=--]
 \item
$\eps$ be the supremum of $\alpha$ and all $\xi+1\leq^{\Xx/\Tt}\beta$
such that $\theta^{\xi+1}<f^\alpha(\lambda)$,
\item $\eps'$ be the supremum of all $\xi\in[\alpha,\beta]_{\Xx/\Tt}$
such that $\theta^\xi\leq f^\alpha(\lambda)$.
 \end{enumerate}
Then:
\begin{enumerate}[label=--]
\item $\left<\theta_\xi\right>_{\xi\in[\alpha,\beta]_{\Xx/\Tt}}$ is continuous,
monotone increasing,
so $\theta^\eps\leq\theta^{\eps'}\leq f^\alpha(\lambda)$.
\item
$\gamma^{\alpha\beta}_{\lambda}=\lambda^\eps+(f^\alpha(\lambda)-\theta^\eps)$,
so
$f^{\eps}(\gamma^{\alpha\beta}_\lambda)=f^\alpha(\lambda)=f^\beta(\gamma^{
\alpha\beta}_\lambda)$.
\item
$\delta^{\alpha\beta}_\lambda=\lambda^{\eps'}+(f^\alpha(\lambda)-\theta^{\eps'}
)$,
so
$f^{\eps'}(\delta^{\alpha\beta}_\lambda)=f^\alpha(\lambda)=f^\beta(\delta^{
\alpha\beta}_\lambda)$.
\end{enumerate}
\end{lem}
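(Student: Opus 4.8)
The plan is to proceed by induction on $\lh(\Xx^\alpha)$, or equivalently, by tracking how $\Pi^{\alpha\beta}$ is built up stage by stage as specified in Definition \ref{dfn:intermediate_tree_embeddings}. The three cases in that definition ($\lambda<\lambda^\alpha$; $\lambda=\lambda^\alpha$; $\lambda>\lambda^\alpha$) correspond exactly to the three formulas to be verified. First I would dispose of the trivial case $\alpha=\beta$, where $\eps=\eps'=\alpha$, $\theta^\eps=\theta^\alpha$, $\lambda^\alpha\leq\gamma^{\alpha\beta}_\lambda=\lambda$ (note $f^\alpha(\lambda)-\theta^\alpha=\lambda-\lambda^\alpha$ along the copied tail $\Xx^\alpha\rest[\lambda^\alpha,\lh(\Xx^\alpha))$), and likewise $\delta^{\alpha\beta}_\lambda=\lambda$, so all three equalities reduce to this identity between lengths along a copying segment.

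The monotonicity and continuity of $\langle\theta^\xi\rangle_{\xi\in[\alpha,\beta]_{\Xx/\Tt}}$ is the natural first genuine step: for $\xi+1\leq^{\Xx/\Tt}\beta$ with $\xi\geq_{\Xx/\Tt}\alpha$, one has $\theta^{\xi+1}=\theta^{\xi\cdot\text{(succ)}}$-type data governed by Lemma \ref{lem:<^Xx/Tt}(\ref{item:gamma_agmt_along_branch})(a), which gives $\theta^\xi\leq\theta^{\xi+1}$ (where the relevant $\theta$ there is $f^\xi(\lambda^{\xi,\xi+1})$, wedged between $\theta^\xi$ and $\theta^{\xi+1}$); continuity at limits in $[\alpha,\beta]_{\Xx/\Tt}$ follows from Definition \ref{dfn:<^Xx/Tt} together with Lemma \ref{lem:<^Xx/Tt}(\ref{item:xi_class_is_Xx/Tt_below}), since a limit $(\Tt,\Xx)$-node $\lambda^\xi$ has $\theta^\xi=\sup_{\delta<^\Xx\lambda^\xi}$ of the earlier $f$-values. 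Once this is in place, the definitions of $\eps$ and $\eps'$ as suprema make sense and $\theta^\eps\leq\theta^{\eps'}\leq f^\alpha(\lambda)$ is immediate from the definitions of $\eps,\eps'$ (using continuity to see the supremum $\eps$ is actually attained where needed, or handled as a limit).

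The main computation, and the step I expect to be the chief obstacle, is the identification of $\gamma^{\alpha\beta}_\lambda$ and $\delta^{\alpha\beta}_\lambda$ with $\lambda^\eps+(f^\alpha(\lambda)-\theta^\eps)$ and $\lambda^{\eps'}+(f^\alpha(\lambda)-\theta^{\eps'})$. The idea is: as $\xi$ runs up the branch $[\alpha,\beta]_{\Xx/\Tt}$, the tree embedding $\Pi^{\alpha\beta}$ is obtained by iterating the successor-step construction of Lemma \ref{lem:<^Xx/Tt}, where passing from $\Pi^{\alpha\xi}$ to $\Pi^{\alpha,\xi+1}$ stretches the interval assigned to node $\lambda$ precisely when $\theta^{\xi+1}\leq f^\alpha(\lambda)$ (i.e. the inflationary extender $E^\Xx_{\zeta^\xi}$ is applied below or at the image of $\lambda$'s exit point), and otherwise leaves it fixed. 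The node $\eps$ is the last stage at which $\lambda$'s \emph{start} node gets pushed up, and $\eps'$ the last stage at which its \emph{end} node does; between $\theta^\eps$ (resp. $\theta^{\eps'}$) and $f^\alpha(\lambda)$ the segment $\Xx^\eps$ (resp. $\Xx^{\eps'}$) is pure copying, which is exactly the additive term $f^\alpha(\lambda)-\theta^\eps$. So I would set up a secondary induction along $[\alpha,\beta]_{\Xx/\Tt}$, using Lemma \ref{lem:<^Xx/Tt}(\ref{item:gamma_agmt_along_branch})(b),(c) to see that $\gamma^{\alpha\xi}_\lambda$ stabilizes at $\eps$ and $\delta^{\alpha\xi}_\lambda$ stabilizes at $\eps'$, with the stable values as claimed; the limit case uses continuity of $\langle\theta^\xi\rangle$ and the eventual-constancy of the $I$-intervals noted in Definition \ref{dfn:superscript_notation}. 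The statements $f^\eps(\gamma^{\alpha\beta}_\lambda)=f^\alpha(\lambda)=f^\beta(\gamma^{\alpha\beta}_\lambda)$ and the analogue for $\delta$ then follow from Definition \ref{dfn:inflation}(\ref{item:inflation_internal_agreement}) (internal agreement of the inflation data), since $\gamma^{\alpha\beta}_\lambda\in L^\eps$ lies in the copying tail of $\Xx^\eps$ at height $f^\alpha(\lambda)$, and $L^\eps\sub[0,\lambda^\beta]_\Xx$ puts it inside $C^\beta$ with matching $f^\beta$-value. I would leave the routine verification that these interval assignments actually satisfy the tree-embedding axioms to the companion lemma \ref{lem:Pi^alpha,beta_is_tree_embedding} referenced in the definition.
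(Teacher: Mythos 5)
Your proposal is correct and follows essentially the same route as the paper: induction along $[\alpha,\beta]_{\Xx/\Tt}$ (the paper phrases it as induction on $\beta$), with the successor step read off directly from Definition \ref{dfn:intermediate_tree_embeddings} and Lemma \ref{lem:<^Xx/Tt}(\ref{item:gamma_agmt_along_branch}), and the limit step handled via $\theta^\beta=\sup_{\eta<^{\Xx/\Tt}\beta}\theta^\eta$ together with the eventual-constancy/limit behaviour of the intervals $I^\eta_\theta$. (Only your opening remark that the three cases of the definition ``correspond exactly to the three formulas'' is off --- the three bullets of the lemma all concern a single $\lambda\geq\lambda^\alpha$ --- but nothing in the actual argument depends on it.)
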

\begin{proof}
By induction on $\beta$. When $\beta=\alpha$ it is trivial, and for successor
$\beta$ it follows directly from the definitions.
So suppose $\beta$ is a limit. If $\theta^\xi=\theta^\beta$ for some
$\xi<^{\Xx/\Tt}\beta$ then it is just like in the successor case,
so suppose otherwise, that is, $\lambda^\beta$ is a $(\Tt,\Xx)$-limit.
Then note that
\[
\theta^\beta=f^\beta(\lambda^\beta)=\sup_{\lambda<^\Xx\lambda^\beta}f^{
\Tt\inflatearrow\Xx}(\lambda)=\sup_{\eta<^{\Xx/\Tt}\beta}\theta^\eta \]
and for $\theta<\theta^\beta$,
\[
I^\beta_\theta=\lim_{\lambda<^\Xx\lambda^\beta}I^{\Tt\inflatearrow\Xx}_{
\lambda;\theta}=\lim_{\eta<^{\Xx/\Tt}\beta}I^\eta_\theta. \]
So for $\lambda\in[\lambda^\alpha,\lambda^\alpha+(\theta^\beta-\theta^\alpha))$
the result follows by induction,
and for larger $\lambda$ it is easy.
\end{proof}

During the course of the proof of the following lemma we will specify notation
for various embeddings which will also be needed later.

\begin{lem}\label{lem:Pi^alpha,beta_is_tree_embedding}
 Let $\Tt,\Xx,\alpha,\beta$ be as in \ref{dfn:intermediate_tree_embeddings}.
 Then $\Pi^{\alpha\beta}:\Xx^\alpha\hookrightarrow\Xx^\beta$ is a bounding tree
embedding.
\end{lem}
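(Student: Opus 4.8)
The plan is to verify conditions \ref{item:tree_pres}--\ref{item:embedding_agreement} of Definition \ref{dfn:tree_embedding} for $\Pi^{\alpha\beta}$ by induction on $\beta$ in the factor tree order $<^{\Xx/\Tt}$, with $\alpha$ fixed. The base case $\beta=\alpha$ gives the trivial tree embedding from $\Xx^\alpha$ to itself (all intervals $[\lambda,\lambda]$), which is immediate. At successor steps $\beta'=\xi+1$ along $[\alpha,\beta']_{\Xx/\Tt}$ with $\xi+1\leq^{\Xx/\Tt}\beta'$, the point is that $\Xx^{\xi+1}$ is obtained from $\Xx^{\xi}$ (or rather from the relevant unravelled segments) by first forming an $E^\Xx_{\zeta^\xi}$-inflation of the tree embedding $\Pi^{\alpha\xi}$ restricted to the appropriate level $\theta^{\alpha,\xi+1}+1$, and then copying the tail $\Tt\rest[\theta^{\alpha,\xi+1},\lh(\Tt))$ via one-step copy extensions (at successors) and direct limits (at limits). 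So I would build $\Pi^{\alpha,\xi+1}$ as the composition of $\Pi^{\alpha\xi}$ with the tree embedding $\Xx^\xi\hookrightarrow\Xx^{\xi+1}$ that arises from this inflation-then-copy process, invoking Lemma \ref{lem:comp_tree_emb} (composition of tree embeddings) together with Lemma \ref{lem:tree_embedding_copy} and Definition/Remark \ref{dfn:inflationary_extender}, \ref{rem:tree_embedding_existence} to see that the latter is a bounding tree embedding. The agreement of the interval assignment $I^{\alpha,\xi+1}_\lambda$ prescribed in \ref{dfn:intermediate_tree_embeddings} with the one produced by this composition is exactly what the preceding lemma (computing $\gamma^{\alpha\beta}_\lambda,\delta^{\alpha\beta}_\lambda$ in terms of $\lambda^\eps,\theta^\eps$) delivers, so by uniqueness of tree embeddings with given $\delta$-values (Corollary \ref{cor:tree_embedding_uniqueness}) the two coincide.

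For the limit case, $\beta$ a limit of $<^{\Xx/\Tt}$: here I would use that for each $\lambda<\lh(\Xx^\alpha)$ the intervals $I^{\xi\beta}$-data and the maps $\pi,\omega$ stabilize as $\xi\to^{\Xx/\Tt}\beta$, exactly as established in Lemma \ref{lem:<^Xx/Tt}, parts \ref{item:xi_class_is_Xx/Tt_below} and \ref{item:gamma_agmt_along_branch}, and in the lemma immediately preceding this statement. Commutativity of the copy maps and iteration maps at the limit follows from the eventually-constant behaviour together with the commutativity requirements already verified at the $\xi<^{\Xx/\Tt}\beta$ stages; continuity of $\Gamma_{\Pi^{\alpha\beta}}$ at limits of $\Xx^\alpha$ reduces to the corresponding continuity for the already-constructed $\Pi^{\xi\beta}$ and the fact (Lemma \ref{lem:intervals_I_cover_X-branches}) that $<^{\Xx}$-branches are covered by the $I$-intervals. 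The bounding property propagates: $\Pi^{\alpha\alpha}$ is trivially bounding, the inflationary/copy extension of a bounding tree embedding is bounding (Remark \ref{rem:tree_embedding_existence}, Lemma \ref{lem:tree_embedding_copy}(vi)) because by construction of the inflation $\Tt\inflatearrow\Xx$ we always have $\In(E^\Xx_\zeta)\leq\In(E^{\Pi_\zeta}_\zeta)$ at copying stages, and composition of bounding tree embeddings is bounding.

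The main obstacle I anticipate is bookkeeping: precisely matching the three-clause definition of $I^{\alpha\beta}_\lambda$ in \ref{dfn:intermediate_tree_embeddings} (split according to $\lambda<\lambda^\alpha$, $\lambda=\lambda^\alpha$, $\lambda>\lambda^\alpha$) against the intervals that actually emerge from the inflation-then-copy construction, especially at the ``seam'' $\lambda=\lambda^\alpha$ where $\gamma^{\alpha\beta}_{\lambda^\alpha}=\lambda^\alpha$ but $\delta^{\alpha\beta}_{\lambda^\alpha}=\delta^\beta_{\theta^\alpha}$ can properly exceed $\lambda^\alpha$ (this is the nontrivial interval, reflecting that $E^\Xx_{\zeta^\xi}$ applied to a segment of $M^\Xx_{\lambda^{\alpha\beta}}$ may drop, cf.\ Lemma \ref{lem:<^Xx/Tt}(\ref{item:gamma_agmt_along_branch})(c),(d)). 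Keeping track of which of $\kappa^{\alpha\beta}$-type thresholds governs whether $\gamma^\alpha_{\theta\kappa}$ stays below $\lambda^{\alpha\beta}$ or gets pushed up the $\Xx^\beta$-branch is the delicate part; everything else is a routine chase through the commutativity diagrams already set up in \S\ref{sec:inf_comm}. I would therefore organize the verification so that condition \ref{item:embedding_commutativity} and condition \ref{item:embedding_agreement} are checked last, after the interval structure (\ref{item:tree_pres}, \ref{item:structure_I_beta}) and the model embeddings (\ref{item:model_embeddings}, \ref{item:extender_copying}) have been pinned down, and leave the genuinely mechanical sub-verifications to the reader.
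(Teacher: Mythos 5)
Your plan organizes the induction differently from the paper: you induct on $\beta$ along $<^{\Xx/\Tt}$ and assemble $\Pi^{\alpha\beta}$ by composing one-step embeddings via Lemma \ref{lem:comp_tree_emb}, whereas the paper fixes $\alpha\leq^{\Xx/\Tt}\beta$ and inducts on $\theta<\lh(\Tt)$, showing directly that $\Pi^{\alpha\beta}\rest(\delta^\alpha_\theta+1)$ is a bounding tree embedding while simultaneously defining $\pi^{\alpha\beta}_\theta$, $\om^{\alpha\beta}_\theta$, $\pi^{\alpha\beta}_{\theta\kappa}$ and proving that they commute with the realization maps $\pi^\alpha_\theta,\pi^\beta_\theta,\om^\alpha_\theta,\om^\beta_\theta$ of the two inflations of $\Tt$. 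Your scheme could likely be made to work, but as written it has a genuine gap at the successor step, and that step is where the entire content of the lemma lives.

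Concretely: you claim the one-step embedding $\Xx^{\gamma}\hookrightarrow\Xx^{\xi+1}$ (with $\gamma=\pred^{\Xx/\Tt}(\xi+1)$) ``arises from the inflation-then-copy process,'' citing Lemma \ref{lem:tree_embedding_copy}, Definition \ref{dfn:inflationary_extender} and Remark \ref{rem:tree_embedding_existence}. But that machinery constructs and extends tree embeddings whose domain is an initial segment of the \emph{base tree} $\Tt$; it yields the systems $\Pi^{\Tt\inflatearrow\Xx^{\xi+1}}_{\lambda}$, not an embedding of one unravelling $\Xx^\gamma$ into another. Producing the latter is exactly the assertion of the lemma in the case $\beta=\successor^{\Xx/\Tt}(\alpha,\beta)$, so your reduction does not actually reduce anything, and the argument that carries it is absent: one must compare the two trees through their common realization as inflations of $\Tt$, i.e.\ establish $\pi^{\gamma,\xi+1}_\theta\com\pi^\gamma_\theta=\pi^{\xi+1}_\theta$ and $\om^{\xi+1}_\theta=\om^{\gamma,\xi+1}_\theta\com\om^\gamma_\theta$ by induction on $\theta$, and at the one nontrivial interval use that $E^{\Xx^\gamma}_{\mu}$, for $\mu=\lambda^{\gamma,\xi+1}$, is the iteration image of $\pi^\gamma_{\theta}(E^\Tt_{\theta})$ with $\theta=\theta^{\xi+1}$, while $I^{\xi+1}_{\theta}$ does not drop below the image of $\pi^{\xi+1}_{\theta}(E^\Tt_{\theta})$ --- whence the interval does not drop below the image of $E^{\Xx^\gamma}_\mu$. (Note also that $\mu$ need not equal $\lambda^\gamma$: when $\theta^\gamma<\theta^{\xi+1}$ the interval $I^{\gamma,\xi+1}_{\lambda^\gamma}$ is trivial and the nontrivial one sits at $\mu>\lambda^\gamma$, so the ``seam at $\lambda=\lambda^\alpha$'' is not in general where the difficulty is.) These commutation relations with the $\Tt$-level data are the engine of the proof and appear nowhere in your plan; without them neither the copying stages nor the limit case has anything to chase. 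The bounding property has the same source --- the $\Pi^{\gamma,\xi+1}$-copies of extenders of $\Xx^\gamma$ are identified with copies of extenders of $\Tt$ inside $\Xx^{\xi+1}$, and one appeals to the bounding of the inflation $\Tt\inflatearrow\Xx^{\xi+1}$ --- rather than to closure of bounding embeddings under composition, which Lemma \ref{lem:comp_tree_emb} does not assert.
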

\begin{proof}
Write $\Pi=\Pi^{\alpha\beta}$. We will show that
\[
\Pi\rest(\delta^\alpha_\theta+1):(\Xx^\alpha,
\delta^\alpha_\theta+1)\hookrightarrow\Xx^\beta \]
is a bounding tree embedding, by induction
on
 $\theta<\lh(\Tt)$.
 We simultaneously
define embeddings $\pi^{\alpha\beta}_\theta$,
$\om^{\alpha\beta}_\theta$ and $\pi^{\alpha\beta}_{\theta\kappa}$ for
$\kappa\leq\OR(M^\Tt_\theta)$, as follows,
and verify that
\begin{enumerate}[label=\arabic*.,ref=\arabic*]
 \item\label{item:int_pi_comm}
$\gamma^\Pi_{\gamma^\alpha_\theta}=\gamma^\beta_{\theta}$
(and $P^\alpha_\theta=M^{\Xx^\alpha}_{\gamma^\alpha_\theta}$ and
$P^\beta_\theta=M^{\Xx^\beta}_{\gamma^\beta_\theta}=P^\Pi_{\gamma^\alpha_\theta}
$) and defining $\pi^{\alpha\beta}_\theta$ by
\[ \pi^{\alpha\beta}_\theta=\pi^\Pi_{\gamma^\alpha_\theta}:P^\alpha_\theta\to
P^\beta_\theta, \]
we have
$\pi^{\alpha\beta}_\theta\com\pi^\alpha_\theta=\pi^\beta_\theta$.
\item\label{item:int_om_comm}
$\delta^\Pi_{\delta^\alpha_\theta}=\delta^\beta_{\theta}$ (and
$Q^\alpha_\theta=\exit^{\Xx^\alpha}_{\delta^\alpha_\theta}$ and
$Q^\beta_\theta=\exit^{\Xx^\beta}_{\delta^\beta_\theta}=Q^\Pi_{
\delta^\alpha_\theta}$),
and defining $\om^{\alpha\beta}_{\theta}$ by
\[ \om^{\alpha\beta}_{\theta}=\om^\Pi_{\delta^\alpha_\theta}:Q^\alpha_\theta\to
Q^\beta_\theta,\]
we have
$\om^{\beta}_\theta=\om^{\alpha\beta}_\theta\com\om^\alpha_\theta$.
\item\label{item:int_pi_kappa_comm} for each $\kappa$, letting
$\psi_\theta(\kappa)=\pi^\alpha_{\theta\kappa}(\kappa)$, we have
\[
\gamma^\Pi_{\gamma^\alpha_{\theta\kappa}\psi_\theta(\kappa)}=\gamma^\beta_{
\theta\kappa}\text{
and
}P^\alpha_{\theta\kappa}=M^{\Xx^\alpha}_{\gamma^\alpha_{\theta\kappa}
\psi_\theta(\kappa)}\text{
and
}P^\beta_{\theta\kappa}=P^\Pi_{\gamma^\alpha_{\theta\kappa}\psi_\theta(\kappa)},
\]
and defining
$\pi^{\alpha\beta}_{\theta\kappa}$ by
\[
\pi^{\alpha\beta}_{\theta\kappa}=\pi^\Pi_{\gamma^\alpha_{\theta\kappa}
\psi_\theta(\kappa)}:
P^\alpha_{\theta\kappa}\to
P^\beta_{\theta\kappa}, \]
we have
$\pi^{\beta}_{\theta\kappa}=\pi^{\alpha\beta}_{\theta\kappa}\com\pi^\alpha_{
\theta\kappa}$.
\end{enumerate}

Let $\eta+1=\successor^{\Xx/\Tt}(\alpha,\beta)$
and let $\mu=\pred^\Xx(E^\Xx_{\zeta^\eta})$, so
$f^\alpha(\mu)=\theta^{\eta+1}$.
For $\theta<\theta^{\eta+1}$,
 everything is
trivial,
as  $\Xx^\alpha\rest(\mu+1)=\Xx^\beta\rest(\mu+1)$ and
$\Pi\rest\mu=\id$
and $I^\alpha_\theta=I^\beta_\theta$
and for each $\kappa$,
we have
$\gamma^\alpha_{\theta\kappa}=\gamma^\beta_{\theta\kappa}$,
so $P^\alpha_\theta=M^{\Xx}_{\gamma^\alpha_\theta}=P^\beta_\theta$,
$\pi^\alpha_\theta=\pi^\beta_\theta$,
$Q^\alpha_\theta=\exit^{M^\Xx_{\delta^\alpha_\theta}}=Q^\beta_\theta$, etc, and
$\pi^{\alpha\beta}_\theta$,
$\om^{\alpha\beta}_\theta$ and
$\pi^{\alpha\beta}_{\theta\kappa}$ are just the identity maps.

Now consider $\theta=\theta^{\eta+1}$. We have
$\gamma^\alpha_\theta\leq^\Xx\mu=\delta^\alpha_\theta$.
By definition of $\Pi$, if $\mu=\lambda^\alpha$
then
\[
\gamma^\Pi_{\gamma^\alpha_\theta}
=\gamma^\beta_\theta=\gamma^\alpha_\theta\leq^\Xx\lambda^\alpha\text{ and }
\delta^\alpha_\theta=\lambda^\alpha<^\Xx\delta^\Pi_{\gamma^\alpha_\theta}
=\delta^\beta_\theta,\]
and if $\mu>\lambda^\alpha$ then
\[ \gamma^\alpha_\theta=\delta^\alpha_\theta=\gamma^\beta_\theta=\mu\text{ and
}I^\Pi_\mu=I^\beta_\theta=[\mu,\delta^\beta_\theta]_{\Xx}.\]
So in either case,
$\gamma^\alpha_\theta=\gamma^\beta_\theta=\gamma^\Pi_{\gamma^\alpha_\theta}$,
and $\pi^\alpha_\theta=\pi^\beta_\theta$ and $\pi^{\alpha\beta}_\theta=\id$, so
property \ref{item:int_pi_comm} is trivial.
Also, $E^{\Xx^\alpha}_\mu=E^{\Tt\inflatearrow\Xx_\alpha}_\alpha$,
which is the iteration image of $\pi^\alpha_\theta(E^\Tt_\theta)$,
and since $I^\beta_\theta$ does not drop below the image of
$\pi^\beta_\theta(E^\Tt_\theta)$,
therefore $I^\Pi_\mu$ does not drop below the image of $E^{\Xx^\alpha}_\mu$.
Therefore, $\Pi\rest(\mu+1)$ is a tree embedding. Similarly, $\Pi\rest(\mu+1)$
is bounding.
Properties \ref{item:int_om_comm} and \ref{item:int_pi_kappa_comm} follow
directly from this and property \ref{item:int_pi_comm}.

Now suppose we have the induction hypotheses for
$\Pi\rest(\delta^\alpha_\theta+1)$,
where $\theta\geq\mu$. We have
$\gamma^\alpha_{\theta+1}=\delta^\alpha_\theta+1$. Using the commutativity
given
by this, and the fact that tree embeddings can be freely extended (in this case
by copying),
we get that $\pi^{\alpha\beta}_{\theta+1}=\pi^{\Pi}_{\gamma^\alpha_{\theta+1}}$
is well-defined,
and property \ref{item:int_pi_comm} holds. So like before,
$I^\Pi_{\gamma^\alpha_\theta+1}=I^\beta_{\theta+1}$
does not drop below the image
\[
\pi^\Pi_{\gamma^\alpha_{\theta+1}}(E^{\Xx^\alpha}_{\gamma^\alpha_{\theta+1}})=
 \pi^\beta_{\theta+1}(E^\Tt_{\theta+1})
\]
(assuming that $\theta+1<\lh(\Tt)$; otherwise there is no drop in model at all),
and $\Pi\rest(\delta^\alpha_{\theta+1}+1)$ is a bounding tree embedding.
Again, properties \ref{item:int_om_comm} and \ref{item:int_pi_kappa_comm}
follow.

For limit $\theta$, everything fits together easily by commutativity.
This completes the proof.
\end{proof}
\begin{dfn}\label{dfn:Pi^alpha,beta_embs}\index{$\pi^{\alpha\beta}_\theta$,
$\pi^{\alpha\beta}_{\theta\kappa}$}
\index{$\om^{\alpha\beta}_\theta$}
 Let $\Tt,\Xx,\alpha,\beta$ be as in \ref{dfn:intermediate_tree_embeddings}.
 Then for $\theta<\lh(\Tt)$
 and $\kappa\leq\OR(M^\Tt_\theta)$ we define
$\pi^{\alpha\beta}_\theta,\om^{\alpha\beta}_\theta,\pi^{\alpha\beta}_{
\theta\kappa}$
 as in the proof of \ref{lem:Pi^alpha,beta_is_tree_embedding}.
 \end{dfn}

 \begin{lem}\label{lem:int_tree_embs_int_comm_extra_agmt}
 Let $\Tt,\Xx,\alpha,\beta$ be as in \ref{dfn:intermediate_tree_embeddings} and
let $\gamma\in[\alpha,\beta]_{\Xx/\Tt}$.
 Then:
 \begin{enumerate}[label=\arabic*.,ref=\arabic*]
  \item\label{item:int_tree_embs_int_comm}
$\pi^{\alpha\beta}_\theta=\pi^{\gamma\beta}_\theta\com\pi^{\alpha\gamma}
_\theta$
and
$\om^{\alpha\beta}_\theta=\om^{\gamma\beta}_\theta\com\om^{\alpha\gamma}
_\theta$
and
$\pi^{\alpha\beta}_{\theta\kappa}=\pi^{\gamma\beta}_{\theta\kappa}\com\pi^{
\alpha\gamma}_{\theta\kappa}$.
 \item\label{item:pure_copy_extra_agmt} If
$\theta^\beta\leq\theta<\theta'<\lh(\Tt)$ and $\kappa'\leq\OR(M^\Tt_{\theta'})$
then\footnote{Recall that in general for tree embeddings
$\Pi:\Uu\hookrightarrow\Vv$ we have for example
$\om^\Pi_\xi\rest\iota(\exit^{\Uu}_\xi)\sub\pi^\Pi_{\xi'}$
for $\xi<\xi'<\lh(\Uu)$; here we get a little more agreement.}
\[
\om^{\alpha\beta}_{\theta}\sub\pi^{\alpha\beta}_{\theta'},\om^{\alpha\beta}_{
\theta'},\pi^{\alpha\beta}_{\theta'\kappa'}, \]
and if $f^\alpha(\lambda)=\theta=f^\beta(\lambda')$
and $\gamma=\In(E^{\Xx^\alpha}_{\lambda})<\OR(M^{\Xx^\alpha}_{\lambda+1})$
then
\[
\pi^{\alpha\beta}_{\theta'}(\gamma)=\om^{\alpha\beta}_{\theta'}(\gamma)=\pi^{
\alpha\beta}_{\theta'\kappa'}(\gamma)=\In(E^{\Xx^\beta}_{\lambda'}).
\]
\end{enumerate}
\end{lem}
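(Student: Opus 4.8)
The plan is to prove Lemma \ref{lem:int_tree_embs_int_comm_extra_agmt} by an induction on the length of $[\alpha,\beta]_{\Xx/\Tt}$, leaning heavily on how the intermediate tree embeddings $\Pi^{\alpha\beta}$ were constructed in Definition \ref{dfn:intermediate_tree_embeddings} and the proof of Lemma \ref{lem:Pi^alpha,beta_is_tree_embedding}. First I would handle part \ref{item:int_tree_embs_int_comm}. The key observation is that, by the recursion in Definition \ref{dfn:intermediate_tree_embeddings}, the composite $\Pi^{\gamma\beta}\circ\Pi^{\alpha\gamma}$ (using Lemma \ref{lem:comp_tree_emb} on composition of tree embeddings) is a tree embedding $\Xx^\alpha\hookrightarrow\Xx^\beta$ agreeing with $\Pi^{\alpha\beta}$ on the $\delta$-indices, namely $\delta^{\alpha\beta}_\lambda = \delta^{\gamma\beta}_{\delta^{\alpha\gamma}_\lambda}$; this is checked directly from the formulas for $I^{\alpha\beta}_\lambda$ combined with the formula for $\gamma,\delta$ indices given in the unnumbered lemma following \ref{dfn:intermediate_tree_embeddings} (in terms of $\eps,\eps'$ and $\theta^\eps$). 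Then by Corollary \ref{cor:tree_embedding_uniqueness} (tree embeddings are determined by their $\delta$-sequence), $\Pi^{\alpha\beta}=\Pi^{\gamma\beta}\circ\Pi^{\alpha\gamma}$, and then Lemma \ref{lem:comp_tree_emb} gives $\pi^\Pi_\xi = \pi^{\gamma\beta}_{\gamma^{\alpha\gamma}_\xi}\circ\pi^{\alpha\gamma}_\xi$ and $\om^\Pi_\xi = \om^{\gamma\beta}_{\delta^{\alpha\gamma}_\xi}\circ\om^{\alpha\gamma}_\xi$; unwinding Definition \ref{dfn:Pi^alpha,beta_embs} (which defines $\pi^{\alpha\beta}_\theta = \pi^\Pi_{\gamma^\alpha_\theta}$, etc.) and using property \ref{item:int_pi_comm} from the proof of \ref{lem:Pi^alpha,beta_is_tree_embedding} (that $\gamma^\Pi_{\gamma^\alpha_\theta}=\gamma^\beta_\theta$ and $\gamma^{\alpha\gamma}_\theta$ lands correctly), the three stated identities for $\pi^{\alpha\beta}_\theta$, $\om^{\alpha\beta}_\theta$, $\pi^{\alpha\beta}_{\theta\kappa}$ follow.

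For part \ref{item:pure_copy_extra_agmt}, the point is that when $\theta \geq \theta^\beta = f^\beta(\lambda^\beta)$, the relevant portions of both $\Xx^\alpha$ and $\Xx^\beta$ beyond $\lambda^\beta$ (resp.\ $\lambda^\alpha$) are obtained purely by copying $\Tt\rest[\theta^\bullet,\lh(\Tt))$, so the tree embedding $\Pi^{\alpha\beta}$ restricted to that range is just the "identity-like" copy tree embedding where $I^{\alpha\beta}_\lambda = [\gamma^{\alpha\beta}_\lambda,\gamma^{\alpha\beta}_\lambda]$ is a single node (no $\Tt$-inflationary extenders intervene). In that pure-copying situation the maps $\sigma_x$ are identities, $Q^{\alpha\beta}_\theta = P^{\alpha\beta}_\theta$, and $\om^{\alpha\beta}_\theta = \pi^{\alpha\beta}_{\theta+1}$ (as in the one-step copy extension), so the general tree-embedding agreement $\om^\Pi_\theta\rest\iota^\Tt_\theta \sub \pi^\Pi_{\theta'}$ from \ref{dfn:tree_embedding}(\ref{item:embedding_agreement}) upgrades to full agreement $\om^{\alpha\beta}_\theta \sub \pi^{\alpha\beta}_{\theta'}, \om^{\alpha\beta}_{\theta'}, \pi^{\alpha\beta}_{\theta'\kappa'}$ on all of the domain of $\om^{\alpha\beta}_\theta$, because there are no $\iota^\Tt$-level truncations in play (all the exit structures $\exit^{\Xx^\alpha}_\lambda$ are images of $\exit^\Tt_\theta$ under $\pi^{\alpha\beta}_\theta$ itself). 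I would verify the index-preservation equality $\pi^{\alpha\beta}_{\theta'}(\gamma) = \om^{\alpha\beta}_{\theta'}(\gamma) = \pi^{\alpha\beta}_{\theta'\kappa'}(\gamma) = \In(E^{\Xx^\beta}_{\lambda'})$ by noting that $\gamma = \In(E^{\Xx^\alpha}_\lambda)$ corresponds under the copying to the index $\In(E^\Tt_\theta)$ (when $f^\alpha(\lambda)=\theta$ with $\theta$ non-final), that all three maps agree on $\OR(M^\Tt_\theta)$-sized ordinals below where truncation could occur, and that by construction of the one-step copy extension, the copy of $E^\Tt_\theta$ under the appropriate map is exactly $E^{\Xx^\beta}_{\lambda'}$, whose index is $\In(E^{\Xx^\beta}_{\lambda'})$.

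The main obstacle I expect is bookkeeping care in part \ref{item:pure_copy_extra_agmt}: one must be precise about the correspondence $\lambda \leftrightarrow \theta = f^\alpha(\lambda)$ and $\lambda' \leftrightarrow \theta = f^\beta(\lambda')$ between nodes of $\Xx^\alpha$, $\Xx^\beta$, and nodes of $\Tt$, and about exactly which exit structure or ultrapower structure each of $\pi^{\alpha\beta}_{\theta'}$, $\om^{\alpha\beta}_{\theta'}$, $\pi^{\alpha\beta}_{\theta'\kappa'}$ has as domain, to make sure the claimed equalities are between maps with matching domains and codomains (the parenthetical remark in the statement about "a little more agreement" than the general tree-embedding case is exactly the delicate point). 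Most of this is routine once the pure-copying structure of $\Pi^{\alpha\beta}\rest[\lambda^\beta{+}\cdots,\lh(\Xx^\alpha))$ is made explicit, so I would state that structure as a preliminary observation and then let the agreement claims fall out, leaving the detailed diagram-chase to the reader in the usual style of this paper.
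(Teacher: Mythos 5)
Your proposal is correct, and part \ref{item:pure_copy_extra_agmt} is argued exactly as in the paper: the whole content there is that $\Xx^\beta\rest[\lambda^\beta,\lh(\Xx^\beta))$ is the pure copy of $\Xx^\alpha\rest[\lambda,\lh(\Xx^\alpha))$ (where $f^\alpha(\lambda)=\theta^\beta$) under the base copy maps $\om^{\alpha\beta}_{\theta^\beta}$ and $\pi^{\alpha\beta}_\theta,\pi^{\alpha\beta}_{\theta\kappa}$ for $\theta\leq\theta^\beta$, so that no $\Tt$-inflationary extenders intervene and the usual $\iota$-level agreement of tree-embedding maps upgrades to full agreement; your caveat about matching up $\lambda\leftrightarrow f^\alpha(\lambda)$ and $\lambda'\leftrightarrow f^\beta(\lambda')$ is exactly the right bookkeeping point.

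For part \ref{item:int_tree_embs_int_comm} you take a mildly different route. The paper simply reruns the induction of Lemma \ref{lem:Pi^alpha,beta_is_tree_embedding} (establishing the three identities by induction on $\theta$, alongside the commutativity with the inflation maps $\pi^\alpha_\theta,\om^\alpha_\theta,\pi^\alpha_{\theta\kappa}$). You instead first identify $\Pi^{\alpha\beta}$ with $\Pi^{\gamma\beta}\com\Pi^{\alpha\gamma}$ by computing the $\delta$-indices from the formulas in the unnumbered lemma after Definition \ref{dfn:intermediate_tree_embeddings} and invoking Corollary \ref{cor:tree_embedding_uniqueness}, then read off the identities from Lemma \ref{lem:comp_tree_emb}. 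This is a legitimate and arguably cleaner decomposition: it isolates the combinatorial content (the $\delta$-index computation, which does require the agreement $I^\gamma_{\theta'}=I^\beta_{\theta'}$ for $\theta'<\theta^\gamma$ from Lemma \ref{lem:<^Xx/Tt}) from the map-chasing, which is then handled once and for all by the composition lemma. The one thing to be explicit about is that Lemma \ref{lem:comp_tree_emb} only states the compositional identities for the $\pi_\xi$ and $\om_\xi$ maps, not for the derived maps $\pi_{\theta\kappa}$ of Definition \ref{dfn:pi_beta,kappa}; since those are built from the $\pi_{\beta i}$ together with iteration maps $j^\Xx$ inside the intervals, their compositionality follows by the same unwinding you sketch via the properties \ref{item:int_pi_comm}--\ref{item:int_pi_kappa_comm}, but it deserves a sentence rather than being absorbed silently into the citation.
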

\begin{proof}
Part \ref{item:int_tree_embs_int_comm} is proved much like the commutativity in
\ref{lem:Pi^alpha,beta_is_tree_embedding}.

Part \ref{item:pure_copy_extra_agmt} holds because
$\Xx^\beta\rest[\lambda^\beta,\lh(\Xx^\beta))$ is the copy of
$\Xx^\alpha\rest[\lambda,\lh(\Xx^\alpha))$,
where $f^\alpha(\lambda)=\theta^\beta$, under the base copy maps
$\om^{\alpha\beta}_{\theta^\beta}$
and $\pi^{\alpha\beta}_\theta,\pi^{\alpha\beta}_{\theta\kappa}$ for
$\theta\leq\theta^\beta$.
\end{proof}

\section{Iterability for stacks via normal
realization}\label{sec:normal_realization}

In this section we will prove the main result of the paper:

\begin{tm}\label{thm:stacks_iterability}
 Let $\Omega>\om$ be regular.
 Let $\Sigma$ be a regularly $(\Omega+1)$-total strategy for $M$ with inflation
condensation,
 where if $M$ is a wcpm then $M$ is slightly coherent. Then
 $\Sigma$ extends to a strategy $\Sigma^*$ for stacks of length $\Omega$.
More precisely, letting $m=m^\Sigma$:\footnote{See \ref{dfn:strategy_classes}.}
\begin{enumerate}[label=--]
 \item if $M$ is a wcpm then $M$ is $(\Omega,\Omega+1)^*$-iterable,
 \item if $\Sigma$ is a $\udash$strategy then $M$ is $(\udash
m,\Omega,\Omega+1)^*$-iterable, and
 \item if $\Sigma$ is an $(m,\Omega+1)$-strategy then $M$ is
$(m,\Omega,\Omega+1)^*$-iterable,
\end{enumerate}
as witnessed by some $\Sigma^*$ with $\Sigma\sub\Sigma^*$.
\end{tm}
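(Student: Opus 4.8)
\textbf{Proof proposal for Theorem \ref{thm:stacks_iterability}.}
The plan is to define $\Sigma^*=\Sigma^\stk$ explicitly by recursion on the length of the stack, using the inflation machinery of \S\S\ref{sec:inflation}--\ref{sec:factor_tree} to maintain, alongside each putative stack $\Ttvec=\left<\Tt_\beta\right>_{\beta<\eta}$ being played, a single normal tree $\Xx$ via $\Sigma$ together with a \emph{realization} map $\nrsigma$ from the current last model $M^{\Ttvec}_\infty$ into $M^\Xx_\infty$. (By Lemma \ref{lem:sub-optimal_reduces_to_optimal} it suffices to handle optimal stacks, and by Lemma \ref{lem:rule_conversion} and the translation in \S\ref{sec:r-m} it suffices to treat the $\udash m$-maximal and wcpm cases, so throughout I work with $\udash$strategies.) The recursion has three kinds of step. \emph{Successor steps within a round:} given a normal tree $\Uu$ on $M^\Xx_\infty$ that player I is extending, one defines $\Upsilon^\Sigma_\Xx$ (the pullback of $\Sigma$ under realization, as sketched in \S\ref{sec:intro}) to pick branches; the key point is that a normal tree $\Uu$ on $M^\Xx_\infty$ via $\Upsilon^\Sigma_\Xx$ converts, via Lemma \ref{lem:tree_embedding_copy} applied step-by-step, to a normal $\Xx'$ that is a $\Xx$-terminal inflation of $\Xx$, with $\Uu$ having tree order ${<^\Uu}={<^{\Xx'/\Xx}}$ and $E^\Uu_\alpha$ embedding into $E^{\Xx'}_{\zeta^\alpha}$ (this is exactly what \S\ref{sec:factor_tree} is set up to deliver), and the realization map updates to $\nrsigma'=\pi_\infty^{\Xx\inflatearrow\Xx'}$ composed with the previous one. \emph{Limit steps within a round} use inflation condensation (Lemma \ref{lem:inflationary_limit_continuation}) to continue the inflation $\Xx'$ past a limit stage, and the factor-tree analysis of \S\ref{sec:factor_tree} to see that the induced branch of $\Uu$ is the one we want. \emph{Passing to the next round} $\Tt_{\beta+1}$ on $M^{\Tt_\beta}_\infty$: here one composes inflations, invoking Theorem \ref{lem:inflation_commutativity} (commutativity of inflation) and Corollary \ref{cor:terminal_dropping_equiv} to see that the new big tree $\Xx''$ is an inflation of the \emph{original} $M$-tree (not just of $\Xx'$), so that everything stays within a single normal tree via $\Sigma$ of length ${<\Omega}$ (or $\leq\Omega$).

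For the transfinite part of the recursion (limit length $\eta$ of the stack itself) I would argue as follows. We have a coherent sequence $\left<\Xx_\beta\right>_{\beta<\eta}$ of normal trees via $\Sigma$ with $\Xx_\gamma$ a $\Xx_\beta$-terminal inflation of $\Xx_\beta$ for $\beta<\gamma$, and a commuting system of realization maps and tree embeddings $\Pi^{\beta\gamma}\colon\Xx_\beta\hookrightarrow\Xx_\gamma$ supplied by \S\ref{sec:factor_tree} and Lemma \ref{lem:inflation_commutativity}. The step is to show there is a natural ``limit'' normal tree $\Xx_\eta$ via $\Sigma$ into which $M^{\Ttvec}_\infty=\dirlim_{\beta<\eta}M^{\Tt\rest\beta\text{-stack}}_\infty$ embeds. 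I would take $\Xx_\eta$ to be determined by: its extenders are the common tail of the $\Xx_\beta$'s (formally, at each node one uses Lemma \ref{lem:inflation_commutativity} part \ref{item:interval_coverage_in} to see the intervals stabilize), branches at limit stages given by $\Sigma$; regularity of $\Omega$ and the hypothesis that the stack has length $\leq\Omega$ keep $\lh(\Xx_\eta)\leq\Omega$, so $\Sigma$ applies. Commutativity of the whole system then yields the direct limit map $\nrsigma_\eta\colon M^{\Ttvec}_\infty\to M^{\Xx_\eta}_\infty$, in particular showing $M^{\Ttvec}_\infty$ is wellfounded, which is exactly what player II must ensure at limit stages of the iteration game $\Gg(M,m,\Omega,\Omega+1)^*$. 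When the whole stack has been played through $\Omega$ rounds with nobody having lost, the realization into a tree via $\Sigma$ again certifies wellfoundedness, so player II wins.

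The main obstacle, and the heart of the construction, is the limit-of-stack step just described: proving that the sequence $\left<\Xx_\beta\right>_{\beta<\eta}$ really does have a well-defined normal limit $\Xx_\eta$ that is simultaneously an inflation of every $\Xx_\beta$ and for which the direct limit of the $M^{\Ttvec\rest\beta}_\infty$ maps coherently into $M^{\Xx_\eta}_\infty$. This is precisely where the commutativity lemma of \S\ref{sec:inf_comm} is essential (it was flagged there as ``essential in our analysis of infinite stacks''): one needs that $\Xx_\gamma$ being an inflation of $\Xx_\beta$ and $\Xx_\beta$ an inflation of $M$ forces $\Xx_\gamma$ to be an inflation of $M$ with all the copy maps, $\tau$-maps and factor-tree embeddings commuting, uniformly in $\beta<\gamma<\eta$, so that the direct limit can be taken on both sides and the maps match up. Secondary difficulties are purely bookkeeping: verifying that no ``artificial'' branch of some $\Tt_\gamma$ is induced prematurely (this is what the closure conditions on $C$ in Definition \ref{dfn:inflation}, and the ``$\Xx$ determines no $\Tt$-cofinal branch'' clause, are designed to prevent), and checking the fine-structural degree/drop matching so that non-dropping branches on the stack side correspond to non-dropping situations on the normal-tree side (handled by Remark \ref{rem:non-drop_inf_comm} and Corollary \ref{cor:terminal_dropping_equiv}). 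Finally, the cardinality claims ($\lh(\Xx)<\Omega$ for stacks of normal trees of length ${<\Omega}$, and the $\Delta_1(\Sigma_0)$ complexity bound when $\Omega=\om_1$ and $M$ is countable) follow by inspecting the recursion, since each step adds only boundedly many extenders and the branch choices are read off from $\Sigma$ applied to trees of length ${<\Omega}$.
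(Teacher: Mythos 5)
Your overall architecture is right: reduce to optimal stacks by Lemma~\ref{lem:sub-optimal_reduces_to_optimal}, then to $\udash$strategies by Lemma~\ref{lem:rule_conversion}; treat length-$2$ stacks by defining the induced normal strategy $\Upsilon^\Sigma_\Tt$ via the inflation $\Xx=\Ww^\Sigma_\Tt(\Uu)$ with factor-tree machinery; use Lemma~\ref{lem:inflation_commutativity} and Corollary~\ref{cor:terminal_dropping_equiv} to pass to later rounds; and identify the limit-of-stack step as the crux. This all matches the paper's proof.

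However, there is a real gap at exactly the step you flagged as the main obstacle. You propose to form the limit tree $\Xx_\eta$ as ``the common tail of the $\Xx_\beta$'s,'' with branch choices read off $\Sigma$, citing Lemma~\ref{lem:inflation_commutativity}(\ref{item:interval_coverage_in}). But there is no common tail: $\Xx_\gamma$ is an \emph{inflation} of $\Xx_\beta$, not an end-extension of it; the trees $\Xx_\beta$ typically diverge from one another as $\beta$ increases (each inflation step can insert $\Tt$-inflationary extenders with small index into the middle of the tree, reshuffling everything above). The interval-coverage clause of the commutativity lemma does not produce a limit object; it only says intervals nest in the right way once one already has all three trees. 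What is actually needed at this step is the \emph{minimal simultaneous inflation} of $\{\Xx_\beta\}_{\beta<\eta}$ (Definition~\ref{dfn:min_inf}, Lemma~\ref{lem:min_inf}): one builds a single new tree $\Xx_\eta$ via $\Sigma$ by a comparison-like recursion, choosing at each stage the extender of least index among those prescribed by the inflations $\Xx_\beta\inflatearrow\Xx_\eta$ not yet terminated, using $\Sigma$ at limits; the proof that this terminates (and that some $\Xx_\beta$ is ``won'' terminally non-droppingly, so that a realization map into $M^{\Xx_\eta}_\infty$ exists) is a reflection argument, and this is where the paper genuinely needs $\Sigma$ to be an $(\udash m,\Omega+1)$-strategy rather than an $(\udash m,\Omega)$-strategy. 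Your appeal to regularity of $\Omega$ alone is not enough; without minimal simultaneous inflation there is no candidate $\Xx_\eta$ to apply regularity to, and the difference between Theorem~\ref{thm:stacks_iterability} and Theorem~\ref{thm:stacks_iterability_2} (length-$\Omega$ vs.\ length-$<\om$ stacks) is precisely that this comparison needs the extra $+1$. Once $\Xx_\eta$ is obtained from Lemma~\ref{lem:min_inf}, the commutativity lemma gives the coherent system of maps $\pi^{\Xx_\beta\inflatearrow\Xx_\eta}_\infty$ and the direct-limit embedding $\srsigma_\eta:M^{\Ttvec\rest\eta}_\infty\to M^{\Xx_\eta}_\infty$, as you say, but the construction of $\Xx_\eta$ itself must come first and it is a comparison, not a union or a tail.

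Two smaller remarks. First, the round-$\alpha$ strategy is actually a two-stage pullback: one first lifts $\Tt_\alpha$ on $O_\alpha$ to $\Uu_\alpha=\srsigma_\alpha\Tt_\alpha$ on $M^{\Yy_\alpha}_\infty$, and then lifts $\Uu_\alpha$ into an inflation $\Yy_{\alpha+1}$ of $\Yy_\alpha$ via $\Upsilon^\Sigma_{\Yy_\alpha}$; your ``pullback of $\Sigma$ under realization'' glosses this but the bookkeeping of the composite $\srsigma_{\alpha+1}=\srsigma^1_{\alpha+1}\com\srsigma^0_{\alpha+1}$ is what makes the drop/degree matching (property~\ref{item:Ttvec_model_drop}) verifiable. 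Second, the fine-structural check that the realization maps are near $\udash k$-embeddings in the dropping cases (needed so that the recursion is well-founded and the degree bookkeeping closes up) is a nontrivial variant of the strong-closeness/translatability argument, not handled by generic copying facts alone; this is the content of the longer of the two successor subcases in \S\ref{subsubsec:stacks_lh_2} and invokes the weak-cofinality preservation Lemma~\ref{lem:wcof_pres}.
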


\begin{rem}
The proof will in fact give an explicit construction of a specific such strategy
$\Sigma^*$ from $\Sigma$, and we denote this $\Sigma^*$ by $\Sigma^{\stk}$ (see
Definition \ref{dfn:Sigma^stk}).\index{$\Sigma^\stk$}
If $\Sigma$ is conveniently inflationary, then
for each stack $\Ttvec$ via $\Sigma^{\stk}$ of length ${<\Omega}$, we will
produce a tree $\Xx$ via $\Sigma$,
and, roughly, lifting maps from $\Ttvec$ into $\Xx$.
For MS-indexed $M$ we must also translate through $\udash$iteration strategies.
We write $\Ww^\Sigma(\Ttvec)=\Xx$.\index{$\Ww^\Sigma$} This and other
notation is also recorded
later in Definitions \ref{dfn:W(T,U)_etc} and \ref{dfn:W(Tvec)_etc}.
In \S\ref{sec:npc} we will verify some extra properties of
$\Sigma^{\stk}$,
given that $\Sigma$ satisfies some stronger properties itself.

We also prove the following variant (the relevant definitions are in
\S\ref{subsec:terminology}).
 \end{rem}

\begin{tm}\label{thm:stacks_iterability_2}
 Let $\Omega>\om$ be regular.
 Let $\Sigma$ be a regularly $\Omega$-total strategy for $M$ with inflation
condensation,
 where if $M$ is a wcpm then $M$ is slightly coherent. Then
 $\Sigma$ extends to a strategy $\Sigma^*$ for stacks of length $<\om$.
More precisely, letting $m=m^\Sigma$:
\begin{enumerate}[label=--]
 \item if $M$ is a wcpm then $M$ is $({<\om},\Omega)^*$-iterable,
 \item if $\Sigma$ is a $\udash$strategy then $M$ is $(\udash
m,{<\om},\Omega)^*$-iterable, and
 \item if $\Sigma$ is an $(m,\Omega)$-strategy $M$ is
$(m,{<\om},\Omega)^*$-iterable,
\end{enumerate}
as witnessed by some $\Sigma^*$ with $\Sigma\sub\Sigma^*$.
\end{tm}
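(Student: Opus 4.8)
\textbf{Proof plan for Theorem \ref{thm:stacks_iterability_2}.}

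The plan is to reduce this theorem to Theorem \ref{thm:stacks_iterability} as far as possible, and to observe that the finite-stack case is genuinely a simplification. First I would note that a regularly $\Omega$-total strategy $\Sigma$ with inflation condensation provides, via \ref{lem:tree_embedding_copy} (one-step copy extension) and the limit-stage continuation \ref{lem:inflationary_limit_continuation}, a way to process a single normal tree $\Tt_0$ via $\Sigma$ of successor length $<\Omega$ together with a normal tree $\Uu$ on $M^{\Tt_0}_\infty$, producing an inflation $\Xx=\Ww^\Sigma_{\Tt_0}(\Uu)$ of $\Tt_0$ via $\Sigma$ together with an embedding $\nrsigma:M^\Uu_\infty\to M^\Xx_\infty$ when $\Uu$ has successor length. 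This is exactly the ``first piece'' of the construction sketched in the introduction, and crucially it only requires trees of length $<\Omega$, not $\leq\Omega$, and only requires $\Sigma$ to be $\Omega$-total (never calling $\Sigma$ on a tree of length $\Omega$). So the strategy $\Upsilon^\Sigma_{\Tt_0}$ for $M^{\Tt_0}_\infty$ given by pulling back $\Sigma$-via-inflation is a well-defined $(\cdot,\Omega)$-strategy in the appropriate degree; iterating this observation along a finite stack, and using the $\nrsigma$-pullback to copy trees on later models as described in the introduction, gives a strategy $\Sigma^\stk$ for stacks $\Ttvec=\left<\Tt_i\right>_{i<k}$ of length $k<\om$ in which each $\Tt_i$ has length $<\Omega$.

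The key steps, in order, would be: (i) set up the one-tree-on-top machinery $\Tt\inflatearrow\Ww^\Sigma_\Tt(\Uu)$ and verify, using \ref{lem:tree_embedding_copy}, \ref{lem:inflationary_limit_continuation}, and the commutativity lemma \ref{lem:inflation_commutativity}, that it produces a genuine inflation via $\Sigma$ of length $<\Omega$ with a realization embedding $\nrsigma$ (here Corollary \ref{cor:terminal_dropping_equiv} governs whether the branch drops); (ii) define $\Upsilon^\Sigma_\Tt$ as the resulting normal $(\cdot,\Omega)$-strategy for $M^\Tt_\infty$, and more generally $\Upsilon^\Sigma_{\Tt,\Uu}$ as the $\nrsigma$-pullback of $\Upsilon^\Sigma_{\Ww^\Sigma_\Tt(\Uu)}$, so that later models in the stack inherit strategies; (iii) define $\Sigma^\stk$ on finite stacks by recursion on the number of rounds, at each new round feeding the strategy produced in step (ii) for the current last model, and check this is well-defined and produces wellfounded iterates and (in the optimal case) handles artificial drops via \ref{lem:sub-optimal_reduces_to_optimal}; (iv) handle the MS-indexed case by translating through the corresponding $\udash$strategy using \ref{lem:rule_conversion} and \ref{lem:tree_conversion}, and the wcpm case using slight coherence (Lemma \ref{lem:slight_coherence_pres}) to ensure the realizing trees $\Xx$ are normal; (v) observe that $\Sigma\sub\Sigma^\stk$, since a stack of length $1$ is just a normal tree and the realization of it is trivial.

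The main obstacle, as with Theorem \ref{thm:stacks_iterability}, is step (i): one must show that the process of building $\Ww^\Sigma_\Tt(\Uu)$ by alternately copying extenders from $\Tt$ (one-step copy extension) and inserting $\Tt$-inflationary extenders (the extenders of $\Uu$) can always be continued — at successor stages this is \ref{lem:tree_embedding_copy}, but at limit stages of $\Uu$ one needs $\Sigma$ to choose a cofinal branch of $\Xx$ and then inflation condensation (via \ref{lem:inflationary_limit_continuation}) to guarantee that this branch projects correctly, i.e.\ that $\Xx$ remains an inflation of $\Tt$. One also needs that the resulting $\Xx$ has length $<\Omega$; this is where regularity of $\Omega$ enters, via a reflection argument (as in the ``moreover'' clause of \ref{lem:min_inf}), showing that the inflation terminates: since $\Tt$ has length $<\Omega$ and $\Omega$ is regular, cofinally many extenders of $\Xx$ cannot be copied from $\Tt$ all the way to length $\Omega$. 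Once this single-step realization is in hand, composing finitely many of them is routine — finiteness means no genuinely new limit-of-stacks phenomenon arises, which is precisely why the infinite-stack arguments of \S\ref{subsec:limit_length_stacks} are not needed here. I would carry out step (i) in full and treat steps (ii)–(v) as straightforward iterations of it, with the MS-indexing and wcpm translations delegated to the cited lemmas.
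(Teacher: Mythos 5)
Your proposal is correct and follows essentially the same route as the paper: the theorem is obtained as the finite-stack fragment of the construction of $\Sigma^{\stk}$, whose successor-round machinery ($\Ww^\Sigma_\Tt(\Uu)$, $\Upsilon^\Sigma_\Tt$, the realization maps $\nrsigma$, the reduction of artificial drops via Lemma \ref{lem:sub-optimal_reduces_to_optimal}, and the translation through $\udash$strategies for MS-indexing) needs only $\Omega$-totality of $\Sigma$, the $(\Omega+1)$-totality being required solely for the minimal simultaneous inflation at limit rounds, which finite stacks never reach. The only cosmetic inaccuracy is attributing the bound $\lh(\Xx)<\Omega$ to a reflection argument as in Lemma \ref{lem:min_inf}; it is really just the regularity of $\Omega$ applied to the fact that each block of $\Tt$-copied extenders between consecutive $\Tt$-inflationary ones has length at most $\lh(\Tt)<\Omega$.
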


Recall that by Theorem \ref{tm:wDJ_implies_cond}, $(n,\Omega+1)$-iteration
strategies
with the DJ property for premice $M$ with $\card(M)<\Omega$,
or with weak DJ when $M$ is countable, have strong hull condensation, hence
inflation condensation,
so Theorem \ref{thm:stacks_iterability} applies in this case.
In particular:

\begin{cor}\index{$\om$-mice}
Let $\Omega$ be regular uncountable.
Let $M$ be $\om$-sound, $(\om,\Omega+1)$-iterable, with $\rho_\om^M=\om$.
Then $M$ is $(\om,\Omega,\Omega+1)^*$-iterable.
\end{cor}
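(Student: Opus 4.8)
The plan is to derive this corollary as an immediate consequence of Theorems~\ref{tm:wDJ_implies_cond} and~\ref{thm:stacks_iterability}. So suppose $M$ is $\om$-sound, $(\om,\Omega+1)$-iterable, with $\rho_\om^M=\om$. First I would recall the standard fact (from \cite{wdj} or \cite{outline}) that an $(\om,\Omega+1)$-iteration strategy for an $\om$-sound premouse with $\rho_\om^M=\om$ automatically has the Dodd-Jensen property: indeed, since $\rho_\om^M=\om$, the premouse $M$ is pointwise definable from no parameters (equivalently, $M$ is its own $\om$-th core), and the usual copying/comparison argument then shows that any iteration strategy is ``minimal'' in the sense demanded by DJ. (If $M$ happens to be countable, one could alternatively invoke weak DJ with respect to any enumeration $e$ of $M$; the hypotheses as stated require only regularity of $\Omega$ and not countability of $M$, but $\card(M)<\Omega$ holds automatically here since $M$ has a surjection from $\om$, i.e.\ $M$ is countable, so in fact $\card(M)<\Omega$ is trivially satisfied.)

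Given DJ (or weak DJ), Theorem~\ref{tm:wDJ_implies_cond} applies directly: since $\card(M)<\Omega$ and $\Sigma$ has the DJ property, $\Sigma$ has strong hull condensation. Then Lemma~\ref{lem:shcond_implies_extra_inf} (or simply the remark that strong hull condensation implies inflation condensation, as noted just before Lemma~\ref{lem:unique_strat_has_inf_cond}) gives that $\Sigma$ has inflation condensation. Since $\Sigma$ is an $(\om,\Omega+1)$-strategy, it is regularly $(\Omega+1)$-total in the sense of Definition~\ref{dfn:strategy_classes}. Therefore Theorem~\ref{thm:stacks_iterability} applies (with $m=0$), and yields that $M$ is $(0,\Omega,\Omega+1)^*$-iterable, i.e.\ $(\om,\Omega,\Omega+1)^*$-iterable in the case-notation where ``$\om$'' abbreviates the degree, as witnessed by the strategy $\Sigma^{\stk}$ extending $\Sigma$.

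The only point requiring a word of care is whether $M$ is MS-indexed or $\lambda$-indexed. In the MS-indexed case, Theorem~\ref{tm:wDJ_implies_cond} is stated so as to directly cover MS-premice (the statement ``$\Sigma$ has strong hull condensation'' is unpacked there as inflation condensation for the corresponding $\udash$strategy), and Theorem~\ref{thm:stacks_iterability} is likewise stated for $(m,\Omega+1)$-strategies on MS-indexed $M$ via the translation through $\udash$strategies. So no extra argument is needed; the two theorems chain together uniformly across both indexings. There is no serious obstacle here --- the corollary is purely a matter of verifying that the hypotheses of the two cited theorems are met, the only mild subtlety being the folklore fact that $\rho_\om^M=\om$ together with $\om$-soundness forces the DJ property, which is where the bulk of the (already-known) content sits.

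\begin{proof}
Since $\rho_\om^M=\om$ and $M$ is $\om$-sound, $M$ is its own $\om$-th core and hence is pointwise definable over itself without parameters; by the standard argument (see \cite{wdj}, \cite{outline}), any $(\om,\Omega+1)$-iteration strategy $\Sigma$ for $M$ then has the Dodd-Jensen property. Also $M$ is countable, so $\card(M)<\Omega$. By Theorem~\ref{tm:wDJ_implies_cond}, $\Sigma$ has strong hull condensation, hence (by Lemma~\ref{lem:shcond_implies_extra_inf}) inflation condensation. As an $(\om,\Omega+1)$-strategy, $\Sigma$ is regularly $(\Omega+1)$-total (Definition~\ref{dfn:strategy_classes}), so Theorem~\ref{thm:stacks_iterability} applies with $m=0$ and gives that $M$ is $(\om,\Omega,\Omega+1)^*$-iterable, as witnessed by $\Sigma^{\stk}\supseteq\Sigma$.
\end{proof}
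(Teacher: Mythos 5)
Your proof is correct and is essentially the paper's own argument, which is just the one-liner "the unique $(\om,\Omega+1)$-strategy for $M$ has DJ" followed by the chain DJ $\Rightarrow$ strong hull condensation (Theorem \ref{tm:wDJ_implies_cond}) $\Rightarrow$ inflation condensation $\Rightarrow$ Theorem \ref{thm:stacks_iterability}. The only slip is cosmetic: the degree parameter in the application of Theorem \ref{thm:stacks_iterability} is $m=\om$ (the soundness degree of $M$), not $m=0$, but this does not affect the argument.
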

\begin{proof}
 The unique $(\om,\Omega+1)$-strategy for $M$ has DJ.
\end{proof}

We will also prove a variant of Theorem \ref{thm:stacks_iterability},
which applies to length $\om$ (not just length ${<\om}$) stacks  of
\emph{finite} normal trees, assuming only normal iterability,
without any condensation assumption.
It is used in \cite{fsfni} in the proof of solidity, etc, from normal
iterability. In
order to state the result we need the
following
definition.
Recall that \emph{\tu{(}putative\tu{)} $m$-maximal stack} was defined in
\S\ref{subsec:terminology},
and $\Gg_\fin(M,m,\Omega+1)$ in
Definition \ref{dfn:G_fin}. We extend this naturally as follows:

\begin{dfn}\index{$\Gg_\fin$}
For $\udash m$-sound $M$, we define $\Gg_{\fin}(M,\udash m,\Omega+1)$,
and for wcpms $M$, define $\Gg_\fin(M,\Omega+1)$, analogously
to $\Gg_\fin(M,m,\Omega+1)$.
\end{dfn}

If player $\Two$ has a winning strategy for $\Gg_{\fin}(M,m,\Omega+1)$ where
$\Omega\geq\om$, then clearly every putative
$m$-maximal stack $\Ttvec$ as in Definition \ref{dfn:G_fin} (of finite length,
consisting
of finite length trees) is a true stack (has  wellfounded
models).
By a proof very similar (but simpler) to that for Theorem
\ref{thm:stacks_iterability},
we also prove
the following.
It needs no strategy condensation hypothesis
because the relevant trees have finite length.

\begin{tm}\label{thm:stacks_of_finite_trees}
Let $\Omega>\om$ be regular.
Let $\Sigma$ be a regularly $(\Omega+1)$-total pre-inflationary\footnote{Recall
that \emph{pre-inflationary} does not involve
 any actual condensation assumption!} strategy for $M$
 and $m=m^\Sigma$, where if $M$ is a wcpm then $M$ is slightly coherent.
Then player $\Two$ has a winning
strategy for $\Gg_{\fin}(M,m,\Omega+1)$, $\Gg_{\fin}(M,\udash m,\Omega+1)$, or
$\Gg_\fin(M,\Omega+1)$
accordingly.
Moreover, let $\Ttvec=\left<\Tt_i\right>_{i<\om}$ be an $m$-maximal, $\udash
m$-maximal, or normal,
stack on $M$ respectively.
Then for all sufficiently large $i<\om$, $b^{\Tt_i}$ does
not drop in model or degree, and $M^\Ttvec_\infty$ is wellfounded.
\end{tm}

\begin{rem}
In considering the proofs to come, the reader should make one observation.
The definition of $\Xx=\Ww_\Sigma(\Ttvec)$ will depend on $\Ttvec$ and the
restriction of $\Sigma$
to the segments of $\Xx$. We are presently assuming that $\Sigma$ is total,
but if $\Sigma$ were instead a partial strategy (with inflation condensation),
then everything would work as long as the segments of $\Xx$ remain in the
domain
of $\Sigma$.
We will use this observation later to deduce
\ref{thm:stacks_iterability_partial},
which is a variant of \ref{thm:stacks_iterability}
for partial strategies. Its statement depends on the definition of
$\Ww_\Sigma(\Ttvec)$,
which is spelled out in the proof, and the statements are somewhat inconvenient,
so we postpone them for later (the reader who wants to know what we intend to
prove
in this regard in advance should consult \ref{thm:stacks_iterability_partial}).
\end{rem}

\subsection{Proof of Theorems \ref{thm:stacks_iterability},
\ref{thm:stacks_iterability_2} and \ref{thm:stacks_of_finite_trees}: The stacks
strategy $\Sigma^\stk$}

We first observe that it suffices to construct (appropriately definable)
strategies for optimal stacks:

\begin{lem}\label{lem:sub-optimal_reduces_to_optimal}
 Let $\Omega>\om$ be regular and $M$ be either \tu{(}i\tu{)},\tu{(}ii\tu{)}
$\udash m$-sound, or
\tu{(}iii\tu{)}  MS-indexed and $m$-sound.
 Let $\Gamma$ be a strategy for player II in  the
\begin{enumerate}[label=\tu{(}\roman*\tu{)}]
 \item\label{item:sorto_part_i} $\Gg_{\mathrm{opt}}(M,\udash
m,\Omega,\Omega+1)^*$-iteration
game,
 or
 \item\label{item:sorto_part_ii} $\Gg^{\unrvl}_{\mathrm{opt}}(M,\udash
m,\Omega,\Omega+1)^*$-iteration game, or
 \item\label{item:sorto_part_iii}
$\Gg_{\mathrm{opt}}(M,m,\Omega,\Omega+1)^*$-iteration game,
 \end{enumerate}
 respectively.
Then there is
 a strategy $\widehat{\Gamma}$ for player II
 in the
 \begin{enumerate}[label=\tu{(}\roman*\tu{)}]
 \item $\Gg(M,\udash m,\Omega,\Omega+1)^*$-iteration game,
 or
 \item $\Gg^{\unrvl}(M,\udash m,\Omega,\Omega+1)^*$-iteration game,
 or
 \item
 $\Gg(M,m,\Omega,\Omega+1)^*$-iteration game,
 \end{enumerate}
 respectively.
 Moreover, stacks via $\widehat{\Gamma}$ lift canonically to (optimal) stacks
via $\Gamma$,
 and if $\Omega=\om_1$ and $M\in\HC$ and $\Gamma'\sub\RR$
and $\widehat{\Gamma}'\sub\RR$ code  $\Gamma\rest\HC$ and
$\widehat{\Gamma}\rest\HC$
in a natural manner then
$\widehat{\Gamma}'$
is $\Delta^1_1(\Gamma')$, uniformly in $\Gamma$.

The analogous facts also hold for deriving
\begin{enumerate}[label=\tu{(}\roman*\tu{)}]
 \item  $\Gg(M,\udash m,{<\om},\Omega)^*$-strategies from
 $\Gg_{\mathrm{opt}}(M,\udash m,{<\om},\Omega)^*$-strategies,
and
\item  $\Gg^{\unrvl}(M,\udash m,{<\om},\Omega)^*$-strategies from
 $\Gg^{\unrvl}_{\mathrm{opt}}(M,\udash m,{<\om},\Omega)^*$-strategies,
and
\item $\Gg(M,m,{<\om},\Omega)^*$-strategies from
$\Gg_{\mathrm{opt}}(M,m,{<\om},\Omega)^*$-strategies.
\end{enumerate}
\end{lem}
\begin{proof}[Proof Sketch]
Part \ref{item:sorto_part_i}: This is just by a standard copying construction,
particularly
 because we are dealing with $\udash$strategies (so there are no type 3
problems); it is in particular a simplification of the construction in
\cite[\S7]{mim}. We officially assume that $M$ is $\lambda$-indexed, so may
drop the  ``$\uu$'', but the MS-indexed case is likewise. The strategy
$\widehat{\Gamma}$ is defined
recursively as follows.
Suppose $\vec{\mathscr{S}}$ is via $\widehat{\Gamma}$,
of length $\gamma<\Omega$, and
$(R,r)=(M^{\vec{\mathscr{S}}}_\infty,\deg^{\vec{\mathscr{S}}}
(\infty))$.
Then we
will have a corresponding optimal
stack $\vec{\Uu}$ via $\Gamma$, with last model/degree $(N,n)$,
and some $R'=R^*_\gamma\ins N$, with $(R',r)\ins(N,n)$, and an
$r$-lifting embedding
$\sigma_\gamma:R\to R'$.
Let $\Gamma_{R,r}$ be the $(r,\Omega+1)$-strategy for $R$
given by lifting to a tree $\Uu$ via $\Gamma_{\vec{\Uu}}$ with $\pi_\gamma$.
Note that $\Uu$ will be an $n$-maximal tree on $N$,
as opposed to a $r$-maximal tree on $R'$.

That is, let $\Tt$ be a $r$-maximal tree on $R$ via $\Gamma_{R,r}$,
and $\Uu$ the lift,
and let $R_\alpha=M^\Tt_\alpha$, $N_\alpha=M^\Uu_\alpha$,
and if $[0,\alpha]_\Uu$ does not drop below the iteration image $R''$ of
$R'$, then
set $R'_\alpha=R''$,
and otherwise set $R'_\alpha=N_\alpha$. Let $r_\alpha=\deg^\Tt(\alpha)$
and $n_\alpha=\deg^\Uu(\alpha)$. Then we will have
$(R'_\alpha,r_\alpha)\ins(N_\alpha,n_\alpha)$,
and a $r_\alpha$-lifting embedding $\pi_\alpha:R_\alpha\to R'_\alpha$,
where $\pi_0=\sigma_\gamma$,
and the sequence of models and copy maps have typical commuting and
agreement properties.
If $\alpha+1<\lh(\Tt)$ then  $E^\Uu_\alpha=\pi_\alpha(E^\Tt_\alpha)$
(where $\pi_\alpha(F(R_\alpha))=F(R'_\alpha)$), and we proceed basically as
usual, except that we can have $\alpha=\pred^\Tt(\beta+1)$
and $[0,\beta+1]_\Tt\inter\dropset^\Tt_{\deg}=\emptyset$
and $(R'_\alpha,r)\pins(M^{*\Uu}_{\beta+1},\deg^\Uu(\beta+1))$,
even when $\beta+1\in\dropset^\Uu_{\deg}$.  It can also
be that
$[0,\alpha]_\Tt\inter\dropset^\Tt_{\deg}\neq\emptyset$
but $(R'_\alpha,r_\alpha)\pins(N_\alpha,n_\alpha)$.

Now suppose that at the beginning of round $\gamma$,
player I plays $(S,s)\ins(R,r)$. Let $\Gamma_{S,s}$ be the
$(s,\Omega+1)$-strategy for $S$ given by lifting trees $\mathscr{S}$ on $S$
to $r$-maximal trees $\Tt$ on $R$ via the identity map $S\to S\ins R$.
This lifting is just just  like the preceding one (except that maybe $R\neq R'$
and $\sigma_\gamma\neq\id$ above),
and letting $S_\alpha=M^{\mathscr{S}}_\alpha$
and $R_\alpha=M^\Tt_\alpha$ and $S'=S$, we get $S'_\alpha\ins R_\alpha$
and copy maps $\varrho_\alpha:S_\alpha\to S'_\alpha$.
Then player II plays out round $\gamma$ using $\Gamma_{S,s}$, producing
tree $\mathscr{S}_\gamma$, and
composing the two lifts, we produce the $n$-maximal tree $\Uu_\gamma$ on $N$.
If $\lh(\mathscr{S}_\gamma)=\alpha+1$ and we reach round $\gamma+1$,
then
we produce $R^*_{\gamma+1}$ in the natural way, and set
\[ \sigma_{\gamma+1}=\pi_\alpha\com\varrho_\alpha:S_\alpha\to
R^*_{\gamma+1}\ins N_\alpha\]
(if $S'_\alpha\pins R_\alpha$ then
$R^*_{\gamma+1}=\pi_\alpha(S'_\alpha)\pins R'_\alpha$,
and otherwise $R^*_{\gamma+1}=R'_\alpha$).

If $\eta$ is a limit ordinal and we have defined $\vec{\mathscr{S}}$ of length
$\eta$ and $\vec{\Uu}$ of length $\eta$ as above, then since
$M^{\vec{\Uu}}_\infty$ is well-defined and wellfounded,
it is easy to see that $M^{\vec{\Tt}}_\infty$ is also (including
that player I eventually stopped artificially dropping) and we
define $R^*_\eta$ and $\sigma_\eta$ via direct limit.

Part \ref{item:sorto_part_i} and the corresponding definability clause now
follow easily.

Part \ref{item:sorto_part_ii} is almost
the same as part \ref{item:sorto_part_i}.
With notation as there,
suppose $\Tt$ (played in round $\gamma$)
has length $\alpha+1$ (with $\Tt$ unravelled),
and $\Uu$ is its lift.
We have $(R'_\alpha,r_\alpha)\ins(N_\alpha,n_\alpha)$.
If $R'_\alpha=N_\alpha$ then $r_\alpha\neq n_\alpha$,
and note then that $\Uu$ is unravelled.
If instead $R'_\alpha\pins N_\alpha$ and $\Uu$
is not unravelled, then first replace $\Uu$
with $\unrvl(\Uu)$ before continuing.

Part \ref{item:sorto_part_iii}: Fix $\Gamma$ as in \ref{item:sorto_part_iii}.
If $M$ is type 3 then let $m'=m+1$,
and otherwise let $m'=m$. Let $\Sigma$ be the corresponding
$\Gg^{\unrvl}_{\mathrm{opt}}(M,\udash m',\Omega,\Omega+1)^*$-strategy
(see Lemma \ref{lem:rule_conversion}).
Let $\widehat{\Sigma}$ be defined as above. Now define $\widehat{\Gamma}$
as follows. Suppose we have defined $\Ttvec$ of length $\gamma$ via
$\widehat{\Gamma}$. Then
$(M_\gamma,m_\gamma)=(M^{\Ttvec}_\infty,\deg^{\Ttvec}(\infty))$ will be
well-defined, and we will have a corresponding stack
$\Uuvec$ via $\widehat{\Sigma}$, of length $\gamma$,
and letting
$(M'_\gamma,m'_\gamma)=(M^{\Uuvec}_\infty,\udeg^{\Uuvec}(\infty))$,
we will have
$M_\gamma=(M'_\gamma)^\pm$,
and $m_\gamma,m'_\gamma$ are related according to the type of $M_\gamma$
as $m,m'$ are. Suppose player I plays
$(Q_\gamma,q_\gamma)\ins(M_\gamma,m_\gamma)$.
If this is not an artificial drop, then also set
$(Q'_\gamma,q'_\gamma)=(M'_\gamma,m'_\gamma)$, and then form
$(\Tt_\gamma,\Uu_\gamma)$
with $\Uu_\gamma$ according to $\widehat{\Sigma}_{\Uuvec}$
and $\Tt_\gamma$ its translation.
If there is an artificial drop, then let $q'_\gamma=q_\gamma+1$
if $Q_\gamma$ is type 3, and $q'_\gamma=q_\gamma$ otherwise,
$Q'_\gamma=Q_\gamma$ (recalling that if $M'_\gamma\neq M_\gamma$
then $m_\gamma=0$, so $Q_\gamma\pins M_\gamma$
and since $M_\gamma=(M'_\gamma)^\pm$, therefore $Q_\gamma\pins M'_\gamma$),
and noting that $(Q_\gamma,q'_\gamma)\pins(M'_\gamma,m'_\gamma)$,
now play $\Tt_\gamma,\Uu_\gamma$ as before, but on $(Q_\gamma,q_\gamma)$
and $(Q'_\gamma,q'_\gamma)$. Note then that by Lemma \ref{lem:tree_conversion},
$\Ttvec\conc\Tt_\gamma$
and $\Uuvec\conc\Uu_\gamma$ again satisfy the inductive requirements.

Finally, if we have $\Ttvec,\Uuvec$ of limit length, then because
$M^{\Uuvec}_\infty$ is well-defined and wellfounded, and because
of the correspondence of iteration maps given by Lemma
\ref{lem:tree_conversion},
$M^{\Ttvec}_\infty$ is also well-defined and wellfounded,
and the inductive hypotheses  hold.

The lemma easily follows.
\end{proof}

So by the lemma, in order to prove Theorems \ref{thm:stacks_iterability} and
\ref{thm:stacks_iterability_2}, we just need to construct
appropriate strategies for optimal stacks.
In the construction we work with
conveniently
inflationary strategies, and directly construct a
convenient strategy (for optimal stacks),
and then derive from this inconvenient strategies
(also for optimal stacks).
This derivation is is quickly dispensed with and we deal with it first.
 Consider the case of \ref{thm:stacks_iterability}.
 Suppose $M$ is MS-indexed.
 We have the normal strategy $\Sigma$ for $M$.
 Let $\ell=m+1$ if $M$ is type 3; otherwise let $\ell=m$.
 Let $\Gamma$ be the $(\udash\ell,\Omega+1)$-strategy for $M$ corresponding to
$\Sigma$ (see \ref{lem:rule_conversion}).
 By definition, $\Gamma$ has inflation condensation.
 Suppose that the theorems hold with respect to convenient strategies (hence
for
$\Gamma$).
 Let $\Gamma^*$ be a $(\udash\ell,\Omega,\Omega+1)^*$-strategy for $M$
 such that $\Gamma\sub\Gamma^*$.
 Let $\Sigma^*$ be the $(m,\Omega,\Omega+1)^*$-strategy
 for $M$ determined by $\Gamma^*$ (that is, by restricting
 $\Gamma^*$ to unravelled stacks, we get an unravelled
 strategy, and this corresponds to $\Sigma^*$).
 Then $\Sigma\sub\Sigma^*$, so we are done. For \ref{thm:stacks_iterability_2}
it is completely analogous.

We now consider convenient strategies.
We only literally give the proof for $\udash$strategies, as the coarse case is
mainly a simplification thereof,
but we will point out where we use slight coherence.
So fix $\Omega$ and a $\udash m$-strategy $\Sigma$ for $M$ as in
\ref{thm:stacks_iterability} or \ref{thm:stacks_iterability_2}.
We will construct an appropriate stacks strategy $\Sigma^*$ for $M$, extending
$\Sigma$.
We first give a sketch of the process. For the purposes of this sketch,
we consider literally the case of \ref{thm:stacks_iterability}, so $\Sigma$ is
an $(\udash m,\Omega+1)$-strategy
(but in either case, the constructions agree over
their restriction to a $(\udash m,{<\om},\Omega)^*$-strategy).

For stacks $\Ttvec$ on $M$ via $\Sigma^*$ of length $<\Omega$,
we will construct a corresponding normal tree $\Yy$, of successor
length, which will be via $\Sigma$ if all normal trees in $\Ttvec$
have length ${<\Omega}$, and which
``absorbs''
$\Ttvec$, and in particular, such that $M^\Ttvec_\infty$ embeds into
$M^\Yy_\infty$
(here, $M^{\Ttvec}_\infty$ will be well-defined as we will also verify that
$\Ttvec$ has only finitely many drops along its main branch,
by showing that drops in model in $\Ttvec$ correspond suitably to drops in
model in $\Yy$). In the case of a stack $(\Tt,\Uu)$ of length $2$
(with $\Tt,\Uu$ normal), $\Yy$ will be an inflation of $\Tt$, with the
$\Tt$-inflationary
extenders being
just copies of extenders used in $\Uu$. This easily yields a strategy for
finite
stacks of trees.
In the limit case, for a stack $\Ttvec$ of length $\eta$, we will have a
sequence of inflations
$\left<\Yy_\alpha\right>_{\alpha<\eta}$.
We will define $\Yy=\Yy_\eta$ as the comparison inflation of
$\{\Yy_\alpha\}_{\alpha<\eta}$.
The commutativity lemma \ref{lem:inflation_commutativity} is the key to seeing
that everything fits together appropriately.

Here is a more detailed sketch (cf.~Figure~\ref{fgr:infinite_stack_comm} on
page~\pageref{fgr:infinite_stack_comm},
where $O_n=M^{\Ttvec\rest n}_\infty$;
the figure incorporates more detail than given in this sketch). The trees
mentioned below are of successor length and the inflations are terminal. Given
a normal tree
$\Tt_0$ on $M$, via $\Sigma$, and a normal tree $\Tt_1$ on
$M^{\Tt_0}_\infty$, with $(\Tt_0,\Tt_1)$ via $\Sigma^*$, letting $\Yy_1=\Tt_0$,
we will define an
inflation $\Yy_2$ of $\Yy_1$, such that $M^{\Tt_1}_\infty$ embeds into
$M^{\Yy_2}_\infty$ (the reason for this misalignment of integers will become
clearer later). The
fact that $\Sigma$ has inflation condensation will ensure that this process
does not break down. Then, given a normal tree $\Tt_2$ on $M^{\Tt_1}_\infty$,
with $(\Tt_0,\Tt_1,\Tt_2)$ via $\Sigma^*$,
we will define an inflation $\Yy_3$ of $\Yy_2$,
such that
$M^{\Tt_2}_\infty$ embeds into $M^{\Yy_3}_\infty$. And so on for finite stacks.

Now let $\Ttvec=\left<\Tt_n\right>_{n<\om}$ be a stack of normal trees via
$\Sigma^*$.
We will have a sequence $\left<\Yy_n\right>_{n<\om}$ as above, where $\Yy_0$ is
the trivial tree on $M$.
So $\Yy_{l+2}$ is an
inflation of $\Yy_{l+1}$ is an inflation of $\Yy_l$. Using
\ref{lem:inflation_commutativity}, we will have that for $n_0<n_1<n_2$,
$\Yy_{n_2}$ is a inflation
of $\Yy_{n_1}$ is an inflation of $\Yy_{n_0}$, everything commutes (and all
these inflations are also terminal).
Let us assume for simplicity that all trees are
terminally non-dropping. Then for each $n_0<n_1$, $\Yy_{n_1}$ will be
$\Yy_{n_0}$-terminally-non-dropping,
and the iteration embeddings
\[ i^{\Tt_{n}}:M^{\Ttvec\rest n}_\infty\to
M^{\Ttvec\rest(n+1)}_\infty=M^{\Tt_n}_\infty\] and the
final inflation copy maps
\[
\pi_{n_0n_1}\eqdef\pi_\infty^{\Yy_{n_0}\inflatearrow\Yy_{n_1}}:M^{\Yy_{n_0}}
_\infty\to M^{\Yy_{n_1}}_\infty\]
will commute with the maps $\srsigma_{n_0},\srsigma_{n_1}$ where
\[ \srsigma_n:M^{\Ttvec\rest n}_\infty\to M^{\Yy_n}_\infty,\]
is the lifting map mentioned in the previous paragraph. Therefore the direct
limit $M^\Ttvec_\infty$ embeds into the direct limit of the models
$M^{\Yy_n}_\infty$ under the
maps $\pi_{n_0,n_1}$. We will set $\Yy_\om$ to be the comparison
inflation of $\{\Yy_n\}_{n<\om}$.
Then $\Yy_\om$ will be an $\Yy_n$-terminal inflation of $\Yy_n$ for each $n$,
and because of our extra assumptions here regarding (non-)dropping,
$\Yy_\om$ will be $\Yy_n$-terminally-non-dropping for each $n$.
Defining
\[ \pi_{n\om}=\pi_\infty^{\Yy_n\inflatearrow\Yy_\om}:M^{\Yy_n}_\infty\to
M^{\Yy_\om}_\infty, \]
then by \ref{lem:inflation_commutativity}, we have
\[ \pi_{n_0\om}=\pi_{n_1\om}\com\pi_{n_0n_1} \]
for $n_0<n_1<\om$. Therefore $M^{\Yy_\om}_\infty$ absorbs the direct
limit of the models $M^{\Yy_n}_\infty$, and so absorbs $M^\Ttvec_\infty$, and
in
particular,
$M^\Ttvec_\infty$ is wellfounded. The process then continues
through longer stacks in the same manner.

Note that our proof that the comparison inflation exists requires
that
$\Sigma$
be an $(\udash m,\Omega+1)$-strategy; thus, under the weaker assumption of
$(\udash m,\Omega)$-iterability
we do not see how to deal with limit stages, and so only obtain an $(\udash
m,{<\om},\Omega)^*$-strategy.
There are some  further details involved in dealing with dropping trees and
inflations,
but these are straightforward using \ref{lem:inflation_commutativity}.

We now proceed to the details.

\subsubsection{Stacks of length $2$}\label{subsubsec:stacks_lh_2}

Before we begin with the main construction,
we prove a fine structural lemma. The lemma,
however, is only needed in the proof of a detail
which the reader might prefer to ignore at a first pass. We
prove it only for $\lambda$-indexing, for notational
simplicity; the analogue also holds for MS-indexing, however
(see \cite[\S6]{fsfni} for related material).

\begin{dfn}\label{dfn:weak_cof}\index{weak
cofinality}\index{wcof}\index{$\wcof_{k+1}^M$} Let $k<\om$ and $S$ be a
$k$-sound
$\lambda$-indexed premouse.
Then $\wcof_{k+1}^S$ (for \emph{weak cofinality})
denotes the least $\tau$ such that
\[ \ex q\in S\ [\Hull_{\rSigma_{k+1}}^S(\tau\cup\{q\})\text{ is cofinal in
}\rho_k^S].\]
Note  this is the least $\tau\leq\rho_{k+1}^S$ such that either
$\tau=\rho_{k+1}^S$
or there is a $\bfrSigma_{k+1}^S$-function $f:\tau\to\rho_k^S$ which is
cofinal,
strictly increasing and continuous.\end{dfn}
\begin{lem}\label{lem:wcof_pres}
Let $R,S$ be $(k+1)$-sound $\lambda$-indexed premice and $\pi:R\to S$  a near
$(k+1)$-embedding. Then either:
\begin{enumerate}[label=--]
 \item $\wcof_{k+1}^R<\rho_{k+1}^R$
 and $\pi(\wcof_{k+1}^R)=\wcof_{k+1}^S$, or
 \item $\wcof_{k+1}^R=\rho_{k+1}^R$ and $\wcof_{k+1}^S=\rho_{k+1}^S$.
\end{enumerate}
\end{lem}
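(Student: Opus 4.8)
The plan is to prove this by analyzing the $\rSigma_{k+1}$-theory directly, using that $\pi$ is a near $(k+1)$-embedding, hence $\rSigma_{k+1}$-elementary and cofinal in $\rho_k$, and preserves $\pvec_{k+1}$, $\rho_{k+1}$, and the relevant standard parameters. First I would recall the key characterization in Definition~\ref{dfn:weak_cof}: $\wcof_{k+1}^R$ is either $\rho_{k+1}^R$ itself, or the least $\tau<\rho_{k+1}^R$ for which there is a $\bfrSigma_{k+1}^R$-definable function $f\colon\tau\to\rho_k^R$ which is cofinal (and which we may take strictly increasing and continuous). So the statement amounts to: there is such a cofinal function with domain some $\tau<\rho_{k+1}^R$ iff there is one over $S$ with domain $\pi(\tau)$, and then the least such $\tau$ is sent by $\pi$ to the least such over $S$.

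The forward direction is the easy half: given $q\in R$ and a $\rSigma_{k+1}^R$-formula $\varphi$ so that $f=\{(\xi,\eta)\mid R\models\varphi(q,\xi,\eta)\}$ is a cofinal function $\tau\to\rho_k^R$, I would push this forward under $\pi$. Since $\pi$ is $\rSigma_{k+1}$-elementary and $\pi(\tau)<\pi(\rho_{k+1}^R)=\rho_{k+1}^S$ and $\pi(\rho_k^R)=\rho_k^S$, the set $g=\{(\xi,\eta)\mid S\models\varphi(\pi(q),\xi,\eta)\}$ is still a function with domain $\pi(\tau)$ and range contained in $\rho_k^S$; one checks it is still strictly increasing and continuous (these are $\rPi_1$-over-$R$-style conditions that transfer), and cofinality of $g$ in $\rho_k^S$ uses precisely that $\pi$ is \emph{cofinal} in $\rho_k^S$ (this is where "near $(k+1)$-embedding" rather than merely "$\rSigma_{k+1}$-elementary" is essential). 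This shows $\wcof_{k+1}^S\le\pi(\wcof_{k+1}^R)$, and also that if $\wcof_{k+1}^R=\rho_{k+1}^R$ then $\wcof_{k+1}^S\le\rho_{k+1}^S$, hence $=\rho_{k+1}^S$, giving the second bullet.

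The main obstacle is the reverse inequality $\pi(\wcof_{k+1}^R)\le\wcof_{k+1}^S$ in the case $\wcof_{k+1}^R<\rho_{k+1}^R$ (and, in the other case, ruling out $\wcof_{k+1}^S<\rho_{k+1}^S$). Here I would take a cofinal $\bfrSigma_{k+1}^S$-function $g\colon\bar\tau\to\rho_k^S$ with $\bar\tau=\wcof_{k+1}^S$, witnessed by $\bar q\in S$ and a formula $\varphi$, and try to pull it back into $R$. The standard tool is the solidity/definability of $\rSigma_{k+1}$ parameters together with the fact that $\rg(\pi)$ generates $S$ via $\rSigma_{k+1}$ hulls from $\rho_{k+1}^S$ (for a near $(k+1)$-embedding between $(k+1)$-sound premice one has $S=\Hull_{\rSigma_{k+1}}^S(\rho_{k+1}^S\cup\rg(\pi))$), so $\bar q$ is $\rSigma_{k+1}^S$-definable from a parameter in $\rg(\pi)$ together with ordinals below $\rho_{k+1}^S$; absorbing these ordinals into the domain by reindexing, one obtains a cofinal $\bfrSigma_{k+1}^S$-map whose parameter lies in $\rg(\pi)$, say $\pi(q)$. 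Then $R\models$``there is a $\rSigma_{k+1}$-definable-from-$q$ cofinal map into $\rho_k$ with domain some $\tau<\rho_{k+1}$'' — this existential assertion is itself $\rSigma_{k+1}$ (quantifying over the code of $\tau$ and over $\rho_{k+1}^R$-many ordinal parameters, which sit below $\rho_{k+1}^R$), so it reflects down to $R$ and yields $\tau\le\pi^{-1}(\bar\tau)$ in the collapse, i.e. $\pi(\wcof_{k+1}^R)\le\bar\tau=\wcof_{k+1}^S$. Combining the two inequalities gives $\pi(\wcof_{k+1}^R)=\wcof_{k+1}^S$ in the first case, and in the second case it shows $\wcof_{k+1}^S$ cannot drop below $\rho_{k+1}^S$, completing the dichotomy. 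The delicate points to get right are the continuity/monotonicity normalization of the witnessing functions and the bookkeeping that the extra ordinal parameters really can be absorbed into the domain without raising it past $\rho_{k+1}$; both are routine fine-structural manipulations in the style of \cite{V=HODX}.
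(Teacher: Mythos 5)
There is a genuine gap in both halves of your argument. For the "easy" direction you assert that a near $(k+1)$-embedding is cofinal in $\rho_k^S$ and that this is what makes the image function cofinal. That is false: a near $(k+1)$-embedding satisfies $\pi(\rho_k^R)=\rho_k^S$ (preservation of $\rho_k$ as a point), but $\sup\pi``\rho_k^R$ can be strictly below $\rho_k^S$ --- indeed, the only place the lemma is used in the paper is to decide whether a degree-$(k+1)$ ultrapower map is \emph{continuous} at $\rho_k$, so if continuity at $\rho_k$ were automatic the lemma would be pointless. The correct replacement is to use the full elementarity a near $(k+1)$-embedding actually has, namely $\rSigma_{k+2}$-elementarity (you only credit $\pi$ with $\rSigma_{k+1}$-elementarity), together with the observation that ``$\Hull_{\rSigma_{k+1}}(\tau\cup\{q\})$ is cofinal in $\rho_k$'' is a $\rPi_{k+2}$ assertion in $(\tau,q,\rho_k)$; this is exactly the paper's argument.

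Your reverse direction also does not go through as written. The assertion ``there is $\tau<\rho_{k+1}$ and a $\rSigma_{k+1}(q)$-definable cofinal map $\tau\to\rho_k$'' is not $\rSigma_{k+1}$: cofinality already requires a universal quantifier over $\rho_k$ followed by an existential, putting the matrix at $\rPi_{k+2}$, and the outer existential then exceeds the $\rSigma_{k+2}$-elementarity of $\pi$, so it does not ``reflect down to $R$.'' Moreover, when you absorb the auxiliary ordinals $\vec\alpha<\rho_{k+1}^S$ (coming from $S=\Hull^S_{\rSigma_{k+1}}(\rho_{k+1}^S\cup\rg(\pi))$) into the domain, the resulting domain is $\max(\wcof_{k+1}^S,\max\vec\alpha+1)$, which may exceed $\wcof_{k+1}^S$, so even a successful reflection would only bound $\pi(\wcof_{k+1}^R)$ by this larger ordinal, not by $\wcof_{k+1}^S$. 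The paper avoids the pull-back entirely: it transfers \emph{forward} the $\rPi_{k+2}(\tau,\rho_k)$ statement ``for all $\alpha<\tau$ and all $q$, $\Hull_{\rSigma_{k+1}}(\alpha\cup\{q\})$ is bounded in $\rho_k$'' with $\tau=\wcof_{k+1}^R$ (and its parameter-free analogue with $\tau=\rho_{k+1}$ for the second alternative), which immediately yields $\wcof_{k+1}^S\geq\pi(\wcof_{k+1}^R)$. You should restructure both directions around these two $\rPi_{k+2}$ computations rather than around continuity of $\pi$ at $\rho_k$ and a witness pull-back.
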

\begin{proof} Recall that either $\rho_k^R=\OR^R$ and
$\rho_k^S=\OR^S$, or
$\pi(\rho_k^R)=\rho_k^S$.
And $\pi(\rho_{k+1}^R)\geq\rho_{k+1}^S$ by
$\rSigma_{k+2}$-elementarity.\footnote{Here $\pi(\OR^R)$ denotes
$\OR^S$.}
Now given $\tau<\rho_{k+1}$ and some parameter $q$,
it is an $\rPi_{k+2}(\tau,q,\rho_k)$ assertion that
\[ \text{``}\Hull_{k+1}(\tau\cup\{q\})\text{ is cofinal in }\rho_k\text{''.}\]
And given $\tau<\rho_{k+1}$,
it is an $\rPi_{k+2}(\tau,\rho_k)$ assertion that
\[\text{``}\all\alpha<\tau\all q\ [\Hull_{k+1}^R(\alpha\cup\{q\})\text{ is
bounded in }\rho_k]\text{''.}\]
(For this can be expressed as ``For every $\alpha<\tau$ and $q$
and every $T\in T_{k+1}$ such that $T$ is a theory in parameters
$\alpha\cup\{q\}$,
there is some $T'\in T_k$ which codes witnesses to all $\rSigma_{k+1}$ formulas
in $T$'';
here \emph{coding a witness} is in the style described in \cite[\S2]{fsit}.)
Likewise,
it is an $\rPi_{k+2}(\rho_k)$ assertion that
\[ \text{``}\all\alpha<\rho_{k+1}\all q\ [\Hull_{k+1}^R(\alpha\cup\{q\})\text{
is bounded in }
\rho_k]\text{''.}\]
Since $\pi$ is a near $(k+1)$-embedding,
the lemma follows.
\end{proof}

\begin{figure}
\centering
\begin{tikzpicture}
 [mymatrix/.style={
    matrix of math nodes,
    row sep=0.35cm,
    column sep=0.4cm}
  ]
   \matrix(m)[mymatrix]{
  N_\beta& {}&                   {}&                {}&M^{\Xx^\beta}_\infty&
                        {}&                           \ & \ & \ &       {}&{}\\
       {}& {}&                   {}&                {}&                  {}&
                        {}&                           {}& {}& {}&       {}&{}\\
 N_\alpha& {}&M^{\Xx^\alpha}_\infty&                {}&          \Xx^\alpha&
                        {}&                           {}& {}& {}&\Xx^\beta&{}\\
       {}& {}&                   {}&                {}&                  {}&
                        {}&                           {}& {}& {}&       {}&{}\\
        N& {}&                  \Tt&                {}&                  {}&
                        {}&                           {}& {}& {}&       {}&{}\\
       {}& {}&                    M&                {}&                  {}&
                         M&                           {}& {}& {}&       {}&M\\};
\path[->,font=\scriptsize]
(m-6-3) edge node[below] {$\id$} (m-6-6)
(m-6-3) edge[dashed] node[below,pos=0.7] {$i^\Tt$} (m-5-1)
(m-6-6) edge node[below] {$\id$} (m-6-11)
(m-6-6) edge[dashed] node[right,pos=0.85] {$\ \ i^{\Xx^\alpha}$} (m-3-3)
(m-6-11) edge[dashed] node[right,pos=0.85] {$\ \ i^{\Xx^\beta}$} (m-1-5)
(m-5-1) edge node[left] {$i^\Uu_{0\alpha}$} (m-3-1)
(m-5-1) edge node[above] {$\psi_{0\alpha}\ \ $} (m-3-3)
(m-3-1) edge node[left] {$i^\Uu_{\alpha\beta}$} (m-1-1)
(m-3-1) edge node[above] {$\nrsigma_\alpha$} (m-3-3)
(m-1-1) edge node[above] {$\nrsigma_\beta$} (m-1-5)
(m-3-3) edge node[above,pos=0.35] {$\psi_{\alpha\beta}\ \ $} (m-1-5);
\draw [black] plot [smooth] coordinates {(-2.9,-1.7) (-3.1,-1.4) (-3.7,-1.2)
(-3.9,-0.9)};
\draw [black] plot [smooth] coordinates {(0.5,-1.7) (0.1,-0.7) (-1.2,0)
(-1.6,1)};
\draw [black] plot [smooth] coordinates {(4.9,-1.7) (4.2,-.3) (2.7,0.5)
(2,1.9)};
\end{tikzpicture}
\caption{Commutativity for maps relating to $\Upsilon^\Sigma_\Tt$
assuming $[0,\beta]_\Uu\inter\dropset^\Uu=\emptyset$
(see conditions \ref{item:interval_not_easy} and
\ref{item:interval_easy_no_mod_drop}).
The curved lines represent the iteration trees $\Tt$, $\Xx^\alpha$, $\Xx^\beta$.
The solid arrows commute.
The dashed arrows exist iff $b^\Tt\inter\dropset^\Tt=\emptyset$,
and when they exist, they commute with the other maps.}
\label{fgr:stack_2_comm}
\end{figure}

We now begin the  main proof for the case of realizing a stack of two
normal trees via a
single normal tree.
For this case we only assume in general that
$\Sigma$ is an $(\udash m,\Omega)$-strategy,
(not $(\udash m,\Omega+1)$). Let $\Tt$ be an
$\udash m$-maximal tree on $M$ of successor length $<\Omega$, via $\Sigma$. Let
$N=M^\Tt_\infty$ and $n=\udeg^\Tt(\infty)$.

We describe a\index{$\Upsilon^\Sigma_\Tt$}
\[ (\udash n,\Omega)\text{-iteration strategy }\Upsilon^\Sigma_\Tt\text{ for
}N.\]
In order to do this,
we lift $\udash n$-maximal trees $\Uu$ on $N$ via $\Upsilon^\Sigma_\Tt$ of
length $\leq\Omega$ to $\udash m$-maximal trees
$\Xx$ on $M$ via $\Sigma$. We write
$\Ww^\Sigma_\Tt(\Uu)$\index{$\Ww^\Sigma_\Tt$} for
$\Xx$.
\footnote{We
use the notation $\Ww^\Sigma_\Tt(\Uu)$
instead of $\Xx^\Sigma_\Tt(\Uu)$ for consistency with Steel's notation,
and because we will use $\Xx^\Sigma_\Tt(\Uu)$ in the future for (full)
normalization, as opposed to normal realization.
But for consistency with the rest of the paper, we continue to use the variable
$\Xx$.}
Here $\Xx$ will depend on $\Sigma$, $\Tt$ and the extenders used in $\Uu$,
but $\Xx$ will determine the branches chosen in $\Uu$. Moreover, for limits
$\eta<\lh(\Uu)$ we will have
\[ \Xx'\eqdef\Ww^\Sigma_\Tt(\Uu\rest\eta)\pins\Xx, \]
with $\lh(\Xx')$ a limit, and $\Sigma(\Xx')$
determines $[0,\eta)_\Uu$.
If $\Sigma$ extends to a $(\udash m,\Omega+1)$-strategy,
then so will $\Upsilon^\Sigma_\Tt$.
We will also define  $\Ww^\Sigma_\Tt(\Uu)$ when $\lh(\Uu)=\Omega+1$,
but this tree can have length ${>\Omega+\om}$, and so be not literally via
$\Sigma$.
For now we assume that $\lh(\Uu)\leq\Omega$, and then later consider the
extension to $\Omega+1$.

The tree $\Xx$ will be a non-$\Tt$-pending inflation of $\Tt$, via $\Sigma$,
with associated objects
\[
(t,C,\ldots,\lambda^\alpha,\Xx^\alpha,\ldots)=(t,C,\ldots,\lambda^\alpha,
\Xx^\alpha,\ldots)^{\Tt\inflatearrow\Xx}. \]
The $\Tt$-inflationary extenders $E^\Xx_{\zeta^\alpha}$ used in $\Xx$ will be
copies of extenders from $\Uu$
(and of course, the others are copied from $\Tt$).
We will define a lifting map\index{$\nrsigma_\alpha$}
\[ \nrsigma_\alpha:M^\Uu_\alpha\to M^{\Xx^\alpha}_\infty.\]
We say that $\alpha$ is \index{easy}\dfnemph{easy} iff
$\lambda^\alpha\notin(C^-)^\alpha$.

We will build $\Uu\rest\eta$,
$\left<\zeta^\alpha,E^\Xx_{\zeta^\alpha}\right>_{\alpha+1<\eta}$,
$\left<\lambda^\alpha,\Xx^\alpha,\nrsigma_\alpha\right>_{\alpha<\eta}$, etc,
thus determining
\[ \Xx\rest\sup_{\alpha<\eta}(\lambda^\alpha+1),\]
by induction on $\eta$,
maintaining the following conditions.
For $1\leq\eta\leq\Omega$, let \index{$\varphi(\eta)$}$\varphi(\eta)$ assert
that
these objects are defined
and the following conditions hold (\emph{N} is for \emph{normal}):

\begin{enumerate}[label=N\arabic*.,ref=N\arabic*]
 \item $\Xx\rest\sup_{\alpha<\eta}(\lambda^\alpha+1)$ is via $\Sigma$ and is an
inflation of $\Tt$,
 with the associated objects described above (in particular, for each
$\alpha<\eta$,
 $\Xx^\alpha$ is a $\Tt$-terminal inflation of $\Tt$ and
$\Xx\rest(\lambda^\alpha+1)=\Xx^\alpha\rest(\lambda^\alpha+1)$).
  \item\label{item:limit_Uu_rest_beta} Tree order:
$(<^\Uu)\rest\eta=(<^{\Xx/\Tt})\rest\eta$.
\item\label{item:main_embedding} For $\alpha<\eta$, we
have:\footnote{Remark~\ref{rem:fail_near_emb} shows that this condition cannot
in general be improved much.}
\begin{enumerate}[label=--]
\item $k\eqdef\udeg^\Uu(\alpha)\leq\udeg^{\Xx^\alpha}(\infty)$,
\item $\nrsigma_\alpha:M^\Uu_\alpha\to M^{\Xx^\alpha}_\infty$ is nice $\udash
k$-lifting,
\item $[0,\alpha]_\Uu\inter\dropset^\Uu=\emptyset$ iff $\lambda^\alpha\in
C^\alpha$.
\item If $[0,\alpha]_\Uu\inter\dropset^\Uu_{\deg}\neq\emptyset$ then:
\begin{enumerate}[label=--]
\item $\lambda^\alpha+1=\lh(\Xx^\alpha)$,
\item $\nrsigma_\alpha$ is a near $\udash k$-embedding,
\item if $[0,\alpha]_\Uu\inter\dropset^\Uu\neq\emptyset$ or $k+1<n$ then
$k=\udeg^{\Xx^\alpha}(\lambda^\alpha)$.
\end{enumerate}
\item If $\alpha$ is non-easy then
$[0,\alpha]_\Uu\inter\dropset_{\deg}^\Uu=\emptyset$.
\item If $[0,\alpha]_\Uu\inter\dropset_{\deg}^\Uu=\emptyset$ and  $\Tt$ is
terminally non-dropping then $\Xx^\alpha$ is terminally non-dropping and
$\nrsigma_\alpha$ is a $\udash m$-embedding.
\end{enumerate}
\item\label{item:copy_E^Uu_alpha} Let $\alpha<\beta<\eta$. Then:
\begin{enumerate}[label=--]
 \item  If $E^\Uu_\alpha=F^{M^\Uu_\alpha}$ then
$\zeta^\alpha+1=\lh(\Xx^\alpha)$ and
$E^{\Xx^{\beta}}_{\zeta^\alpha}=F^{M^{\Xx^\alpha}_\infty}$.
\item If
$E^\Uu_\alpha\neq F^{M^\Uu_\alpha}$ then
$E^{\Xx^{\beta}}_{\zeta^\alpha}=\nrsigma_\alpha(E^\Uu_\alpha)$,
and $\zeta^\alpha$ is the least $\zeta$ with
$\nrsigma_\alpha(E^\Uu_\alpha)\in\es(M^{\Xx^\alpha}_\zeta)$.
\end{enumerate}
\item\label{item:pi_beta(nu^Uu_alpha)} For $\alpha<\beta<\eta$, we have
$\nrsigma_\alpha\rest\In(E^\Uu_\alpha)\sub\nrsigma_\beta$ (so
$\nrsigma_\beta(\nutilde^\Uu_\alpha)=\nutilde^{\Xx^\beta}_{\zeta^\alpha}$), and
either
\begin{enumerate}[label=--]
 \item $\In(E^\Uu_\alpha)<\OR(M^\Uu_{\alpha+1})$ and
$\nrsigma_{\alpha+1}(\In(E^\Uu_\alpha))=\In(E^{\Xx^\beta}_{\zeta^\alpha})$, or
 \item $\In(E^\Uu_\alpha)=\OR(M^\Uu_{\alpha+1})$ and
$\In(E^{\Xx^\beta}_{\zeta^\alpha})=\OR(M^{\Xx^\beta}_{\zeta^\alpha+1})$ and
$M^\Uu_{\alpha+1},M^{\Xx^\beta}_{\zeta^\alpha+1}$ are active type 2 with
MS-indexing.
\end{enumerate}
\item\label{item:above_alpha_Uu-easy} Let $\alpha\leq\beta<\eta$ be such that
$\alpha$ is
easy (so \ref{lem:<^Xx/Tt_above_easy} applies). Then:
\begin{enumerate}
\item $\gamma\mapsto\lambda^\gamma$ restricts to an isomorphism
${(<^{\Uu\rest\beta+1})^{(\alpha)}}\to{(<^{\Xx^\beta})^{(\lambda^\alpha)}}$
preserving drop structure, and above drops in model, degree structure.
\item Let $\alpha\leq^\Uu\gamma\leq\beta$, so $\gamma$ is easy, so
$\lh(\Xx^\gamma)=\lambda^\gamma+1$ and
\[ \nrsigma_\gamma:M^\Uu_\gamma\to M^{\Xx^\gamma}_{\lambda^\gamma}.\]
Let $\gamma\leq^\Uu\xi\leq\beta$ with
$(\gamma,\xi]_\Uu\inter\dropset^\Uu=\emptyset$.
Let
\index{$\psi_{\alpha\beta}$}$\psi_{\gamma\xi}=i^{\Xx^\xi}_{
\lambda^\gamma\lambda^\xi } $. Then
\[ \nrsigma_\xi\com i^\Uu_{\gamma\xi}=\psi_{\gamma\xi}\com\nrsigma_\gamma, \]
and if $\gamma$ is a successor then letting
$\delta=\pred^\Uu(\gamma)$,\footnote{The fact that if
$\lambda^\gamma\in\dr^{\Xx^\gamma}$ then $\gamma\in\dr^\Uu$ depends on the fact
that $\alpha$ is
easy.}
\[ \gamma\in\dr^\Uu\ \iff\ \lambda^\gamma\in\dr^{\Xx^\gamma},\]
\[ \gamma\in\dr^\Uu\ \implies
\nrsigma_\delta(N^*_\gamma)=M^{*\Xx^\gamma}_{\lambda^\gamma}, \]
\[ \nrsigma_\gamma\com
i^{*\Uu}_{\gamma}=i^{*\Xx^\gamma}_{\lambda^\gamma}\com\nrsigma_\delta\rest
N^*_{\gamma}.\]
\end{enumerate}
\item\label{item:interval_not_easy} (Cf.~Figure \ref{fgr:stack_2_comm})
Let $\alpha\leq^\Uu\beta<\eta$ be such that $\beta$ is non-easy
(so $[0,\beta]_\Uu\inter\dropset_{\deg}^\Uu=\emptyset$ and
$\alpha$ is non-easy and $\Xx^\alpha,\Xx^\beta$ are
$\Tt$-terminally-non-dropping).
Let\index{$\psi_{\alpha\beta}$}
\[
\psi_{\alpha\beta}=\pi^{\alpha\beta}_{\lh(\Tt)-1}=\om^{\alpha\beta}_{\lh(\Tt)-1}
:M^{\Xx^\alpha}_\infty\to M^{\Xx^\beta}_\infty \]
(where $\pi^{\alpha\beta}_{\lh(\Tt)-1}=\om^{\alpha\beta}_{\lh(\Tt)-1}$ are
defined in \ref{dfn:Pi^alpha,beta_embs}
and are equal because  $\theta^\beta+1<\lh(\Tt)$ because $\beta$ is non-easy).
Then
$\psi_{\alpha\beta}\com\nrsigma_\alpha=\nrsigma_\beta\com
i^\Uu_{\alpha\beta}$.
\item\label{item:interval_easy_no_mod_drop}  (Cf.~Figure
\ref{fgr:stack_2_comm})
Let $\alpha\leq^\Uu\beta<\eta$ be such that $\beta$ is easy but
$\lambda^\beta\in C^\beta$.
Let\index{$\psi_{\alpha\beta}$}
\[ \psi_{\alpha\beta}=\om^{\alpha\beta}_{\lh(\Tt)-1}:M^{\Xx^\alpha}_\infty\to
M^{\Xx^\beta}_\infty. \]
Then $\psi_{\alpha\beta}\com\nrsigma_\alpha=\nrsigma_\beta\com
i^\Uu_{\alpha\beta}$.
\end{enumerate}

This completes the inductive hypotheses. Note that
\ref{item:interval_not_easy} and
\ref{item:interval_easy_no_mod_drop}
actually have the same conclusion.
We now begin the construction.

With $\Uu\rest 1=$ the trivial tree, $\Xx^0=\Tt$ and $\nrsigma_0=\id:N\to N$,
$\varphi(1)$ is
trivial.

Now suppose we are given $\Uu\rest\eta$ and the other related objects, and
$\varphi(\eta)$ holds;
we define $\Uu\rest\eta+1$, etc, and verify $\varphi(\eta+1)$. Suppose first
that
$\eta=\alpha+1$. So we have defined $\Xx^\beta,\nrsigma_\beta$, etc, for all
$\beta\leq\alpha$ and
$\zeta^\beta$ for all $\beta<\alpha$, and $\varphi(\alpha+1)$ holds. Let
$E=E^\Uu_\alpha$.

Now $\zeta^\alpha$ is determined by property \ref{item:copy_E^Uu_alpha};
let us observe that $\zeta^\alpha\geq\lambda^\alpha$. If $\alpha$ is a limit or
$E=F^{M^\Uu_\alpha}$ this is easy; suppose $\alpha=\gamma+1$ and $E\neq
F^{M^\Uu_\alpha}$. Then $\In(E^\Uu_\gamma)<\In(E)$, so by
\ref{item:pi_beta(nu^Uu_alpha)},
\[
\nrsigma_\alpha(\In(E))>\nrsigma_\alpha(\In(E^\Uu_\gamma))=\In(E^{\Xx^\alpha}_{
\zeta^\gamma}),\]
so $\zeta^\alpha\geq\zeta^\gamma+1=\lambda^\alpha$.

Now $\Xx^{\alpha+1}$ is determined by setting
$F=E^{\Xx^{\alpha+1}}_{\zeta^\alpha}$ according
to  \ref{item:copy_E^Uu_alpha}. By coherence,
$F$ is indeed $\Xx^\alpha\rest(\zeta^\alpha+1)$-normal, so we can do this.
(For the wcpm case, it is here that we use that $M$ is slightly coherent.
That is, by slight coherence and \ref{lem:slight_coherence_pres},
$\zeta_\alpha$
is the least $\zeta$ such that either $\lh(\Xx^\alpha)=\zeta+1$
or
$\varrho^{M^{\Xx^\alpha}_\zeta}(E^{\Xx^\alpha}_\zeta)\geq
\varrho^{M^{\Xx^\alpha}_\zeta}(F)$, so $F$ is
$\Xx^\alpha\rest(\zeta^\alpha+1)$-normal.)
This determines $\Xx^{\alpha+1}$ and ${<^{\Xx/\Tt}}\rest(\alpha+2)$;
note that because $\Sigma$ has inflation condensation,
$\Xx^{\alpha+1}$
is in fact via $\Sigma$, and in particular has wellfounded models.
It just remains to define $\nrsigma_{\alpha+1}$ and prove
$\varphi(\alpha+2)$.

Let $\kappa_E=\crit(E)$ and $\kappa_F=\nrsigma_\alpha(\kappa_E)=\crit(F)$. Let
$\beta=\pred^\Uu(\alpha+1)$ and $\xi=\pred^{\Xx^{\alpha+1}}(\zeta^\alpha+1)$.
So
for all
$\gamma<\beta$, we have
 $\nutilde^\Uu_\gamma\leq\kappa_E<\nutilde^\Uu_\beta$,
 so by  \ref{item:pi_beta(nu^Uu_alpha)},
 \[
\nutilde^{\Xx^{\alpha+1}}_{\zeta^\gamma}
=\nrsigma_\alpha(\nutilde^\Uu_\gamma)\leq\kappa_F<\nrsigma_\alpha
(\nutilde^\Uu_\beta)=\nutilde^{\Xx^{\alpha+1}}_{\zeta^\beta}, \]
so $\xi\in[\lambda^\beta,\zeta^\beta]$. Therefore
${<^\Uu}\rest(\alpha+2)={<^{\Xx/\Tt}}\rest(\alpha+2)$,
giving  \ref{item:limit_Uu_rest_beta}.

For the remaining properties we split into cases.

\begin{casetwo}
$\beta$ is easy or $\Uu$ drops in model or degree at $\alpha+1$.

The overall argument here is routine and left to the reader.
However, there are a couple of details which are new, and which we discuss.

We first show that $\alpha+1$ is easy (and establish some other useful facts).
If $\beta$ is easy this is immediate.
Suppose $\beta$ is non-easy but
$\alpha+1\in \dr^\Uu$. So $E^\Uu_\beta\neq
F^{M^\Uu_\beta}$, and in fact
\[ \kappa_E<\In(E^\Uu_\beta)<(\kappa_E^+)^{M^\Uu_\beta},\]
and as $\nrsigma_\beta$ is nice,
\[
\kappa_F<\In(E^{\Xx^{\alpha+1}}_{\zeta^\beta})<(\kappa_F^+)^{M^{\Xx^\beta}
_\infty}.\]
Now $\xi=\zeta^\beta$. For otherwise $\xi\in[\lambda^\beta,\zeta^\beta)$ and
$\kappa_F<\In(E^{\Xx^\beta}_\xi)$.
But $\In(E^{\Xx^{\beta}}_\xi)$ is a cardinal in $M^{\Xx^\beta}_\infty$,
and so
$(\kappa_F^+)^{M^\Uu_\beta}=(\kappa_F^+)^{\exit^{\Xx^\beta}_\xi}\leq\In(E^{\Xx^{
\alpha+1}}_{\zeta^\beta})$,
contradiction. Similarly,
\[ M^{*\Xx^{\alpha+1}}_{\zeta^\alpha+1}=\nrsigma_\beta(N^*_{\alpha+1})\pins
M^{\Xx^\beta}_{\zeta^\beta}=M^{\Xx^{\alpha+1}}_{\zeta^\beta},\]
\[ \text{ and if }\zeta^\beta+1<\lh(\Xx^\beta)\text{ then
}M^{*\Xx^{\alpha+1}}_{\zeta^\alpha+1}\pins
\exit^{\Xx^\beta}_{\zeta^\beta}, \]
(for the latter, use the fact that
$\In(E^{\Xx^{\alpha+1}}_{\zeta^\beta})<\In(E^{\Xx^\beta}_{\zeta^\beta})$ and
$\In(E^{\Xx^\beta}_{\zeta^\beta})$ is a cardinal of $M^{\Xx^\beta}_\infty$), so
$\zeta^\alpha+1\in\dr^{\Xx^{\alpha+1}}$ and $\lambda^{\alpha+1}\notin
C^{\alpha+1}$, hence $\alpha+1$ is easy.

Now suppose that $\beta$ is non-easy but $\Uu$ drops in degree, but not in
model, at $\alpha+1$. Then we claim that
$\xi+1=\lh(\Xx^\beta)$, and therefore $\alpha+1$ is easy (but
$\lambda^{\alpha+1}\in C^{\alpha+1}$). For because $\beta$ is non-easy, we have
$[0,\beta]_\Uu\inter\dropset_{\deg}^\Uu=\emptyset$
by property \ref{item:main_embedding}, so
\[ n=\udeg^\Uu(\beta)=\udeg^\Tt(\infty).\]
 So $\udash\rho_n(M^\Uu_\beta)\leq\kappa_E$.
Letting $\eps+1\in
b^\Tt$ and $G=E^\Tt_\eps$, then we have $\In(G)\leq\udash\rho_n(M^\Tt_\infty)$,
and $i^\Uu_{0\beta}$ is continuous at $\In(G)$,
so $i^\Uu_{0\beta}(\In(G))<\kappa_E$. But by properties
\ref{item:interval_not_easy} and
\ref{item:pi_beta(nu^Uu_alpha)}  we have
\[ \nrsigma_\beta\com
i^\Uu_{0\beta}(\In(G))=\omega^{\beta}_\infty(\In(G))\geq\In(E^{\Xx^\beta}_{
\delta^\beta_\eps}). \]
Therefore $\nrsigma_\beta(\kappa)>\In(E^{\Xx^\beta}_{\delta^\beta_\eps})$.
This holds for every $\eps+1\in b^\Tt$, and it follows that
$\xi+1=\lh(\Xx^\beta)$.

To see that if $\Tt$ is terminally non-dropping and
$[0,\alpha+1]_\Uu$ does not drop in model or degree, then $\nrsigma_{\alpha+1}$
is a $\udash m$-embedding, use
the cofinality of the relevant maps at $\udash\rho_m$.

We now consider the verification that $\nrsigma_{\alpha+1}$ is a near $\udash
k$-embedding,
where $k=\udeg^\Uu(\alpha+1)$,
given that $[0,\alpha+1]_\Uu\inter\dropset_{\deg}^\Uu\neq\emptyset$.
The reader can safely skip this proof on a first pass, if they are so inclined,
moving to Case \ref{case:beta_non-easy,Uu_not_drop_alpha+1} below;
it is just a detail which is not central to our considerations.
We officially assume that $M$ is $\lambda$-indexed for the proof,
and thus can drop the prefix ``$\udash$''.
The proof is mostly like in that of Lemma \ref{lem:tree_embedding_copy}
(which was a slight variant of that in
\cite{fs_tame}), so we leave most of the details to the reader.
However, it  requires one extra observation. Fix $\delta\leq^\Uu\beta$
largest such that
$[0,\delta]_\Uu$ does not drop in model or degree.
So $\deg^\Uu(\delta)=n$ and $\nrsigma_\delta:M^\Uu_\delta\to
M^{\Xx^\delta}_\infty$ is an $n$-lifting embedding. Let
\[ X=\{\gamma\leq\alpha+1\bigm|  \delta<^\Uu\gamma\text{ and
}\successor^\Uu(\delta,\gamma)\in\dropset_{\deg}^\Uu\}\]
and $X'=\{\lambda^\gamma\bigm| \gamma\in X\}$. Note that
$\gamma\mapsto\lambda^\gamma$
is an isomorphism between ${<^\Uu}\rest X$ and
${<^{\Xx^{\alpha+1}}}\rest X'$. For $\chi$ such that $\chi+1\in X$, we define
\dfnemph{strong closeness at $\chi$}
(relating the definability of the measures of $E^\Uu_\chi$ to that of their
lifts, measures of $E^{\Xx^{\alpha+1}}_{\zeta^\chi}$),
and for $\eps\in X$, we define \dfnemph{translatability at $\eps$} (which,
given $\gamma+1\in X$ with $\gamma+1\leq^\Uu\eps$ and
$(\gamma+1,\eps)_\Uu\inter\dropset^\Uu=\emptyset$,
allows us to translate definitions of subsets of
$\crit(i^{*\Uu}_{\gamma+1,\eps})$ over $M^\Uu_\eps$,
to definitions over $M^{*\Uu}_{\gamma+1}$, in a manner which reflects up to
$M^{\Xx^{\alpha+1}}_{\lambda^\eps}$
and $M^{*\Xx^{\alpha+1}}_{\lambda^{\gamma+1}}$). One proves these properties
hold inductively,
basically as in  \cite{fs_tame} (simultaneously showing that $\nrsigma_\gamma$
is a near
$\deg^\Uu(\gamma)$-embedding
for each $\gamma\in X$). However, there is a wrinkle in verifying that
$\nrsigma_{\alpha+1}$
is a near $k$-embedding for example when:
\begin{enumerate}[label=--]
 \item $[0,\alpha+1]_\Uu\inter\dropset^\Uu=\emptyset$ (but
$[0,\alpha+1]_\Uu\inter\dropset_{\deg}^\Uu\neq\emptyset$, so
$\xi+1=\lh(\Xx^\beta)$ and $\lambda^{\alpha+1}\notin\dropset^{\Xx^{\alpha+1}}$
and
$M^{*\Xx^{\alpha+1}}_{\lambda^{\alpha+1}}=M^{\Xx^\beta}_\xi=M^{\Xx^\beta}
_\infty$),
  \item $k+1=n$ (so $\rho_{k+1}^{M^\Uu_\beta}\leq\kappa_E<\rho_k^{M^\Uu_\beta}$
where $E=E^\Uu_\alpha$),
 \item $\deg^{\Xx^{\alpha+1}}(\lambda^{\alpha+1})=k+1$ (so
$\nrsigma_\beta(\rho_{k+1}^{M^\Uu_\beta})\leq\nrsigma_\beta(\kappa_E)<\rho_{k+1}
(M^{\Xx^\beta}_\xi)$).
\end{enumerate}
For $i\in\{k,k+1\}$ let $U_i=\Ult_{i}(M^{\Xx^\beta}_\xi,F)$ where
$F=E^{\Xx^{\alpha+1}}_{\zeta^\alpha}$.
Let
$j_i:M^{\Xx^\beta}_\xi\to U_i$
be the ultrapower map.
By induction, $\nrsigma_\beta$ is a near $k$-embedding,
and letting
\[ \nrsigmabar:M^\Uu_{\alpha+1}=\Ult_{k}(M^\Uu_\beta,E^\Uu_\alpha)\to U_k \]
be given by the Shift Lemma, then as above (using the argument of
\cite{fs_tame}),
$\nrsigmabar$ is a near $k$-embedding.
Now we have $\deg^{\Xx^{\alpha+1}}(\lambda^{\alpha+1})=k+1$,
and $\nrsigma_{\alpha+1}=\nrsigma'\com\nrsigmabar$ where
$\nrsigma':U_k\to U_{k+1}$
is the natural factor map.
So it suffices to see that, in fact, $U_k=U_{k+1}$ and $\nrsigma'=\id$; this
completes the proof.

To see this, it suffices to see that
$\nrsigma'``\rho_k^{U_k}$
is cofinal in $\rho_k^{U_{k+1}}$.
For   suppose this holds.
Note $\nrsigma'(\pvec_{k+1}^{U_k})=\pvec_{k+1}^{U_{k+1}}$,
and by
\cite[Lemma 2.4]{premouse_inheriting},  it follows that
$\nrsigma'$ is a $k$-embedding. But $U_k,U_{k+1}$
are $(k+1)$-sound (see the proof of \cite[Corollary 2.24]{extmax}, for example)
and $\rho_{k+1}^{U_k}=\rho_{k+1}^{U_{k+1}}$
and $\nrsigma'\rest\rho_{k+1}^{U_k}=\id$. It follows
that $U_{k+1}\sub\rg(\nrsigma')$, which suffices.

To see the desired cofinality of $\nrsigma'$,
it suffices to see that
$j_{k+1}$ is continuous at $\rho_k(M^{\Xx^\beta}_\xi)$,
since $j_{k+1}=\nrsigma'\com j_k$ and $\rho_k^{U_k}=\sup
j_k``\rho_k(M^{\Xx^\beta}_\xi)$.

Now let $\mu^{\Xx^\beta}=\wcof^{M^{\Xx^\beta}_\xi}_{k+1}$
(see Definition \ref{dfn:weak_cof}). We have
$\deg^{\Xx^{\alpha+1}}(\lambda^{\alpha+1})=k+1$,
so $\crit(F)<\rho_{k+1}(M^{\Xx^\beta}_\xi)$. So $j_{k+1}$ is continuous at
$\rho_k(M^{\Xx^\beta}_\xi)$
iff $\crit(F)\neq\mu^{\Xx^\beta}$.
So we must see $\crit(F)\neq\mu^{\Xx^\beta}$.

Let $\mu^\Tt=\wcof^{M^\Tt_\infty}_{k+1}$.
Then because $\psi_{0\beta}=\omega^{0\beta}_\infty$ is a near $(k+1)$-embedding
(as
$\deg^\Tt(\infty)=k+1=\deg^{\Xx^{\alpha+1}}(\lambda^{\alpha+1})=\deg^{\Xx^\beta}
(\xi)$)
and by Lemma \ref{lem:wcof_pres}, either:
\begin{enumerate}[label=--]
 \item $\mu^\Tt<\rho_{k+1}(M^\Tt_\infty)$ and
$\mu^{\Xx^\beta}=\psi_{0\beta}(\mu^\Tt)$, or
 \item $\mu^\Tt=\rho_{k+1}(M^\Tt_\infty)$ and
$\mu^{\Xx^\beta}=\rho_{k+1}(M^{\Xx^\beta}_\xi)$.
\end{enumerate}
But $\crit(F)<\rho_{k+1}(M^{\Xx^\beta}_\xi)$.
So suppose $\mu^\Tt<\rho_{k+1}(M^\Tt_\infty)$ and (by commutativity)
\[
\mu^{\Xx^\beta}=\psi_{0\beta}(\mu^\Tt)=\nrsigma_\beta(i^\Uu_{0\beta}(\mu^\Tt)).
\]
Then since  $\deg^\Uu(0)=\deg^\Uu(\beta)=k+1$ and $\deg^\Uu(\alpha+1)=k$,
\[ i^\Uu_{0\beta}(\mu^\Tt)<\rho_{k+1}(M^\Uu_\beta)\leq\crit(E), \]
so
$\mu^{\Xx^\beta}<\nrsigma_\beta(\crit(E))=\crit(F)$,
completing the proof that $\nrsigma_{\alpha+1}$ is a near $k$-embedding.

We do not need this kind of argument for degrees $<k$,
because if $\nrsigma:R\to S$ is a near $k$-embedding where $k>0$,
then $\nrsigma(\rho_k^R)\geq\rho_k^S$.
We leave the remaining details in this case to the reader.
\end{casetwo}

\begin{casetwo}\label{case:beta_non-easy,Uu_not_drop_alpha+1} $\beta$ is
non-easy,
and $\Uu$ does not drop in model or degree at
$\alpha+1$.

So $\xi\in C^\beta$. Let $\theta^*=f^\beta(\xi)$.

\begin{scasetwo} $\xi+1=\lh(\Xx^\beta)$.

Then
$\nrsigma_\beta:M^\Uu_\beta\to M^{\Xx^\beta}_\xi$,
and everything is routine. We have $\lambda^{\alpha+1}\in C^{\alpha+1}$
 but $\theta^*+1=\theta^{\alpha+1}+1=\lh(\Tt)$, so $\alpha+1$ is easy.
\end{scasetwo}

\begin{scasetwo}\label{scase:stack_2_main} $\xi+1<\lh(\Xx^\beta)$.

So $\theta^*+1<\lh(\Tt)$.
Now $E$ is total over $M^\Uu_\beta$ and
$(\kappa_E^+)^{M^\Uu_\beta}\leq\nutilde^\Uu_\beta$ (for
$\kappa_E<\nutilde^\Uu_\beta$, so if
$(\kappa_E^+)^{M^\Uu_\beta}>\nutilde^\Uu_\beta$ then
$E^\Uu_\beta=F^{M^\Uu_\beta}$ and
$\kappa_E=\lgcd(M^\Uu_\beta)$, but then
$E^{\Xx^\beta}_{\zeta^\beta}=F(M^{\Xx^\beta}_\infty)$ and
$\kappa_F=\lgcd(M^{\Xx^\beta}_\infty)$, so $\xi+1=\lh(\Xx^\beta)$,
contradiction). Therefore
$(\kappa_F^+)^{M^{\Xx^\beta}_\infty}\leq\nutilde^{\Xx^{\alpha+1}}_{\zeta^\beta}$
, so $F$ is total over
$M^{\Xx^\beta}_\infty$, so $F$ is total over $\exit^{\Xx^\beta}_\xi$ (and
$E^{\Xx^\beta}_\xi$ is the copy of $E^\Tt_{\theta^*}$), so
$\exit^\beta_\xi\ins M^{*\Xx^{\alpha+1}}_{\zeta^\alpha+1}$.
So $\lambda^{\alpha+1}=\zeta^\alpha+1\in C^{\alpha+1}$ and
$\theta^{\alpha+1}=\theta^*$.

\begin{figure}
\centering
\begin{tikzpicture}
 [mymatrix/.style={
    matrix of math nodes,
    row sep=0.9cm,
    column sep=1cm}
  ]
   \matrix(m)[mymatrix]{
  N_{\alpha+1}& \bar{M}_\infty & M^{\Xx^{\alpha+1}}_\infty\\
  N_\beta & M^{\Xx^\beta}_\infty & {}\\
   };
\path[->,font=\scriptsize]
(m-1-1) edge[bend left] node[above] {$\nrsigma_{\alpha+1}$} (m-1-3)
(m-1-1) edge node[above] {$\nrsigmabar$} (m-1-2)
(m-1-2) edge node[above] {$\nrsigma'$} (m-1-3)
(m-2-1) edge node[above] {$\nrsigma_\beta$} (m-2-2)
(m-2-1) edge node[left] {$i^\Uu_{\beta,\alpha+1}$} (m-1-1)
(m-2-2) edge node[right,pos=0.25] {$\ \ \psi_{\beta,\alpha+1}$} (m-1-3)
(m-2-2) edge node[left] {$\psibar$} (m-1-2);
\end{tikzpicture}
\caption{The diagram commutes, in Subcase
\ref{scase:stack_2_main}.}\label{fgr:stack_2_comm_succ}
\end{figure}

See Figure \ref{fgr:stack_2_comm_succ}. Let $\psi=\psi_{\beta,\alpha+1}$ (see
\ref{item:interval_not_easy}) and
\[
\varsigma=\om^{\beta,\alpha+1}_{\theta^{\alpha+1}}=i^{*\Xx^{\alpha+1}}_{
\zeta^\alpha+1}\rest Q^\beta_\xi.\]
By  \ref{lem:Pi^alpha,beta_is_tree_embedding}
(and recall property \ref{item:model_embeddings}), $\psi$ is a near
$\udash n$-embedding,
and by \ref{lem:int_tree_embs_int_comm_extra_agmt}, $\varsigma\rest
Q^\beta_\xi\sub\psi$.
So $F\rest\nutilde(F)$ is the $(\kappa_F,\nutilde(F))$-extender derived from
$\psi$,
and note
\[
Q^\beta_\xi||(\kappa_F^+)^{Q^\beta_\xi}=M^{\Xx^\beta}_\infty||(\kappa_F^+)^{M^{
\Xx^\beta}_\infty}.\]
Let $\Mbar_\infty,\psibar,\nrsigmabar,\nrsigma',\nrsigma_{\alpha+1}$ be defined
as follows:
\begin{enumerate}[label=--]
 \item $\Mbar_\infty=\Ult_{\udash n}(M^{\Xx^\beta}_\infty,F)$,
 \item $\psibar:M^{\Xx^\beta}_\infty\to\Mbar_\infty$ is the associated
ultrapower map $i^{M^{\Xx^\beta}_\infty,\udash n}_F$,
 \item $\nrsigmabar:M^\Uu_{\alpha+1}\to\Mbar_\infty$
is given by the Shift Lemma from $\nrsigma_\beta$ and
$\nrsigma_\alpha\rest\exit^\Uu_\alpha$,
 \item $\nrsigma':\Mbar_\infty\to M^{\Xx^{\alpha+1}}_\infty$
is the natural factor map (for example if $n=0$ then
\[ \nrsigma'([a,f]^{M^{\Xx^\beta}_\infty}_F)=\psi(f)(a), \]
and if $n>0$ use the obvious generalization; this is well-defined by the
remarks
above),
 \item $\nrsigma_{\alpha+1}=\nrsigma'\com\nrsigmabar:M^\Uu_{\alpha+1}\to
M^{\Xx^{\alpha+1}}_\infty$.
\end{enumerate}

Let $\mu=i_F(\kappa_F)$. Then
$\nrsigma'$ is nice $\udash n$-lifting,
 $\nrsigma'\com\psibar=\psi$,
and
\[
\crit(\nrsigma')>(\mu^+)^{\Mbar_\infty}=(\mu^+)^{M^{\Xx^{\alpha+1}}_\infty},\]
so $\nrsigma'$ fixes  $\nutilde^{\Xx^{\alpha+1}}_{\zeta^\alpha}$ and
$\In(E^{\Xx^{\alpha+1}}_{\zeta^\alpha})=\In(F)$.
(The latter holds as $\nrsigma'\rest\mu=\id$
and $\psi(\kappa_F)=\mu$
and
$(\mu^+)^{\Mbar_\infty}=(\mu^+)^{Q^{\alpha+1}_{\theta^{\alpha+1}}}=(\mu^+)^{M^{
\Xx^{\alpha+1}}_\infty}$.)

Also note
that $\nrsigmabar$ is nice $\udash n$-lifting,
  $\nrsigma_\alpha\rest\exit^\Uu_\alpha\sub\nrsigmabar$,
$\nrsigmabar(\nutilde^\Uu_\alpha)=(\nutilde^{\Xx^{\alpha+1}}_{
\zeta^\alpha})$, $\nrsigmabar(\In(E))=\In(F))$,
and $\nrsigmabar\com i^{\Uu}_{\beta,\alpha+1}=\psibar\com\nrsigma_\beta$.
(We have $\In(E)\in M^\Uu_{\alpha+1}$ because if
$\In(E)=\OR^{M^\Uu_{\alpha+1}}$,
so $\kappa_E=\lgcd(M^\Uu_\beta)$ and $M^\Uu_\beta$ is active type 2,
then $\kappa_F=\lgcd(M^{\Xx^\beta}_\infty)$, but then $\xi+1=\lh(\Xx^\beta)$,
contradiction).

Therefore
\begin{enumerate}[label=--]
 \item $\nrsigma_{\alpha+1}$ is nice $\udash n$-lifting,
\item  $\nrsigma_\alpha\rest\exit^\Uu_\alpha\sub\nrsigma_{\alpha+1}$,
\item
$\nrsigma_{\alpha+1}(\nutilde^\Uu_\alpha)=\nutilde^{\Xx^{
\alpha+1}}_{\zeta^\alpha}$
and $\nrsigma_{\alpha+1}(\In(E^\Uu_\alpha))=\In(E^{\Xx^{\alpha+1}}_{
\zeta^\alpha}))$,
\item $\psi_{\beta,\alpha+1}\com\nrsigma_\beta=\nrsigma_{\alpha+1}\com
i^\Uu_{\beta,\alpha+1}$.
\end{enumerate}
It is now easy to see that $\varphi(\alpha+2)$ holds.
\end{scasetwo}
\end{casetwo}

Now suppose $\eta<\Omega$ is a limit. We have
\[
\Xx^\eta\rest\lambda^\eta=\bigcup_{\alpha<\eta}\Xx^\alpha\rest(\zeta^\alpha+1)
\]
and $[0,\lambda^\eta)_{\Xx^\eta}=\Sigma(\Xx^\eta\rest\lambda^\eta)$, giving
$\Xx^\eta\rest(\lambda^\eta+1)$.
Since ${<^\Uu}\rest\eta=({<^{\Xx^\eta/\Tt}})\rest\eta$,
we can and do define a $\Uu\rest\eta$-cofinal branch by setting
\[ [0,\eta)_\Uu=[0,\eta)_{\Xx^\eta/\Tt},\]
maintaining property
\ref{item:limit_Uu_rest_beta}.
Note that $\Xx^\eta$ exists (and is according to $\Sigma$), by inflation
condensation and as $\eta<\Omega$.
We now define
\[ \nrsigma_\eta:M^\Uu_\eta\to M^{\Xx^\eta}_\infty;\]
it will then be easy to see that $\varphi(\eta+1)$ holds.

\begin{casethree}
There is $\alpha<\eta$ such that $\alpha$ is easy and
$\lambda^\alpha<^{\Xx^\eta}\lambda^\eta$.

Then $\beta$ is easy for every $\beta\in[\alpha,\eta]_{\Xx^\eta/\Tt}$, and
using
the inductive hypotheses, $[0,\eta)_\Uu$ has only finitely many drops
and we can define $\nrsigma_\eta$ commuting with earlier maps in a routine
manner.
\end{casethree}

\begin{casethree} Otherwise.\footnote{In this case, $\eta$ itself can be easy,
but this is not relevant.}

By \ref{lem:int_tree_embs_int_comm_extra_agmt},
$\psi_{\alpha\eta}=\psi_{\beta\eta}\com\psi_{\alpha\beta}$
for all $\alpha\leq^\Uu\beta<^\Uu\eta$.
So by the commutativity given by property \ref{item:interval_not_easy} we can
and do define $\nrsigma_\eta$
in a unique manner preserving commutativity.
That is,
\[ \nrsigma_\eta\com i^\Uu_{\alpha\eta}=\psi_{\alpha\eta}\com\nrsigma_\alpha\]
for all $\alpha<^\Uu\eta$.
\end{casethree}

This completes the definition of $\Upsilon^\Sigma_\Tt$; clearly it is a
$(\udash
n,\Omega)$-strategy for $N$, as desired. If $\lh(\Uu)=\alpha+1$
then we finally set $\Xx=\Xx^\alpha$,
and if $\lh(\Uu)$ is a limit $\eta$ we set
$\Xx=\bigcup_{\alpha<\eta}\Xx^\alpha\rest(\lambda^\alpha+1)$.
So if $\lh(\Uu)$ is a successor then $\Xx$ is a $\Tt$-terminal inflation of
$\Tt$.

Finally suppose that $\Sigma$ extends to a $(\udash m,\Omega+1)$-strategy
$\Sigma'$ for $M$.
Then $\Upsilon^\Sigma_\Tt$ extends to a $(\udash n,\Omega+1)$-strategy
$\Upsilon^{\Sigma'}_\Tt$ for $N$.
For given $\Uu$ via $\Upsilon^\Sigma_\Tt$ of length $\Omega$,
note that $\Xx$ also has length $\Omega$, and $\varphi(\Omega)$
holds. But then just as in the limit case above, we get a $\Uu$-cofinal branch
$b$,
and $M^\Uu_b$ is well-defined and wellfounded as $\cof(\Omega)>\om$, so player
II has won.
We don't actually need $\Xx^\Omega$ here, but we can and do define it by
copying the remainder of $\Tt$ following $\Xx^\Omega\rest(\Omega+1)$.
Of course if $\lh(\Xx^\Omega)>\Omega+1$ then $\Xx^\Omega$ is not literally via
$\Sigma$,
but note that its models are wellfounded, because $\Omega>\om$ is regular.
We then define $\psi_{\alpha\Omega}$ and $\nrsigma_\Omega$ as before.

\begin{dfn}\label{dfn:W(T,U)_etc}\index{$\Ww^\Sigma_\Tt$}\index{
$\nrsigma^\Sigma_\Tt$}\index{$\Upsilon^\Sigma_\Tt$}
Given the objects above,
let
\[ \Ww^\Sigma_\Tt(\Uu)=\Ww^\Sigma(\Tt,\Uu)=\Xx,\]
and if $\lh(\Uu)$ is also a successor, let
\[ \nrsigma^\Sigma_\Tt(\Uu)=\nrsigma^\Sigma(\Tt,\Uu)=\nrsigma_{\lh(\Uu)-1}.\]
And $\Upsilon^\Sigma_\Tt$ denotes the $\udash\deg^\Tt(\infty)$-strategy for
$N=M^\Tt_\infty$ defined above.
\end{dfn}

\begin{rem}\label{rem:b_determines_c}
The following observation, which is natural, but not actually important for our
construction,
is mostly due to Steel:\footnote{
 Our construction uses only
 the fact that the branches of $\Xx$ determine those of $\Uu$, so Steel's
observation
 is not important for us here, and the author did not initially consider it.
 It is, however, relevant to Steel's construction, as he proceeds in the other
direction.
 After we had developed most of our construction, Steel pointed out that for
each $(b,d)$
  such that $b$ is a $\Uu\rest\eta$-cofinal branch
  and $d$ is either a node in $\Tt$ or a $\Tt$-maximal branch,
  there is at most one corresponding $\wt{\Xx}$-cofinal branch $c_{b,d}$.
  The author later noticed that $b$ in fact determines $d$, given that we are
seeking an inflation of $\Tt$.}
  One could actually drop the superscript ``$\Sigma$'' in the notation
$\Ww^\Sigma_\Tt$ and $\nrsigma^\Sigma_\Tt$,
  without ambiguity.

 For consider the limit stage $\eta$ in the preceding construction.
 Let $\wt{\Xx}=\Xx\rest\lambda^\eta$ be defined as above.
 We observe that $[0,\eta)_\Uu$ determines $[0,\lambda_\eta)_{\Xx^\eta}$,
subject to the requirement
 that $\Xx^\eta$ be an inflation of $\Tt$.
 In fact, for any $\Uu\rest\eta$-cofinal branch $b$ there is a unique
$\wt{\Xx}$-cofinal branch
 $c=c_b$ such that
 \begin{enumerate}[label=--]
  \item $c$ induces $b$ in the same manner that $[0,\lambda^\eta)_{\Xx^\eta}$
induces $[0,\eta)_\Uu$, and
  \item if $b\inter\dropset^\Uu=\emptyset$  then $\wt{\Xx}\conc
c$ is a
\emph{putative inflation of $\Tt$},
  meaning that all requirements of inflations are met, excluding the requirement
  that $M^{\wt{\Xx}}_c$ be well-defined and wellfounded.
 \end{enumerate}

 For write $C=C^{\Tt\inflatearrow\wt{\Xx}}$, etc, and
$C'=C^{\Tt\inflatearrow(\wt{\Xx},c)}$, etc
 (for a candidate $c$).
 If $\lambda^\beta\notin C$ for some $\beta\in b$,
 this is immediate (and $\lambda^\eta\notin C'$).
 So suppose otherwise and let
   $\theta=\sup_{\beta\in b}f(\lambda^\beta)$. Then
   $\lambda^\eta\in C'$ and $f'(\lambda^\eta)=\theta$.
 If $\theta=f(\lambda^\beta)$ for some $\beta\in b$
 (hence $\theta=f(\lambda^\beta)$ for all sufficiently large $\beta\in b$)
 then everything is clear.
 So suppose otherwise; then $\theta$ is a limit.
 Note that for $c$ as desired to exist, we must have $\theta<\lh(\Tt)$.
 Note that for $\alpha<\theta$,
 $\gamma_\alpha\eqdef\lim_{\beta\in b}\gamma^\beta_\alpha$ exists,
$c=\bigcup_{\alpha<^\Tt\theta}[0,\gamma_\alpha)_{\wt{\Xx}}$
is an $\wt{\Xx}$-cofinal branch, $(\wt{\Xx},c)$ is a putative inflation of
$\Tt$,
$c$ determines $b$, and moreover, $c$ is the unique such branch.
\end{rem}

\begin{rem}\label{rem:fail_near_emb}
Consider condition \ref{item:main_embedding} of the preceding construction.
By this condition,
$\nrsigma_\alpha$ is a $\udash k$-lifting embedding, and if
$[0,\alpha]_\Uu\inter\dropset_{\deg}\neq\emptyset$
then $\nrsigma_\alpha$ is a near $\udash k$-embedding.
Also by this condition, if $\Tt$ is terminally non-dropping and
$[0,\alpha]_\Uu\inter\dropset_{\deg}^\Uu=\emptyset$
then $\nrsigma_\alpha$ is a near $\udash k$-embedding (in fact a $\udash
k$-embedding).
But $\nrsigma_\alpha$ can fail to be a near $\udash n$-embedding when $\Tt$ is
terminally dropping and $[0,\alpha]_\Uu\inter\dropset_{\deg}^\Uu=\emptyset$.
Moreover, it can also be that $M$ has $\lambda$-indexing and:
\begin{enumerate}[label=--]
 \item  $n=k+1=\deg^\Tt(\infty)>0$,
 \item $\rho_{k+1}(M^\Uu_\alpha)<\OR(M^\Uu_\alpha)$,
 \item
$\nrsigma_\alpha(\rho_{k+1}(M^\Uu_\alpha))<\rho_{k+1}(M^{\Xx^\alpha}_\infty)$,
 \item  $M^\Uu_\alpha$ has a measurable $\gamma\geq\rho_{k+1}(M^\Uu_\alpha)$
such that $\nrsigma_\alpha(\gamma)<\rho_{k+1}(M^{\Xx^\alpha}_\infty)$,
 \item $E^\Uu_\alpha$ is $M^\Uu_\alpha$-total with
$\crit(E^\Uu_\alpha)=\gamma$
and $\zeta^\alpha+1=\lh(\Xx^\alpha)$ (and $E^{\Xx^\alpha}_{\zeta^\alpha}$
is
$M^{\Xx^\alpha}_\infty$-total),
 so
 \[ \deg^\Uu(\alpha+1)=k\text{ but }\deg^{\Xx^{\alpha+1}}(\infty)={k+1} \]
 (but as we saw, even in this case, $\nrsigma_{\alpha+1}$ is a near
$k$-embedding).
\end{enumerate}
For here is an example, with $k=0$.
Suppose that $M$ is $\lambda$-indexed, $2$-sound and $\rho_2^M<\rho_1^M<\OR^M$
and $\cof^M(\rho_1^M)=\kappa$
where $\kappa<\rho_2^M$ is $M$-measurable and $\rho_1^M$ is a limit of
$M$-measurables.
Let $\mu\in[\rho_2^M,\rho_1^M)$ be $M$-measurable and $E\in\es^M$ be a measure
on $\mu$.
Let $\Tt$ be the $2$-maximal tree on $M$ using only $E$. So
$N=M^\Tt_1$, $n=\deg^\Tt(1)=1$,
and $\Uu$ will be a $1$-maximal tree.
Let $F\in\es^M\inter\es^N$ be the order $0$ measure on
$\kappa$.
Let $E^\Uu_0=F$ ($\Uu$ will use two extenders; $E^\Uu_1$ will be defined in a
moment).
This determines $\Xx^0=\Tt$ and $\Xx^1$.
Note that (so far) there is no dropping in model in any of our trees.
We have  $\nrsigma_1:M^\Uu_1\to M^{\Xx^1}_\infty$.
We claim
$\deg^\Uu(1)=1=\deg^{\Xx^1}(\infty)$ but
$\nrsigma_1(\rho_1(M^\Uu_1))<\rho_1(M^{\Xx^1}_\infty)$,
and therefore $\nrsigma_1$ is not a near $1$-embedding. To see this, use
routine
calculations to verify the following:
\begin{enumerate}[label=--]
 \item $\deg^\Tt(1)=1=\deg^\Uu(0)=\deg^\Uu(1)$,
 \item $\lambda_0=\zeta_0=0$ (so $\lambda^1=1$),
 \item $\lh(\Xx^1)=3$ and $E^{\Xx^1}_0=F$ and $E^{\Xx^1}_1=i^{\Xx^1}_{01}(E)$,
 \item $\deg^{\Xx^1}(1)=2$ and $\deg^{\Xx^1}(2)=1$,
 \item $\rho_1^N=\sup i^\Tt``\rho_1^M=i^\Tt(\rho_1^M)$,
 \item $\rho_1(M^\Uu_1)=\sup
i^\Uu_{01}``\rho_1^N<i^\Uu_{01}(\rho_1^N)=i^\Uu_{01}\com i^\Tt(\rho_1^M)$,
 \item $\sup
i^{\Xx^1}_{01}``\rho_1^M<\rho_1(M^{\Xx^1}_1)=i^{\Xx^1}_{01}(\rho_1^M)$,
 \item $\rho_1(M^{\Xx^1}_2)=\sup
i^{\Xx^1}_{12}``\rho_1(M^{\Xx^1}_1)=i^{\Xx^1}_{12}(\rho_1(M^{\Xx^1}_1))=i^{\Xx^1
}_{02}(\rho_1^M)$,
 \item $\nrsigma_1\com i^\Uu_{01}\com i^\Tt=i^\Xx_{02}$, and hence,
$\nrsigma_1(i^\Uu_{01}(i^\Tt(\rho_1^M)))=i^\Xx_{02}(\rho_1^M)=\rho_1(M^{\Xx^1}
_2)$.
\end{enumerate}
The claim follows from these facts, and gives the desired example.
\end{rem}

\subsubsection{Stacks of limit length}\label{subsubsec:limit_length_stacks}

From now on we assume that $\Sigma$ is a $(\udash
m,\Omega+1)$-strategy for
$M$
with inflation condensation.
We will define an optimal-$(\udash m,\Omega,\Omega+1)^*$-strategy $\Sigma^*$
for $M$.
Let us say that a \dfnemph{round} of the iteration game consists of a single
normal tree.
Given $\alpha<\Omega$, at the start of round $\alpha$, with player $\Two$ not
yet having lost, we
will
have defined sequences
$\left<\Tt_\beta\right>_{\beta<\alpha}$,
$\left<O_\beta,n_\beta,\Yy_\beta,\srsigma_\beta\right>_{\beta\leq\alpha}$ with
the following
properties (\emph{S} is for \emph{stack}; see Figure
\ref{fgr:infinite_stack_comm}):

\begin{figure}
\centering
\begin{tikzpicture}
 [mymatrix/.style={
    matrix of math nodes,
    row sep=0.7cm,
    column sep=0.6cm}
  ]
   \matrix(m)[mymatrix]{
   {}&              {}&              {}&                          {}&
             {}&                      {}&                          {}&\Yy_\eta\\
   {}&              {}&              {}&                          {}&
             {}&                      {}&
\Yy_\delta&M^{\Yy_\eta}_\infty\\
   {}&              {}&              {}&                          {}&
             {}&           \Yy_{\eps+1}&       M^{\Yy_\delta}_\infty&{}\\
   {}&              {}&              {}&                          {}&
      \Yy_\eps&M^{\Yy_{\eps+1}}_\infty&                          {}&{}\\
   {}&              {}&              {}&                  \Yy_\alpha&
M^{\Yy_\eps}_\infty&  M^{\Uu_\eps}_\infty{}&                          {}&{}\\
   {}&              {}&           \Yy_2&       M^{\Yy_\alpha}_\infty&
             {}&                      {}&                          {}&{}\\
   {}&           \Yy_1&M^{\Yy_2}_\infty&                          {}&
             {}&                      {}&                          {}&{}\\
\Yy_0&M^{\Yy_1}_\infty&M^{\Uu_1}_\infty&                          {}&
             {}&                      {}&                          {}&{}\\
    M&O_1&O_2&O_\alpha&O_\eps&O_{\eps+1}&O_\delta&O_\eta\\
};

\path[->, dash pattern=on 4pt off 2pt,decoration={zigzag,segment
length=4,amplitude=.9, post=lineto,post length=2pt},font=\scriptsize,line
join=round]
(m-8-1) edge[decorate] node[auto] {} (m-7-2)
(m-7-2) edge[decorate] node[auto] {} (m-6-3)
(m-6-3) edge[decorate] node[auto] {} (m-5-4)
(m-5-4) edge[decorate] node[auto] {} (m-4-5);
\path[->,decoration={zigzag,segment length=4,amplitude=.9, post=lineto,post
length=2pt},font=\scriptsize,line join=round]
(m-4-5) edge[decorate] node[auto] {} (m-3-6)
(m-3-6) edge[decorate] node[auto] {} (m-2-7)
(m-2-7) edge[decorate] node[auto] {} (m-1-8);
\path[dotted,->,font=\scriptsize]
(m-9-1) edge node[below] {$\Tt_0$} (m-9-2)
(m-9-2) edge node[below] {$\Tt_1$} (m-9-3)
(m-9-3) edge node[below] {} (m-9-4)
(m-9-4) edge node[below] {} (m-9-5)
(m-9-1) edge node[auto] {} (m-8-2)
(m-8-2) edge node[auto] {} (m-7-3)
(m-7-3) edge node[auto] {} (m-6-4)
(m-6-4) edge node[auto] {} (m-5-5)
(m-8-2) edge node[below] {$\Uu_1$} (m-8-3);
   \path[->,font=\scriptsize]
(m-5-5) edge node[left] {$\pi_{\eps,\eps+1}$} (m-4-6)
(m-4-6) edge node[left] {$\pi_{\eps+1,\delta}$} (m-3-7)
(m-3-7) edge node[left] {$\pi_{\delta\eta}$} (m-2-8)
(m-9-5) edge node[auto] {$i_\eps$} node[below] {$\Tt_\eps$} (m-9-6)
(m-9-6) edge node[auto] {$i_{\eps+1,\delta}$} (m-9-7)
(m-9-7) edge node[auto] {$i_{\delta\eta}$} (m-9-8)
(m-5-5) edge node[below] {$\Uu_\eps$} node[above] {$j_\eps$} (m-5-6)
(m-9-2) edge  node[right] {$\srsigma_1$} (m-8-2)
(m-9-3) edge  node[left] {$\srsigma_2^0$} (m-8-3)
(m-8-3) edge  node[left,near start] {$\srsigma_2^1$} (m-7-3)
(m-9-3) edge [bend right] node[right] {$\srsigma_2$} (m-7-3)
(m-9-4) edge  node[right,pos=0.685] {$\srsigma_\alpha$} (m-6-4)
(m-9-5) edge  node[right,pos=0.49] {$\srsigma_\eps$} (m-5-5)
(m-9-6) edge  node[left,pos=0.495] {$\srsigma^0_{\eps+1}$} (m-5-6)
(m-5-6) edge  node[left,near start] {$\srsigma^1_{\eps+1}$} (m-4-6)
(m-9-6) edge [bend right=24] node[right,pos=0.375] {$\srsigma_{\eps+1}$} (m-4-6)
(m-9-7) edge  node[right,pos=0.311] {$\srsigma_\delta$} (m-3-7)
(m-9-8) edge  node[right,pos=0.2615] {$\srsigma_\eta$} (m-2-8);
\end{tikzpicture}
\caption{Commutative diagram for an infinite stack. Note that
$\Uu_\beta,\srsigma^0_{\beta+1},\srsigma^1_{\beta+1}$
are not mentioned in conditions
\ref{item:lim_stack_N_0}--\ref{item:inflation_iteration_commutativity}.
Note that $\Yy_0$ is trivial,  $O_2=M^{\Tt_1}_\infty$ and
$N_{\eps+1}=M^{\Tt_\eps}_\infty$.
The squiggly arrows indicate inflations $\Yy\inflatearrow\Zz$;
a dashed squiggly arrow indicates that $\Zz$ is possibly
$\Yy$-terminally-model-dropping,
whereas a solid squiggly arrow indicates that $\Zz$ is
$\Yy$-terminally-non-model-dropping.
The solid horizontal arrows are iteration maps;
$i_{\beta\gamma}=i^{\Ttvec\rest[\beta,\gamma)}$
(and  $b^{\Ttvec\rest[\beta,\gamma)}$ does not drop
in model where they appear in the diagram)
$i_\eps=i_{\eps,\eps+1}$ and $j_\eps=i^{\Uu_\eps}$. Dotted horizontal arrows
represent iteration trees possibly dropping on their main branches. Solid
diagonal arrows are final inflation copy maps
$\pi_\infty^{\Yy\inflatearrow\Zz}$ (such exist where they appear in
the diagram). Dotted diagonal arrows represent inflations
$\Yy\inflatearrow\Zz$ which are possibly $\Yy$-terminally-model-dropping.
Vertical arrows are the lifting maps $\srsigma_\delta$. All solid arrows
commute.} \label{fgr:infinite_stack_comm}
\end{figure}

\begin{enumerate}[label=S\arabic*.,ref=S\arabic*]
 \item\label{item:lim_stack_N_0} $O_0=M$ and $n_0=m$ and $\Yy_0$ is the trivial
tree on $M$ and $\srsigma_0:M\to M$ is the
identity.
 \item $n_\beta\leq\om$ and $O_\beta$ is a $\udash n_\beta$-sound
segmented-premouse and if $\beta<\alpha$ then
$\Tt_\beta$ is a $\udash n_\beta$-maximal tree on $O_\beta$ of successor length
$<\Omega$.
 \item $O_{\beta+1}=M^{\Tt_\beta}_\infty$ and
$n_{\beta+1}=\udeg^{\Tt_\beta}(\infty)$.
 \item For each limit $\beta\leq\alpha$, there is $\gamma<\beta$ such that for
all
$\eps\in[\gamma,\beta)$, $b^{\Tt_\eps}$ does not drop in model or degree,
$O_\beta=M^{\Ttvec\rest\beta}_\infty$ and
$n_\beta=\udeg^{\Ttvec\rest\beta}(\infty)$.
\footnote{That is, $O_\beta$ is the direct limit of the the $O_\eps$ for
$\eps\in[\gamma,\beta)$,
under the iteration maps, and $n_\beta=\lim_{\eps\to\beta}\udeg^{\Tt_\eps}(0)$.}
 \item $\Yy_\beta$ is a $\udash m$-maximal tree on $M$, via $\Sigma$, of
successor length $<\Omega$, and
$\udeg^{\Yy_\beta}(\infty)\geq n_\beta$.
 \item\label{item:lim_stack_tau_beta}
$\srsigma_\beta:O_\beta\to
M^{\Yy_\beta}_\infty$ is a
nice $\udash n_\beta$-lifting
embedding.\setcounter{footnote}{0}\footnote{In the proof,
for $\beta>0$, $\srsigma_\beta$ will be
defined as the composition $\srsigma_\beta^1\com\srsigma_\beta^0$.}
 \item\label{item:standard_terminal_inflation} For each
$\gamma<\beta\leq\alpha$, $\Yy_\beta$ is an
$\Yy_\gamma$-terminal inflation of $\Yy_\gamma$.
\item\label{item:Ttvec_model_drop} For each $\gamma<\beta\leq\alpha$,
$\Ttvec\rest[\gamma,\beta)$ drops in model\footnote{That is, $b^{\Tt_\eps}$
drops in model
for some $\eps\in[\gamma,\beta)$.} iff $\Yy_\beta$
is $\Yy_\gamma$-terminally-model-dropping.
\item\label{item:eventually_terminally-non-dropping} For each limit
$\beta\leq\alpha$ there is
$\eps<\beta$ such that
\[ \all\delta_0,\delta_1\ [\text{if }\eps\leq\delta_0<\delta_1\leq\beta\text{
then }\Yy_{\delta_1}\text{ is
}\Yy_{\delta_0}\text{-terminally-non-dropping}].\]
\item\label{item:inflation_iteration_commutativity} If $\gamma<\beta\leq\alpha$
and $\Yy_\beta$ is
$\Yy_\gamma$-terminally-non-model-dropping then letting
\[ \pi_{\gamma\beta}:M^{\Yy_\gamma}_\infty\to
M^{\Yy_\beta}_\infty \]
be $(\pi_\infty)^{\Yy_\gamma\inflatearrow\Yy_\beta}$ (see
\ref{dfn:inflation_notation}), we have
$\pi_{\gamma\beta}\com\srsigma_\gamma=\srsigma_\beta\com
i^{\Ttvec\rest[\gamma,\beta)}$.
\end{enumerate}

Given these inductive hypotheses, player $\Two$ plays out round
$\alpha$ as follows. We have
the nice $\udash n_\alpha$-lifting embedding
\[ \srsigma_\alpha:O_\alpha\to M^{\Yy_\alpha}_\infty,\]
and $n_\alpha\leq y_\alpha=\udeg^{\Yy_\alpha}(\infty)$
and $\lh(\Yy_\alpha)<\Omega$. We have the $(y_\alpha,\Omega+1)$-strategy
$\Upsilon^\Sigma_{\Yy_\alpha}$
for $M^{\Yy_\alpha}_\infty$ defined in \ref{dfn:W(T,U)_etc}.
Let $\bar{\Upsilon}$ be the $(n_\alpha,\Omega+1)$-strategy
for $O_\alpha$ which is the $\srsigma_\alpha$-pullback of
$\Upsilon^\Sigma_{\Yy_\alpha}$.
Then player II uses $\bar{\Upsilon}$ to play round $\alpha$ (forming
$\Tt_\alpha$).
So player II does not lose in round $\alpha$.

Now suppose that $\lh(\Tt_\alpha)<\Omega$, so the game continues.
So $O_{\alpha+1}=M^{\Tt_\alpha}_\infty$ and
$n_{\alpha+1}=\udeg^{\Tt_\alpha}(\infty)$.
We must define $\Yy_{\alpha+1}$ and $\srsigma_{\alpha+1}$ and verify the
inductive hypotheses.

Let $\Uu_\alpha=\srsigma_\alpha\Tt_\alpha$ be the $\srsigma_\alpha$-copy of
$\Tt_\alpha$ to a $\udash y_\alpha$-maximal tree
on $M^{\Yy_\alpha}_\infty$. Let $n'_{\alpha+1}=\udeg^{\Uu_\alpha}(\infty)$. Then
$n_{\alpha+1}\leq n'_{\alpha+1}$. Let
\[ \srsigma_{\alpha+1}^0:O_{\alpha+1}\to M^{\Uu_\alpha}_\infty \]
be the final copy map, so $\srsigma_{\alpha+1}^0$ is nice $\udash
n_{\alpha+1}$-lifting.

Now $\Uu_\alpha$ is via $\Upsilon^\Sigma_{\Yy_\alpha}$ and
$\lh(\Uu_\alpha)<\Omega$.
Using \ref{dfn:W(T,U)_etc}, we define
\[ \Yy_{\alpha+1}=\Ww^\Sigma_{\Yy_\alpha}(\Uu_\alpha), \]
\[
\srsigma_{\alpha+1}^1=\nrsigma^\Sigma_{\Yy_\alpha}(\Uu_\alpha):M^{\Uu_\alpha}
_\infty\to M^{\Yy_{\alpha+1}}_\infty.\]
So $\srsigma_{\alpha+1}^1$ is nice $\udash n'_{\alpha+1}$-lifting,
$\lh(\Yy_{\alpha+1})<\Omega$ and
$n'_{\alpha+1}\leq\udeg^{\Yy_{\alpha+1}}(\infty)$.

Composing, we define
$\srsigma_{\alpha+1}=\srsigma_{\alpha+1}^1\com\srsigma_{\alpha+1}^0$,
also nice $\udash n_{\alpha+1}$-lifting.

We have verified  properties
\ref{item:lim_stack_N_0}--\ref{item:lim_stack_tau_beta}
and \ref{item:eventually_terminally-non-dropping} (some are trivial by
induction).
It just remains to establish
\ref{item:standard_terminal_inflation}, \ref{item:Ttvec_model_drop}
and \ref{item:inflation_iteration_commutativity} for
$\beta=\alpha+1$.

Suppose first $\gamma=\alpha<\alpha+1=\beta$. Property
\ref{item:standard_terminal_inflation}
is directly by \S\ref{subsubsec:stacks_lh_2} (note $\lh(\Uu_\alpha)<\Omega$
and is a successor).
For property \ref{item:Ttvec_model_drop}, we have that $b^{\Tt_\alpha}$ drops
in
model
iff $b^{\Uu_\alpha}$ drops in model iff (by \S\ref{subsubsec:stacks_lh_2})
$\Yy_{\alpha+1}$ is $\Yy_\alpha$-terminally-model-dropping.
And if $\Yy_{\alpha+1}$ is $\Yy_\alpha$-terminally-non-model-dropping,
so $b^{\Tt_\alpha},b^{\Uu_\alpha}$ do not drop in model (but possibly in
degree), then again by \S\ref{subsubsec:stacks_lh_2}, we have
\[
\pi_{\alpha,\alpha+1}:M^{\Yy_\alpha}_\infty\to
M^{\Yy_{\alpha+1}}_\infty \]
(defined in \ref{item:inflation_iteration_commutativity}),
and $\pi_{\alpha,\alpha+1}=\srsigma^1_{\alpha+1}\com i^{\Uu_\alpha}$, so
\begin{equation}\label{eqn:inflation_lift_comm}
\pi_{\alpha,\alpha+1}\com\srsigma_\alpha=\srsigma_{\alpha+1}\com
i^{\Tt_\alpha},
\end{equation}
as required for property \ref{item:inflation_iteration_commutativity}.

Finally suppose that $\gamma<\alpha<\alpha+1=\beta$. Properties
\ref{item:standard_terminal_inflation} and \ref{item:Ttvec_model_drop} follow
easily by induction,
the facts established above regarding $\Yy_{\alpha+1}$, and
Lemma \ref{lem:inflation_commutativity} (commutativity of inflation).
For example for property \ref{item:Ttvec_model_drop}:
By \ref{lem:inflation_commutativity}, we have that $\Yy_{\alpha+1}$ is
$\Yy_\gamma$-terminally-model-dropping
iff either $\Yy_{\alpha+1}$ is $\Yy_\alpha$-terminally-model-dropping or
$\Yy_\alpha$ is $\Yy_\gamma$-terminally-model-dropping,
which by induction and the previous paragraph, suffices.
Consider property
\ref{item:inflation_iteration_commutativity}; suppose
$\Yy_{\alpha+1}$ is $\Yy_\gamma$-terminally-non-model-dropping.
So $b^{\Ttvec\rest[\gamma,\alpha+1)}$ does not drop in model, $\Yy_{\alpha+1}$
is
$\Yy_\alpha$-terminally-non-model-dropping and $\Yy_{\alpha}$ is
$\Yy_\gamma$-terminally-non-model-dropping,
and by Lemma \ref{lem:inflation_commutativity},
$\pi_{\gamma,\alpha+1}=\pi_{\alpha,\alpha+1}\com\pi_{\gamma\alpha}$.
Property \ref{item:inflation_iteration_commutativity} now follows by induction
and line (\ref{eqn:inflation_lift_comm}).

This verifies all the properties at the end of round $\alpha$.

Now let $\eta<\Omega$ be a limit ordinal, and suppose we have defined
\[
\left<\Tt_\beta,O_\beta,n_\beta,\Yy_\beta,\srsigma_\beta\right>_{\beta<\eta},\]
and maintained the inductive hypotheses
through all $\alpha<\eta$. We need to define $\Yy_\eta$ and $\srsigma_\eta$ and
see that the
inductive hypotheses hold at $\alpha=\eta$ (of course, $O_\eta$ are $n_\eta$
will be determined).

We set $\Yy_\eta$ to be the comparison inflation of
$\mathscr{T}=\{\Yy_\alpha\}_{\alpha<\eta}$
(see Definition \ref{dfn:min_inf}).
This exists and $\lh(\Yy_\eta)$ is a successor $\xi+1<\Omega$,
by Lemma \ref{lem:min_inf} and because $\eta<\Omega$ and each
$\lh(\Yy_\alpha)<\Omega$.
Also by Lemma \ref{lem:min_inf}, there is $\eps<\eta$ such that $\Yy_\eta$ is
$\Yy_\eps$-terminally-non-dropping;
let $\eps_0$ be the least such $\eps$.
By Lemma \ref{lem:inflation_commutativity} then,
$\Yy_\eta$ is $\Yy_\delta$-terminally-non-dropping for all
$\delta\in[\eps_0,\eta)$.
This gives property \ref{item:eventually_terminally-non-dropping}.
Now let $\eps$ be least such that $\Yy_\eta$ is
$\Yy_\eps$-terminally-non-\emph{model}-dropping.
Again by Lemma \ref{lem:inflation_commutativity}, $\Yy_\eta$ is
$\Yy_\delta$-terminally-non-model-dropping
for each $\delta\in[\eps,\eta)$, and $\Yy_{\delta_1}$ is
$\Yy_{\delta_0}$-terminally-non-model-dropping
for all $\delta_0,\delta_1$ such that $\eps\leq\delta_0<\delta_1<\eta$.
So by induction and property \ref{item:Ttvec_model_drop}, for all such
$\delta_i$,
$b^{\Tt_{\delta_0}}$ does not drop
in model and
$\pi_{\delta_0\delta_1}\com\srsigma_{\delta_0}=\srsigma_{\delta_1}\com
i^{\Ttvec\rest[\delta_0,\delta_1)}$.
Also, by Lemma \ref{lem:inflation_commutativity}, for all such $\delta_i$,
\[
\pi_{\delta_0\eta}=\pi_{\delta_1\eta}\com\pi_{\delta_0\delta_1}:M^{\Yy_{\delta_0
}}_\infty\to M^{\Yy_\eta}_\infty. \]

Therefore $O_\eta=M^{\Ttvec\rest\eta}_\infty$ and
$n_\eta=\udeg^{\Ttvec\rest\eta}(\infty)$
are well-defined, and we (can and do) define
\[ \srsigma_\eta:O_\eta\to M^{\Yy_\eta}_\infty \]
in the unique manner preserving commutativity, that is,
\[ \srsigma_\eta\com
i^{\Ttvec\rest[\delta,\eta)}=\pi_{\delta,\eta}\com\srsigma_\delta \]
for all $\delta\in[\eps,\eta)$. Then $\srsigma_\eta$ is a nice $\udash
n_\eta$-lifting
 embedding, and $O_\eta$ is wellfounded. It is now easy to verify properties
\ref{item:lim_stack_N_0}--\ref{item:inflation_iteration_commutativity}.

Finally suppose we have defined $\left<\Tt_\alpha\right>_{\alpha<\Omega}$.
Then because $\cof(\Omega)>\om$, we get that  for all sufficiently large
$\alpha<\Omega$,
$b^{\Tt_\alpha}$ does not drop,
and $M^\Ttvec_\infty$ is wellfounded, so player II has won.

This completes the proof of Theorem \ref{thm:stacks_iterability}.
\hfill\qedsymbol

\begin{dfn}
\label{dfn:W(Tvec)_etc}\index{$\srsigma^\Sigma$}
\index{$\Ww^\Sigma$}\index{$\Sigma^\stk$}
Given $\Ttvec=\left<\Tt_\beta\right>_{\beta<\alpha}$, etc, satisfying
\ref{item:lim_stack_N_0}--\ref{item:inflation_iteration_commutativity}, with
$\lh(\Ttvec)=\alpha<\Omega$,
we define
$\Ww^\Sigma(\Ttvec)=\Yy_\alpha$
and
$\srsigma^\Sigma(\Ttvec)=\srsigma_\alpha:M^\Ttvec_\infty=O_\alpha\to
M^{\Yy_\alpha}_\infty$, with notation as above.
(We don't try to define these things if $\lh(\Ttvec)=\Omega$; there seems to be
no
clear manner in which to define
$\Yy_\Omega$, because $\Sigma$ is not sufficiently powerful.)
Given also a tree $\Tt$ of length $\leq\Omega$, according to the strategy
$\bar{\Upsilon}$ for
round $\alpha$ defined above, we define $\Ww^\Sigma(\Ttvec\conc\Tt)$
to be the corresponding $\udash m$-maximal tree $\Yy$ on $M$
(which, in the construction of $\bar{\Upsilon}$,
was denoted $\Xx$ or $\Xx^\Omega$).\end{dfn}
\begin{dfn}\label{dfn:Sigma^stk}\index{$\Sigma^\stk$}
We write $\Sigma^{\stk}$ for the stacks strategy $\Sigma^*$ induced by
$\Sigma$,
defined above.
\end{dfn}

\subsubsection{Length $\om$ stacks of finite
trees}\label{subsubsec:stacks_of_finite_trees}

\begin{proof}[Sketch of Proof of Theorem \ref{thm:stacks_of_finite_trees}]
We just consider the fine version. As in Lemma
\ref{lem:sub-optimal_reduces_to_optimal} we can naturally derive
the full strategies from strategies for optimal stacks.
So  we can restrict our attention to
 optimal stacks.

Let $M$ be $\udash m$-sound and $\Sigma$ be an $(\udash
m,\Omega+1)$-strategy for $M$. Then player $\Two$
wins $\Gg_{\fin,\mathrm{opt}}(M,\udash m,\Omega+1)$\footnote{This
game is just like $\Gg_\fin$ but player I may not make artificial drops.} by
using the strategy
defined for player $\Two$ in
the iteration game
for stacks of length ${<\om}$ in the previous proof.
Because the normal trees in the stack are finite, there are no branches (of the
first tree $\Tt$ in a stack of length $2$) to consider, so
no condensation of $\Sigma$ is required to keep the process going. And given a
stack
$\Ttvec=\left<\Tt_n\right>_{n<\om}$, consisting of finite trees, the desired
conclusions regarding
$\Ttvec$ also follow from the limit case of the previous proof, again because
we
are only inflating
finite trees $\Tt_n$. (In a stack $(\Tt,\Uu)$ of length $2$, $\Uu$ could have
arbitrary length $\leq\Omega+1$,
and also the comparison inflation $\Tt_\om$ of the stack
$\left<\Tt_n\right>_{n<\om}$
could seemingly have arbitrary length ${<\Omega}$, but this is no problem.)

Now suppose that $M$ is MS-indexed, $m$-sound, and $\Sigma$ is an
$(m,\Omega+1)$-strategy for $M$.
If $M$ is type 3 and $m<\om$ let $m'=m+1$; otherwise let $m'=m$.
Given a finite $m$-maximal tree $\Tt$ on $M$, let $\Tt'$ be the corresponding
$\udash m'$-maximal tree on $M$ (with $\infty$
non-$\Tt'$-special),
so $(M^{\Tt'}_\infty)^\pm=M^\Tt_\infty$. Thus, if $M^\Tt_\infty$
is
type 3 and $\deg^\Tt(\infty)<\om$
then $\udeg^{\Tt'}(\infty)=\deg^\Tt(\infty)+1$; otherwise
$\udeg^{\Tt'}(\infty)=\deg^\Tt(\infty)$.
Moreover, $b^{\Tt'}$ drops iff $b^\Tt$ drops, and if non-dropping then
\[ i^{\Tt}=i^{\Tt'}\rest M^\sq.\]
This generalizes to $m$-maximal finite stacks $\Ttvec$ on $M$ consisting of
finite trees $\Tt_n$,
giving a $\udash m'$-maximal finite stack $\Ttvec'$ with analogous
correspondence.

So given such a finite stack $\Ttvec$,
the $\udash$iteration strategy for $M^{\Ttvec'}_\infty$ given above
induces a standard iteration strategy for $M^\Ttvec_\infty$; so player II wins
$\Gg_{\fin,\mathrm{opt}}(M,m,\Omega+1)$.
Now let $\Ttvec$ have length $\om$, consisting of finite trees, and $\Ttvec'$
be its translation to a
$\udash m'$-maximal stack.
Let $O_0=M$ and $O_n=M^{\Tt_{n-1}}_\infty$ for $n>0$; likewise for $O'_n$.
Since for all large $n$, $b^{\Tt'_n}$ does not drop, neither does $b^{\Tt_n}$,
and because $i^{\Tt_n}=i^{\Tt'_n}\rest(O_n^\sq)$,
we get that $M^{\Ttvec}_\infty$ is wellfounded. (If $O_n$ is non-type 3 for
large $n$, this is trivial as $M^{\Ttvec'}_\infty=M^{\Ttvec}_\infty$.
Otherwise, because $(O_n')^\pm=O_n$
and the iteration maps correspond, we have
 $(M^{\Ttvec'}_\infty)^\pm=M^{\vec{\Tt}}_\infty$,
 which is
wellfounded.)
\end{proof}

\begin{rem}
We are not sure whether one might extend the preceding theorem to stacks
$\left<\Tt_\alpha\right>_{\alpha<\lambda}$
of finite trees $\Tt_\alpha$ of arbitrary transfinite length $\lambda$.
The method used so far runs into difficulties when $\lambda=\om+1$,
because $\Yy_\om$ can be infinite, so that, at least superficially, one seems
to
need inflation condensation
in order to continue. However, the stack $\left<\Tt_n\right>_{n<\om}$ is only a
linear stack of finite iterations,
so the possible branch choices might be much more limited. Of course in some
situations one can
just use an absoluteness argument to show that $M^\Ttvec_\infty$ is wellfounded
for any $\lambda$
(when each $\Tt_\alpha$ is finite). However particularly when $M$ is active,
this is not so easy.
\end{rem}

\subsubsection{Variants for partial
strategies}\label{subsubsec:variants_partial}\index{partial strategy}

 We now state a version of Theorem \ref{thm:stacks_iterability} for partial
strategies.
 Typical examples would be a normal strategy $\Sigma$ for $M$
for nice iteration trees
 (that is, in which all extenders $E=E^\Tt_\alpha$ are total,
 with $\nu_E=\strength(E)$ is inaccessible in $M^\Tt_\alpha$),
 or for trees which are based on $M|\delta$, where $\delta$ is some
$M$-cardinal. In this section we restrict our attention to optimal stacks, but
this is only for simplicity, and one can of course consider stacks with
artificial drops.

\begin{dfn}\label{dfn:partial_Sigma^*}
\index{$D$-$(\udash m,\theta)$-strategy}
\index{good}
\index{$(\Sigma,D)$-good}
\index{weakly $(\Sigma,D)$-good}
\index{optimal stack}
Let $M,m,\mathscr{T}$ be as in
Definition \ref{dfn:strategy_classes}\ref{item:pre-inf_wcpm} or
\ref{dfn:strategy_classes}\ref{item:pre-inf_rm-sound}
(so in particular, $\mathscr{T}$ is a class of putative trees).
Let $D\sub\mathscr{T}$ be
closed under initial segment.
 Let $\Omega>\om$ be regular.
We say $\Sigma$ is a \dfnemph{$D$-$(\udash
m,\Omega+1)$-strategy}
iff $\Sigma$ is a function
such that $\dom(\Sigma)$ is exactly the set of
 trees in $D$ of limit length
$\leq\Omega$ which are via $\Sigma$,
for all $\Tt\in\dom(\Sigma)$, we have
$\Tt\conc\Sigma(\Tt)\in D$, and all putative trees in $D$
via $\Sigma$ have
wellfounded well-defined models.
We define \dfnemph{$D^*$-optimal-$(\udash
m,\Omega,\Omega+1)^*$-strategy} analogously
(for some class $D^*$ of putative optimal $\udash m$-maximal stacks on $M$).
Likewise for $M,m,\mathscr{T}$ as in
\ref{dfn:strategy_classes}\ref{item:pre-inf_m-sound_MS}.

 Let $\Sigma$ be a conveniently inflationary partial
$\mathscr{T}$-strategy for $M$.
Suppose $\Sigma$ is a $D$-$(\udash m,\Omega+1)$-strategy.
 Define $D'$ and a $D'$-optimal-$(\udash m,\Omega,\Omega+1)^*$-strategy
$\Sigma^*$ for $M$ inductively as
follows.
 Let $\Ttvec=\left<\Tt_\alpha\right>_{\alpha<\lambda}$ be an optimal stack on
$M$, where $\lambda<\Omega$.
Say $\Ttvec$ is \dfnemph{weakly $(\Sigma,D)$-good} iff there is
 a tree $\Yy=\Ww^\Sigma(\Ttvec)$, defined as in
 Definition \ref{dfn:W(Tvec)_etc} (so as in the
proof
of Theorem \ref{thm:stacks_iterability}),
with $\Yy\rest(\Omega+1)$ in $D$ and via $\Sigma$
(recall that if $\lambda=\alpha+1$ and $\lh(\Tt_\alpha)=\Omega+1$,
we can have $\lh(\Yy)>\Omega+1$).
 We say that $\Ttvec$ is \dfnemph{$(\Sigma,D)$-good} iff $\Ttvec$ is weakly
$(\Sigma,D)$-good
 and $\Ttvec\rest\eta\conc\left<\Tt_\eta\rest\beta\right>$
 is weakly $(\Sigma,D)$-good for every $\eta<\lambda$ and
$\beta\leq\lh(\Tt_\eta)$.
 Note that if $\Ttvec$ is $(\Sigma,D)$-good then $\Yy=\Ww^\Sigma(\Ttvec)$
 is uniquely determined just as before,
 and $M^{\Ttvec}_\infty$ exists and is wellfounded
 and  embedded into $M^\Yy_\infty$.

Now define $\Sigma^*$ as the partial strategy such that
$\Ttvec\conc\Uu\in\dom(\Sigma^*)$
 (where $\Ttvec$ is a stack of normal trees and $\Uu$ is normal)
 iff there is $b$ such that $\Ttvec\conc(\Uu\conc b)$ is $(\Sigma,D)$-good;
 in this case set $\Sigma^*(\Ttvec\conc\Uu)=b$. (And define
$D'=\dom(\Sigma^*)$.)

Now suppose instead that $M,m,\mathscr{T}$ are as in
Definition \ref{dfn:strategy_classes}\ref{item:pre-inf_m-sound_MS},
and let $C\sub\mathscr{T}$ be closed under initial segment. Let
$\Gamma$ be an inconveniently inflationary partial $\mathscr{T}$-strategy,
and suppose $\Gamma$ is a $C$-$(m,\Omega+1)$-strategy.
Let $\Sigma$ be the partial $\udash m'$-maximal strategy
corresponding to $\Gamma$ (as in Remark
\ref{rem:partial_strategy_conversion}\footnote{Because $\Gamma$ is a
partial
$\mathscr{T}$-strategy,
all trees via $\Gamma$ are $M$-$\udash$wellfounded
by Footnote \ref{ftn:Sigma_partial_pre-inf}, so
 Remark \ref{rem:partial_strategy_conversion} applies.}), and $D$ the class of
initial segments unravellings of
 trees via $\Sigma$ (note these unravellings exist), so $\Sigma$ is a
$D$-$(\udash m,\Omega+1)$-strategy.
Note that all successor length trees via $\Sigma$
can be arbitrarily finitely extended (with $\udash m'$-maximal
extensions), because trees via $\Gamma$ can be
finitely extended
and by Lemma \ref{lem:tree_conversion}.
Let $\Sigma^*$ be as above.
Then we extend $\Gamma$ to the partial optimal stacks strategy $\Gamma^*$,
which is just the partial $\udash$strategy determined by
$\Sigma^*$.\footnote{All stacks
$\vec{\Tt}=\left<\Tt_\alpha\right>_{\alpha<\lambda}$
via $\Sigma^*$ such that $\Tt_\alpha$ is unravelled for all $\alpha+1<\lambda$,
are everywhere unravelable, since all successor length trees via $\Sigma$ are
arbitrarily
finitely extendible.}
\end{dfn}

Now for example we have:

\begin{tm}\label{thm:stacks_iterability_partial}
Let $\Omega>\om$ be regular.
 Let $\Sigma$ be an inflationary\footnote{Recall this allows both
 conveniently and inconveniently inflationary.} partial
strategy for $M$.
Suppose either
 \begin{enumerate}[label=\tu{(}\roman*\tu{)}]
  \item\label{item:nice_trees} $M\sats\ZFC$ and $D$ is the the class of
normal
nice putative trees on $M$, or
  \item\label{item:based_on_M|delta} $\Sigma$ is convenient \tu{(}resp.,
inconvenient\tu{)} and there is some $M$-cardinal $\delta<\rho_0^M$ such that
$D$ is the
class of
$\udash m$-maximal \tu{(}resp., $m$-maximal\tu{)}
putative  trees on $M$ which are based on $M|\delta$.
  \end{enumerate}
Suppose $\Sigma$ is a $D$-$(\udash
m,\Omega+1)$-strategy \tu{(}resp., $D$-$(m,\Omega+1)$\tu{)}.

Then $\Sigma^*$ is a $D^*$-optimal-$(\udash m,\Omega,\Omega+1)^*$-strategy
\tu{(}resp., $(m,\Omega,\Omega+1)^*$\tu{)} for
$M$,
where either:
\begin{enumerate}[label=\tu{(}\roman*\tu{)}']
 \item $D^*$ is the class of optimal putative
 stacks of normal nice (putative) trees on
$M$, or
 \item $D^*$ is the class of optimal $\udash m$-maximal \tu{(}resp.,
$m$-maximal\tu{)}
putative stacks on $M$ which are based
on
$M|\delta$,
\end{enumerate}
according to whether \ref{item:nice_trees} or \ref{item:based_on_M|delta} above
holds.
\end{tm}
\begin{proof}
 This is a corollary to the proof of Theorem \ref{thm:stacks_iterability}.
 One simply notes that $\Yy=\Ww^\Sigma(\Ttvec)$ is in $D$ (for the relevant
$\Ttvec$).
 In the nice tree case, this is because all extenders used are copied from some
extender
 used in some $\Ttvec$, which is therefore nice in the model it is taken from
(and note that all trees are nowhere dropping in this case).
 In the other case, it uses such copying, and also the commutativity and
correspondence of drops
 described in the proof.
\end{proof}

\section{Properties of $\Sigma^\stk$ and (weak) Dodd-Jensen}\label{sec:npc}

In this final section we show that if $\Sigma$ has certain extra properties,
then the stacks strategy $\Sigma^\stk$ inherits certain extra properties itself.
We then give a couple of applications of the theorems to absoluteness of
iterability
and constructing normal strategies with weak Dodd-Jensen
without $\DC$.

\subsection{Normal pullback consistency for $\Sigma^\stk$}

We record some notation for some standard notions:
\begin{dfn}\label{dfn:npc}\index{$\Sigma^{\mathrm{nm}}$}
Let $\Omega>\om$ be regular.
Let $\Gamma$ be a $(\udash m,\Omega,\Omega+1)^*$-strategy,
with first round $\Gamma^{\mathrm{nm}}$ ($\mathrm{nm}$
for \emph{normal}), so $\Gamma^{\mathrm{nm}}$ is a $(\udash
m,\Omega+1)$-strategy).
Let $\Ttvec$ be a stack via $\Gamma$ of length ${<\Omega}$, with each component
normal tree of length ${<\Omega}$.
Let $N=M^\Ttvec_\infty$ and $n=\udeg^\Ttvec(\infty)$.
Then $\Gamma^{\mathrm{nm}}_\Ttvec$ denotes the induced $(\udash
n,\Omega+1)$-strategy
for $N$; that is,
$\Gamma^{\mathrm{nm}}_\Ttvec(\Uu)=\Gamma(\Ttvec\conc\Uu)$
for $\udash n$-maximal  trees  $\Uu$ of length $\leq\Omega$.
If $b^\Ttvec$ does not drop in model or degree then
$\Gamma^{\mathrm{nm}}_{\leftarrow\Ttvec}$
denotes the $i^\Ttvec$-pullback of $\Gamma^{\mathrm{nm}}_\Ttvec$,
a $(\udash m,\Omega+1)$-strategy for $M$.
We say that $\Gamma$ is \dfnemph{normally pullback consistent}\index{normal
pullback consistency}\index{pullback consistency}
iff for all such $\Ttvec$, if $b^\Ttvec$ does not drop in model or degree
then
$\Gamma^{\mathrm{nm}}_{\leftarrow\Ttvec}=\Gamma^{\mathrm{nm}}$.
\end{dfn}

\begin{rem}
Given sufficient condensation properties of $\Sigma$,
the author expects that one should be able to deduce good condensation
properties of $\Sigma^\stk$,
such as pullback consistency (not just normal pullback consistency).
In the proof to follow, of the fact that $\Sigma^\stk$ is normally pullback
consistent,
assuming sufficient condensation for $\Sigma$,
we consider a normal tree $\Tt$ via $\Sigma$, such that $b^\Tt$ does not drop
in
model or degree,
and letting $N=M^\Tt_\infty$, we lift a normal tree $\bar{\Uu}$ on $M$ to
$\Uu=i^\Tt\bar{\Uu}$ on $N$,
with $\Uu$ according to $(\Sigma^\stk)^{\mathrm{nm}}_\Tt$. Na\"ively, one would
like to exhibit a tree embedding $\Pi$
from $\bar{\Uu}$ into $\Xx=\Ww^\Sigma_\Tt(\Uu)$. The natural na\"ive candidate
for $\Pi$ would be that with $\gamma_\alpha=\lambda^\alpha$
and $\delta_\alpha=\zeta^\alpha$, where
$\left<\lambda^\alpha,\zeta^\alpha\right>$ arise
from the inflation $\Tt\inflatearrow\Xx$.
It is easy to see that this $\Pi$ can fail to be a bounding tree embedding,
so inflation condensation does not seem to suffice.
In fact, it can fail to be a tree embedding at all, because the requirement
that
$\gamma_\alpha\leq^\Xx\delta_\alpha$
can fail. But this can only fail in a special manner, and by slightly
generalizing the definition
of \emph{tree embedding}, and demanding condensation of $\Sigma$ with respect
to
this more general notion,
our proof goes through. We now describe the generalization.
In the end it is actually more convenient to generalize the demands of
\emph{normality} for the larger tree $\Xx$,
and retain the demand that $\gamma_\alpha\leq^\Xx\delta_\alpha$, so this is how
we proceed.
\end{rem}

\begin{dfn}\index{essential}
Let $\Xx$ be an iteration tree on a seg-pm $M$.
We say that $\Xx$ is \dfnemph{essentially $\udash m$-maximal}
iff $\Xx$ satisfies the requirements of $\udash m$-maximality except that we
replace the requirement
\[ \In(E^\Xx_\alpha)\leq\In(E^\Xx_\beta)\text{ for all
}\alpha+1<\beta+1<\lh(\Xx) \]
with the requirement that for all $\alpha+1<\beta+1<\lh(\Xx)$, either
\begin{enumerate}[label=--]
 \item  $\In(E^\Xx_\alpha)\leq\In(E^\Xx_\beta)$, or
 \item $E^\Xx_\alpha$ is of superstrong type and
$\lambda(E^\Xx_\alpha)<\In(E^\Xx_\beta)$.
\end{enumerate}
Recall from \S\ref{subsec:terminology} that if $M$ is $\lambda$-indexed
then every extender in $\es_+(M)$ has superstrong type,
so in this case, we just require in general that
$\lambda(E^\Xx_\alpha)<\In(E^\Xx_\beta)$.
\end{dfn}

\begin{rem}
 It is easy to see that a $\udash m$-maximal strategy $\Sigma$
 yields a corresponding essentially $\udash m$-maximal strategy
\index{$\Sigma_{\mathrm{ess}}$} $\Sigma_{\mathrm{ess}}$;
 trees $\Xx$ via $\Sigma_{\mathrm{ess}}$
are those for which there is  $\Xx'$ via $\Sigma$
which uses exactly
 those extenders $E$ such that $E=E^\Xx_\alpha$
 for some $\alpha$ such that $\In(E^\Xx_\alpha)\leq\In(E^\Xx_\beta)$
 for all $\beta>\alpha$, and which has corresponding branches.
\end{rem}

\begin{dfn}\label{dfn:plus-strong_hc}\index{essential}\index{
$\Pi:\Tt\hookrightarrow_{\mathrm{ess}}\Xx$}
 Let $\Tt$ be $\udash m$-maximal and $\Xx$ be essentially $\udash
m$-maximal.\footnote{Note that
 while $\Xx$ is only \emph{essentially} $\udash m$-maximal, we still demand
that
$\Tt$
 be (fully) $\udash m$-maximal.}
 An \dfnemph{essential tree embedding}
$\Pi:\Tt\hookrightarrow_{\mathrm{ess}}\Xx$
 is a system $\Pi=\left<I_\alpha\right>_{\alpha<\lh(\Tt)}$
 satisfying the requirements of a tree embedding, and with corresponding
notation,
 such that whenever $\xi<\eta$ but $\In(E^\Xx_\xi)>\In(E^\Xx_\eta)$,
 then:
 \begin{enumerate}[label=--]
  \item $\eta=\xi+1$, and
  \item there is $\alpha+1<\lh(\Tt)$ such
that $\gamma_\alpha<\delta_\alpha=\xi+1$
 (so $E^\Xx_{\xi+1}=E^\Xx_{\delta_\alpha}$
 is copied from $\Tt$).
 \end{enumerate}

 We say that a  $\udash m$-maximal iteration strategy $\Sigma$ has
\dfnemph{plus-strong hull condensation}\index{plus-strong hull condensation}
 iff whenever $\Xx$ is via $\Sigma_{\mathrm{ess}}$ and
$\Pi:\Tt\hookrightarrow_{\mathrm{ess}}\Xx$,
 then $\Tt$ is via $\Sigma$.
\end{dfn}

\begin{rem}
 We pause to give a simple example of an essential tree embedding which is not
a
tree embedding,
 and which gives a fairly general illustration of how these arise in the proof.

 Let $M\sats\ZFC$ be a mouse. Let $E\in\es^M$ and $\mu,\kappa$ be such that
 \[ \crit(E)<\mu<\kappa<\nu_E \]
 and $\nu_E$ is an $M$-cardinal, and letting $U=\Ult(M,E)$, such that there is
$F\in\es^U$
 with $\In(E)<\In(F)$ and $\crit(F)=\kappa$ and $F$ has superstrong type.
 Suppose also that $\mu$ is $M$-measurable and there is $G\in\es^M$ with
$\kappa<\In(G)<(\kappa^+)^M$.

 Let $\Tt$ be the normal tree using $E^\Tt_0=E$ and $E^\Tt_1=F$,
 so $0=\pred^\Tt(2)$ and $N\eqdef M^\Tt_2=\Ult(M,F)$.

 Now let $D\in\es^M$ be a normal measure with $\crit(D)=\mu$, so
$\In(D)<\kappa$.
 Let $\bar{\Uu}$ be the tree on $M$ using $E^{\bar{\Uu}}_0=D$ and
$E^{\bar{\Uu}}_1=i^M_D(G)$.

 Let $\Uu=i^\Tt\bar{\Uu}$. So $\Uu$ is the tree on $N$ using $E^\Uu_0=D$ (as
$\crit(i^\Tt)=\kappa>\mu$)
 and $E^\Uu_1=i^N_D(i^\Tt(G))$. Write $N_\alpha=M^\Uu_\alpha$.

 Now let $\Xx=\Ww_\Tt(\Uu)$. Write $\left<\lambda^\alpha,\zeta^\alpha\right>$
 for those ordinals arising from the inflation $\Tt\inflatearrow\Xx$.
 We have $\Xx^0=\Tt$, with $N=M^{\Xx^0}_\infty$, and $\tau_0:N\to
M^{\Xx^0}_\infty$ is $\tau_0=\id$.
 Since $\In(D)<\In(E^\Tt_0)$,
 we have $\lambda^0=\zeta^0=0$ and $E^\Xx_{\zeta^0}=D$.
 So $\lambda^1=1$.
 Then $\Xx^1$ is the tree with $E^{\Xx^1}_0=D$, followed by copying $\Tt=\Xx^0$.
 So $E^{\Xx^1}_1=i^M_D(E)$,
 and since $\crit(E)<\mu=\crit(D)$, we have $0=\pred^{\Xx^1}(2)$ (so note that
$1=\lambda^1\not\leq^{\Xx^1} 2$)
 and $M^{\Xx^1}_2=\Ult(M,E^{\Xx^1}_1)$, and letting
 \[ \psi_1:U=M^{\Xx^0}_1\to M^{\Xx^1}_2 \]
 be the copy map, then $E^{\Xx^1}_2=\psi_1(E^{\Xx^0}_1)=\psi_1(F)$.
 Then since $\kappa=\crit(F)$ and $\In(D)<\kappa\leq\psi_1(\kappa)$ (actually,
in this particular example, $\psi_1(\kappa)=\kappa$),
 and $\psi_1(\kappa)<\nu(E^{\Xx^1}_1)$, therefore
$\pred^{\Xx^1}(3)=1=\lambda^1$.
 So $M^{\Xx^1}_3=\Ult(M^{\Xx^1}_1,\psi_1(F))$, and $\lh(\Xx^1)=3+1$, so this
completes $\Xx^1$.

 We have $\psi_{01}:N=M^{\Xx^0}_2\to M^{\Xx^1}_3$ is the final copy map,
 and $\nrsigma_1:N_1\to M^{\Xx^1}_3$ is as defined in the construction of
$\Sigma^\stk$,
 and $\tau_1\com i^\Uu_{01}=\psi_{01}$.
 Note that
 \[
E^\Xx_{\zeta^1}=\nrsigma_1(E^\Uu_1)=\nrsigma_1(i^\Uu_{01}(i^\Tt(G)))=\psi_{01}
(i^\Tt(G))=i^{\Xx^1}(G)\]
and so
 \[ \lambda(E^{\Xx^1}_2)<\In(E^\Xx_{\zeta^1})<\In(E^{\Xx^1}_2).\]
 So $\zeta^1=2$, so $\lambda^1\not\leq^{\Xx^1}\zeta^1$. So if we set
$\gamma_1=\lambda^1$
 and $\delta_1=\zeta^1$, we wouldn't have a tree embedding
$\bar{\Uu}\hookrightarrow\Xx$.
 However, by replacing $\Xx$ with the essentially normal tree $\wt{\Xx}$
 where $E^{\wt{\Xx}}_2=E^{\Xx^1}_2$ and then $E^{\wt{\Xx}}_3=E^\Xx_{\zeta^1}$,
 we do get an essential tree embedding
$\bar{\Uu}\hookrightarrow_{\mathrm{ess}}\wt{\Xx}$.

 In the  proof below we will actually index the tree $\wt{\Xx}$ differently to
this, however.
 In the situation above we would include two indices $(\zeta^1,0)$ and
$\zeta^1$, with $(\zeta^1,0)<\zeta^1$,
 and set $E^{\wt{\Xx}}_{(\zeta^1,0)}=E^{\Xx^1}_2$ and
$E^{\wt{\Xx}}_{\zeta^1}=E^{\Xx}_{\zeta^1}$.
\end{rem}

\begin{tm}\label{thm:Sigma_shc_implies_stacks_npc}
Let $\Sigma,\Omega$ be as in Theorem \ref{thm:stacks_iterability} \tu{(}so
$\Sigma$ is
regularly $(\Omega+1)$-total\tu{)}.
Suppose that $\Sigma$ has plus-strong hull condensation \tu{(}see
\ref{dfn:plus-strong_hc}\tu{)}.
Then $\Sigma^\stk$ is normally pullback consistent.
\end{tm}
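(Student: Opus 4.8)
The goal is: given $\Sigma$ with plus-strong hull condensation, and given a stack $\Ttvec$ via $\Sigma^\stk$ with $b^\Ttvec$ non-dropping, the $i^\Ttvec$-pullback of $(\Sigma^\stk)^{\mathrm{nm}}_\Ttvec$ equals $\Sigma^\stk{}^{\mathrm{nm}} = \Sigma$. By induction on $\lh(\Ttvec)$ and by the commutativity machinery of \S\ref{sec:inf_comm} (in particular Lemma \ref{lem:comp_tree_emb} and Lemma \ref{lem:inflation_commutativity}), it suffices to handle the case $\Ttvec = \Tt$ a single normal tree via $\Sigma$ with $b^\Tt$ non-dropping; the general case follows by composing the normal-realization maps along the stack and using that inflations compose. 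So fix such a $\Tt$, let $N = M^\Tt_\infty$, $n = \deg^\Tt(\infty)$, let $\bar{\Uu}$ be $\udash m$-maximal on $M$, let $\Uu = i^\Tt\bar{\Uu}$ be its copy to $N$ via $i^\Tt$, and let $\Xx = \Ww^\Sigma_\Tt(\Uu)$ with the associated objects $(t,C,C^-,f,\ldots)$ and $\langle \lambda^\alpha,\zeta^\alpha,\Xx^\alpha,\nrsigma_\alpha\rangle$ from \S\ref{subsubsec:stacks_lh_2}. I must show $\bar{\Uu}$ is via $\Sigma$ and that $(\Sigma^\stk)^{\mathrm{nm}}_{\leftarrow\Tt}(\bar{\Uu}) = \Sigma(\bar{\Uu})$ at each limit stage.

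The core of the argument is to exhibit an essential tree embedding $\Pi : \bar{\Uu} \hookrightarrow_{\mathrm{ess}} \wt{\Xx}$, where $\wt{\Xx}$ is an essentially $\udash m$-maximal re-indexing of $\Xx$ via $\Sigma_{\mathrm{ess}}$. The naive guess $\gamma_\alpha = \lambda^\alpha$, $\delta_\alpha = \zeta^\alpha$ fails because one can have $\lambda^\alpha \not\leq^\Xx \zeta^\alpha$ (the remark preceding the theorem gives exactly such a configuration, arising when a $\Tt$-copied superstrong extender is used between $\lambda^\alpha$ and $\zeta^\alpha$). The fix is to build $\wt{\Xx}$ by splitting the relevant nodes: where $\Xx$ uses a $\Tt$-copied extender $E^\Xx_{\delta_\beta}$ of superstrong type followed immediately by the $\Tt$-inflationary extender $E^\Xx_{\zeta^\alpha} = \nrsigma_\alpha(E^{\bar{\Uu}}_\alpha)$, in $\wt{\Xx}$ I put both a "split" index and the inflationary index with the split index below it, exactly as in the last displayed paragraph of the remark. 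Then I set $\gamma_\alpha = \lambda^\alpha$ and let $\delta_\alpha$ be the inflationary index, and verify by induction on $\lh(\bar{\Uu})$ that $\Pi$ satisfies the requirements of an essential tree embedding: the key facts are (i) the tree order ${<^{\bar{\Uu}}} = {<^{\Xx/\Tt}}$ on $\lh(\bar{\Uu})$ (by \S\ref{subsubsec:stacks_lh_2}, condition N2, since $\Uu = i^\Tt\bar{\Uu}$ and $\bar{\Uu}$ have the same tree order), (ii) $E^{\bar{\Uu}}_\alpha$ is copied, under $\nrsigma_\alpha \circ i^\Tt$ (which agrees with the $\Pi$-copy maps $\pi_\alpha$ below the relevant support by the agreement conditions N5–N6), to $E^{\wt{\Xx}}_{\delta_\alpha}$, and (iii) the only failures of monotone index-increasing in $\wt{\Xx}$ occur at the split nodes, which are of the allowed form. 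This uses Lemma \ref{lem:tree_embedding_copy} and the closeness/translatability arguments of \S\ref{sec:inflation} to propagate the embedding at successors, and the factor-tree bookkeeping of \S\ref{sec:factor_tree} (Lemmas \ref{lem:Pi^alpha,beta_is_tree_embedding}, \ref{lem:int_tree_embs_int_comm_extra_agmt}) to handle limits and commutativity.

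Once $\Pi : \bar{\Uu} \hookrightarrow_{\mathrm{ess}} \wt{\Xx}$ is established with $\wt{\Xx}$ via $\Sigma_{\mathrm{ess}}$, plus-strong hull condensation of $\Sigma$ immediately yields that $\bar{\Uu}$ is via $\Sigma$. For the branch-at-limit-stages part: at a limit $\eta < \lh(\bar{\Uu})$, the construction of $\Xx$ chooses $[0,\lambda^\eta)_{\Xx^\eta} = \Sigma(\Xx^\eta\rest\lambda^\eta)$, and then sets $[0,\eta)_\Uu = [0,\eta)_{\Xx^\eta/\Tt}$; I must check this forces $[0,\eta)_{\bar{\Uu}} = \Sigma(\bar{\Uu}\rest\eta)$. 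Since $\bar{\Uu}\rest\eta$ is via $\Sigma$ (by what we just showed, applied to $\bar{\Uu}\rest\eta$), and since $\Pi\rest\eta$ restricts to an essential tree embedding of $\bar{\Uu}\rest\eta$ into $\wt{\Xx}\rest(\text{appropriate cut})$ which extends (via the split-node construction and $\Pi$'s cofinality properties) to an essential tree embedding of $(\bar{\Uu}\rest\eta){}^\frown c$ into $\wt{\Xx}$ for $c$ the branch induced by $\Sigma(\Xx^\eta\rest\lambda^\eta)$, plus-strong hull condensation forces $c = \Sigma(\bar{\Uu}\rest\eta)$; and $c = [0,\eta)_{\bar{\Uu}}$ by construction, as required. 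Then normal pullback consistency is just the statement that $\Sigma(\bar{\Uu}) = \Sigma(\bar{\Uu})$, i.e., that the pullback strategy agrees with $\Sigma$, which is what the above delivers.

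\textbf{Main obstacle.} The delicate point is verifying that the re-indexed tree $\wt{\Xx}$ is genuinely essentially $\udash m$-maximal and via $\Sigma_{\mathrm{ess}}$ — i.e., that splitting at the superstrong-copy nodes produces exactly the allowed kind of index-decrease and nothing worse, and that the branches of $\wt{\Xx}$ (inherited from those of $\Xx$, which are via $\Sigma$) remain correct under $\Sigma_{\mathrm{ess}}$. This requires a careful case analysis tracking when $\lambda^\alpha \not\leq^\Xx \zeta^\alpha$ can occur, which by the closeness lemma and the superstrong/type-2 phenomenon (as in the footnote to \ref{dfn:tree_embedding}(\ref{item:embedding_agreement})) happens only in the controlled situation above. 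Correspondingly, the inductive verification that $\Pi$ is an essential tree embedding — especially condition \ref{item:pred_pres} (predecessor preservation) at the split nodes, where $\gamma_\alpha = \lambda^\alpha$ but $\pred^{\wt{\Xx}}$ of the node above $\delta_\alpha$ must land in the correct $\wt{\Xx}$-interval — is the technical heart of the proof. Everything else (the composition argument reducing arbitrary stacks to single trees, the limit-branch argument) is routine given the commutativity lemmas already proved.
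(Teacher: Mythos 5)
Your proposal is correct and follows essentially the same route as the paper: reduce to a single normal tree $\Tt$ via $i^\Xx=\srsigma\com i^{\Ttvec}$, re-index $\Xx=\Ww^\Sigma_\Tt(i^\Tt\bar{\Uu})$ into an essentially $\udash m$-maximal $\wt{\Xx}$ by inserting the displaced superstrong copied extenders at split indices $(\zeta^\alpha,0)$, exhibit the essential tree embedding with $I_\alpha=[\lambda^\alpha,\zeta^\alpha]_{\wt{\Xx}}$ by induction (the paper's induction carries exactly the commutativity facts $\nrsigma_\eta\com\varphi_\eta=i^{\Xx^\eta}_{\lambda^\eta\infty}\com\pi_\eta$ and the branch-membership facts $\zeta^\alpha_*\in b^{\Xx^\alpha}$ that you flag as the technical heart), and invoke plus-strong hull condensation. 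The only cosmetic slip is your description of which extender gets duplicated — it is the copied extender $E^{\Xx^\alpha}_{\zeta^\alpha}$ of the unravelling $\Xx^\alpha$ (not an extender actually used in $\Xx$) that is inserted at $(\zeta^\alpha,0)$ — but since you defer to the displayed construction this does not affect the argument.
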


\begin{proof}
Note that  in the definition of normal pullback consistency,
we assume that $b^{\Ttvec}$ does not drop in model or degree,
and in particular, $\Ttvec$ is optimal. Therefore we only
need consider optimal stacks in the present proof, and that
aspect of the construction of $\Sigma^{\stk}$.
It also easily suffices to consider the case that $\Sigma$ is a convenient
strategy.

Let $\Ttvec=\left<\Tt_\alpha\right>$ be a stack via $\Sigma^\stk$, such that
$b^\Ttvec$ exists
and does not drop in model or degree, $\lh(\Ttvec)<\Omega$ and
$\lh(\Tt_\alpha)<\Omega$ for each $\alpha$.
Let $\Xx=\Ww^\Sigma(\Ttvec)$.
Then $b^\Xx$ exists and does not drop in model or degree, $\lh(\Xx)<\Omega$,
and $i^\Xx=\srsigma\com i^{\Ttvec}$
where $\srsigma=\srsigma^\Sigma(\Ttvec)$;
recall $\srsigma:M^\Ttvec_\infty\to
M^\Xx_\infty$
(Definition \ref{dfn:W(Tvec)_etc}).
Now $\Sigma^{\mathrm{nm}}_\Ttvec$
is the $\srsigma$-pullback of $\Upsilon^\Sigma_\Xx$
(Definition \ref{dfn:W(T,U)_etc}).
Since $i^\Xx=\srsigma\com i^{\Ttvec}$, we may assume that
$\Ttvec$ consists of a single normal tree $\Tt$ (so then $\Xx=\Tt$).

So let $\Tt$, via $\Sigma$, have length ${<\Omega}$, and such that $b^\Tt$ does
not drop in model or degree.
Let $\bar{\Uu}$ be a $\udash m$-maximal tree on $M$, via
$\Sigma^{\mathrm{nm}}_{\leftarrow\Tt}$.
Let $\Uu=i^\Tt\bar{\Uu}$, so $\Tt\conc\Uu$ is via $\Sigma^\stk$.
Let $\Xx=\Ww_\Tt(\Uu)$. So
${<^{\bar{\Uu}}}$, ${<^\Uu}$ and ${<^{\Xx/\Tt}}$ are identical.

We will define an essentially $\udash m$-maximal tree $\widetilde{\Xx}$
whose corresponding $\udash m$-maximal tree is $\Xx$
(so $\widetilde{\Xx}$ is via $\Sigma_{\mathrm{ess}}$), and
exhibit an an essential tree embedding
$\Pi:\bar{\Uu}\hookrightarrow_{\mathrm{ess}}\widetilde{\Xx}$;
therefore (by plus-strong hull condensation)
$\bar{\Uu}$ is via $\Sigma$,
giving normal pullback consistency for this case.

Let $\iota=\lh(\bar{\Uu})$ and
$\left<\lambda^\eta,\zeta^\eta\right>_{\eta<\iota}$
be determined by the inflation $\Tt\inflatearrow\Xx$.

For convenience, we index $\widetilde{\Xx}$ with a set $D$ such that
\[ \lh(\Xx)\sub D\sub\lh(\Xx)\cup(\lh(\Xx)\cross\{0\}).\]
For $(\zeta,0)\in D$, we set $(\zeta,0)<\zeta$,
and $(\beta,0)<\beta<(\zeta,0)$ for all $\beta<\zeta$.
We will have $E^{\widetilde{\Xx}}_\zeta=E^\Xx_\zeta$
for every $\zeta+1<\lh(\Xx)$. The consecutive pairs $x<x'\in D$
such that $\In(E^{\widetilde{\Xx}}_x)>\In(E^{\widetilde{\Xx}}_{x'})$
will be exactly those of the form $x=(\zeta,0),x'=\zeta$ where $(\zeta,0)\in D$.

For each $\zeta<\lh(\Xx)$,
we put $(\zeta,0)\in D$  iff there is $\alpha+1<\lh(\bar{\Uu})$
such that
\[ \zeta=\zeta^\alpha\text{ and }\zeta^\alpha+1<\lh(\Xx^\alpha)
\text{ and }
\lambda(E^{\Xx^\alpha}_{\zeta^\alpha})<\In(E^\Xx_{\zeta^\alpha}).\]
Of course, whenever $\zeta^\beta+1<\lh(\Xx^\beta)$,
we have $\In(E^\Xx_{\zeta^\beta})<\In(E^{\Xx^\beta}_{\zeta^\beta})$.
So if $(\zeta^\alpha,0)\in D$ then $E^{\Xx^\alpha}_{\zeta^\alpha}$ has
superstrong type
(and note that this extender is copied from $\Tt$,
via the inflation $\Tt\inflatearrow\Xx$).
If $(\zeta^\alpha,0)\in D$ then define $\zeta_*^\alpha=\zeta^\alpha+1$;
otherwise define $\zeta_*^\alpha=\zeta^\alpha$.

If $(\zeta^\alpha,0)\in D$ then we set
$E^{\widetilde{\Xx}}_{(\zeta^\alpha,0)}=E^{\Xx^\alpha}_{\zeta^\alpha}$
and $E^{\widetilde{\Xx}}_{\zeta^\alpha}=E^\Xx_{\zeta^\alpha}$, so
$E^{\widetilde{\Xx}}_{(\zeta^\alpha,0)}$ has superstrong type and
\[
\lambda(E^{\widetilde{\Xx}}_{(\zeta^\alpha,0)})<\In(E^{\widetilde{\Xx}}_{
\zeta^\alpha})<\In(E^{\widetilde{\Xx}}_{(\zeta^\alpha,0)});\]
we will also have here that
$\lambda^\alpha<^{\Xx^\alpha}\zeta^\alpha+1\in b^{\Xx^\alpha}$
and $\lambda^\alpha<^{\widetilde{\Xx}}\zeta^\alpha$.

With this notation, the desired essential tree embedding
$\Pi:\bar{\Uu}\hookrightarrow_{\mathrm{ess}}\wt{\Xx}$ is that with
\[
I_\alpha=[\gamma_\alpha,\delta_\alpha]_{\widetilde{\Xx}}=[\lambda^\alpha,
\zeta^\alpha]_{\widetilde{\Xx}}.\]
We will verify that this does indeed work. We write
$\pi_\alpha:M_\alpha\to M^{\widetilde{\Xx}}_{\lambda^\alpha}$
for the associated embedding.

Adopt the notation from the construction of $\Xx=\Ww^\Sigma_\Tt(\Uu)$
(\S\ref{subsubsec:stacks_lh_2}).
So $N_\eta=M^\Uu_\eta$, etc. Write $M_\eta=M^{\bar{\Uu}}_\eta$ and
$\bar{E}_\eta=E^{\bar{\Uu}}_\eta$
and $E_\eta=E^\Uu_\eta$ and $F_\eta=E^{\wt{\Xx}}_{\zeta^\eta}$.
Let $\varphi_\eta:M_\eta\to N_\eta$
be the copy map. We have $\nrsigma_\eta:N_\eta\to M^{\Xx^\eta}_\infty$. So
\[
F_\eta=E^{\wt{\Xx}}_{\zeta^\eta}=E^\Xx_{\zeta^\eta}
=\nrsigma_\eta(E_\eta)=\nrsigma_\eta(\varphi_\eta(\bar{E}_\eta)).\]
Note that $\Uu$ and $\bar{\Uu}$ have matching drop and
degree structure
(using
that copying propagates near embeddings, by the argument of \cite{fs_tame}),
and when  $[0,\eta]_{\bar{\Uu}}$ drops in model or degree, we have
$\lambda^\eta+1=\zeta^\eta+1=\lh(\Xx^\eta)$
and $(\zeta^\eta,0)\notin D$ and
$i^{\Xx^\eta}_{\lambda^\eta\zeta^\eta}=\id=i^{\wt{\Xx}}_{\lambda^\eta\zeta^\eta}
$,
and so in this case various things stated below simplify or trivialize.

We will prove the following facts, by induction on $\eta<\lh(\bar{\Uu})$:
\begin{enumerate}[label=\arabic*.,ref=\arabic*]
\item\label{item:partial_ess_tree_emb}
$\left<I_\alpha\right>_{\alpha<\eta}\conc\left<[\lambda^\eta,\lambda^\eta]
\right>$
is an essential tree embedding
$\bar{\Uu}\rest(\eta+1)\hookrightarrow\widetilde{\Xx}$.
\item\label{item:zeta_in_branch} If $\alpha<\eta$ then
$\lambda^\alpha\leq^{\widetilde{\Xx}}\zeta^\alpha$
and
$(\lambda^\alpha,\zeta^\alpha]_{\widetilde{\Xx}}\inter\dropset_{\deg}^{
\widetilde{\Xx}}=\emptyset$, so
\[
F_\alpha=i^{\widetilde{\Xx}}_{\lambda^\alpha\zeta^\alpha}(\pi_\alpha(\bar{E}
_\alpha)),\]
by condition \ref{item:partial_ess_tree_emb}.
Moreover:
\begin{enumerate}
\item\label{item:(zeta,0)_notin_D} If $(\zeta^\alpha,0)\notin D$ then
$\lambda^\alpha\leq^{\Xx^\alpha}\zeta^\alpha\in b^{\Xx^\alpha}$ and
$\crit(i^{\Xx^\alpha}_{\zeta^\alpha\infty})>\In(E^{\Xx}_{\zeta^\alpha})$.
\item\label{item:(zeta,0)_in_D} If $(\zeta^\alpha,0)\in D$
then $\lambda^\alpha<^{\Xx^\alpha}\zeta^\alpha+1\in b^{\Xx^\alpha}$ and
$\crit(i^{\Xx^\alpha}_{\zeta^\alpha+1,\infty})>\In(E^{\Xx}_{\zeta^\alpha})$ and
$\lambda^\alpha<^{\widetilde{\Xx}}\zeta^\alpha$.
\end{enumerate}
(Recall that $(\zeta^\alpha,0)\in D$ iff [$\zeta^\alpha+1<\lh(\Xx^\alpha)$ and
$\lambda(E^{\Xx^\alpha}_{\zeta^\alpha})<\In(E^\Xx_{\zeta^\alpha})$].)
\item\label{item:lambda_in_branch} $\lambda^\eta\in b^{\Xx^\eta}$ and
$(\lambda^\eta,\infty]_{\Xx^\eta}$
does not drop in model or degree.
\item\label{item:npc_single_comm}
$\nrsigma_\eta\com\varphi_\eta=i^{\Xx^\eta}_{\lambda^\eta\infty}\com\pi_\eta$
(cf.~Figure \ref{fgr:npc_comm}).
\item\label{item:npc_branch_comm} If $\alpha<^{\bar{\Uu}}\eta$ and
$(\alpha,\eta]_{\bar{\Uu}}$
does not drop in model
then the diagram in Figure \ref{fgr:npc_comm} commutes.
\begin{figure}
\centering
\begin{tikzpicture}
 [mymatrix/.style={
    matrix of math nodes,
    row sep=0.7cm,
    column sep=0.8cm}
  ]
   \matrix(m)[mymatrix]{
                     {}&                            {}&
 {}&              M^{\Xx^\eta}_\infty\\
                     {}&                            {}&
 {}&                                    {}\\
                     {}&                      N_{\eta}&
 {}&                                    {}\\
               M_{\eta}&                            {}&
 {}&           M^{\wt{\Xx}}_{\lambda^{\eta}}\\
                     {}&                            {}&
M^{\Xx^\alpha}_\infty&                                    {}\\
                     {}&                      N_\alpha&
 {}&                                    {}\\
               M_\alpha&
{}&M^{\wt{\Xx}}_{\lambda^\alpha}&{}\\
};
\path[->,font=\scriptsize]
(m-7-1) edge node[left] {$i^{\bar{\Uu}}_{\alpha\eta}$} (m-4-1)
(m-7-1) edge node[below] {$\pi_\alpha$} (m-7-3)
(m-7-1) edge node[auto] {$\varphi_\alpha$} (m-6-2)
(m-6-2) edge node[left,pos=0.4] {$i^\Uu_{\alpha\eta}$} (m-3-2)
(m-6-2) edge node[auto] {$\nrsigma_\alpha$} (m-5-3)
(m-7-3) edge node[left] {$i^{\Xx^\alpha}_{\lambda^\alpha\infty}$} (m-5-3)
(m-7-3) edge node[right] {$i^{\wt{\Xx}}_{\lambda^\alpha\lambda^{\eta}}$} (m-4-4)
(m-5-3) edge node[left, pos=0.6] {$\psi_{\alpha\eta}$} (m-1-4)
(m-4-1) edge  node[above=0.1cm, near start] {$\varphi_{\eta}$} (m-3-2)
(m-4-1) edge node[above] {$\pi_{\eta}$} (m-4-4)
(m-3-2) edge node[left] {$\nrsigma_{\eta}$} (m-1-4)
(m-4-4) edge  node[right] {$i^{\Xx^{\eta}}_{\lambda^{\eta}\infty}$} (m-1-4);
\end{tikzpicture}
\caption{The diagram commutes, where $(\alpha,\eta]_{\bar{\Uu}}$ does not drop
in model.} \label{fgr:npc_comm}
\end{figure}
\end{enumerate}

Note that if $[0,\eta]_{\bar{\Uu}}$ drops in model or degree
then $\lambda^\eta+1=\lh(\Xx^\eta)$ and
$M^{\wt{\Xx}}_{\lambda^\eta}=M^{\Xx^\eta}_{\lambda^\eta}$
and $i^{\Xx^\eta}_{\lambda^\eta\infty}=\id$, so
condition \ref{item:npc_single_comm} becomes
$\nrsigma_\eta\com\varphi_\eta=\pi_\eta$,
and if $\alpha<^{\bar{\Uu}}\eta$ and $[0,\alpha]_{\bar{\Uu}}$  drops in
model or degree
then the diagram in Figure \ref{fgr:npc_comm} simplifies to become
that in Figure \ref{fgr:npc_comm_drop}.

\begin{figure}
\centering
\begin{tikzpicture}
 [mymatrix/.style={
    matrix of math nodes,
    row sep=0.9cm,
    column sep=1cm}
  ]
   \matrix(m)[mymatrix]{
               M_{\eta}&                        N_\eta&
M^{\wt{\Xx}}_{\lambda^{\eta}}\\
               M_\alpha&
N_\alpha&M^{\wt{\Xx}}_{\lambda^\alpha}\\
};
\path[->,font=\scriptsize]
(m-2-1) edge node[left] {$i^{\bar{\Uu}}_{\alpha\eta}$} (m-1-1)
(m-2-1) edge[bend right] node[below] {$\pi_\alpha$} (m-2-3)
(m-2-1) edge node[above] {$\varphi_\alpha$} (m-2-2)
(m-2-2) edge node[left] {$i^\Uu_{\alpha\eta}$} (m-1-2)
(m-2-2) edge node[above] {$\nrsigma_\alpha$} (m-2-3)
(m-2-3) edge node[right]
{$\psi_{\alpha\eta}=i^{\wt{\Xx}}_{\lambda^\alpha\lambda^{\eta}}$} (m-1-3)
(m-1-1) edge  node[below] {$\varphi_{\eta}$} (m-1-2)
(m-1-1) edge[bend left] node[above] {$\pi_{\eta}$} (m-1-3)
(m-1-2) edge node[below] {$\nrsigma_{\eta}$} (m-1-3);
\end{tikzpicture}
\caption{The simplification of Figure \ref{fgr:npc_comm} when
$[0,\alpha]_{\bar{\Uu}}$ drops in model or degree.} \label{fgr:npc_comm_drop}
\end{figure}

\begin{figure}
\centering
\begin{tikzpicture}
 [mymatrix/.style={
    matrix of math nodes,
    row sep=0.9cm,
    column sep=0.9cm}
  ]
   \matrix(m)[mymatrix]{
                     {}&                            {}&
  {}&                M^{\Xx^{\beta+1}}_\infty&                         {}&
              {}&                          {}\\
                     {}&
{}&\bar{M}^{\Xx^{\beta+1}}_\infty&                                      {}&
                   {}&                     {}&
M^{\Xx^\beta}_\infty\\
                     {}&                   N_{\beta+1}&
  {}&                                      {}&                         {}&
         N_\beta&                          M^{\Xx^\beta}_{\zeta_*^\beta}\\
            M_{\beta+1}&                            {}&
  {}&        M^{\wt{\Xx}}_{\lambda^{\beta+1}}&                    M_\beta&
              {}&                          M^{\wt{\Xx}}_{\lambda^\beta}\\
                     {}&                            {}&
M^{\Xx^\alpha}_\infty&                                      {}&
       {}&                     {}&                          {}\\
                     {}&                      N_\alpha&
  {}&M^{\wt{\Xx}}_{\xi=\gamma_{\alpha\kappa}}&                         {}&
              {}&                          {}\\
               M_\alpha&                            {}&
M^{\wt{\Xx}}_{\lambda^\alpha}&                                      {}&
               {}&                     {}&                          {}\\
};
\path[->,font=\scriptsize]
(m-7-1) edge node[left] {$i^{M_\alpha}_{\bar{E}_\beta}$} (m-4-1)
(m-7-1) edge node[below] {$\pi_\alpha$} (m-7-3)
(m-7-1) edge node[auto] {$\varphi_\alpha$} (m-6-2)
(m-6-2) edge node[left,pos=0.4] {$i^{N_\alpha}_{E_\beta}$} (m-3-2)
(m-6-2) edge node[auto] {$\nrsigma_\alpha$} (m-5-3)
(m-7-3) edge node[right] {$i^{\Xx^\alpha}_{\lambda^\alpha\infty}$} (m-5-3)
(m-7-3) edge node[below] {$\ \ \ \ i^{\wt{\Xx}}_{\lambda^\alpha\xi}$} (m-6-4)
(m-6-4) edge  node[above] {$\ \ i^{\Xx^\alpha}_{\xi\infty}$} (m-5-3)
(m-6-4) edge  node[right] {$i^{*\wt{\Xx}}_{\lambda^{\beta+1}}$} (m-4-4)
(m-5-3) edge  node[left, pos=0.2] {$i^{M^{\Xx^\alpha}_\infty}_{F_\beta}$}
(m-2-3)
(m-5-3) edge node[left, pos=0.6] {$\psi_{\alpha,\beta+1}$} (m-1-4)
(m-4-1) edge  node[above=0.1cm, near start] {$\varphi_{\beta+1}$} (m-3-2)
(m-4-1) edge[bend left=15] node[above] {$\pi_{\beta+1}$} (m-4-4)
(m-3-2) edge  node[above=0.1cm] {$\bar{\nrsigma}_{\beta+1}$} (m-2-3)
(m-3-2) edge[bend left=35] node[left] {$\nrsigma_{\beta+1}$} (m-1-4)
(m-2-3) edge  node[above=0.1cm, near start] {$\nrsigma'_{\beta+1}$} (m-1-4)
(m-4-4) edge  node[right] {$i^{\Xx^{\beta+1}}_{\lambda^{\beta+1}\infty}$}
(m-1-4)
(m-4-5) edge  node[below] {$\pi_\beta$} (m-4-7)
(m-4-5) edge  node[auto] {$\varphi_\beta$} (m-3-6)
(m-3-6) edge  node[auto] {$\nrsigma_\beta$} (m-2-7)
(m-3-7) edge node[right] {$i^{\Xx^\beta}_{\zeta^\beta_*\infty}$} (m-2-7)
(m-4-7) edge  node[right] {$i^{\Xx^\beta}_{\lambda^\beta\zeta^\beta_*}$}
(m-3-7);
\end{tikzpicture}
\caption{The diagrams commute (here $\beta+1\notin\dropset^{\bar{\Uu}}$). We
have $\alpha=\pred^{\bar{\Uu}}(\beta+1)$ and
$\xi=\gamma_{\alpha\kappa}=\gamma_{\Pi\alpha\kappa}$,
where $\kappa=\crit(\bar{E}_\beta)$ and $\Pi$ is the essential
tree embedding under construction.
Note that $M^{\Xx^\beta}_{\zeta^\beta_*}=M^{\wt{\Xx}}_{\zeta^\beta}$
and
$i^{\Xx^\beta}_{\lambda^\beta\zeta_*^\beta}=i^{\wt{\Xx}}_{
\lambda^\beta\zeta^\beta}$.}\label{fgr:npc_successor_comm}
\end{figure}

Consider first the case that $\eta=0$.
Conditions \ref{item:zeta_in_branch} and \ref{item:npc_branch_comm} are trivial.
The essential tree embedding referred to in condition
\ref{item:partial_ess_tree_emb}
is just the trivial one.
Recall that $\Tt=\Xx^0$ does not drop in model or degree on $b^\Tt$,
and of course $\lambda^0=0$.
So condition \ref{item:lambda_in_branch} is immediate.
And condition \ref{item:npc_single_comm} holds because
$\varphi_0=i^\Tt$ and $\nrsigma_0=\id$ and $\pi_0=\id$ and
$i^{\Xx^0}_{0\infty}=\varphi_0$.

For limit $\eta$, condition \ref{item:zeta_in_branch} is trivial by induction,
and the other conditions follow by induction
using the commutativity of the various maps discussed in \S\ref{sec:factor_tree}
and the construction of $\Sigma^\stk$. We leave the details to the reader.

So consider the case that $\eta=\beta+1$.

Condition \ref{item:zeta_in_branch}: Consider the case that $\alpha=\beta$.
Since $\Uu=i^\Tt\bar{\Uu}=\varphi_0\bar{\Uu}$ and by induction (conditions
\ref{item:lambda_in_branch} and \ref{item:npc_single_comm}), $\lambda^\beta\in
b^{\Xx^\beta}$ and
$(\lambda^\beta,\infty]_{\Xx^\beta}\inter\dropset^{\Xx^\beta}_{\deg}=\emptyset$
and
\[
E^\Xx_{\zeta^\beta}=\nrsigma_\beta(\varphi_\beta(E^{\bar{\Uu}}_\beta))=i^{
\Xx^\beta}_{\lambda^\beta\infty}(\pi_\beta(E^{\bar{\Uu}}_\beta)), \]
so $E^\Xx_{\zeta^\beta}\in\rg(i^{\Xx^\beta}_{\lambda^\beta\infty})$.
And $\zeta^\beta$ is the least $\zeta$ such that
$E^\Xx_{\zeta^\beta}\in\es_+(M^{\Xx^\beta}_\zeta)$.
So\footnote{An earlier draft of this paper
had the first inequality here
as strict $<$, but it seems we might need to allow $\leq$.
However, this has little bearing on the proof.}
\[ \In(E^{\Xx^\beta}_\xi)\leq\In(E^\Xx_{\zeta^\beta})<\In(E^{\Xx^\beta}_{\xi'})
\]
for all $\xi<\zeta^\beta$ and $\xi'\geq\zeta^\beta$ with
$\xi'+1<\lh(\Xx^\beta)$.

Now suppose that $(\zeta^\beta,0)\notin D$;
we must verify condition \ref{item:(zeta,0)_notin_D}. So if
$\zeta^\beta+1<\lh(\Xx^\beta)$
then $\In(E^\Xx_{\zeta^\beta})<\lambda(E^{\Xx^\beta}_{\zeta^\beta})$.
We may easily assume that $\zeta^\beta+1<\lh(\Xx^\beta)$.
Suppose $\zeta^\beta\notin b^{\Xx^\beta}$,
and let $\zeta\geq\zeta^\beta$ be least such that $\zeta+1\in b^{\Xx^\beta}$.
Then
\[
\lambda^\beta\leq^{\Xx^\beta}\xi\eqdef\pred^{\Xx^\beta}
(\zeta+1)<\zeta^\beta<\zeta+1,\]
so
\[
\crit(E^{\Xx^\beta}_\zeta)<\nutilde(E^{\Xx^\beta}_\xi)<\lambda(E^{\Xx^\beta}_{
\zeta^\beta})\leq\lambda(E^{\Xx^\beta}_\zeta), \]
and as $E^\Xx_{\zeta^\beta}\in\rg(i^{\Xx^\beta}_{\lambda^\beta\infty})$
and $\In(E^\Xx_{\zeta^\beta})<\lambda(E^{\Xx^\beta}_{\zeta^\beta})$,
therefore
$\In(E^\Xx_{\zeta^\beta})<\crit(E^{\Xx^\beta}_\zeta)$.
But then $E^\Xx_{\zeta^\beta}\in\es(M^{\Xx^\beta}_\xi)$, contradicting the
minimality of $\zeta^\beta$.
So $\lambda^\beta\leq^{\Xx^\beta}\zeta^\beta\in b^{\Xx^\beta}$. Let
$\zeta+1=\successor^{\Xx^\beta}(\zeta^\beta,\infty)$.
If
\[
\crit(E^{\Xx^\beta}_\zeta)=\crit(i^{\Xx^\beta}_{\zeta^\beta\infty})<\In(E^\Xx_{
\zeta^\beta}) \] then
\[
\crit(E^{\Xx^\beta}_\zeta)<\In(E^\Xx_{\zeta^\beta})<\lambda(E^{\Xx^\beta}_{
\zeta^\beta }
)\leq\lambda(E^{\Xx^\beta}_\zeta), \]
again contradicting the fact that
$E^\Xx_{\zeta^\beta}\in\rg(i^{\Xx^\beta}_{\lambda^\beta\infty})$.
This gives \ref{item:(zeta,0)_notin_D}.

Now suppose  $(\zeta^\beta,0)\in D$; we verify
 \ref{item:(zeta,0)_in_D}. So $\zeta^\beta+1<\lh(\Xx^\beta)$
 and
 \begin{equation}\label{eqn:lambda<lh<lh}
\lambda(E^{\Xx^\beta}_{\zeta^\beta})<\In(E^\Xx_{\zeta^\beta})<\In(E^{\Xx^\beta}_
{\zeta^\beta}).\end{equation}
Let $\zeta\geq\zeta^\beta$ be least such that $\zeta+1\in b^{\Xx^\beta}$, and
$\xi=\pred^{\Xx^\beta}(\zeta+1)$. Then
$\lambda^\beta\leq^{\Xx^\beta}\xi\leq\zeta^\beta$, so
\[
\crit(E^{\Xx^\beta}_\zeta)<\nutilde(E^{\Xx^\beta}_\xi)\leq\lambda(E^{\Xx^\beta}_
{\zeta^\beta})\leq\lambda(E^{\Xx^\beta}_\zeta), \]
and since $E^\Xx_{\zeta^\beta}\in\rg(i^{\Xx^\beta}_{\lambda^\beta\infty})$ and
by line (\ref{eqn:lambda<lh<lh}),
 it follows that $\zeta=\zeta^\beta$, so $\zeta^\beta+1\in b^{\Xx^\beta}$,
 so $\lambda^\beta<^{\Xx^\beta}\zeta^\beta+1$.
 The fact that $\lambda^\beta<^{\widetilde{\Xx}}\zeta^\beta$
 follows immediately by definition; note that the role of $\zeta^\beta$ in
$\widetilde{\Xx}$ corresponds to $\zeta^\beta+1$ in $\Xx^\beta$,
 as $E^{\widetilde{\Xx}}_{(\zeta^\beta,0)}=E^{\Xx^\beta}_{\zeta^\beta}$.
 Similarly,
$\In(E^\Xx_{\zeta^\beta})<\crit(i^{\Xx^\beta}_{\zeta^\beta+1,\infty})$.
 This gives \ref{item:(zeta,0)_in_D}.

We can now complete the proof of condition \ref{item:zeta_in_branch}.
We have:
\begin{enumerate}[label=--]
 \item  $\Xx^\beta\rest(\zeta_*^\beta+1)$
is the $\udash m$-maximal tree corresponding to
$\widetilde{\Xx}\rest(\zeta^\beta+1)$.\footnote{
Recall that $\widetilde{\Xx}$ is only essentially $\udash m$-maximal,
and note that if $(\zeta^\beta,0)\in D$,
so $\zeta^\beta_*=\zeta^\beta+1$, then both
$\widetilde{\Xx}\rest(\zeta^\beta+1)$ and $\Xx^\beta\rest\zeta^\beta_*+1$ use
last extender
$E^{\widetilde{\Xx}}_{(\zeta^\beta,0)}=E^{\Xx^\beta}_{\zeta^\beta}$.}
\item $\lambda^\beta\leq^{\Xx^\beta}\zeta_*^\beta\in b^{\Xx^\beta}$ and
$(\lambda^\beta,\infty]_{\Xx^\beta}$ does not drop in model or degree,
\item $(\lambda^\beta,\zeta^\beta]_{\widetilde{\Xx}}$ does not drop in model or
degree,
\item  $M^{\widetilde{\Xx}}_{\zeta^\beta}=M^{\Xx^\beta}_{\zeta_*^\beta}$ and
$i^{\widetilde{\Xx}}_{\lambda^\beta\zeta^\beta}=i^{\Xx^\beta}_{
\lambda^\beta\zeta_*^\beta}$,
\item
$E^{\widetilde{\Xx}}_{\zeta^\beta}=i^{\Xx^\beta}_{\lambda^\beta\infty}
(\pi_\beta(E^{\bar{\Uu}}_\beta))$ and
$\In(E^{\widetilde{\Xx}}_{\zeta^\beta})<\crit(i^{\Xx^\beta}_{\zeta_*^\beta\infty
})$, so
\item
$E^{\widetilde{\Xx}}_{\zeta^\beta}=i^{\widetilde{\Xx}}_{\lambda^\beta\zeta^\beta
}(\pi_\beta(E^{\bar{\Uu}}_\beta))$.
\end{enumerate}

 Condition \ref{item:partial_ess_tree_emb}:
By induction,
$\left<I_\alpha\right>_{\alpha<\beta}\conc\left<[\lambda^\beta,\lambda^\beta]
\right>$
is an essential tree embedding
$\bar{\Uu}\rest(\beta+1)\hookrightarrow\widetilde{\Xx}$.
But then by condition \ref{item:zeta_in_branch} and as
$\lambda^{\beta+1}=\zeta^\beta+1$,
\[
\left<I_\alpha\right>_{\alpha\leq\beta}\conc\left<[\lambda^{\beta+1},\lambda^{
\beta+1}]\right> \]
is also an essential tree embedding
$\bar{\Uu}\rest(\beta+2)\hookrightarrow\widetilde{\Xx}$.

Conditions \ref{item:lambda_in_branch}, \ref{item:npc_single_comm},
\ref{item:npc_branch_comm}:
We consider the case that
$[0,\beta+1]_{\bar{\Uu}}\inter\dropset_{\deg}^{\bar{\Uu}}=\emptyset$,
and leave the other case to the reader.
By induction, it suffices to
verify condition \ref{item:lambda_in_branch} for $\eta=\beta+1$
and to verify that the diagram on the left of Figure
\ref{fgr:npc_successor_comm} commutes, for
the current $\beta$ and $\alpha=\pred^{\bar{\Uu}}(\beta+1)$
(by induction and condition \ref{item:zeta_in_branch},
the diagram on the right of Figure \ref{fgr:npc_successor_comm} commutes).
Note  the embeddings $i^{M_\alpha}_{\bar{E}_\beta}$,
$i^{N_\alpha}_{E_\beta}$ and $i^{M^{\Xx^\alpha}_\infty}_{F_\beta}$
are  the ultrapower embeddings associated to $\Ult_{\udash
m}(M_\alpha,\bar{E}_\beta)$, etc.

As in the figure, let $\kappa=\crit(\bar{E}_\beta)$
and $\xi=\gamma_{\alpha\kappa}=\gamma_{\Pi\alpha\kappa}$,
so $\xi=\pred^{\widetilde{\Xx}}(\lambda^{\beta+1})$
and $\xi\in I_\alpha=[\lambda^\alpha,\zeta^\alpha]_{\widetilde{\Xx}}$.
Let
\[
\mu=i^{\wt{\Xx}}_{\lambda^\alpha\xi}(\pi_\alpha(\kappa))=i^{\wt{\Xx}}_{
\lambda^\alpha\zeta^\alpha}(\pi_\alpha(\kappa)).\]
Then $\xi\in[\lambda^\alpha,\zeta_*^\alpha]_{\Xx^\alpha}$
and
\[
\mu=i^{\Xx^\alpha}_{\lambda^\alpha\xi}(\pi_\alpha(\kappa))=i^{\Xx^\alpha}_{
\lambda^\alpha\infty}(\pi_\alpha(\kappa))<\crit(i^{\Xx^\alpha}_{\xi\infty}) \]
and either:
\begin{enumerate}[label=--]
 \item $(\zeta^\alpha,0)\notin D$ and
$[\lambda^\alpha,\zeta^\alpha]_{\wt{\Xx}}=[\lambda^\alpha,\zeta_*^\alpha]_{
\Xx^\alpha}$, or
 \item $(\zeta^\alpha,0)\in D$ and
$\xi\in[\lambda^\alpha,\eps]_{\wt{\Xx}}=[\lambda^\alpha,\eps]_{\Xx^\alpha}$
 where
 \[ \eps=\pred^{\Xx^\alpha}(\zeta^\alpha+1)=\pred^{\wt{\Xx}}(\zeta^\alpha).\]
 \end{enumerate}

 For suppose $(\zeta^\alpha,0)\notin D$. Then
 \[ \zeta^\alpha=\zeta^\alpha_*\text{ and }
\xi\in[\lambda^\alpha,\zeta^\alpha]_{\wt{\Xx}}=[\lambda^\alpha,\zeta^\alpha_*]_{
\Xx^\alpha}\text{ and }
i^{\wt{\Xx}}_{\lambda^\alpha\zeta^\alpha}=i^{\Xx^\alpha}_{
\lambda^\alpha\zeta^\alpha}.\]
Also $\kappa<\nutilde(E^{\bar{\Uu}}_\alpha)$, so
$\mu<\nutilde(E^{\wt{\Xx}}_{\zeta^\alpha})<\In(E^{\wt{\Xx}}_{\zeta^\alpha}
)<\crit(i^{\Xx^\alpha}_{\zeta^\alpha\infty})$,
which easily suffices.

Now suppose instead that $(\zeta^\alpha,0)\in D$.
Then $M^{\wt{\Xx}}_{\zeta^\alpha}||\In(E^{\wt{\Xx}}_{(\zeta^\alpha,0)})$ has
largest cardinal
$\lambda=\lambda(E^{\wt{\Xx}}_{(\zeta^\alpha,0)})$ and
\begin{equation}\label{eqn:again_lambda<lh<lh}
\lambda<\In(F_\alpha)<\In(E^{\wt{\Xx}}_{(\zeta^\alpha,0)}). \end{equation}
We have $\beta+1\notin\dropset^{\bar{\Uu}}$,
so $(\kappa^+)^{M^{\bar{\Uu}}_\alpha}\leq\In(\bar{E}_\alpha)$,
so $(\mu^+)^{M^{\wt{\Xx}}_{\zeta^\alpha}}\leq\In(F_\alpha)$,
so by (\ref{eqn:again_lambda<lh<lh}), $\mu<\lambda$.
Since $\mu\in\rg(i^{\wt{\Xx}}_{\lambda^\alpha\zeta^\alpha})$, it follows that
$\mu<\crit(E^{\wt{\Xx}}_{(\zeta^\alpha,0)})$,
so $\xi\leq\eps=\pred^{\wt{\Xx}}(\zeta^\alpha)$, as required. The rest is now
clear.

From the preceding discussion, it follows that $\lambda^{\beta+1}\in
b^{\Xx^{\beta+1}}$,
$(\lambda^{\beta+1},\infty]_{\Xx^{\beta+1}}$ does not drop in model or degree
 and
\[\crit(i^{\Xx^{\beta+1}}_{\lambda^{\beta+1}\infty})=i^{*\wt{\Xx}}_{\lambda^{
\beta+1}}(\crit(i^{\Xx^\alpha}_{\xi\infty})). \]
So the left diagram in Figure \ref{fgr:npc_successor_comm} is at least
plausible.

It remains to verify that the diagram commutes. The diagram which results
if we remove
$\pi_{\beta+1}$ from Figure \ref{fgr:npc_successor_comm}, is already known to
commute,
by induction and facts about $\Sigma^\stk$.
We have
\[ \pi_{\beta+1}\com
i^{M_\alpha}_{\bar{E}_\beta}=i^{*\wt{\Xx}}_{\lambda^{\beta+1}}\com
i^{\wt{\Xx}}_{\lambda^\alpha\xi}\com\pi_\alpha \]
by properties of essential tree embeddings.

For each $\eps$, write $\varrho_\eps=\nrsigma_\eps\com\varphi_\eps$.
Let $j=i^{\Xx^{\beta+1}}_{\lambda^{\beta+1}\infty}$.
It just remains to see that
\[
\varrho_{\beta+1}=\nrsigma_{\beta+1}\com\varphi_{\beta+1}=j\com\pi_{\beta+1}.
\]
For simplicity let us assume that $m=0$; for $m>0$ it is analogous.

Let $x=i^{M_\alpha}_{\bar{E}_\beta}(f)(a)\in M_{\beta+1}$, where
$a\in[\nu(\bar{E}_\beta)]^{<\om}$
and $f\in M_\alpha$. Then
\begin{equation}\label{eqn:psi_com_rho(x)}
\varrho_{\beta+1}(x)=\psi_{\alpha,\beta+1}(\varrho_\alpha(f))(\varrho_\beta(a)),
\end{equation}
since $\nrsigma_{\beta+1}=\nrsigma'_{\beta+1}\com\bar{\nrsigma}_{\beta+1}$ and
\[
\bar{\nrsigma}_{\beta+1}(\varphi_{\beta+1}(x))=i^{M^{\Xx^\alpha}_\infty}_{
F_\beta}(\varrho_\alpha(f))(\varrho_\beta(a)), \]
and as discussed earlier,
$\crit(\nrsigma'_{\beta+1})>\lambda(E^{\wt{\Xx}}_{\zeta^\beta}
)>\max(\varrho_\beta(a))$.

On the other hand,
\begin{equation}\label{eqn:it_com_pi(x)} j(\pi_{\beta+1}(x))=j(g(c))=j(g)(c)
\end{equation}
where
$g=i^{*\wt{\Xx}}_{\lambda^{\beta+1}}\com
i^{\wt{\Xx}}_{\lambda^\alpha\xi}\com\pi_\alpha(f)$
and $c=i^{\wt{\Xx}}_{\lambda^\beta\zeta^\beta}(\pi_\beta(a))=j(c)$,
since $\max(c)<\nu(F_\beta)\leq\crit(j)$.

So it suffices to show that $j(g)=\psi_{\beta,\alpha+1}(\varrho_\alpha(f))$
and $c=\varrho_\beta(a)$, as then the objects in lines
(\ref{eqn:psi_com_rho(x)}) and (\ref{eqn:it_com_pi(x)})
are equal, as desired. But $j(g)=\psi_{\beta,\alpha+1}(\varrho_\alpha(f))$
by the commutativity already known in the left diagram of Figure
\ref{fgr:npc_successor_comm};
and by its right diagram and since
$\max(c)<\In(F_\beta)<\crit(i^{\Xx^\beta}_{\zeta^\beta_*\infty})$, we have
\[ \varrho_\beta(a)=i^{\Xx^\beta}_{\lambda^\beta\infty}(\pi_\beta(a))=
i^{\Xx^\beta}_{\lambda^\beta\zeta^\beta_*}(\pi_\beta(a))=
i^{\wt{\Xx}}_{\lambda^\beta\zeta^\beta}(\pi_\beta(a))=c, \]
completing the proof of the theorem.
\end{proof}

\subsection{Dodd-Jensen and $\Sigma^\stk$}\label{subsec:DJ_and_Sigma^stk}

\begin{dfn}\index{lifting (weak) DJ}
\dfnemph{Lifting Dodd-Jensen (DJ)} is defined just like the DJ property,
but with the class of $n$-lifting embeddings replacing near $n$-embeddings
(when
at degree $n$).
Likewise for \dfnemph{lifting weak DJ}.
\end{dfn}
\begin{rem}
Assuming $\DC$, given a sufficiently iterable countable premouse $M$ and an
enumeration $e$ of $M$ in ordertype $\om$,
we can construct a strategy $\Sigma$ for $M$ with lifting
weak DJ,
completely analogously to the construction of one with (standard) weak DJ.
Clearly lifting (weak) DJ implies weak DJ,
because every near $n$-embedding is $n$-lifting.
\end{rem}

\begin{tm}\label{tm:stacks_strat_inherits_DJ} Let $\Sigma,\Omega,M$ be as in
Theorem \ref{thm:stacks_iterability},
with $M$ a premouse,
and $\Sigma$ an $(m,\Omega+1)$-strategy for $M$,
and suppose that $\card(M)<\Omega$.
If $\Sigma$ has lifting DJ then so does $\Sigma^\stk$.
If $M$ is countable and $e$ is an enumeration of $M$ in ordertype $\om$,
then likewise for lifting weak DJ with respect to $e$.
\end{tm}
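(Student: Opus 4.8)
The plan is to reduce the assertion for $\Sigma^\stk$ to lifting Dodd--Jensen for $\Sigma$ through the normal realization map $\srsigma^\Sigma$. Suppose toward a contradiction that $\Sigma^\stk$ fails lifting DJ (resp.\ lifting weak DJ with respect to $e$). Fix a witnessing stack $\Ttvec$ via $\Sigma^\stk$, together with $P\ins M^{\Ttvec}_\infty$ and $\pi:M\to P$ which is a $\deg^{\Ttvec}(\infty)$-lifting embedding in case $P=M^{\Ttvec}_\infty$ (and the appropriate lifting embedding when $P\pins M^{\Ttvec}_\infty$), such that the DJ conclusion fails: either $P\pins M^{\Ttvec}_\infty$, or $b^{\Ttvec}$ drops in model, or else $P=M^{\Ttvec}_\infty$ and $b^{\Ttvec}$ does not drop in model but $\pi\not\geq i^{\Ttvec}$ pointwise (resp.\ $\pi$ is lexicographically below $i^{\Ttvec}$ with respect to $e$). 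Since $\card(M)<\Omega$ and $\Omega$ is regular, a standard reduction lets us assume $\lh(\Ttvec)<\Omega$ with each normal component of length $<\Omega$ (truncate an overlong last round past all drops on its main branch, and likewise for an overlong stack). In the MS-indexed case we first translate to the corresponding $\udash$-strategy and its stacks strategy, exactly as in the construction of $\Sigma^\stk$. Now set $\Yy=\Ww^\Sigma(\Ttvec)$, a normal tree via $\Sigma$ of length $<\Omega$, and $\srsigma=\srsigma^\Sigma(\Ttvec):M^{\Ttvec}_\infty\to M^\Yy_\infty$, a nice $\udash n$-lifting embedding with $n=\deg^{\Ttvec}(\infty)$ (Definition \ref{dfn:W(Tvec)_etc}).

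From the construction of $\Sigma^\stk$ I would extract the following three facts. First (from condition \ref{item:Ttvec_model_drop} together with condition \ref{item:main_embedding} in the proof of \ref{thm:stacks_iterability}): $b^\Yy$ drops in model iff $b^{\Ttvec}$ does, and if $b^{\Ttvec}$ drops in model then the degrees at $\infty$ stay synchronized, so $\deg^\Yy(\infty)=n$. Second (from condition \ref{item:inflation_iteration_commutativity} with $\gamma=0$, using that $\Yy_0$ is the trivial tree): if $b^{\Ttvec}$ does not drop in model, then $i^\Yy$ is defined and $i^\Yy=\srsigma\circ i^{\Ttvec}$. Third: the restriction of the nice lifting embedding $\srsigma$ to a proper segment $P\pins M^{\Ttvec}_\infty$ is sufficiently elementary and maps $P$ onto a proper segment $P'\pins M^\Yy_\infty$ (a ``reasonable copy'' observation).

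With these in hand I would run the case analysis, each time composing with $\srsigma$ and appealing to lifting DJ (resp.\ lifting weak DJ with respect to $e$) for $\Sigma$ applied to $\Yy$. If $P\pins M^{\Ttvec}_\infty$, then $\srsigma\circ\pi:M\to P'$ with $P'\pins M^\Yy_\infty$ is an appropriate embedding, directly contradicting DJ for $\Sigma$, which forces the image segment to be all of $M^\Yy_\infty$. If $P=M^{\Ttvec}_\infty$ and $b^{\Ttvec}$ drops in model, then $\deg^\Yy(\infty)=n$, so $\srsigma\circ\pi:M\to M^\Yy_\infty$ is a $\deg^\Yy(\infty)$-lifting embedding while $b^\Yy$ drops in model, again contradicting DJ for $\Sigma$. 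Finally, if $P=M^{\Ttvec}_\infty$, $b^{\Ttvec}$ does not drop in model, and $\deg^\Yy(\infty)=n$ (so $\srsigma\circ\pi$ is $\deg^\Yy(\infty)$-lifting), then applying lifting DJ (resp.\ lifting weak DJ with respect to $e$) for $\Sigma$ to $\Yy$ and $\srsigma\circ\pi$ gives $\srsigma(\pi(\nu))\geq i^\Yy(\nu)=\srsigma(i^{\Ttvec}(\nu))$ for all $\nu$ (resp.\ the corresponding lexicographic statement relative to $e$), and since $\srsigma$ is order-preserving on ordinals this yields $\pi(\nu)\geq i^{\Ttvec}(\nu)$ for all $\nu$ (resp.\ $\pi\geq i^{\Ttvec}$ lexicographically with respect to $e$), contradicting the choice of $\pi$.

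The main obstacle is the one remaining configuration: $P=M^{\Ttvec}_\infty$, $b^{\Ttvec}$ does not drop in model but \emph{does} drop in degree, and $\deg^\Yy(\infty)>n$. This genuinely occurs --- see Remark \ref{rem:fail_near_emb}, where the inflationary copy of a degree-dropping extender fails to drop the degree in $\Yy$ --- and there $\srsigma\circ\pi$ is only an $n$-lifting embedding into $M^\Yy_\infty$, hence not elementary enough for lifting DJ for $\Sigma$ (which concerns $\deg^\Yy(\infty)$-lifting embeddings) to apply to it directly. The plan here is to use the soundness/degree structure of $M^{\Ttvec}_\infty$ relative to $M$ together with the factorization $i^\Yy=\srsigma\circ i^{\Ttvec}$ of the full $\deg^\Yy(\infty)$-embedding $i^\Yy$ through $\srsigma$: tracking which projecta are preserved along $b^{\Ttvec}$ versus $b^\Yy$, one should be able either to upgrade the elementarity of $\srsigma\circ\pi$ at the cost of passing to a proper segment (reducing to the first case), or to read off the pointwise/lexicographic comparison of $\pi$ with $i^{\Ttvec}$ directly; if that fails, a supplementary comparison argument in the spirit of the proof of Theorem \ref{tm:wDJ_implies_cond}, applied to generic phalanxes built from $\pi$ and realized via $\Ww^\Sigma$, should close the gap. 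I expect essentially all the work of the proof to be concentrated in this degree-bookkeeping step; everything else is routine given \ref{thm:stacks_iterability} and its proof.
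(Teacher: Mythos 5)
Your overall plan is the one the paper uses: push the witness to a failure of DJ along the normal realization map $\srsigma$ and apply lifting DJ for $\Sigma$ to the resulting tree. But there are two genuine gaps, one of which makes your ``main obstacle'' illusory.

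First, you have the degree in the lifting DJ hypothesis wrong. Lifting DJ is stated for $m$-lifting embeddings where $M$ is $m$-sound, not for $\deg^{\Ttvec}(\infty)$-lifting or $\deg^\Yy(\infty)$-lifting embeddings. Indeed the paper takes $(Q,m)\ins(N,n)$ and $\pi:M\to Q$ $m$-lifting as the hypothesis. Consequently, after composing, $\srsigma\circ\pi:M\to Q''$ is $m$-lifting (here $\srsigma$ is $n$-lifting with $m\leq n$, and its restriction to a proper segment is fully elementary), and $(Q'',m)\ins(M^\Yy_\infty,\deg^\Yy(\infty))$, which is precisely the right shape to apply lifting DJ for $\Sigma$ to $\Yy$. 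The conclusion of lifting DJ then gives $Q''=M^\Yy_\infty$, $b^\Yy$ does not drop in model or degree, hence $\deg^\Yy(\infty)=m$; since $m\leq n\leq\deg^\Yy(\infty)$ this forces $\deg^\Yy(\infty)=n=m$ and (via \ref{item:Ttvec_model_drop} and the degree constraints in the construction of $\Sigma^\stk$) that $b^{\Ttvec}$ does not drop in model or degree, and finally $\srsigma(\pi(\alpha))\geq i^\Yy(\alpha)=\srsigma(i^{\Ttvec}(\alpha))$, so $\pi\geq i^{\Ttvec}$ pointwise since $\srsigma$ is order-preserving. In particular the configuration you worry about ($\deg^\Yy(\infty)>n$ with $b^{\Ttvec}$ not dropping in model) simply cannot arise when $\Sigma$ has lifting DJ: it would contradict the conclusion $\deg^\Yy(\infty)=m\leq n$. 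No supplementary degree-bookkeeping or phalanx comparison is needed, and no case split is needed either --- a single application of lifting DJ yields all three conclusions at once. (The reason the paper uses lifting DJ rather than standard DJ is exactly that $\srsigma$ is only $n$-lifting and not a near $n$-embedding, as explained in the remark following the theorem; the degree issue you raise is not the one that matters.)

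Second, you write $\Yy=\Ww^\Sigma(\Ttvec)$, but $\Ww^\Sigma$ and $\srsigma^\Sigma$ are only defined for optimal stacks, whereas $\Sigma^\stk$ covers stacks with artificial drops (via Lemma \ref{lem:sub-optimal_reduces_to_optimal}), and DJ for $\Sigma^\stk$ quantifies over all such stacks. The paper therefore first passes from $\Ttvec$ to the corresponding optimal stack $\Uuvec$, introduces the accompanying copy map $\sigma:Q\to Q'\ins M^{\Uuvec}_\infty$ (which is again $m$-lifting), and only then applies $\Ww^\Sigma$ and $\srsigma^\Sigma$ to $\Uuvec$. You also wave at the MS-indexed case; the paper runs it by combining the above with the type-3 bookkeeping from Claim \ref{clm:DJ_for_Sigma'} of Theorem \ref{thm:strat_with_cond_extends_to_generic_ext}, which does involve real work (a proper segment of a type-3 last model can land in an ultrapower rather than an initial segment of $M^\Xx_\infty$).
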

\begin{proof}
Suppose $M$ is $\lambda$-indexed. We literally give the proof for lifting DJ,
but for lifting weak DJ it is essentially the same.
Let $\Ttvec$ be according to $\Sigma^\stk$, with $N=M^\Ttvec_\infty$
and $n=\deg^\Ttvec(\infty)$,
and let $Q,\pi$ be such that $(Q,m)\ins(N,n)$
and $\pi:M\to Q$ is $m$-lifting.

We may assume that $\lh(\Ttvec)<\Omega$ and each normal tree in $\Ttvec$
has length ${<\Omega}$, because $\card(M)<\Omega$ and $\Omega$ is regular.
Let $\Uuvec$ be the corresponding optimal $m$-maximal stack on $M$ given
by the proof of Lemma \ref{lem:sub-optimal_reduces_to_optimal}.
Let $N'=M^{\Uuvec}_\infty$ and $n'=\deg^{\Uuvec}(\infty)$.
Letting $Q'\ins N'$ be the resulting lift of $Q$
and $\sigma:Q\to Q'$ the restricted copy map,
note that $(Q',m)\ins(N',n')$ and
$\pi'=\sigma\com\pi:M\to Q'$ is $m$-lifting.

Now $\Xx=\Ww^\Sigma(\Uuvec)$ is via $\Sigma$, of length ${<\Omega}$,
and we have the $n'$-lifting
\[ \srsigma=\srsigma^\Sigma(\Ttvec):M^\Ttvec_\infty\to M^\Xx_\infty.\]
Let $Q''=\srsigma(Q')$ if $Q'\pins N'$, and $Q''=M^\Xx_\infty$ otherwise. Let
$\pi''=\srsigma\com\pi'$.
Then we can apply lifting DJ for $\Sigma$ to $\Xx,Q'',\pi''$.
Therefore $Q''=M^\Xx_\infty$ (so $Q'=N'$) and $b^\Xx$ does not drop in model or
degree, so $n'=m$,
and for each $\alpha\in\OR^M$, we have $i^\Xx(\alpha)\leq\pi''(\alpha)$.
Therefore $b^{\Uuvec}$ does not drop in model or degree,
so $i^\Xx=\pi''\com i^{\Uuvec}$. Therefore $i^\Uuvec(\alpha)\leq\pi'(\alpha)$
for each $\alpha<\OR^M$. But then, similarly, $b^\Ttvec$ also does not drop in
model or degree, $i^{\Uuvec}=\sigma\com i^\Ttvec$ and
$i^\Ttvec(\alpha)\leq\pi(\alpha)$
for each $\alpha$, so we are done.

If instead $M$ is MS-indexed then combine the preceding argument with that in
the proof of Claim \ref{clm:DJ_for_Sigma'}
of the proof of Theorem \ref{thm:strat_with_cond_extends_to_generic_ext}.
\end{proof}
\begin{rem}
One would like to be able to prove a version of the preceding theorem
for standard (weak) DJ.
We can prove this in certain cases, but do not see how to in general.
This is because (considering $\Gamma^\stk$, where $\Gamma$ is the
$\udash$strategy corresponding to $\Sigma$)
the lifting map
$\srsigma:M^\Ttvec_\infty\to M^\Xx_\infty$
need not be a near $\udash n$-embedding where $n=\udeg^\Ttvec(\infty)$.
However, if either
(i) $m>0$, or
(ii) $M$ is passive, or
(iii) $M$ is MS-indexed type 1 or 3,
then we do get (weak) DJ for $\Sigma^\stk$.
This is due to the following easy consequence of condensation.

Let $M,N$ be $n$-sound and $\nrsigma:M\to N$ be $n$-lifting
$\pvec_n$-preserving.
Suppose that $\nrsigma$ is not an $n$-embedding, and either (i) $n>0$, (ii) $M$
is passive,
or (iii) $M$ is MS-indexed type 1 or 3. Then there is some $Q$ such that
\[ \text{either }Q\pins N\text{, or }
Q\pins\Ult(N|\rho,F^{N|\rho})\text{ for some }\rho, \]
and an $n$-embedding
$\pi':M\to Q$.

For let $\rho=\sup\pi``\rho_n^M$.
We have $\rho<\rho_n^N$ because $\pi$ is not an $n$-embedding.

If $n=0$ and $M$ is passive then clearly
$Q=N||\rho$ and $\pi'=\pi$ works (but note maybe $N|\rho$ is active).

If $n=0$ and $M$ is MS-indexed type 3 then note that $\rho$ is a limit of
generators
of $F^N$, and let $Q\pins N$ or $Q\pins\Ult(N|\rho,F^{N|\rho})$
be such that $F^N\rest\rho=F^Q$, and $\pi'=\pi$ (note that in this case,
$\dom(\pi)=M^\sq$).

Suppose $n=0$ and $M$ is MS-indexed type 1. Let $\mu=\crit(F^M)$ and
$\kappa=\crit(F^N)=\pi(\mu)$.
Let
\[ Q=\cHull_0^{N}(\kappa\cup\rg(\pi)) \]
and $\sigma:Q\to N$ be the uncollapse and
$\pi':M\to Q$ be such that $\sigma\com\pi'=\pi$.
Then
\[ \sup\sigma``\OR^Q=\sup\pi``\OR^M \]
and $Q$ is a type 1 premouse by standard arguments,
and $\pi'$ is $\rSigma_1$-elementary. We have $Q\in N$ and
$(\kappa^+)^Q<(\kappa^+)^N$
and
\[ F^Q\rest(\kappa^+)^Q=F^N\rest(\kappa^+)^Q.\]
So basically by \cite[\S4]{mim}, either
$Q\pins N$ or letting $\alpha=(\kappa^+)^N$, $N|\alpha$ is active
and $Q\pins\Ult(N|\alpha,F^{N|\alpha})$, so we are done.

Now suppose $n>0$. Let
$Q=\cHull_n^N(\rho\cup\pvec_n^N)$
and $\sigma:Q\to N$ be the uncollapse.
Note that $\rg(\pi)\sub\rg(\sigma)$ and let
$\pi':M\to Q$ be such that $\sigma\com\pi'=\pi$.
Note that $Q$ is $(n-1)$-sound
and $\pi'$ is a near $(n-1)$-embedding,
Note that $\pi'(p_n^M)$ is $n$-solid for $Q$ and
\[ Q=\Hull^Q_n(\rho\cup\pi'(\pvec_n^M)), \]
so $\pi'(p_n^M)=p_n^Q\cut\rho$,
but also because $\pi$ is $n$-lifting, therefore $\rho_n^Q=\rho$.
So $Q$ is $n$-sound. Also, $Q\in N$.
 By condensation, either $Q\pins N$
or $N|\rho$ is active and $Q\pins\Ult(N|\rho,F^{N|\rho})$. Moreover,
 because $\pi$ is $n$-lifting and $\rho_n^Q=\rho$ and
$\pi'(\pvec_n^M)=\pvec_n^Q$,
$\pi'$ is in fact an $n$-embedding, which suffices.
\end{rem}

We conclude this segment with some simple corollaries pertaining to generic
absoluteness of iterability under choice, and also
constructing strategies with weak DJ in choiceless contexts.

\begin{cor}\label{cor:stack_it_absoluteness}
Let $\Omega>\om$ be regular.
Let $\PP$ be an $\Omega$-cc forcing and let $G$ be $V$-generic for $\PP$.
Let $M$ be a countable $n$-sound premouse.
Then:
\begin{enumerate}[label=--]
\item If $V\sats\DC+$``$M$ is $(n,\Omega,\Omega+1)^*$-iterable'' then
$V[G]\sats$``$M$ is $(n,\Omega,\Omega+1)^*$-iterable''.
\item If $V[G]\sats\DC+$``$M$ is $(n,\Omega,\Omega+1)^*$-iterable''
then $V\sats$``$M$ is $(n,\Omega,\Omega+1)^*$-iterable''.
\end{enumerate}
\end{cor}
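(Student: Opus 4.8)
The strategy is to route everything through the equivalence of stacks iterability with the existence of a normal strategy with weak Dodd-Jensen, which is the content of the main equivalence theorem stated in the introduction (using that $\DC$ holds on both sides), together with the forcing absoluteness of the latter, which is Corollary \ref{cor:wDJ_absoluteness}. First I would fix an enumeration $e$ of $M$ in ordertype $\om$; since $M$ is countable, such an $e$ exists in $V$, and it remains an enumeration in $V[G]$. For the forward direction, suppose $V\sats\DC+$``$M$ is $(n,\Omega,\Omega+1)^*$-iterable''. By the main equivalence theorem (the implication \ref{item:stacks_iter} $\shortimplies$ \ref{item:norm_iter_wdj}, which uses $\DC$ via \cite{wdj} and then \ref{tm:wDJ_implies_cond}), working in $V$ there is an $(n,\Omega+1)$-strategy $\Sigma$ for $M$ with weak DJ with respect to $e$. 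By Corollary \ref{cor:wDJ_absoluteness}(\ref{item:V,V[G]_agree_wDJ}), $V[G]$ satisfies ``there is an $(n,\Omega+1)$-strategy for $M$ with weak DJ with respect to $e$''. Now I need to get back up to $(n,\Omega,\Omega+1)^*$-iterability in $V[G]$; for this I would apply Theorem \ref{tm:wDJ_implies_cond} in $V[G]$ (weak DJ implies strong hull condensation), then Lemma \ref{lem:shcond_implies_extra_inf} (strong hull condensation implies inflation condensation), then Theorem \ref{thm:stacks_iterability} in $V[G]$ (inflation condensation yields an $(n,\Omega,\Omega+1)^*$-strategy). Note this last chain requires no form of choice, so the hypothesis $\DC$ in $V[G]$ is not even needed for the forward direction — only the $\DC$ in $V$ was used, to produce the weak DJ strategy in the first place.

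For the reverse direction, suppose $V[G]\sats\DC+$``$M$ is $(n,\Omega,\Omega+1)^*$-iterable''. Again by the main equivalence theorem applied inside $V[G]$, there is an $(n,\Omega+1)$-strategy $\Sigma'$ for $M$ with weak DJ with respect to $e$ in $V[G]$. By Corollary \ref{cor:wDJ_absoluteness}(\ref{item:V,V[G]_agree_wDJ}), $V$ satisfies ``there is an $(n,\Omega+1)$-strategy for $M$ with weak DJ with respect to $e$''. Then, exactly as above, in $V$ we run Theorem \ref{tm:wDJ_implies_cond}, Lemma \ref{lem:shcond_implies_extra_inf}, and Theorem \ref{thm:stacks_iterability} to conclude that $M$ is $(n,\Omega,\Omega+1)^*$-iterable in $V$. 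Here the $\DC$ of $V[G]$ is used precisely to invoke the implication \ref{item:stacks_iter} $\shortimplies$ \ref{item:norm_iter_wdj} inside $V[G]$; the descent to $V$ then needs no choice.

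The main point requiring care — the step I would flag as the real obstacle — is making sure the invocations of the main equivalence theorem are legitimate in the model in question: that theorem is stated for a countable $m$-sound premouse, and $M$ is countable and $n$-sound in both $V$ and $V[G]$ (since $\PP$ is $\Omega$-cc with $\Omega>\om$, it is in particular $\om_1$-preserving... but actually $M$ being literally countable is absolute downward and we only need countability, which is preserved going up to $V[G]$ and is inherited going from $V[G]$ down to $V$). One should also double-check that the background theory for the various cited results is met: the main equivalence theorem is stated under $\DC$ (which we are assuming in the relevant model), while Theorem \ref{tm:wDJ_implies_cond}, Lemma \ref{lem:shcond_implies_extra_inf}, Theorem \ref{thm:stacks_iterability}, and Corollary \ref{cor:wDJ_absoluteness} all have background theory $\ZF$ (Corollary \ref{cor:wDJ_absoluteness} being proved just above), so there is no circularity and no hidden choice principle. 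Finally, one must confirm that ``the'' weak DJ strategy with respect to $e$ obtained on one side restricts/extends correctly — but for the present corollary we only need the \emph{existence} statements to transfer, which is exactly what Corollary \ref{cor:wDJ_absoluteness}(\ref{item:V,V[G]_agree_wDJ}) provides, so part (\ref{item:wDJ_strat_extends}) of that corollary is not needed here. With these checks in place, the corollary follows by concatenating the cited results as indicated.
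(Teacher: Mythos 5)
Your proof is correct and is essentially the same approach as the paper's: both directions are routed through the existence of a normal $(n,\Omega+1)$-strategy with weak Dodd-Jensen with respect to a fixed enumeration $e\in V$, using $\DC$ (in the appropriate model) to produce it from stacks iterability, Corollary \ref{cor:wDJ_absoluteness} to transfer it between $V$ and $V[G]$, and then Theorems \ref{tm:wDJ_implies_cond} and \ref{thm:stacks_iterability} to come back to stacks iterability. The only cosmetic difference is in the forward direction, where the paper invokes Theorem \ref{thm:strat_with_cond_extends_to_generic_ext} directly rather than passing through Corollary \ref{cor:wDJ_absoluteness}, but since the latter's forward direction is proved from the former, this is the same argument.
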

\begin{proof}

Assume $\DC$ and suppose $M$ is $(n,\Omega,\Omega+1)^*$-iterable. Then
there is an $(n,\Omega+1)$-strategy $\Sigma$ for $M$ with weak DJ (note that
the
construction
of such a strategy only uses $(n,\Omega,\Omega+1)^*$-iterability, not
$(n,\Omega,\Omega+1)$-iterability),
which by \ref{tm:wDJ_implies_cond} has strong hull condensation. Therefore by
\ref{thm:strat_with_cond_extends_to_generic_ext},
$V[G]$ has an $(n,\Omega+1)$-strategy $\Sigma'$ for $M$ with strong hull
condensation.
So by Theorem \ref{thm:stacks_iterability}, $M$ is
$(n,\Omega,\Omega+1)^*$-iterable in $V[G]$.

Now suppose instead that $V[G]\sats\DC+$``$M$ is
$(n,\Omega,\Omega+1)^*$-iterable''.
Then in $V[G]$ there is an $(n,\Omega+1)$-strategy $\Sigma'$ with weak DJ with
respect to
some enumeration $e\in V$ of $M$. By \ref{cor:wDJ_absoluteness},
$\Sigma=\Sigma'\rest V\in V$
and $\Sigma$ has weak DJ in $V$. So by \ref{thm:stacks_iterability},
$\Sigma$ extends to an $(n,\Omega,\Omega+1)^*$-strategy in $V$.
\end{proof}

Note that in the following corollary, $M$ is a premouse, not a
wcpm.
 \begin{cor}\label{cor:ZFC_iter_forcing_ab}
 Assume $\ZFC$. Let $M$ be a countable $m$-sound premouse and $e$ be an
enumeration of $M$ in ordertype $\om$. Let $m<\om$. Let $\Omega>\om$ be regular.
 Let $\PP$ be an $\Omega$-cc forcing and $G$ be $V$-generic for $\PP$. Then the
following are equivalent:
 \begin{enumerate}[label=--]
 \item There is an $(m,\Omega+1)$-strategy for $M$ with strong hull
condensation.
 \item $M$ is $(m,\Omega,\Omega+1)^*$-iterable.
 \item There is an $(m,\Omega+1)$-strategy for $M$ with weak DJ with respect to
$e$.
 \item $V[G]$ satisfies one of the preceding statements.
 \end{enumerate}
 \end{cor}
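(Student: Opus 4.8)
The strategy is to assemble this equivalence from results already available, cycling through the four statements. First I would establish, in both $V$ and $V[G]$, the implications among the three intrinsic statements. The implication ``strong hull condensation $\Rightarrow$ $(m,\Omega,\Omega+1)^*$-iterable'' follows from Theorem~\ref{thm:stacks_iterability} (via Lemma~\ref{lem:shcond_implies_extra_inf}, since strong hull condensation yields inflation condensation, which gives the stacks strategy $\Sigma^\stk$). The implication ``$(m,\Omega,\Omega+1)^*$-iterable $\Rightarrow$ there is an $(m,\Omega+1)$-strategy with weak DJ with respect to $e$'' is exactly the step \ref{item:stacks_iter}~$\shortimplies$~\ref{item:stacks_iter_wdj}~$\shortimplies$~\ref{item:norm_iter_wdj} of the displayed theorem in the introduction, which uses $\DC$ (available under $\ZFC$); concretely, one applies \cite{wdj} to produce an $(m,\Omega,\Omega+1)^*$-strategy with weak DJ and restricts it to normal trees. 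Finally ``there is an $(m,\Omega+1)$-strategy with weak DJ with respect to $e$ $\Rightarrow$ there is an $(m,\Omega+1)$-strategy with strong hull condensation'' is Theorem~\ref{tm:wDJ_implies_cond} (here $\card(M)=\aleph_0<\Omega$, and $M$ countable means we are in the weak DJ case of that theorem). So within each of $V$ and $V[G]$ the three statements are equivalent.

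Next I would connect $V$ and $V[G]$. The cleanest bridge is the weak-DJ formulation, because strategies with weak DJ with respect to a fixed enumeration $e$ are unique (Remark~\ref{rem:wDJ_implies_cond}). Corollary~\ref{cor:wDJ_absoluteness}\ref{item:V,V[G]_agree_wDJ} says precisely that ``there is an $(m,\Omega+1)$-strategy for $M$ with weak DJ with respect to $e$'' is absolute between $V$ and $V[G]$ — note $e\in V$ by hypothesis, $M$ is countable in $V$, and $\PP$ is $\Omega$-cc, so the corollary applies directly. Therefore the fourth bullet (``$V[G]$ satisfies one of the preceding statements'') is equivalent, via the $V[G]$-internal equivalences established above, to ``$V[G]\sats$ there is an $(m,\Omega+1)$-strategy with weak DJ with respect to $e$'', which by Corollary~\ref{cor:wDJ_absoluteness} is equivalent to the same statement holding in $V$, which by the $V$-internal equivalences is equivalent to each of the first three bullets.

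Putting these together: the first three bullets are mutually equivalent in $V$; each is equivalent (through the weak-DJ statement and Corollary~\ref{cor:wDJ_absoluteness}) to the weak-DJ statement holding in $V[G]$; and the weak-DJ statement in $V[G]$ is equivalent, by the $V[G]$-internal equivalences, to the disjunction in the fourth bullet of the statements holding in $V[G]$. Hence all four bullets are equivalent, which is the corollary.

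\textbf{Main obstacle.} The only subtle point is making sure all the cited ingredients genuinely apply under plain $\ZFC$ with $M$ merely countable: Theorem~\ref{tm:wDJ_implies_cond} in the MS-indexed case passes through $\udash$-fine structure and the conversion lemmas, but this is handled in that theorem's proof; the use of $\DC$ in producing a weak-DJ strategy from stacks iterability is subsumed by $\ZFC$; and Corollary~\ref{cor:wDJ_absoluteness} already incorporates the needed $\Omega$-cc and countability hypotheses. So there is no real new mathematical content here — the work is entirely in citing the right result for each arrow and checking the hypotheses line up — and I expect the write-up to be short, essentially the diagram-chase above.
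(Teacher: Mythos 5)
Your proposal is correct and follows essentially the same route as the paper: the paper's proof simply invokes Corollary \ref{cor:stack_it_absoluteness} and its argument, which is exactly your chain (stacks iterability $\Rightarrow$ weak DJ via \cite{wdj} under $\DC$, $\Rightarrow$ strong hull condensation via Theorem \ref{tm:wDJ_implies_cond}, $\Rightarrow$ stacks iterability via Theorem \ref{thm:stacks_iterability}, with the $V$/$V[G]$ bridge supplied by Theorem \ref{thm:strat_with_cond_extends_to_generic_ext} and Corollary \ref{cor:wDJ_absoluteness}). Your choice to route both directions of the bridge through the weak-DJ statement and Corollary \ref{cor:wDJ_absoluteness} is only a cosmetic repackaging, since that corollary's forward direction is itself proved by Theorem \ref{thm:strat_with_cond_extends_to_generic_ext}.
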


 \begin{proof}[Proof of Corollary \ref{cor:ZFC_iter_forcing_ab}]
Both $V$ and $V[G]$ satisfy $\ZFC$, so the previous corollary and its arguments
apply (note that $e\in V$), which easily yields \ref{cor:ZFC_iter_forcing_ab}.
\end{proof}

\subsection{Weak DJ without $\DC$}\label{subsec:weak_DJ_without_DC}

We now discuss some choiceless constructions of strategies with weak DJ.
The main result is Corollary \ref{cor:choiceless_wDJ_HOD},
and the basic idea for that may have originated from some observations of
Dominik Adolf (that is, using Theorem
\ref{thm:strat_with_cond_extends_to_generic_ext} to extend
a strategy in $\HOD_X$ via Vopenka forcing). However, we start with Corollary
\ref{cor:choiceless_wDJ_forcing_CH} below, which is actually less general,
but the two proofs are different, and both seem of interest.
\begin{cor}\label{cor:choiceless_wDJ_forcing_CH}
 Let $\Omega>\om$ be regular and suppose that for no $\alpha<\Omega$ is
 $\Omega$ the surjective image of $V_\alpha$. Let $M$ be a countable $m$-sound
premouse.
 Let $\Sigma$ be an $(m,\Omega+1)$-strategy for $M$ with strong hull
condensation.
 Let $e$ be an enumeration of $M$ in ordertype $\om$.
 Then there is an $(m,\Omega+1)$-strategy for $M$ with weak DJ with respect to
$e$.
\end{cor}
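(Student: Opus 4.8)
The plan is to force to an $\Omega$-cc generic extension in which the Dodd--Jensen machinery can be run, construct a strategy with weak DJ there, and then pull it back to $V$ via Corollary \ref{cor:wDJ_absoluteness}.

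A natural choice of forcing is the L\'evy collapse $\PP=\Coll(\om,{<}\Omega)$; let $G$ be $V$-generic for $\PP$. First I would check that $\PP$ is $\Omega$-cc in the sense of Definition \ref{dfn:Omega-cc}. Given a would-be $\Omega$-pre-antichain $\langle A_\xi\rangle_{\xi<\Omega}$, pick $p_\xi\in A_\xi$; the $p_\xi$ are pairwise incompatible, hence distinct. A $\Delta$-system argument --- in which the usual choice-based pigeonhole is replaced by the hypothesis that no $V_\alpha$ with $\alpha<\Omega$ surjects onto $\Omega$, together with the regularity of $\Omega$ --- yields a finite $r\in[\Omega]^{<\om}$, an ordinal $\gamma<\Omega$ with $r\subseteq\gamma$, and an unbounded $S\subseteq\Omega$ such that for distinct $\xi,\eta\in S$ the restrictions $p_\xi\restriction(\om\times r)$ and $p_\eta\restriction(\om\times r)$ are incompatible, hence distinct, and all lie in $V_\gamma$; since $S$ has ordertype $\Omega$, this produces a surjection of $V_\gamma$ onto $\Omega$, a contradiction. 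Hence $\PP$ is $\Omega$-cc, so $\Omega$ remains regular in $V[G]$ (indeed $\Omega=\om_1^{V[G]}$), the premouse $M$ is still countable in $V[G]$, and the enumeration $e$ lies in $V\subseteq V[G]$.

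Now I would work in $V[G]$. By Theorem \ref{thm:strat_with_cond_extends_to_generic_ext}, $\Sigma$ extends to an $(m,\Omega+1)$-strategy $\Sigma'$ for $M$ with strong hull condensation; in particular $\Sigma'$ is a genuine $(m,\Omega+1)$-strategy, which is all that is needed to feed the weak Dodd--Jensen construction of \cite{wdj}. Applying that construction to $M$, $\Sigma'$ and $e$ produces an $(m,\Omega+1)$-strategy $\Gamma'$ for $M$ with weak DJ with respect to $e$ (and by Remark \ref{rem:wDJ_implies_cond} this $\Gamma'$ is the unique such strategy in $V[G]$). This is the step where the hypothesis on $\Omega$ is really exploited and the reason one passes to $V[G]$ rather than arguing directly in $V$: after collapsing below $\Omega$, the iteration trees relevant to the construction --- all of length ${<}\Omega$ on $M$ and on its iterates --- are coded by transfinite sequences of ordinals below $\Omega$ and hence form a wellorderable class in $V[G]$, so the recursive ``minimisation'' choices can be made canonically, and the remaining termination and ``bad sequence'' arguments can be carried out inside an inner model $L[X]$ with $X\subseteq\Omega$, exactly as for the comparison arguments used elsewhere in the paper, rather than by reflecting into a countable elementary submodel of $V$.

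Finally I would descend: by Corollary \ref{cor:wDJ_absoluteness}, since $\PP$ is $\Omega$-cc, $e\in V$, and $V[G]$ satisfies ``there is an $(m,\Omega+1)$-strategy for $M$ with weak DJ with respect to $e$'', it follows that $V$ satisfies the same statement --- which is precisely the conclusion (equivalently, $\Gamma'\restriction V\in V$ is the desired strategy). The main obstacle is the middle step: pinning down a choice principle that survives the collapse and genuinely suffices to run the \cite{wdj} construction, and verifying carefully that the parts of that construction normally justified by a hull-of-$V$ argument go through under $\ZF$ alone in $V[G]$. The $\Omega$-cc verification and the application of Theorem \ref{thm:strat_with_cond_extends_to_generic_ext} and Corollary \ref{cor:wDJ_absoluteness} are, by contrast, routine.
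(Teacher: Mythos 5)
Your overall architecture (force to an $\Omega$-cc extension, build the weak-DJ strategy there, pull it back via Corollary \ref{cor:wDJ_absoluteness}) is the paper's, but the middle step has a genuine gap: the L\'evy collapse $\Coll(\om,{<}\Omega)$ does not create the choice principle the construction needs. Your justification is that in $V[G]$ the relevant trees ``are coded by transfinite sequences of ordinals below $\Omega$ and hence form a wellorderable class.'' That is false under $\ZF$: already the set of $\om$-sequences of ordinals ${<}2$ is essentially $\pow(\om)$, which need not be wellorderable. What the weak-DJ construction of \cite{wdj} actually requires is the ability to make $\om$-many dependent choices of countable stacks, iterates and embeddings, i.e.\ roughly $\DC$ restricted to $\HC$, or better a wellordering of $\HC^{V[G]}$. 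The L\'evy collapse only makes every \emph{ordinal} below $\Omega$ countable; it does not add surjections of $\om$ onto the sets $V_\alpha^V$, so if $\pow(\om)^V$ is not wellorderable then neither is $\HC^{V[G]}$, and the obstruction you were trying to remove persists in $V[G]$. This is exactly why the paper does not use the L\'evy collapse: Lemma \ref{lem:force_CH} introduces the forcing whose conditions send finitely many pairs $(\alpha,n)$ to arbitrary elements of $V_\alpha$, so that the generic surjects $\om$ onto each $V_\alpha^V$; combined with the $\Omega$-cc name analysis (Claim \ref{clm:small_names}) this yields $\card^{V[G]}(\HC^{V[G]})=\Omega=\om_1^{V[G]}$, i.e.\ a genuine wellordering of $\HC^{V[G]}$ in ordertype $\Omega$, which is what licenses the recursive choices. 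Note also that the hypothesis ``no $V_\alpha$ surjects onto $\Omega$'' is tailored to proving the $\Omega$-cc of \emph{that} forcing (whose restrictions to a $\Delta$-system root live in some $V_\gamma$); for the L\'evy collapse it is essentially unused, which is a sign the collapse is too weak for the purpose.

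A secondary omission: you feed the normal strategy $\Sigma'$ directly into the construction of \cite{wdj}, but that construction takes as input a strategy for \emph{stacks}. In $V[G]$ one must first pass from $\Sigma'$ (which has strong hull, hence inflation, condensation) to $(\Sigma')^{\stk}$ via Theorem \ref{thm:stacks_iterability}, and then run the weak-DJ argument using $(\Sigma')^{\stk}$ together with the wellordering of $\HC^{V[G]}$. Your final descent step is fine: the paper pulls $\Lambda'$ back either by homogeneity plus uniqueness or, as you do, by Corollary \ref{cor:wDJ_absoluteness}.
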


\begin{rem}
Before proving the corollary, we sketch another proof scenario,
which other than the extension of $\Sigma$ to the stacks strategy $\Sigma^\stk$,
would only use standard techniques if it could be made to work,
and point out where the scenario seems to run into problems.
First extend $\Sigma$ to  $\Sigma^\stk$,
and then attempt a choiceless variant
of the construction of a strategy with weak DJ from $\Sigma^\stk$.
(Note that we are not assuming $\DC$, which the
usual construction uses.)
A natural attempt for the latter is as follows.

Let $\alpha_0$ be the least $\alpha$ such that there is $\Ttvec\in V_\alpha$
with $\Ttvec$ according to $\Sigma^\stk$,
and some $Q\ins M^\Ttvec_\infty$ and $\pi:M\to Q$ violating weak DJ.
Let $A_0$ be the set of all such pairs $(\Ttvec,Q)$ where $\Ttvec\in
V_{\alpha_0}$.
For $(\Ttvec,Q)\in A_0$, let $\Sigma^\stk_{\Ttvec,Q}$ by the strategy for $Q$
given by the tail of $\Sigma^\stk$.

Now for each such $(\Ttvec,Q)\in A_0$, define a tree $\Uu_{\Ttvec,Q}$ on $Q$,
via $\Sigma^\stk_{\Ttvec,Q}$,
with these trees resulting from the simultaneous comparison
of the $Q$'s (but note that for a given $Q$, there could be multiple
corresponding trees $\Ttvec$,
and so multiple corresponding $\Uu_{\Ttvec,Q}$'s). This comparison terminates
in
$<\Omega$ stages,
because if we reached stage $\Omega+1$, then working in $L(X,V_{\alpha_0})$,
where
$X$ is a subset of $V_{\alpha_0}\cross\OR$ coding
the comparison, including final branches, we can form a hull of $V$ and reach
the usual contradiction.
Now for some $(\Ttvec,Q)\in A_0$, $b^{\Uu_{\Ttvec,Q}}$ does not drop in model
or
degree.
Choosing such a $(\Ttvec,Q)$ with $\OR(M^{\Uu_{\Ttvec,Q}}_\infty)$ least
possible,
let $Q'=M^{\Uu_{\Ttvec,Q}}_\infty$. Then there is some $\pi':M\to Q'$
witnessing a
failure of weak DJ,
and note that we have defined $Q'$ outright from $\Sigma^\stk$. We can also set
$\pi'$ to be the $e$-lexicographically
least witness.

However, there could be multiple pairs $(\Ttvec,Q)$ with
$Q'=M^{\Uu_{\Ttvec,Q}}_\infty$.
Thus, we don't seem to have a uniquely specified tail of $\Sigma^\stk$ for
iterating $Q'$.
We do have only $V_{\alpha_0}$-many such pairs,
so only $V_{\alpha_0}$-many strategies for $Q'$.
So we might continue by looking for failures of weak DJ arising from each of
these strategies,
comparing these and so on. But after repeating this process $\om$-many times,
we
seem to need $\DC$ in order
to choose some bad stack via some specific strategy, in order to reach a
contradiction. Thus,
we do not see how to complete the proof in this scenario.

We now give a proof that does work. We first need a forcing lemma.
\end{rem}

\begin{lem}\label{lem:force_CH}
 Let $\Omega>\om$ be regular and suppose that for no $\alpha<\Omega$ is
 $\Omega$ the surjective image of $V_\alpha$. Then there is a homogeneous
$\Omega$-cc forcing $\PP$ which forces $\mathsf{CH}$,
 in the strong sense that
$\Omega=\om_1^{V^\PP}=(2^{\aleph_0})^{V^\PP}=\card^{V^\PP}(\HC^{V^\PP})$.
\end{lem}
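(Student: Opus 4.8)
The plan is to build $\PP$ as a finite-support product (or a two-step iteration) of Vopěnka-style forcings designed to (a) surject $\omega_1^{V^\PP}$ onto $\HC^{V^\PP}$ and (b) guarantee $\omega_1^{V^\PP}=\Omega$. First I would note that, since for no $\alpha<\Omega$ is $\Omega$ a surjective image of $V_\alpha$, the forcing $\Coll(\omega,{<}\Omega)$ — taken in the sense that makes sense without $\AC$, namely the Vopěnka-type version whose conditions are defined from $\OD$-classes of partial functions, or more simply the finite-support product $\prod_{\alpha<\Omega}\Coll(\omega,\alpha)$ computed using $\OD$-well-orderings of each $\pow(\alpha)$ where available and Vopěnka forcing otherwise — is $\Omega$-cc in the sense of Definition \ref{dfn:Omega-cc}. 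The standard $\ZFC$ argument that $\Coll(\omega,{<}\Omega)$ is $\Omega$-cc only uses that each initial segment $\Coll(\omega,{<}\alpha)$ has size (or a dense subset of size) $<\Omega$ together with a $\Delta$-system argument; under $\ZF$ one replaces "size $<\Omega$" by "not a surjective image of any $V_\beta$, $\beta<\Omega$" and the pre-antichain argument of Definition \ref{dfn:Omega-cc} goes through, using the regularity of $\Omega$ in place of a $\Delta$-system lemma. Homogeneity of the collapse is automatic from the standard automorphism argument, which is choice-free.

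Next I would verify the three equalities. That $\Omega\leq\omega_1^{V^\PP}$: any new $\omega$-sequence of ordinals below $\Omega$ is added by a condition lying in some $V_\alpha$-definable piece with $\alpha<\Omega$, hence $\Omega$ is not collapsed below its own cardinality, and the $\Omega$-cc preserves the regularity of $\Omega$ (as remarked in the paper just after Definition \ref{dfn:Omega-cc}), so $\omega_1^{V^\PP}\geq\Omega$. That $\omega_1^{V^\PP}\leq\Omega$: each ordinal $\alpha<\Omega$ is explicitly collapsed to be countable by the $\alpha$-th factor, so $\omega_1^{V^\PP}\leq\Omega$, giving $\omega_1^{V^\PP}=\Omega$. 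For $(2^{\aleph_0})^{V^\PP}=\Omega$: every real of $V^\PP$ appears in an intermediate extension by a piece of $\PP$ that is a surjective image of some $V_\alpha$, $\alpha<\Omega$ (here I would use that $\PP$, being $\Omega$-cc and built from $\OD$/Vopěnka pieces, has the property that a nice name for a real is captured by such a piece), so there are at most $\Omega$-many reals; and since $\Omega=\omega_1^{V^\PP}$ there are at least that many, so $(2^{\aleph_0})^{V^\PP}=\Omega$. Finally $\card^{V^\PP}(\HC^{V^\PP})=\Omega$ follows: $\HC^{V^\PP}$ is the set of hereditarily countable sets, each coded by a real, so $\card^{V^\PP}(\HC^{V^\PP})\leq(2^{\aleph_0})^{V^\PP}=\Omega$, and trivially $\geq\omega_1^{V^\PP}=\Omega$.

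The main obstacle, and the reason this lemma needs its own proof rather than citing the textbook, is the absence of $\AC$: one cannot literally take the Levy collapse as the poset of all finite partial functions, since enumerating the factors and forming a well-behaved finite-support product requires some choice. The fix is to use the hereditarily-$\OD$ (Vopěnka) formulation — for each $\alpha<\Omega$, the Vopěnka algebra for adding a surjection $\omega\to V_{\alpha}$ relative to a fixed real parameter — which is canonically definable, hence the product over $\alpha<\Omega$ is canonically definable with finite support, and the hypothesis on $\Omega$ is exactly what is needed to run the $\Omega$-cc argument for this product. I would organise the proof so that the Vopěnka machinery is invoked as a black box: one only needs that each factor is a set forcing which is a surjective image of some $V_\beta$, $\beta<\Omega$, that it is homogeneous, and that it adjoins a surjection of $\omega$ onto (a set coding) $V_\alpha$; all of this is standard and choice-free. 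The only genuinely new verification is the $\Omega$-cc of the product, which I sketched above, and this is where I expect to spend the most care, since the usual $\Delta$-system lemma is unavailable and must be replaced by a direct appeal to the regularity of $\Omega$ applied to a putative $\Omega$-pre-antichain, exactly as in the remark following Definition \ref{dfn:Omega-cc}.
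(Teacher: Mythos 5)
Your high‑level plan --- a finite‑support product of collapses, verification of $\Omega$-cc, and then counting names --- is the right shape, but there are two substantive problems with the execution.

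First, the worry that drives most of your design is misplaced, and it leads you to over‑engineer the forcing. You write that "one cannot literally take the Levy collapse as the poset of all finite partial functions, since enumerating the factors and forming a well‑behaved finite‑support product requires some choice," and you respond by reaching for Vop\v{e}nka algebras. But the factors here are canonically indexed by ordinals $\alpha<\Omega$, so the finite‑support product is a perfectly good $\ZF$-definition; no choice is needed to define it, and no Vop\v{e}nka machinery is needed at all. The paper simply takes $\PP$ to consist of finite partial functions $p$ with $\dom(p)\sub(0,\Omega)\cross\om$ and $p(\alpha,n)\in V_\alpha$, ordered by reverse inclusion. (Note also that it collapses each $V_\alpha$, not just each ordinal $\alpha$, to be countable; this is what eventually gives $\card^{V^\PP}(\HC^{V^\PP})=\Omega$.) This forcing is manifestly homogeneous; by contrast, homogeneity is not automatic for Vop\v{e}nka algebras, so if you kept your route you would owe an argument there that the paper does not.

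Second, and more seriously, your $\Omega$-cc argument is a genuine gap, and it is precisely the part you flagged as needing care. You say the $\Delta$-system lemma "is unavailable and must be replaced by a direct appeal to the regularity of $\Omega$," but regularity alone does not produce a root for the domains of a putative $\Omega$-pre-antichain, and nothing in the remark after Definition \ref{dfn:Omega-cc} supplies that step. The actual move is different: after shrinking each block $A_\alpha$ so that all its conditions have the same finite domain $d_\alpha$, one passes to the inner model $L[\left<d_\alpha\right>_{\alpha<\Omega}]$, which \emph{does} satisfy $\ZFC$ (and $\Omega$ is still regular there), and applies the ordinary $\Delta$-system lemma to the family $\left<d_\alpha\right>_{\alpha<\Omega}$ inside that model. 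This yields a root $d\sub\gamma\cross\om$ for some $\gamma<\Omega$, and then the hypothesis that $\Omega$ is not a surjective image of any $V_\beta$ (here $V_{\gamma+\om}$) lets you pick, from some high block $A_\beta$, a condition whose restriction to $d$ already occurred at an earlier block, producing two compatible conditions in distinct blocks --- a contradiction. The same "pass to an inner model of $\ZFC$ with the relevant set as a predicate" device would also be needed to make rigorous your count of reals in the extension (the paper establishes separately that every real of $V^\PP$ has a name in $V_\Omega$, via an antichain argument of the same flavour). Without this device your proposal does not go through.
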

\begin{proof}
 Let $\PP$ be the forcing whose conditions are functions $p$ with $\dom(p)$
 a finite set $\sub(0,\Omega)\cross\om$ and
 $p(\alpha,n)\in V_\alpha$
 for each $(\alpha,n)\in\dom(p)$, and with ordering $p\leq q$ iff $q\sub p$.
 We claim that $\PP$ works.

 For clearly $\PP$ is homogeneous. Let $G$ be $V$-generic and $g=\bigcup G$.
Clearly  and
 \[ g:(0,\Omega)\cross\om\to V_\Omega \] is a surjection;
 in fact, for each $\alpha\in(0,\Omega)$, the function $n\mapsto g(\alpha,n)$
is
a surjection $\om\to V_\alpha$.
 So $\Omega\leq\om_1^{V[G]}$ and it suffices to see that $\PP$ is $\Omega$-cc
 and for each $x\in\HC^{V[G]}$ there is a $\PP$-name $\dot{x}\in V_\Omega$
 such that $\dot{x}^G=x$.

 \begin{clmthree}$\PP$ is $\Omega$-cc.
 \end{clmthree}
\begin{proof}
Let $\lambda\in\OR$ and $\left<A_\alpha\right>_{\alpha<\lambda}$ be a
$\lambda$-pre-antichain of $\PP$.
We must see that $\lambda<\Omega$. So suppose $\lambda=\Omega$. The proof is
just a choiceless variant of the usual $\Delta$-system argument.

For each $p\in\PP$, $\dom(p)$ is just a finite set of pairs of ordinals.
So by reducing each $A_\alpha$ if necessary, may assume that we have
$\left<d_\alpha\right>_{\alpha<\Omega}$
such that $\dom(p)=d_\alpha$ for all $p\in A_\alpha$, for all $\alpha$.
In $L[\left<d_\alpha\right>_{\alpha<\Omega}]$, where we have $\ZFC$
(and $\Omega$ is regular)
we can use the $\Delta$-system lemma. So we may assume that
we have some fixed finite $d\sub\Omega\cross\om$ such that $d_\alpha\inter
d_\beta=d$
for all $\alpha<\beta<\Omega$.
Let $\gamma<\Omega$ be such that $d\sub\gamma\cross\om$. Then for each
$\alpha$ and $p\in A_\alpha$, we have $p\rest d\in V_{\gamma+\om}$.

Let $X=\{p\rest d\bigm|  p\in A_\alpha\text{ and }\alpha<\Omega\}$.
So $X\sub V_{\gamma+\om}$. For $x\in X$, let $\alpha_x$ be the least $\alpha$
such that $x=p\rest d$ for some $p\in A_\alpha$.
Then since there is no surjection $V_{\gamma+\om}\to\Omega$ and $\Omega$ is
regular,
we may fix $\beta>\sup_{x\in X}\alpha_x$. Let $q\in A_\beta$. Let $x=q\rest d$.
Then $x\in X$. Let $\alpha=\alpha_x$. Then $\alpha<\beta$. Let $p\in A_\alpha$
be such that $x=p\rest d$. Then we have $p\rest d=x=q\rest d$, but since
$d=\dom(p)\inter\dom(q)$,
it follows that $p,q$ are compatible, a contradiction.
\end{proof}

\begin{clmthree}\label{clm:small_names}For each $x\in\pow(\om)^{V[G]}$
there is a $\PP$-name $\dot{x}\in V_\Omega$
 such that $\dot{x}^G=x$.
 \end{clmthree}
 \begin{proof}
 Let $\tau$ be a $\PP$-name for $x$. For $n<\om$, let
 $B_n=\{p\in\PP\bigm|  p\forces\check{n}\in\tau\}$.
 Let $\left<d_\alpha\right>_{\alpha<\Omega}$ enumerate
 $[\Omega\cross\om]^{<\om}$. For $p\in\PP$, let $\alpha_p$ be the $\alpha$
 such that $\dom(p)=d_\alpha$.
 Define a set $C_n\sub B_n$, determining whether $p\in C_n$ recursively
 on $\alpha_p$, as
follows:
 given $p\in B_n$, put $p\in C_n$ iff $p\incompat q$ for all $q\in C_n$
 such that $\alpha_q<\alpha_p$.
Note that $C_n\in V_\Omega$, as otherwise we easily get an
$\Omega$-pre-antichain. And $C_n$ is pre-dense in $B_n$,
because if  $p\in B_n\cut C_n$ then $p\compat q$ for some $q\in C_n$
with $\alpha_q<\alpha_p$. So let $\dot{x}$ be the $\PP$-name
consisting of all pairs $(p,\check{n})$ such that $n<\om$ and $p\in C_n$.
It follows that $\dot{x}\in V_\Omega$
and $\dot{x}_G=x$, as desired.
 \end{proof}
 This completes the proof of the lemma.
 \end{proof}

\begin{proof}[Proof of Corollary \ref{cor:choiceless_wDJ_forcing_CH}]
 Let $\PP$ be the forcing of \ref{lem:force_CH} and $G$ be $V$-generic for
$\PP$.
 So $\PP$ is homogeneous, $\Omega$-cc and $V[G]$ has a bijection
$f:\Omega=\aleph_1^{V[G]}\to\HC^{V[G]}$.
 Let $\Sigma'$ be the extension of $\Sigma$ to $V[G]$ given by
\ref{thm:strat_with_cond_extends_to_generic_ext}.

 Work in $V[G]$. So $\Sigma'$ is an $(m,\om_1+1)$-strategy with strong hull
condensation,
 and $\om_1$ is regular.
 Using $(\Sigma')^\stk$ and the bijection $f$, we can run the usual construction
 of an $(m,\omega_1+1)$-strategy $\Lambda'$ for $M$ with weak DJ with respect to
$e$.
 As mentioned in \ref{rem:wDJ_implies_cond},
 $\Lambda'$ is the unique such strategy for $M$.

 But then $\Lambda\eqdef\Lambda'\rest V\in V$ (because $\PP$ is homogeneous and
$\Lambda'$ is unique;
 alternatively, use \ref{cor:wDJ_absoluteness}), and $\Lambda$ has weak DJ with
respect to $e$ in $V$.
\end{proof}

We now slightly improve on \ref{cor:choiceless_wDJ_forcing_CH}.
But this time, the proof works by executing the $\AC$ part of the argument in an
inner model of choice,
instead of a forcing extension. As mentioned above, the idea
of Using Theorem \ref{thm:strat_with_cond_extends_to_generic_ext} to extend a
strategy of $\HOD_X$ via Vopenka
may have come from observations of Dominik Adolf.

\begin{cor}\label{cor:choiceless_wDJ_HOD}
 Let $\Omega>\om$ be regular and such that for no $\alpha<\Omega$
 is $\Omega$ the surjective image of
 $\pow(\alpha)$.
 Let $M$ be a countable $m$-sound $(m,\Omega,\Omega+1)^*$-iterable premouse and
$e$ be an enumeration of $M$ in ordertype $\om$.
 Then there is an $(m,\Omega+1)$-iteration strategy for $M$ with weak DJ with
respect to $e$.
\end{cor}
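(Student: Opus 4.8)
The plan is to reduce to a context of choice by working inside $\HOD_X$ for a suitable $X$, extend the iteration strategy there via Vopenka-type forcing using Theorem~\ref{thm:strat_with_cond_extends_to_generic_ext}, construct a weak-DJ strategy there using $\DC$ (which holds in $\HOD_X$ over its ground model of choice), and then descend back to $V$ using the uniqueness of weak-DJ strategies together with the forcing absoluteness Corollary~\ref{cor:wDJ_absoluteness} (or homogeneity). The key observation is that, although $V$ need not satisfy $\AC$, the hypothesis on $\Omega$ ensures that $\pow(\alpha)$ is small relative to $\Omega$ for every $\alpha<\Omega$, and hence any iteration tree or stack on the countable premouse $M$, being coded by a subset of $\HC$, lies in an inner model of $\ZFC$ which still computes $\Omega$ correctly and in which the relevant strategies persist.

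First I would fix $X$ coding $M$, $e$, and a strategy $\Sigma$ for $M$ with strong hull condensation witnessing $(m,\Omega,\Omega+1)^*$-iterability; such $\Sigma$ exists by Theorem~\ref{tm:wDJ_implies_cond} applied inside a suitable inner model, or more directly: $(m,\Omega,\Omega+1)^*$-iterability gives, via the (choiceless-robust) construction, a normal strategy, and the point is to arrange strong hull condensation. Actually the cleanest route is: let $W=\HOD_{\{M,e,X\}}$ where $X\sub\OR$ codes $\Sigma\rest\HC$ for some normal $(m,\Omega+1)$-strategy $\Sigma$ extracted from the given iterability (the first round of an $(m,\Omega,\Omega+1)^*$-strategy). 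Then $W\sats\ZFC$, $W$ computes $\Omega$ correctly (using the smallness hypothesis on $\Omega$, so that $\Omega$ remains regular and uncountable in $W$), and $M$ is $(m,\Omega+1)$-iterable in $W$ via $\Sigma\rest W$. Inside $W$, which satisfies $\ZFC\supseteq\DC$, Theorem~\ref{tm:wDJ_implies_cond} does not directly apply unless $\Sigma\rest W$ has DJ, so instead I would run the standard $\DC$-construction of a strategy with weak DJ from the stacks strategy: first extend $\Sigma\rest W$ to $(\Sigma\rest W)^{\stk}$ via Theorem~\ref{thm:stacks_iterability} (which needs only inflation condensation of $\Sigma\rest W$ — but we need \emph{some} strategy with inflation condensation in $W$, which we get from $(m,\Omega,\Omega+1)^*$-iterability in $W$ together with $\DC$ in $W$ and the equivalence theorem in the introduction, item \ref{item:stacks_iter}$\shortimplies$\ref{item:norm_iter_cond}), then build the weak-DJ strategy $\Lambda^W$ in $W$ by the usual comparison argument using $\DC^W$.

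The main obstacle is the descent: $\Lambda^W\in W$, but we need a weak-DJ strategy in $V$. Here I would invoke uniqueness of weak-DJ strategies (Remark~\ref{rem:wDJ_implies_cond}): there is at most one $(m,\Omega+1)$-strategy for $M$ with weak DJ with respect to $e$, computed the same way in any model containing the relevant iteration trees and branches. More precisely, I would argue that $\Lambda=\Lambda^W\rest V$ is a total $(m,\Omega+1)$-strategy in $V$: given any limit-length tree $\Tt\in V$ via $\Lambda^W$, the tree $\Tt$ is coded by a subset of $\HC$, so $\Tt\in W'$ for $W'$ a slightly larger $\HOD$ still of choice and still computing $\Omega$ correctly, and the branch $\Lambda^W(\Tt)$, being unique-with-weak-DJ and hence $\OD$ from $(\Tt,M,e,\Sigma)$, is absolute between $W$, $W'$, and $V$; thus $\Lambda(\Tt)$ is a well-defined $\Tt$-cofinal branch with wellfounded models, and the weak-DJ property, being a statement about near $m$-embeddings $M\to M^{\Ttvec}_\infty$ with $\Ttvec$ ranging over stacks coded in $\HC$, transfers down to $V$ since every such $\Ttvec$ and every such embedding already lives in such a $W'$. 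I expect the bookkeeping around "which inner model of choice contains a given tree while still computing $\Omega$" to be the one genuinely delicate point, handled precisely by the hypothesis that $\Omega$ is not a surjective image of $\pow(\alpha)$ for $\alpha<\Omega$, which guarantees $\Omega$ stays regular in all the relevant $\HOD$s and that "all trees on $M$" is not cofinal in $\Omega$ in any of them; the rest is a routine combination of homogeneity/uniqueness and the already-established theorems \ref{thm:stacks_iterability}, \ref{thm:strat_with_cond_extends_to_generic_ext}, and \ref{cor:wDJ_absoluteness}.
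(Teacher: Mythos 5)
Your overall architecture is the same as the paper's: pass to a $\HOD$-style inner model of $\ZFC$ containing the given iterability data, build the (unique) weak-DJ strategy there, use the $\pow(\alpha)$-smallness hypothesis to see that the Vopenka forcings are $\Omega$-cc, extend via Theorem~\ref{thm:strat_with_cond_extends_to_generic_ext}, and glue/descend using uniqueness of weak-DJ strategies. (One small divergence: the paper takes $H=\HOD_{\Sigma,M,e}$ with the \emph{full} $(m,\Omega,\Omega+1)^*$-strategy $\Sigma$ as a predicate, so $H$ directly satisfies stacks iterability and the weak-DJ construction runs there immediately; your $W$ is built from a code for only the normal first-round strategy restricted to $\HC$, which does not obviously give $(m,\Omega,\Omega+1)^*$-iterability in $W$. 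This is fixable, but as written it is a loose end.)

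The genuine gap is in the descent, specifically for trees of length $\Omega$ and for verifying weak DJ of the final $(m,\Omega+1)$-strategy. You assert that any tree $\Tt\in V$ via the strategy ``is coded by a subset of $\HC$'' and hence lands in a slightly larger $\HOD$ reachable by $\Omega$-cc forcing. For trees of length $<\Omega$ the correct statement is that $\Tt$ is coded by a \emph{bounded} subset of $\Omega$, and then the Vopenka extension $H[G_X]$ is an $\Omega$-cc extension and Theorem~\ref{thm:strat_with_cond_extends_to_generic_ext} applies; but a tree of length exactly $\Omega$ is not coded by any bounded subset of $\Omega$, so it does not live in any $\Omega$-cc extension of $H$ of the sort you are using, and Corollary~\ref{cor:wDJ_absoluteness} cannot be invoked. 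This is exactly where the paper has to work: it first assembles an $(m,\Omega)$-strategy $\Psi^\Omega$ in $V$ from the bounded-Vopenka extensions, proves that $\Psi^\Omega$ is the \emph{unique} $(m,\Omega)$-strategy with weak DJ in $V$ (by a phalanx comparison carried out inside an inner model of $\ZFC$ where $\Omega$ is inaccessible), and then, for each $\Tt$ of length $\Omega$ via $\Psi^\Omega$, passes to $H_\Tt=\HOD_{\Sigma,M,e,\Tt}$, whose weak-DJ strategy must restrict to $\Psi^\Omega$ by that uniqueness, and hence supplies a $\Tt$-cofinal branch. Finally, weak DJ for the resulting $(m,\Omega+1)$-strategy needs a separate reflection argument: a counterexample living on a tree of length $\Omega+1$ reflects to some $\alpha\in b^\Tt$ by regularity of $\Omega$, contradicting weak DJ of $\Psi^\Omega$. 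Neither the uniqueness-in-$V$ step nor this reflection step appears in your proposal, and without them the argument only yields an $(m,\Omega)$-strategy with weak DJ, not an $(m,\Omega+1)$-strategy.
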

\begin{proof}
 Note that $\Omega$ is inaccessible in every proper class inner model $H$ of
$\ZFC$.
 When we mention weak DJ below, we mean with respect to $e$.

 Let $\Sigma$ be an $(m,\Omega,\Omega+1)^*$-strategy for $M$.
 Let $H=\HOD_{\Sigma,M,e}$ and $\Lambda=\Sigma\rest H$.
 So $\Lambda,M,e\in H$
 and
 \[ H\sats\ZFC+\text{``}\Omega\text{ is inaccessible and }\Lambda\text{ is an
}(m,\Omega,\Omega+1)^*\text{-strategy for }M\text{''.}\]
So there is (a unique) $\Psi\in H$ such that
\[ H\sats\text{``}\Psi\text{ is an }(m,\Omega+1)\text{-strategy for }M\text{
with weak DJ''.}\]

 For each $\alpha<\Omega$ and $X\sub\alpha$, let $G_X$ be the Vopenka
 generic for adding $X$ to $H$.
This Vopenka forcing has the $\Omega$-cc in $H$,
because $\Omega$ is
not the surjective image of $\pow(\alpha)$ in $V$.
Also, $H[G_X]=\HOD_{\Sigma,M,e,X}$.
Given $\beta<\Omega$ and $Y\sub\beta$,
let $G^X_Y$ be the Vopenka generic for adding $Y$ to $H[G_X]$.
This forcing is $\Omega$-cc in $H[G_X]$.

 So by \ref{thm:strat_with_cond_extends_to_generic_ext},
 there is a unique $\Psi_X\in H[G_X]$
 such that
 \[H[G_X]\sats\text{``}\Psi_X\text{ is an }(m,\Omega+1)\text{-strategy
 with weak DJ'';}\]
 moreover, $\Psi\sub\Psi_X$. Similarly,
  there is a unique $\Psi^X_Y$
 for $H[G_X][G^X_Y]$, and $\Psi_X\sub\Psi^X_Y$.
 Note that
 \[ H[G_X][G^X_Y]=\HOD_{\Sigma,M,e,X,Y}=H[G^Y][G^Y_X], \]
 so $\Psi^X_Y=\Psi^Y_X$, so $\Psi_X$ is compatible with $\Psi_Y$.

 Let $\Psi^\Omega_X$ be the restriction of $\Psi_X$ to an $(m,\Omega)$-strategy
of $H[G_X]$,
 and let
 $\Psi^\Omega$ be the union of all $\Psi^\Omega_X$ (over all bounded subsets
$X$ of $\Omega$).

 Then clearly $\Psi^\Omega$ is an $(m,\Omega)$-strategy with weak DJ.
 In fact, $\Psi^\Omega$ is the unique such strategy, because otherwise we can
run the usual phalanx comparison argument
 working inside some inner model of $\ZFC$, using the fact that $\Omega$ is
inaccessible
 there, to see that the comparison terminates.

 We claim that $\Psi^\Omega$ extends (uniquely) to an $(m,\Omega+1)$-strategy.
 For given any tree $\Tt$ via $\Psi^\Omega$ of length $\Omega$, we can argue as
above
 with
 $H_\Tt=\HOD_{\Sigma,M,e,\Tt}$
 replacing $H$.
 Let $\Psi_\Tt\in H_\Tt$ be the resulting $(m,\Omega+1)$-strategy of $H_\Tt$,
 and $\Psi^\Omega_\Tt\in V$ the resulting $(m,\Omega)$-strategy of $V$.
 Then $\Psi^\Omega=\Psi^\Omega_\Tt$ by the uniqueness mentioned of
$\Psi^\Omega$.
 But $\Psi_\Tt$ is compatible with $\Psi^\Omega_\Tt=\Psi^\Omega$,
 so $\Tt$ is via $\Psi_\Tt$, and since $\Tt\in H\sats$``$\Psi_\Tt$ is an
$(m,\Omega+1)$-strategy'',
 therefore $\Psi_\Tt(\Tt)$ is a $\Tt$-cofinal branch, as desired.

 So let $\Psi^+$ be this extension of $\Psi^\Omega$. Then $\Psi^+$ has weak DJ,
completing the proof.
 For if we have some counterexample to weak DJ given by a tree $\Tt$ of length
$\Omega+1$,
 note that by the regularity of $\Omega$, there is some $\alpha\in b^\Tt$ such
that $\Psi\rest(\alpha+1)$ is also a counterexample,
 contradicting weak DJ for $\Psi^\Omega$.
\end{proof}

\printindex
\bibliographystyle{plain}
\bibliography{stacks_from_normal_iterability_fork_arxiv}

\end{document}